\newtheorem{Thm}{Theorem}[section]
\newtheorem{Lem}[Thm]{Lemma}
\newtheorem{Rem}[Thm]{Remark}
\newtheorem{Prop}[Thm]{Proposition}
\numberwithin{equation}{section}
\newcommand{\1}{\mathbf{1}}
\newcommand{\R}{\mathbb{R}}
\newcommand{\Rd}{{\mathbb{R}^3}}
\renewcommand{\Re}{\text{Re}}
\newcommand{\F}{\mathcal{F}}
\newcommand{\N}{\mathbb{N}}
\renewcommand{\L}{\mathcal{L}}
\newcommand{\E}{\mathcal{E}}
\newcommand{\DD}{\mathcal{D}}
\newcommand{\G}{\mathcal{G}}
\renewcommand{\F}{\mathcal{F}}
\newcommand{\ve}{\varepsilon}
\newcommand{\vt}{\vartheta}
\newcommand{\vp}{\varphi}
\newcommand{\vpi}{\varpi}
\newcommand{\ga}{\gamma}
\newcommand{\Ga}{\Gamma}
\renewcommand{\th}{\theta}
\newcommand{\wt}{\widetilde}
\newcommand{\ti}{\tilde}
\newcommand{\ol}{\overline}
\newcommand{\pa}{\partial}
\newcommand{\na}{\nabla}
\newcommand{\de}{\delta}
\newcommand{\De}{\Delta}
\newcommand{\al}{\alpha}
\newcommand{\ka}{\kappa}
\newcommand{\si}{\sigma}
\newcommand{\Si}{\Sigma}
\newcommand{\lam}{\lambda}
\newcommand{\wh}{\widehat}
\renewcommand{\S}{\mathbb{S}}
\renewcommand{\b}{\mathbf{b}}
\newcommand{\<}{\langle}
\renewcommand{\>}{\rangle}
\newcommand{\I}{{I}}
\renewcommand{\P}{{P}}
\newcommand{\vertiii}[1]{{\left\vert\kern-0.25ex\left\vert\kern-0.25ex\left\vert #1 \right\vert\kern-0.25ex\right\vert\kern-0.25ex\right\vert}}
\pgfplotsset{compat=1.18}
\def\@tocline#1#2#3#4#5#6#7{\relax
	\ifnum #1>\c@tocdepth 
	\else
	\par \addpenalty\@secpenalty\addvspace{#2}%
	\begingroup \hyphenpenalty\@M
	\@ifempty{#4}{%
		\@tempdima\csname r@tocindent\number#1\endcsname\relax
	}{%
		\@tempdima#4\relax
	}%
	\parindent\z@ \leftskip#3\relax \advance\leftskip\@tempdima\relax
	\rightskip\@pnumwidth plus4em \parfillskip-\@pnumwidth
	#5\leavevmode\hskip-\@tempdima
	\ifcase #1
	\or\or \hskip 1em \or \hskip 2em \else \hskip 3em \fi%
	#6\nobreak\relax
	\hfill\hbox to\@pnumwidth{\@tocpagenum{#7}}\par
	\nobreak
	\endgroup
	\fi}
\begin{document}

\title[Non-cutoff Boltzmann equation in bounded domains]{The Non-cutoff Boltzmann Equation in Bounded Domains}
\author{Dingqun Deng
			\,\orcidlink{0000-0001-9678-314X}
}
\address{Department of Mathematics, Pohang University of Science and Technology, Pohang, Republic of Korea (South), ORCID: \href{https://orcid.org/0000-0001-9678-314X}{0000-0001-9678-314X}}
\curraddr{}
\email{dingqun.deng@postech.ac.kr}
\email{dingqun.deng@gmail.com}
\date{\today}
\begin{abstract}
	The initial-boundary value problem for the inhomogeneous non-cutoff Boltzmann equation is a challenging open problem. In this paper, we study the stability and long-time dynamics of the Boltzmann equation near a global Maxwellian without angular cutoff assumption in a general $C^3$ bounded domain $\Omega$ (including convex and non-convex cases) with physical boundary conditions: inflow boundary and Maxwell-reflection boundary with accommodation coefficient $\al\in(0,1)$. We obtain the global-in-time existence, which has an exponential decay rate towards the global Maxwellian for both hard and soft potentials. The crucial methods are the forward-backward extension of the boundary problem to the whole space by Vlasov-type equations, a level-function trace lemma, an improved velocity averaging lemma with less regularity but without cutoff in velocity, and an extra damping provided by the advection operator, followed by the De Giorgi iteration and the $L^2$--$L^\infty$ energy method. 
\end{abstract}
\subjclass[2020]{Primary: 35Q20; Secondary 76P05, 35B40, 76N15, 82C40}

\keywords{Boltzmann equation, general bounded domain, extension, global existence, De Giorgi iteration}

\maketitle
\tableofcontents


\section{Introduction}
In 1989, R. J. Diperna and P. L. Lions \cite{Diperna1989} obtained the well-known global existence of renormalized solutions to the Cauchy problem for the Boltzmann equation. Subsequently, in the early 1990s, several researchers investigated the initial-boundary value problem describing the time evolution of rarefied gas in a bounded domain $\Omega$; e.g. 
\cite{Hamdache1992,Cercignani1992,Arkeryd1993}. In the last two decades, many researchers have studied various significant topics of the Boltzmann equation in a domain with boundary, including global existence and uniqueness, the hydrodynamic limit, the boundary layer, the wave solutions, the $L^2$--$L^\infty$ method, the regularity, and the Hilbert expansion, etc.; see Section \ref{sec161} for more discussion. 

\smallskip
However, the above-mentioned literature is limited to the case of Grad's angular cutoff assumption introduced by H. Grad \cite{Grad1963}. It's a challenging open problem to consider a physically reasonable assumption, the angular non-cutoff assumption, in a domain with a boundary. In this paper, we will study the Boltzmann equation without the angular cutoff assumption in a general 3-dimensional open bounded $C^3$ domain $\Omega$.

\subsection{Model and bounded domain}
Let $\Omega$ be an open bounded subset of $\R^3_x$. The \emph{Boltzmann equation} for a particle distribution function $F(t,x,v):[0,\infty)\times\Omega\times\R^3_v$ at time $t\ge 0$, position $x\in\Omega$ and velocity $v\in\R^3$, takes the form
\begin{align}
	\label{B}
	\pa_tF(t,x,v) + v\cdot\na_xF(t,x,v)=Q(F,F)(t,x,v), \quad F(0,x,v)=F_0(x,v),
\end{align}
where $Q$ is the \emph{Boltzmann collision operator} given by
\begin{align*}
	Q(F,G)=\int_{\R^3}\int_{\S^2}B(v-v_*,\sigma)\big(F(v'_*)G(v')-F(v_*)G(v)\big)\,d\sigma dv_*,
\end{align*}
where $(v,v_*)$ and $(v',v_*')$ are pre-post velocities in a collision given by
\begin{align}\label{vprime}
	\begin{aligned}
		v'=\frac{v+v_*}{2}+\frac{|v-v_*|\sigma}{2}, \\
		v'_*=\frac{v+v_*}{2}-\frac{|v-v_*|\sigma}{2},
	\end{aligned}
\end{align}
with $\sigma\in\S^2$. Here $\S^2$ is the $2$-sphere, i.e. $\S^2=\{x\in\R^3\,:\,|x|=1\}$. The pre-post velocities satisfy the conservation laws of momentum and energy for elastic collisions:
\begin{align*}\begin{aligned}
		v+v_* & =v'+v'_*, \\
		|v|^2+|v_*|^2 & =|v'|^2+|v'_*|^2.
	\end{aligned}
\end{align*}
The \emph{cross-section} $B(v-v_*,\sigma)$ is a measure of the probability for the event of a collision (or scattering) given by
\begin{align*}
	B(v-v_*,\sigma)=|v-v_*|^\ga b(\cos\th),
\end{align*}
which depends only on the relative velocity $|v-v_*|$ and the deviation angle $\th$ through $\cos\th=\mathbf{k}\cdot\sigma$, where $\mathbf{k}=\frac{v-v_*}{|v-v_*|}$ and $\cdot$ is the usual scalar product in $\R^3$. Without loss of generality, we assume that $B(v-v_*,\sigma)$ is supported on $\mathbf{k}\cdot\sigma\ge 0$, i.e. $0\le\th\le\frac{\pi}{2}$, since one can reduce to this situation with standard symmetrization: $\ol B(v-v_*,\si)=[B(v-v_*,\si)+B(v-v_*,\si)]\1_{\mathbf{k}\cdot\si\ge 0}$. Moreover, we assume the angular non-cutoff assumption:
\begin{align}
	\label{ths}
	\frac{1}{C_b}\th^{-1-2s}\le\sin\th b(\cos\th)\le C_b \th^{-1-2s},
\end{align}
for some $C_b>0$, which is derived from the inverse power law (for long-range interactions) according to a spherical intermolecular repulsive potential $\phi(r)=r^{-(p-1)}$ $(p>2)$ with $\ga=\frac{p-5}{p-1}$ and $s=\frac{1}{p-1}$; see for instance \cite{Alexandre2000,Maxwell1867,Cercignani1988,Cercignani1994,Villani2002}. The factor $\sin \th$ corresponds to the Jacobian factor for integration in spherical coordinates. Thus, the function $\sin\th b(\cos\th)$ represents a non-integrable singularity as $\th\to 0$ (the grazing collisions), which is an essential difficulty compared to the angular cutoff case. In this paper, we assume that the indices $(\ga,s)$ satisfy
\begin{align*}
	-\frac{3}{2}<\ga\le 2, \quad s\in(0,1).
\end{align*}
It's convenient to call them hard potentials when $\ga+2s\ge 0$ and soft potentials when $\ga+2s<0$.
We remark that the upper bound of $\ga$ doesn't play an essential role, and can be relaxed to any fixed constant.
It's straightforward to check that the global Maxwellian
\begin{align}\label{mu}
	\mu(v):=(2\pi)^{-\frac{3}{2}}e^{-\frac{|v|^2}{2}}
\end{align}
is a steady solution to equation \eqref{B}.
Throughout the paper, we always assume a positive initial datum $F_0\ge 0$. 

{{\smallskip}} The equation \eqref{B} describes the dynamics of moving particles with the collision in a domain $\Omega$. To make equation \eqref{B} mathematically and physically reasonable, we need to introduce the boundary condition. Let $\Omega$ be a open bounded subset of $\R^3_x$, given by
\begin{align}\label{Omega}
	\Omega=\{x\in\R^3\,:\,\xi(x)<0\}\ \text{ with }\ \xi\in C^3(\R^3_x).
\end{align}
(In this work, we only require a $C^3$ boundary.)
Then $\pa\Omega=\{x\in\R^3\,:\,\xi(x)=0\}$. We also assume that $\na\xi(x)\neq 0$ on $\pa\Omega$. 
To use the method of straightening out the boundary, we further assume that there exists an open cover $\{B_k\}_{k=1}^N$ and $C^3$ functions $\rho_k:\R^2\to\R$ with $1\le k\le N<+\infty$ such that (upon relabeling and reorienting the coordinates axes if necessary) 
\begin{align}\label{Bnboundary}
	\Omega\cap B_k =\{x\in B_k\,:\,x_3<\rho_k(x_1,x_2)\}.
\end{align}
This can be easily deduced if $\Omega$ is bounded by using the Heine-Borel theorem of finite open covers. 

\smallskip 
For the regularity of the boundary, we further assume that there exists a vector field
\begin{align}\label{naxi1}
	n(x)\in W^{2,\infty}(\R^3_x;\R^3).
\end{align}
such that $n(x)$ coincides with the outward unit normal vector $\frac{\na\xi}{|\na\xi|}$ at $x\in\pa\Omega$. This can be easily obtained if $\Omega$ is bounded and $\na\xi\neq 0$ on $\pa\Omega$. In fact, in this case, for any sufficiently small $\de>0$, we have
$\na\xi(x)\neq 0$ for any $x$ satisfying $d(x,\pa\Omega)\le \de$.
Here $d(x,\pa\Omega)$ is the distance function between $x$ and $\pa\Omega$.
Then $n(x)=\frac{\na\xi}{|\na\xi|}$ is well defined for $x\in\Omega_\de\equiv\{x\in\R^3_x\,:\,d(x,\pa\Omega)\le\de\}$, which is a compact subset of $\R^3_x$. Moreover, since $\xi\in C^3(\R^3_x)$, we know that $n(x)\in W^{2,\infty}(\{x\in\R^3_x\,:\,d(x,\pa\Omega)\le\de\})$ and hence possesses an Sobolev extension such that \eqref{naxi1} is valid.

%

{{\smallskip}}
We then decompose the boundary of the phase space $\Si:=\pa\Omega\times\R^3_v$ as
\begin{align*}
	\begin{aligned}
		\Si_+ & =\{(x,v)\in\pa\Omega\times\R^3_v\,:\, v\cdot n(x)>0\}, \\
		\Si_- & =\{(x,v)\in\pa\Omega\times\R^3_v\,:\, v\cdot n(x)<0\}, \\
		\Si_0 & =\{(x,v)\in\pa\Omega\times\R^3_v\,:\, v\cdot n(x)=0\},
	\end{aligned}
\end{align*}
to represent the outgoing $(\Si_+)$, incoming $(\Si_-)$ and grazing $(\Si_0)$ sets.

{{\smallskip}} The boundary condition takes into account how the particles are reflected with the wall and thus takes the form of incoming velocity being represented by the outgoing velocity. When the incoming velocity is given by a known time-dependent function, the solution satisfies the inflow boundary condition, On the other hand, J. C. Maxwell \cite[Appendix]{Maxwell1879} introduced a phenomenological law of splitting the reflection operator into a local-in-velocity reflection operator and a diffuse reflection operator (which is nonlocal in velocity); see also \cite{Hamdache1992,Mischler2010}. They are given by:
\begin{enumerate}
	\item The inflow-boundary condition: for $(t,x,v)\in[0,\infty)\times\Si_-$,
	\begin{align*}
		F(t,x,v)|_{\Si_-}=G(t,x,v),
	\end{align*}
	for some given function $G$ on $[0,\infty)\times\Si_-$.
	
	{{\smallskip}}\item The \emph{Maxwell} reflection boundary condition: for $(t,x,v)\in[0,\infty)\times\Si_-$,
	\begin{align*}
		F(t,x,v)|_{\Si_-}=(1-\al)F(x,R_L(x)v)+\al c_\mu\mu(v)\int_{v'\cdot n(x)>0}\{v'\cdot n(x)\}F(t,x,v')(v')\,dv'.
	\end{align*}
	where we use the natural normalization with some constant $c_\mu>0$ satisfying
	\begin{align}\label{cmu}
		c_\mu\int_{v\cdot n(x)>0}\mu(v)|v\cdot n|\,dv=1.
	\end{align}
	Here $\al\in(0,1)$ is a constant, called the \emph{accommodation coefficient}.
	The local reflection operator $R_L(x)$ is given by
	\begin{align}\label{RL1}\begin{aligned}
		& R_L(x)=-v\ (\text{bounce-back reflection}) \\
		\text{ or } & R_L(x)=v-2(n(x)\cdot v)n(x)\ (\text{specular reflection}),
	\end{aligned}
\end{align}
	Note that there always exists a diffuse-reflection part since $\al>0$, and one can consider the pure \emph{diffuse} reflection boundary condition by letting $\al=1$.
	In this work, the condition $\al>0$ is essential since we need to utilize the diffuse effect on the boundary. 
\end{enumerate}

\subsection{Reformulations and boundary condition}
In this work, we use the exponential perturbation $f$ such that
\begin{align*}
	F=\mu+\mu^{\frac{1}{2}}f.
\end{align*}
Then the Boltzmann equation can be rewritten as
\begin{align}
	\label{B1}
	\left\{
	\begin{aligned}
		& \pa_tf+ v\cdot\na_xf = \Gamma(\mu^{\frac{1}{2}}+f,f)+\Gamma(f,\mu^{\frac{1}{2}})\quad \text{ in } (0,\infty)\times\Omega\times\R^3_v, \\
		& f(0,x,v)=f_0\quad \text{ in }\Omega\times\R^3_v,
	\end{aligned}\right.
\end{align}
where the standard Boltzmann collision operator is given by
\begin{align}\label{Ga}
	\Gamma(f,g)=\mu^{-\frac{1}{2}}Q(\mu^{\frac{1}{2}}f,\mu^{\frac{1}{2}}g)=\int_{\R^3}\int_{\S^2}B(v-v_*,\sigma)\mu^{\frac{1}{2}}(v_*)\big(f'_*g'-f_*g\big)\,d\sigma dv_*.
\end{align}
For convenience, we also denote the linear Boltzmann collision operator as
\begin{align}\label{L}
	Lf = \Gamma(\mu^{\frac{1}{2}},f)+\Gamma(f,\mu^{\frac{1}{2}}).
\end{align}
The corresponding boundary conditions are:
\begin{enumerate}
	\item The \emph{inflow}-boundary condition: for $(t,x,v)\in[0,\infty)\times\Si_-$,
	\begin{align}
		\label{inflow}
		f(t,x,v)|_{\Si_-}=g(t,x,v),
	\end{align}
	for some given function $g=\mu^{-\frac{1}{2}}(G-\mu)$ on $[0,\infty)\times\Si_-$.
	
	{{\smallskip}}\item The \emph{Maxwell} reflection boundary condition: for $(t,x,v)\in[0,\infty)\times\Si_-$,
	\begin{align}\label{Maxwell}
		f(t,x,v)|_{\Si_-}=Rf.
	\end{align}
	The \emph{Maxwell} reflection boundary operator is given by
	\begin{align}\label{reflect}
		Rf(x,v)=(1-\al)f(x,R_L(x)v)+\al R_Df(x,v),
	\end{align}
	for any $(x,v)\in\Si_-$. Here $\al\in(0,1)$ is the \emph{accommodation coefficient}.
	The local reflection operator $R_L(x)$ is given by \eqref{RL1}
	and the diffuse reflection operator $R_D(x)$ according to global Maxwellian $\mu$ is defined at the boundary point $x\in\Si_-$ by
	\begin{align}\label{RD1}
		R_D(x)f=c_\mu\mu^{\frac{1}{2}}(v)\int_{v'\cdot n(x)>0}\{v'\cdot n(x)\}f(t,x,v')\mu^{\frac{1}{2}}(v')\,dv'.
	\end{align}
	The dual reflection operator is given by
	\begin{align}\label{reflectdual}\notag
		R^*\Phi(x,v) & =(1-\al)\Phi(x,R_L(x)v) \\&\qquad+\al c_\mu\mu^{\frac{1}{2}}(v)\int_{v'\cdot n(x)<0}\{v'\cdot n(x)\}\Phi(t,x,v')\mu^{\frac{1}{2}}(v')\,dv',
	\end{align}
\end{enumerate}

%
%

\subsection{Notations, weight function and function spaces}\label{SecNotation}
\subsubsection{Miscellany}
Through this paper, $C$ denotes some positive constant (generally large) that may take different values in different lines. $[a_1,a_2,\dots,a_n]$ denotes any $n$-dimensional vector. $\N=\{1,2,3,\dots\}$ is the set of positive natural numbers. 
Write $x_+=\max\{x,0\}$ and $x_-=\max\{-x,0\}$ to be the positive and negative parts respectively.
Let $\1_A$ be the indicator function on the set $A$, $\ol\Omega$ be the closure of domain $\Omega$, and $dS(x)$ be the spherical measure on $\pa\Omega$.
The notation $a\approx b$ (resp. $a\gtrsim b$, $a\lesssim b$) for positive real functions $a,b$ is equivalent to $C^{-1}a\le b\le Ca$ (resp. $a\ge C^{-1}b$, $a\le Cb$) on their domains where $C>0$ is a constant not depending on possible free parameters. 
A constant $C=C(a_1,a_2,\dots)$ means that $C$ depends on $a_1,a_2,\dots$.
The set $C^\infty_c$ consists of smooth functions with compact support.

\subsubsection{Weight function}
Let $\<v\>=(1+|v|^2)^{\frac{1}{2}}$ be the Japanese bracket. To deal with level-function estimates on the boundary, we design a specific weight function. 
To split the grazing and non-grazing set in the trace lemmas \ref{diffboundLem} and \ref{LemR} later, we fix a small constant $\de\in(0,1)$. 
Let $\chi_{|v\cdot n(x)|\le 2\de^{-\frac{1}{4}}}\equiv\chi(v\cdot n(x))$ be a smooth cutoff function with argument $v\cdot n(x)$ satisfying 
\begin{align}
	\label{chivn}
	\1_{|v\cdot n(x)|\le \de^{-\frac{1}{4}}}\le \chi_{|v\cdot n(x)|\le 2\de^{-\frac{1}{4}}}\le \1_{|v\cdot n(x)|\le 2\de^{-\frac{1}{4}}}. 
\end{align}
Then we consider a modified weight function $\<v\>^{l}_{\de}$ given by 
\begin{align}\label{weightvde}
	\<v\>^{l}_{\de}=\frac{\<v\>^l}{(\de^2+\<v\>^{-2}(v\cdot n(x))^2\chi_{|v\cdot n(x)|\le 2\de^{-\frac{1}{4}}})^{\frac{1}{2}}}, 
\end{align}
where $n(x)\in W^{2,\infty}(\R^3_x)$ is the extended ``normal vector'' given by \eqref{naxi1}. 
In Lemmas \ref{vldeLem} and \ref{vldeLem1}, we will show that $\<v\>^l_\de$ has similar properties as $\<v\>^l$ and, for brevity of notations, we denote 
	\begin{align}
		\label{weight}
		\<v\>^l_\de=
		\begin{cases}
			\frac{\<v\>^l}{\big(\de^2+\<v\>^{-2}(v\cdot n(x))^2\chi_{|v\cdot n(x)|\le 2\de^{-\frac{1}{4}}}\big)^{\frac{1}{2}}} &\text{ if }\de\in(0,1),\\
			\<v\>^l &\text{ if }\de=1.
		\end{cases}
	\end{align}
We will use $\<v\>^l$ for the inflow case and $\<v\>^l_\de$, $\de\in(0,1)$, for the Maxwell-boundary case.

\subsubsection{$L^p$ spaces}
We denote $\|\cdot\|_{L^p}$ the Lebesgue $L^p$ norm and write $L^p_tL^q_xL^r_v=L^p_t(L^q_x(L^r_v))$ in short. The underlying domain will be specified in corresponding Sections. For instance,
\begin{align*}
	\|f\|_{L^p_tL^q_xL^r_v([0,T]\times\Omega\times\R^3_v)}:=\Big\{\int_0^T\Big[\int_\Omega\Big(\int_{\R^3_v}|f|^r\,dv\Big)^{\frac{q}{r}}\,dx\Big]^{\frac{p}{q}}\,dt\Big\}^{\frac{1}{p}}.
\end{align*}
The Sobolev space $W^{k,p}(\R^3)$ denotes the set of the tempered distribution such that its $W^{k,p}$ norm is finite:
\begin{align*}
	\|f\|_{W^{k,p}(\R^3)}:=\sum_{j=0}^k\|\na^jf\|_{L^p(\R^3)}<\infty.
\end{align*}
For convenience, we also denote the inner product on the boundary:
\begin{align}\label{boundarySi}
	(f,g)_{L^2(\Si_\pm)}=\int_{\Si_\pm}|v\cdot n|f\cdot g\,dS(x)dv,\quad \|f\|_{L^2(\Si_\pm)}^2=(f,f)_{L^2(\Si_\pm)}.
\end{align}

\subsubsection{Bessel potential}
The Bessel potential operator of order $\ka$ ($0<\Re\ka<\infty$) in $\R^d_x$ is
\begin{align*}
	(I-\De_x)^{-\frac{\ka}{2}}.
\end{align*}
Applying this to any suitable function $f$, it can be represented by the Bessel potential:
\begin{align*}
	(I-\De_x)^{-\frac{\ka}{2}}f&=f*G_\ka,\\
	\text{where} \ \ G_\ka(x) &= \big((1+4\pi^2|\xi|^2)^{-\frac{\ka}{2}}\big)^\vee(x),
\end{align*}
and $(\cdot)^\vee$ is the inverse Fourier transform.
By \cite[Proposition 1.2.5, pp. 13]{Grafakos2014a}, we know that
\begin{align}\label{BesselReal}
	\text{if $\ka>0$ is real, then $G_\ka$ is strictly positive.}
\end{align}
Furthermore, 
\begin{align}\label{GsL1}
	\begin{cases}
		\|G_\ka\|_{L^1_x(\R^d)}=1,&\\
		G_\ka(x)\le C_{\ka,d} e^{-\frac{|x|}{2}}\ &\text{ when }|x|\ge 2,\\
		\frac{1}{C_{\ka,d}}\le \frac{G_{\ka,d}(x)}{H_s(x)}\le C_\ka,\ &\text{ when }|x|\le 2,
	\end{cases}
\end{align}
where 
\begin{align*}
	H_\ka(x)=\begin{cases}
		|x|^{\ka-d}+1+O(|x|^{\ka-d+2}),&\text{if }0<\ka<d,\\
		\ln\frac{2}{|x|}+1+O(|x|^{2}),&\text{if }\ka=d,\\
		1+O(|x|^{\ka-d}),&\text{if }\ka>d.
	\end{cases}
\end{align*}
It follows from \eqref{GsL1} and Young's convolution inequality that for $1\le p\le \infty$,
\begin{align}\label{Besselbound}
	\|(I-\De_x)^{-\frac{\ka}{2}}f\|_{L^p_x(\R^d)}\le\|G_\ka\|_{L^1_x(\R^d)}\|f\|_{L^p_x(\R^d)}=\|f\|_{L^p_x(\R^d)}.
\end{align}
Moreover, we have the equivalent relation: for
$m,n\in\R$ and $1\le p\le \infty$,
\begin{align}
	\label{equiv}\|\<v\>^m\<D_v\>^nf\|_{L^p(\R^d)}\approx \|\<D_v\>^n\<v\>^mf\|_{L^p(\R^d)}.
\end{align}
This follows from \cite[Lemma 2.1 and its proof in Section 8]{Alonso2022} for the case $1\le p\le\infty$ and \cite[Proposition 5.5 and its Corollary, pp. 251--pp. 252]{Stein1993} for the case $1<p<\infty$.
%
Furthermore, we can define the Sobolev space $H^{s}_p(\R^d)$ of order $s\in\R$ by 
\begin{align*}
	H^s_p(\R^d)&=\big\{f\,:\,f \text{ is tempered distribution},\ \|f\|_{H^s_p(\R^d)}<\infty\big\},\\
		&\text{ where }\|f\|_{H^s_p(\R^d)}=\|((1+|\xi|^2)^{\frac{s}{2}}\wh f)^\vee\|_{L^p(\R^d)}. 
\end{align*}
Here, $\wh{f}$ and $f^\vee$ are the Fourier transform and inverse Fourier transform in the sense of tempered distribution, respectively.

\subsubsection{Besov space}
For $\al\in\R$, we denote by $B^{\al,q}_p(\R^{1+d}_{t,x})$ the Besov space about $(t,x)$ as follows. Let $d\ge 2$ be the spatial dimension (which is $3$ is our main result). 
Denote by $\wh\vp$ the Fourier transform of a function $\vp$ in $(t,x)$. 
We fix Schwartz functions $\Psi,\Psi_0$ on $\R^{1+d}_{t,x}$ such that their Fourier transform about $(t,x)$, $\wh\Psi,\wh\Psi_0\in C^\infty_c(\R^{1+d}_{t, x})$, are spherically symmetric, supported in $\{1\le |[\tau,\xi]|\le 3\}$ and $\{|[\tau,\xi]|\le 3\}$ respectively, and satisfy 
\begin{align*}
	\big(\wh\Psi_0(\tau,\xi)\big)^2+\sum^\infty_{j=1}\big(\wh\Psi(2^{-j}\tau,2^{-j}\xi)\big)^2=1\quad \text{ for all }(\tau,\xi)\in\R^{1+d}. 
\end{align*}
We denote by $\De_j$ the corresponding convolution operator defined by 
\begin{align*}
	\De_j\vp = \Psi_{2^{-j}}*_{t,x}\vp\ \ (\forall j\ge 1),\quad \De_0\vp=\Psi_0*_{t,x}\vp, 
\end{align*}
where $\Psi_{2^{-j}}(t,x)=2^{(1+d)j}\Psi(2^jt,2^jx)$. 
Define the inhomogeneous Besov space $B^{\al,q}_p(\R^{1+d}_{t,x})$ $(0<p,q\le\infty)$ to be the space of all tempered distributions $f$ for which the quantity
\begin{align}
	\label{Besov}
	\|f\|_{B^{\al,q}_p(\R^{1+d}_{t,x})}=\|\De_0^2f\|_{L^p_{t,x}}
	+\Big(\sum^\infty_{j=1}(2^{j\al}\|\De_j^2f\|_{L^p_{t,x}})^q\Big)^{1/q}
\end{align}
is finite. To universalize the arguments below, we use $\De^2_j$ instead of $\De_j$ in our definition (but they are equivalent). 
Similarly, one can define the Besov space $B^{\al,q}_p(\R^d_x)$ with respect to $x$. 
Moreover, one can define the Besov space by real interpolation
\begin{align}
	\label{interBesov}
	\big(H^{s_0}_p,H^{s_1}_p\big)_{\th,q}=B^{s,q}_p,
\end{align}
where $s=(1-\th)s_0+\th s_1$, $1\le p,q\le\infty$, $0<\th<1$; see \cite[Theorem 6.2.4]{Bergh1976} for more details. 

\subsubsection{Lorentz space}
The Lorentz space $L^{p,q}(\R^d)$ $(d\ge 1)$ is the space of measurable functions $f$ on $\R^d$ such that the following quasinorm is finite
$$\|f\|_{L^{p,q}(\R^d)}=p^{\frac {1}{q}}\left(\int _{0}^{\infty }t^{q}\big|\big\{x:|f(x)|\geq t\big\}\big|^{\frac {q}{p}}\,{\frac {dt}{t}}\right)^{\frac {1}{q}},$$
where $0<p,q<\infty$ and $|A|$ is the Lebesgue measure of the set $A$. 
Then for $p\ge 1$, by Cavalieri's principle, one has $L^{p,p}=L^p$. Also, one has Hölder's inequality
\begin{align*}
	\|fg\|_{L^{p,q}}\leq C\|f\|_{L^{p_{1},q_{1}}}\|g\|_{L^{p_{2},q_{2}}}, 
\end{align*}
where $0<p,p_{1},p_{2},q,q_{1},q_{2}<\infty$, $\frac{1}{p}=\frac{1}{p_1}+\frac{1}{p_2}$, and $\frac{1}{q}=\frac{1}{q_1}+\frac{1}{q_2}$. 
Moreover, one can define the Lorentz space by real interpolation
\begin{align*}
	\big(L^{p_0},L^{p_1}\big)_{\th,q}=L^{p,q},
\end{align*} 
where $0<p_0<p_1\le\infty$, $p_0<q\le\infty$, $\frac{1}{p}=\frac{1-\th}{p_0}+\frac{\th}{p_1}$, $0<\th<1$; see \cite[Theorem 5.2.1]{Bergh1976} for more details. 
The main embedding between Besov and Lebesgue spaces (equivalent to some Triebel-Lizorkin space) that we will use in this work is from \cite[Theorem 1.1]{Seeger2018}:
\begin{align}\label{embeddBesov}
	\|f\|_{L^{q,r}(\R^{d})}
	\le C\|f\|_{B^{s,2}_p(\R^d)},
\end{align}
where $\frac{s}{d}>\frac{1}{p}-\frac{1}{q}>0$. 

\subsubsection{Kernel of $L$}
The kernel of $L$ in $L^2_v$ is the span of $\{\mu^{\frac{1}{2}},v_i\mu^{\frac{1}{2}}$ $(1\le i\le 3),|v|^2\mu^{\frac{1}{2}}\}$ which follows from collision invariant; see for instance \cite{Cercignani1994}. Then we denote $\P$ the projection onto $\ker L\subset\R^3_v$:
\begin{align}\label{Pf}
	\P f(t,x,v) = (a(t,x)+b(t,x)\cdot v+\frac{|v|^2-3}{6}c(t,x))\mu^{\frac{1}{2}}(v),
\end{align}
where $[a,b,c]$ is given by
\begin{align*}
		[a(t,x),b(t,x),c(t,x)]=\int_{\R^3_v}\Big[1,v,\frac{|v|^2-3}{6}\Big]f(t,x,v)\mu^{\frac{1}{2}}\,dv.
\end{align*}

\subsubsection{Dissipation norm}
To describe the behavior of Boltzmann collision operator, \cite{Alexandre2012} introduces the norm $\vertiii{f}$:
\begin{align*}
	\vertiii{f}^2: & =\int B(v-v_*,\sigma)\Big(\mu_*(f'-f)^2+f^2_*((\mu')^{1/2}-\mu^{1/2})^2\Big)\,d\sigma dv_*dv,
\end{align*}
while \cite{Gressman2011} introduces the anisotropic norm $N^{s,\gamma}$:
\begin{align}\label{Nsga}
	\|f\|^2_{N^{s,\gamma}}: & =\|\<v\>^{\gamma/2+s}f\|^2_{L^2_v}+\int_{\R^6}(\<v\>\<v'\>)^{\frac{\gamma+2s+1}{2}}\frac{(f'-f)^2}{d(v,v')^{3+2s}}\1_{d(v,v')\le 1}\,dvdv',
\end{align}
where $d(v,v'):=\sqrt{|v-v'|^2+\frac{1}{4}(|v|^2-|v'|^2)^2}$.
Moreover, \cite{Global2019} and \cite{Deng2020} use the pseudo-differential-type norm
\begin{align}\label{tiaa}
	\|(\tilde{a}^{1/2})^wf\|_{L^2_v}, \quad \ti a(v,\eta)=\<v\>^{\ga}\big(1+|v|^2+|v\wedge\eta|^2+|\eta|^2\big)^s+K_0\<v\>^{\gamma+2s},
\end{align}
where $(\cdot)^w$ is the Weyl quantization, $K_0>0$ is a sufficiently large constant and $\wedge$ is the wedge product in three dimension; see \cite{Lerner2010,Deng2020a} for more details. We then define the dissipation norm $\|\cdot\|_{L^2_D}$ by
\begin{align}
	\label{L2D}
	\|f\|_{L^2_D}:=\|(\ti a^{\frac{1}{2}})^wf\|_{L^2_v},\quad (f,g)_{L^2_D}=((\ti a^{\frac{1}{2}})^wf,(\ti a^{\frac{1}{2}})^wf)_{L^2_v}.
\end{align}
Then one has from \cite[Eq. (2.13), (2.15)]{Gressman2011}, \cite[Proposition 2.1]{Alexandre2012} and \cite[Theorem 1.2]{Global2019} that these dissipation norms are all equivalent:
\begin{align}\label{tia}
	\|f\|_{L^2_D}\equiv\|(\ti a^{\frac{1}{2}})^wf\|_{L^2_v}\approx \|f\|^2_{N^{s,\gamma}}\approx\vertiii{f}^2.
\end{align}
One also has from \cite[Lemma 2.4]{Deng2020a} that $\|\<v\>^l(\ti a^{\frac{1}{2}})^wf\|_{L^2_v}\approx\|(\ti a^{\frac{1}{2}})^w(\<v\>^lf)\|_{L^2_v}$ for any $l\in\R$.
%

\subsubsection{Inflow and outflow regions}
Inspired by the extension of normal vector in \eqref{naxi1}, we can define the ``inflow'' and ``outflow'' regions in $\ol\Omega^c$ as 
\begin{align}\label{Dinoutwt}
	\begin{aligned}
		&D_{in}=\{(x,v)\in\ol\Omega^c\times\R^3\,:\,
		v\cdot n(x)<0
		\},& \text{ inflow},\\
		&D_{out}=\{(x,v)\in\ol\Omega^c\times\R^3\,:\,
		v\cdot n(x)>0
		\},& \text{ outflow}.
	\end{aligned}
\end{align}
Then their spatial and velocity boundaries can be given by 
\begin{align}\label{paxvDinwt}
	\begin{aligned}
		&\pa_xD_{in}=\Si_-\cup\{(x,v)\in\ol{D_{in}}:v\cdot n(x)=0\},\\
		&\pa_xD_{out}=\Si_+\cup\{(x,v)\in\ol{D_{out}}:v\cdot n(x)=0\},\\
		&\pa_vD_{in}=\{(x,v)\in\ol{D_{in}}\,:\,
		v\cdot n(x)=0
		\},\\
		&\pa_vD_{out}=\{(x,v)\in\ol{D_{out}}\,:\,
		v\cdot n(x)=0\}.
	\end{aligned}
\end{align}
Here, the spatial boundary and velocity boundary can be defined by, for instance, 
\begin{align*}
	&\pa_xD_{out}=\big\{(x,v)\,:\,v\in\R^3\text{ and }x\in\pa\{x\,:\,(x,v)\in D_{out}\}\big\},\\
	&\pa_vD_{out}=\big\{(x,v)\,:\,x\in\R^3\text{ and }v\in\pa\{v\,:\,(x,v)\in D_{out}\}\big\}.
\end{align*}

\subsection{Main result: inflow boundary}
We first present the main results and later list some important observations.
\begin{Thm}[Stability of Boltzmann equation with inflow boundary condition]
	\label{ThmInflowMain}
	Let $\Omega\subset\R^3_x$ be a bounded domain satisfying \eqref{Omega}, \eqref{Bnboundary} and \eqref{naxi1}. 
	Let $-\frac{3}{2}<\ga\le 2$, and $s\in(0,1)$. Fix any 
	$l\ge \ga+10$. Let $l_0=l_0(l,s)$ be a large constant and fix any $\ti C>0$.  
Then there exists a generic constant $c_0=c_0(\ga,s)>0$ and sufficiently small constants $\ve_\infty,\ve_1>0$ (depending on $\ga,s,l,\ti C$ such that if $f_0$ and $g$ satisfy $F_0=\mu+\mu^{\frac{1}{2}}f_0\ge 0$ and 
	\begin{align}\begin{aligned}\label{ve0p}
			\|\<v\>^lg\|_{L^\infty_{t,x,v}([0,\infty)\times\Si_-)}+\|\<v\>^lf_0\|_{L^\infty_{x,v}(\Omega\times\R^3_v)} & =\ve_\infty, \\
			\|e^{c_0t}\<v\>^{l-2}g\|^2_{L^2_{t,x,v}([0,\infty)\times\Si_-)}+\|\<v\>^{l-2}f_0\|^2_{L^2_x(\Omega)L^2_v}&=\ve_1,\\
			\|\<v\>^{l_0}g\|^2_{L^2_{t,x,v}([0,\infty)\times\Si_-)}+\|\<v\>^{l_0}f_0\|^2_{L^2_x(\Omega)L^2_v}&=\ti C, 
		\end{aligned}
	\end{align}
	then there exist a global-in-time solution $f(t)$ $(t\ge 0)$ to the Boltzmann equation \eqref{B1} with inflow boundary condition \eqref{inflow}, i.e. 
\begin{align}\label{non1p}
	\left\{
	\begin{aligned}
		& \pa_tf+ v\cdot\na_xf = \Gamma(\mu^{\frac{1}{2}}+f,f)+\Gamma(f,\mu^{\frac{1}{2}})\quad \text{ in } (0,T]\times\Omega\times\R^3_v, \\
		& f|_{\Si_-}=g\quad \text{ on }[0,T]\times\Si_-, \\
		& f(0,x,v)=f_0\quad \text{ in }\Omega\times\R^3_v,
	\end{aligned}\right.
\end{align}			
		 satisfying $F=\mu+\mu^{\frac{1}{2}}f\ge 0$ and, for any $T>0$ and $k\in[0,l_0]$, we have $L^2$--$L^\infty$ energy estimates:
		\begin{align}\label{L2es1b1}\notag
			&\|\<v\>^kf\|^2_{L^\infty_t([0,T])L^2_x(\Omega)L^2_v}
			+\|\<v\>^kf\|^2_{L^2_t([0,T])L^2_{x,v}(\Si_+)}+c_0\|\<v\>^kf\|_{L^2_t([0,T])L^2_x(\Omega)L^2_D}^2
			\\&\quad+c_0\|\<v\>^kf\|_{L^2_t([0,T])L^2_x(\Omega)L^2_v}^2\le C\|\<v\>^{k}f_0\|^2_{L^2_x(\Omega)L^2_v}+C\|\<v\>^kg\|^2_{L^2_t([0,T])L^2_{x,v}(\Si_-)},
		\end{align}
		and 
		\begin{align}\label{L2es1b2}
			\sup_{t\ge 0}\|\<v\>^lf\|_{L^\infty_{x,v}(\ol\Omega\times\R^3_v)} & \le 
			\ve_\infty+C\ve_1^\zeta,
		\end{align}
		with some constants $C=C(\ga,s,l)>0$ and $\zeta=\zeta(\ga,s)>0$ that are independent of $T$. 
		Moreover, one has large-time asymptotic $L^2$ behavior: 
		\begin{align}\notag\label{L2es1b}
			&e^{c_0t}\|\<v\>^{k}f(t)\|^2_{L^2_x(\Omega)L^2_v}+\|e^{c_0s}\<v\>^kf\|_{L^2_s([0,t])L^2_{x,v}(\Si_+)}^2\\
			&\quad\quad\le C\big(\|\<v\>^{k}f_0\|^2_{L^2_x(\Omega)L^2_v}+\|e^{c_0s}\<v\>^kg\|_{L^2_s([0,t])L^2_{x,v}(\Si_-)}^2\big). 
		\end{align}
\end{Thm}

\smallskip
Here, the solution is in the standard weak sense (see also Theorem \ref{weakfThm}): 
for any $\Phi\in C^\infty_c(\R_t\times\R^3_x\times\R^3_v)$ and $T>0$,
\begin{multline*}
	(f(T),\Phi(T))_{L^2_x(\Omega)L^2_v}-(f,(\pa_t+v\cdot\na_x)\Phi)_{L^2_{t,x,v}([0,T]\times\Omega\times\R^3_v)}+
(f,\Phi)_{L^2_{t,x,v}([0,T]\times\Si_+)}\\
	=(f_0,\Phi(0))_{L^2_x(\Omega)L^2_v}+
	(g,\Phi)_{L^2_{t,x,v}([0,T]\times\Si_-)}
	+\big(\Gamma(\mu^{\frac{1}{2}}+f,f)+\Gamma(f,\mu^{\frac{1}{2}}),\Phi\big)_{L^2_{t,x,v}([0,T]\times\Omega\times\R^3_v)}.
\end{multline*}
The proof will be given in Theorem \ref{ThmInflow} with the global $L^2$ energy estimate in Section \ref{Sec12}. 

\subsection{Main result: Maxwell boundary}
For the case of Maxwell reflection boundary, we further assume that the initial datum $f_0$ satisfies the conservation law in mass:
\begin{align}\label{conservationlaw}
	\int_{\Omega\times\R^3_v}f_0(x,v)\mu^{\frac{1}{2}}\,dxdv=0.
\end{align}
Then the solution $f$ to equation \eqref{B1} also satisfies the mass conservation
\begin{align*}
	\int_{\Omega\times\R^3_v}f(x,v)\mu^{\frac{1}{2}}\,dxdv=0.
\end{align*}
This is for the derivation of the global $L^2$ estimate.

\begin{Thm}[Stability of Boltzmann equation with Maxwell boundary condition]
	\label{ThmReflectionMain}
	Let $\Omega\subset\R^3_x$ be a bounded domain satisfying \eqref{Omega}, \eqref{Bnboundary} and \eqref{naxi1}. 
	Let $\al\in(0,1)$ (accommodation coefficient), $-\frac{3}{2}<\ga\le 2$, and $s\in(0,1)$, $l\ge \ga+10$, $\ti C>0$, and let $l_0=l_0(s,l)>0$ be a large constant. Fix a small $\de=\de(\al)>0$. 
	Suppose the initial data $f_0$ satisfies conservation law \eqref{conservationlaw}, $F_0=\mu+\mu^{\frac{1}{2}}f_0\ge 0$, and 
	\begin{align}
		\label{ve42y}
		\begin{aligned}
			\|\<v\>^{l_0}f_0\|_{L^2_x(\Omega)L^2_v}=\ti C,\quad 
			\|\<v\>^l_\de f_0\|_{L^\infty_x(\Omega)L^\infty_v}=\ve_\infty,\quad\|\<v\>^{l-2}f_0\|_{L^2_x(\Omega)L^2_v}= \ve_1,
		\end{aligned}
	\end{align}
	with sufficiently small $\ve_1,\ve_\infty\in(0,1)$ depending on $\al,\ga,s,l,\ti C$.
	Then there exists a global-in-time solution $f(t)$ $(t\ge 0)$ to the Boltzmann equation \eqref{B1} with Maxwell boundary condition \eqref{Maxwell}, i.e. 
\begin{align*}
	\left\{
	\begin{aligned}
		& \pa_tf+ v\cdot\na_xf = \Gamma(\mu^{\frac{1}{2}}+f,f)+\Gamma(f,\mu^{\frac{1}{2}})\quad \text{ in } (0,T]\times\Omega\times\R^3_v, \\
		& f(t,x,v)|_{\Si_-}=Rf\quad \text{ on }[0,T]\times\Si_-, \\
		& f(0,x,v)=f_0\quad \text{ in }\Omega\times\R^3_v,
	\end{aligned}\right.
\end{align*}		
		such that $F=\mu+\mu^{\frac{1}{2}}f\ge 0$. Moreover, for any $k,T\ge 0$, one has the $L^2$ energy estimate 
		\begin{multline}\label{maines144}
			\sup_{0\le t\le T}\|\<v\>^kf(t)\|^2_{L^2_x(\Omega)L^2_v}+c_{\al}\|\<v\>^kf\|_{L^2_{t}([0,T])L^2_{x,v}(\Si_+)}
		+c_0\|\<v\>^kf(t)\|_{L^2_{t}([0,T])L^2_x(\Omega)L^2_D}^2\\
		+c_0\|\<v\>^kf(t)\|_{L^2_{t}([0,T])L^2_x(\Omega)L^2_v}^2
		\le C\|\<v\>^kf_0\|^2_{L^2_x(\Omega)L^2_v}, 
		\end{multline}
		and $L^\infty$ estimate
		\begin{align}\label{maines144a}
			\sup_{t\ge 0}\|\<v\>^l_\de f\|_{L^\infty_{x,v}(\R^6_{x,v})}
		\le \ve_\infty+C(1+\ti C)^C(\ve_1)^{\zeta},
		\end{align}
	and the large-time $L^2$ decay  
\begin{align}\label{maines157}
	\begin{aligned}
		&e^{c_0 t}\|\<v\>^kf(t)\|^2_{L^2_x(\Omega)L^2_v}+\|e^{c_0s}\<v\>^kf\|^2_{L^2_s([0,t])L^2_{x,v}(\Si_+)} \le \|\<v\>^kf_0\|^2_{L^2_x(\Omega)L^2_v},
	\end{aligned}
\end{align}
whenever the right-hand sides are well-defined, for any $t\ge 0$. 
Here, the constants are $C=C(\al,\ga,s,l)>0$, $\zeta=\zeta(\ga,s)>0$, and $c_0=c_0(\ga,s)>0$.
\end{Thm}

Here, the solution is in the standard weak sense (see also Theorem \ref{weakfThm}): 
for any function $\Phi\in C^\infty_c(\R_t\times\R^3_x\times\R^3_v)$ satisfying $\Phi|_{\Si_+}=R^*\Phi$ with dual reflection operator $R^*$ given by \eqref{reflectdual}, 
\begin{multline*}
	(f(T),\Phi(T))_{L^2_x(\Omega)L^2_v}-(f,(\pa_t+v\cdot\na_x)\Phi)_{L^2_{t,x,v}([0,T]\times\Omega\times\R^3_v)}\\
	=(f_{0},\Phi(0))_{L^2_x(\Omega)L^2_v}
	+\big(\Gamma(\mu^{\frac{1}{2}}+f,f)+\Gamma(f,\mu^{\frac{1}{2}}),\Phi\big)_{L^2_{t,x,v}([0,T]\times\Omega\times\R^3_v)}.
\end{multline*}
The proof of Theorem \ref{ThmReflectionMain} will be given in Section \ref{SecMainMaxwell}. 

\subsection{Main difficulties and idea of the proof}
For the boundary problem of the non-cutoff Boltzmann equation, the main difficulty is to deal with the boundary effect and non-cutoff collision operator involving velocity diffusion. In the cutoff case, the cutoff Boltzmann collision operator obeys the $L^1_{x,v}$ estimate (\cite{Diperna1989}), and $L^2_{x,v}$--$L^\infty_{x,v}$ estimate (\cite{Guo2009}). In the former case, one can use the trace theorem from \cite{Ukai1986} to derive the $L^1_{x,v}$ control on the boundary, and then apply the Diperna-Lions convergence argument \cite{Diperna1989} to obtain the global existence for renormalized solution; see for instance \cite{Hamdache1992,Cercignani1992,Mischler2000}. In the latter case, one can use the decomposition $L=-\nu+K$, where $\nu\approx\<v\>^\ga$ is a positive function and $K$ is a compact operator, and apply the semigroup method and the Duhamel principle, which is roughly
\begin{align*}
	f(t)=e^{-\nu t}f_0+\int^t_{\max\{0,t-t_\b\}}e^{-\nu(t-s)}(Kf+\Gamma(f,f))(s)\,ds,
\end{align*}
where $t_\b(x,v)= \min\big\{\tau>0\,:\,x-v\tau\in\pa\Omega\big\}$ is the backward exit time.
With a delicate study of the backward characteristic line, one can obtain the global existence for the cutoff Boltzmann equation; e.g. \cite{Guo2009,Cao2019}.

{{\smallskip}} In the non-cutoff case, however, the simple $L^1_{x,v}$ estimate and the delicate decomposition $L=-\nu+K$ do not hold. (The semigroup method and Duhamel principle are still available for the non-cutoff case in the whole space, as mentioned in \cite{Deng2020}, but the corresponding semigroup $e^{-tL}$ doesn't keep the nice structure of the semigroup $e^{-\nu t}$ for the cutoff case, where the latter one is an exponentially-decay function. This may create barriers between the semigroup Duhamel method and the $L^2_{x,v}$--$L^\infty_{x,v}$ approach.) In such a situation, the analysis within the domain $\Omega$ could be an essentially difficult task.

\subsubsection{Difficulties for boundary problem and extension to the whole space}\label{Secexten}
To overcome the difficulties arising from the boundary conditions, the crucial method is to extend the boundary problem to a whole-space problem. For this purpose, we consider the Vlasov-type equation with some dissipation in the region $\ol\Omega^c\times\R^3_v$; cf. \cite{Golse1988}. 
That is, we consider the extension in $\ol\Omega^c\times\R^3_v$:
\begin{align}\label{omegac}
\left\{
\begin{aligned}
	&\pa_tf+v\cdot\na_xf+E\cdot\na_vf=P^2f& \text{ in } [T_1,T_2]\times D_{in},\\
	&\pa_tf+v\cdot\na_xf+E\cdot\na_vf=-P^2f& \text{ in } [T_1,T_2]\times D_{out},\\
	& f|_{\pa\Omega}=g & \text{ on }[T_1,T_2]\times\pa\Omega\times\R^3_v,\\
	&f(T_1,x,v)=0& \text{ in }D_{out},\\
	&f(T_2,x,v)=0& \text{ in }D_{in}, 
\end{aligned}\right.
\end{align}
where $E(x,v),P(x,v)$ are functions that will be given in \eqref{EP}. 
We call this the forward-backward extension method; see figure \ref{fig2} for its simplified graphs in two-dimensional spacetime $(x_1,t)$ and in the spatial region. Here the \emph{inflow} region $D_{in}$ and \emph{outflow} region $D_{out}$ are defined in \eqref{Dinoutwt}, corresponding to the domains in $\Omega^c\times\R^3_v$ consisting of the ``inflow particles" (satisfying $v\cdot n(x)<0$) and ``outflow particles" (satisfying $v\cdot n(x)>0$), respectively. The vector $n(x)\in W^{2,\infty}(\R^3_x)$ is given in \eqref{naxi1}. 

\begin{figure}[htbp]
	\centering
	\begin{tikzpicture}[>=Stealth,scale=0.4]
		\draw[dashed,->] (-6.8,0)--(-6.8,7)node[left]{$t$};
		\draw[thick,->] (-7,0)--(7,0)node[below]{$x_1$};
		\draw[line width=0.15mm,->] (-6.5,0)--(-2,4.5);
		\draw[line width=0.15mm,->] (-5.5,0)--(-2,3.5);
		\draw[line width=0.15mm,->] (-4.5,0)--(-2,2.5);
		\draw[line width=0.15mm,->] (-3.5,0)--(-2,1.5);
		\draw[line width=0.15mm,->] (-2.5,0)--(-2,0.5);
		\draw (0,3) node {$\Omega$};
		\draw (-5,3) node[above] {inflow};
		\draw (5.5,1.5) node[right]{outflow};
		\draw[line width=0.15mm,->] (3.5,0)--(6,2.5);
		\draw[line width=0.15mm,->] (2.5,0)--(6,3.5);
		\draw[line width=0.15mm,->] (2,0.5)--(6,4.5);
		\draw[line width=0.15mm,->] (2,1.5)--(6,5.5);
		\draw[line width=0.15mm,->] (2,2.5)--(6,6.5);
		\draw[line width=0.15mm,->] (2,3.5)--(6,7.5);
		\draw[line width=0.15mm,->] (2,4.5)--(5,7.5);
		\draw[line width=0.15mm,->] (2,5.5)--(4,7.5);
		\draw[line width=0.25mm,blue] (-2,0)--(-2,7);
		\draw[line width=0.25mm,blue] (2,0)--(2,7);
	\end{tikzpicture}
	\hspace{3em}
	\begin{subfigure}
		\centering
		\includegraphics[width=13em]{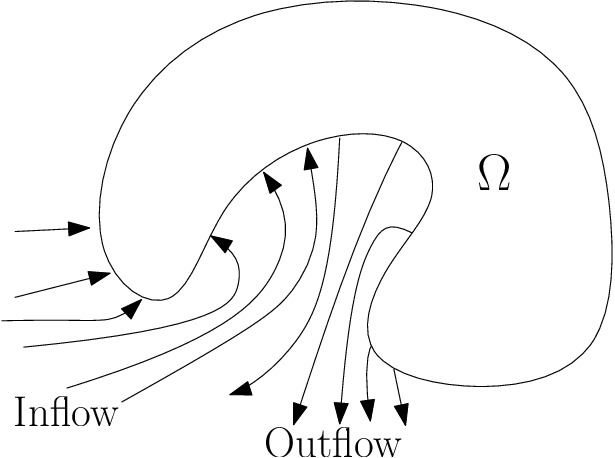}
	\end{subfigure}
	\centering
	\caption{Inflow and outflow regions}
	\label{fig2}
\end{figure}

\smallskip We add the force term $E$ and the dissipation term $P$ to ensure the particle trajectory is curved and to extract the confined effect. That is, all particles starting at $(t,x,v)\in [T_1, T_2]\times D_{in}$ (or $D_{out}$) will be confined in the region $D_{in}$ (or $D_{out}$) along the forward (or backward) characteristic curve. Then the particles will be confined in their own region, i.e. $D_{in}$ or $D_{out}$, and these two parts will not interrupt each other. 

\begin{Rem}
	For the strictly convex domain, one can simply use the transport equation without any forces. 
\end{Rem}

\subsubsection{Velocity averaging lemma}
To obtain the $L^\infty$ estimate, we will use the De Giorgi method, for which we require the time-space-velocity regularity and the embedding theorem. While the velocity regularity is natural for the non-cutoff Boltzmann operator, we will use the velocity averaging lemma to obtain the time-space regularity in some Besov space, which will be given in Lemma \ref{averThma} later. (The averaging lemma is an important analysis tool widely used in kinetic theory and fluid dynamics; see for example \cite{Diperna1989a, Guo2009, Arsenio2011, Jabin2004, DeVore2000}). 

\smallskip 
Furthermore, we provide an enhanced version of the velocity averaging lemma by replacing the smooth cutoff function in the velocity averages with a general regular function $\phi$ without compact support, such as $\phi=\<v\>^{-\rho}$ for some $\rho$. To generalize this assumption, we lose a small amount of regularity compared to \cite{Diperna1991a}. Therefore, the interpolation Lemma \ref{interpoLem} will help to establish the $L^p$ ($p>2$ but close to $2$) energy estimate and finally the $L^\infty$ estimate.

\subsubsection{Difficulties for nonlinear problem}\label{Sec1non}
In the cutoff case, one can obtain strong convergence in some function space (see for instance \cite{Cao2019}) and then pass the limit from the linearized equation to the nonlinear equation. That is, one can use iteration sequence $f^{n+1}$ with $f^0=0$ by 
\begin{align*}
		& \pa_tf^{n+1}+ v\cdot\na_xf^{n+1} = \Gamma(\mu^{\frac{1}{2}}+f^n,f^{n+1})+\Gamma(f^n,\mu^{\frac{1}{2}})\quad \text{ in } (0,T]\times\Omega\times\R^3_v, 
\end{align*}
Using the strong convergence $\|f^{n+1}-f^n\|\to 0$, as $n\to\infty$ in some Lebesgue space, one can find that $f^{n+1}$ and $f^n$ converge to the same limit. 
Thus, passing the limit $n\to\infty$, one obtains the solution $f$ to equation 
\begin{align}\label{sec1non1}
		& \pa_tf+ v\cdot\na_xf = \Gamma(\mu^{\frac{1}{2}}+f,f)+\Gamma(f,\mu^{\frac{1}{2}})\quad \text{ in } (0,T]\times\Omega\times\R^3_v, 
\end{align}
However, in the non-cutoff case, it's hard to obtain strong convergence of $\{f^n\}$ but merely the weak-$*$ convergence of a subsequence of $\{f^{n_k}\}$. In this case, $\{f^{n_k+1}\}$ and $\{f^{n_k}\}$ may not converge to the same limit. In order to overcome this difficulty from nonlinearity, we regularize the equation \eqref{sec1non1} by adding a vanishing regularizing term as in \cite{Alonso2022}. That is, we aim at solving 
\begin{align}\label{sec1noneq}
		& \pa_tf+ v\cdot\na_xf = \vpi Vf+\Gamma(\mu^{\frac{1}{2}}+f,f)+\Gamma(f,\mu^{\frac{1}{2}})\quad \text{ in } (0,T]\times\Omega\times\R^3_v,
\end{align}
for any $\vpi>0$, 
where 
\begin{align}
	\label{Vf}
	Vf=-2\wh{C}^2_0\<v\>^{{8}}f+2\na_v\cdot(\<v\>^{{4}}\na_v)f, 
\end{align}
with some large constant $\wh{C}_0=\wh{C}(\ga,s,l,\al)>0$ to be chosen.
Note that $(\cdot)_{K,+}$ is Lipschitz continuous and one can apply the first-order derivative to it.
By adding a regularizing term, one can easily obtain the strong convergence as in the cutoff case in the short time $T_\vpi>0$ (which depends on $\vpi$). After proving that the existence time $T>0$ for the nonlinear equation doesn't depend on $\vpi$, we can pass the limit $\vpi\to0$ to deduce the existence of equation \eqref{non1p}. 

\smallskip Moreover, as in \cite[Section 7]{Alonso2022}, we need to consider the cases $s\in(0,\frac{1}{2})$ and $s\in[\frac{1}{2},1)$ separately. On one hand, if $s\in(0,\frac{1}{2})$, the regularizing norm $\|\<v\>^2\<D_v\>f\|_{L^2_v}$ in \eqref{Vf} is enough to control the norms $\|\<v\>^2f\|_{H^{2s}_v}$ arising from collision term. On the other hand, if $s\in[\frac{1}{2},1)$, we need to truncate the collision kernel as in \cite{Alonso2022} as follows. For any $s\in[\frac{1}{2},1)$, we fix $s_*\in(0,\frac{1}{2})$ such that 
\begin{align*}
2s-2s_*<1.
\end{align*}
Since the collision kernel satisfies $b(\cos\th)\approx\th^{-2-2s}$ as in \eqref{ths}, for any $\eta\in(0,1)$, we denote 
\begin{align}\label{beta}
b_\eta(\cos\th):=\frac{b(\cos\th)\th^{2+2s}}{\th^{2+2s_*}(\th+\eta)^{2s-2s_*}}.
\end{align}
Then we denote the corresponding collision operator $\Ga_\eta$ as 
\begin{align}\label{Gaeta}
\Ga_\eta(f,g)=\int_{\R^3}\int_{\S^2}|v-v_*|^\ga b_\eta(\cos\th)\mu^{\frac{1}{2}}(v_*)\big(f'_*g'-f_*g\big)\,d\sigma dv_*.
\end{align}
Notice that, whenever $\th\in(0,\frac{\pi}{2})$ and $\eta\in(0,1)$, we have 
\begin{align*}
\frac{1}{(\th+\eta)^{2s-2s_*}}\ge \frac{1}{(\pi+1)^{2s-2s_*}}. 
\end{align*}
Thus, there exists a constant $\al_0>0$ that is independent of $\eta$ and a constant $C_\eta>0$ such that 
\begin{align*}
\frac{\al_0}{\th^{2+2s_*}}\le b_\eta(\cos\th)\le \frac{C_\eta}{\th^{2+2s_*}}.
\end{align*}
This implies that, for any fixed $\eta>0$, $b_\eta(\cos\th)$ can be regarded as a collision kernel with weak singularity $s_*\in(0,\frac{1}{2})$, and hence, the calculations for the case of weak singularity can be applied. Once we obtain the local-in-time solution for the nonlinear problem, we can take the limit $\eta\to 0$ to obtain the solution for strong singularity. 
Moreover, since 
\begin{align}\label{betab}
b_\eta(\cos\th)\le b(\cos\th),
\end{align}
roughly speaking, the calculation for the upper bound of $\Ga_\eta$ is uniform in $\eta$ and hence, the limit $\eta\to 0$ can be taken. See the basic estimates for the case of strong singularity in Section \ref{secStrong}.

\subsubsection{Initial $L^\infty$ bound}
As a quick note, to proceed with the De Giorgi iteration, we add a vanishing dissipation $-\eta\<v\>^lf$, with any $\eta>0$, and use a simple level function argument to obtain an initial $L^\infty$ bound:
 $$\|\<v\>^lf\|_{L^\infty_{t,x,v}}<\infty,$$
whose bound depends on $\eta$ and grows with time; but it won't blow up for a finite time. 
Otherwise if it's infinite, all computations will fail.
In the end, we will obtain an improved $L^\infty$ estimate that is independent of $\eta$, and let $\eta\to0$. 
(Note that we used notation $\eta$ twice, but the vanishing dissipation will be used in Sections \ref{SecLinfty} and \ref{Sec10}, and the ``cut-off" $\Ga_\eta$ will be used in Sections \ref{Sec8} and \ref{Sec11}.)

\subsubsection{The De Giorgi method}\label{sec144}
To obtain the $L^\infty$ estimate of the solution $f$, we utilize the De Giorgi method \cite{DeGiorgi1957}; see also its application to kinetic equations \cite{Guerand2022,Alonso2022}. The De Giorgi method provides an approach to obtain the $L^\infty$ estimate from the $L^2$ (or $L^p$) estimate. To do this, we use level functions with polynomial weight: (for inflow $\de=1$ while for Maxwell $\de\in(0,1)$)
\begin{align}\label{flK}
	f^{(l)}_K := f-K\<v\>^{-l}_\de,\quad f^{(l)}_{K,+}=f^{(l)}_K\1_{f^{(l)}_K\ge 0}. 
\end{align} 
Our goal is, by using the $L^2$ estimate of level functions $f^{(l)}_{K,+}$ and $(-f)^{(l)}_{K,+}$, to deduce
\begin{align*}
\|f^{(l)}_{K,+}(t)\|_{L^2_{x,v}}=0,\quad\text{ or }\quad \|(-f)^{(l)}_{K,+}(t)\|_{L^2_{x,v}}=0,
\end{align*}
with well-chosen $K>0$. Then one has $L^\infty$ estimate $\|\<v\>^l_\de f(t)\|_{L^\infty_{x,v}}\le K$.
\begin{Rem}
	Since $f$ is merely a weak solution, to obtain the $L^2$ energy estimate of $f^{(l)}_{K,+}$ (or even $f$), one may need the chain rule for weak solutions. For details, one can refer to \cite[Lemma 5.6]{Silvestre2022} and \cite[Lemma 2.6]{Zhu2022} for the case of kinetic Fokker-Planck equations, and \cite[Lemma 4.5]{Ouyang2023a} for the case of non-cutoff Boltzmann equation. Such a technique may be used from time to time. 
\end{Rem}

In detail, for the {\bf inflow case}, we split the equation \eqref{sec1noneq} into two equations to obtain its $L^2$--$L^\infty$ estimate, i.e. $f=f_1+f_2$. One has the vanishing initial-boundary value with an artificial dissipation and the other one has non-vanishing initial-boundary value:
\begin{align*}
\left\{
\begin{aligned}
	& \pa_tf_1+ v\cdot\na_xf_1 = \vpi Vf_1+ \Gamma(\mu^{\frac{1}{2}}+f,f_1)+\Gamma(f,\mu^{\frac{1}{2}})-N\<v\>^{l-2}f_1-\eta\<v\>^lf_1\quad \text{ in } \Omega, \\
	& f_1|_{\Si_-}=g\qquad\quad\text{ on } \pa\Omega,\\
	& f_1(0,x,v)=f_0\quad \text{ in }\Omega,
\end{aligned}\right.
\end{align*}
and
\begin{align*}
\left\{
\begin{aligned}
	& (\pa_t+ v\cdot\na_x)f_2 = 
		\vpi Vf_2+\Gamma(\mu^{\frac{1}{2}}+f,f_2)+N\<v\>^{l-2}(f-f_2)-\eta\<v\>^lf_2 \quad \text{ in } \Omega, \\
	& f_2|_{\Si_-}=0\qquad\quad\text{ on } \pa\Omega, \\
	& f_2(0,x,v)=0\quad \text{ in }\Omega\times\R^3_v,
\end{aligned}\right.
\end{align*}
respectively, with large $N>0$. Here we add the term $N\<v\>^{l-2}f_1$ to obtain a good dissipation. Then one can easily obtain the $L^2$ estimate for the level function $(f_1)^{(l)}_{K_1,+}$:
\begin{align*}
	\|(f_1)^{(l)}_{K_1,+}\|_{L^\infty_tL^2_x(\Omega)L^2_v}^2+\|(f_1)^{(l)}_{K_1,+}\|^2_{L^2_tL^2_{x,v}(\Si_+)}
	\le 2\|f^{(l)}_{K_1,+}(0)\|_{L^2_x(\Omega)L^2_v}^2
	+\|g^{(l)}_{K_1,+}(t)\|^2_{L^2_tL^2_{x,v}(\Si_-)}.  
\end{align*}
Setting $K_1$ to be greater than the initial-inflow boundary data yields the $L^\infty$ estimate of $f_1$. 

\smallskip 
For $f_2$, which has vanishing initial data, we will use the forward-backward extension method as in \eqref{omegac} to extend it to the whole space, followed by the De Giorgi method with a more delicate calculation.
By extension to the whole space and using the velocity averaging lemma, we can obtain the time-space-velocity regularity in the sense of energy functional 
\begin{align}\label{EpIntro}\notag
	\E_p(K):&=\|f^{(l)}_{K,+}\|^2_{L^\infty_tL^2_{x,v}([T_1, T_2]\times\R^3_x\times\R^3_v)}
	+\|f^{(l)}_{K,+}\|_{L^2_{t,x}L^2_D([T_1, T_2]\times\Omega\times\R^3_v)}^2\\
	&\notag\quad
	+\vpi\|[\wh{C}^{}_0\<v\>^{{4}}f^{(l)}_{K,+},\<v\>^{{2}}\na_vf^{(l)}_{K,+}]\|^2_{L^2_{t,x,v}([T_1, T_2]\times\Omega\times\R^3_v)}\\
	&\quad+\frac{1
		}{C_0\max\{C_\infty^{2p-2},1\}}\Big\|\int_{\R^3_v}\1_{[T_1,T_2]}\<v\>^{-10}(f^{(l)}_{K,+})^2\,dv\Big\|_{B^{s',2}_p(\R^{1+3}_{t,x})}^p. 
\end{align}
with some parameters $p\in(1,2)$ close to $1$, and $0<s',s<1$. 
For this functional, note that 
\begin{itemize}[leftmargin=2em]
	\item since $f$ has the initial $L^\infty$ bound, we write 
	\begin{align*}
		C_\infty:=\|\<v\>^{l}_{\de}f\|_{L^\infty_{t,x,v}([T_1,T_2]\times\ol\Omega\times\R^3_v)}, 
	\end{align*}
	which is {\bf finite} for any $\eta>0$. Also, the exponent of $C_\infty$ will be essential in the following analysis, and will be canceled by the left-hand $C_\infty$ at the end;
	\item the parameters $C_0,s',p$ will be chosen in Lemmas \ref{interLem}, \ref{energyinterLem}, \ref{LinftyLemVanish} and \ref{energyinterLem1} (which is independent of those mollified parameters such as $\vpi,\eta$);. Moreover, $p>1$ is a constant sufficiently close to $1$ chosen in \eqref{ppsharp}; 
	\item the last term has {\bf exponent} $p$, which gives better energy estimate, for instance, in \eqref{736};
	\item the term $\<v\>^{-10}$ is to capture a good large velocity averages for convenience. For example, $\|\<v\>^{-10}(\cdot)\|_{L^2_v}\le C\|\<v\>^{-8}(\cdot)\|_{L^2_D}$. 
\end{itemize}
Next, one can use the interpolation Lemma \ref{interpoLem} to control the term $\|\<v\>^n((f_2)_{K,+}^{(l)})^2\|_{L^{r}_{x,v}(\Omega\times\R^3_v)}$ for some any $r\in[1,2p]$ and any $n\ge0$,  which allows us to control the extra terms in energy estimates. 
 By delicate analysis of the level functions and the Boltzmann collision operator, we will obtain 
 \begin{align*}
 \E_p(M_{k+1})\le C\frac{2^{k\al}\E_p(M_{k})^{r}}{K_0^{\xi}},
 \end{align*}
 where $r>1$, $M_k:=K_0\big(1-\frac{1}{2^k}\big)$ for $k\ge 0$. The power $r>1$ will induce an exponential decay and suppress the polynomial growth in $2^{k\al}$. 
 Then choosing $K_0>0$ large enough (as a function of all source terms) one has $\E_k(K_0)\le\E_k(M_k)\to 0$ as $k\to\infty$, which implies the upper bound of $f_2$ in $\Omega$, i.e. $f_2\le K_0\<v\>^{-l}_\de$, while the lower bound can be deduced similarly. 
 
 \smallskip Combining the $L^\infty$ estimates for $f_1$ and $f_2$, one can derive the $L^\infty$ estimate for $f=f_1+f_2$. The standard $L^2$--$L^\infty$ method is followed. 
 \begin{Rem}\label{coeff1}
	 Such a splitting allows us to obtain: 
	 \begin{itemize}
		 \item the initial $L^\infty$ estimate for vanishing initial(-inflow) data;
		 \item a good coefficient in \eqref{Linfty1}, which is $1$ and is essential for the global estimate of the continuity arguments for time intervals $[0,1]$, $[1,2]$, $\dots$ (or $[j\de^3,(j+1)\de^3]$ for Maxwell case). 
	 \end{itemize} 
 \end{Rem}

The {\bf Maxwell boundary} case can be done similarly by using the functional $\E_p$ in \eqref{EpIntro}, while the boundary term is analyzed in Subsection \ref{subsecDiff}.

\subsubsection{The diffuse boundary term}\label{subsecDiff}
For the Maxwell reflection boundary, the diffuse reflection part provides advantages in obtaining the boundary effect, but disadvantages in estimating the level function, resulting in our constraint $\al\in(0,1)$ for the accommodation coefficient. 
On the other hand, the inflow boundary condition can be regarded as a ``nice'' feature in the estimation, since the boundary is fixed.

\smallskip 
For instance, to use the forward-backward extension method in \eqref{omegac}, we need to obtain dissipation properties in both the inflow and outflow regions. However, since one cannot absorb all the boundary energy arising from the extended region $\ol\Omega^c\times\R^3_v$, we need to assume the accommodation coefficient $\al\in(0,1)$ in \eqref{reflect} to utilize both the local-reflection and the diffuse-boundary effect. Then it is possible to obtain a few $L^2$ dissipation boundary energy on $\Si_+$ (which depends on the value of $\al\in(0,1)$). 

\smallskip 
To deal with the diffuse boundary term, we use the classic Ukai's trace lemma \ref{diffboundLem} and provide a level-function trace lemma \ref{LemR}. The classic trace lemma \ref{diffboundLem} provides a method to control the boundary energy by interior energy on the non-grazing set: 
\begin{multline*}
	\int_{T}^{s}\int_{\pa\Omega}\int_{v\cdot n(x)>0}|v\cdot n(x)|\chi^{+}_{\de}(t,x,v;{T})|f(v)|^2\,dvdS(x)dt
	\le\|f(T)\|^2_{L^2_x(\Omega)L^2_v}\\
	\quad
	+\int^s_T\int_{\Omega\times\R^3_v}\chi^{+}_{\de}\big(\pa_t|f|^2+v\cdot\na_x|f|^2\big)\,dvdxdt, 
\end{multline*}
with a well-chosen cutoff function $\chi^+_\de$; cf. \cite{Ukai1986,Mischler2010,Guo2021b}. 
In the new trace lemma \ref{LemR}, besides the standard diffuse trace estimate, we provide a level-function trace estimate on the non-grazing set:
\begin{align*}
	\|(R_Df)^{(l)}_{K,+}\|^2_{L^2_t(s,T)L^2_{x,v}(\Si_-)}
	&\le 
	C\de^2\|f^{(l)}_{K,+}\|_{L^2_t(s,T)L^2(\Si_-)}
	+\|f^{(l)}_{K,+}(T)\|^2_{L^2_x(\Omega)L^2_v}\\
	&\quad
	-\int_s^T\int_{\Omega\times\R^3_v}\chi^{-}_{\de}\big(\pa_t|f^{(l)}_{K,+}|^2+v\cdot\na_x|f^{(l)}_{K,+}|^2\big)\,dvdxdt.  
\end{align*}
This utilize the essential property of the weight functio n$\<v\>^{-l}_\de$ given in \eqref{weightvde}, which allows us to control $\mu^{\frac{1}{2}}$ by $\<v\>^{-l}_\de$ on non-grazing set. (The grazing set is small in the sense of energy.)

\subsubsection{Recovering spectral gap and global energy estimate}
The inflow and Maxwell boundary conditions possess a delicate structure for hard and (even) soft potentials: a ``spectral'' gap for the absorbing boundary condition ($g=0$), and exponential time decay. This was observed in an earlier work \cite{Deng2022}.
The crucial idea to recover the spectral gap in the case of soft potentials is to introduce a weight function in the phase variable $(x,v)\in \Omega\times \R^3$ that involves the scalar product of $x$ and $v$. In fact, fixing any positive constant $q>0$, we define the weight function
\begin{align}\label{W}
W = W(x,v) = \exp\Big(-q\frac{x\cdot v}{\<v\>}\Big).
\end{align}
It is straightforward to calculate
\begin{equation}\label{W1}
-v\cdot\na_xW =q\frac{|v|^2}{\<v\>}W.
\end{equation}
and
\begin{align}\label{Wbound}
\begin{aligned}
	& e^{-qC}\leq W\leq e^{qC},\quad
	|\pa_{x_i}W|\leq C qW, \\
	& |\pa_{v_i}W|\leq \frac{Cq}{\<v\>}W,\quad
	|\pa_{v_iv_j}W|\leq \frac{C(q+q^2)}{\<v\>^2}W,
\end{aligned}
\end{align}
for a generic constant $C\geq 1$ depending only on the size of $\Omega$ but not on $q$.
One can choose the weight function of the more general form $W=\exp\big\{-q\langle v\rangle^\vartheta (1-\epsilon \frac{x\cdot v}{\langle v\rangle})\big\}$ to generate higher velocity weight with parameters $q>0$, $0\leq\vt\leq 2$ and $\epsilon>0$. To keep our arguments concise, we will only use the weight $W$ in \eqref{W} due to the fact that $W\approx 1$.

\smallskip Using the above weight $W$ for the inflow and Maxwell conditions, one can derive the global $L^2$ estimate with exponential time decay for both hard and soft potentials.
Using the standard macro-micro decomposition $f=\P f+\{\I-\P\}f$, we derive the microscopic energy from the dissipation property of $Lf$ and then control the macroscopic energy by the microscopic energy; cf. \cite{Guo2002,Guo2009,Guo2021b}. However, for the non-cutoff Boltzmann case, we will carefully choose a smooth cutoff function (instead of an indicator function) for time interval $[T,T+\de^3]$ depending on small fixed $\de>0$. 

\smallskip 
Meanwhile, one also needs the following global $L^2$ weighted and non-weighted estimates for the Maxwell boundary case.  
\begin{itemize}[leftmargin=2em]
	\item Non-weighted $L^2$ estimate with vanishing boundary on both $\Si_+$ and $\Si_-$ (they cancel each other); 
	\item Non-weighted $L^2$ estimate with non-vanishing boundary on $\Si_+$, while the part $\Si_-$ will be controlled by the interior energy with the classic trace lemma;
	\item Weighted $L^2$ estimate estimate.
\end{itemize}

\subsection{Related works} \label{sec161}
Ludwig Boltzmann introduced the Boltzmann equation in 1872 as a significant model for describing the motion of gas particles. The Boltzmann equation is one of the notable nonlinear partial differential equations in mathematical physics and has various applications in statistical physics, plasma physics, special and general relativity, and quantum physics. Since Carleman \cite{Carleman1957} solved the global existence in the spatially homogeneous case for the first time, many important works have explored various topics on the Boltzmann equation. We refer to the following significant works in this regard.

\subsubsection{The $L^1$ existence theory for cutoff Boltzmann equation with boundary}
Since Diperna-Lions (1989) \cite{Diperna1989} established the global weak solution for the cutoff Boltzmann equation in the whole space in the $L^1_{x,v}$ framework, many authors used the $L^1_{x,v}$ framework to solve the existence of the cutoff Boltzmann equation in the bounded domain. For example, Hamdache \cite{Hamdache1992} considered the initial boundary value problem in $\Omega$ whose boundaries are kept at a constant temperature with the linear boundary condition of the form
\begin{align}\label{Kphi}
f|_{\Si_-}=(1-\al)K(f|_{\Si_+})+\al\phi,
\end{align}
where $K$ is a mass-preserving scattering operator and $\phi$ is a given inflow for the case $\al\in(0,1)$. Shortly after, Cercignani \cite{Cercignani1992} considered the same problem for the case $\al=0$, which is a more realistic situation. Later, Arkeryd-Cercignani \cite{Arkeryd1993}, Arkeryd-Maslova \cite{Arkeryd1994} and Arkeryd-Nouri \cite{Arkeryd1997} considered the initial boundary value problem with non-constant boundary temperature. In the new century, Mischler considered the initial-boundary value problem for the Vlasov-Poisson-Boltzmann system with linear boundary \eqref{Kphi} with $\al\in[0,1]$ in \cite{Mischler2000} and various kinetic equations with Maxwell reflection boundary \eqref{Maxwell} with $\al\in(0,1]$ in \cite{Mischler2010}.

\subsubsection{The $L^2$ existence theory for cutoff Boltzmann equation near Maxwellian with boundary}

For the earliest global existence in the $L^2$ framework without boundary, one may refer to \cite{Grad1965,Ukai1974,Ukai1982,Caflisch1980,Caflisch1980a}.

\smallskip
For the boundary problem, Ukai-Asano \cite{Ukai1983} considered the steady solution for a gas flow past an obstacle with several types of reflection boundary conditions, while Asano \cite{Asano1984} established the local existence in a bounded domain with bounce-back and specular reflection boundary condition by using the semigroup method and Duhamel principle. In \cite{Shizuta1977}, it was announced that Boltzmann solution admits a global stable solution near a Maxwellian in a smooth bounded convex domain with specular reflection boundary conditions, but unfortunately, we are not aware of any complete proof for such a result.

\smallskip
In 2005, Yang-Zhao \cite{Yang2005} considered the stability of the one-dimensional Boltzmann equation in a half-space with the specular boundary condition. Later, by the idea of \cite{Vidav1970}, Guo \cite{Guo2009} introduced the $L^2$--$L^\infty$ method to study the time decay and continuity for hard potential, while Liu-Yang \cite{Liu2016} studied this problem for soft potential. Shortly after, Kim-Lee \cite{Kim2017} established stability for the Boltzmann equation with an external potential with specular boundary condition in a $C^3$ convex domain, while the result \cite{Guo2009} was restricted to analytic convex domains. Briant-Guo \cite{Briant2016} investigated the stability in $C^1$ domain for Maxwell boundary condition with an accommodation coefficient $\al\in(\sqrt{2/3},1)$. Guo-Liu \cite{Guo2017} proved the global-in-time existence and uniqueness in a smoothly bounded convex domain with rotational symmetry and the specular reflection boundary condition. Cao-Kim-Lee \cite{Cao2019} obtained the global strong solutions of the Vlasov-Poisson-Boltzmann equation with the diffuse boundary condition.

\smallskip
For the non-convex domain, Kim-Lee \cite{Kim2018} considered the global stability in a periodic-in-$x_2$ cylindrical domain with a non-convex analytic cross-section with specular reflection boundary condition, and the recent preprint by Ko-Kim-Lee \cite{Ko2023} considered the non-convex 3D toroidal domain.

\smallskip
For the wave solutions with boundary, Liu-Yu \cite{Liu2006} considered the coupling of different localized wave solutions of the Boltzmann equation, such as the stationary, non-Maxwellian boundary layers and the interior fluid waves; see also \cite{Yu2008}.

\smallskip
For the non-isothermal boundary, Esposito-Guo-Kim-Marra \cite{Esposito2013} constructed a small-amplitude solution to the steady Boltzmann equation. With these motivations, Duan-Huang-Wang-Zhang \cite{Duan2019} studied the existence and long-time dynamics of the steady Boltzmann equation with soft interaction and non-isothermal boundary in a new mild formulation; see also \cite{Duan2022}.

\smallskip For the large-amplitude initial data, Duan-Wang \cite{Duan2019b} proved the stability for diffuse reflection boundary.

\smallskip For the exterior problem, Ukai-Asano \cite{Ukai1980} considered the linearized Boltzmann equation in an exterior domain, and recently, Dong-Yang-Zhong \cite{Dong2019} considered a flow under the effect of a self-induced electric field past an obstacle (exterior problem) governed by the linearized Vlasov-Poisson-Boltzmann equation.


\subsubsection{Related models in bounded domain}
For the Landau equation, Guo-Hwang-Jang-Ouyang \cite{Guo2020} established the global stability for the specular boundary condition by using a flattening extension near the boundary; see also the correction \cite{Guo2021b}. Then Dong-Guo-Ouyang \cite{Dong2022} extended this result to the Vlasov-Poisson-Landau system.

\smallskip
For the linear Fokker-Planck (Kolmogorov) equation, Hwang-Jang-Vel\'{a}zquez \cite{Hwang2014} established the one-dimensional Fokker-Planck equation in an interval with absorbing boundary conditions, and Hwang-Jang-Jung \cite{Hwang2018} generalized this result to multiple dimension. Recently, Zhu \cite{Zhu2022} established the H\"{o}lder regularity for general linear Fokker-Planck equation with inflow, diffuse, and specular reflection boundary conditions.

\subsubsection{The regularity theory for cutoff Boltzmann equation with boundary}
For the singularity theory, Kim \cite{Kim2011} studied the formation of singularities (non-continuity) at non-convex points of the boundary that propagate along characteristics and the regularity outside an identified set related to these characteristics. This shows that singularities occur at the grazing sets.

\smallskip
For the regularity theory, Guo-Kim-Tonon-Trescases \cite{Guo2015} established the optimal BV estimates in a general non-convex domain with diffuse boundary condition by using a new $W^{1,1}$-trace estimate, while \cite{Guo2016} established the regularity $C^1$ away from the grazing set in a bounded domain with typical reflection boundary conditions, and showed by examples the blow-up of the second derivatives. Chen-Kim \cite{Chen2022} constructed $C^{1,\beta}$ solutions away from the grazing boundary, for any $\beta<1$, to the stationary Boltzmann equation with the non-isothermal diffuse boundary condition in a strictly convex domain, confirming the conjecture in \cite{Guo2016}; see also preprint \cite{Chen2023} for the most recent work.

\smallskip
We also mention that Briant \cite{Briant2015} proved the immediate appearance of the lower bound of mild solutions to the Boltzmann equation in the torus or a $C^2$ convex domain with specular boundary conditions.

\subsubsection{Hydrodynamic/Diffusive limit for cutoff Boltzmann equation with boundary}

By taking the hydrodynamic (or diffusive) limit for the solutions of the Boltzmann equation, it will converge Euler equations (or Navier-Stokes-Fourier equations). For some classic results without boundary, one may refer to \cite{Caflisch1980b,Nishida1978,Golse2003,Levermore2009}. When the boundary is present, the boundary layer effect is non-negligible; cf. \cite{Golse1988a}.

\smallskip
In the $L^1$ framework of renormalized solutions (Diperna-Lions), Masmoudi and Saint-Raymond \cite{Masmoudi2003} considered the Stokes-Fourier fluid dynamic limit in a smooth bounded domain for the Boltzmann equation with Maxwell boundary condition, while Jiang-Masmoudi \cite{Jiang2016} established the incompressible Navier-Stokes-Fourier limit.

\smallskip In the $L^2$ framework, Esposito-Guo-Kim-Marra \cite{Esposito2017} used the $L^2$--$L^\infty$ approach to derive the steady incompressible Navier-Stokes-Fourier limit for the steady Boltzmann equation in a bounded domain with diffuse boundary condition, while \cite{Esposito2018} studied the steady case of flow past an obstacle.
Jang-Kim \cite{Jang2021a} established a rigorous derivation of the incompressible Euler equations with the no-penetration boundary condition from the Boltzmann equation with the diffuse reflection boundary condition. In the recent preprint by Ouyang-Wu \cite{Ouyang2023}, they considered the more challenging inflow boundary condition for the incompressible Navier-Stokes-Fourier limit in $L^2$.

\smallskip For the Hilbert expansion or asymptotic analysis, one may refer to Guo-Huang-Wang \cite{Guo2021a} and the preprint Wu-Ouyang \cite{Wu2020}.

\smallskip For the boundary layer problem, Golse-Benoît-Catherine \cite{Golse1988a} studied the Knudsen layer described by the one-dimensional nonlinear Boltzmann equation in half-space with a boundary condition of a slightly perturbed specular reflection. Ukai-Yang-Yu \cite{Ukai2003,Ukai2004} discussed the boundary layer depending on the Mach number in a one-dimensional half-space with inflow boundary; cf. \cite{Chen2004,Sun2011,Yang2011}. Sakamoto-Suzuki-Zhang \cite{Sakamoto2022} considered the nonlinear boundary layer on a three-dimensional half-space by perturbing around a Maxwellian.


\subsubsection{The non-cutoff Boltzmann equation with boundary}
For the non-cutoff Boltzmann equation in a domain without boundary, one may refer to some early work \cite{Pao1974,Alexandre2000,Alexandre2001,Gressman2011}.

\smallskip
For the non-cutoff Boltzmann equation in a domain with boundary, very recently, Duan-Liu-Sakamoto-Strain \cite{Duan2020} first considered Boltzmann and Landau equations in the finite channel with specular boundary condition. Later, Deng-Duan \cite{Deng2021c} generalized this technique to the Vlasov-Poisson-Boltzmann/Landau system in the finite channel, and Deng \cite{Deng2021,Deng2021e} considered the Boltzmann/Landau equation and Vlasov-Poisson-Boltzmann/Landau system in the union of cubes which has a ``flat" boundary. We also mention the work of Deng \cite{Deng2023} for the stability of rarefaction waves to the Vlasov-Poisson-Boltzmann system in a rectangular duct (with ``flat" boundary) with specular reflection boundary condition.

\smallskip However, these results require that the boundary is flat, which will lead to a great advantage in obtaining the high-order Sobolev regularity. In the general bounded domain, one cannot expect such high-order Sobolev regularity as shown in \cite{Kim2011,Guo2016}.

\smallskip For the $L^\infty$ estimate, Ouyang-Silvestre \cite{Ouyang2023a} obtain an estimate of the conditional $L^\infty$ estimate of the solution depending only on the macroscopic bounds on mass, energy, and entropy for hard potentials in general $C^{1,1}$ bounded domains.

\subsubsection{The De Giorgi method}
The De Giorgi method is an iteration scheme introduced by E. De Giorgi \cite{DeGiorgi1957}. Caffarelli-Vasseur \cite{Caffarelli2011} applied this method to elliptic and parabolic equations with some applications to the quasi-geostrophic equation; see also the lecture note by Vasseur \cite{Vasseur2016}.

\smallskip For the Boltzmann equation, Alonso \cite{Alonso2019} applied the De Giorgi method to spatially homogeneous Boltzmann equation without angular cutoff and obtained the $L^p$ estimate for $p\in[1,\infty]$. Later, Alonso-Morimoto-Sun-Yang \cite{Alonso2022} applied the De Giorgi method to spatially inhomogeneous Boltzmann equation with polynomial perturbation $F=\mu+f$ and without angular cutoff to obtain the $L^\infty$ estimate and established the global solution in the $L^2$--$L^\infty$ framework. Cao \cite{Cao2022b} used simplified arguments and generalized this result to the case of soft potential.

\subsection{Discussions}

\subsubsection{Related models} Using the methods presented here, we expect to be able to study the problem in the presence of a (self-consistent or external) \emph{electric} or \emph{electromagnetic field}. In addition, the collision operator commonly used \emph{Landau collision operator} in plasma physics is of interest. Our study is expected to provide insights into the study of (general) relativistic Boltzmann equation and quantum Boltzmann equation. There is also interest in investigating other nonlinear kinetic collision operators such as Lenard-Balescu and Fokker-Planck collision operators.

\subsubsection{Applications} We believe that our work will provide robust applications to various important topics in kinetic theory. The problems mentioned in \eqref{sec161} for the cutoff Boltzmann equation can now be carried out in the presence of the angular non-cutoff assumption; for example, the existence of wave solutions, non-isothermal boundary problem, (ir-)regularity theory, and fluid dynamic limit.

\subsection{Outline of the paper}
The remainder of the paper is organized as follows. 
\begin{itemize}[leftmargin=2em]
\item In Section \ref{Sec2}, we illustrate some tools in our analysis such as the collision operator estimates, the interpolation inequality, the velocity averaging Lemma, the energy functional interpolation, and two trace lemmes.
\item In Section \ref{Sec3}, we give the $L^2$ estimates of the Boltzmann collision operator, the regular change of variable, and the non-negativity of the solution.
\item In Section \ref{Sec4}, we present the forward-backward extension method for extending the boundary-value problem to the whole-space problem, and the local-in-time $L^2$ existence of the inflow boundary and Maxwell boundary problems. 
\item In Section \ref{Sec5}, for level functions, we establish some $L^p$ $(p=1,2)$ estimate of the collision terms, and the Besov regularity by using the velocity averaging lemma.
\item In Section \ref{SecLinfty}, we establish the $L^\infty$ for the linear equation with inflow boundary condition locally in time.
\item In Section \ref{Sec8}, we prove by the $L^2$--$L^\infty$ method the global existence of the Boltzmann equation with inflow boundary condition.
\item In Section \ref{Sec10}, we establish the $L^\infty$ for the linear equation with the Maxwell reflection boundary condition locally in time.
\item In Section \ref{Sec11}, we prove by the $L^2$--$L^\infty$ method the global existence of the Boltzmann equation with Maxwell reflection boundary condition.
\item In Section \ref{Sec12}, we prove the global a priori $L^2$ decay estimate. This Section is self-consistent in the sense that we don't need the $L^\infty$ estimate in the previous Sections.
\item In Appendix \ref{AppAver}, we give the proof of velocity averaging lemma. 
\end{itemize}

\section{Toolbox}\label{Sec2}
In this Section, we give some basic estimates and tools that are useful in our calculations.

\subsection{The weight function}
Recall the weight function $\<v\>^l$ and $\<v\>^{l}_\de$ given in \eqref{weight}. That is, by fixing a small constant $\de\in(0,1)$ (to be used in trace lemmas \ref{diffboundLem} and \ref{LemR}), we denote 
\begin{align*}
	\<v\>^l_\de=
	\begin{cases}
		\frac{\<v\>^l}{\big(\de^2+\<v\>^{-2}(v\cdot n(x))^2\chi_{|v\cdot n(x)|\le 2\de^{-\frac{1}{4}}}\big)^{\frac{1}{2}}} &\text{ if }\de\in(0,1),\\
		\<v\>^l &\text{ if }\de=1, 
	\end{cases}
\end{align*}
where $\chi_{|v\cdot n(x)|\le 2\de^{-\frac{1}{4}}}\equiv\chi(v\cdot n(x))$ is a smooth cutoff function with argument $v\cdot n(x)$ satisfying $\1_{|v\cdot n(x)|\le \de^{-\frac{1}{4}}}\le \chi_{|v\cdot n(x)|\le 2\de^{-\frac{1}{4}}}\le \1_{|v\cdot n(x)|\le 2\de^{-\frac{1}{4}}}$. 

\smallskip 
Here we list some basic properties of weight function $\<v\>^l$ and modified weight function $\<v\>^l_\de$. 
\begin{Lem}\label{vldeLem1}
	Let $l\ge 0$. Then 
\begin{itemize}[leftmargin=2em]
	\item By mean value theorem and \eqref{vpriminv}, we have 
	\begin{align*}
		\big|\<v'\>^{-l}-\<v\>^{-l}\big|\le C|v'-v|\le C|v-v_*|\sin\frac{\th}{2},  
	\end{align*}
	\item It follows from $|v'|\le |v|+|v_*|$ that 
	\begin{align*}
		\<v\>^{-l}=\frac{\<v_*\>^{l}}{\<v\>^{l}\<v_*\>^{l}}
		\le\frac{\<v_*\>^{l}}{\<v'\>^{l}}.
	\end{align*}
	\item To use Taylor expansion up to second order, we calculate 
	\begin{align*}
		\pa_{v_i}\<v\>^{-l} = -lv_i\<v\>^{-l-2}\quad\text{and}\quad \pa_{v_iv_j}\<v\>^{-l} = -l\de_{ij}\<v\>^{-l-2}+l(l+2)v_iv_j\<v\>^{-l-4}. 
	\end{align*}
	\item Moreover, we have the formula for the difference of square
	\begin{align*}
		\<v'\>^{-l}-\<v\>^{-l}=\big(\<v'\>^{-\frac{l}{2}}-\<v\>^{-\frac{l}{2}}\big)\big(\<v'\>^{-\frac{l}{2}}+\<v\>^{-\frac{l}{2}}\big).  
	\end{align*}
\end{itemize}
\end{Lem}

\begin{Lem}\label{vldeLem}
	Let $l\in\R$ and $\de\in(0,1)$. Then 
	\begin{align*}
		(1)\ &\frac{\<v\>^l}{C_{\|n\|_{L^\infty}}}\le\<v\>^l_{\de}\le C_\de\<v\>^l,\\
		(2)\ &|v\cdot\na_x\<v\>^l_\de|\le C_{\de,\|n\|_{W^{1,\infty}}}\<v\>^l,\\
		(3)\ &\<v\>^{-l}_\de\le\frac{C_{\|n\|_{L^\infty}}\<v_*\>^{l}}{\<v'\>^l},\ \ \text{ if }l>0,\\
		(4)\ &\na_v\<v\>^l_\de=O(\de,l,\|n\|_{L^\infty})\<v\>^{l-2}v+O(\de,l,\|n\|_{L^\infty})\<v\>^{l-2}n(x),\quad |\na^2_v\<v\>^l_\de|\le C_{\|n\|_{L^\infty},\de,l}\<v\>^{l-2},\\
		(5)\ &\<v\>^{l}_\de-\<u\>^{l}_\de=\<v\>^{\frac{l}{2}}_\de(\<v\>^{\frac{l}{2}}-\<u\>^{\frac{l}{2}})+(\<v\>^{\frac{l}{2}}_\de-\<u\>^{\frac{l}{2}}_\de)\<u\>^{\frac{l}{2}}, 
	\end{align*}
	where $O(\de,l,\|n\|_{L^\infty})$ is a function of $(x,v)$ that bounded above by a constant depending on $\de,l,\|n\|_{L^\infty}$. 
	for some constants $C=C_{(\cdot)}>0$ depending only on their arguments that is singular only when $\de\to0$. 
\end{Lem}
\begin{Rem}
	According to Lemma \ref{vldeLem}, the constant in this work may depend on the fixed $\|n\|_{W^{1,\infty}}$ without further notice, while $\de>0$ is a constant that will be used and fixed later in Section \ref{Sec10}. 
\end{Rem}
\begin{proof}
The denominator in \eqref{weightvde} can be estimated as 
\begin{align}\label{eq85}
	\de\le (\de^2+\<v\>^{-2}(v\cdot n(x))^2\chi_{|v\cdot n(x)|\le 2\de^{-\frac{1}{4}}})^{\frac{1}{2}}\le (\de^2+\|n\|_{L^\infty(\R^3_x)}^2)^{\frac{1}{2}}\le (1+\|n\|_{L^\infty(\R^3_x)}^2)^{\frac{1}{2}}, 
\end{align}
which implies (1). 
Moreover, the spatial derivative can be calculated as 
\begin{align*}
	v\cdot\na_x\<v\>^l_{\de}
	=-\frac{\<v\>^{l-2}\big(2v\cdot n(x)\chi_{|v\cdot n(x)|\le 2\de^{-\frac{1}{4}}}+(v\cdot n(x))^2\chi'_{|v\cdot n(x)|\le 2\de^{-\frac{1}{4}}}\big)v_i\pa_{x_j}n_i(x)v_j}{2(\de^2+\<v\>^{-2}(v\cdot n(x))^2\chi_{|v\cdot n(x)|\le 2\de^{-\frac{1}{4}}})^{\frac{3}{2}}}, 
\end{align*}
where repeated indices are summed implicitly. Thus, (2) follows from \eqref{eq85} and the support $\chi_{|v\cdot n(x)|\le 2\de^{-\frac{1}{4}}}$. 
Also, for estimate (3), by using $|v'|\le|v|+|v_*|$ and \eqref{eq85}, we can obtain 
\begin{align*}
	\<v\>^{-l}_\de\le\frac{C_{\|n\|_{L^\infty}}\<v_*\>^{l}}{\<v\>^l\<v_*\>^l}\le \frac{C_{\|n\|_{L^\infty}}\<v_*\>^{l}}{\<v'\>^l}. 
\end{align*}
The velocity derivative is
\begin{align*}
	\na_v\<v\>^l_\de
	&=\frac{l\<v\>^{l-2}v}{(\de^2+\<v\>^{-2}(v\cdot n(x))^2\chi_{|v\cdot n(x)|\le 2\de^{-\frac{1}{4}}})^{\frac{1}{2}}}
	-\frac{\<v\>^l\big(-2v\<v\>^{-4}(v\cdot n(x))^2\chi_{|v\cdot n(x)|\le 2\de^{-\frac{1}{4}}}\big)}{2(\de^2+\<v\>^{-2}(v\cdot n(x))^2\chi_{|v\cdot n(x)|\le 2\de^{-\frac{1}{4}}})^{\frac{3}{2}}}
	\\&\quad-\frac{\<v\>^l\big(\<v\>^{-2}2n(x)v\cdot n(x)\chi_{|v\cdot n(x)|\le 2\de^{-\frac{1}{4}}}
	+\<v\>^{-2}(v\cdot n(x))^2n(x)\chi'_{|v\cdot n(x)|\le 2\de^{-\frac{1}{4}}}\big)}{2(\de^2+\<v\>^{-2}(v\cdot n(x))^2\chi_{|v\cdot n(x)|\le 2\de^{-\frac{1}{4}}})^{\frac{3}{2}}},  
\end{align*}
which implies 
\begin{align*}
	\na_v\<v\>^l_\de=O(\de,l,\|n\|_{L^\infty})\<v\>^{l-2}v+O(\de,l,\|n\|_{L^\infty})\<v\>^{l-2}n(x).
\end{align*}
The second order derivative can be obtained similarly: $|\na^2_v\<v\>^l_\de|\le C(\|n\|_{L^\infty},\de,l)\<v\>^{l-2}$. Then we obtain (4). 
For estimate (5), by using definition \eqref{weightvde}, we have difference-of-square-type formula
\begin{align*}
	\<v\>^{l}_\de-\<u\>^{l}_\de
	&=\frac{\<v\>^\frac{l}{2}(\<v\>^{\frac{l}{2}}-\<u\>^{\frac{l}{2}})+\<v\>^\frac{l}{2}\<u\>^{\frac{l}{2}}}{(\de^2+\<v\>^{-2}(v\cdot n(x))^2\chi_{|v\cdot n(x)|\le 2\de^{-\frac{1}{4}}})^{\frac{1}{2}}}
	-\frac{\<u\>^{\frac{l}{2}}\<u\>^{\frac{l}{2}}}{(\de^2+\<u\>^{-2}(u\cdot n(x))^2\chi_{|u\cdot n(x)|\le 2\de^{-\frac{1}{4}}})^{\frac{1}{2}}}\\
	&=\<v\>^{\frac{l}{2}}_\de(\<v\>^{\frac{l}{2}}-\<u\>^{\frac{l}{2}})+(\<v\>^{\frac{l}{2}}_\de-\<u\>^{\frac{l}{2}}_\de)\<u\>^{\frac{l}{2}}.
\end{align*}
This completes the proof of Lemma \ref{vldeLem}. 
\end{proof}

\subsection{Collision operator}
We will frequently use these embeddings without further notice:
\begin{align*}
	\|f\|_{L^1_v}\le C\|\<v\>^4f\|_{L^\infty_v}, \\
	\|f\|_{L^2_v}\le C\|\<v\>^2f\|_{L^\infty_v}.
\end{align*}
Some standard estimates of collision operators $L$ and $\Gamma$ (given by \eqref{L} and \eqref{Ga}) can be obtained from \cite{Gressman2011,Alexandre2012,He2018}, and we list them below.
From \cite[Proposition 2.2]{Alexandre2012} or \cite[Eq. (2.15)]{Gressman2011}, we have
\begin{align}
	\label{esD}
	\|\<v\>^{\frac{\ga+2s}{2}}f\|_{L^2_v}+\|\<v\>^{\frac{\ga}{2}}\<D_v\>^sf\|_{L^2_v}\lesssim \|f\|_{L^2_D}\lesssim \|\<v\>^{\frac{\ga+2s}{2}}\<D_v\>^sf\|_{L^2_v}.
\end{align}
This can also be derived from \eqref{tiaa} by using pseudo-differential estimates; see \cite{Lerner2010,Deng2020a}.
By \cite[Eq. (6.6), pp. 817]{Gressman2011}, for $\ga+2s>-\frac{3}{2}$,
\begin{align}\label{Gammaes}
	|(\Gamma(f,g),h)_{L^2_v}|\le C\|f\|_{L^2_v}\|g\|_{L^2_D}\|h\|_{L^2_D}.
\end{align}
By \cite[Proposition 3.13, pp. 967]{Alexandre2012}, for any $l\ge 0$, we have
\begin{multline}\label{25}
	|(\<v\>^l\Gamma(f,g)-\Gamma(f,\<v\>^lg),h)_{L^2_v}|
	\le C_l\Big(\|\<v\>^{\frac{\ga+2s}{2}}f\|_{L^2_v}\|\<v\>^{l+\frac{\ga}{2}}g\|_{L^2_v}\\
	+\min\big\{\|f\|_{L^2_v}\|\<v\>^{l+\frac{\ga}{2}}g\|_{L^2_v},\|\<v\>^{s+\frac{\ga}{2}}f\|_{L^2_v}\|\<v\>^{l-s}g\|_{L^2_v}\big\}\Big)\|h\|_{L^2_D}. 
\end{multline}
By \cite[Eq. (2.13), pp. 784]{Gressman2011} or \cite[Proposition 2.1]{Alexandre2012}, for $\ga>-3$ and $s\in(0,1)$, one has
\begin{align}
	\label{micro}
	(Lf,f)_{L^2_v}\le -c_0\|\{\I-\P\}f\|_{L^2_D}^2,
\end{align}
for some $c_0>0$, where $\P f$ is given in \eqref{Pf}.
By \cite[Proposition 4.8, pp. 983]{Alexandre2012} and \eqref{esD}, for $\ga>-3$ and $s\in(0,1)$, one has
\begin{align}\notag
	\label{L1}
	\big(\Gamma(\mu^{\frac{1}{2}},f),\<v\>^{2l}f\big)_{L^2_v} & \le -2c_0\|\<v\>^lf\|_{L^2_D}^2+C\|\<v\>^{l+\frac{\ga}{2}}f\|_{L^2_v}^2 \\
	& \le -c_0\|\<v\>^lf\|_{L^2_D}^2+C_{l}\|\1_{|v|\le R_0}f\|_{L^2_v}^2,
\end{align}
for some constant $c_0>0$ and large $R_0>0$, where we used interpolation in $v$ and $\1_{|v|\le R_0}$ is the indicator function of $B(0,R_0)$. By \cite[Proposition 4.5]{Alexandre2012}, we have
\begin{align}
	\label{L2}
	\big|\big(\Gamma(\psi,\mu^{\frac{1}{2}}),\<v\>^{2l}f\big)_{L^2_v}\big|
	\le C\|\mu^{\frac{1}{10^4}}\psi\|_{L^2_v}\|\mu^{\frac{1}{10^4}}f\|_{L^2_v}.
\end{align}

\begin{Lem}\label{LemGa}
	Let $l\ge 0$, $\ga>\max\{-3,-\frac{3}{2}-2s\}$, and $\ga+2s<4$. We have
	\begin{align}\label{Gaesweight}
		|(\Gamma(f,g),\<v\>^{2l}h)_{L^2_v}|
		\le C_l\|\<v\>^{2}f\|_{L^2_v}\|\<v\>^{l}g\|_{L^2_D}\|\<v\>^{l}h\|_{L^2_D},
	\end{align}
	and
	\begin{align}\label{GaesweightDvs}
		\|\<v\>^{l-\frac{\ga+2s}{2}}\<D_v\>^{-s}\Gamma(f,g)\|_{L^2_v}\le C_l\|\<v\>^{2}f\|_{L^2_v}\|\<v\>^{l}g\|_{L^2_D}.
	\end{align}
	Consequently, if we let $\Psi=\mu^{\frac{1}{2}}+\psi$, then
	\begin{align}\label{56}
		\begin{aligned}
			\big(\Gamma(\Psi,f),\<v\>^{2l}f\big)_{L^2_v}
			& \le \big(-c_0+C\|\<v\>^{4}\psi\|_{L^\infty_v}\big)\|\<v\>^lf\|_{L^2_D}^2+C\|\1_{|v|\le R_0}f\|_{L^2_v}^2, \\
			\big(\Gamma(\vp,\mu^{\frac{1}{2}}),\<v\>^{2l}f\big)_{L^2_v} & \le C\|\mu^{\frac{1}{10^4}}\vp\|_{L^2_v}\|\mu^{\frac{1}{10^4}}f\|_{L^2_v},
		\end{aligned}
	\end{align}
	where $R_0>0$ is some large constant.
\end{Lem}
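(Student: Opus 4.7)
The plan is to establish \eqref{Gaesweight} first, then derive \eqref{GaesweightDvs} by a duality argument, and finally obtain \eqref{56} by combining \eqref{Gaesweight} with the known linearized estimates \eqref{L1} and \eqref{L2}.

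For \eqref{Gaesweight}, I would write the weight-splitting identity
\begin{align*}
	(\Gamma(f,g),\<v\>^{2l}h)_{L^2_v}=(\Gamma(f,\<v\>^l g),\<v\>^l h)_{L^2_v}+\bigl(\<v\>^l\Gamma(f,g)-\Gamma(f,\<v\>^l g),\<v\>^l h\bigr)_{L^2_v}.
\end{align*}
The first term is controlled directly by \eqref{Gammaes}, which gives a bound of the form $C\|f\|_{L^2_v}\|\<v\>^l g\|_{L^2_D}\|\<v\>^l h\|_{L^2_D}$. For the commutator term, I would apply the commutator estimate \eqref{25}, and then bound each factor by the desired norms using \eqref{esD}: since $\ga/2\le(\ga+2s)/2$ one has $\|\<v\>^{l+\ga/2}g\|_{L^2_v}\le\|\<v\>^{l+(\ga+2s)/2}g\|_{L^2_v}\lesssim\|\<v\>^l g\|_{L^2_D}$, and the $f$-factor $\|\<v\>^{(\ga+2s)/2}f\|_{L^2_v}$ is dominated by $\|\<v\>^2 f\|_{L^2_v}$ thanks to the hypothesis $\ga+2s<4$. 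The factor $\|f\|_{L^2_v}$ is also dominated by $\|\<v\>^2 f\|_{L^2_v}$.

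For \eqref{GaesweightDvs}, the plan is to use duality. For any $\phi\in L^2_v$ with $\|\phi\|_{L^2_v}\le 1$, set $h=\<v\>^{-2l}\<D_v\>^{-s}\<v\>^{l-(\ga+2s)/2}\phi$ and compute
\begin{align*}
	(\<v\>^{l-\frac{\ga+2s}{2}}\<D_v\>^{-s}\Gamma(f,g),\phi)_{L^2_v}=(\Gamma(f,g),\<v\>^{2l}h)_{L^2_v}.
\end{align*}
Applying \eqref{Gaesweight}, the only non-routine factor to bound is $\|\<v\>^l h\|_{L^2_D}=\|\<v\>^{-l}\<D_v\>^{-s}\<v\>^{l-(\ga+2s)/2}\phi\|_{L^2_D}$. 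Using the upper bound $\|\cdot\|_{L^2_D}\lesssim\|\<v\>^{(\ga+2s)/2}\<D_v\>^s\cdot\|_{L^2_v}$ from \eqref{esD} together with the commutation relation \eqref{equiv}, this reduces to $\|\phi\|_{L^2_v}\le 1$, up to harmless factors of $\<v\>^{\pm\cdot}$ that cancel. Taking the supremum over $\phi$ gives \eqref{GaesweightDvs}. I expect this to be the main obstacle, since moving the Bessel potential $\<D_v\>^{-s}$ across the polynomial weight $\<v\>^{l-(\ga+2s)/2}$ requires careful use of \eqref{equiv}, but the cancellation $\<D_v\>^s\<D_v\>^{-s}=I$ makes the final count of powers balance exactly.

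For the consequences \eqref{56}, I would split $\Gamma(\Psi,f)=\Gamma(\mu^{1/2},f)+\Gamma(\psi,f)$. The first piece is controlled by \eqref{L1}, producing the dissipative term $-c_0\|\<v\>^l f\|_{L^2_D}^2$ modulo a compact correction $C\|\1_{|v|\le R_0}f\|_{L^2_v}^2$. For the $\psi$-piece, applying \eqref{Gaesweight} with $g=h=f$ yields a bound by $C\|\<v\>^2\psi\|_{L^2_v}\|\<v\>^l f\|_{L^2_D}^2$, and the Sobolev-type embedding $\|\<v\>^2\psi\|_{L^2_v}\lesssim\|\<v\>^4\psi\|_{L^\infty_v}$ (integrability of $\<v\>^{-4}$ in $\R^3$) converts this into the stated form. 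The second inequality in \eqref{56} is exactly \eqref{L2}.
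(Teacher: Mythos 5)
Your proposal is correct and follows essentially the same route as the paper: the weight-splitting via \eqref{Gammaes} and the commutator estimate \eqref{25} for \eqref{Gaesweight}, a duality argument through \eqref{esD} and \eqref{equiv} for \eqref{GaesweightDvs}, and the combination of \eqref{L1}, \eqref{L2} and \eqref{Gaesweight} for \eqref{56}. The only cosmetic difference is that in the duality step the paper first records the refined trilinear bound with $\|\<v\>^{l+\frac{\ga+2s}{2}}\<D_v\>^{s}h\|_{L^2_v}$ and then dualizes, whereas you apply \eqref{Gaesweight} to an explicit dual test function and unwind the norm afterwards; both require exactly the same inputs.
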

\begin{proof}
	By \eqref{esD}, \eqref{Gammaes} and \eqref{25},
	\begin{align}\label{214}\notag
		|(\Gamma(f,g),\<v\>^{2l}h)_{L^2_v}| & \le|(\Gamma(f,\<v\>^{l}g),\<v\>^{l}h)_{L^2_v}|+|(\<v\>^l\Gamma(f,g)-\Gamma(f,\<v\>^lg),h)_{L^2_v}| \\
		& \le C_l\big(\|\<v\>^{\frac{\ga+2s}{2}}f\|_{L^2_v}+\|f\|_{L^2_v}\big)\|\<v\>^{l}g\|_{L^2_D}\|\<v\>^{l}h\|_{L^2_D}.
	\end{align}
	This implies \eqref{Gaesweight}. 
	By \eqref{esD} and \eqref{equiv}, we have
	\begin{align*}
		|(\Gamma(f,g),\<v\>^{2l}h)_{L^2_v}|
		\le C_l\|\<v\>^{\max\{\frac{\ga+2s}{2},0\}}f\|_{L^2_v}\|\<v\>^{l}g\|_{L^2_D}\|\<v\>^{l+\frac{\ga+2s}{2}}\<D_v\>^sh\|_{L^2_v},
	\end{align*}
	and hence, by duality,
	\begin{align*}
		\|\<v\>^{l-\frac{\ga+2s}{2}}\<D_v\>^{-s}\Gamma(f,g)\|_{L^2_v}\le C_l\|\<v\>^{\max\{\frac{\ga+2s}{2},0\}}f\|_{L^2_v}\|\<v\>^{l}g\|_{L^2_D}.
	\end{align*}
		This implies \eqref{GaesweightDvs}.
		Applying \eqref{L1}, \eqref{L2} and \eqref{Gaesweight}, we can obtain \eqref{56}.
		This completes the proof of Lemma \ref{LemGa}.
	\end{proof}
	\begin{Rem}
		One can have a more accurate dual estimate than \eqref{GaesweightDvs} by using pseudo-differential calculus. The left-hand side of \eqref{tia} defines the norm of Sobolev space $H(\ti a^{\frac{1}{2}})$; see \cite[Section 2]{Lerner2010}. By using the dual property $\big(H(\ti a^{\frac{1}{2}})\big)^*=H(\ti a^{-\frac{1}{2}})$, one can deduce from \eqref{Gaesweight} that
		\begin{align*}
			\|\<v\>^{-l}(\ti a^{-\frac{1}{2}})^w\Gamma(f,g)\|_{L^2_v}\le C\|\<v\>^{4}f\|_{L^\infty_v}\|\<v\>^{l}g\|_{L^2_D}.
		\end{align*}
		In this work, we only use the less accurate dual estimate \eqref{GaesweightDvs}.
	\end{Rem}
	
	\smallskip We then give some facts about the pre-post velocities.
	Let
	\begin{align}\label{bfk}
		\mathbf{k}=\frac{v-v_*}{|v-v_*|}\ \text{ if }v\ne v_*;\quad \mathbf{k}=(1,0,0)\ \text{ if }v=v_*.
	\end{align}
	Under the spherical coordinate, we write
	\begin{align}\label{sigma}
		\sigma=\cos\th\,\mathbf{k}+\sin\th\,\omega,
	\end{align}
	with $\th\in[0,\pi/2]$, $\omega\in\S^1(\mathbf{k})$, where
	\begin{align}\label{S1k}
		\S^1(\mathbf{k})=\{\omega\in\S^2\,:\,\omega\cdot\mathbf{k}=0\}.
	\end{align}
	Then we have
	\begin{align}\label{vprimeth}
		\begin{aligned}
			v' & =\cos^2\frac{\th}{2}v+\sin^2\frac{\th}{2}v_*+\frac{1}{2}|v-v_*|\sin\th\omega, \\
			v'_* & =\sin^2\frac{\th}{2}v+\cos^2\frac{\th}{2}v_*-\frac{1}{2}|v-v_*|\sin\th\omega, 
		\end{aligned}
	\end{align}
	and
	\begin{align}\label{vpriminv}
		\begin{aligned}
			& |v'-v|=|v'_*-v_*|=|v-v_*|\sin\frac{\th}{2}, \\
			& |v'-v_*|=|v'_*-v|=|v-v_*|\cos\frac{\th}{2}, \\
			& |v'_*-v'|=|v_*-v|.
		\end{aligned}
	\end{align}
	Then it follows from $\th\in\big(0,\frac{\pi}{2}\big]$ that
	\begin{align}\label{vstar1}\notag
		|v_*|^2 & \le \big(|v_*-v|+|v|\big)^2
		\le \big(\frac{|v'_*-v|}{\cos\frac{\pi}{2}}+|v|\big)^2 \\
		& \le \big(\sqrt{2}|v'_*|+(\sqrt{2}+1)|v|\big)^2
		\le 4|v'_*|^2+18|v|^2,
	\end{align}
	and similarly,
	\begin{align}\label{vstar2}
		|v'|^2\le 4|v|^2+18|v'_*|^2.
	\end{align}
	Also, 
	\begin{align}\label{vstar3}
		|v'|^2\le \big(|v'-v_*|+|v_*|\big)^2\le \big(|v-v_*|+|v_*|\big)^2\le 2|v|^2+8|v_*|^2.
	\end{align}	
				
\subsection{Interpolation inequality}
In this Subsection, we introduce the crucial interpolation inequality to be combined with the velocity averaging lemma. This is the crucial idea to obtain the $L^\infty$ estimate of the solution. 
\begin{Lem}
	\label{interpoLem}
	Let $0\le T_1<T_2<\infty$, 
	$p\in(1,2)$, $\eta,\eta'\in(0,1)$ satisfying $0<\eta'<(d+1)/p$, and $d\ge 2$ be the dimension. 
	Assume that $\Omega\subset\R^d$ is bounded and let $\psi\in L^{2}(\R^d_v)$ be any (averaging function) satisfying $\|\psi\|_{L^\infty_v}\le 1$ (this $1$ can be any fixed constant).
	 Then there exists $r=r(\eta,\eta',p,d)>2$ and $\si=\si(\eta,\eta',p,d)\in (0,1)$ such that for any suitable function $\vp:\R^{1+2d}\to \R$ (such that the right-hand side of \eqref{inter} is finite), 
	\begin{align}\label{inter}
		\|\vp\psi\|_{L^r_{t,x,v}([T_1,T_2]\times\Omega\times\R^d_v)}\le C\|(I-\De_v)^{\frac{\eta}{2}}\vp\|_{L^2_{t,x,v}([T_1,T_2]\times\Omega\times\R^d)}^{\si}\,\Big\|\int_{\R^d_v}\1_{[T_1,T_2]}(\vp\psi)^2\,dv\Big\|_{B^{\eta',2}_p(\R^{1+d}_{t,x})}^{\frac{1-\si}{2}}, 
	\end{align}
	where $C=C(\eta,\eta',p,d)>0$ is a constant. 
	Moreover, $r=r(\eta,\eta',p,d)>2$ is a non-decreasing function with respect to $p$ for fixed $\eta,d$. Also, $r(\eta,\eta',p,d)$ is continuous with respect to $\eta$, $\eta'$ and $p$, and satisfies
	\begin{align*}
			\lim_{\eta'\to0,\,p\to 1}r(\eta,\eta',p,d)=2,&\quad\lim_{p\to 1}r(\eta,\eta',p,d)>2.
	\end{align*}
	and 
	\begin{align}\label{almr}
%
		\frac{\si}{2}+\frac{1-\si}{2p}>\frac{1}{r},\quad \frac{(1-\si)r}{2}<1. 
	\end{align}
\end{Lem}
\begin{proof}
	By Sobolev embedding on $\R^d$ for Bessel potential (\cite[Theorem 1.3.5]{Grafakos2014a}) and the embedding theorem for Besov space \eqref{embeddBesov}, by noting $\|\cdot\|_{L^{n}}=\|\phi\|_{L^{n,n}}$, we have
	\begin{align}\label{371}
		\begin{aligned}
			\Big(\int_{\R^d_v}|\vp(x,v)|^m\,dv\Big)^{\frac{2}{m}}&\le C\|(I-\De_v)^{\frac{\eta}{2}}\vp(x,\cdot)\|_{L^2_v(\R^d)}^2, \\
			\Big\|\int_{\R^d_v}\vp^2(\cdot,v)\,dv\Big\|_{L^{n}(\R^{1+d}_{t,x})}
			&\le C\Big\|\int_{\R^d_v}\vp^2(\cdot,v)\,dv\Big\|_{B^{\eta',2}_p(\R^{1+d}_{t,x})},
		\end{aligned}
	\end{align}
	where $C=C(\eta,\eta',p,d)>0$ is a generic constant, and we choose $m>2,n>p$ by 
	\begin{align}\label{pqmd}
		\frac{1}{m}=\frac{1}{2}-\frac{\eta}{d},\quad 0<\frac{1}{p}-\frac{1}{n}=\frac{\eta'}{2(1+d)}<\frac{\eta'}{1+d}.
	\end{align} 
	That is $m=\frac{2d}{d-\eta}$ and $n=\frac{2p(1+d)}{2(1+d)-p\eta'}$. 
	Set constants $\si_1,\si_2,r$ by 
	\begin{align}\label{pqmdr2}
		\frac{1-\si_1}{1-\si_2}=n,\quad \frac{\si_1}{\si_2}=\frac{2}{m},\quad r=m\si_1+2(1-\si_1)=2\si_2+2n(1-\si_2),
	\end{align}
	which, together with $n>p>1$ and $m>2$, implies 
	\begin{align}\label{pqmdr}
		\si_2=\frac{n-1}{n-\frac{2}{m}}\in(0,1),\quad \si_1=\frac{2}{m}\si_2=\frac{n-1}{\frac{m}{2}n-1}\in(0,1).
	\end{align}
	Thus, using $L^{\frac{1}{\si_1}}_v-L^{\frac{1}{1-\si_1}}_v$ and $L^{\frac{1}{\si_2}}_{t,x}-L^{\frac{1}{1-\si_2}}_{t,x}$ H\"{o}lder's inequality, and \eqref{371}, we obtain
	\begin{align*}
		& \|\vp\psi\|_{L^r_{t,x,v}([T_1,T_2]\times\Omega\times\R^d_v)}^r = \int_{[T_1,T_2]\times\Omega\times\R^d_v}|\vp(t,x,v)\psi(v)|^{m\si_1}|\vp(t,x,v)\psi(v)|^{2(1-\si_1)}\,dvdxdt\\
		&\quad \le\int_{[T_1,T_2]\times\Omega}\Big(\int_{\R^d}|\vp|^{m}\,dv\Big)^{\si_1}\,\Big(\int_{\R^d}|\vp\psi|^{2}\,dv\Big)^{1-\si_1}\,dxdt\\
		&\quad \le\Big(\int_{[T_1,T_2]\times\Omega}\Big(\int_{\R^d}|\vp|^{m}\,dv\Big)^{\frac{\si_1}{\si_2}}\,dxdt\Big)^{\si_2}\,\Big(\int_{[T_1,T_2]\times\Omega}\Big(\int_{\R^d}|\vp\psi|^{2}\,dv\Big)^{\frac{1-\si_1}{1-\si_2}}\,dxdt\Big)^{1-\si_2}\\
		&\quad =\Big(\int_{[T_1,T_2]\times\Omega}\Big(\int_{\R^d}|\vp|^{m}\,dv\Big)^{\frac{2}{m}}\,dxdt\Big)^{\si_2}\,\Big(\int_{[T_1,T_2]\times\Omega}\Big(\int_{\R^d}|\vp\psi|^{2}\,dv\Big)^{n}\,dxdt\Big)^{1-\si_2}\\
		&\quad\le C\|(I-\De_v)^{\frac{\eta}{2}}\vp\|_{L^2_{t,x,v}([T_1,T_2]\times\Omega\times\R^d)}^{2\si_2}\,\Big\|\int_{\R^d_v}\1_{[T_1,T_2]}(\vp\psi)^2(\cdot,v)\,dv\Big\|_{B^{\eta',2}_p(\R^{1+d}_{t,x})}^{n(1-\si_2)},
	\end{align*}
	This implies \eqref{inter} by taking power $(\cdot)^{\frac{1}{r}}$ and letting 
	\begin{align}\label{si000}
		\si:=\frac{2\si_2}{r},\quad\text{ and }\quad 1-\si=\frac{2n(1-\si_2)}{r}. 
	\end{align}
	Notice from \eqref{pqmd}, \eqref{pqmdr2} and \eqref{pqmdr} that $m=\frac{2d}{d-2\eta}$ and
	\begin{align}\notag\label{rDef}
		r &=m\si_1+2(1-\si_1)=\frac{2mn-m-2n}{\frac{m}{2}n-1}\\
		&\notag=\frac{4nd-2d-2n(d-2\eta)}{nd-d+2\eta}
		=\frac{2nd-2d+4n\eta}{nd-d+2\eta}\\
		&=2+\frac{4(n-1)\eta}{(n-1)d+2\eta}>2. 
	\end{align}
	From this and $n=\frac{2p(1+d)}{2(1+d)-p\eta'}$, we know that $r$ is continuous with respect to $\eta$, $\eta'$ and $p$. Moreover, $r=r(\eta,\eta',p,d)$ is a non-decreasing function with respect to $p$ for fixed $\eta,\eta',d$. Also, from \eqref{pqmd} and \eqref{rDef}, we have $\lim_{p\to 1}r(\eta,\eta',p,d)>2$, and 
	\begin{align*}
		\lim_{\eta'\to0}n=p,\quad \text{ and }\quad
		\lim_{\eta'\to0,\,p\to 1}r(\eta,\eta',p,d)=2. 
	\end{align*}
	To prove \eqref{almr}, 
	we apply \eqref{si000} and \eqref{pqmdr} to deduce
	\begin{align*}
		\frac{\si}{2}+\frac{1-\si}{2p}=\frac{\si_2}{r}+\frac{n(1-\si_2)}{pr}
		>\frac{1}{r}, 
	\end{align*}
	and 
	\begin{align*}
		\frac{(1-\si)r}{2}=n(1-\si_2)=\frac{1-\frac{2}{m}}{1-\frac{2}{mn}}<1. 
	\end{align*}
	This completes the proof of Lemma \ref{interpoLem}.
\end{proof}

\subsection{Velocity Averaging Lemma}
We would like to use the $L^p$ velocity averaging Lemma in \cite[Theorem 5]{Diperna1991a}, which considers a smooth cutoff function in the velocity averages and gives a general statement without energy estimates and without time-regularizing on the right-hand side. Here we present a more precise statement with a general averaging function $\psi$ without compact support. 

\begin{Thm}\label{averThma}
	Let $d\ge 2$ be the dimension, $0\le T_1<T_2$, $\ka\in[0,1)$, $m\ge 0$, $p\in(1,\infty)$, $n>0$. Suppose $G$ and $\psi$ are such that the following right-hand sides are well-defined.
	 Denote by $B^{\al,q}_p$ the Besov space given by \eqref{Besov}.
	
	\begin{enumerate}[leftmargin=*]
	\item Let $f$ be the solution 
	\begin{align}\label{transG}
		\pa_tf+v\cdot\na_xf=(I-\De_{t,x})^{\ka/2}(I-\De_v)^{m/2}G\ \text{ in }\R_t\times\R^d_x\times\R^d_v,
	\end{align}
	in the sense of distribution. 
		If $p\in(1,\infty)$, then 
		\begin{align}
			\label{avereq}
			\Big\|\int_{\R^d}f(v)\psi(v)\,dv\Big\|_{B^{\al,2}_p(\R^{1+d}_{t,x})}
			\le C_{d,p}\|\<v\>^{n}\<D_v\>^{m+1}\psi\|_{L^{2}_{v}}\Big(\|f\|_{L^p(\R^{1+2d}_{t,x,v})}+\|G\|_{L^p(\R^{1+2d}_{t,x,v})}\Big), 
		\end{align}
		where 
		$\al=\frac{n(1-\ka)}{(1+2n)(1+m)\max\{p,p'\}}$. 

	\smallskip 
\item Assume $p\in(1,2]$ and $\ka\in[0,\frac{1}{p})$. If $f\in L^p([T_1,T_2]\times\R^d_x\times\R^d_v)$ is the solution 
\begin{align}\label{transG1}
	\pa_tf+v\cdot\na_xf=G\ \text{ in }[T_1,T_2]\times\R^d_x\times\R^d_v,
\end{align}
in the sense of distribution, then 
$\int_{\R^d}\1_{[T_1,T_2]}(t)f(v)\psi(v)\,dv\in B^{\al,2}_p(\R^{1+d}_{t,x})$ 
\begin{align}\notag
	\label{avereq1}
	&\ \Big\|\int_{\R^d}\1_{[T_1,T_2]}(t)f(v)\psi(v)\,dv\Big\|_{B^{\al,2}_p(\R^{1+d}_{t,x})}\\
	&\quad\notag\le C_{d,p}\|\<v\>^{n}\<D_v\>^{m+1}\psi\|_{L^{2}_{v}}
	\Big(\|(I-\De_{x})^{-\frac{\ka}{2}}(I-\De_v)^{-\frac{m}{2}}f(T_1)\|_{L^p(\R^{2d}_{x,v})}\\
	&\qquad \notag+\|(I-\De_{x})^{-\frac{\ka}{2}}(I-\De_v)^{-\frac{m}{2}}f(T_2)\|_{L^p(\R^{2d}_{x,v})}\\
	&\qquad+\|\1_{[T_1,T_2]}f\|_{L^p(\R^{1+2d}_{t,x,v})}+\|(I-\De_{t,x})^{-\frac{\ka}{2}}(I-\De_v)^{-\frac{m}{2}}(\1_{[T_1,T_2]}G)\|_{L^p(\R^{1+2d}_{t,x,v})}\Big),
\end{align}
with regularity 
\begin{align}
	\label{lam1}
	\al=\frac{n(1-\ka)}{(1+2n)(1+m)}\big(1-\frac{1}{p}\big). 
\end{align}
\end{enumerate}
\end{Thm}
We will put the proof in Appendix \ref{AppAver}.

\subsection{Energy functional interpolation}
We will apply the De Giorgi method to deduce the $L^\infty$ estimate of the equation and prepare the following notations. We write $\<v\>^{-l}_\de$ as in \eqref{weight} and use the (polynomial) level functions as in \eqref{flK}: 
\begin{align*}
	f^{(l)}_{K,+} := (f-K\<v\>^{-l}_\de)_+\ \text{ with constant }K>0.
\end{align*}
Then for any $K>M$ and $k>1$, by Lemma \ref{vldeLem}, (we simply write $C=C_{\|n\|_{L^\infty}}$ later on)
\begin{align}
\label{fKM}
f^{(l)}_{K,+}\le \frac{f^{(l)}_{K,+}(f^{(l)}_{M,+})^{k-1}}{(K-M)^{k-1}(\<v\>^{-l}_\de)^{k-1}}\le \frac{C_{\|n\|_{L^\infty}}\<v\>^{l(k-1)}(f^{(l)}_{M,+})^k}{(K-M)^{k-1}}.
\end{align}
For any $-\infty<M<K<\infty$, $0<s',s<1$, $l\ge 0$, and $p>1$, we introduce the energy functional as in Subsection \ref{sec144}: 
\begin{align}\label{Ep}\notag
	\E_p(K):&=\|f^{(l)}_{K,+}\|^2_{L^\infty_tL^2_{x,v}([T_1, T_2]\times\R^3_x\times\R^3_v)}
	+\|f^{(l)}_{K,+}\|_{L^2_{t,x}L^2_D([T_1, T_2]\times\Omega\times\R^3_v)}^2\\
	&\notag\quad
	+\vpi\|[\wh{C}^{}_0\<v\>^{{4}}f^{(l)}_{K,+},\<v\>^{{2}}\na_vf^{(l)}_{K,+}]\|^2_{L^2_{t,x,v}([T_1, T_2]\times\Omega\times\R^3_v)}\\
	&\quad+\frac{1
		}{C_0\max\{C_\infty^{2p-2},1\}}\Big\|\int_{\R^3_v}\1_{[T_1,T_2]}\<v\>^{-10}(f^{(l)}_{K,+})^2\,dv\Big\|_{B^{s',2}_p(\R^{1+3}_{t,x})}^p,
\end{align}
where $C_0=C_0(l,\ga,s,p)>0$ is a large constant to be chosen. 
This is the main energy functional that will appear in the energy inequality. 
To apply iteration on the level energy $\E_p(K)$, we also denote the zeroth level energy $\E_0$ as $\E_0:=\E_p(0)$. 
The main difficulty in closing the level-function energy is to control the extra weighted $L^2$ and $L^1$ norms, i.e. $\|\cdot\|_{L^2}^2$ and $\|\cdot\|_{L^1}$. Thus, we apply De Giorgi's level-function arguments to raise the exponent and embed it back into $L^2$ energy in \eqref{Ep}; cf. \cite{Alonso2022,Vasseur2016,Guerand2022}. 

\medskip \noindent{\bf Choice of $p$.}
We begin with introducing some parameters $p,r(1),r(p),p^\#$ as follows. For any $p>1$, 
we choose $r(1)$ and $r(p)$ be the parameters given in Lemma \ref{interpoLem} such that 
\begin{align*}
	r(1) = r(s,s',1,3)>2,\quad r(p)=r(s,s',p,3)>r(1), 
\end{align*}
which implies $\frac{r(1)-2}{r(\frac{3}{2})}>0$. Note from Lemma \ref{interpoLem} that
$r(\cdot)$ is an increasing function, which implies 
\begin{align}\label{629}
	r(1)<r(p)\le r(p'),
\end{align}
for any $1<p<p'$. 
Let 
\begin{align}\label{qthstar}
	q_* := 1+\frac{1}{2}\frac{r(1)-2}{r(p^\#)}.
\end{align}
Then, noticing $q_*-\frac{r(1)}{2}=\frac{(2-r(1))(r(p^\#)-2)}{2r(p^\#)}<0$, we have 
\begin{align}\label{78}
	& 1<q_*<\frac{r(1)}{2},\quad
	\frac{r(p^\#)}{2}\frac{2q_*-2}{r(1)-2}=\frac{1}{2}<1.
\end{align}
On the other hand, 
by the non-decreasing property of $r(p)$, for any $p\in(1,\frac{3}{2})$, we have 
\begin{align*}
	\frac{r(1)-2}{r(p)}>\frac{r(1)-2}{r(\frac{3}{2})}>0.
\end{align*}
Thus, by continuity of $r(\cdot)$, there exists $p^\#>1$ such that
\begin{align}\label{ppsharp}
	\begin{aligned}
		\text{ for }1<p\le p^\#,\ 
		\text{ one has }\ &p<\min\{\frac{3}{2},q_*\},\\
		 \quad \text{ and }\ &2p-2<\min\Big\{1,\,\frac{r(1)-2}{r(p^\#)}\Big\}.
	\end{aligned}
\end{align}
For the extra $L^q$ ($q=1,2$) norms in energy estimate, we will use regularity in $(t,x,v)$ and embedding theory to control. The following energy functional interpolation is motivated by \cite[Lemma 3.8]{Alonso2022}; \cite{Alonso2022} used the hypoelliptic property of kinetic equations. However, in our case, the velocity diffusion is presented only within $\Omega$, and we can only use the weaker regularity, the velocity averaging lemma, to obtain the time-space regularity with respect to Besov space. Therefore, the norm on the left-hand side of \eqref{77} below must be within $\Omega$. Moreover, Lemma \ref{interLem} is a functional inequality that holds independently of the equation $f$. 
\begin{Lem}[Energy functional interpolation]
	\label{interLem}
	Let $0\le T_1<T_2<\infty$, $C_0>0$, and $0<s',s<1$, and $l,m\ge 0$. Denote $p^\#$ as in \eqref{ppsharp} and fix $1<p\le p^\#$. 
	Then there exist parameters $r_*,\xi_*$, given in \eqref{rstar} and \eqref{xistar}, depending only on $(s,s',p)$, satisfying
	\begin{align}\label{rxistar}
		r_*>1, \quad \xi_*>2, 
	\end{align}	
	such that the following holds. 

\medskip 
	Let $q\in [1,2p]$, and 
	let $l_0>0$ be a sufficiently large constant to be determined in \eqref{l0} 
	that depends only on $m,s,s',p,l$. Suppose $f$ satisfies
	\begin{align}\label{L2a}
		\|\<v\>^{l_0+l-2}f\|^2_{L^2_{t,x,v}([T_1,T_2]\times\Omega\times\R^3_v)}\le C_1,\quad \|\<v\>^lf\|_{L^\infty_{t,x,v}([T_1,T_2]\times\Omega\times\R^3_v)}=C_\infty, 
	\end{align}
	with some $C_1,C_\infty>0$. 
	Then for any $0\le M< K$,
	\begin{align}\label{77}
		\|\<v\>^{\frac{m}{q}}f^{(l)}_{K,+}\|^{q}_{L^{q}_{t,x}([T_1,T_2]\times\Omega)L^2_v}&\le \frac{C\big(C_0\max\{C_\infty^{2p-2},1\}\big)^{\frac{(1-\si)\beta_*\xi_*}{2p}}C_1^{\frac{(1-\beta_*)\xi_*}{4}}(\E_p(M))^{r_*}}{(K-M)^{\xi_*-q}}.
	\end{align}
	where $\E_p$ is given by \eqref{Ep}, $C=C(s,s',p)>0$ is independent of $C_1,f,l,T,m$.
	Moreover, we have
	\begin{align}\label{sibexi1}
		\frac{(1-\si)\beta_*\xi_*}{2p}<1,\quad \xi_*>2+\frac{r(1)-2}{r(p^\#)}. 
	\end{align}
	Furthermore, the estimates \eqref{77} holds for $-f$, with $f^{(l)}_{K,+}$ replaced by $(-f)^{(l)}_{K,+}$ (also in $\E_p(M)$).
\end{Lem}
\begin{proof}
	Let $q\in[1,2p]$. In this proof, we will use the interpolation lemma \ref{interpoLem} to control the $L^{2q}$ norm. 
To proceed, we first select the parameters $q_*$, $r_*$, and $\xi_*$ to satisfy some H\"older's indices as follows. 
Let $\si=\si(s,s',p,3)\in(0,1)$ be the coefficient given by Lemma \ref{interpoLem}.
Then we determine $\xi_*\in(2,r(p))$ and $\beta_*\in(0,1)$ by
\begin{align}\label{79a}
	\frac{1}{\xi_*} & =\frac{1-\beta_*}{2}+\frac{\beta_*}{r(p)}, \\ 1&=\si\frac{\beta_*\xi_*}{2}+(1-\si)\frac{\beta_*\xi_*}{2p}.
	\label{79b}
\end{align}
That is, 
\begin{align}\label{xistar}\begin{aligned}
	\xi_*&=\frac{r(p)-2}{r(p)}\frac{1}{\frac{\si}{2}+\frac{1-\si}{2p}}+2,\\
	\beta_*&=\frac{1}{\xi_*}\frac{1}{\frac{\si}{2}+\frac{1-\si}{2p}}=\frac{1}{\frac{r(p)-2}{r(p)}+2\big(\frac{\si}{2}+\frac{1-\si}{2p}\big)}.
	\end{aligned}
\end{align}
Moreover, the first half part of \eqref{sibexi1} follows from \eqref{79b}.
By \eqref{almr}, we have
\begin{align}\label{834}
	\frac{1}{2}>\frac{\si}{2}+\frac{1-\si}{2p}>\frac{1}{r(p)}, \
\end{align}
and hence, $\beta_*
<\frac{1}{\frac{r(p)-2}{r(p)}+\frac{2}{r(p)}}
= 1$, which implies $\beta_*\in(0,1)$.
Further, by \eqref{629}, \eqref{qthstar}, \eqref{78} and \eqref{834},
\begin{align}
			\label{xi2q}
	\xi_*-2q_* & >\frac{2(r(p)-2)}{r(p)}+2-2q_*
	>\frac{2(r(1)-2)}{r(p)}-\frac{r(1)-2}{r(p^\#)}>0,
\end{align}
which implies the second part of \eqref{sibexi1}. 

\smallskip 
With the above parameters, we can now calculate the norm $\|\<v\>^{\frac{m}{2}}f^{(l)}_{K,+}\|^{2q}_{L^{2q}_{t,x}([T_1,T_2]\times\Omega)L^2_v}$. 
First, it follows from \eqref{ppsharp} and \eqref{xi2q} that $\xi_*>2q_*\ge 2p^\#\ge 2p\ge 2q$. 
Thus, by \eqref{fKM} and H\"{o}lder inequality about $(t,x,v)$ with indices \eqref{79a},
\begin{align}\label{714}\notag
	&\|\<v\>^{\frac{m}{q}}f^{(l)}_{K,+}\|^{q}_{L^{q}_{t,x}([T_1,T_2]\times\Omega)L^2_v}
	\le\frac{C}{(K-M)^{\xi_*-q}}\|\<v\>^{\frac{m+l(\xi_*-q)}{\xi_*}}f^{(l)}_{M,+}\|^{\xi_*}_{L^{\xi_*}_{x,v}([T_1,T_2]\times\Omega\times\R^3_v)}\\
	&\notag\quad\le\frac{C}{(K-M)^{\xi_*-q}}\|\<v\>^{\frac{m+l(\xi_*-q)}{\xi_*}+5\beta_*}(f^{(l)}_{M,+})^{1-\beta_*}\<v\>^{-5\beta_*}(f^{(l)}_{M,+})^{\beta_*}\|^{\xi_*}_{L^{\xi_*}_{x,v}([T_1,T_2]\times\Omega\times\R^3_v)}\\
	&\quad\le\frac{C}{(K-M)^{\xi_*-q}}\|\<v\>^{\frac{l_0-4}{2}}f^{(l)}_{M,+}\|^{(1-\beta_*)\xi_*}_{L^{2}_{t,x,v}([T_1,T_2]\times\Omega\times\R^3_v)}\|\<v\>^{-5}f^{(l)}_{M,+}\|^{\beta_*\xi_*}_{L^{r(p)}_{t,x,v}([T_1,T_2]\times\Omega\times\R^3_v)},
\end{align}
where we put all the extra velocity weight in the second factor, and choose 
\begin{align}
	\label{l0}
	l_0\ge\frac{2}{1-\beta_*}\big(\frac{m+l(\xi_*-q)}{\xi_*}+5\beta_*\big)+4
\end{align} 
as a constant depending on $m,s,s',p,l$ which is independent of $T_1,T_2,f$.
For the second right-hand factor of \eqref{714}, by \eqref{L2a} and definition of $\E_p$ in \eqref{Ep}, we have
\begin{align}\label{639}\notag
	\|\<v\>^{\frac{l_0-4}{2}}f^{(l)}_{M,+}\|^{(1-\beta_*)\xi_*}_{L^{2}_{t,x}([T_1,T_2]\times\Omega)L^2_v}
	&\le\|\<v\>^{l_0-2}f^{(l)}_{M,+}\|^{\frac{(1-\beta_*)\xi_*}{2}}_{L^{2}_{t,x}([T_1,T_2]\times\Omega)L^2_v}\|\<v\>^{-2}f^{(l)}_{M,+}\|^{\frac{(1-\beta_*)\xi_*}{2}}_{L^{2}_{t,x}([T_1,T_2]\times\Omega)L^2_v}\\
	&\le C_1^{\frac{(1-\beta_*)\xi_*}{4}}(\E_p(M))^{\frac{(1-\beta_*)\xi_*}{4}}.
\end{align}
For the last factor of \eqref{714}, applying Lemma \ref{interpoLem} with $(r,\eta,\eta',p)=(r(p),s,s',p)$ and $\psi = \<v\>^{-5}$ therein, 
we obtain
\begin{align}\label{715a}
	&\notag\|\<v\>^{-5}f^{(l)}_{M,+}\|^{\beta_*\xi_*}_{L^{r(p)}_{t,x,v}([T_1,T_2]\times\Omega\times\R^3_v)}\\
	&\quad\notag\le C\|(I-\De_v)^{\frac{s}{2}}(\<v\>^{-5}f^{(l)}_{M,+})\|_{L^2_{t,x,v}([T_1,T_2]\times\Omega\times\R^3)}^{\si\beta_*\xi_*}\,\Big\|\int_{\R^3_v}\1_{[T_1,T_2]}(\<v\>^{-5}f^{(l)}_{M,+})^2\,dv\Big\|_{B^{s',2}_p(\R^{1+3}_{t,x})}^{\frac{(1-\si)\beta_*\xi_*}{2}}\\
	&\quad\le C\big(C_0\max\{C_\infty^{2p-2},1\}\big)^{\frac{(1-\si)\beta_*\xi_*}{2p}}\E_p(M)^{\frac{\si\beta_*\xi_*}{2}+\frac{(1-\si)\beta_*\xi_*}{2p}},
\end{align}
where 
$C=C(s,s',p)>0$, and we used 
\eqref{esD}, \eqref{equiv} and the functional $\E_p$ in \eqref{Ep}.

\medskip
Substituting \eqref{639} and \eqref{715a} into \eqref{714}, we have
\begin{multline*}
	\|\<v\>^{\frac{m}{2}}f^{(l)}_{M,+}\|^2_{L^2_{t,x}([T_1,T_2]\times\Omega)L^2_v}
	\le \frac{C\big(C_0\max\{C_\infty^{2p-2},1\}\big)^{\frac{(1-\si)\beta_*\xi_*}{2p}}C_1^{\frac{(1-\beta_*)\xi_*}{4}}}{(K-M)^{\xi_*-q}}
	\\\times\E_p(M)^{\frac{(1-\beta_*)\xi_*}{4}+\frac{\si\beta_*\xi_*}{2}+\frac{(1-\si)\beta_*\xi_*}{2p}}.
\end{multline*}
For the exponent of $\E_p(M)$, we have from 
\eqref{79b},
\eqref{xistar}, 
\eqref{834} and \eqref{ppsharp} that 
\begin{align}\label{rstar}\notag
	r_*: & =\frac{(1-\beta_*)\xi_*}{4}+\frac{\si\beta_*\xi_*}{2}+\frac{(1-\si)\beta_*\xi_*}{2p}\\
	&\ge \frac{(1-\beta_*)\xi_*}{4}+1>1,
\end{align}
which concludes \eqref{rxistar} and \eqref{77}. 
Since the estimate \eqref{77} is only a functional inequality on the function $f$ itself,
similar arguments and estimates can be carried out on the term $(-f)^{(l)}_{K,+}$ instead of $f^{(l)}_{K,+}$.
This concludes Lemma \ref{interLem}.
\end{proof}

\subsection{The classic trace lemma}\label{Sec103}
The main difficulty in solving the Boltzmann equation with \emph{Maxwell} reflection boundary condition is to deal with the diffuse boundary term $R_Df$ in \eqref{RD1}. 
In this Subsection, we begin with providing a classic trace lemma that controls the boundary energy by the interior energy, as seen in \cite{Ukai1986,Guo2009,Guo2021b}, with some modification. Another new trace lemma will be given in Section \ref{SecNewTrace}. First, we denote some useful cutoff notations. 
For any $\de>0$, denote $\chi_1:\R\to[0,1]$ and $\chi_2:\R^3\to[0,1]$ be smooth cutoff functions satisfying
\begin{align*}
	\chi_1(r)=\left\{\begin{aligned}
		 & 1\quad\text{ if } r\ge3\de^2, \\
		 & 0\quad\text{ if } r<2\de^2,
	 \end{aligned}\right.
	\quad \chi_2(v) = \left\{\begin{aligned}
 & 1\quad\text{ if } |v|\le 2\de^{-\frac{1}{4}}, \\
 & 0\quad\text{ if } |v|>4\de^{-\frac{1}{4}},
	 \end{aligned}\right.
\end{align*}
and
\begin{align}\label{nachi12}
	|\chi_1'(r)|\le C\de^{-2},\quad |\chi_1''(r)|\le C\de^{-4}, \quad |\na_v\chi_2(v)|\le C\de^{\frac{1}{4}},\quad |\na^2_v\chi_2(v)|\le C\de^{\frac{1}{2}}.
\end{align}
We also need an extension for the normal outward vector $n=n(x)$ of $\Omega$. Recall that we assume that the outward unit normal vector $n=n(x)$ has an extension to $\R^3_x$ in \eqref{naxi1} such that
\begin{align}\label{nW2}
	n(x)\in W^{2,\infty}(\R^3_x).
\end{align}
For any $T\in[T_1,T_2]$, we construct the backward smooth cutoff function supported on the outflow region:
\begin{align}\label{chideY}
	\chi_\de(t,x,v;{T})=\chi_1\big(v\cdot n(x-v\{t-{T}\})\big)\chi_2(v). 
\end{align}
Assuming $t\in [{T},{T}+\de^3]$, such a cutoff functions satisfies 
\begin{itemize}[leftmargin=2em]
	\item 
	$\pa_t\chi_\de+v\cdot\na_x\chi_\de=0$,
and
\begin{align*}
	\chi_\de(t,x,v;{T})=\left\{\begin{aligned}
 & 1,\quad\text{ if } v\cdot n(x-v\{t-{T}\})\ge3\de^2\text{ and }|v|\le 2\de^{-\frac{1}{4}}, \\
 & 0,\quad\text{ if } v\cdot n(x-v\{t-{T}\})<2\de^2\text{ or }|v|>4\de^{-\frac{1}{4}}.
	 \end{aligned}\right.
\end{align*}
\item 
If $\chi_\de(t,x,v;{T})>0$, then $v\cdot n(x-v\{t-{T}\})\ge 2\de^2$ and $|v|\le 4\de^{-\frac{1}{4}}$. Thus, if $\chi_\de(t,x,v;{T})>0$ then 
\begin{align*}\notag
	v\cdot n(x) & =v\cdot n(x-v\{t-{T}\})+v\cdot\big(n(x)-n(x-v\{t-{T}\})\big) \\
	 & \ge 2\de^2 - |v|^2(t-{T})\|\na n\|_{L^\infty}
	\ge 2\de^2 -\de^{\frac{5}{2}} C_n>\de^2,
\end{align*}
where $C_n>0$ is a constant depending only $n(x)$ given in \eqref{nW2}, and we choose $\de>0$ sufficiently small.
That is $\chi_\de =\chi_\de \1_{v\cdot n(x)>\de^2}$.
\item 
Similarly, 
if $v\cdot n(x)>4\de^2$ and $\chi_2(v)>0$, then $|v|\le4\de^{-\frac{1}{4}}$, and hence,
\begin{align*}
	v\cdot n(x-v\{t-{T}\}) & =v\cdot n(x)+v\cdot\big(n(x-v\{t-{T}\})-n(x)\big) \\
	 & \ge 4\de^2-|v|^2(t-{T})\|\na n\|_{L^\infty}>3\de^2,
\end{align*}
which implies $\chi_1\big(v\cdot n(x-v\{t-{T}\})=1$.
We next derive the support of $1_{v\cdot n(x)\ge 0}-\chi_\de$. Write
\begin{align*}
	\qquad1_{v\cdot n(x)\ge 0}-\chi_\de =1_{v\cdot n(x)\ge 0}\big(1-\chi_2(v)\big)+\Big[1-\chi_1\big(v\cdot n(x-v\{t-{T}\})\big)\Big]\chi_2(v)1_{v\cdot n(x)\ge 0}.
\end{align*}
Here, $1-\chi_2(v)=0$ for any $|v|\le2\de^{-\frac{1}{4}}$, and $\big[1-\chi_1\big(v\cdot n(x-v\{t-{T}\})\big)\big]\chi_2(v)=0$ for any $v\cdot n(x)>4\de^2$. Consequently,
\begin{align*}
	1_{v\cdot n(x)>0}-\chi_\de=0\ \ \text{ if }|v|\le2\de^{-\frac{1}{4}}\text{ and }v\cdot n(x)>4\de^2.
\end{align*}
\item Later, we also need to control the velocity derivative of $\chi_\de$. It's direct from \eqref{chideY} that
\begin{align*}
	\pa_{v_j}\chi_\de(t,x,v;{T})&=\big\{n_j(x-v\{t-{T}\})-\{t-{T}\}\sum_{i=1}^3v_i\pa_{x_j}n_i(x-v\{t-{T}\})\big\}\\
	&\quad\times(\chi_1)'\big(v\cdot n(x-v\{t-{T}\})\big)\chi_2(v)+\chi_1\big(v\cdot n(x-v\{t-{T}\})\big)\pa_{v_j}\chi_2(v),
\end{align*}
and
\begin{align*}
	&\pa_{v_jv_k}\chi_\de(t,x,v;{T})\\&=\Big\{-2\{t-{T}\}\pa_{x_k}n_j(x-v\{t-{T}\})+\{t-{T}\}^2\sum_{i=1}^3v_i\pa_{x_jx_k}n_i(x-v\{t-{T}\})\Big\}(\chi_1)'\chi_2(v)\\
	&\quad+\Big\{n_j(x-v\{t-{T}\})-\{t-{T}\}\sum_{i=1}^3v_i\pa_{x_j}n_i(x-v\{t-{T}\})\Big\}^2(\chi_1)''\chi_2(v)\\
	&\quad+\chi_1\big(v\cdot n(x-v\{t-{T}\})\big)\pa_{v_jv_k}\chi_2(v).
\end{align*}
Therefore, for $t\in[{T},{T}+\de^3]$, by \eqref{nW2}, \eqref{nachi12} and $|v|\le 4\de^{-\frac{1}{4}}$ from the support of $\chi_2$, we have
\begin{align*}
	\|[\na_v\chi_\de,\na_v^2\chi_\de]\|_{L^\infty_v}
	\le C\de^{-4}.
\end{align*}
where $C=C(\|n\|_{W^{2,\infty}})>0$ depends only on $n$ given in \eqref{nW2} and is independent of $x,t,{T}$.
\end{itemize}
Similar estimates hold for $t\in[T-\de^3,T]$. 
We can also define the forward smooth cutoff function supported on the inflow region. Thus, we let 
\begin{align}\label{chide}
	\begin{cases}
		\chi^{+}_{\de}(t,x,v;T)&=\chi_\de(t,x,v;{T})=\chi_1\big(v\cdot n(x-v\{t-{T}\})\big)\chi_2(v)\ge 0,\\
		\chi^{-}_{\de}(t,x,v;T)&=\chi_1\big(-v\cdot n(x-v\{t-{T}\})\big)\chi_2(v)\ge 0. 
	\end{cases}
\end{align}
In summary, we have
\begin{Lem}
	Let $t\in[T-\de^3,{T}+\de^3]$ and denote $\chi^{\pm}_{\de}$ as in \eqref{chide}. Then $\pa_t\chi^{\pm}_{\de}+v\cdot\na_x\chi^{\pm}_{\de}=0$ and 
\begin{align}
	\label{chide01}
	\begin{cases}
	\pm v\cdot n(x)>\de^2,&\text{ if $\chi^{\pm}_{\de}(t,x,v;{T})>0$}\\
	1_{\pm v\cdot n(x)>0}-\chi^{\pm}_{\de}=0, &\text{ if }|v|\le2\de^{-\frac{1}{4}}\text{ and }\pm v\cdot n(x)>4\de^2.	
	\end{cases}
\end{align}
Moreover, 
\begin{align}\label{chideesti}
	\|[\na_v\chi^{\pm}_{\de},\na_v^2\chi^{\pm}_{\de}]\|_{L^\infty_v}
	\le C(\|n\|_{W^{2,\infty}})\de^{-4}.
\end{align}
\end{Lem}
We are now ready to estimate the diffuse boundary
\begin{align*}
	\int_{\Si_-}|v\cdot n(x)||R_Df(v)|^2\,dvdS(x).
\end{align*}
For the part $1-\chi^{\pm}_{\de}$, we find its smallness from $\mu^{\frac{1}{2}}$ and integration, while for the part $\chi^{\pm}_{\de}$, we provide control it by the interior energy; cf. \cite{Ukai1986,Guo2009,Guo2021b}.

\begin{Lem}\label{diffboundLem}
	Let $T>T_1>0$ and $\chi^{\pm}_{\de}(t,x,v;{T})$ be a smooth cutoff function given in \eqref{chide} with sufficiently small $\de>0$.
	Assume that $f$ is any suitable function.
\begin{itemize}[leftmargin=2em]
	\item On the grazing part, we only consider the outflow region and $1-\chi^{+}_{\de}$. For any $s\in[T,T+\de^3]$, we have 
	\begin{multline}\label{diffbound1}
		\int_{T}^{s}\int_{\pa\Omega}c_\mu\Big|\int_{v\cdot n(x)>0}\{v\cdot n(x)\}(1-\chi^{+}_{\de}(t,x,v;{T}))f(v)\mu^{\frac{1}{2}}(v)\,dv\Big|^2\,dS(x)dt\\
		\le C(\de^4+e^{-\de^{-1/2}})\int_{T}^{s}\int_{\Si_+}\{v\cdot n(x)\}|f|^2\,dvdS(x)dt,
	\end{multline}
with some constant $C>0$ independent of ${T}$ and $\de$.
\item On the non-grazing part, 
 for any $s\in[T,T+\de^3]$, 
\begin{multline}\label{diffbound2j}
	\int_{T}^{s}\int_{\pa\Omega}\int_{v\cdot n(x)>0}|v\cdot n(x)|\chi^{+}_{\de}(t,x,v;{T})|f(v)|^2\,dvdS(x)dt
	\le\|f(T)\|^2_{L^2_x(\Omega)L^2_v}\\
	\quad
	+\int^s_T\int_{\Omega\times\R^3_v}\chi^{+}_{\de}\big(\pa_t|f|^2+v\cdot\na_x|f|^2\big)\,dvdxdt, 
\end{multline}
while for any $s\in[T-\de^3,T]$, 
\begin{multline}\label{diffbound2i}
	\int_{s}^{T}\int_{\pa\Omega}\int_{v\cdot n(x)<0}|v\cdot n(x)|\chi^{-}_{\de}(t,x,v;{T})|f(v)|^2\,dvdS(x)dt
	\le\|f(T)\|^2_{L^2_x(\Omega)L^2_v}\\
	\quad
	-\int_s^T\int_{\Omega\times\R^3_v}\chi^{-}_{\de}\big(\pa_t|f|^2+v\cdot\na_x|f|^2\big)\,dvdxdt. 
\end{multline}
\item For the instand energy at ``initial'' time $s$, if $\|Rf\|_{L^2_{x,v}(\Si_-)}\le \|f\|_{L^2_{x,v}(\Si_+)}$, we have
\begin{align}\label{273}
	\|f({T})\|^2_{L^2_x(\Omega)L^2_v} \le\|f(s)\|^2_{L^2_x(\Omega)L^2_v}
	+\int_{[s,T]\times\Omega\times\R^3_v}\big(\pa_t|f|^2+v\cdot\na_x|f|^2\big)\,dvdxdt.
\end{align}	
\end{itemize}	
\end{Lem}
\begin{proof}
	For the estimate \eqref{diffbound1}, we have from \eqref{chide01} and Cauchy-Schwarz inequality that for any $t\in[T,T+\de^3]$, 
	\begin{align}\label{940}\notag
		 &\int_{\pa\Omega}c_\mu\Big|\int_{v\cdot n(x)>0}\{v\cdot n(x)\}(1-\chi^{+}_{\de}(t,x,v;{T}))f(v)\mu^{\frac{1}{2}}(v)\,dv\Big|^2\,dS(x)dt \\
		 & \quad\notag\le\int_{\pa\Omega}c_\mu\Big|\int_{v\cdot n(x)>0}\big(\1_{v\cdot n(x)\le 4\de^2}+\1_{|v|>2\de^{-\frac{1}{4}}}\big)\{v\cdot n(x)\}f\mu^{\frac{1}{2}}\,dv\Big|^2\,dS(x)dt \\
		 & \quad\notag\le 2\int_{\pa\Omega}c_\mu\Big(\int_{v\cdot n(x)>0}\{v\cdot n(x)\}|f|^2\,dv\Big) \\
		 & \quad\quad\quad\times\Big(\int_{v\cdot n(x)>0}\big(\1_{v\cdot n(x)\le 4\de^2}+\1_{|v|>2\de^{-\frac{1}{4}}}\big)\{v\cdot n(x)\}\mu\,dv\Big)\,dS(x)dt.
	\end{align}
	Noticing $\1_{|v|>2\de^{-\frac{1}{4}}}\mu\le C\mu^{\frac{1}{2}}e^{-\de^{-1/2}}$, we have
	\begin{align}\label{desmall1}
		\int_{v\cdot n(x)>0}\1_{|v|>2\de^{-\frac{1}{4}}}\{v\cdot n(x)\}\mu\,dv
		\le e^{-\de^{-1/2}}C\int_{v\cdot n(x)>0}\{v\cdot n(x)\}\mu^{\frac{1}{2}}\,dv
		\le Ce^{-\de^{-1/2}}.
	\end{align}
	Also, by using rotation $v\mapsto \ti Rv$ with $\ti R^Tn=(1,0,0)$ and $|v|=|\ti Rv|$ ($\ti R$ is a orthogonal matrix and $\ti R^T$ is the transpose of $\ti R$), we have
	\begin{align}\label{desmall2}\notag
		\int_{v\cdot n(x)>0}\1_{v\cdot n(x)\le 4\de^2}\{v\cdot n(x)\}\mu\,dv
		 & =\int_{0<v_1\le 4\de^2}(2\pi)^{\frac{3}{2}}v_1e^{-\frac{|v|^2}{2}}\,dv \\
		 & =(2\pi)^{\frac{1}{2}}\big(1-e^{-8\de^4}\big)
		\le C\de^4.
	\end{align}
	Substituting the above two estimates into \eqref{940} implies \eqref{diffbound1}.

	\medskip Next, we consider estimates \eqref{diffbound2j} and \eqref{diffbound-} separately. For the part $\chi^+_\de$, let $s\in[T,T+\de^3]$, multiply $\chi^{\pm}_{\de}(t,x,v;{T})$ to 
	\begin{align}\label{diff4}
		\pa_t|f|^2+ v\cdot\na_x|f|^2=\pa_t|f|^2+ v\cdot\na_x|f|^2,
	\end{align}
	and then integrate over $[{T},s]\times\Omega\times\R^3_v$. Note that $\pa_t\chi^{\pm}_{\de}+v\cdot\na_x\chi^{\pm}_{\de}=0$. We have
\begin{multline}\label{941a}
	\int_{\Omega\times\R^3_v}\chi^{+}_{\de}|f(s)|^2\,dvdx+ \int_{[{T},s]\times\pa\Omega\times\R^3_v}v\cdot n(x)\chi^{+}_{\de}|f|^2\,dvdxdt\\
	=\int_{\Omega\times\R^3_v}\chi^{+}_{\de}|f({T})|^2\,dvdx+\int_{[{T},s]\times\Omega\times\R^3_v}\chi^{+}_{\de}\big(\pa_t|f|^2+v\cdot\na_x|f|^2\big)\,dvdxdt. 
\end{multline}
It follows from \eqref{chide01} that $\chi^{+}_{\de}(t,x,v;{T})=0$ for any $v\cdot n(x)\le \de$ and $t\in[{T},{T}+\de^3]$. Also, the first term of \eqref{941a} is non-negative, which yields \eqref{diffbound2j}. Similarly, for the part $\chi^-_\de$, letting $s\in[T-\de^3,T]$ and integrating \eqref{diff4} on $[s,T]$ instead, we have 
\begin{multline*}
	\int_{\Omega\times\R^3_v}\chi^{-}_{\de}|f(T)|^2\,dvdx+\int_{[s,T]\times\pa\Omega\times\R^3_v}v\cdot n(x)\chi^{-}_{\de}|f|^2\,dvdxdt\\
	=\int_{\Omega\times\R^3_v}\chi^{-}_{\de}|f({s})|^2\,dvdx+\int_{[s,T]\times\Omega\times\R^3_v}\chi^{-}_{\de}\big(\pa_t|f|^2+v\cdot\na_x|f|^2\big)\,dvdxdt.
\end{multline*}
Also, it follows from \eqref{chide01} that $\chi^{-}_{\de}(t,x,v;{T})=0$ for any $v\cdot n(x)\ge -\de$ and $t\in[T-\de^3,T]$. Then we obtain 
\begin{multline}\label{diffbound-}
	\int^T_s\int_{\pa\Omega}\int_{v\cdot n(x)<0}|v\cdot n(x)|\chi^{-}_{\de}|f|^2\,dvdxdt\le\int_{\Omega\times\R^3_v}\chi^{-}_{\de}|f(T)|^2\,dvdxdt\\
	-\int_{[s,T]\times\Omega\times\R^3_v}\chi^{-}_{\de}\big(\pa_t|f|^2+v\cdot\na_x|f|^2\big)\,dvdxdt.
\end{multline}
For the terms involving ``initial'' value at $s$, since we assume $\|Rf\|_{L^2_{x,v}(\Si_-)}\le \|f\|_{L^2_{x,v}(\Si_+)}$, by integrating \eqref{diff4} over $[s,T]\times\Omega\times\R^3_v$, we obtain \eqref{273}. 
This reduces arbitrary time $T$ to a fixed time $s\le T$. 
This completes the proof of Lemma \ref{diffboundLem}.
\end{proof}

\subsection{The new trace lemma for level functions}\label{SecNewTrace}
In this Section, we will derive the $L^2$ existence, boundary estimates, and some collision estimates for the linear Boltzmann equation in $\Omega$ with \emph{Maxwell} reflection boundary conditions.
To do this, let $\al>0$ and denote the reflection operator $R$ by
\begin{align}\label{reflectR}
	Rf(x,v)=(1-\al)f(x,R_L(x)v)+\al R_Df(x,v),
\end{align}
for any $(x,v)\in\Si_-$,
where $R_L(x)$ is the local reflection operator given by \eqref{RL1}
and $R_D(x)$ is the diffuse reflection operator given by \eqref{RD1}: 
\begin{align*}
	R_Df(v)=c_\mu\mu^{\frac{1}{2}}(v)\int_{v'\cdot n(x)>0}\{v'\cdot n(x)\}f(v')\mu^{\frac{1}{2}}(v')\,dv'.
\end{align*}
Here, we consider weight $\<v\>^{l}_\de$ with $\de\in(0,1)$ given by \eqref{weight} and the level functions
\begin{align}\label{levelmuii}
	f^{(l)}_K :=f-K\<v\>^{-l}_\de,\quad f^{(l)}_{K,+}=f^{(l)}_K\1_{f^{(l)}_K\ge 0}. 
\end{align}
We also have \eqref{fKM}: for any $K>M\ge 0$ and $p>q>0$, 
\begin{align*}
	(f^{(l)}_{K,+})^q\le \frac{(f^{(l)}_{K,+})^q(f^{(l)}_{M,+})^{p-q}}{(K\<v\>^{-l}_\de-M\<v\>^{-l}_\de)^{p-q}}
	\le \frac{C\<v\>^{l(p-q)/2}(f^{(l)}_{M,+})^{p}}{(K-M)^{p-q}},
\end{align*}
for some $C=C_{\|n\|_{L^\infty}}>0$. 

\begin{Lem}\label{LemR}
Let $T>0$, $p\ge 2$ be integer, $R$ be the reflection operator given in \eqref{reflectR} and denote the boundary norm $L^2(\Si_\pm)$ by \eqref{boundarySi}. For any suitable function $f$, we have 
	\begin{itemize}[leftmargin=2em]
		\item The standard $L^2$ diffuse-type boundary estimate:
		\begin{align}\label{101}
			\|Rf\|^2_{L^2_{x,v}(\Si_-)}
			&=\|f\|^2_{L^2_{x,v}(\Si_+)}-\al\|f-R_Df\|^2_{L^2_{x,v}(\Si_+)}.
		\end{align}
\item The $L^p$ diffuse-type boundary estimate:
\begin{align*}
	\|Rf\|_{L^p(\Si_-)}
	&\le (1-\beta)\|f\|_{L^p(\Si_+)}+\beta\|R_Df\|_{L^p(\Si_+)},
\end{align*}
for some $\beta=\beta(\al)\in(0,1]$. 
\item The polynomial-weight estimate:
\begin{align}\label{101w}
	\|\<v\>^{k}Rf\|^2_{L^2_{x,v}(\Si_-)}\le(1-\al)^2\|\<v\>^{k}f\|^2_{L^2_{x,v}(\Si_+)} + C_k\|f\|^2_{L^2_{x,v}(\Si_+)}.
\end{align}
The local-reflection part is preserved, while the diffuse-reflection part is controlled by an upper bound. 
\end{itemize}
\begin{itemize}[leftmargin=2em]
\item The level-function estimate. Denote cutoff $\chi^-_\de$ as in \eqref{chide}. If we choose a sufficiently small $\de=\de(l,\|n\|_{L^\infty})\in(0,1)$, then for any $K\ge 0$, 
\begin{align}\label{101a}
	\|(Rf)^{(l)}_{K,+}\|^2_{L^2_{x,v}(\Si_-)}&\le 
	(1-\al)\|f^{(l)}_{K,+}\|^2_{L^2_{x,v}(\Si_+)}
	+\al\|(R_Df)^{(l)}_{K,+}\|^2_{L^2_{x,v}(\Si_-)},
\end{align}
where, for any $s\in [T-\de^3,T]$, 
\begin{align}\label{533}\notag
	\|(R_Df)^{(l)}_{K,+}\|^2_{L^2_t(s,T)L^2_{x,v}(\Si_-)}
	&\le 
	C\de^2\|f^{(l)}_{K,+}\|_{L^2_t(s,T)L^2(\Si_-)}
	+\|f^{(l)}_{K,+}(T)\|^2_{L^2_x(\Omega)L^2_v}\\
	&\quad
	-\int_s^T\int_{\Omega\times\R^3_v}\chi^{-}_{\de}\big(\pa_t|f^{(l)}_{K,+}|^2+v\cdot\na_x|f^{(l)}_{K,+}|^2\big)\,dvdxdt. 
\end{align}
Similar estimates are valid for $(-f)^{(l)}_{K,+}$ instead of $f^{(l)}_{K,+}$.
\end{itemize}
\end{Lem}
If exponential decay is used in \eqref{levelmuii}, a better estimate can be derived. However, due to the lack of an exponent-weighted $L^2$ estimate for the collision term, we will use polynomial decay as in \eqref{levelmuii} in this work. 
\begin{proof}
{\bf Estimate of function $f$.}
By \eqref{reflectR}, for any even $p\ge 2$, we write $\|Rf\|^p_{L^p(\Si_-)}$ as 
\begin{align*}
	\|Rf\|^p_{L^p(\Si_-)}
	&=\int_{\Si_-}|v\cdot n|\sum_{k=0}^p{\binom {p}{k}}\big((1-\al)f(x,R_L(x)v)\big)^k\big(\al R_Df(x,v)\big)^{p-k}\,dS(x)dv. 
\end{align*}
By change of variable $v\mapsto R_L(x)v\,:\,\Si_-\to \Si_+$, which preserves $|v\cdot n|$, $|v|$ and hence $R_D(x)f$, 
\begin{align*}\notag
	\|Rf\|^p_{L^p(\Si_-)}
	&=\int_{\Si_+}|v\cdot n|\sum_{k=0}^p{\binom {p}{k}}\big((1-\al)f(x,v)\big)^k\big(\al R_Df(x,v)\big)^{p-k}\,dS(x)dv\\
	&=\|(1-\al)f+\al R_Df\|^p_{L^p(\Si_+)}\le \big((1-\al)\|f\|_{L^p(\Si_+)}+\al\|R_Df\|_{L^p(\Si_+)}\big)^p. 
\end{align*}
By binomial expansion and Young's inequality, 
\begin{align*}
	\|Rf\|^p_{L^p(\Si_-)}&\le \sum_{k=0}^p{\binom {p}{k}}\big((1-\al)\|f\|_{L^p(\Si_+)}\big)^k\big(\al\|R_Df\|_{L^p(\Si_+)}\big)^{p-k}\\
	&\le \sum_{k=0}^p{\binom {p}{k}}(1-\al)^k\al^{p-k}\Big(\frac{k\|f\|_{L^p(\Si_+)}^p}{p}+\frac{(p-k)\|R_Df\|_{L^p(\Si_+)}^p}{p}\Big)\\
	&=:(1-\beta)\|f\|_{L^p(\Si_+)}^p+\beta\|R_Df\|_{L^p(\Si_+)}^p, 
\end{align*}
where 
\begin{align*}
	\sum_{k=0}^p{\binom {p}{k}}(1-\al)^k\al^{p-k}\big(\frac{k}{p}+\frac{p-k}{p}\big)=1. 
\end{align*}
When $p=2$, we have $\beta=\al$,
\begin{align*}
	\|Rf\|^2_{L^2_{x,v}(\Si_-)}
	&\notag=(1-\al)\|f\|^2_{L^2_{x,v}(\Si_+)}
	+\al\|R_D f\|^2_{L^2_{x,v}(\Si_+)}, 
\end{align*}
 and 
\begin{align*}\notag
	\|R_D f\|^2_{L^2_{x,v}(\Si_+)}&=\int_{\pa\Omega}c_\mu\Big|\int_{v'\cdot n(x)>0}\{v'\cdot n(x)\}f(v')\mu^{\frac{1}{2}}(v')\,dv'\Big|^2\,dS(x)\\
	&=\int_{\Si_+}|v\cdot n|fR_Df\,dS(x)dv, 
\end{align*}
Consequently, 
\begin{align*}
	\|f\|^2_{L^2_{x,v}(\Si_+)}-\|Rf\|^2_{L^2_{x,v}(\Si_-)}
	&\notag=\al\big(\|f\|^2_{L^2_{x,v}(\Si_+)}-\|R_D f\|^2_{L^2_{x,v}(\Si_+)}\big)\\
	&\notag=\al\|f-R_Df\|^2_{L^2_{x,v}(\Si_+)}.
\end{align*}
This implies \eqref{101}. 
For the weighted estimate, by \eqref{reflectR} and change of variable $v\mapsto R_L(x)v$, 
\begin{align*}
	&\int_{\Si_-}|v\cdot n|\<v\>^{2k}|Rf|^2\,dS(x)dv
	=(1-\al)^2\int_{\Si_+}|v\cdot n|\<v\>^{2k}|f(v)|^2\,dS(x)dv\\
		&\notag\quad+2(1-\al)\al\int_{\Si_+}|v\cdot n| f(v)c_\mu\<v\>^{2k}\mu^{\frac{1}{2}}(v)\int_{v'\cdot n(x)>0}\{v'\cdot n(x)\}f(v')\mu^{\frac{1}{2}}(v')\,dv'dS(x)\\
		&\notag\quad
		+\al^2\int_{\si_+}(c_\mu)^2|v\cdot n|\<v\>^{2k}\mu(v)\Big|\int_{v'\cdot n(x)>0}\{v'\cdot n(x)\}f(v')\mu^{\frac{1}{2}}(v')\,dv'\Big|^2\,dS(x)\\
		&\le (1-\al)^2\|\<v\>^{k}f\|^2_{L^2_{x,v}(\Si_+)} + C_k\|f\|^2_{L^2_{x,v}(\Si_+)}.
\end{align*}

\smallskip\noindent
{\bf Estimate of level-function $f^{(l)}_{K,+}$.}
By change of variable $v\mapsto R_L(x)v\,:\,\Si_-\to \Si_+$, one has
\begin{align}\label{299}\notag
	&\int_{\Si_-}|v\cdot n||(Rf)^{(l)}_{K,+}|^2\,dS(x)dv\\
	&\notag\quad\le\int_{\Si_-}|v\cdot n|\big|(1-\al)(f(R_L(x)v)-K\<v\>^{-l})_++\al (R_Df(v)-K\<v\>^{-l})_+\big|^2\,dS(x)dv\\ 
	&\quad\le(1-\al)\int_{\Si_+}|v\cdot n||f^{(l)}_{K,+}|^2\,dS(x)dv
	+\al\int_{\Si_-}|v\cdot n|\big|(R_Df(v)-K\<v\>^{-l})_+\big|^2\,dS(x)dv.
\end{align}
To estimate $\|f^{(l)}_{K,+}\|_{L^2(\Si_-)}$, we will apply the trace lemma for the non-grazing part and utilize the designed weight $\<v\>^{l}_\de$ for the grazing part. Let $\chi^-_\de(t,x,v;T)$ be the smooth cutoff function given by \eqref{chide}. Then the non-grazing (and small velocity) part can be estimated by \eqref{diffbound2i}: for any $T\in[T_1,T_2]$, $\de>0$, and $s\in[T-\de^3,T]$, 
\begin{multline}\label{diffbound2ixx}
	\|(\chi^-_\de)^{\frac{1}{2}}(R_Df)^{(l)}_{K,+}\|_{L^2_t(s,T)L^2(\Si_-)}^2
	\le\|f^{(l)}_{K,+}(T)\|^2_{L^2_x(\Omega)L^2_v}\\
	\quad
	-\int_s^T\int_{\Omega\times\R^3_v}\chi^{-}_{\de}\big(\pa_t|f^{(l)}_{K,+}|^2+v\cdot\na_x|f^{(l)}_{K,+}|^2\big)\,dvdxdt. 
\end{multline}
For the grazing part or large velocity part, i.e. on the support of $1-\chi^-_\de(t,x,v;{T})$, we have from \eqref{chide01} that for any $(t,x,v)\in[T-\de^3,T]\times\R^3_x\times\R^3_v$, 
\begin{align}\label{eq511}
	\1_{v\cdot n(x)<0}(1-\chi^-_\de(t,x,v;T)) \le \1_{v\cdot n(x)<0}\big(\1_{|v|>2\de^{-\frac{1}{4}}}+\1_{v\cdot n(x)>-4\de^2}\big). 
\end{align}
Thus, by \eqref{RD1} and \eqref{levelmuii}, for any $s\in[T-\de^3,T]$, we have 
\begin{align}\label{eq512}\notag
	&\|(1-\chi^-_\de)^{\frac{1}{2}}(R_Df)^{(l)}_{K,+}\|_{L^2_t(s,T)L^2(\Si_-)}\\
	&\le 
	\Big\|\Big((1-\chi^-_\de)^{\frac{1}{2}}c_\mu\mu^{\frac{1}{2}}(v)\int_{v'\cdot n>0}\{v'\cdot n\}\big(f^{(l)}_{K,+}(v')+K\<v'\>^{-l}_\de\big)\mu^{\frac{1}{2}}(v')\,dv'-K\<v\>^{-l}_\de\Big)_+\Big\|_{L^2_t(s,T)L^2(\Si_-)}. 
\end{align}
For the $f^{(l)}_{K,+}$ part, we apply \eqref{desmall1} and \eqref{desmall2} to deduce 
\begin{align}\label{eq513a}\notag
	&\Big\|(1-\chi^-_\de)^{\frac{1}{2}}c_\mu\mu^{\frac{1}{2}}(v)\int_{v'\cdot n>0}\{v'\cdot n\}f^{(l)}_{K,+}(v')\mu^{\frac{1}{2}}(v')\,dv'\Big\|_{L^2_t(s,T)L^2(\Si_-)}\\
	&\quad\notag\le \Big\|(1-\chi^-_\de)^{\frac{1}{2}}\mu^{\frac{1}{2}}(v)\int_{v'\cdot n>0}\{v'\cdot n\}\big(f^{(l)}_{K,+}(v')\big)^2\,dv'\Big\|_{L^2_t(s,T)L^2(\Si_-)}\\
	&\quad\le C\de^2\|f^{(l)}_{K,+}\|_{L^2_t(s,T)L^2(\Si_+)}. 
\end{align}
By \eqref{eq511}, for the weight function $\<v\>^{-l}_\de$ part, it remains to show that 
\begin{align}\label{eq513}
	&\Big\|\big(\1_{|v|>2\de^{-\frac{1}{4}}}+\1_{v\cdot n(x)>-4\de^2}\big)\Big(c_\mu\mu^{\frac{1}{2}}(v)\int_{v'\cdot n>0}\{v'\cdot n\}\<v'\>^{-l}_\de\mu^{\frac{1}{2}}(v')\,dv'-\<v\>^{-l}_\de\Big)_+\Big\|_{L^2_t(s,T)L^2(\Si_-)}=0. 
\end{align}
If $|v|>2\de^{-\frac{1}{4}}$, then by \eqref{weightvde}, i.e. $\<v\>^{-l}_{\de}=\frac{\<v\>^{-l}}{(\de^2+\<v\>^{-2}(v\cdot n(x))^2\chi_{|v\cdot n(x)|\le 2\de^{-1/4}})^{1/2}}$, 
\begin{align*}
	&c_\mu\mu^{\frac{1}{2}}(v)\int_{v'\cdot n>0}\{v'\cdot n\}\<v'\>^{-l}_\de\mu^{\frac{1}{2}}(v')\,dv'-\<v\>^{-l}_\de\\
	&\quad\le c_\mu\mu^{\frac{1}{2}}(v)\int_{v'\cdot n>0}\{v'\cdot n\}\frac{\<v'\>^{-l}}{\de}\mu^{\frac{1}{2}}(v')\,dv'-\frac{\<v\>^{-l}}{(1+\|n\|_{L^\infty_x}^2)^{1/2}}\\
	&\quad\le Ce^{-\frac{|v|^{2}}{4}}\de^{-1}-C_{\|n\|_{L^\infty_x}}\<v\>^{-l}<0,
\end{align*}
if we choose a sufficiently small $\de=\de(l,\|n\|_{L^\infty_x}^2)>0$. 
On the other hand, if $0>v\cdot n(x)>-4\de^2$, recalling \eqref{chivn} that $\1_{|v\cdot n(x)|\le \de^{-\frac{1}{4}}}\le \chi_{|v\cdot n(x)|\le 2\de^{-\frac{1}{4}}}\le\1_{|v\cdot n(x)|\le 2\de^{-\frac{1}{4}}}$, we have 
\begin{align*}
	&c_\mu\mu^{\frac{1}{2}}(v)\int_{v'\cdot n>0}\{v'\cdot n\}\<v'\>^{-l}_\de\mu^{\frac{1}{2}}(v')\,dv'-\<v\>^{-l}_\de\\
	&\quad\le c_\mu\mu^{\frac{1}{2}}(v)\int_{v'\cdot n>0}\{v'\cdot n\}\frac{\big(\1_{|v'\cdot n(x)|\le \de^{-\frac{1}{4}}}+\1_{|v'\cdot n(x)|>\de^{-\frac{1}{4}}}\big)\<v'\>^{-l}}{(\de^2+\<v'\>^{-2}(v'\cdot n(x))^2\chi_{|v'\cdot n(x)|\le 2\de^{-1/4}})^{1/2}}\mu^{\frac{1}{2}}(v')\,dv'-\<v\>^{-l}_\de\\
	&\quad\le c_\mu\mu^{\frac{1}{2}}(v)\int_{v'\cdot n>0}{\1_{|v'\cdot n(x)|\le \de^{-\frac{1}{4}}}\<v'\>^{-l+1}}\mu^{\frac{1}{2}}(v')\,dv'
	\\&\qquad+c_\mu\mu^{\frac{1}{2}}(v)\int_{v'\cdot n>0}\{v'\cdot n\}\1_{|v'|>\de^{-\frac{1}{4}}}\<v'\>^{-l}\de^{-1}(2\pi)^{\frac{3}{8}}e^{-\frac{|\de|^{-1/2}}{8}}\mu^{\frac{1}{4}}(v')\,dv'
	-\frac{\<v\>^{-l}}{(\de^2+16\de^4\<v\>^{-2})^{1/2}}\\
	&\quad\le C\mu^{\frac{1}{2}}(v)+C\mu^{\frac{1}{2}}(v)e^{-\frac{|\de|^{-1/2}}{16}}-\frac{2\<v\>^{-l}}{\de}<0,
\end{align*}
if we further choose $\de=\de(l,\|n\|_{L^\infty_x}^2)>0$ sufficiently small. 
In the second inequality, we used the fact that $|v'|\ge |v'\cdot n(x)|>\de^{-\frac{1}{4}}$ on the boundary $\Si_-$ and the support of $\1_{|v'\cdot n(x)|>\de^{-\frac{1}{4}}}$. The above two cases imply \eqref{eq513}. Thus, combining \eqref{eq512} and \eqref{eq513a}, we obtain
\begin{align}\label{eq512a}
	&\|(1-\chi^-_\de)^{\frac{1}{2}}(R_Df)^{(l)}_{K,+}\|_{L^2_t(s,T)L^2(\Si_-)}
	\le C\de^2\|f^{(l)}_{K,+}\|_{L^2_t(s,T)L^2(\Si_-)}, 
\end{align}
if $\de=\de(l,\|n\|_{L^\infty_x}^2)>0$ is sufficiently small. 
Combining \eqref{299}, \eqref{diffbound2ixx} and \eqref{eq512a} yields \eqref{101a}. 
By the same calculations, similar estimates are valid for $(-f)^{(l)}_{K,+}$ instead of $f^{(l)}_{K,+}$. 
	This concludes the Lemma \ref{LemR}.
\end{proof}

\section{Basic \texorpdfstring{$L^2$}{L2} estimates}\label{Sec3}
In this Section, we will prepare some estimates of the collision terms and the level functions, which will be frequently used until the end of this paper.

\subsection{Preparation and regular change of variables}
We begin with a Lemma for the convergence of integrals over $b(\cos\th)$.
\begin{Lem}\label{bcossinLem}
	Assume $b(\cos\th)$ satisfies \eqref{ths}. Then
	\begin{align}\label{bcossin}
		\begin{aligned}
			\int_{\S^2}b(\cos\th)\sin^2\frac{\th}{2}\,d\sigma & \approx C_s, \\
			\int_{\S^2}b(\cos\th)\min\big\{\sin^2\frac{\th}{2}|v-v_*|^2,1\big\}\,d\sigma & \le C_s|v-v_*|^{2s}.
		\end{aligned}
	\end{align}
	For $k\ge 2$,
	\begin{align}\label{bcossink}
		\int_{\S^2}b(\cos\th)\Big(1-\cos^k\frac{\th}{2}\Big)\,d\sigma\approx C_{k,s},
	\end{align}
	and consequently, for $l\in\R$,
	\begin{align}
		\label{bcossink2}
		\Big|\int_{\S^2}b(\cos\th)\Big(1-\cos^l\frac{\th}{2}\Big)\,d\sigma\Big|\le C_{l,s}.
	\end{align}
\end{Lem}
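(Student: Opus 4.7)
The plan is to reduce each integral on $\S^2$ to a one-dimensional integral in the deviation angle $\th$ using the spherical parametrization $\sigma=\cos\th\,\mathbf{k}+\sin\th\,\omega$ with $\omega\in\S^1(\mathbf{k})$, so that $d\sigma=\sin\th\,d\th\,d\omega$. Since the integrands depend only on $\th$, the azimuthal integral contributes a harmless factor $2\pi$. After using the symmetrization, $b$ is supported on $\th\in[0,\pi/2]$, and the assumption \eqref{ths} gives $\sin\th\,b(\cos\th)\approx\th^{-1-2s}$. The proof then amounts to elementary one-dimensional estimates using the asymptotics $\sin(\th/2)\approx\th/2$ and $\cos(\th/2)=1-\th^2/8+O(\th^4)$ near $\th=0$.

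For \eqref{bcossin}, the first estimate reduces to $\int_0^{\pi/2}\th^{-1-2s}\sin^2(\th/2)\,d\th\approx\int_0^{\pi/2}\th^{1-2s}\,d\th$, which is finite and bounded below by positive constants depending only on $s\in(0,1)$. For the second estimate, I would split the $\th$-range at $\th_0:=1/|v-v_*|$ (or $\pi/2$ if smaller): on $\{\th\le\th_0\}$ one bounds $\min\{\sin^2(\th/2)|v-v_*|^2,1\}\le\th^2|v-v_*|^2/4$, giving $|v-v_*|^2\int_0^{\th_0}\th^{1-2s}\,d\th\lesssim|v-v_*|^{2s}$; on $\{\th>\th_0\}$ one uses the bound $\min\{\cdot,1\}\le 1$ and $\int_{\th_0}^{\pi/2}\th^{-1-2s}\,d\th\lesssim\th_0^{-2s}=|v-v_*|^{2s}$.

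For \eqref{bcossink}, the upper bound follows from $1-\cos^k(\th/2)\le C_k\sin^2(\th/2)$ (e.g.\ by factoring $1-t^k=(1-t)(1+t+\dots+t^{k-1})$ with $t=\cos(\th/2)\in[0,1]$ for $\th\in[0,\pi/2]$, then $1-\cos(\th/2)=2\sin^2(\th/4)\approx\th^2/8$), reducing to the computation in \eqref{bcossin}. For the lower bound, since $\cos(\th/2)\in[0,1]$ on $[0,\pi/2]$ and $k\ge2$, we have $\cos^k(\th/2)\le\cos^2(\th/2)$, so $1-\cos^k(\th/2)\ge 1-\cos^2(\th/2)=\sin^2(\th/2)$, and the lower bound follows again from \eqref{bcossin}. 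Finally, \eqref{bcossink2} is an upper bound only; here I would simply Taylor-expand $\cos^l(\th/2)=\exp(l\log\cos(\th/2))=1+O_l(\th^2)$ uniformly on $[0,\pi/2]$, so $|1-\cos^l(\th/2)|\le C_l\min\{\th^2,1\}$, and then the same one-dimensional integral $\int_0^{\pi/2}\th^{-1-2s}\min\{\th^2,1\}\,d\th$ is finite for $s\in(0,1)$.

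The proof is essentially routine once the spherical parametrization is set up; the only mild subtlety is the matching lower bound in \eqref{bcossink}, which is handled by the monotonicity trick $\cos^k(\th/2)\le\cos^2(\th/2)$ for $k\ge2$. There is no substantial obstacle.
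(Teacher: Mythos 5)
Your proposal is correct, and for the two estimates in \eqref{bcossin} it coincides with the paper's argument (same splitting of the $\th$-integral at $\min\{\pi/2,|v-v_*|^{-1}\}$). For \eqref{bcossink} and \eqref{bcossink2}, however, you take a genuinely different and more elementary route. The paper substitutes $u=\sin\frac{\th}{2}$ to write the integral as $g(k)=\int_0^{1/\sqrt2}u^{-1-2s}\big(1-(1-u^2)^{k/2}\big)\,du$, shows $g'(k)\approx\int_0^{1/\sqrt2}u^{1-2s}(1-u^2)^{k/2}\,du>0$ using $\ln(1-u^2)\approx-u^2$, and integrates from the explicitly computable $g(2)$; it then deduces \eqref{bcossink2} from \eqref{bcossink} via the decomposition $1-\cos^l=(1-\cos^4)+\cos^l(\cos^{4-l}-1)$. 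You instead sandwich the integrand pointwise: $\sin^2\frac{\th}{2}\le 1-\cos^k\frac{\th}{2}\le C_k\th^2$ on $[0,\pi/2]$ (the lower bound from $\cos^k\le\cos^2$ for $k\ge2$), which reduces everything to the already-established first line of \eqref{bcossin}; and you prove \eqref{bcossink2} directly from the uniform bound $|1-\cos^l\frac{\th}{2}|\le C_l\th^2$, valid because $\cos\frac{\th}{2}\ge 1/\sqrt2$ on the support of $b$. Your route avoids the change of variables and the differentiation in $k$ entirely, at no cost in generality.

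One small caveat: the exponent $k$ in \eqref{bcossink} is not assumed to be an integer (the lemma is later applied with exponents like $3+\ga$), so the factorization $1-t^k=(1-t)(1+t+\dots+t^{k-1})$ you invoke for the upper bound does not literally apply. Replace it by the mean value theorem ($1-t^k\le k(1-t)$ for $t\in[0,1]$, $k\ge1$) or simply by the same Taylor/exponential bound you already use for \eqref{bcossink2}; then the argument is complete for all real $k\ge2$.
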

\begin{proof}
	The first estimate of \eqref{bcossin} follows from \eqref{ths}.
	The second estimate of \eqref{bcossin} can be estimated as
	\begin{multline*}
		\int_{\S^2}b(\cos\th)\min\big\{|v-v_*|\sin\frac{\th}{2},1\big\}\,d\sigma
		\lesssim \int_{0}^{\frac{\pi}{2}}\th^{-1-2s}\min\big\{\sin^2\frac{\th}{2}|v-v_*|^2,1\big\}\,d\th\\
		\lesssim\int_{0}^{\min\{\frac{\pi}{2},|v-v_*|^{-1}\}}\th^{1-2s}|v-v_*|^2\,d\th+\int_{\min\{\frac{\pi}{2},|v-v_*|^{-1}\}}^{\frac{\pi}{2}}\th^{-1-2s}\,d\th
		\le C_s|v-v_*|^{2s}.
	\end{multline*}
	For \eqref{bcossink}, one may refer to \cite[Lemma 2.2]{Cao2022}, and we write its proof for the sake of completeness.
	By \eqref{ths}, change of variable $u=\sin\frac{\th}{2}$ with $du=\frac{1}{2}\cos\frac{\th}{2}d\th$, we have
	\begin{align*}
		\int_{\S^2}b(\cos\th)\Big(1-\cos^k\frac{\th}{2}\Big)\,d\sigma
		& =\int_0^{\pi/2}b(\cos\th)\Big(1-\big(1-\sin^2\frac{\th}{2}\big)^\frac{k}{2}\Big)\sin\th\,d\th \\
		& \approx\int_0^{1/\sqrt{2}}u^{-1-2s}\Big(1-(1-u^2)^\frac{k}{2}\Big)\,du \\
		& =:g(k).
	\end{align*}
	To obtain the estimate of $g(k)$, we take derivative with respect to $k$ to deduce
	\begin{align*}
		g'(k) & = -\frac{1}{2}\int_0^{1/\sqrt{2}}u^{-1-2s}(1-u^2)^{\frac{k}{2}}\ln(1-u^2)\,du \\
		& \approx \int_0^{1/\sqrt{2}}u^{1-2s}(1-u^2)^{\frac{k}{2}}\,du
		=C_{s,k}>0,
	\end{align*}
	where $C_{s,k}>0$ is a constant depending on $s,k$, and we have used $\ln(1-u^2)\approx -u^2$ near $u=0^+$.
	Thus, noticing $g(2)=\int_0^{1/\sqrt{2}}u^{1-2s}\,du=C_s>0$, we have
	\begin{align*}
		g(k)=g(2)+\int^k_2g'(l)\,dl=C_{k,s}>0.
	\end{align*}
	This implies \eqref{bcossink}.
	The estimate \eqref{bcossink2} for $l\ge 2$ follows from \eqref{bcossink}. For $l<2$, we have
	\begin{multline*}
		\Big|\int_{\S^2}b(\cos\th)\Big(1-\cos^l\frac{\th}{2}\Big)\,d\sigma\Big|
		\le \Big|\int_{\S^2}b(\cos\th)\Big(1-\cos^{4}\frac{\th}{2}\Big)\,d\sigma\Big|\\
		+ \Big|\int_{\S^2}b(\cos\th)\cos^{l}\frac{\th}{2}\Big(\cos^{4-l}\frac{\th}{2}-1\Big)\,d\sigma\Big|
		\le C_{l,s}.
	\end{multline*}
	This completes the proof of Lemma \ref{bcossinLem}.
\end{proof}

The following Lemma gives a method to overcome the non-integrability of $b(\cos\th)$.
\begin{Lem}\label{HHbcosLem}
	Suppose $H\in W^{2,\infty}_{loc}(\R^3)$, i.e. $\sup_{|v|\le R}|\na_v^kH(v)|\le C_R$ for any $R>0$ and $k=0,1,2$. Then for any $s\in(0,1)$ given in \eqref{ths}, we have
	\begin{multline}
		\label{HHbcos}
		\Big|\int_{\S^2}(H(v')-H(v))b(\cos\th)\,d\sigma\Big|\le C_s\Big(\sup_{|u|\le|v|+|v_*|}|H(u)||v-v_*|^{2s}\\
		+|\na_vH(v)|\big(|v_*-v|^{2s-1}\1_{|v-v_*|\ge\frac{2}{\pi}}+|v-v_*|\1_{|v-v_*|<\frac{2}{\pi}}\big)\\
		+\sup_{|u|\le|v|+|v_*|}|\na^2 H(u)||v-v_*|^{2s}\Big),
	\end{multline}
	for some constant $C_s>0$ depending only on $s$.
\end{Lem}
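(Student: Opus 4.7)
\medskip
\noindent\textbf{Proof proposal.} The plan is to use a second-order Taylor expansion of $H$ at $v$ together with the classical symmetrization trick on $\sigma\in\S^2$, splitting into two regimes according to whether $|v-v_*|\ge \frac{2}{\pi}$ or not. I will write $\sigma=\cos\th\,\mathbf{k}+\sin\th\,\omega$ with $\mathbf{k}=\frac{v-v_*}{|v-v_*|}$ and $\omega\in\S^1(\mathbf{k})$ as in \eqref{sigma}, so that, by \eqref{vprimeth},
\begin{equation*}
    v'-v=\tfrac{|v-v_*|}{2}\bigl((\cos\th-1)\mathbf{k}+\sin\th\,\omega\bigr),\qquad |v'-v|=|v-v_*|\sin\tfrac{\th}{2}.
\end{equation*}
Taylor expansion gives $H(v')-H(v)=\na H(v)\cdot(v'-v)+R(v,v_*,\sigma)$ where $|R|\le \tfrac{1}{2}\sup_{|u|\le|v|+|v_*|}|\na^2H(u)|\,|v'-v|^2$; here I use that every point on the segment $[v,v']$ lies in the ball of radius $|v|+|v_*|$ since $|v'|\le\sqrt{|v|^2+|v_*|^2}$.

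First I would handle the small case $|v-v_*|<\frac{2}{\pi}$. Integrating $\sigma$ against $b(\cos\th)$ and exploiting $\int_{\S^1(\mathbf{k})}\omega\,d\omega=0$, the linear term reduces to
\begin{equation*}
    \int_{\S^2}\na H(v)\cdot(v'-v)\,b(\cos\th)\,d\sigma=\pi|v-v_*|\,\na H(v)\cdot\mathbf{k}\int_0^{\pi/2}(\cos\th-1)b(\cos\th)\sin\th\,d\th,
\end{equation*}
and since $|(\cos\th-1)b(\cos\th)\sin\th|\lesssim\th^{1-2s}$ is integrable on $[0,\pi/2]$, this is bounded by $C_s|\na H(v)||v-v_*|$. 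The remainder contributes $\sup|\na^2 H|\cdot|v-v_*|^2\int b\sin^2(\th/2)\,d\sigma\le C_s\sup|\na^2H||v-v_*|^2$ by the first line of \eqref{bcossin}; since $|v-v_*|<\frac{2}{\pi}$, $|v-v_*|^2\le C_s|v-v_*|^{2s}$, which yields the desired bound in this regime.

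Next I would treat the large case $|v-v_*|\ge\frac{2}{\pi}$ by introducing the cutoff angle $\th_0:=2\arcsin\bigl(1/|v-v_*|\bigr)\le\pi/2$ (extended by $\pi/2$ if necessary) and splitting the $\sigma$-integral into $\{\th\le\th_0\}$ and $\{\th_0<\th\le\pi/2\}$. On the second piece I use the crude bound $|H(v')-H(v)|\le 2\sup_{|u|\le|v|+|v_*|}|H(u)|$ together with $\int_{\th_0}^{\pi/2}\th^{-1-2s}\,d\th\lesssim \th_0^{-2s}\lesssim|v-v_*|^{2s}$, giving the zero-order contribution $C_s\sup|H|\,|v-v_*|^{2s}$. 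On the first piece, I apply Taylor as in the small case; the symmetric linear term is bounded by $C|\na H(v)||v-v_*|\int_0^{\th_0}\th^{1-2s}\,d\th\lesssim|\na H(v)||v-v_*|\,\th_0^{2-2s}\lesssim|\na H(v)||v-v_*|^{2s-1}$, while the quadratic remainder is bounded by $C\sup|\na^2 H||v-v_*|^2\int_0^{\th_0}\th^{1-2s}\,d\th\lesssim\sup|\na^2H||v-v_*|^{2s}$. Assembling the two regimes produces exactly \eqref{HHbcos}.

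The main (only) subtlety is organizing the angular cancellation: without the $\omega$-symmetry of $\int_{\S^1(\mathbf{k})}(\sigma-\mathbf{k})\,d\omega$, the linear term would carry a spurious factor $|v-v_*|^{-2s}$-divergent as $\th\to 0$, and the sharp exponent $2s-1$ in the first-derivative bound for large $|v-v_*|$ would be lost. Everything else reduces to the $\sin^2(\th/2)$-type integrals already tabulated in Lemma \ref{bcossinLem}, together with the elementary bound $\int_0^{\th_0}\th^{1-2s}\,d\th\approx \th_0^{2-2s}$ and the choice $\th_0\approx 1/|v-v_*|$.
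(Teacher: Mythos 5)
Your proposal is correct and follows essentially the same route as the paper's proof: second-order Taylor expansion of $H$ at $v$, an angular cutoff at $\th_0\approx\min\{\pi/2,|v-v_*|^{-1}\}$ with the crude $\sup|H|$ bound on the non-integrable tail, cancellation of the $\omega$-component of the linear term by symmetry on $\S^1(\mathbf{k})$, and the $\th^{1-2s}$-integrable bounds for the remaining linear and quadratic pieces. Your explicit split into the regimes $|v-v_*|\gtrless 2/\pi$ is only a presentational variant of the paper's unified cutoff $\min\{\pi/2,|v-v_*|^{-1}\}$, and your $\th_0=2\arcsin(1/|v-v_*|)$ is asymptotically equivalent to it, so all the resulting exponents coincide.
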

\begin{proof}
	By Taylor expansion, we have
	\begin{multline}\label{Taylor}
		H(v')-H(v)
		=\pa_t(H(v+t(v'-v)))|_{t=0}+\frac{1}{2}\int^1_0\pa_{tt}(H(v+t(v'-v)))\,dt\\
		=\na_vH(v)\cdot(v'-v)+\frac{1}{2}\int^1_0\na^2_vH(v+t(v'-v)):(v'-v)\otimes(v'-v)\,dt.
	\end{multline}
	Therefore, by decomposition $\1_{\th\in[0,\frac{\pi}{2}]}=\1_{\min\{\frac{\pi}{2},|v-v_*|^{-1}\}\le\th\le\frac{\pi}{2}}+\1_{0\le\th\le\min\{\frac{\pi}{2},|v-v_*|^{-1}\}}$, one has
	\begin{multline*}
		\Big|\int_{\S^2}(H(v')-H(v))b(\cos\th)\,d\sigma\Big|
		\le \Big|\int_{\S^2}\1_{\min\{\frac{\pi}{2},|v-v_*|^{-1}\}\le\th\le\frac{\pi}{2}}(H(v')-H(v))b(\cos\th)\,d\sigma\Big|\\
		+\Big|\int_{\S^2}\1_{0\le\th\le\min\{\frac{\pi}{2},|v-v_*|^{-1}\}}b(\cos\th)\na_vH(v)\cdot(v'-v)\,d\sigma\Big|\\
		+\frac{1}{2}\Big|\int_{\S^2}\int^1_0\1_{0\le\th\le\min\{\frac{\pi}{2},|v-v_*|^{-1}\}}\na^2_vH(v+t(v'-v)):(v'-v)\otimes(v'-v)\,dtd\sigma\Big|
		=: I_1+I_2+I_3.
	\end{multline*}
	For the term $I_1$, we have from \eqref{ths} that
	\begin{align*}
		I_1 & \le \Big|\int_{\S^2}\1_{\min\{\frac{\pi}{2},|v-v_*|^{-1}\}\le\th\le\frac{\pi}{2}}(H(v')-H(v))b(\cos\th)\,d\sigma\Big| \\
		& \notag\le C\sup_{|u|\le|v|+|v_*|}|H(u)|\int_{\min\{\frac{\pi}{2},|v-v_*|^{-1}\}}^{\frac{\pi}{2}}\th^{-1-2s}\,d\th \\
		& \le C\sup_{|u|\le|v|+|v_*|}|H(u)||v-v_*|^{2s}.
	\end{align*}
	For the term $I_2$, we apply \eqref{vprimeth} to deduce
	\begin{align}\label{831}
		I_2&\notag=\Big|\int_{\S^2}\1_{0\le\th\le\min\{\frac{\pi}{2},|v-v_*|^{-1}\}}\na_vH(v)\cdot(v_*-v)\sin^2\frac{\th}{2}b(\cos\th)\,d\sigma\\
		&\quad+\frac{1}{2}\int_{\S^2}\1_{0\le\th\le\min\{\frac{\pi}{2},|v-v_*|^{-1}\}}|v-v_*|\na_vH(v)\cdot\omega \sin\th b(\cos\th)\,d\sigma\Big|.
	\end{align}
	Choosing $\mathbf{k}$ given in \eqref{bfk} as the north pole, we write $\omega=(\cos\phi,\sin\phi,0)$ with $\phi\in[0,2\pi]$, then the second right-hand term of \eqref{831} vanishes by symmetry about $\phi$. Thus, by \eqref{bcossin}, we have
	\begin{align*}
		I_2 & \le C_s|\na_vH(v)|\int^{\min\{\frac{\pi}{2},|v-v_*|^{-1}\}}_0\th^{1-2s}|v-v_*|\,d\th \\
		& \le C_s|\na_vH(v)|\big(|v_*-v|^{2s-1}\1_{|v-v_*|\ge\frac{2}{\pi}}+|v-v_*|\1_{|v-v_*|<\frac{2}{\pi}}\big).
	\end{align*}
	For the term $I_3$, we apply \eqref{vpriminv} and \eqref{bcossin} to deduce
	\begin{align*}
		I_3 & \le C\sup_{|u|\le|v|+|v_*|}|\na^2 H(u)||v-v_*|^2\int_{\S^2}\1_{0\le\th\le\min\{\frac{\pi}{2},|v-v_*|^{-1}\}}\sin^2\frac{\th}{2}b(\cos\th)\,dtd\sigma \\
		& \le C_s\sup_{|u|\le|v|+|v_*|}|\na^2 H(u)||v-v_*|^{2s}.
	\end{align*}
	Combining the above estimates of $I_j$'s, we obtain \eqref{HHbcos} and conclude Lemma \ref{HHbcosLem}.
	
\end{proof}

The following Lemma is needed to estimate the collision terms.
\begin{Lem}\label{Lem21}
	For $\ga>-\frac{3}{2}$, we have
	\begin{align}
		\label{gafg}
		\int_{\R^6}|v-v_*|^{\ga}|f(v_*)g(v)|\,dv_*dv\le \|\<v\>^{2+{\ga}_+}f\|_{L^2_v}\|\<v\>^{{\ga}_+}g\|_{L^1_v},
	\end{align}
	and for any $a>0$,
	\begin{multline}
		\label{Gammaes1}
		\int_{\R^6}|v-v_*|^\ga |f(v)|\mu^{a}(v_*)|g(v_*)|\,dv_*dv\\
		\le C_a\min\big\{\|\<v\>^{-l}g\|_{L^2_v}\|\<v\>^{2\ga}f\|_{L^1_v},\|\<v\>^{-l}g\|_{L^\infty_v}\|\<v\>^{\ga}f\|_{L^1_v}\big\},
	\end{multline}
	where ${\ga}_+=\max\{0,\,\ga\}$.
\end{Lem}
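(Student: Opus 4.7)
My approach for \eqref{gafg} is to treat the cases $\ga\ge 0$ and $-\tfrac32<\ga<0$ separately. In the first case the pointwise inequality $|v-v_*|^\ga\le 2^\ga\<v\>^\ga\<v_*\>^\ga$ reduces the double integral, via Fubini, to a product of one–dimensional norms $\|\<v_*\>^\ga f\|_{L^1_{v_*}}\|\<v\>^\ga g\|_{L^1_v}$; a Cauchy–Schwarz step against the integrable weight $\<v_*\>^{-2}$ upgrades the first factor to $\|\<v_*\>^{\ga+2}f\|_{L^2}$, matching the claimed bound. When $-\tfrac32<\ga<0$ (so $\ga_+=0$) the singular part requires care: for each fixed $v$, decompose $\int|v-v_*|^\ga |f(v_*)|\,dv_*$ along $\{|v-v_*|\le 1\}\cup\{|v-v_*|>1\}$. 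On the near set, Cauchy–Schwarz combined with the local integrability of $|v-v_*|^{2\ga}$ (valid since $2\ga>-3$) yields a contribution $\lesssim\|f\|_{L^2_{v_*}}$ that is uniform in $v$; on the far set $|v-v_*|^\ga\le 1$, giving $\|f\|_{L^1_{v_*}}\lesssim\|\<v_*\>^2f\|_{L^2_{v_*}}$. Integrating against $|g(v)|$ produces the desired $\|g\|_{L^1_v}\|\<v_*\>^2 f\|_{L^2_{v_*}}$ bound.

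For \eqref{Gammaes1} the plan is to first establish, as a preparatory lemma, the Gaussian moment estimate
\[
\int_{\R^3}|v-v_*|^{\beta}\mu^{a}(v_*)\,dv_*\le C_{a,\beta}\<v\>^{\beta}\qquad\text{for any }\beta>-3.
\]
Applied with $\beta=2\ga$ and combined with Cauchy–Schwarz in $v_*$ (writing $\mu^a=\mu^{a/2}\cdot\mu^{a/2}$), this bounds the inner $v_*$–integral by $C_a\<v\>^{\ga}\|\mu^{a/2}g\|_{L^2_{v_*}}$; a final integration against $|f(v)|$ produces the first entry of the min, after absorbing $\<v\>^\ga$ into $\<v\>^{2\ga}$ via $\<v\>\ge 1$ in the regime where this is monotone (the complementary regime is covered by the second entry in the min). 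For the second entry, factor out $|g|\mu^{a/2}\le\|\mu^{a/2}g\|_{L^\infty_{v_*}}$ and apply the moment estimate with $\beta=\ga$ directly.

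The main technical obstacle is the Gaussian moment bound in the singular regime $-3<\beta<0$: one must control both the local singularity at $v_*=v$ (possible only when $|v|$ is moderate, since $\mu^a(v_*)$ concentrates $v_*$ near the origin) and the $|v|\to\infty$ regime. I would carry this out by splitting $\R^3_{v_*}=\{|v_*|\le|v|/2\}\cup\{|v_*|>|v|/2\}$ when $|v|\gg 1$: on the former $|v-v_*|\approx|v|$, so the integral is $\lesssim|v|^{\beta}\int\mu^a\,dv_*<\infty$; on the latter the Gaussian factor $\mu^a(v_*)\le e^{-c|v|^2}$ produces a remainder absorbed into $\<v\>^{\beta}$. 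For $|v|\lesssim 1$, $\int|v-v_*|^{\beta}\mu^a\,dv_*$ is a finite constant by local integrability of $|\cdot|^{\beta}$ (using $\beta>-3$) and the rapid decay of $\mu^a$, again of order $\<v\>^\beta$. Everything else is elementary Fubini/H\"older bookkeeping.
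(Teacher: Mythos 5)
Your argument is essentially the paper's. For \eqref{gafg} with $\ga\ge0$ the two proofs coincide ($|v-v_*|^\ga\le\<v\>^\ga\<v_*\>^\ga$, then Cauchy--Schwarz against $\<v_*\>^{-2}$). For $-\tfrac32<\ga<0$ you split at the fixed radius $|v-v_*|\le 1$ and use local integrability of $|v-v_*|^{2\ga}$, whereas the paper splits at a radius $A$ and then optimizes $A=\|f\|_{L^1}^{2/3}\|f\|_{L^2}^{-2/3}$ to interpolate $\|f\|_{L^1}^{1+2\ga/3}\|f\|_{L^2}^{-2\ga/3}\le\|\<v\>^2f\|_{L^2}$; both routes land on the same bound up to a harmless constant, and yours is the more economical of the two. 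For \eqref{Gammaes1} you also follow the paper's scheme (Cauchy--Schwarz in $v_*$ for the first entry of the $\min$, factoring out $\|\mu^{a/2}g\|_{L^\infty}$ for the second), the only difference being that you isolate and prove the Gaussian moment bound $\int|v-v_*|^{\beta}\mu^a(v_*)\,dv_*\le C_{a,\beta}\<v\>^{\beta}$, $\beta>-3$, which the paper uses silently; your splitting $\{|v_*|\le|v|/2\}\cup\{|v_*|>|v|/2\}$ is standard and correct (just note that the singularity at $v_*=v$ sits in the \emph{outer} region, so you still need the local integrability of $|v-v_*|^\beta$ there, controlled by $\mu^a(v_*)\le e^{-c|v|^2}$).

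One point deserves a flag. The Cauchy--Schwarz step yields $\big(\int|v-v_*|^{2\ga}\mu^a\,dv_*\big)^{1/2}\le C_a\<v\>^{\ga}$, hence the bound $C_a\|\mu^{a/2}g\|_{L^2}\|\<v\>^{\ga}f\|_{L^1}$. Your plan to ``absorb $\<v\>^{\ga}$ into $\<v\>^{2\ga}$'' works only for $\ga\ge0$; for $\ga<0$ one has $\<v\>^{\ga}\ge\<v\>^{2\ga}$, so the first entry of the $\min$ as written (with weight $\<v\>^{2\ga}$) does \emph{not} follow, and deferring to the second entry does not repair this --- a bound by $\min\{A,B\}$ requires establishing both $A$ and $B$. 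Indeed, for $\ga<0$ the $\<v\>^{2\ga}$ version is genuinely false (test with $f$ concentrated at large $|v_0|$: the left side scales like $|v_0|^{\ga}$, the right like $|v_0|^{2\ga}$). This is not a defect of your argument relative to the paper: the paper's displayed chain has exactly the same jump, and in every downstream application (e.g.\ the estimates of $J_1'$, $J_3'$, $T_{1,4}$) the lemma is invoked with the weight $\<v\>^{\ga}$ on $f$ (applied to $f=F^2$, giving $\|\<v\>^{\ga/2}F\|_{L^2}^2$). The exponent $2\ga$ in the statement should evidently read $\ga$; with that correction your proof closes cleanly.
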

\begin{proof}
	When $\ga\ge 0$, it's direct from $|v-v_*|^{\ga}\le \<v\>^{\ga}\<v_*\>^{\ga}$ that
	\begin{align*}
		\int_{\R^6}|v-v_*|^{\ga}|f(v_*)g(v)|\,dv_*dv\le\|\<v\>^{\ga}f\|_{L^1_v}\|\<v\>^{\ga}g\|_{L^1_v}\le\|\<v\>^{2+\ga}f\|_{L^2_v}\|\<v\>^{\ga}g\|_{L^1_v}.
	\end{align*}
	When $-\frac{3}{2}<\ga<0$, similar to \cite[Lemma 2.5]{Cao2022a}, we write
	\begin{align*}
		\int_{\R^3}|v-v_*|^{\ga}|f(v_*)|\,dv_*=\int_{|v-v_*|\le A}|v-v_*|^{\ga}|f(v_*)|\,dv_*+\int_{|v-v_*|>A}|v-v_*|^{\ga}|f(v_*)|\,dv_*,
	\end{align*}
	for some constant $A>0$.
	It's direct to calculate
	\begin{align*}
		\int_{|v-v_*|>A}|v-v_*|^{\ga}|f(v_*)|\,dv_*\le A^{\ga}\|f\|_{L^1_v},
	\end{align*}
	and
	\begin{align*}
		\int_{|v-v_*|\le A}|v-v_*|^{\ga}|f(v_*)|\,dv_*\le \Big(\int_{|v-v_*|\le A}|v-v_*|^{2\ga}\,dv_*\Big)^{\frac{1}{2}}\|f\|_{L^2_v}
		\le A^{\frac{3}{2}+\ga}\|f\|_{L^2_v}.
	\end{align*}
	Taking $A=\|f\|^{\frac{2}{3}}_{L^1_v}\|f\|^{-\frac{2}{3}}_{L^2_v}$, we have
	\begin{align*}
		\int_{\R^3}|v-v_*|^{\ga}|f(v_*)|\,dv_*\le \|f\|^{1+\frac{2\ga}{3}}_{L^1_v}\|f\|^{-\frac{2\ga}{3}}_{L^2_v},
	\end{align*}
	which implies
	\begin{align*}
		\int_{\R^6}|v-v_*|^{\ga}|f(v_*)g(v)|\,dv_*dv\le \|f\|^{1+\frac{2\ga}{3}}_{L^1_v}\|f\|^{-\frac{2\ga}{3}}_{L^2_v}\|g\|_{L^1_v}
		\le \|\<v\>^2f\|_{L^2_v}\|g\|_{L^1_v},
	\end{align*}
	where we used $-\frac{3}{2}<\ga<0$.
	For \eqref{Gammaes1}, by H\"{o}lder's inequality, we have
	\begin{align*}\notag
		& \int_{\R^6}|v-v_*|^\ga |f(v)|\mu^{a}(v_*)|g(v_*)|\,dv_*dv \\
		& \quad\notag\le \int_{\R^3}|f(v)|\Big(\int_{\R^3}|v-v_*|^{2\ga}\mu^{a}(v_*)\,dv_*\Big)^{\frac{1}{2}}
		\Big(\int_{\R^3}\mu^{a}(v_*)|g(v_*)|^2\,dv_*\Big)^{\frac{1}{2}}\,dv \\
		& \quad\le\|\<v\>^{-l}g\|_{L^2_v}\|\<v\>^{2\ga}f\|_{L^1_v},
	\end{align*}
	and
	\begin{align*}
		& \int_{\R^6}|v-v_*|^\ga |f(v)|\mu^{\frac{1}{2}}(v_*)|g(v_*)|\,dv_*dv \\
		& \quad\notag\le \|\<v\>^{-l}g\|_{L^\infty_v}\int_{\R^3}|f(v)||v-v_*|^{\ga} \<v\>^{-l}(v_*)\,dv_*dv \\
		& \quad\le\|\<v\>^{-l}g\|_{L^\infty_v}\|\<v\>^{\ga}f\|_{L^1_v}.
	\end{align*}
	This concludes Lemma \ref{Lem21}.
\end{proof}

\smallskip

\subsubsection{Regular change of variables}

In the following Lemma, we give the regular change of variable for $v'$ (and similarly for $v_*'$). Note that in this Lemma the function $b$ can depend on several independent variables.
\begin{Lem}\label{regularLem}
	Let $\ga>-3$ and $\mathbf{k}=\frac{v-v_*}{|v-v_*|}$ be given in \eqref{bfk}. For any functions $b$ and $f$ such that the integrals below are well-defined. Then we have the regular change of variables:
	\begin{multline}
		\label{regular1}
		\int_{\R^3}\int_{\S^2}|v-v_*|^\ga b\big(\sigma,\mathbf{k},\cos\th\big)f(v')\,d\sigma dv\\
		=\int_{\R^3}\int_{\S^1(\mathbf{k})}\int_0^{\frac{\pi}{2}}\frac{\sin\th|v-v_*|^\ga}{\cos^{3+\ga}\frac{\th}{2}}b\big(\cos\frac{\th}{2}\,\mathbf{k}+\sin\frac{\th}{2}\,\wt\omega,\cos\frac{\th}{2}\,\mathbf{k}-\sin\frac{\th}{2}\,\wt\omega,\cos\th\big)f(v)\,d\th d\wt\omega dv,
	\end{multline}
	where $\cos\th=\mathbf{k}\cdot\sigma$, $\wt\omega\in\S^1(\mathbf{k})$ with $\S^1(\mathbf{k})$ defined by \eqref{S1k}.
	Consequently, if $b$ depends only on $\cos\th$, then
	\begin{align}
		\label{regular}
		\int_{\R^3}\int_{\S^2}|v-v_*|^\ga b(\cos\th)f(v')\,d\sigma dv=\int_{\R^3}\int_{\S^2}\frac{|v-v_*|^\ga}{\cos^{3+\ga}\frac{\th}{2}}b(\cos\th)f(v)\,d\sigma dv,
	\end{align}
	Moreover, if we let $\omega=\frac{\sigma-(\mathbf{k}\cdot\sigma)\mathbf{k}}{|\sigma-(\mathbf{k}\cdot\sigma)\mathbf{k}|}$, and $b$ depends on $\omega$ and $\cos\th$, then
	\begin{multline}
		\label{regularomega}
		\int_{\R^3}\int_{\S^2}|v-v_*|^\ga b(\omega,\cos\th)f(v')\,d\sigma dv\\
		=\int_{\R^3}\int_{\S^1(\mathbf{k})}\int_0^{\frac{\pi}{2}}\frac{\sin\th|v-v_*|^\ga}{\cos^{3+\ga}\frac{\th}{2}} b\Big(\cos\frac{\th}{2}\,\wt\omega+\sin\frac{\th}{2}\,\mathbf{k},\cos\th\Big)f(v)\,d\th d\wt\omega dv,
	\end{multline}
	where $\cos\th=\mathbf{k}\cdot\sigma$, $\wt\omega\in\S^1(\mathbf{k})$.
\end{Lem}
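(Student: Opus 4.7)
The plan is to carry out the classical regular change of variable $v\mapsto v'$ on the left-hand side of \eqref{regular1}, holding $v_*$ and $\sigma$ fixed, and then recast the angular integration in spherical coordinates centered at the new direction $\mathbf{k}_n:=(v'-v_*)/|v'-v_*|$, which becomes the $\mathbf{k}$ on the right after renaming $v'\to v$. Writing $\mathbf{k}_o:=(v-v_*)/|v-v_*|$ for the old direction, differentiation of $v'-v_*=\tfrac12(v-v_*)+\tfrac12|v-v_*|\sigma$ gives $\partial v'/\partial v = \tfrac12(I+\sigma\otimes\mathbf{k}_o)$, so $\det(\partial v'/\partial v)=\tfrac18(1+\mathbf{k}_o\cdot\sigma)=\tfrac14\cos^2(\th/2)$, which is strictly positive on the supporting hemisphere $\mathbf{k}_o\cdot\sigma\ge 0$. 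Hence $dv = 4\cos^{-2}(\th/2)\,dv'$, and the radial identity $|v'-v_*|=|v-v_*|\cos(\th/2)$ from \eqref{vpriminv} gives $|v-v_*|^\ga = |v'-v_*|^\ga\cos^{-\ga}(\th/2)$.

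For the new angular variable, since $\mathbf{k}_n=(\mathbf{k}_o+\sigma)/(2\cos(\th/2))$, introducing $\wt\omega := (\mathbf{k}_o-\sigma)/(2\sin(\th/2))\in\S^1(\mathbf{k}_n)$ yields the clean parameterization
\[
\mathbf{k}_o = \cos(\th/2)\mathbf{k}_n+\sin(\th/2)\wt\omega,\qquad \sigma = \cos(\th/2)\mathbf{k}_n-\sin(\th/2)\wt\omega,
\]
and the angle between $\sigma$ and $\mathbf{k}_n$ equals $\th/2$; thus in spherical coordinates based at $\mathbf{k}_n$ one has $d\sigma = \tfrac12\sin(\th/2)\,d\th\,d\wt\omega$ with $\th\in[0,\pi/2]$. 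Combining these ingredients and renaming $v'\to v$, the net integrand weight becomes
\[
\tfrac{4}{\cos^2(\th/2)}\cdot\tfrac{1}{\cos^\ga(\th/2)}\cdot\tfrac{\sin(\th/2)}{2} = \tfrac{\sin\th}{\cos^{3+\ga}(\th/2)}
\]
via $\sin\th=2\sin(\th/2)\cos(\th/2)$, matching \eqref{regular1}; substituting the parameterization of $\mathbf{k}_o$ and $\sigma$ into the argument of $b$ yields the right-hand side exactly.

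The specialization \eqref{regular} is then recovered by collapsing $\int_0^{\pi/2}\int_{\S^1(\mathbf{k})}\sin\th\,d\wt\omega\,d\th$ back to $\int_{\S^2}d\sigma$ in the spherical decomposition about $\mathbf{k}$. For \eqref{regularomega}, a short direct calculation from the parameterization above gives $\sigma-(\mathbf{k}_o\cdot\sigma)\mathbf{k}_o = \sin\th\,[\sin(\th/2)\mathbf{k}_n-\cos(\th/2)\wt\omega]$, so the unit vector $\omega$ in the lemma equals $\cos(\th/2)\wt\omega+\sin(\th/2)\mathbf{k}$ after the sign flip $\wt\omega\mapsto-\wt\omega$, which is absorbed by the $\S^1$-integration. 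The main obstacle I anticipate is the geometric bookkeeping between the two spherical coordinate systems on $\S^2$: checking that the map $(v,\sigma)\mapsto(v',\wt\omega,\th)$ restricted to the supporting hemisphere is a measure-preserving bijection onto $\R^3_{v'}\times\S^1(\mathbf{k}_n)\times[0,\pi/2]$, and that the azimuthal measure $d\wt\omega$ matches the natural unit-circle measure inherited from $d\sigma$. Once these geometric facts are in place, the remainder is direct substitution.
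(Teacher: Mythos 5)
Your proposal is correct and follows essentially the same route as the paper: the change of variable $v\mapsto v'$ with Jacobian $\tfrac14\cos^2(\th/2)$, the radial identity $|v'-v_*|=|v-v_*|\cos(\th/2)$, and the angle-halving spherical decomposition about the new direction, including the observation that the residual sign ambiguity in $\wt\omega$ is absorbed by the $\S^1(\mathbf{k})$-integration. The only cosmetic difference is that you construct $\wt\omega=(\mathbf{k}_o-\sigma)/(2\sin(\th/2))$ directly, whereas the paper first restricts to the hemisphere $\mathbf{k}\cdot\sigma\ge 1/\sqrt{2}$ and then substitutes $\th\mapsto\th/2$; the computations are equivalent.
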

\begin{proof}
The proof is similar to \cite[Lemma 1]{Alexandre2000} but we provide the proof with slight modification, since \eqref{regular1} and \eqref{regularomega} involve $\omega$ given in \eqref{sigma}.
	Recall from \eqref{vprime} that
	\begin{align}\label{vprsi}
		v'=\frac{v+v_*}{2}+\frac{|v-v_*|\sigma}{2}.
	\end{align}
	We will preform change of variable $v\mapsto v'$, which is well defined on the set $\{\cos\th>0\}$. By direct calculation, the Jacobian determinant is
	\begin{align*}
		\Big|\frac{\pa v'}{\pa v}\Big|=\Big|\frac{1}{2}I+\frac{1}{2}\mathbf{k}\otimes\sigma\Big|
		=\frac{1}{2^3}(1+\mathbf{k}\cdot\sigma)=\frac{1}{2^2}(\mathbf{k}'\cdot\sigma)^2.
	\end{align*}
	where the last identity can be deduced as
	\begin{align}\label{kkprime}
		1+\mathbf{k}\cdot\sigma=1+\cos\th=2\cos^2\frac{\th}{2}=2(\mathbf{k}'\cdot\sigma)^2,
	\end{align} which can also be seen from the geometry of binary collisions (as in \cite[Lemma 1]{Alexandre2000}). Here
	\begin{align*}
		\mathbf{k}'=\frac{v'-v_*}{|v'-v_*|}\ \text{ if }v'\ne v_*;\quad \mathbf{k}'=(1,0,0)\ \text{ if }v'=v_*,
	\end{align*}
	and
	\begin{align*}
		\mathbf{k}'\cdot\sigma=\cos^2\frac{\th}{2}\ge \frac{1}{\sqrt{2}}.
	\end{align*}
	Also, notice from \eqref{vprsi} and \eqref{vpriminv} that
	\begin{align*}
		\mathbf{k}=\frac{v-v_*}{|v-v_*|}=\frac{2(v'-v_*)-|v-v_*|\sigma}{|v-v_*|}
		=\frac{2(v'-v_*)\cos\frac{\th}{2}}{|v'-v_*|}-\sigma,
	\end{align*}
	and
	\begin{align*}
		|v-v_*|=\frac{|v'-v_*|}{\cos\frac{\th}{2}}=\frac{|v'-v_*|}{\mathbf{k}'\cdot\sigma}.
	\end{align*}
	Combining all the above preparation, we apply change of variable $v\mapsto v'$ to the left-hand side of \eqref{regular} and change notation $v'$ to $v$ to deduce
	\begin{align}\label{341a}\notag
		& \int_{\S^2}\int_{\R^3}|v-v_*|^\ga b\big(\sigma,\mathbf{k},\mathbf{k}\cdot\sigma\big)f(v')\,dvd\sigma \\
		& \notag\quad=\int_{\S^2}\int_{\mathbf{k}'\cdot\sigma\ge\frac{1}{\sqrt{2}}}\frac{|v'-v_*|^\ga}{(\mathbf{k}'\cdot\sigma)^\ga}b\big(\sigma,\frac{2(v'-v_*)\cos\frac{\th}{2}}{|v'-v_*|}-\sigma,2(\mathbf{k}'\cdot\sigma)^2-1\big)f(v')\frac{2^2}{(\mathbf{k}'\cdot\sigma)^2}\,dv'd\sigma \\
		& \quad=2^2\int_{\S^2}\int_{\mathbf{k}\cdot\sigma\ge\frac{1}{\sqrt{2}}}\frac{|v-v_*|^\ga}{(\mathbf{k}\cdot\sigma)^{2+\ga}}b\big(\sigma,2\cos\th\,\mathbf{k}-\sigma,2(\mathbf{k}\cdot\sigma)^2-1\big)f(v)\,dvd\sigma,
	\end{align}
	where $\cos\th$ is always defined by $\cos\th=\mathbf{k}\cdot\sigma$, and we used \eqref{kkprime} to find that, during the change of notation from $v'$ to $v$, $\cos\th$ is changed to $\cos(2\th)$.
	Applying spherical coordinate with $\mathbf{k}$,
	i.e. $$\sigma=\cos\th\,\mathbf{k}+\sin\th\,\wt\omega,\quad \wt\omega\in\S^1(\mathbf{k})$$
	with $\wt\omega\perp\mathbf{k}$ as in \eqref{sigma}, and then applying change of variable $\th\mapsto\frac{\th}{2}$, we have
	\begin{align*}
		& 2^2\int_{\mathbf{k}\cdot\sigma\ge\frac{1}{\sqrt{2}}}\frac{|v-v_*|^\ga}{(\mathbf{k}\cdot\sigma)^{2+\ga}}b\big(\sigma,2\cos\th\,\mathbf{k}-\sigma,2(\mathbf{k}\cdot\sigma)^2-1\big)f(v)\,d\sigma \\
		& \quad=2^2\int_{\S^1(\mathbf{k})}\int_0^{\frac{\pi}{4}}\frac{\sin\th|v-v_*|^\ga}{\cos^{2+\ga}\th}b\big(\cos\th\,\mathbf{k}+\sin\th\,\wt\omega,\cos\th\,\mathbf{k}-\sin\th\,\wt\omega,\cos(2\th)\big)f(v)\,d\th d\wt\omega \\
		& \quad=\int_{\S^1(\mathbf{k})}\int_0^{\frac{\pi}{2}}\frac{\sin\th|v-v_*|^\ga}{\cos^{3+\ga}\frac{\th}{2}}b\big(\cos\frac{\th}{2}\,\mathbf{k}+\sin\frac{\th}{2}\,\wt\omega,\cos\frac{\th}{2}\,\mathbf{k}-\sin\frac{\th}{2}\,\wt\omega,\cos\th\big)f(v)\,d\th d\wt\omega.
	\end{align*}
	This and \eqref{341a} imply \eqref{regular1}.
	Consequently, when $b$ depends only on $\cos\th$, we obtain \eqref{regular}.
	If we let $\omega=\frac{\sigma-(\mathbf{k}\cdot\sigma)\mathbf{k}}{|\sigma-(\mathbf{k}\cdot\sigma)\mathbf{k}|}$ as in \eqref{sigma}, then by \eqref{regular1}, the left hand side of \eqref{regularomega} is
	\begin{align*}
		& \int_{\R^3}\int_{\S^2}|v-v_*|^\ga b(\omega,\cos\th)f(v')\,d\sigma dv \\
		& =\int_{\R^3}\int_{\S^2}|v-v_*|^\ga b\big(\frac{\sigma-\cos\th\mathbf{k}}{\sin\th},\cos\th\big)f(v')\,d\sigma dv\\
		& =\int_{\R^3}\int_{\S^1(\mathbf{k})}\int_0^{\frac{\pi}{2}}\frac{\sin\th|v-v_*|^\ga}{\cos^{3+\ga}\frac{\th}{2}} b\Big(\frac{2\cos^2\frac{\th}{2}\sin\frac{\th}{2}\,\wt\omega
			+2\sin^2\frac{\th}{2}\cos\frac{\th}{2}\,\mathbf{k}}{\sin\th},\cos\th\Big)f(v)\,d\th d\wt\omega dv \\
		& =\int_{\R^3}\int_{\S^1(\mathbf{k})}\int_0^{\frac{\pi}{2}}\frac{\sin\th|v-v_*|^\ga}{\cos^{3+\ga}\frac{\th}{2}} b\Big(\cos\frac{\th}{2}\,\wt\omega+\sin\frac{\th}{2}\,\mathbf{k},\cos\th\Big)f(v)\,d\th d\wt\omega dv,
	\end{align*}
	which implies \eqref{regularomega}. This completes the proof of Lemma \ref{regularLem}.
	
\end{proof}

Furthermore, we have the following basic collision-type estimates. 
\begin{Lem}\label{FGLem}
	Suppose $H\in W^{2,\infty}$ and $-\frac{3}{2}<\ga\le 2$. Denote $\<v\>^{l}_\de$ as in \eqref{weight}. Assume $l\ge\ga+10$, $p\in(1,\infty)$ and $\frac{1}{p'}=1-\frac{1}{p}$. Then we have the following estimates for suitable functions $F,G,\Psi$.
\begin{multline}\label{Hpristar}
	\text{(a)}\ \ 
	\Big|\int_{\R^6\times\S^2}B(v-v_*,\sigma)F(v')\Psi(v_*)(H(v'_*)-H(v_*))\,d\sigma dv_*dv\Big|
	\\\le C\|\<v\>^{2+(\ga+2s)_+}\Psi\|_{L^2_v}\|\<v\>^{(\ga+2s)_+}F\|_{L^1_{v}}\|[H,\na_vH,\na^2_vH]\|_{L^\infty_v};
\end{multline}
\begin{multline}\label{Hpristar2}
	\text{(b)}\ \
	\Big|\int_{\R^6\times\S^2}B(v-v_*,\sigma)F(v')\Psi(v_*)(H(v)-H(v'))\,d\sigma dv_*dv\Big|
	\\\le C\|\<v\>^{2+(\ga+2s)_+}\Psi\|_{L^2_v}\|\<v\>^{(\ga+2s)_+}F\|_{L^1_{v}}\|[H,\na_vH,\na^2_vH]\|_{L^\infty_v};
\end{multline}
\begin{multline}\label{Hpristar1}
	\text{(c)}\ \ 
	\Big|\int_{\R^6\times\S^2}B(v-v_*,\sigma)F(v)\Psi(v_*)(H(v'_*)-H(v_*))\,d\sigma dv_*dv\Big|\\
	+\Big|\int_{\R^6\times\S^2}B(v-v_*,\sigma)F(v)\Psi(v_*)(H(v')-H(v))\,d\sigma dv_*dv\Big|
	\\\le C\|\<v\>^{2+(\ga+2s)_+}\Psi\|_{L^2_v}\|\<v\>^{(\ga+2s)_+}F\|_{L^1_v}\|[H,\na_vH,\na^2_vH]\|_{L^\infty_v};
\end{multline}
\begin{multline}
	\label{vpristar}
	\text{(d)}\ \ 
	\Big|\int_{\R^6\times\S^2}B(v-v_*,\sigma)F(v')\Psi(v_*)
	(\<v'\>^{-l}_\de-\<v\>^{-l}_\de)
	\,d\sigma dv_*dv\Big|\\
	\le C_{\de,l,\|n\|_{L^\infty}}\|\<v\>^{\frac{l}{2}+\ga+5}\Psi\|_{L^2_v}\|\<v\>^{-\frac{l}{2}+\ga+2}F\|_{L^1_v};
\end{multline}
The above norms on the right-hand side are taken over $\R^3_v$.
\end{Lem}
\begin{proof}
{\bf Estimate (a).}
Similar to \eqref{Taylor}, by Taylor expansion, we have
\begin{align}\notag\label{468}
	H(v'_*)-H(v_*)
	&=\na H(v_*)\cdot(v'_*-v_*)\\
	&\quad+\int^1_0(1-t)\na^2H(v_*+t(v'_*-v_*)):(v'_*-v_*)\otimes(v'_*-v_*)\,dt. 
\end{align}
Thus, using \eqref{vprimeth}, and decomposing the region of $\th$,
the left hand side of \eqref{Hpristar} is 
\begin{align*}
	&=\int_{\R^6\times\S^2}\1_{\min\{\frac{\pi}{2},|v-v_*|^{-1}\}\le\th\le\frac{\pi}{2}}|v-v_*|^{\ga}b(\cos\th)F(v')\Psi(v_*)(H(v'_*)-H(v_*))\,d\sigma dv_*dv\\
	&\quad+\int_{\R^6\times\S^2}\1_{0\le\th\le\min\{\frac{\pi}{2},|v-v_*|^{-1}\}}|v-v_*|^{\ga}b(\cos\th)\sin^2\frac{\th}{2}F(v')\Psi(v_*)\na H(v_*)\cdot(v-v_*)\,d\sigma dv_*dv\\
	&\quad-\frac{1}{2}\int_{\R^6\times\S^2}\1_{0\le\th\le\min\{\frac{\pi}{2},|v-v_*|^{-1}\}}|v-v_*|^{\ga+1}b(\cos\th)\sin\th F(v')\Psi(v_*)\na H(v_*)\cdot\omega\,d\sigma dv_*dv\\
	&\quad+\int_{\R^6\times\S^2}\int^1_0\1_{0\le\th\le\min\{\frac{\pi}{2},|v-v_*|^{-1}\}}|v-v_*|^{\ga}b(\cos\th)F(v')\Psi(v_*)\\
	&\qquad\qquad\qquad\times\na^2H(v_*+t(v'_*-v_*)):(v'_*-v_*)\otimes(v'_*-v_*)\,dtd\sigma dv_*dv\\
	&=:I_1+I_2+I_3+I_4.
\end{align*}
For the term $I_1$, by regular change of variable \eqref{regular} and estimate \eqref{gafg}, we have
\begin{align}\label{I11111}\notag
	|I_1| & \le 2\|H\|_{L^\infty_v}\int_{\R^6\times\S^2}\1_{\min\{\frac{\pi}{2},|v-v_*|^{-1}\}\le\th\le\frac{\pi}{2}}|v-v_*|^{\ga}b(\cos\th)|F(v')\Psi(v_*)|\,d\sigma dv_*dv\\
	& \notag\le C\|H\|_{L^\infty_v}\int_{\R^6}\int_{\min\{\frac{\pi}{2},|v-v_*|^{-1}\}}^{\frac{\pi}{2}}\th^{-1-2s}|v-v_*|^{\ga}|F(v)\Psi(v_*)|\,d\th dv_*dv \\
	& \notag\le C\|H\|_{L^\infty_v}\int_{\R^6}|v-v_*|^{\ga+2s}|F(v)\Psi(v_*)|\,dv_*dv \\
	& \le C\|H\|_{L^\infty_v}\|\<v\>^{2+(\ga+2s)_+}\Psi\|_{L^2_v}\|\<v\>^{(\ga+2s)_+}F\|_{L^1_v}.
\end{align}
For the term $I_3$, we apply regular change of variable \eqref{regularomega} for $\omega$ to deduce
\begin{multline}\label{I2a}
	I_3=-2\pi\int_{\R^6}\int_0^{\min\{\frac{\pi}{2},|v-v_*|^{-1}\}}
	\frac{|v-v_*|^{\ga+1} b(\cos\th)}{\cos^{4+\ga}\frac{\th}{2}}\Psi(v_*)F(v)\sin^2\th\sin\frac{\th}{2}
	\na H(v_*)\cdot\mathbf{k}\,d\th dv_*dv\\
	-\frac{1}{2}\int_{\R^6}\int_{\S^1(\mathbf{k})}\int_0^{\min\{\frac{\pi}{2},|v-v_*|^{-1}\}}\frac{|v-v_*|^{\ga+1} b(\cos\th)}{\cos^{4+\ga}\frac{\th}{2}}\Psi(v_*)F(v)\sin^2\th\na H(v_*)\cdot\wt\omega\,d\th d\wt\omega dv_*dv,
\end{multline}
where $\wt\omega\in\S^1(\mathbf{k})$ and $\mathbf{k}=\frac{v-v_*}{|v-v_*|}$ is given in \eqref{bfk}.
Then the second right-hand term of \eqref{I2a} vanishes due to symmetric about $\wt\omega$.
For the first right-hand term of \eqref{I2a}, $I_2$ and $I_4$, we apply \eqref{vpriminv}, regular change of variable \eqref{regular} and estimate \eqref{bcossin} to deduce
\begin{align}\label{471}\notag
	& |I_2|+|I_4|+\int_{\R^6}\int_0^{\min\{\frac{\pi}{2},|v-v_*|^{-1}\}}
	\frac{|v-v_*|^{\ga+1} b(\cos\th)}{\cos^{4+\ga}\frac{\th}{2}}|\Psi(v_*)F(v)|\sin^2\frac{\th}{2}
	|\na H(v_*)|\,d\th dv_*dv \\
	& \quad\le\notag C\int_{\R^6
	}\big(|v_*-v|^{\ga+2s-1}\1_{|v-v_*|\ge\frac{2}{\pi}}+|v-v_*|^{\ga+1}\1_{|v-v_*|<\frac{2}{\pi}}+|v-v_*|^{\ga+2s}\big)
	\\&\qquad\quad\quad\notag\times|\Psi(v_*)F(v)|\|[\na_vH,\na^2_vH]\|_{L^\infty_v}\,
	dv_*dv \\
	& \quad\le C\|\<v\>^{2+(\ga+2s)_+}\Psi\|_{L^2_v}\|\<v\>^{(\ga+2s)_+}F\|_{L^1_{v}}\|[\na_vH,\na^2_vH]\|_{L^\infty_v},
\end{align}
where we used \eqref{gafg} in the last inequality.
Therefore, combining the above estimates, we obtain \eqref{Hpristar}.

\medskip\noindent{\bf Estimate (b).}
To obtain \eqref{Hpristar2}, similar to \eqref{468}, we have
\begin{align*}
	H(v)-H(v')
	=\na H(v')\cdot(v-v')+\int^1_0(1-t)\na^2H(v'+t(v-v')):(v-v')\otimes(v-v')\,dt.
\end{align*}
Then using \eqref{vprimeth} and \eqref{vpriminv}, the left hand side of \eqref{Hpristar2} is
\begin{align}\label{wtI}
	&\notag=\int_{\R^6\times\S^2}\1_{\min\{\frac{\pi}{2},|v-v_*|^{-1}\}\le\th\le\frac{\pi}{2}}|v-v_*|^{\ga}b(\cos\th)F(v')\Psi(v_*)(H(v)-H(v'))\,d\sigma dv_*dv\\
	&\notag\quad+\int_{\R^6\times\S^2}\1_{0\le\th\le\min\{\frac{\pi}{2},|v-v_*|^{-1}\}}|v-v_*|^{\ga}b(\cos\th)\sin^2\frac{\th}{2}F(v')\Psi(v_*)\na H(v')\cdot(v-v_*)\,d\sigma dv_*dv\\
	&\notag\quad-\frac{1}{2}\int_{\R^6\times\S^2}\1_{0\le\th\le\min\{\frac{\pi}{2},|v-v_*|^{-1}\}}|v-v_*|^{\ga+1}b(\cos\th)\sin\th F(v')\Psi(v_*)\na H(v')\cdot\omega\,d\sigma dv_*dv\\
	&\notag\quad+\int_{\R^6\times\S^2}\int^1_0\1_{0\le\th\le\min\{\frac{\pi}{2},|v-v_*|^{-1}\}}|v-v_*|^{\ga}b(\cos\th)F(v')\Psi(v_*)\\
	&\notag\qquad\qquad\qquad\times\na^2H(v'+t(v-v')):(v-v')\otimes(v-v')\,dtd\sigma dv_*dv\\
	&=:\wt I_1+\wt I_2+\wt I_3+\wt I_4.
\end{align}
The estimate of $\wt I_1$ is the same as \eqref{I11111}, and we have
\begin{align*}
	|\wt I_1|\le C\|H\|_{L^\infty_v}\|\<v\>^{2+(\ga+2s)_+}\Psi\|_{L^2_v}\|\<v\>^{(\ga+2s)_+}F\|_{L^1_v}.
\end{align*}
For the term $\wt I_3$, similar to \eqref{I2a}, by regular change of variable \eqref{regularomega}, one has
\begin{multline*}
	\wt I_3=-2\pi\int_{\R^6}\int_0^{\min\{\frac{\pi}{2},|v-v_*|^{-1}\}}\frac{|v-v_*|^{\ga+1} b(\cos\th)}{\cos^{4+\ga}\frac{\th}{2}}\Psi(v_*)F(v)\sin^2\th\sin\frac{\th}{2}
	\na H(v)\cdot\mathbf{k}\,d\th dv_*dv\\
	-\frac{1}{2}\int_{\R^6}\int_{\S^1(\mathbf{k})}\int_0^{\min\{\frac{\pi}{2},|v-v_*|^{-1}\}}\frac{|v-v_*|^{\ga+1} b(\cos\th)}{\cos^{4+\ga}\frac{\th}{2}}\Psi(v_*)F(v)\sin^2\th\na H(v)\cdot\wt\omega\,d\th d\wt\omega dv_*dv,
\end{multline*}
and the second right-hand term vanishes due to symmetric about $\wt\omega$. The first right-hand term and $\wt I_2+\wt I_4$ in \eqref{wtI} can be estimated with the same method in \eqref{471}:
\begin{multline*}
	|\wt I_2|+|\wt I_4|+\int_{\R^6}\int_0^{\min\{\frac{\pi}{2},|v-v_*|^{-1}\}}\frac{|v-v_*|^{\ga+1} b(\cos\th)}{\cos^{4+\ga}\frac{\th}{2}}|\Psi(v_*)F(v)|\sin^2\th\sin\frac{\th}{2}
	|\na H(v)|\,d\th dv_*dv\\
	\le C\|\<v\>^{2+(\ga+2s)_+}\Psi\|_{L^2_v}\|\<v\>^{(\ga+2s)_+}F\|_{L^1_{v}}\|[\na_vH,\na^2_vH]\|_{L^\infty_v}.
\end{multline*}
Combining the above three estimates, we obtain \eqref{Hpristar2}.

\medskip\noindent{\bf Estimate (c).} The proof of \eqref{Hpristar1} is simpler. Applying \eqref{HHbcos}, we have
\begin{align*}
	& \Big|\int_{\R^6\times\S^2}B(v-v_*,\sigma)F(v)\Psi(v_*)(H(v'_*)-H(v_*))\,d\sigma dv_*dv\Big| \\
	& \quad\le C\|[H,\na_vH,\na^2_vH]\|_{L^\infty_v}\int_{\R^6}\big(|v-v_*|^{\ga}+|v-v_*|^{\ga+2s}\big)|F(v)\Psi(v_*)|\,dv_*dv \\
	& \quad\le C\|[H,\na_vH,\na^2_vH]\|_{L^\infty_v}\|\<v\>^{2+(\ga+2s)_+}\Psi\|_{L^2_v}\|\<v\>^{(\ga+2s)_+}F\|_{L^1_v},
\end{align*}
where we used \eqref{gafg} in the second inequality. The estimate of the second left-hand term of \eqref{Hpristar1} is similar and we omit the proof for brevity.

\medskip\noindent
{\bf Estimate (d).} 
Recall that we define the weight function $\<v\>^{l}_\de$ in \eqref{weight} for brevity. By Lemmas \ref{vldeLem1} and \ref{vldeLem}, and Taylor's expansion \eqref{Taylor}, we have 
\begin{align*}
	\<v'\>^{-l}_\de-\<v\>^{-l}_\de
	&=\<v'\>^{-\frac{l}{2}}_\de(\<v'\>^{-\frac{l}{2}}-\<v\>^{-\frac{l}{2}})+(\<v'\>^{-\frac{l}{2}}_\de-\<v\>^{-\frac{l}{2}}_\de)\<v\>^{-\frac{l}{2}}\\
	&=\Big(\<v'\>^{-\frac{l}{2}}_\de\na_v\<v\>^{-\frac{l}{2}}+\<v\>^{-\frac{l}{2}}\na_v\<v\>^{-\frac{l}{2}}_\de\Big)\cdot(v'-v)
	\\&\quad+\int^1_0(1-t)\<v'\>^{-\frac{l}{2}}_\de\na^2_u\<u\>^{-\frac{l}{2}}|_{u=v+t(v'-v)}:(v'-v)\otimes(v'-v)\,dt\\
	&\quad+\int^1_0(1-t)\<v\>^{-\frac{l}{2}}\na^2_u\<u\>^{-\frac{l}{2}}_\de|_{u=v+t(v'-v)}:(v'-v)\otimes(v'-v)\,dt. 
\end{align*}
Thus, using \eqref{vprimeth} and \eqref{vpriminv} to represent $v'-v$, and using Lemma \ref{vldeLem} to estimate the velocity derivatives of the weight, the left-hand side of \eqref{vpristar} is 
\begin{align}\label{J123}
	&\notag\le \Big|\int_{\R^6\times\S^2}B
	F(v')\Psi(v_*)\frac{|v-v_*|\sin\th}{2}\omega\cdot\Big(\<v'\>^{-\frac{l}{2}}_\de\na_v\<v\>^{-\frac{l}{2}}+\<v\>^{-\frac{l}{2}}\na_v\<v\>^{-\frac{l}{2}}_\de\Big)\,d\sigma dv_*dv\Big|\\
	&\notag\quad+\Big|\int_{\R^6\times\S^2}B
	F(v')\Psi(v_*)\frac{\sin^2\frac{\th}{2}}{2}(v_*-v)\cdot\Big(\<v'\>^{-\frac{l}{2}}_\de\na_v\<v\>^{-\frac{l}{2}}+\<v\>^{-\frac{l}{2}}\na_v\<v\>^{-\frac{l}{2}}_\de\Big)\,d\sigma dv_*dv\Big|\\
	&\quad+C_{\de,l,\|n\|_{L^\infty}}\int_{\R^6\times\S^2}B|F(v')||\Psi(v_*)|\big(\<v'\>^{-\frac{l}{2}}+\<v\>^{-\frac{l}{2}}\big)|v-v_*|^2\sin^2\frac{\th}{2}\,d\sigma dv_*dv\notag\\
	&\quad=:|J_1|+|J_2|+|J_3|.
\end{align}
For the term $J_1$, notice from \eqref{sigma} that $\omega\perp\mathbf{k}$ and hence,
$$v\cdot\omega=v_*\cdot\omega.$$
Thus, by Lemma \ref{vldeLem} (4), we estimate $J_1$ in \eqref{J123} as
\begin{align*}
	J_1&=\int_{\R^6\times\S^2}B
	F(v')\Psi(v_*)\frac{|v-v_*|\sin\th}{2}\omega\cdot\Big(\<v'\>^{-\frac{l}{2}}_\de\na_v\<v\>^{-\frac{l}{2}}+\<v\>^{-\frac{l}{2}}\na_v\<v\>^{-\frac{l}{2}}_\de\Big)\,d\sigma dv_*dv\\
	&=\int_{\R^6\times\S^2}B
	F(v')\Psi(v_*)\frac{|v-v_*|\sin\th}{2}
	\Big(-\frac{l}{2}\<v'\>^{-\frac{l}{2}}_\de\<v\>^{-\frac{l}{2}-2}\omega\cdot v_*\\
	&\qquad+
	\<v\>^{-\frac{l}{2}}\big(O(\de,l,\|n\|_{L^\infty})\<v\>^{-\frac{l}{2}-2}\omega\cdot v_*
	+O(\de,l,\|n\|_{L^\infty})\<v\>^{-\frac{l}{2}-2}\omega\cdot n(x)\big)\Big)\,d\sigma dv_*dv.  
\end{align*}
To apply regular change of variable \eqref{regularomega}, we rewrite the integrand as 
\begin{align*}
	&-\frac{l}{2}\<v'\>^{-\frac{l}{2}}_\de\<v\>^{-\frac{l}{2}-2}\omega\cdot v_*+
	O(\de,l,\|n\|_{L^\infty})\<v\>^{-l-2}\big(\omega\cdot v_*+\omega\cdot n(x)\big)\\
	&\quad
	=-\frac{l}{2}\<v'\>^{-\frac{l}{2}}_\de\big(\<v\>^{-\frac{l}{2}-2}-\<v'\>^{-\frac{l}{2}-2}\big)\omega\cdot v_*
	-\frac{l}{2}\<v'\>^{-\frac{l}{2}}_\de\<v'\>^{-\frac{l}{2}-2}\omega\cdot v_*
	\\&\qquad+O(\de,l,\|n\|_{L^\infty})\<v\>^{-\frac{l}{2}}\big(\<v\>^{-\frac{l}{2}-2}-\<v'\>^{-\frac{l}{2}-2}\big)\big(\omega\cdot v_*+\omega\cdot n(x)\big)
	\\&\qquad+O(\de,l,\|n\|_{L^\infty})\big(\<v\>^{-\frac{l}{2}}-\<v'\>^{-\frac{l}{2}}\big)\<v'\>^{-\frac{l}{2}-2}\big(\omega\cdot v_*+\omega\cdot n(x)\big)
	\\&\qquad+O(\de,l,\|n\|_{L^\infty})\<v'\>^{-l-2}\big(\omega\cdot v_*+\omega\cdot n(x)\big).
\end{align*}
For the term involving $\omega$ and $v'$, we apply regular change of variable \eqref{regularomega}, while for the commutator terms, we use Lemmas \ref{vldeLem1} and \ref{vldeLem} to obtain one more $|v'-v|$, change $\<v\>^{-\frac{l}{2}}$ into $\<v_*\>^{\frac{l}{2}}\<v'\>^{-\frac{l}{2}}$ and use \eqref{regular}. That is, 
\begin{align*}
	J_1
	&=\frac{l}{2}\int_{\R^6}\int_{\S^1(\mathbf{k})}\int_0^{\frac{\pi}{2}}\frac{\sin^2\th|v-v_*|^{\ga+1}}{\cos^{4+\ga}\frac{\th}{2}}b(\cos\th)
	F(v)\Psi(v_*)
	\big(\cos\frac{\th}{2}\,\wt\omega+\sin\frac{\th}{2}\,\mathbf{k}\big)\\
	&\qquad\quad\cdot\Big(\frac{-lv_*}{2}\<v\>^{-\frac{l}{2}}_\de\<v\>^{-\frac{l}{2}-2}
	+\big(v_*+n(x)\big)O(\de,l,\|n\|_{L^\infty})\<v\>^{-l-2}\Big)
	\,d\th d\wt\omega dv_*dv\\
	&\quad+C_{\de,l,\|n\|_{L^\infty}}\int_{\R^6\times\S^2}|v-v_*|^{\ga+2}b(\cos\th)\sin^2\frac{\th}{2}
	|F(v')\Psi(v_*)|\<v_*\>^{\frac{l}{2}+1}\<v'\>^{-\frac{l}{2}}\,d\sigma dv_*dv. 
\end{align*}
The term involving $\wt\omega\in\S^1(\mathbf{k})$ (given in \eqref{regularomega}) vanishes due to symmetric about $\wt\omega$. Thus, 
by \eqref{bcossin} and \eqref{gafg}, we obtain
\begin{align*}
	J_1 & \le C_{\de,l,\|n\|_{L^\infty}}\int_{\R^6}\big(|v-v_*|^{\ga+1}+|v-v_*|^{\ga+2}\big)
	|F(v)\Psi(v_*)|\<v_*\>^{\frac{l}{2}+1}\<v\>^{-\frac{l}{2}}\,dv_*dv\\
	& \le C_{\de,l,\|n\|_{L^\infty}}\|\<v\>^{\frac{l}{2}+\ga+5}\Psi\|_{L^2_v}\|\<v\>^{-\frac{l}{2}+\ga+2}F\|_{L^1_v}.
\end{align*}
The terms $J_2+J_3$ in \eqref{J123} can be estimated easily, since they have enough angular decay rate $\sin^2\frac{\th}{2}$. 
Thus, using Lemma \ref{vldeLem1} and \ref{vldeLem}, and the regular change of variable \eqref{regular}, we have
\begin{align*}
	J_2+J_3&\le C_{\de,l,\|n\|_{L^\infty}}\int_{\R^6}\big(|v-v_*|^{\ga+1}+|v-v_*|^{\ga+2}\big)
	|F(v)\Psi(v_*)|\<v_*\>^{\frac{l}{2}+1}\<v\>^{-\frac{l}{2}}
	\,dv_*dv\\
	&\le C_{\de,l,\|n\|_{L^\infty}}\|\<v\>^{\frac{l}{2}+\ga+5}\Psi\|_{L^2_v}\|\<v\>^{-\frac{l}{2}+\ga+2}F\|_{L^1_v}, 
\end{align*}
where we used \eqref{bcossin} and \eqref{gafg}.
Combining the above estimates of $J_1,J_2,J_3$, we obtain \eqref{vpristar}.
This completes the proof of Lemma \ref{FGLem}.
\end{proof}

\subsection{\texorpdfstring{$L^2$}{L2} estimate of the collision terms}
In this Subsection, we write the energy estimates for the collision term.
Note that in order to limit the highest order of $\Psi$ to $l$, i.e. $\|\<v\>^{l}\Psi\|_{L^\infty_xL^\infty_v}$, we will use a different approach than \cite{Alonso2022}. 

The following Lemma gives some basic estimates of the collision operator. 
The term $\Gamma(f,\mu^{\frac{1}{2}})$ has better behavior in upper bound than $\Gamma(\mu^{\frac{1}{2}},f)$, whose upper bound doesn't contain velocity regularity. Moreover, we can restrict all the derivatives to one function. 
\begin{Lem}\label{vpmu12Lem}
Assume $-\frac{3}{2}<\ga\le 2$, $s\in(0,1)$. Denote $\<v\>^l_{\de}$ as in \eqref{weight}. Then 
\begin{align}
	\label{vpmu12}
	&\text{(a)}\quad 
|(\Gamma(f,\mu^{\frac{1}{2}}),g)_{L^2_v}|\le C
\|\mu^{\frac{1}{76}}f\|_{L^2_v}\|\mu^{\frac{1}{76}}g\|_{L^1_v},
	\\
	\label{vpl12}
	&\text{(b)}\quad 
	|(\Gamma(f,\<v\>^{-l}_\de),g)_{L^2_v}|\le C_{\de,l}\|\<v\>^lf\|_{L^\infty_v}\|\<v\>^{-2}g\|_{L^1_v}, 
\end{align}
for any $l\ge \ga+10$, with some $C=C(a,\ga,s,l,\de)>0$. \eqref{vpl12} is the only estimate needed in level-function estimate, so we only consider the weight $\<v\>^{-l}_\de$ here.
To restrict regularity only to one collisional function, we have 
\begin{align}\label{Gammavp}
	&\text{(c)}\quad 
	|(\Gamma(f,g),h)_{L^2_v}|\le C\|[h,\,\na_vh,\,\na^2_vh]\|_{L^\infty_v}\|\<v\>^{2+(\ga+2s)_+}f\|_{L^2_v}\|\<v\>^{2+(\ga+2s)_+}g\|_{L^2_v},\\
	\label{Gammavp1}
	&\text{(d)}\quad 
	|(\Gamma(f,g),h)_{L^2_v}|\le C\|[g,\na_vg,\na^2_vg]\|_{L^\infty_v}\|\<v\>^{2+(\ga+2s)_+}f\|_{L^2_v}\|\<v\>^{(\ga+2s)_+}h\|_{L^1_v}.
\end{align}
If we use the $H^{2s}$ control on the third term, then for any $k\ge 0$, $l\in\R$, 
\begin{align}
	\label{64eq1}
	&\text{(e)}\quad 
	|(\Gamma(f,g),h)_{L^2_v}|\le C\|f\|_{L^2_v}\|\<v\>^{(l+\ga+2s)_+}g\|_{L^2_v}\|\<v\>^{-l}h\|_{H^{2s}}, \\
	\label{64eq2}
	&\text{(f)}\quad 
	|(\Gamma(f,g),\<v\>^{2k}h)_{L^2_v}|
	\le 
	C\|\<v\>^{\frac{(\ga+2s)_+}{2}}f\|_{L^2_v}\|\<v\>^{k+\frac{\ga}{2}}g\|_{L^2_v}\|\<v\>^{k+4}h\|_{H^{2s}_v}.
\end{align}
\end{Lem}
\begin{proof}
{\bf Estimating \eqref{vpmu12}.}
By pre-post change of variable, we write
\begin{align}\label{T3a}\notag
	(\Gamma(f,\mu^{\frac{1}{2}}),g)_{L^2_v} & =\int_{\R^6\times\S^2}Bf_*\mu^{\frac{1}{2}}(v)\big(g(v')\mu^{\frac{1}{2}}(v'_*)-g\mu^{\frac{1}{2}}(v_*)\big)\,d\sigma dv_*dv \\
	& \notag=\int_{\R^6\times\S^2}Bf_*\big(\mu^{\frac{1}{2}}(v)-\mu^{\frac{1}{2}}(v')\big)g(v')\big(\mu^{\frac{1}{2}}(v'_*)-\mu^{\frac{1}{2}}(v_*)\big)\,d\sigma dv_*dv \\
	& \quad\notag+\int_{\R^6\times\S^2}Bf_*\big(\mu^{\frac{1}{2}}(v)-\mu^{\frac{1}{2}}(v')\big)g(v')\mu^{\frac{1}{2}}(v_*)\,d\sigma dv_*dv \\
	& \quad\notag+\int_{\R^6\times\S^2}Bf_*\mu^{\frac{1}{2}}(v')g(v')\big(\mu^{\frac{1}{2}}(v'_*)-\mu^{\frac{1}{2}}(v_*)\big)\,d\sigma dv_*dv \\
	& \quad\notag+\int_{\R^6\times\S^2}Bf_*\big(\mu^{\frac{1}{2}}(v')g(v')-\mu^{\frac{1}{2}}(v)g\big)\mu^{\frac{1}{2}}_*\,d\sigma dv_*dv \\
	& =:T_{1,1}+T_{1,2}+T_{1,3}+T_{1,4}.
\end{align}
For the term $T_{1,1}$, notice that
\begin{align}\label{635}\begin{aligned}
		\mu^{\frac{1}{2}}(v_*)-\mu^{\frac{1}{2}}(v'_*)&=\big(\mu^{\frac{1}{4}}(v_*)-\mu^{\frac{1}{4}}(v'_*)\big)\big(\mu^{\frac{1}{4}}(v_*)+\mu^{\frac{1}{4}}(v'_*)\big), \\
		\mu^{\frac{1}{2}}(v)-\mu^{\frac{1}{2}}(v')&=\big(\mu^{\frac{1}{4}}(v)-\mu^{\frac{1}{4}}(v')\big)\big(\mu^{\frac{1}{4}}(v)+\mu^{\frac{1}{4}}(v')\big),
	\end{aligned}
\end{align}
and from 
\begin{align}\label{477}
	|\mu^{\frac{1}{4}}(v'_*)-\mu^{\frac{1}{4}}(v_*)|&\le|v'_*-v_*|\int^1_0|(v_*+t(v'_*-v_*))\cdot\na_v\mu^{\frac{1}{4}}(v_*+t(v'_*-v_*))|\,dt\notag\\
	&\le C|v-v_*|\sin\frac{\th}{2}
\end{align} that
\begin{align}\label{635a}
	\begin{aligned}
		|\mu^{\frac{1}{4}}(v'_*)-\mu^{\frac{1}{4}}(v_*)|\le C\min\big\{|v-v_*|\sin\frac{\th}{2},1\big\}, \\
		|\mu^{\frac{1}{4}}(v')-\mu^{\frac{1}{4}}(v)|\le C\min\big\{|v-v_*|\sin\frac{\th}{2},1\big\}.
	\end{aligned}
\end{align}
Then $T_{1,1}$ can be estimated as
\begin{align*}
	|T_{1,1}|&\le C\int_{\R^6\times\S^2}|v-v_*|^{\ga+2}b(\cos\th)\sin^2\frac{\th}{2}f(v_*)g(v')\\
	&\qquad\qquad\qquad\times\big(\mu^{\frac{1}{4}}(v)+\mu^{\frac{1}{4}}(v')\big)\big(\mu^{\frac{1}{4}}(v_*)+\mu^{\frac{1}{4}}(v'_*)\big)\,d\sigma dv_*dv.
\end{align*}
To obtain the large velocity decay, we apply \eqref{vstar1}, \eqref{vstar2}, and \eqref{vstar3} to obtain 
\begin{align}\begin{aligned}
		\label{muequiv}
		& \mu(v'_*)\mu(v)\le \big(\mu(v_*)\mu(v')\big)^{\frac{1}{18}}, \\
		& \mu(v_*)\mu(v)=\mu(v'_*)\mu(v')\le \big(\mu(v_*)\mu(v')\big)^{\frac{1}{16}}.
	\end{aligned}
\end{align}
Thus, by regular change of variable \eqref{regular}, estimates \eqref{bcossin} and \eqref{gafg}, $T_{1,1}$ is bounded above as
\begin{align*}
	|T_{1,1}|
	& \le C\int_{\R^6\times\S^2}|v-v_*|^{\ga+2}b(\cos\th)\sin^2\frac{\th}{2}|f_*|(\mu(v_*)\mu(v'))^{\frac{1}{72}}|g(v')|\,d\sigma dv_*dv \\
	& \le C\int_{\R^6}|v-v_*|^{\ga+2}f_*(\mu(v_*)\mu(v))^{\frac{1}{72}}g(v)\,dv_*dv\\
	& \le C\|\mu^{\frac{1}{76}}f\|_{L^2_v}\|\mu^{\frac{1}{76}}g\|_{L^1_{v}}.
\end{align*}
%
%
For the term $T_{1,2}$,
noticing $\mu^{\frac{1}{2}}(v)-\mu^{\frac{1}{2}}(v')=2\mu^{\frac{1}{4}}(v')\big(\mu^{\frac{1}{4}}(v)-\mu^{\frac{1}{4}}(v')\big)+\big(\mu^{\frac{1}{4}}(v)-\mu^{\frac{1}{4}}(v')\big)^2$, we write
\begin{align*}
	T_{1,2}&=2\int_{\R^6\times\S^2}Bf(v_*)\mu^{\frac{1}{4}}(v')\big(\mu^{\frac{1}{4}}(v)-\mu^{\frac{1}{4}}(v')\big)g(v')\mu^{\frac{1}{2}}(v_*)\,d\sigma dv_*dv\\
	&\quad+\int_{\R^6\times\S^2}Bf(v_*)\big(\mu^{\frac{1}{4}}(v)-\mu^{\frac{1}{4}}(v')\big)^2g(v')\mu^{\frac{1}{2}}(v_*)\,d\sigma dv_*dv
	=:T_{1,2,1}+T_{1,2,2}.
\end{align*}
For the term $T_{1,2,1}$, by \eqref{Hpristar2}, we have
\begin{align*}
	|T_{1,2,1}| & \le C\|\mu^{\frac{1}{4}}f\|_{L^2_v}\|\mu^{\frac{1}{8}}g\|_{L^1_v}.
\end{align*}
For the term $T_{1,2,2}$, by \eqref{635a}, 
\eqref{635}, regular change of variable \eqref{regular} and \eqref{muequiv},
\begin{align*}
	|T_{1,2,2}| & \le C\int_{\R^6\times\S^2}B\sin^2\frac{\th}{2}|v-v_*|^2|f_*|\big(\mu^{\frac{1}{4}}(v)+\mu^{\frac{1}{4}}(v')\big)g(v')\mu^{\frac{1}{2}}_*\,d\sigma dv_*dv \\
	& \le C\int_{\R^6}|f(v_*)|g(v)\mu^{\frac{1}{64}}(v_*)\mu^{\frac{1}{64}}(v)\,dv_*dv \\
	& \le C\|\mu^{\frac{1}{76}}f\|_{L^2_v}\|\mu^{\frac{1}{76}}g\|_{L^1_v},
\end{align*}
where we used \eqref{gafg}.
The above two estimates imply
\begin{align*}
	|T_{1,2}| & \le C\|\mu^{\frac{1}{76}}f\|_{L^2_v}\|\mu^{\frac{1}{76}}g\|_{L^1_v}.
\end{align*}
The estimate of $T_{1,3}$ can be carried out as the estimate of $T_{1,2}$ by decomposing $\mu^{\frac{1}{2}}(v'_*)-\mu^{\frac{1}{2}}(v_*)=2\mu^{\frac{1}{4}}(v_*)\big(\mu^{\frac{1}{4}}(v'_*)-\mu^{\frac{1}{4}}(v_*)\big)+\big(\mu^{\frac{1}{4}}(v_*)-\mu^{\frac{1}{4}}(v'_*)\big)^2$. Thus, by using the same method as in the estimates of $T_{1,2,1}$, $T_{1,2,2}$ and \eqref{muequiv}, we have
\begin{align*}
	|T_{1,3}| & \le C\|\mu^{\frac{1}{76}}f\|_{L^2_v}\|\mu^{\frac{1}{76}}g\|_{L^1_v}.
\end{align*}
For the term $T_{1,4}$ in \eqref{T3a}, we apply regular change of variable \eqref{regular} and \eqref{bcossink2} to deduce
\begin{align*}
	|T_{1,4}| & \le \int_{\R^6\times\S^2}|v-v_*|^{\ga}b(\cos\th)\frac{1-\cos^{3+\ga}\frac{\th}{2}}{\cos^{3+\ga}\frac{\th}{2}}\mu^{\frac{1}{2}}_*f_*\mu^{\frac{1}{2}}(v)g(v)\,d\sigma dv_*dv \\
	& \le C\|\mu^{\frac{1}{4}}f\|_{L^2_{v}}\|\mu^{\frac{1}{4}}g\|_{L^1_v},
\end{align*}
where we used \eqref{gafg} in the last inequality. Substituting the above estimates for $T_{1,j}$'s ($1\le j\le 4$) into \eqref{T3a}, we obtain \eqref{vpmu12}.

\smallskip\noindent{\bf Estimating \eqref{vpl12}.}
The proof is similar to step 1. The only difference is a worse decay in $\<v\>^{-l}_\de$ compared to $\mu^{\frac{1}{2}}$.
By pre-post change of variable $(v,v_*)\mapsto (v',v_*')$, we estimate the left-hand side of \eqref{vpl12} as 
\begin{align}\notag
	&\int_{\R^6\times\S^2}Bf(v_*)\<v\>^{-l}_\de\big(g(v')\mu^{\frac{1}{2}}(v'_*)-g(v)\mu^{\frac{1}{2}}(v_*)\big)\,d\sigma dv_*dv\\
	&\notag\quad=\int_{\R^6\times\S^2}Bf(v_*)g(v')\big(\mu^{\frac{1}{2}}(v'_*)-\mu^{\frac{1}{2}}(v_*)\big)\big(\<v\>^{-l}_\de-\<v'\>^{-l}_\de\big)\,d\sigma dv_*dv\\
	&\notag\quad+\int_{\R^6\times\S^2}Bf(v_*)g(v')\mu^{\frac{1}{2}}(v_*)\big(\<v\>^{-l}_\de-\<v'\>^{-l}_\de\big)\,d\sigma dv_*dv\\
	&\notag\quad+\int_{\R^6\times\S^2}Bf(v_*)g(v')\<v'\>^{-l}_\de\big(\mu^{\frac{1}{2}}(v'_*)-\mu^{\frac{1}{2}}(v_*)\big)\,d\sigma dv_*dv\\
	&\notag\quad+\int_{\R^6\times\S^2}B\mu^{\frac{1}{2}}(v_*)f(v_*)\big(g(v')\<v'\>^{-l}_\de-g(v)\<v\>^{-l}_\de\big)\,d\sigma dv_*dv\\
	&\quad=:T_{2,1}+T_{2,2}+T_{2,3}+T_{2,4}. \label{T2c}
\end{align}
By Lemmas \ref{vldeLem} and \ref{vldeLem1}, for $l\ge 2$, we have
\begin{align*}
	\begin{aligned}
		\Big|{\<v\>^{-\frac{l}{2}}}-{\<v'\>^{-\frac{l}{2}}}\Big|\le C\min\big\{|v-v_*|\sin\frac{\th}{2},1\big\}. 
	\end{aligned}
\end{align*}
Moreover, similar to \eqref{muequiv}, we have from \eqref{vstar1}, \eqref{vstar2} and \eqref{vstar3} that 
\begin{align*}\begin{aligned}
		\mu^{\frac{1}{4}}(v'_*)\<v\>^{-\frac{l}{2}}&\le C_l
		\<v'\>^{-\frac{l}{2}},\\
		\max\big\{\mu^{\frac{1}{4}}(v_*)\<v\>^{-\frac{l}{2}},\,\mu^{\frac{1}{4}}(v'_*)\<v'\>^{-\frac{l}{2}}\big\}&\le C_l
		\<v'\>^{-\frac{l}{2}}. 
	\end{aligned}
\end{align*}
Then by \eqref{bcossin}, \eqref{gafg} and Lemma \ref{vldeLem} (5), the term $T_{2,1}$ can be estimated as 
\begin{align*}
	|T_{2,1}| &\le
	C_{\de,l}\int_{\R^6\times\S^2}|v-v_*|^{\ga}b(\cos\th)\min\big\{|v-v_*|^2\sin^2\frac{\th}{2},1\big\}f(v_*)g(v')
	\<v'\>^{-\frac{l}{2}}\,d\sigma dv_*dv\\ 
	&\le C_{\de,l}\|\<v\>^{2+(\ga+2s)_+}f\|_{L^2_v}\|\<v\>^{-\frac{l}{2}+(\ga+2s)_+}g\|_{L^1_v}.
\end{align*}
Applying \eqref{vpristar} and \eqref{Hpristar} to the terms $T_{2,2},T_{2,3}$ respectively, we have 
\begin{align*}
	|T_{2,2}|+|T_{2,3}|&\le C\|\<v\>^{\frac{l}{2}+\ga+5}\mu^{\frac{1}{2}}f\|_{L^2_v}\|\<v\>^{-\frac{l}{2}+\ga+2}g\|_{L^1_v}+C\|\<v\>^{2+(\ga+2s)_+}f\|_{L^2_v}\|\<v\>^{-l+(\ga+2s)_+}g\|_{L^1_{v}}.
\end{align*}
For the term $T_{2,4}$, by regular change of variable \eqref{regular} and \eqref{bcossink2}, we have
\begin{align*}
	|T_{2,4}| & =\int_{\R^6\times\S^2}|v-v_*|^\ga b(\cos\th)\mu^{\frac{1}{2}}(v_*)f(v_*)\frac{g(v)}{\<v\>^l}\frac{1-\cos^{3+\ga}\frac{\th}{2}}{\cos^{3+\ga}\frac{\th}{2}}\,d\sigma dv_*dv \\
	& =C\int_{\R^6}|v-v_*|^\ga\mu^{\frac{1}{2}}(v_*)f(v_*)g(v)\<v\>^{-l}_\de\,dv_*dv \\
	& \le C_{\de,l}\|\mu^{\frac{1}{4}}f\|_{L^\infty_v}\|\<v\>^{-l+{\ga}_+}g\|_{L^1_v},
\end{align*}
where we used \eqref{gafg}.
Substituting the above estimates into \eqref{T2c} and assuming $l\ge\ga+10$ and $-\frac{3}{2}<\ga\le 2$, we obtain
\begin{align*}
	|T_{2}|\le C_{\de,l}\|\<v\>^lf\|_{L^\infty_v}\|\<v\>^{-2}g\|_{L^1_v}. 
\end{align*}

\smallskip\noindent{\bf Estimating \eqref{Gammavp}.}
By \eqref{Ga}, we can write
\begin{multline}\label{Gaest}
	(\Gamma(f,g),h)_{L^2_v}
	= \int_{\R^6}\int_{\S^2}B(v-v_*,\sigma)\mu^{\frac{1}{2}}(v_*)\big(f'_*g'-f_*g\big)h\,d\sigma dv_*dv\\
	=\int_{\R^6}\int_{\S^2}B(v-v_*,\sigma)\big(\mu^{\frac{1}{2}}(v'_*)h(v')-\mu^{\frac{1}{2}}(v_*)h(v)\big)f(v_*)g(v)\,d\sigma dv_*dv.
\end{multline}
Note that
\begin{align*}
	& \mu^{\frac{1}{2}}(v'_*)h(v')-\mu^{\frac{1}{2}}(v_*)h(v) \\
	& =\big(\mu^{\frac{1}{2}}(v'_*)-\mu^{\frac{1}{2}}(v_*)\big)h(v')+\mu^{\frac{1}{2}}(v_*)\big(h(v')-h(v)\big) \\
	& =\big(\mu^{\frac{1}{2}}(v'_*)-\mu^{\frac{1}{2}}(v_*)\big)\big(h(v')-h(v)\big)+\big(\mu^{\frac{1}{2}}(v'_*)-\mu^{\frac{1}{2}}(v_*)\big)h(v)+\mu^{\frac{1}{2}}(v_*)\big(h(v')-h(v)\big).
\end{align*}
Correspondingly, we write \eqref{Gaest} as
\begin{multline*}
	(\Gamma(f,g),h)_{L^2_v}=\int_{\R^6}\int_{\S^2}B\big(\mu^{\frac{1}{2}}(v'_*)-\mu^{\frac{1}{2}}(v_*)\big)\big(h(v')-h(v)\big)f(v_*)g(v)\,d\sigma dv_*dv\\
	+\int_{\R^6}\int_{\S^2}B\big(\mu^{\frac{1}{2}}(v'_*)-\mu^{\frac{1}{2}}(v_*)\big)h(v)f(v_*)g(v)\,d\sigma dv_*dv\\
	+\int_{\R^6}\int_{\S^2}B\mu^{\frac{1}{2}}(v_*)\big(h(v')-h(v)\big)f(v_*)g(v)\,d\sigma dv_*dv
	=\Gamma_1+\Gamma_2+\Gamma_3.
\end{multline*}
By estimates \eqref{HHbcos} and \eqref{gafg}, $\Gamma_2$ and $\Gamma_3$ can be estimated as
\begin{multline*}
	|\Gamma_2|+|\Gamma_3|
	\le \int_{\R^6}\big(|v_*-v|^{\ga+2s-1}\1_{|v-v_*|\ge\frac{2}{\pi}}+|v-v_*|^{\ga+1}\1_{|v-v_*|<\frac{2}{\pi}}+|v-v_*|^{\ga+2s}\big)\\
	\times\Big(\|[\mu^{\frac{1}{2}},\,\na_v\mu^{\frac{1}{2}},\,\na^2_v\mu^{\frac{1}{2}}]\|_{L^\infty_v}|h(v)|+\mu^{\frac{1}{2}}(v_*)\|[h,\,\na_vh,\,\na^2_vh]\|_{L^\infty_v}\Big)|f(v_*)g(v)|\,d\sigma dv_*dv\\
	\le C\|[h,\,\na_vh,\,\na^2_vh]\|_{L^\infty_v}\|\<v\>^{2+(\ga+2s)_+}f\|_{L^2_v}\|\<v\>^{2+(\ga+2s)_+}g\|_{L^2_v}.
\end{multline*}
For the term $\Gamma_1$, note from \eqref{vpriminv} that
\begin{align}\begin{aligned}
		\label{322}
		|\mu^{\frac{1}{2}}(v'_*)-\mu^{\frac{1}{2}}(v_*)| & \le \|\na_v\mu^{\frac{1}{2}}\|_{L^\infty_v}|v-v_*|\sin\frac{\th}{2}, \\
		|h(v')-h(v)| & \le\|\na_vh\|_{L^\infty_v}|v-v_*|\sin\frac{\th}{2}.
	\end{aligned}
\end{align}
Then by \eqref{bcossin} and \eqref{gafg}, we have
\begin{align*}
	|\Gamma_1| & \le\int_{\R^6}\int_{\S^2}|v-v_*|^{\ga}b(\cos\th)\min\big\{\sin^2\frac{\th}{2}|v-v_*|^2,1\big\}\|[h,\,\na_vh]\|_{L^\infty_v}|f(v_*)g(v)|\,d\sigma dv_*dv \\
	& \le C\|[h,\,\na_vh]\|_{L^\infty_v}\|\<v\>^{2+(\ga+2s)_+}f\|_{L^2_v}\|\<v\>^{2+(\ga+2s)_+}g\|_{L^2_v}.
\end{align*}
Combining the above estimates for $\Gamma_i$ $(i=1,2,3)$, we obtain \eqref{Gammavp}.


%

\smallskip\noindent{\bf Estimating \eqref{Gammavp1}.}
For the estimate \eqref{Gammavp1}, we write \eqref{Gaest} as
\begin{align*}
	(\Gamma(f,g),h)_{L^2_v}
	& =\int_{\R^6}\int_{\S^2}B\big(\mu^{\frac{1}{2}}(v'_*)h(v')-\mu^{\frac{1}{2}}(v_*)h(v)\big)f(v_*)g(v)\,d\sigma dv_*dv \\
	& =
	\int_{\R^6}\int_{\S^2}B\big(\mu^{\frac{1}{2}}(v'_*)-\mu^{\frac{1}{2}}(v_*)\big)f(v_*)\big(g(v)-g(v')\big)h(v')\,d\sigma dv_*dv \\
	& \quad+\int_{\R^6}\int_{\S^2}B\big(\mu^{\frac{1}{2}}(v'_*)-\mu^{\frac{1}{2}}(v_*)\big)f(v_*)g(v')h(v')\,d\sigma dv_*dv \\
	& \quad+\int_{\R^6}\int_{\S^2}B\mu^{\frac{1}{2}}(v_*)f(v_*)\big(g(v)-g(v'))h(v')\,d\sigma dv_*dv \\
	& \quad+\int_{\R^6}\int_{\S^2}B\mu^{\frac{1}{2}}(v_*)f(v_*)\big(g(v')h(v')-g(v)h(v)\big)\,d\sigma dv_*dv \\
	& =\ti\Ga_1+\ti\Ga_2+\ti\Ga_3+\ti\Ga_4.
\end{align*}
For the term $\ti\Ga_1$, we use \eqref{322} and regular change of variable \eqref{regular} to deduce
\begin{align*}
	|\ti\Ga_1| & \le \int_{\R^6}\int_{\S^2}\Big\{4\|g\|_{L^\infty_v},\,C|v-v_*|^2\sin^2\frac{\th}{2}\|\na_vg\|_{L^\infty_v}\Big\}|v-v_*|^{\ga}b(\cos\th)|f(v_*)h(v')|\,d\sigma dv_*dv \\
	& \le C\|[g,\na_vg]\|_{L^\infty_v}\int_{\R^6}|v-v_*|^{\ga+2s}|f(v_*)h(v)|\,dv_*dv \\
	& \le C\|[g,\na_vg]\|_{L^\infty_v}\|\<v\>^{2+(\ga+2s)_+}f\|_{L^2_v}\|\<v\>^{(\ga+2s)_+}h\|_{L^1_v},
\end{align*}
where we used \eqref{bcossin} and \eqref{gafg}.
For the terms $\ti\Ga_2$ and $\ti\Ga_3$, we apply \eqref{Hpristar} and \eqref{Hpristar2} to obtain
\begin{align*}
	|\ti\Ga_2|+|\ti\Ga_3|
	& \le C\|\<v\>^{2+(\ga+2s)_+}f\|_{L^2_v}\big(\|[g,\na_vg,\na^2_vg]\|_{L^\infty_v}\|\<v\>^{(\ga+2s)_+}h\|_{L^1_{v}}+\|\<v\>^{(\ga+2s)_+}gh\|_{L^1_{v}}\big) \\
	& \le C\|[g,\na_vg,\na^2_vg]\|_{L^\infty_v}\|\<v\>^{2+(\ga+2s)_+}f\|_{L^2_v}\|\<v\>^{(\ga+2s)_+}h\|_{L^1_{v}}.
\end{align*}
For the term $\ti\Ga_4$, we apply regular change of variable \eqref{regular} and \eqref{bcossink2} to deduce
\begin{align*}
	|\ti\Ga_4| & =\Big|\int_{\R^6}\int_{\S^2}|v-v_*|^\ga b(\cos\th)\mu^{\frac{1}{2}}(v_*)f(v_*)\big(\frac{1
	}{{\cos^{3+\ga}\frac{\th}{2}}}-1\big)g(v)h(v)\,d\sigma dv_*dv\Big| \\
	& \le C\int_{\R^6}|v-v_*|^\ga\mu^{\frac{1}{2}}(v_*)|f(v_*)||g(v)h(v)|\,dv_*dv\\
	& \le C\|\mu^{\frac{1}{4}}f\|_{L^2_v}\|\<v\>^{{\ga}_+}g\|_{L^\infty_v}\|\<v\>^{{\ga}_+}h\|_{L^1_v},
\end{align*}
where we used \eqref{gafg}.
The above estimates imply \eqref{Gammavp1}.

%
%
%

\smallskip\noindent{\bf Estimating \eqref{64eq1} and \eqref{64eq2}.}
It follows from \cite[Proposition 6.10]{Morimoto2016} that for any $l\in\R$, 
\begin{align}
	\label{64eq}
	|(\Gamma(f,g),h)_{L^2_v}|\le C\|f\|_{L^2_v}\|\<v\>^{(l+\ga+2s)_+}g\|_{L^2_v}\|\<v\>^{-l}h\|_{H^{2s}}, 
\end{align}
where $(l+\ga+2s)_+=\max\{l+\ga+2s,0\}$. This is \eqref{64eq1}. 
Using \eqref{25} and \eqref{64eq}, for any $k\ge 0$, we have 
\begin{multline*}
	|(\Gamma(f,g),\<v\>^{2k}h)_{L^2_v}|
	\le |(\Gamma(f,\<v\>^{k}g),\<v\>^{k}h)_{L^2_v}|+|(\Gamma(f,g),\<v\>^{2k}h)_{L^2_v}-(\Gamma(f,\<v\>^{k}g),\<v\>^{k}h)_{L^2_v}|\\
	\le 
	C\|f\|_{L^2_v}\|\<v\>^{k+(l+\ga+2s)_+}g\|_{L^2_v}\|\<v\>^{k-l}h\|_{H^{2s}_v}
	+C\|[f,\<v\>^{\frac{\ga+2s}{2}}f]\|_{L^2_v}\|\<v\>^{k+\frac{\ga}{2}}g\|_{L^2_v}\|\<v\>^{k}h\|_{L^2_D}. 
\end{multline*}
Set $l=-4$ and notice from \eqref{esD} that $\|\<v\>^{k}h\|_{L^2_D}\le \|\<v\>^{k+\frac{\ga+2s}{2}}h\|_{H^{2s}_v}\le\|\<v\>^{k+2}h\|_{H^{2s}_v}$, we obtain 
\begin{align*}
	|(\Gamma(f,g),\<v\>^{2k}h)_{L^2_v}|
	\le 
	C\|\<v\>^{\frac{(\ga+2s)_+}{2}}f\|_{L^2_v}\|\<v\>^{k+\frac{\ga}{2}}g\|_{L^2_v}\|\<v\>^{k+4}h\|_{H^{2s}_v}.
\end{align*}
This completes the proof of Lemma \ref{vpmu12Lem}.
\end{proof}

\subsection{\texorpdfstring{$L^2$}{L2} estimate for the strong singularity}\label{secStrong}
For the case of strong singularity $s\in[\frac{1}{2},1)$, we truncate the collision kernel as in \eqref{beta} and denote the truncated collision operator $\Ga_\eta$ as in \eqref{Gaeta}. After the truncation, the calculation for local-in-time existence is the same as in the case of weak singularity. 
To take the limit from the weak singularity to the strong singularity, we need the following $L^2$ estimate and convergence properties. 

\begin{Lem}\label{GaetaesLem}
	Let $\ga\in(-\frac{3}{2},2]$, $s\in[\frac{1}{2},1)$ and $\eta\in(0,1)$. Let $b_\eta(\cos\th)$ and $\Ga_\eta$ be defined in \eqref{beta} and \eqref{Gaeta} respectively. Then for suitable functions $f,g,h$ and $k\ge 0$, we have 
	\begin{align}\label{Gaetaes}
		\big|\big(\Ga_\eta(f,g),\<v\>^{2k}h\big)_{L^2_v}\big|\le C\|\<v\>^2f\|_{L^2_v}\|\<v\>^kg\|_{L^2_D}\|\<v\>^kh\|_{L^2_D},
	\end{align}
	and 
	\begin{align}\label{Gaetaes1}
		|(\Gamma_\eta(f,g),h)_{L^2_v}|\le C\|[h,\,\na_vh,\,\na^2_vh]\|_{L^\infty_v}\|\<v\>^{2+(\ga+2s)_+}f\|_{L^2_v}\|\<v\>^{2+(\ga+2s)_+}g\|_{L^2_v},
	\end{align}
	where $C>0$ is independent of $\eta$. 
	Moreover, for any fixed $h\in C^\infty_c(\R^7_{t,x,v})$ and $f,g$ satisfying $\|\<v\>^6[f,g]\|_{L^\infty_t([0,T])L^2_x(\Omega)L^2_v}<\infty$, we have 
	\begin{align}\label{Gaetalimit}
		\lim_{\eta\to0}\big(\Ga_\eta(f,g)-\Ga(f,g),h\big)_{L^2_t([0,T])L^2_x(\Omega)L^2_v}=0. 
	\end{align}
\end{Lem}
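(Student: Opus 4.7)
The plan is to exploit the uniform domination $b_\eta(\cos\th)\le b(\cos\th)$ from \eqref{betab}: every estimate in Sections~\ref{Sec2} and \ref{Sec3} that controls $\Gamma$ from above via integrals against $b$ automatically yields the same bound for $\Gamma_\eta$, with constants independent of $\eta$. The only content that requires separate treatment is the convergence \eqref{Gaetalimit}, which needs a dominated convergence argument after pre-post symmetrization.

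For \eqref{Gaetaes}, I will repeat the proof of \eqref{Gaesweight} in Lemma~\ref{LemGa} verbatim with $b$ replaced by $b_\eta$. Splitting $\<v\>^{2k}h$ as in \eqref{214}, one gets
\[
	|(\Gamma_\eta(f,g),\<v\>^{2k}h)_{L^2_v}|\le |(\Gamma_\eta(f,\<v\>^{k}g),\<v\>^{k}h)_{L^2_v}|+|(\<v\>^k\Gamma_\eta(f,g)-\Gamma_\eta(f,\<v\>^kg),h)_{L^2_v}|.
\]
For the first term I apply the Gressman--Strain estimate \eqref{Gammaes}; for the second I apply the commutator estimate \eqref{25}. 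Both \eqref{Gammaes} and \eqref{25} are upper bounds derived from integration against $b$ via the quantities appearing in Lemma~\ref{bcossinLem}, and since $b_\eta\le b$ pointwise, all their right-hand sides still control the analogous quantities formed with $b_\eta$. The dissipation norm $\|\cdot\|_{L^2_D}$ on the right-hand side is the one associated with $b$ itself, so no adjustment is needed. This yields \eqref{Gaetaes} with constants independent of $\eta$.

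For \eqref{Gaetaes1}, I repeat the proof of \eqref{Gammavp} in Lemma~\ref{GammavpLem} with $b_\eta$ in place of $b$. The key ingredients are the pointwise estimates \eqref{HHbcos}, \eqref{322} and the angular integrability \eqref{bcossin}. Since all three rely only on the upper bound $b(\cos\th)\le C\th^{-1-2s}/\sin\th$, and $b_\eta\le b$, they transfer directly to $b_\eta$ with the same constants. Combining as in the proof of \eqref{Gammavp} gives \eqref{Gaetaes1} uniformly in $\eta$.

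The main obstacle is the convergence \eqref{Gaetalimit}. I would first use pre-post symmetrization to rewrite
\[
	\bigl(\Gamma_\eta(f,g)-\Gamma(f,g),h\bigr)_{L^2_v}=\int_{\R^6\times\S^2} |v-v_*|^\ga\bigl(b_\eta-b\bigr)(\cos\th)\,\mu^{1/2}(v_*)f(v_*)g(v)\bigl(h(v')-h(v)\bigr)\,d\sigma dv_*dv,
\]
and then, after further symmetrizing with \eqref{635} so that the integrand factors also contain $\mu^{1/2}(v_*')-\mu^{1/2}(v_*)$, the cancellation provides an extra $\sin(\th/2)$ factor. Using the smoothness of $h$ as in \eqref{Hvpri}, each resulting integrand is pointwise dominated by
\[
	C\,|v-v_*|^{\ga+2}\,b(\cos\th)\sin^2\tfrac{\th}{2}\,\mu^{1/2}(v_*)\,|f(v_*)g(v)|\,\|[h,\na_v h,\na^2_v h]\|_{L^\infty_v},
\]
which is integrable in $(v,v_*,\sigma)$ by \eqref{bcossin} and \eqref{gafg}, uniformly in $\eta\in(0,1)$. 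The definition \eqref{beta} gives pointwise convergence $b_\eta(\cos\th)\to b(\cos\th)$ as $\eta\to0$ on $\th\in(0,\pi/2]$. Dominated convergence (after integrating against the $C^\infty_c$ test function $h$ in $(t,x)$ and using the hypothesis $\|\<v\>^6[f,g]\|_{L^\infty_t L^2_x L^2_v}<\infty$ to absorb the $\<v_*\>$ weights) yields \eqref{Gaetalimit}.
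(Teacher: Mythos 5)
Your proposal follows essentially the same route as the paper for all three assertions: the uniform bounds \eqref{Gaetaes} and \eqref{Gaetaes1} come from rerunning \eqref{214} (with \eqref{Gammaes}, \eqref{25}) and \eqref{Gammavp} using only the domination $b_\eta\le b$, and the limit \eqref{Gaetalimit} comes from pre-post symmetrization, Taylor expansion, cancellation of the first-order $\omega$-terms by symmetry, and dominated convergence against $2|v-v_*|^{\ga+2}\sin^2\frac{\th}{2}\,b(\cos\th)|f_*g|$.

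One slip to correct: the displayed symmetrization identity should read, as in \eqref{Gaest} and \eqref{821}, with the factor $\mu^{\frac{1}{2}}(v'_*)h(v')-\mu^{\frac{1}{2}}(v_*)h(v)$ rather than $\mu^{\frac{1}{2}}(v_*)\bigl(h(v')-h(v)\bigr)$; as written it omits the contribution $\bigl(\mu^{\frac{1}{2}}(v'_*)-\mu^{\frac{1}{2}}(v_*)\bigr)h(v')$. Since your next sentence already proposes expanding the $\mu^{\frac{1}{2}}(v'_*)-\mu^{\frac{1}{2}}(v_*)$ difference to extract the extra angular factor (exactly the decomposition \eqref{821a}), this missing term is controlled by the same mechanism and the argument closes as intended.
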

\begin{proof}
	Since $b_\eta(\cos\th)\le b(\cos\th)$, the same calculations for the upper bound of collision term $\Ga$ can be applied to $\Ga_\eta$. Therefore, using the method in \cite[Eq. (6.6), pp. 817]{Gressman2011} and \eqref{esD}, we can obtain the estimate \eqref{Gaetaes} for $\Ga_\eta$ when $k=0$, which is the same as $\Ga$; we omit the details for brevity. For the estimate \eqref{Gaetaes} with the case $k>0$ and estimate \eqref{Gaetaes1}, one can follow \eqref{214} and \eqref{Gammavp} respectively; note that only the upper bound of $b_\eta(\cos\th)\le b(\cos\th)$ is involved in these estimates. 
	
	\smallskip To prove the limit \eqref{Gaetalimit}, we use the pre-post change of variable and write 
	\begin{align}\label{821}\notag
		\big(\Ga_\eta(f,g)-\Ga(f,g),h&\big)_{L^2_t([0,T])L^2_x(\Omega)L^2_v}=\int_{[0,T]\times\Omega}\int_{\R^6}\int_{\S^2}|v-v_*|^\ga \big(b_\eta(\cos\th)-b(\cos\th)\big)\\&\qquad\times\big(\mu^{\frac{1}{2}}(v'_*)h(v')-\mu^{\frac{1}{2}}(v_*)h(v)\big)f(v_*)g(v)\,d\sigma dv_*dvdxdt.
	\end{align}
	We will make a rough estimate as follows. Notice that 
	\begin{align}\label{821a}\notag
		&\mu^{\frac{1}{2}}(v'_*)h(v')-\mu^{\frac{1}{2}}(v_*)h(v)\\
		&\notag\quad=\big(\mu^{\frac{1}{2}}(v'_*)-\mu^{\frac{1}{2}}(v_*)\big)\big(h(v')-h(v)\big)+\big(\mu^{\frac{1}{2}}(v'_*)-\mu^{\frac{1}{2}}(v_*)\big)h(v)+\mu^{\frac{1}{2}}(v_*)\big(h(v')-h(v)\big)\\
		&\notag\quad=\na_v\mu^{\frac{1}{2}}(\bar v_*)\cdot(v'_*-v_*)\na_vh(\bar v)\cdot(v'-v)\\
		&\notag\quad\quad+\na_v\mu^{\frac{1}{2}}(v_*)\cdot(v'_*-v_*)h(v)+\na_v^2\mu^{\frac{1}{2}}(\bar v'_*):(v'_*-v_*)\otimes(v'_*-v_*)h(v)\\
		&\quad\quad+\mu^{\frac{1}{2}}(v_*)\na_vh(v)\cdot(v'-v)+\mu^{\frac{1}{2}}(v_*)\na_v^2h(\bar v'):(v'-v)\otimes(v'-v),
	\end{align}
	for some $\bar v'_*,\bar v'$. 
	For the first-order terms, we use \eqref{vprimeth} to obtain 
	\begin{align}\label{821b}
		\begin{aligned}
			v'_*-v_*&=\sin^2\frac{\th}{2}(v-v_*)-\frac{1}{2}|v-v_*|\sin\th\omega,\\
			v'-v&=\sin^2\frac{\th}{2}(v_*-v)+\frac{1}{2}|v-v_*|\sin\th\omega.
		\end{aligned}
	\end{align}
	where $\omega\in\S^1(\mathbf{k})$ satisfies $\sigma=\cos\th\,\mathbf{k}+\sin\th\,\omega$ with $\mathbf{k}=\frac{v-v_*}{|v-v_*|}$. 
	By choosing $\mathbf{k}$ as the north pole, we can write $\omega=(\cos\phi,\sin\phi,0)$ with $\phi\in[0,2\pi]$ and hence, by the symmetric about $\phi$, the integrals involving $\omega$ vanish as 
	\begin{align*}
		\int_{\S^2}\big(b_\eta(\cos\th)-b(\cos\th)\big)\sin\th\omega\,d\sigma
		\int_{0}^{\frac{\pi}{2}}\int_{0}^{2\pi}\big(b_\eta(\cos\th)-b(\cos\th)\big)\sin\th(\cos\phi,\sin\phi,0)\,d\phi d\th=0.
	\end{align*}
	Therefore, using \eqref{vpriminv}, and combining \eqref{821a} and \eqref{821b}, the remaining terms in \eqref{821} satisfy 
	\begin{multline}\label{822}
		\big|\big(\Ga_\eta(f,g)-\Ga(f,g),h\big)_{L^2_t([0,T])L^2_x(\Omega)L^2_v}\big|
		\le C\int_{[0,T]\times\Omega}\int_{\R^6}\int_{\S^2}\|[h,\na_vh,\na_v^2h]\|_{L^\infty_v}\\
		\times|v-v_*|^{\ga+2}\sin^2\frac{\th}{2} \big|b_\eta(\cos\th)-b(\cos\th)\big||f(v_*)||g(v)|\,d\sigma dv_*dvdxdt.
	\end{multline}
	To apply the Dominated Convergence Theorem, since $b_\eta(\cos\th)\le b(\cos\th)$ (from \eqref{betab}), we estimate the integrand of the right-hand side of \eqref{822} as 
	\begin{align*}
		2|v-v_*|^{\ga+2}\sin^2\frac{\th}{2}b(\cos\th)f(v_*)g(v),
	\end{align*}
	which is independent of $\eta$. This function is in $L^1$ by using \eqref{bcossin}:
	\begin{align*}
		\int_{\R^6}\int_{\S^2}|v-v_*|^{\ga+2}\sin^2\frac{\th}{2}b(\cos\th)f(v_*)g(v)\,d\sigma dv_*dv
		&\le C_s\int_{\R^6}|v-v_*|^{\ga+2}f(v_*)g(v)\,dv_*dv\\
		&\le C_s\|\<v\>^4f\|_{L^1_v}\|\<v\>^4g\|_{L^1_v}.
	\end{align*}
	Thus, applying Dominated Convergence Theorem in \eqref{822} and using \eqref{beta}, we obtain 
	\begin{align*}
		\lim_{\eta\to0}\big|\big(\Ga_\eta(f,g)-\Ga(f,g),h\big)_{L^2_t([0,T])L^2_x(\Omega)L^2_v}\big|=0. 
	\end{align*}
	This completes the proof of Lemma \ref{GaetaesLem}. 
\end{proof}

\subsection{\texorpdfstring{$L^2$}{L2} estimate of the regularizing operator}
In this Subsection, we analyze the vanishing regularizing operator $V$ given in \eqref{Vf}. 
It's direct to obtain its $L^2$ estimate as the following. 

\begin{Lem}\label{LemRegu}
	Let $V$ be the operator defined by \eqref{Vf}. 
	For any $k\in\R$, by choosing $\wh{C}_0>0$ large enough (depending on $k$), we have 
	\begin{align*}
		(Vf,\<v\>^{2k}f)_{L^2_v}&\le
		-\|[\wh{C}^{}_0\<v\>^{{k+4}}f,\<v\>^{{k+2}}\na_vf]\|_{L^2_v}^2. 
	\end{align*}
	Additionally, if $f\ge 0$ and $\wh{C}_0>0$ is large enough (depending on $k$), then 
	\begin{align*}
		(V\<v\>^{-k},\<v\>^{2k}f)_{L^2_v}
		&\le -\wh{C}^2_0\|\<v\>^{k+{8}}f\|_{L^1_v}. 
	\end{align*}
\end{Lem}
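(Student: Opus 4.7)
\smallskip
\noindent\textbf{Proof proposal for Lemma \ref{LemRegu}.} The plan is to expand $V$ explicitly using \eqref{Vf}, apply integration by parts to the diffusion part, and then balance the resulting terms against the large negative contribution from $-2\wh{C}_0^2\<v\>^8$ by choosing $\wh{C}_0$ sufficiently large. Since $V$ has no boundary effects (it acts only in $v\in\R^3$) and all the integrals are weighted-polynomial integrals of $f$, no delicate cancellations beyond direct computation are required.

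For part (i), I would write
\begin{align*}
(Vf,\<v\>^{2k}f)_{L^2_v}
=-2\wh{C}_0^2\int_{\R^3}\<v\>^{2k+8}f^2\,dv
+2\int_{\R^3}\na_v\cdot(\<v\>^{4}\na_vf)\,\<v\>^{2k}f\,dv,
\end{align*}
and integrate by parts in the second term. Using $\na_v\<v\>^{2k}=2kv\<v\>^{2k-2}$, this yields
\begin{align*}
2\int\na_v\cdot(\<v\>^{4}\na_vf)\,\<v\>^{2k}f\,dv
=-2\int\<v\>^{2k+4}|\na_vf|^2\,dv-2k\int\<v\>^{2k+2}v\cdot\na_v(f^2)\,dv,
\end{align*}
and a second integration by parts turns the last term into an $L^2$ integral of $f^2$ with polynomial weight of order $\<v\>^{2k+2}$. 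Next I would separately expand
\begin{align*}
\|\na_v(\<v\>^{k+2}f)\|_{L^2_v}^2
=\int\<v\>^{2k+4}|\na_vf|^2\,dv+(k+2)\int\<v\>^{2k+2}v\cdot\na_v(f^2)\,dv+(k+2)^2\int|v|^2\<v\>^{2k}f^2\,dv,
\end{align*}
and integrate by parts once more so that it is expressed via $|\na_vf|^2$ and a polynomial in $\<v\>$ applied to $f^2$ of order at most $\<v\>^{2k+2}$. Subtracting the target $-\wh{C}_0^2\|\<v\>^{k+4}f\|_{L^2_v}^2-\|\na_v(\<v\>^{k+2}f)\|_{L^2_v}^2$ from $(Vf,\<v\>^{2k}f)_{L^2_v}$, the $|\na_vf|^2$ integrals cancel perfectly and what remains is a leftover weighted $L^2$ integral of $f^2$ of order $\<v\>^{2k+2}$. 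Choosing $\wh{C}_0^2$ large enough depending only on $k$ absorbs this term into $\wh{C}_0^2\<v\>^{2k+8}f^2$, which completes the inequality.

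For parts (ii) and (iii), the proof is easier because the diffusion term produces only lower-order weights: computing $\na_v\mu^{\frac{1}{2}}=-\frac{v}{2}\mu^{\frac{1}{2}}$ and $\na_v\<v\>^{-k}=-kv\<v\>^{-k-2}$, one checks directly that $|\na_v\cdot(\<v\>^4\na_v\mu^{\frac{1}{2}})|\le C_k\<v\>^6\mu^{\frac{1}{2}}$ and $|\na_v\cdot(\<v\>^4\na_v\<v\>^{-k})|\le C_k\<v\>^{2-k}$. Pairing with $\<v\>^{2k}f$ and using $f\ge0$ gives
\begin{align*}
(V\mu^{\frac{1}{2}},\<v\>^{2k}f)_{L^2_v}
\le -2\wh{C}_0^2\|\<v\>^{2k+8}\mu^{\frac{1}{2}}f\|_{L^1_v}+C_k\|\<v\>^{2k+6}\mu^{\frac{1}{2}}f\|_{L^1_v},
\end{align*}
and similarly
\begin{align*}
(V\<v\>^{-k},\<v\>^{2k}f)_{L^2_v}\le -2\wh{C}_0^2\|\<v\>^{k+8}f\|_{L^1_v}+C_k\|\<v\>^{k+2}f\|_{L^1_v}.
\end{align*}
For $\wh{C}_0$ large (depending on $k$), the subleading terms are absorbed into half the leading negative term, giving the stated bounds.

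The only mildly delicate step is the bookkeeping of weighted terms in part (i), where one must verify that after all integrations by parts the $|\na_vf|^2$ contributions cancel exactly and only $f^2$ integrals of weight at most $\<v\>^{2k+2}$ remain; these are then easily dominated by $\wh{C}_0^2\<v\>^{2k+8}f^2$. Parts (ii) and (iii) present no essential difficulty once the pointwise bounds on $\na_v\cdot(\<v\>^4\na_v\mu^{\frac{1}{2}})$ and $\na_v\cdot(\<v\>^4\na_v\<v\>^{-k})$ are in hand.
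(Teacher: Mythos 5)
Your proposal is correct and follows essentially the same route as the paper: integrate by parts, identify the cross terms as weighted $L^2$ (resp.\ $L^1$) integrals of $f$ with polynomial weight strictly below $\<v\>^{2k+8}$ (resp.\ $\<v\>^{2k+8}\mu^{1/2}$, $\<v\>^{k+8}$), and absorb them by choosing $\wh{C}_0$ large depending on $k$; the paper merely applies Cauchy--Schwarz to the cross term directly instead of integrating by parts a second time. One small imprecision: in part (i) the $|\na_vf|^2$ integrals do not ``cancel perfectly'' --- the left-hand side carries a factor $-2\int\<v\>^{2k+4}|\na_vf|^2\,dv$ against $-\int\<v\>^{2k+4}|\na_vf|^2\,dv$ in the target, so a nonpositive remainder $-\int\<v\>^{2k+4}|\na_vf|^2\,dv$ survives, which is harmless since it can simply be dropped.
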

\begin{proof}
	Taking $L^2$ inner product of $Vf$ with $\<v\>^{2k}f$ over $\R^3_v$, we have 
	\begin{align*}
		(Vf,\<v\>^{2k}f)_{L^2_v}&=\big(-2\wh{C}^2_0\<v\>^{{8}}f+2\na_v\cdot(\<v\>^{{4}}\na_v)f,\<v\>^{2k}f\big)_{L^2_v}\\
		&=-2\wh{C}^2_0\|\<v\>^{{k+4}}f\|_{L^2_v}^2-2\wh{C}_0\int_{\R^3}\<v\>^{{4}}\na_vf\cdot\big(\na_v\<v\>^{k}\<v\>^{k}f+\<v\>^{k}\na_v(\<v\>^{k}f)\big)\,dv.
	\end{align*}
	Notice that $\na_v\<v\>^k=kv\<v\>^{k-2}$.
	Then by Cauchy-Schwarz inequality and choosing $\wh{C}_0>0$ large enough (depending only on $k$), we have 
	\begin{align*}
		(Vf,\<v\>^{2k}f)_{L^2_v}&\le
		-\|[\wh{C}^{}_0\<v\>^{{k+4}}f,\<v\>^{{k+2}}\na_vf]\|_{L^2_v}^2.
	\end{align*} 
	If $f\ge 0$, 
	choosing $\wh{C}_0>0$ small enough, we have 
	\begin{align*}
		(V\<v\>^{-k},\<v\>^{2k}f)_{L^2_v}&=\big(-2\wh{C}^2_0\<v\>^{{8}}\<v\>^{-k}+2\na_v\cdot(\<v\>^{{4}}\na_v)\<v\>^{-k},\<v\>^{2k}f\big)_{L^2_v}\\
		&\le -\wh{C}^2_0\|\<v\>^{k+{8}}f\|_{L^1_v}. 
	\end{align*}
	This completes the proof of Lemma \ref{LemRegu}. 
\end{proof}

\subsection{Weak limit for collision term}
To obtain the existence of the nonlinear problem, we need to use the weak-$*$ limit to approximate the final solution. As a preparation, we need to derive the following limit. 
\begin{Lem}\label{claimLem}
	Fix $T>0$, $k>2$ and any function $\Phi\in C^\infty_c(\R^7)$. Assume that $f$ is the weak-$*$ limit of $f^n$ in the sense that
	\begin{align}\label{weaklimitz}
		\begin{aligned}
			& f^n\rightharpoonup f\quad \text{ weakly-$*$ in $L^2_x(\Omega)L^2_v$ for any $t\in(0,T]$},
		\end{aligned}
	\end{align}
	which satisfy
	\begin{multline}\label{84cg}
		\int^T_0\int_{\Omega}\|\Phi\|_{W^{2,\infty}_v}\|\<v\>^{k}[f^{n_j},f]\|_{L^2_v}\,dxdt+
		\sup_{0\le t\le T}\|[f,f^n]\|_{L^2_x(\Omega)L^2_v}^2
		\\+c_0\int^T_0\|[f,f^n]\|_{L^2_x(\Omega)L^2_D}^2\,dt
		\le C_\Phi<\infty,
	\end{multline}
	with some constant $C_\Phi>0$ uniformly in $n$, which can depend on $\Phi$. 
	Then there exists a subsequence $\{f^{n_j}\}\subset\{f^{n}\}$ such that 
	\begin{align}\label{86ab}
		\lim_{n_j\to\infty}\int^T_0\int_{\Omega}\|\Phi\|_{W^{2,\infty}_v}\|\<v\>^{k-2}
		(f^{n_j}-f)
		\|_{L^2_v}\,dxdt=0.
	\end{align}
\end{Lem}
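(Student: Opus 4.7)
The plan is to first use velocity interpolation to simplify the problem, then apply Vitali's convergence theorem, with the a.e. pointwise strong $L^2_v$ convergence coming from compactness arguments built from the velocity regularity of the dissipation norm. Write $\phi(t,x):=\|\Phi(t,x,\cdot)\|_{W^{2,\infty}_v}$; since $\Phi\in C^\infty_c(\R^7)$, $\phi$ is continuous and has compact support in $(t,x)$. The elementary $L^2_v$ interpolation
\[
\|\<v\>^{k-2}g\|_{L^2_v}\le\|g\|_{L^2_v}^{2/k}\|\<v\>^kg\|_{L^2_v}^{1-2/k},
\]
valid for $k>2$, combined with Young's inequality with conjugate exponents $k/2$ and $k/(k-2)$, gives for any $\ve>0$,
\[
\int_0^T\!\!\int_\Omega \phi\|\<v\>^{k-2}(f^{n_j}{-}f)\|_{L^2_v}\,dxdt\le \ve\!\int_0^T\!\!\int_\Omega\phi\|\<v\>^k(f^{n_j}{-}f)\|_{L^2_v}\,dxdt+C_\ve\!\int_0^T\!\!\int_\Omega\phi\|f^{n_j}{-}f\|_{L^2_v}\,dxdt.
\]
By \eqref{84cg} the first right-hand integral is bounded by $2C_\Phi$; hence, if a subsequence can be found along which the second integral tends to zero, sending $j\to\infty$ and then $\ve\to 0$ yields the claim.

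For this reduced goal, $\sup_n\|f^n\|_{L^\infty_tL^2_{x,v}}<\infty$ together with Cauchy-Schwarz in $x$ shows that $\phi\,\|f^n-f\|_{L^2_v}$ is bounded in $L^\infty_tL^2_x(\operatorname{supp}\phi)$, which is uniformly integrable in $L^1_{t,x}$ since $\operatorname{supp}\phi$ has finite Lebesgue measure. By Vitali's convergence theorem it therefore suffices to construct $\{n_j\}$ along which $\|f^{n_j}(t,x,\cdot)-f(t,x,\cdot)\|_{L^2_v}\to 0$ for almost every $(t,x)\in\operatorname{supp}\phi$. This pointwise strong convergence rests on two ingredients inherited from the hypotheses: velocity regularity (by \eqref{esD}, $\<v\>^{\ga/2}\<D_v\>^sf^n$ is bounded in $L^2_{t,x,v}$ uniformly in $n$), and velocity tail decay (by \eqref{84cg}, $\phi\,\|\<v\>^kf^n\|_{L^2_v}$ is bounded in $L^1_{t,x}$ uniformly in $n$). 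Via Banach--Alaoglu and a diagonal extraction over a countable dense family of test functions in $C^\infty_c(\R^3_v)\otimes C^\infty_c(\R^3_x)\otimes C^\infty_c(\R)$, one first extracts $\{n_j\}$ so that $f^{n_j}(t,x,\cdot)\rightharpoonup f(t,x,\cdot)$ in $\mathcal{D}'(\R^3_v)$ for almost every $(t,x)\in\operatorname{supp}\phi$. At any such $(t,x)$ where the velocity norms $\|\<v\>^kf^{n_j}(t,x,\cdot)\|_{L^2_v}$ and $\|\<v\>^{\ga/2}\<D_v\>^sf^{n_j}(t,x,\cdot)\|_{L^2_v}$ stay uniformly bounded in $j$, weighted Rellich--Kondrachov upgrades the weak-$\mathcal{D}'$ convergence to strong $L^2_v$ convergence: $\int_{|v|\le R}|f^{n_j}-f|^2\,dv\to 0$ via the compact embedding $H^s_v\hookrightarrow L^2(B_R)$, while the tail is bounded by $R^{-2k}\|\<v\>^kf^{n_j}\|^2_{L^2_v}$, which becomes uniformly small as $R\to\infty$.

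The main obstacle is arranging that the two velocity norms above remain uniformly bounded in $j$ at almost every $(t,x)\in\operatorname{supp}\phi$, since the hypotheses \eqref{84cg} control them only in integrated form. To handle this I would refine the extraction in stages, passing to subsubsequences so that $\phi\,\|\<v\>^kf^{n_j}\|_{L^2_v}$ converges weakly in $L^1_{t,x}$ (available by Dunford--Pettis from its uniform integrability, which itself follows from the $L^\infty_tL^2_{x,v}$ bound and Cauchy--Schwarz on $\operatorname{supp}\phi$) and $\|\<v\>^{\ga/2}\<D_v\>^sf^{n_j}\|_{L^2_v}$ converges weakly in $L^2_{t,x}$, and then invoking a Koml\'os-type theorem to pass to a further subsequence whose suitable convex combinations converge a.e., yielding pointwise almost-everywhere control of the norms on a set $E_\delta\subset\operatorname{supp}\phi$ of measure at least $|\operatorname{supp}\phi|-\delta$ for any prescribed $\delta>0$. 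On $E_\delta$ the weighted Rellich--Kondrachov argument of the previous paragraph yields a.e. strong $L^2_v$ convergence, so Vitali's theorem gives $\int_{E_\delta}\phi\|f^{n_j}-f\|_{L^2_v}\,dxdt\to 0$; the complementary integral on $\operatorname{supp}\phi\setminus E_\delta$ is controlled by uniform integrability and made arbitrarily small by choosing $\delta$ small, after which sending $j\to\infty$ and combining with the interpolation reduction completes the proof.
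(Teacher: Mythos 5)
Your route (weight interpolation, then Vitali plus a.e.-$(t,x)$ strong $L^2_v$ convergence obtained from velocity compactness at each fixed $(t,x)$) is genuinely different from the paper's, but it has a real gap at exactly the step you flag as the main obstacle. To run weighted Rellich--Kondrachov at a.e. fixed $(t,x)$ for the \emph{whole} subsequence $\{f^{n_j}(t,x,\cdot)\}$, you need $\sup_j\|\<v\>^kf^{n_j}(t,x,\cdot)\|_{L^2_v}$ and $\sup_j\|\<D_v\>^sf^{n_j}(t,x,\cdot)\|_{L^2_v}$ finite at a.e. $(t,x)$ along a single $(t,x)$-independent subsequence, and neither Dunford--Pettis nor Koml\'os delivers this. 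Koml\'os gives a.e. convergence of Ces\`aro means of an $L^1$-bounded sequence $a_j(t,x)\ge 0$, which controls neither $\sup_j a_j$ nor $\limsup_j a_j$ pointwise; a uniformly integrable sequence can even satisfy $\int a_j\to 0$ while $\limsup_j a_j=+\infty$ a.e. (sliding bumps $a_j=(\log j)\1_{E_j}$ with $|E_j|=1/j$ arranged so that a.e. point lies in infinitely many $E_j$). Chebyshev applied to each $j$ gives exceptional sets whose measures are not summable, so no set $E_\delta$ with $\sup_j a_j\le M$ on $E_\delta$ can be extracted either. Without pointwise uniform bounds you only obtain, at each $(t,x)$, strong $L^2_v$ convergence along a $(t,x)$-\emph{dependent} subsubsequence, which is useless for Vitali. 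A second, independent problem is the claim that weak-$*$ convergence in $L^2_{x,v}$ for each $t$ disintegrates into $f^{n_j}(t,x,\cdot)\rightharpoonup f(t,x,\cdot)$ in $\mathcal{D}'(\R^3_v)$ for a.e. $(t,x)$: the velocity averages $\int f^{n_j}\varphi(v)\,dv$ converge only weakly in $L^2_x$, and weak $L^2_x$ convergence does not pass to a.e.-$x$ convergence along any subsequence without spatial compactness, which is precisely what is unavailable here.

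The paper avoids both issues by never asking for pointwise strong convergence of the unmollified sequence. It splits $\<v\>^{k-2}(f^{n}-f)$ into a large-velocity tail bounded by $\ve^{s/(k-2)}\|\<v\>^{k}(f^{n}-f)\|_{L^2_v}$ (whose $(t,x)$-integral is controlled by \eqref{84cg}), a mollification error bounded by $\ve^{s/2}C\|f^{n}-f\|_{L^2_D}$ --- an \emph{integrated} bound coming from the fractional Sobolev characterization of $\|\cdot\|_{L^2_D}$, requiring no pointwise control --- and the mollified part $\rho_\ve*(\chi_R(f^{n}-f))$, which at each $(t,x)$ is equibounded, equicontinuous and compactly supported in $v$, so Arzel\`a--Ascoli combined with the global weak-$*$ limit identifies its limit as $0$, and dominated convergence finishes before sending $\ve\to 0$. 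If you want to keep your architecture, you would need to replace the pointwise Rellich--Kondrachov step by such a mollification with quantitative error terms that are integrable in $(t,x)$.
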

\begin{proof}
	We use the method of Rellich-Kondrachov theorem to prove \eqref{86ab}.
	For any $\ve\in(0,1)$, 
	let $\rho(v)\in C^\infty_c(\R^3_v)$ be a positive smooth function with compact support in $\R^3_v$ such that $\|\rho\|_{L^1_v}=1$, and $\rho_\ve(v)=\ve^{-3}\rho(\ve^{-1}v)$. Also, we let $\chi_R\in C^\infty_c(\R^3_v)$ be a smooth cutoff function with $\chi_R(v)=1$ for $|v|\le R$ and $\chi_R(v)=0$ for $|v|\ge 2R$.
	Choosing $R=\ve^{-\frac{s}{2(k-2)}}$, we have
	\begin{align}\label{88}\notag
		\|\<v\>^{k-2}(f^{n}-f)\|_{L^2_v}
		&\le\|\<v\>^{k-2}(1-\chi_R)(f^{n}-f)\|_{L^2_v}+\ve^{-\frac{s}{2}}\|\chi_R(f^{n}-f)\|_{L^2_v}\\ 
		&\notag\le\ve^{\frac{s}{k-2}}\|\<v\>^{k}(f^{n}-f)\|_{L^2_v}+\ve^{-\frac{s}{2}}\|\chi_R(f^{n}-f)-\rho_\ve*(\chi_R(f^{n}-f))\|_{L^2_v}\\
		&\quad+\ve^{-\frac{s}{2}}\|\rho_\ve*(\chi_R(f^{n}-f))\|_{L^2_v},
	\end{align}
	where $\rho_\ve*(\chi_R(f^{n}-f))$ is the convolution defined by
	\begin{align*}
		\rho_\ve*(\chi_R(f^{n}-f))=\int_{\R^3}\rho_\ve(v_*)(\chi_R(f^{n}-f))(v-v_*)\,dv.
	\end{align*}
	For the second right hand term of \eqref{88}, by $\|\rho_\ve\|_{L^1_v}=1$, Minkowski's integral inequality and H\"{o}lder's inequality, we have
	\begin{align*}
		& \ve^{-\frac{s}{2}}\|\chi_R(f^{n}-f)-\rho_\ve*(\chi_R(f^{n}-f))\|_{L^2_v} \\
		& =\ve^{-\frac{s}{2}}\Big(\int_{\R^3}\Big|\int_{\R^3}\rho_\ve(v_*)\big[(\chi_R(f^{n}-f))(v)-(\chi_R(f^{n}-f))(v-v_*)\big]\,dv_*\Big|^2\,dv\Big)^{\frac{1}{2}} \\
		& \le\ve^{-\frac{s}{2}}\int_{\R^3}\rho_\ve(v_*)\Big(\int_{\R^3}|(\chi_R(f^{n}-f))(v)-(\chi_R(f^{n}-f))(v-v_*)|^2\,dv\Big)^{\frac{1}{2}}\,dv_* \\
		& \le\ve^{-\frac{s}{2}}\Big(\int_{\R^3}|\rho_\ve(v_*)|^2|v_*|^{3+2s}\,dv_*\Big)^{\frac{1}{2}}\Big(\int_{\R^6}\frac{|(\chi_R(f^{n}-f))(v)-(\chi_R(f^{n}-f))(v-v_*)|^2}{|v_*|^{3+2s}}\,dvdv_*\Big)^{\frac{1}{2}} \\
		& \le\ve^{\frac{s}{2}}C\|\<D_v\>^s(\chi_R(f^{n}-f))\|_{L^2_v(\R^3_v)}.
	\end{align*}
	Note that $\<D_v\>^s\chi_R$ can be regarded as a pseudo-differential operator with symbol in $S(\<v\>^{\frac{\ga}{2}}\<\eta\>^s)$; see \cite{Lerner2010} for more details. That is, $S(\<v\>^{\frac{\ga}{2}}\<\eta\>^s)$ consists of functions $a(v,\eta)$ such that for $\alpha,\beta\in \N^3$, $v,\eta\in\Rd$,
	\begin{align}\label{symbol}
		|\partial^\alpha_v\partial^\beta_\eta a(v,\eta)|\le C_{\alpha,\beta}\<v\>^{\frac{\ga}{2}}\<\eta\>^s,
	\end{align}
	and $\<D_v\>^s(\chi_Rf)$ can be written as the pseudo-differential operator in the form
	\begin{align*}
		\int_\Rd\int_\Rd e^{2\pi i (x-y)\cdot\xi}a\big(\frac{v+v_*}{2},\eta\big)f(v_*)\,dv_*d\eta,
	\end{align*}
	for some $a\in S(\<v\>^{\frac{\ga}{2}}\<\eta\>^s)$.
	Then by boundedness $\|\<D_v\>^s(\chi_R\vp)\|_{L^2_v}\le C\|\<v\>^{\frac{\ga}{2}}\<D_v\>^s\vp\|_{L^2_v}$ from \cite[Lemma 2.3]{Deng2020a} and \eqref{esD}, we have
	\begin{align}\label{819}\notag
		\ve^{-\frac{s}{2}}\|\chi_R(f^{n}-f)-\rho_\ve*(\chi_R(f^{n}-f))\|_{L^2_v}
		& \le \ve^{\frac{s}{2}}C\|\<D_v\>^s(\chi_R(f^{n}-f))\|_{L^2_v(\R^3_v)} \\
		& \le \ve^{\frac{s}{2}}C\|f^{n}-f\|_{L^2_D(\R^3_v)}.
	\end{align}
	For the third right-hand term of \eqref{88}, we know that
	$\rho_\ve*(\chi_R(f^{n}-f))$ is uniformly-in-$n$ bounded and equicontinuous for each fixed $\ve>0$, and almost every $(t,x)\in[0,T]\times\Omega$, since
	\begin{align*}
		\|\rho_\ve*(\chi_R(f^{n}-f))\|_{L^\infty_v}\le\|\rho_\ve\|_{L^\infty_v}\|\chi_R(f^{n}-f)\|_{L^1_v}\le C_{\ve,x},
	\end{align*}
	and
	\begin{align*}
		\|\na_v\{\rho_\ve*(\chi_R(f^{n}-f))\}\|_{L^\infty_v}\le\|\na_v\rho_\ve\|_{L^\infty_v}\|\chi_R(f^{n}-f)\|_{L^1_v}\le C_{\ve,x},
	\end{align*}
	where we used $\|\chi_R(f^{n}-f)\|_{L^1_v}\le \|f^{n}-f\|_{L^2_v}$ and \eqref{84cg} to obtain its almost-everywhere finiteness. Also, $\rho_\ve*(\chi_R(f^{n}-f))$ has uniformly-in-$n$ compact support in $v\in\R^3$ for any fixed $\ve>0$, since $\rho_\ve$ and $\chi_R$ have compact support that is independent of $n$ and $x$. Thus, by Arzel\`a–Ascoli Theorem, there exists a subsequence $\{f^{n_j}\}\subset\{f^n\}$ such that $\rho_\ve*(\chi_R(f^{n_j}-f))$ converges in $L^\infty_v$ as $n_j\to\infty$ for almost every $(t,x)\in[0,T]\times\Omega$. On the other hand, by the weak-$*$ convergence \eqref{weaklimitz}, we have
	\begin{multline*}
		\int^T_0\int_{\Omega}\vp(t,x)\rho_\ve*(\chi_R(f^{n}-f))(v)\,dxdt\\=\int^T_0\int_{\Omega}\int_{\R^3}\vp(t,x)\rho_\ve(v-v_*)\chi_R(v_*)(f^{n}-f)(v_*)\,dv_*dxdt\to 0,
	\end{multline*}
	as $n\to\infty$ for any test function $\vp\in C^\infty_c(\R^4_{t,x})$.
	Thus, by the uniqueness of the limit, for any fixed $\ve>0$, and almost every $(t,x)\in[0,T]\times\Omega$, 
	\begin{align*}
		\text{$\rho_\ve*(\chi_R(f^{n_j}-f))$ converges to $0$ in $L^\infty_v$ as $n_j\to\infty$.}
	\end{align*} 
	(One can prove this via contradiction by assuming it doesn't converge to $0$ for $(t,x)\in A$ with a non-zero measure set $A\subset[0,T]\times\Omega$). Therefore, since $\rho_\ve*(\chi_R(f^{n_j}-f))$ has a uniform-in-$n$ compact support, for any fixed $\ve>0$, and almost every $(t,x)\in[0,T]\times\Omega$, we have
	\begin{align}\label{820b}
		\|\rho_\ve*(\chi_R(f^{n_j}-f))\|_{L^2_v}
		\to 0,
	\end{align}
	as $n_j\to\infty$.
	Substituting \eqref{88} and \eqref{819} into the left-hand side of \eqref{86ab}, and choosing $n=n_j$ as the above, we have
	\begin{align}\label{820}
		&\notag\lim_{n_j\to\infty}\int^T_0\int_{\Omega}\|\Phi\|_{W^{2,\infty}_v}\|\<v\>^{k-2}(f^{n_j}-f)\|_{L^2_v}\,dxdt\\
		&\notag\le \lim_{n_j\to\infty}\int^T_0\int_{\Omega}\|\Phi\|_{W^{2,\infty}_v}
		\Big(\ve^{\frac{s}{k-2}}\|\<v\>^{k}(f^{n_j}-f)\|_{L^2_v}+\ve^{\frac{s}{2}}C\|f^{n_j}-f\|_{L^2_D}\\
		&\notag\quad+\ve^{-\frac{s}{2}}\|\rho_\ve*(\chi_R(f^{n_j}-f))\|_{L^2_v}\Big)\,dxdt\\
		&\notag\le \ve^{\frac{s}{k-2}}C_\Phi
		+\ve^{\frac{s}{2}}C\|\Phi\|_{L^2_tL^2_xW^{2,\infty}_v([0,T]\times\Omega\times\R^3_v)}\\
		&\quad+\ve^{-\frac{s}{2}}\lim_{n_j\to\infty}\int^T_0\int_{\Omega}\|\Phi\|_{W^{2,\infty}_v}\|\rho_\ve*(\chi_R(f^{n_j}-f))\|_{L^2_v}\,dxdt,
	\end{align}
	where we used \eqref{84cg} in the last inequality. For the last term, note from \eqref{820b} that for sufficiently large $n_j>1$ and fixed $\ve\in(0,1)$, 
	\begin{align*}
		\|\Phi\|_{W^{2,\infty}_v}\|\rho_\ve*(\chi_R(f^{n_j}-f))\|_{L^2_v}\le C\|\Phi\|_{W^{2,\infty}_v},
	\end{align*}
	where the right hand side is integrable over $[0,T]\times\Omega$. Then by Dominated Convergence Theorem and \eqref{820b},
	\begin{align*}
		\lim_{n_j\to\infty}\int^T_0\int_{\Omega}\|\Phi\|_{W^{2,\infty}_v}\|\rho_\ve*(\chi_R(f^{n_j}-f))\|_{L^2_v}\,dxdt=0,
	\end{align*}
	for any $\ve\in(0,1)$.
	Then \eqref{820} becomes
	\begin{align*}
		\lim_{n_j\to\infty}\int^T_0\int_{\Omega}\|\Phi\|_{W^{2,\infty}_v}\|\<v\>^{k-2}(f^{n_j}-f)\|_{L^2_v}\,dxdt
		\le \ve^{\frac{s}{k-2}}C_\Phi
		+\ve^{\frac{s}{2}}C\|\Phi\|_{L^2_tL^2_xW^{2,\infty}_v([0,T]\times\Omega\times\R^3_v)}.
	\end{align*}
	Since this inequality holds for any $\ve>0$, we let $\ve\to0$ and deduce \eqref{86ab}.
	This completes the proof of Lemma \ref{claimLem}.
\end{proof}

\subsection{Non-negativity of \texorpdfstring{$F$}{F}}
In this Subsection, we will prove the non-negativity of $F=\mu+\mu^{\frac{1}{2}}f$.
\begin{Thm}[Non-negativity]\label{positivity}
	Let $M,\vpi\ge 0$. Assume $F_0=\mu+\mu^{\frac{1}{2}}f_0\ge 0$. Assume that
	\begin{align}
		\label{4135}
		\sup_{0\le t\le T}\|\<v\>^4\psi(t)\|_{L^\infty_{x}(\Omega)L^\infty_v(\R^3_v)}\,dt & \le\de_0,
	\end{align}
	with sufficiently small $\de_0>0$.
	Fix any $\vpi\ge 0$ and $\ve\in[0,1)$. Denote operator $V$ by \eqref{Vf}
	\begin{align*}
		Vf=-2\wh{C}^2_0\<v\>^{{8}}f+2\na_v\cdot(\<v\>^{{4}}\na_v)f, 
	\end{align*}
	with some sufficiently large constant $\wh{C}_0>0$. 
	Let $f$ be the solution to
	\begin{align}\label{fLinearpo}
		\left\{
		\begin{aligned}
			& \pa_tf+ v\cdot\na_xf = \vpi Vf+ \Gamma(\mu^{\frac{1}{2}}+\psi,\mu^{\frac{1}{2}}+f)-Mf \quad \text{ in } (0,T]\times\Omega\times\R^3_v, \\
			& f(0,x,v)=f_0\quad \text{ in }\Omega\times\R^3_v,
		\end{aligned}\right.
	\end{align}
	with the inflow boundary condition \eqref{inflow} satisfying $G(t)=\mu+\mu^{\frac{1}{2}}g(t)\ge 0$ or the Maxwell-reflection boundary condition $f(t,x,v)|_{\Si_-}=(1-\ve)Rf$ with any fixed $0\le \ve<1$.
	Then $F(t)=\mu+\mu^{\frac{1}{2}}f(t)\ge 0$ in $\Omega\times\R^3_v$ for $t\in[0,T]$.
\end{Thm}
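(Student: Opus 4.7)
The plan is to prove $F\ge 0$ by showing that $\|u_-(t)\|_{L^2_{x,v}}^2$ remains zero, where $u:=\mu^{-\frac12}F=f+\mu^{\frac12}$ and $u_-:=\max(-u,0)$; since $\mu^{\frac12}>0$ pointwise, this is equivalent to $F\ge 0$, and the hypothesis $F_0\ge 0$ yields $u_-(0)\equiv 0$. Because $\pa_t\mu^{\frac12}=v\cdot\na_x\mu^{\frac12}=0$, the equation for $u$ reads
\begin{equation*}
\pa_t u+v\cdot\na_x u=\vpi Vu-\vpi V\mu^{\frac12}+\Gamma(\mu^{\frac12}+\psi,u)-M\mu^{-\frac12}u+M,
\end{equation*}
which is linear in $u$ with a nonnegative source $-\vpi V\mu^{\frac12}+M$ (the first summand being nonnegative for $\wh C_0$ large, by the same computation that underlies Lemma \ref{LemRegu}).

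Next I test with $-u_-\1_{u<0}$ and integrate on $\Omega\times\R^3_v$. Using $\pa_t u_-=-\pa_t u\cdot\1_{u<0}$, $\na_v u_-=-\na_v u\cdot\1_{u<0}$, and integration by parts in $x$, I obtain
\begin{equation*}
\tfrac12\tfrac{d}{dt}\|u_-\|_{L^2_{x,v}}^2+\tfrac12\int_{\pa\Omega\times\R^3_v}(v\cdot n)u_-^2\,dSdv=-\int u_-\bigl[\vpi Vu-\vpi V\mu^{\frac12}+\Gamma(\mu^{\frac12}+\psi,u)-M\mu^{-\frac12}u+M\bigr]\1_{u<0}\,dvdx.
\end{equation*}
The regularization pieces reduce, after integration by parts in $v$, to the dissipations $-2\vpi\wh C_0^2\|\<v\>^4 u_-\|_{L^2_{x,v}}^2-2\vpi\|\<v\>^2\na_v u_-\|_{L^2_{x,v}}^2$ coming from $\vpi V u$, and to $-\vpi\wh C_0^2\|\<v\>^8\mu^{\frac12}u_-\|_{L^1_{x,v}}$ coming from $-\vpi V\mu^{\frac12}$ by Lemma \ref{LemRegu} with $k=0$ applied to $u_-\ge 0$. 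The damping $-M\mu^{-\frac12}u+M$ paired against $-u_-\1_{u<0}$ (using $u=-u_-$ on $\{u<0\}$) contributes $-M\int\mu^{-\frac12}u_-^2-M\|u_-\|_{L^1_{x,v}}\le 0$.

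For the collision term I split $u=u_+-u_-$ in the second slot of $\Gamma$. The self-interaction $(\Gamma(\mu^{\frac12}+\psi,u_-),u_-)_{L^2_{x,v}}$ is controlled by the coercivity estimate \eqref{56}, which under $\|\<v\>^4\psi\|_{L^\infty_{x,v}}\le\de_0$ gives $\le-\tfrac{c_0}{2}\|u_-\|_{L^2_xL^2_D}^2+C\|u_-\|_{L^2_{x,v}}^2$. For the cross-interaction $-\int u_-\Gamma(\mu^{\frac12}+\psi,u_+)\1_{u<0}$, the key observation is that $u_+(v)=0$ on $\{u<0\}$, so $\Gamma(\mu^{\frac12}+\psi,u_+)(v)$ reduces to the pure gain-type integral $\int B\mu_*^{\frac12}(\mu^{\frac12}+\psi)(v_*')u_+(v')\,d\si dv_*$; splitting further $\mu^{\frac12}+\psi=(\mu^{\frac12}+\psi)_+-(\mu^{\frac12}+\psi)_-$, the $(\mu^{\frac12}+\psi)_+$ piece has the favourable sign (nonpositive after multiplying by $-u_-$), while the $(\mu^{\frac12}+\psi)_-$ piece is absorbed via $\|(\mu^{\frac12}+\psi)_-\|_{L^2_v}\le C\de_0$ (since $\mu^{\frac12}$ dominates $|\psi|\le\de_0\<v\>^{-4}$ outside a fixed ball) together with the bilinear bound \eqref{Gammaes} and the pointwise inequality $\|u_+\|_{L^2_xL^2_D}\le C\|u\|_{L^2_xL^2_D}$. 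The boundary term is handled by cases: for inflow, $G\ge 0$ gives $u|_{\Si_-}=\mu^{-\frac12}G\ge 0$, hence $u_-|_{\Si_-}=0$ and $\int_{\pa\Omega\times\R^3_v}(v\cdot n)u_-^2\ge 0$; for Maxwell, the identity $R\mu^{\frac12}=\mu^{\frac12}$ yields $u|_{\Si_-}=\ve\mu^{\frac12}+(1-\ve)Ru$, and since $R$ preserves nonnegativity and is an $L^2$-contraction on the boundary, $\int_{\pa\Omega\times\R^3_v}(v\cdot n)u_-^2\ge(1-(1-\ve)^2)\int_{\Si_+}(v\cdot n)u_-^2\ge 0$.

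The main obstacle is closing the Gronwall inequality in the non-cutoff setting: the cross-interaction contribution bounded by $C\de_0\|u_+\|_{L^2_xL^2_D}\|u_-\|_{L^2_xL^2_D}$ must be fully absorbed into the dissipation $+\tfrac{c_0}{2}\|u_-\|_{L^2_xL^2_D}^2$ without leaving any residual source involving $\|u\|_{L^2_xL^2_D}^2$ on the right. The smallness of $\de_0$ combined with Young's inequality and $\|u_+\|_D\le\|u\|_D\le\|u_-\|_D+\|u_+\|_D$ is what allows this absorption; once $\tfrac{d}{dt}\|u_-\|_{L^2_{x,v}}^2\le C\|u_-\|_{L^2_{x,v}}^2$ is established, Gronwall with $u_-(0)=0$ gives $u_-\equiv 0$ on $[0,T]$, i.e., $F\ge 0$. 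If the direct absorption proves too tight, an approximation by cutoff kernels $b_n$ with $\int b_n\,d\si<\infty$ (for which the equation satisfies a genuine maximum principle and $F_n\ge 0$ is immediate along characteristics using the integrating-factor representation) followed by passage to the limit $n\to\infty$ provides an alternative and more robust route.
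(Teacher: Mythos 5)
Your overall strategy --- an $L^2$ energy estimate on the negative part of $u=\mu^{-\frac12}F$, the splitting $u=u_+-u_-$ in the second slot of $\Gamma$ so that on $\{u<0\}$ the cross term reduces to a gain-only integral with a sign, coercivity (\eqref{L1}, \eqref{56}) for the self-interaction $\Gamma(\Psi,u_-)$, sign arguments for the regularizing and damping terms, the boundary case analysis via $R\mu^{\frac12}=\mu^{\frac12}$ and the $L^2$-contractivity of $R$, and Gr\"onwall from $u_-(0)=0$ --- is exactly the paper's proof; the paper merely works with $F_\pm$ and the test function $-2\mu^{-1}F_-$ instead of $u_\pm$ and $-u_-$, a purely cosmetic difference.

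The one step that does not close as written is your treatment of the $(\mu^{\frac12}+\psi)_-$ piece of the cross term. First, \eqref{Gammaes} controls the full bilinear form, whose angular singularity is tamed by the gain--loss cancellation; the gain-only operator that survives on $\{u<0\}$ is not bounded by the same norms in the non-cutoff regime (for $s\ge\frac12$ the first-order cancellation $|u_+(v')-u_+(v)|\lesssim\theta|v-v_*|$ is insufficient against $\theta^{-1-2s}$). Second, and more decisively, even granting a bound $C\de_0\|u_+\|_{L^2_xL^2_D}\|u_-\|_{L^2_xL^2_D}$, it cannot be absorbed: Young's inequality leaves a residue $C\de_0^2\|u_+\|_{L^2_xL^2_D}^2$, and $\|u_+\|_{L^2_D}$ is not dominated by anything involving $u_-$ alone (the chain $\|u_+\|_{D}\le\|u_-\|_{D}+\|u_+\|_{D}$ you invoke is vacuous), so the differential inequality acquires a source independent of $u_-$ and Gr\"onwall no longer yields $u_-\equiv0$. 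The paper sidesteps this entirely by taking $\Psi=\mu^{\frac12}+\psi\ge0$ --- the hypothesis under which the theorem is invoked in every application (cf.\ Theorem \ref{weakfThm}, Lemma \ref{LemLinRe}) --- so that the whole gain-only cross term has the favourable sign and is simply \emph{dropped}, never estimated. Under that assumption your argument is complete as it stands; without it, your fallback through cutoff kernels and a limiting argument is the honest route, not the direct absorption.
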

\begin{proof}
	Writing $F=\mu+\mu^{\frac{1}{2}}f$ and $\Psi=\mu^{\frac{1}{2}}+\psi$, we rewrite \eqref{fLinearpo} as
	\begin{align}\label{536}
		\left\{
		\begin{aligned}
			& \pa_tF+ v\cdot\na_xF = \mu^{\frac{1}{2}}\vpi V(\mu^{-\frac{1}{2}}F-\mu^{\frac{1}{2}}) + Q(\mu+\mu^{\frac{1}{2}}\psi,F)
			\\&\qquad\qquad\qquad-M(\mu^{-\frac{1}{2}}F-\mu^{\frac{1}{2}}) \quad \text{ in } (0,T]\times\Omega\times\R^3_v, \\
			& f(0,x,v)=f_0\quad \text{ in }\Omega\times\R^3_v.
		\end{aligned}\right.
	\end{align}
	Denote $F_+:=\max\{F,0\}$ and $F_-:=-\min\{F,0\}$.
	Then one has $F=F_+-F_-$, and 
	\begin{align*}
		\frac{d(x_-)^2}{dx} = -2x_-,\quad
		\na_{t,x}|F_-|^2=-2F_{-}\na_{t,x}F,
		\quad F_{-}|_{t=0}=0.
	\end{align*}
	Then it suffices to show that $F_-(t)=0$ for $t\in[0,T]$.
	Notice from $F=F_+-F_-$ that
	\begin{align*}
		& \int_{\R^3}Q(\mu^{\frac{1}{2}}\Psi,F)\mu^{-1}F_-(v)\,dv \\
		& \quad=-\int_{\R^3}Q(\mu^{\frac{1}{2}}\Psi,F_-)\mu^{-1}F_-(v)\,dv+\int_{\R^3}Q(\mu^{\frac{1}{2}}\Psi,F_+)\mu^{-1}F_-(v)\,dv \\
		& \quad=-\int_{\R^3}Q(\mu^{\frac{1}{2}}\Psi,F_-)\mu^{-1}F_-(v)\,dv+\int_{\R^3}\int_{\R^3}\int_{\S^2}B(\mu^{\frac{1}{2}}\Psi)_{*}'(F_+)'F_-(v)\mu^{-1}\,d\sigma dv_*dv
		\\&\quad\qquad
		-\int_{\R^3}\int_{\R^3}\int_{\S^2}B(\mu^{\frac{1}{2}}\Psi)_{*}F_+(v)F_-(v)\mu^{-1}\,d\sigma dv_*dv \\
		& \quad\ge -\int_{\R^3}{Q(\mu^{\frac{1}{2}}\Psi,F_-)\mu^{-1}F_-(v)\,dv},
	\end{align*}
	where we used $F_+\times F_-=0$ and $(F_+)'\times F_-\ge 0$.
	Moreover, it follows from $F=F_+-F_-$ and Lemma \ref{LemRegu} that 
	\begin{align*}
		&-2\vpi\int_{\R^3}V(\mu^{-\frac{1}{2}}F-\mu^{\frac{1}{2}})\mu^{-\frac{1}{2}}F_-\,dv\\
		&\quad=2\vpi\int_{\R^3}V(\mu^{-\frac{1}{2}}F_-+\mu^{\frac{1}{2}})\mu^{-\frac{1}{2}}F_-\,dv\\
		&\quad\le -\|[\wh{C}^{}_0\<v\>^{{4}}\mu^{-\frac{1}{2}}F_-,\na_v(\<v\>^{{2}}\mu^{-\frac{1}{2}}F_-)]\|_{L^2_v}^2
		-\wh{C}^2_0\|\<v\>^{{8}}F_-\|_{L^1_v}\\
		&\quad\le 0. 
	\end{align*}
	It's direct to obtain 
	\begin{align*}
		2\int_{\R^3}M(\mu^{-\frac{1}{2}}F-\mu^{\frac{1}{2}})\mu^{-1}F_-\,dv
		&=-2\int_{\R^3}M(\mu^{-\frac{1}{2}}F_-+\mu^{\frac{1}{2}})\mu^{-1}F_-\,dv\le 0. 
	\end{align*}
	Then combining the above three estimates, and taking $L^2$ inner product of \eqref{536} with $-2\mu^{-1}F_-$, we have
	\begin{multline*}
		\frac{d}{dt}\|\mu^{-\frac{1}{2}}F_-\|^2_{L^2_x(\Omega)L^2_v}+\int_{\pa\Omega}\int_{\R^3}v\cdot n|\mu^{-\frac{1}{2}}F_-|^2\,dvdS(x)
		\le-2\int_{\R^3}Q(\mu^{\frac{1}{2}}\Psi,F)\mu^{-1}F_-(v)\,dv \\ 
		-2\vpi\int_{\Omega\times\R^3}V(\mu^{-\frac{1}{2}}F-\mu^{\frac{1}{2}})\mu^{-\frac{1}{2}}F_-\,dvdx 
		+2\int_{\Omega\times\R^3}M(\mu^{-\frac{1}{2}}F-\mu^{\frac{1}{2}})\mu^{-1}F_-\,dvdx \\
		\le2\int_{\Omega\times\R^3}\Ga(\Psi,\mu^{-\frac{1}{2}}F_-)\mu^{-\frac{1}{2}}F_-(v)\,dvdx.
	\end{multline*}
	Applying \eqref{L1} and \eqref{Gaesweight} to the collision term, we have
	\begin{multline*}
		\frac{d}{dt}\|\mu^{-\frac{1}{2}}F_-\|^2_{L^2_x(\Omega)L^2_v}
		+\int_{\Si_+}|v\cdot n||\mu^{-\frac{1}{2}}F_-|^2\,dvdS(x)\\
		\le
		\int_{\Si_-}|v\cdot n||\mu^{-\frac{1}{2}}F_-|^2\,dvdS(x)+C\|\1_{|v|\le R_0}\mu^{-\frac{1}{2}}F_-\|_{L^2_x(\Omega)L^2_v}^2.
	\end{multline*}
	where we choose $\de_0>0$ in \eqref{4135} small enough.
	For the inflow case, we have $G\ge 0$ on $\Si_-$, and hence, $F_-=0$ on $\Si_-$. For the Maxwell-reflection case, we have from \eqref{reflect} that 
	\begin{align*}
		\frac{1}{1-\ve}&\int_{\Si_-}|v\cdot n||\mu^{-\frac{1}{2}}F_-|^2\,dvdS(x)\le 
		\int_{\Si_-}|v\cdot n|\Big((1-\al)^2|\mu^{-\frac{1}{2}}F_-(x,R_L(x)v)|^2\\
		&\quad+2(1-\al)\al c_\mu F_-(x,R_L(x)v)\int_{v'\cdot n(x)>0}\{v'\cdot n(x)\}F_-(t,x,v')(v')\,dv'\\
		&\quad+\al^2(c_\mu)^2\mu(v)\Big\{\int_{v'\cdot n(x)>0}\{v'\cdot n(x)\}F_-(t,x,v')(v')\,dv'\Big\}^2\Big)\,dvdS(x)\\
		&\le(1-\al)\int_{\Si_-}|v\cdot n||\mu^{-\frac{1}{2}}F_-(x,v)|^2\,dvdS(x)\\
		&\quad+\al c_\mu\int_{\pa\Omega}\Big(\int_{v'\cdot n(x)>0}\{v'\cdot n(x)\}F_-(t,x,v')(v')\,dv'\Big)^2\,dS(x)\\
		&\le\int_{\Si_+}|v\cdot n||\mu^{-\frac{1}{2}}F_-|^2\,dvdS(x).
	\end{align*}
	where we used change of variables $v\mapsto R_L(x)v:\Si_-\to\Si_+$, H\"{o}lder's inequality $\int_{v'\cdot n(x)>0}\{v'\cdot n(x)\}F_-(t,x,v')(v')\,dv'\le \big(\int_{v'\cdot n(x)>0}\{v'\cdot n(x)\}|\mu^{-\frac{1}{2}}F_-|^2\,dv'\big)\big(\int_{v'\cdot n(x)>0}\{v'\cdot n(x)\}\mu\,dv'\big)$ and \eqref{cmu}. 
	In both cases, we have
	\begin{align*}
		\frac{d}{dt}\|\mu^{-\frac{1}{2}}F_-\|^2_{L^2_x(\Omega)L^2_v}
		\le C\|\1_{|v|\le R_0}\mu^{-\frac{1}{2}}F_-\|_{L^2_x(\Omega)L^2_v}^2.
	\end{align*}
	Using Gr\"{o}nwall's inequality, we deduce
	\begin{align*}
		\sup_{0\le t\le T}\|\mu^{-\frac{1}{2}}F_-\|^2_{L^2_x(\Omega)L^2_v}
		\le e^{CT}\|\mu^{-\frac{1}{2}}F_-|_{t=0}\|^2_{L^2_x(\Omega)L^2_v}=0.
	\end{align*}
	We then conclude that $F(t)\ge 0$ in $[0,T]\times\Omega\times\R^3_v$. This completes the proof of Theorem \ref{positivity}.
\end{proof}

\section{Extension to the whole space and \texorpdfstring{$L^2$}{L2} local existence}\label{Sec4}
In this section, we will extend the boundary problem to a whole-space problem and analyze the $L^2$ estimates without level sets.
Assume that $\Omega\subset\R^3_x$ is an open bounded subset.
Then we can split $\Omega^c\times\R^3_v$ into the \emph{inflow} and \emph{outflow regions} $D_{in},D_{out}$ as in \eqref{Dinoutwt}. 


\subsection{Extension of the boundary value}
In order to use the Galerkin method for the derivation of a weak solution, we need the boundary value to vanish. We first extend the boundary value $g$ to $\R^3_x\times\R^3_v$ and mollify it to a smooth function. With such effort, we can consider $f-g$ in equation \eqref{L2existeq} below, which has the vanishing (inflow or outflow) boundary value. We begin with extending and approximating the boundary value $g$. 
\begin{Lem}\label{gextension}
	Let $0\le T_1<T_2$. 
	We can approximate the $L^2$ functions on $\Si_+$ or $\Si_-$ by a smooth function in $\R^3_x\times\R^3_v$ as follows. 
	\begin{enumerate}
		\item 
		Suppose $g$ is defined on $[T_1,T_2]\times\Si_-$ and satisfies 
		\begin{align}\label{eqg2}
			\|g\|_{L^2_{t,x,v}([T_1,T_2]\times\Si_-)}
			<\infty. 
		\end{align}
		Then there exists functions $g_j\in C^\infty_c(\R^7_{t,x,v})$ for $j\ge 1$ 
		such that its restriction on $[T_1,T_2]\times\Si_-$ satisfies 
		\begin{align}\label{eqg2limit}
			\int^{T_2}_{T_1}\int_{\Si_-}|v\cdot n||g_j-g|^2\,dS(x)dv\to 0, \ \ \text{ as }j\to \infty. 
		\end{align}
		\item Suppose $g$ is defined on $[T_1,T_2]\times\Si_+$ and satisfies 
		\begin{align*}
			\|g\|_{L^2_{t,x,v}([T_1,T_2]\times\Si_-)}
			<\infty. 
		\end{align*}
		Then there exists functions $g_j\in C^\infty_c(\R^7_{t,x,v})$ for $j\ge 1$ such that its restriction on $[T_1,T_2]\times\Si_+$ satisfies 
		\begin{align}\label{eqg2limit2}
			\int^{T_2}_{T_1}\int_{\Si_+}|v\cdot n||g_j-g|^2\,dS(x)dv\to 0, \ \ \text{ as }j\to \infty. 
		\end{align}
	\end{enumerate}
\end{Lem}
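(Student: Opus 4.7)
The plan is to reduce the statement to a standard density-of-smooth-functions argument via local coordinate charts and a partition of unity; I focus on part (1), since part (2) is identical after swapping $\Si_-$ for $\Si_+$.

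\textbf{Step 1: Truncation away from the grazing set.} First I will approximate $g$ in $L^2(\Si_-, |v\cdot n|\,dS\,dv\,dt)$ by functions supported away from $\Si_0$ and with bounded velocity. For $\eta>0$ small, let $\zeta_\eta(t,x,v)$ be a smooth cutoff equal to $1$ on $\{|v\cdot n(x)|\ge 2\eta,\ |v|\le 1/\eta,\ t\in[T_1+\eta,T_2-\eta]\}$ and vanishing outside $\{|v\cdot n(x)|\ge \eta,\ |v|\le 2/\eta,\ t\in[T_1+\eta/2,T_2-\eta/2]\}$. Dominated convergence together with \eqref{eqg2} gives
\begin{align*}
\int^{T_2}_{T_1}\int_{\Si_-}|v\cdot n||\zeta_\eta g-g|^2\,dS(x)dvdt\longrightarrow 0\quad \text{as }\eta\to 0.
\end{align*}
On the support of $g_\eta:=\zeta_\eta g$, the weight $|v\cdot n|$ is bounded above and below by positive constants depending on $\eta$, so the weighted and unweighted $L^2$ norms are equivalent there.

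\textbf{Step 2: Local flattening.} Using the open cover $\{B_k\}_{k=1}^N$ from \eqref{Bnboundary} together with a subordinate partition of unity $\{\phi_k\}$ on $\pa\Omega$ (supplemented by a bulk cutoff for the $v$ and $t$ variables), I write $g_\eta=\sum_k \phi_k g_\eta$. In each patch $B_k$, introduce the $C^3$ diffeomorphism $\Phi_k:x\mapsto y$ with $y_i=x_i$ $(i=1,2)$ and $y_3=x_3-\rho_k(x_1,x_2)$. Then $\pa\Omega\cap B_k$ is mapped onto the flat hyperplane $\{y_3=0\}$ and $\Omega\cap B_k$ onto $\{y_3<0\}$, and the surface measure pulls back to a smooth positive density on $\{y_3=0\}$. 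Since the $v$ variable is unchanged, $|v\cdot n(x)|\,dS(x)$ is pointwise comparable, on the support of $\phi_k g_\eta$, to $|v\cdot e_3|\,dy_1\,dy_2$ (up to a $C^2$ factor coming from $\nabla\rho_k$).

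\textbf{Step 3: Extension and mollification.} Denote by $h_k(t,y_1,y_2,v)$ the pullback of $\phi_k g_\eta$ to the flat boundary; by Step 1, it belongs to $L^2$ and has compact support bounded away from $\{v\cdot e_3=0\}$. Extend $h_k$ to $\R^7$ by
\begin{align*}
\tilde h_k(t,y_1,y_2,y_3,v):=h_k(t,y_1,y_2,v)\,\psi(y_3),\qquad \psi\in C^\infty_c(\R),\ \psi(0)=1,
\end{align*}
and let $h_k^{(j)}\in C^\infty_c(\R^7)$ approximate $\tilde h_k$ in $L^2(\R^7)$ via standard mollification and truncation. The trace of $h_k^{(j)}$ on $\{y_3=0\}$ then converges to $h_k$ in $L^2(\{y_3=0\}\times\R^3_v\times[T_1,T_2])$ by the continuity of the $L^2$ trace for functions in $H^1$ of a slab, after an additional convolution in $y_3$; because the support of $\tilde h_k$ avoids $\{v\cdot e_3=0\}$, the weight $|v\cdot e_3|$ is irrelevant to this convergence. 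Pushing $h_k^{(j)}$ forward through $\Phi_k^{-1}$ and summing over $k$ produces $\tilde g_{\eta,j}\in C^\infty_c(\R^7_{t,x,v})$ with
\begin{align*}
\int^{T_2}_{T_1}\int_{\Si_-}|v\cdot n|\,|\tilde g_{\eta,j}-g_\eta|^2\,dS(x)dv dt\longrightarrow 0\quad\text{as }j\to\infty.
\end{align*}
A standard diagonal extraction (choosing $\eta_j\to 0$ and a corresponding $j$) yields the sequence $g_j\in C^\infty_c(\R^7)$ satisfying \eqref{eqg2limit}.

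\textbf{Main obstacle.} The only mildly delicate point is to make sure the weight $|v\cdot n(x)|$ does not obstruct the convergence when transferring through the local charts. This is handled by Step 1, which reduces matters to the region where $|v\cdot n|$ is uniformly bounded above and below, so that the weighted norm on $\Si_-$ is controlled by the (unweighted) trace $L^2$ norm on the flattened boundary, where density of $C^\infty_c(\R^7)$ is classical.
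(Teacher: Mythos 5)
Your proposal is correct and follows essentially the same route as the paper: flatten the boundary with the charts from \eqref{Bnboundary} and a partition of unity, mollify, and extend off the boundary by a cutoff $\psi(y_3)$ with $\psi(0)=1$. The only cosmetic differences are that the paper handles the weight by truncating $|x|+|v|\le R$ (so $|v\cdot n|$ is merely bounded above) rather than cutting away the grazing set, and it mollifies the boundary data in $(t,y_1,y_2,v)$ \emph{before} tensoring with $\psi(y_3)$, which avoids your appeal to a trace argument altogether.
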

\begin{proof}
	We first straighten the boundary $\pa\Omega$ and then construct the approximate function by using approximate identities in $\R^2_x\times\R^3_v$. 
	From \eqref{Bnboundary}, we can define $C^3$ mapping $\Phi_k$ and its inverse $\Psi_k$ ($1\le k\le N$ with $N<+\infty$ given in \eqref{Bnboundary}) such that $\Phi_k$ ``straightens out $\pa\Omega$ in $B_k$". 
	
	\smallskip The straightening method is standard. Choosing $C^3$ functions $\rho_k$ as in \eqref{Bnboundary}, we define $y=\Phi_k(x)$ and $x=\Psi_k(y)$ by 
	\begin{align}\label{yPhi}
		\begin{aligned}
			&y_1=\Phi^1_k(x):=x_1,\quad y_2=\Phi^2_k(x):=x_2,\\
			&y_3=\Phi^3_k(x)=x_3-\rho_k(x_1,x_2),
		\end{aligned}
	\end{align}
	and 
	\begin{align}\label{xPsi}
		\begin{aligned}
			&x_1=\Psi^1_k(y):=y_1,\quad x_2=\Psi^2_k(y):=y_2,\\
			&x_3=\Psi^3_k(y)=y_3+\rho_k(y_1,y_2). 
		\end{aligned}
	\end{align}
	It follows that $\det\big(\frac{\pa\Phi(x)}{\pa x}\big)=\det\big(\frac{\pa\Psi(y)}{\pa y}\big)=1$. 
	From \eqref{Bnboundary}, we know that $\Phi$ is one-to-one onto mapping and satisfies 
	\begin{align*}
		&\Phi_k(\Omega\cap B_k)=\{y\in \Phi_k(B_k)\,:\,y_3<0\},\\
		&\Phi_k(\pa\Omega\cap B_k)=\{y\in \Phi_k(B_k)\,:\,y_3=0\},\\
		&\Phi_k(\ol\Omega^c\cap B_k)=\{y\in \Phi_k(B_k)\,:\,y_3>0\}. 
	\end{align*}
	Then for any suitable function $f$, the surface integral can be expressed as 
	\begin{align}\label{intebound}
		\int_{\pa\Omega\cap B_k}f(x)\,dS(x)=\int_{\{y\in \Phi_k(B_k):y_3=0\}}f(y)\sqrt{1+(\rho_{k,y_1})^2+(\rho_{k,y_2})^2}\,dy_1dy_2, 
	\end{align}
	where $\rho_{k,y_j}=\pa_{y_j}\rho_k$ ($j=1,2$), 
	and the interior integral can be expressed as 
	\begin{align*}
		\begin{aligned}
			&\int_{B_k}f(x)\,dx=\int_{y\in \Phi_k(B_k)}f(y)\,dy_1dy_2. 
		\end{aligned}
	\end{align*}
	Moreover, 
	since $\pa\Omega\subset\cup_{k=1}^NB_k$, we let $\{\zeta_k\}_{k=1}^N$ be an associated partition of unity ($\zeta_k$ are smooth function with compact support in $B_k$ and $\sum_{k=1}^N\zeta_k(x)=1$ for any $x$ in an open neighborhood of $\pa\Omega$). 
	
	\smallskip For the function $g$ defined on $[T_1,T_2]\times\Si_-$ satisfying \eqref{eqg2}, we first extend $g$ to $\Si_+$ by letting $g=0$ on $[T_1,T_2]\times(\Si_+\cup\Si_0)$ and choose $g_R=g\1_{|x|+|v|\le R}$. Since $\|g\|_{L^2_{t,x,v}([T_1,T_2]\times\Si_-)}<\infty$, we have from Dominated Convergence Theorem that 
	\begin{align}\label{limit1}
		\int^{T_2}_{T_1}\int_{\pa\Omega\times\R^3_v}|v\cdot n||g_R-g|^2\,dS(x)dv\to 0,\ \text{ as } R\to\infty. 
	\end{align}
	For any suitable function $h$, by surface integral \eqref{intebound}, we have 
	\begin{align*}
		&\int^{T_2}_{T_1}\int_{\pa\Omega\times\R^3_v}|v\cdot n||\zeta_k(h-g_R)(x)|^2\,dS(x)dv\\
		&\quad=\int^{T_2}_{T_1}\int_{\R^3_v}\int_{\{y\in \Phi_k(B_k):y_3=0\}}|v\cdot n(y)||\zeta_k(h-g_R)(x(y))|^2\sqrt{1+(\rho_{k,y_1})^2+(\rho_{k,y_2})^2}\,dy_1dy_2dvdt.
	\end{align*}
	Let $\eta_\de$ be the mollifier defined by $\eta_\de(Y)=\de^{-6}\eta(\de^{-1}Y)$ with $Y=(t,y_1,y_2,v)$, and a smooth function $\eta$ having compact support and satisfying $\int_{\R^6}\eta(Y)\,dY=1$. 
	Then we choose the smooth function
	\begin{align*}
		h_{\de,R}=\eta_\de*g_R:=\int_{\R^6}\eta_\de(Y-Y_*)g_R(Y_*)\,dY, 
	\end{align*}
	where $g_R(Y_*)$ is written in the $y$ coordinates by the transformation \eqref{yPhi}. 
	It follows from the properties of approximate identities $\eta_\de$ that (see for instance \cite[Theorem 1.2.19]{Grafakos2014})
	\begin{align}\label{limit2}\notag
		&\int^{T_2}_{T_1}\int_{\R^3_v}\int_{\{y\in \Phi_k(B_k):y_3=0\}}|v\cdot n(y)||\zeta_k(h_{\de,R}-g_R)(x(y))|^2\sqrt{1+(\rho_{k,y_1})^2+(\rho_{k,y_2})^2}\,dy_1dy_2dvdt\\&\notag\quad\le RC_k\int^{T_2}_{T_1}\int_{\R^3_v}\int_{\R^2}|\zeta_k(h_{\de,R}-g_R)(x(y_1,y_2,0))|^2\,dy_1dy_2dvdt\\
		&\quad\to 0, \ \text{ as }\de\to 0,
	\end{align}
	for any $1\le k\le N$, where $x=x(y)=\Psi(y)$ is defined by \eqref{xPsi}. 
	Since $\rho$ is $C^3$ function, $\rho_{k,y_1}$ and $\rho_{k,y_2}$ are bounded by a constant $C_k>0$ on the compact support of $\zeta_k$. Moreover, since $g_R$ and $\eta_\de$ have compact supports, we know that $h_{\de,R}(t,y_1,y_2,v)$ also has compact support. 
	
	\smallskip We next extend $h_{\de,R}$ from $\R^6$ to $\R^6\times\{y_3\in\R\}$ by simply multiplying a smooth function $\vp\in C^\infty_c(\R)$ which has compact support and satisfies $\vp(0)=1$. 
	Then the function $H_{\de,R}(t,y,v)=h_{\de,R}(t,y_1,y_2,v)\vp(y_3)$ is a smooth function with compact support and satisfies $H_{\de,R}(t,y_1,y_2,0,v)=h_{\de,R}(t,y_1,y_2,v)$. Denote $\wt H_{\de,R}(t,x,v)=H_{\de,R}(t,y(x),v)$ with $y(x)=\Phi(x)$ defined in \eqref{yPhi}. 
	Combining this with \eqref{limit1} and \eqref{limit2}, we know that for any $\ve>0$, there exists large $R=R(\ve)>0$ and small $\de=\de(R,\ve)>0$ such that 
	\begin{align*}
		\int^{T_2}_{T_1}\int_{\pa\Omega\times\R^3_v}|v\cdot n||g_R-g|^2\,dS(x)dv<\frac{\ve}{2}, 
	\end{align*}
	and 
	\begin{align*}
		&\int^{T_2}_{T_1}\int_{\pa\Omega\times\R^3_v}|v\cdot n||\wt H_{\de,R}(x)-g_R(x)|^2\,dS(x)dv\\
		&\quad=\sum_{k=1}^NC_k\int^{T_2}_{T_1}\int_{\R^3_v}\int_{\R^2}|v\cdot n(y)||\zeta_k(x(y_1,y_2,0))(h_{\de,R}(y_1,y_2)-g_R(x(y_1,y_2,0)))|^2\,dy_1dy_2dvdt\\
		&\quad< \frac{\ve}{2}. 
	\end{align*}
	Consequently, there exists $g_j\in C^\infty_c(\R^7_{t,x,v})$ (which is smooth and has compact support) such that its restriction on $[T_1,T_2]\times\Si_-$ satisfies 
	\begin{align}\label{555}
		\int^{T_2}_{T_1}\int_{\Si_-}|v\cdot n||g_j-g|^2\,dS(x)dv\to 0, \ \ \text{ as }j\to \infty. 
	\end{align}
	This implies \eqref{eqg2limit} and Lemma \ref{gextension} (1). 
	
	\smallskip The proof of the second assertion is the same since we can extend $g$ from $\Si_+$ to $\pa\Omega\times\R^3_v$ by letting $g=0$ on $\Si_-\cup\Si_0$. Then following the calculations from \eqref{limit1} to \eqref{555}, one can obtain Lemma \ref{gextension} (2) and \eqref{eqg2limit2}. This completes the proof of Lemma \ref{gextension}. 
\end{proof}

\subsection{\texorpdfstring{$L^2$}{L2} local existence of linear equation with inflow}
In this subsection, we will derive the $L^2$ existence of the modified linearized Boltzmann equation 
\begin{align}\label{L2existeq}
	\left\{
	\begin{aligned}
		& \pa_tf+ v\cdot\na_xf = \vpi Vf+\Gamma(\Psi,f)+\Gamma(\vp,\mu^{\frac{1}{2}})\\&\qquad\qquad\quad+\phi-N\<v\>^{{l-2}}f\quad \text{ in } (T_1,T_2]\times\Omega\times\R^3_v, \\
		& f(t,x,v)|_{\Si_-}=g\quad \text{ on }[T_1,T_2]\times\Si_-, \\
		& f(T_1,x,v)=f_{T_1}\quad \text{ in }\Omega\times\R^3_v, 
	\end{aligned}\right.
\end{align}
with given functions $\Psi=\mu^{\frac{1}{2}}+\psi\ge 0$, $\vp,\phi$, and inflow-boundary function $g$ on $[T_1,T_2]\times\Si_-$, and any $\vpi,N,M\ge 0$. 
Here, operator $V$ is given by \eqref{Vf}. 

\smallskip Using the approximation Lemma \ref{gextension}, we can derive the local existence of equation \eqref{L2existeq}. 
\begin{Thm}\label{weakfThm}
	Let $T_1\ge 0$, $s\in(0,1)$ and $\vpi,N,M\ge 0$. Assume time-dependent functions $\Psi=\mu^{\frac{1}{2}}+\psi\ge0$, $\vp$, $\phi$, inflow boundary value $g$, and initial data $f_{T_1}$ satisfy
	\begin{align}\label{41344}\begin{aligned}
			\|\<v\>^4\psi\|_{L^\infty_t([T_1,T_2])L^\infty_{x}(\Omega)L^\infty_v(\R^3_v)} & \le\de_0, \\
			\|[\vp,\phi]\|^2_{L^2_t([T_1,T_2])L^2_x(\Omega)L^2_v(\R^3_v)}& <\infty, \\
			\|f_{T_1}\|_{L^2_x(\Omega)L^2_v(\R^3_v)} +\|g\|_{L^2_tL^2_{x,v}(\Si_-)}& <\infty,
		\end{aligned}
	\end{align}
	with sufficiently small $\de_0>0$. 
	Then there exists a small time $T_2>T_1$ (depending on $\ti C$) and a weak solution $f$ to equation \eqref{L2existeq}
	in the sense that, for any function $\Phi\in C^\infty_c(\R_t\times\R^3_x\times\R^3_v)$, 
	\begin{multline}\label{weakf}
		(f(T_2),\Phi(T_2))_{L^2_x(\Omega)L^2_v}-(f_{T_1},\Phi(T_1))_{L^2_x(\Omega)L^2_v}
		-(f,(\pa_t+v\cdot\na_x)\Phi)_{L^2_t([T_1,T_2])L^2_x(\Omega)L^2_v}\\
		+(f,\Phi)_{L^2_t([T_1,T_2])L^2_{x,v(\Si_+)}}
		=(g,\Phi)_{L^2_t([T_1,T_2])L^2_{x,v(\Si_-)}}
		+(f,\vpi V\Phi)_{L^2_t([T_1,T_2])L^2_x(\Omega)L^2_v}\\
		+(\Gamma(\Psi,f)+\Gamma(\vp,\mu^{\frac{1}{2}})+\phi-N\<v\>^{{l-2}}f,\Phi)_{L^2_t([T_1,T_2])L^2_x(\Omega)L^2_v}.
	\end{multline}
	Moreover, for any weak solution $f$ to equation \eqref{L2existeq}, we have $L^2$ estimate: for any $k\ge 0$, 
	\begin{multline}\label{fLineares1}
		\pa_t\|\<v\>^kf(t)\|^2_{L^2_{x}(\Omega)L^2_v}+\|\<v\>^kf\|^2_{L^2_{x,v}(\Si_+)}+\vpi\|[\wh{C}_0\<v\>^{{k+4}}f,\<v\>^{{k+2}}\na_vf]\|^2_{L^2_{x}(\Omega)L^2_v}\\
		+c_0\|\<v\>^kf\|_{L^2_{x}(\Omega)L^2_D}^2
		+N\|\<v\>^{l-2+k}f\|_{L^2_{x}(\Omega)L^2_v}^2\\
		\le C\|\<v\>^kf(t)\|_{L^2_{x}(\Omega)L^2_v}^2+\|[\mu^{\frac{1}{10^4}}\vp,\<v\>^k\phi]\|_{L^2_{x}(\Omega)L^2_v}^2+\|\<v\>^kg\|^2_{L^2_{x,v}(\Si_-)}.
	\end{multline}
	
\end{Thm}
\begin{proof}
	By Lemma \ref{gextension}, there exists $g_j\in C^\infty_c(\R^7_{t,x,v})$ such that 
	\begin{align}\label{eqg2l2}
		\int^{T_2}_{T_1}\int_{\Si_-}|v\cdot n||g_j-g|^2\,dS(x)dv\to 0, \ \ \text{ as }j\to \infty. 
	\end{align}
	We begin with considering the inflow-boundary value $f|_{\Si_-}=g_j$ and the weak form of $h:=f-g_j$. That is, for any function $\Phi\in C^\infty_c(\R^7_{t,x,v})$, we search for an $L^2$ function $h$ satisfying 
	\begin{multline}\label{hheq}
		(h(T_2),\Phi(T_2))_{L^2_x(\Omega)L^2_v}-(f_{T_1}-g_j(T_1),\Phi(T_1))_{L^2_x(\Omega)L^2_v}
		-(h,(\pa_t+v\cdot\na_x)\Phi)_{L^2_t([T_1,T_2])L^2_x(\Omega)L^2_v}\\
		+(h,\Phi)_{L^2_t([T_1,T_2])L^2_{x,v(\Si_+)}}
		+\vpi\int^{T_2}_{T_1}\int_{\Omega\times\R^3_v}\Big(2\wh{C}^2_0\<v\>^{{8}}h\Phi+2\<v\>^{{4}}\na_vh\cdot\na_v\Phi\Big)\,dxdvdt\\
		=(\Gamma(\Psi,h)+\Gamma(\vp,\mu^{\frac{1}{2}})+\phi-N\<v\>^{{l-2}}h,\Phi)_{L^2_x(\Omega)L^2_v}\,dt\\
		-((\pa_t+v\cdot\na_x)g_j,\Phi)_{L^2_t([T_1,T_2])L^2_x(\Omega)L^2_v} 
		+(\vpi Vg_j+\Gamma(\Psi,g_j)-N\<v\>^{{l-2}}g_j,\Phi)_{L^2_t([T_1,T_2])L^2_x(\Omega)L^2_v}. 
	\end{multline}
	To further mollify the spatial and velocity variables, and eliminate the boundary effect, we directly consider the weak form $B_\ve[h,\Phi]:H^{2}_{x,v}(\<v\>^4)\times H^{2}_{x,v}(\<v\>^4)\to \R$ defined by 
	\begin{multline*}
		B_\ve[h,\Phi]=-\int_{\Omega\times\R^3_v}h v\cdot\na_x\Phi\,dxdv
		+\int_{\Si_+}|v\cdot n|h\Phi\,dS(x)dv\\
		+\ve\sum_{|\al|+|\beta|\le 2}\int_{\Omega\times\R^3_v}\<v\>^8\pa^\al_\beta h\pa^\al_\beta \Phi\,dxdv
		+\vpi\int_{\Omega\times\R^3_v}\Big(2\wh{C}^2_0\<v\>^{{8}}h\Phi+2\<v\>^{{4}}\na_vh\cdot\na_v\Phi\Big)\,dxdv\\
		-\int_{\Omega\times\R^3_v}\Big(\Gamma(\Psi,h)-N\<v\>^{{l-2}}h\Big)\Phi\,dxdv,
	\end{multline*}
	for any $\ve>0$, 
	where we denote $\pa^\al_\beta=\pa^{\al_1}_{x_1}\pa^{\al_1}_{x_1}\pa^{\al_2}_{x_2}\pa^{\al_3}_{x_3}\pa^{\beta_1}_{v_1}\pa^{\beta_2}_{v_2}\pa^{\beta_3}_{v_3}$ and linear space 
	\begin{align*}
		&H^{2}_{x,v}(\<v\>^4)=\{h\,:\,\|h\|_{H^{2}_{x,v}(\<v\>^4)}<\infty\},\ \text{ with }\\
		&\|h\|_{H^{2}_{x,v}(\<v\>^4)}^2:=\sum_{|\al|+|\beta|\le 2}\|\<v\>^4\pa^\al_\beta h\|_{L^2_x(\Omega)L^2_v}^2.
	\end{align*} 
	Note that $Vh=-2\wh{C}^2_0\<v\>^{{8}}h+2\na_v\cdot(\<v\>^{{4}}\na_v)h$ (defined in \eqref{Vf}). 
	Then we search for the solution $h\in H^{2}_{x,v}(\<v\>^4)$ such that for any $\Phi\in H^{2}_{x,v}(\<v\>^4)$, 
	\begin{align}\label{patfh}
		\left\{\begin{aligned}
			&\pa_t(h,\Phi)_{L^2_{x}(\Omega)L^2_v}-(h,\pa_t\Phi)_{L^2_{x}(\Omega)L^2_v}+B_\ve[h,\Phi]=l(\Phi),\\
			&h(T_1,x,v)=f_{T_1}(x,v)-g_j(T_1,x,v). 
		\end{aligned}\right.
	\end{align}
	where we denote $l(\Phi)$ by 
	\begin{align*}
		l(\Phi):&=\big(\Gamma(\vp,\mu^{\frac{1}{2}})+\phi,\Phi\big)_{L^2_{x}(\Omega)L^2_v}-\big((\pa_t+v\cdot\na_x)g_j,\Phi\big)_{L^2_x(\Omega)L^2_v}\\&\qquad
		+\big(\vpi Vg_j+\Gamma(\Psi,g_j)-N\<v\>^{{l-2}}g_j,\Phi\big)_{L^2_x(\Omega)L^2_v}. 
	\end{align*}
	Notice that this equation \eqref{patfh} is different from the weak form of the equation
	\begin{align*}
		\left\{
		\begin{aligned}
			& \pa_th+ v\cdot\na_xh = \ve\sum_{|\al|+|\beta|\le 2}(-1)^{|\al|+|\beta|}\pa^\al_\beta(\<v\>^8\pa^\al_\beta h)+\vpi Vh+\Gamma(\Psi,h)\\&\qquad\qquad\qquad+\Gamma(\vp,\mu^{\frac{1}{2}})+\phi-N\<v\>^{{l-2}}h-(\pa_t+v\cdot\na_x)g_j\\
			&\qquad\qquad\qquad+\vpi Vg_j+\Gamma(\Psi,g_j)-N\<v\>^{{l-2}}g_j\quad \text{ in } (T_1,T_2]\times\Omega\times\R^3_v, \\
			& h(t,x,v)|_{\Si_-}=0\quad \text{ on }[T_1,T_2]\times\Si_-, \\
			& h(T_1,x,v)=f_{T_1}(x,v)+g_j(T_1,x,v)\quad \text{ in }\Omega\times\R^3_v, 
		\end{aligned}\right.
	\end{align*}
	since the latter one contains boundary effect in $\pa^\al$ (by integration by parts, boundary terms occur). 
	
	\smallskip To solve equation \eqref{patfh}, we shall apply the Galerkin's method; see for instance \cite[III.5]{Ladyzhenskaia1995}. We will only give the sketch of the proof of the existence of \eqref{patfh} and use the \emph{a priori} arguments. 
	Let $\{w_k\}_{k=1}^\infty$ be an orthonormal basis in $H^{2}_{x,v}(\<v\>^4)$, i.e. $(w_k,w_l)_{H^{2}_{x,v}(\<v\>^4)}=1$. Then we construct a sequence 
	\begin{align*}
		h_n(t)=\sum_{k=1}^nc^k_n(t)w_k
	\end{align*}
	by solving $c^k_m(t)$ from the ordinary differential equations (ODEs):
	\begin{align}\label{patfh1}
		\left\{\begin{aligned}
			&(\pa_th_n,\Phi)_{L^2_{x}(\Omega)L^2_v}+B_\ve[h_n,\Phi]=l(\Phi),\\
			&\sum_{k=1}^n(c^k_m(T_1)w_k,\Phi)_{L^2_{x}(\Omega)L^2_v}=(f_{T_1}-g_j(T_1),\Phi)_{L^2_{x}(\Omega)L^2_v}, 
		\end{aligned}\right.
	\end{align}
	with $\Phi\in\{w_k\}_{k=1}^n$, which contains $n$ ODEs and can be solved locally in time. 
	To take the limit $n\to\infty$ in \eqref{patfh1}, we need to derive the uniform-in-$n$ estimate of $h_n$. Instead, we give the \emph{a priori} estimate as follows (similar to giving the uniform estimate of $h_n$). Let $\Phi=h$ in \eqref{patfh}, we have from \eqref{56} and \eqref{Gaesweight} that 
	\begin{align*}
		&\frac{1}{2}\pa_t\|h\|^2_{L^2_{x}(\Omega)L^2_v}+\frac{1}{2}\|h\|^2_{L^2_{x,v}(\Si_+\cup\Si_-)}
		+\ve\sum_{|\al|+|\beta|\le 2}\|\<v\>^4\pa^\al_\beta h\|^2_{L^2_{x}(\Omega)L^2_v}
		+2\vpi\wh{C}^2_0\|\<v\>^{{4}}h\|^2_{L^2_{x}(\Omega)L^2_v}\\
		&\quad+2\vpi\|\<v\>^{{2}}\na_vh\|^2_{L^2_{x}(\Omega)L^2_v}
		+\big(c_0-C\|\<v\>^{4}\psi\|_{L^\infty_{x}(\Omega)L^\infty_v}\big)\|h\|_{L^2_{x}(\Omega)L^2_D}^2
		+N\|\<v\>^{l-2}h\|_{L^2_{x}(\Omega)L^2_v}^2\\
		&\quad\le 
		C\|\1_{|v|\le R_0}h\|_{L^2_{x}(\Omega)L^2_v}^2
		+C\big(1+\|\<v\>^{4}\psi\|_{L^\infty_{x}(\Omega)L^\infty_v}\big)\|g_j\|_{L^2_{x}(\Omega)L^2_D}\|h\|_{L^2_{x}(\Omega)L^2_D}\\
		&\qquad+C\big(\|[\vp,\phi,(\pa_t+v\cdot\na_x)g_j]\|_{L^2_{x}(\Omega)L^2_v}
		+(\vpi+N)\|\<v\>^{l-2}g_j\|_{H^2_{x,v}(\<v\>^4)}\big)\|h\|_{L^2_{x}(\Omega)L^2_v}.
	\end{align*}
	Integrating over $t\in[T_1,T_2]$, using Gr\"{o}nwall's inequality, and letting $\de_0>0$ in \eqref{41344} small enough, we have 
	\begin{multline}\label{434}
		\frac{1}{2}\sup_{T_1\le t\le T_2}\|h\|^2_{L^2_{x}(\Omega)L^2_v}+\frac{1}{2}\int^{T_2}_{T_1}\|h\|^2_{L^2_{x,v}(\Si_+\cup\Si_-)}dt
		+{\ve}\sum_{|\al|+|\beta|\le 2}\int^{T_2}_{T_1}\|\<v\>^4\pa^\al_\beta h\|^2_{L^2_{x}(\Omega)L^2_v}\,dt\\
		+2\vpi\int^{T_2}_{T_1}\wh{C}^2_0\|\<v\>^{{4}}h\|^2_{L^2_{x}(\Omega)L^2_v}\,dt+2\vpi\int^{T_2}_{T_1}\|\<v\>^{{2}}\na_vh\|^2_{L^2_{x}(\Omega)L^2_v}\,dt
		+\frac{c_0}{2}\int^{T_2}_{T_1}\|h\|_{L^2_{x}(\Omega)L^2_D}^2\,dt\\
		+\int^{T_2}_{T_1}\Big(N\|\<v\>^{l-2}h\|_{L^2_{x}(\Omega)L^2_v}^2\Big)\,dt\\
		\le e^{C(T_2-T_1)}\Big(\|f_{T_1}-g_j(T_1)\|_{L^2_{x}(\Omega)L^2_v}^2+\int^{T_2}_{T_1}\|[\vp,\phi]\|_{L^2_{x}(\Omega)L^2_v}^2\,dt+(T_2-T_1)(1+\vpi+N)^2C_{g_j}\Big).
	\end{multline}
	Here, $C_{g_j}>0$ is some constant depending on some Sobolev norm of $g_j$, which is finite since $g_j\in C^\infty_c(\R^7_{t,x,v})$.
	The estimate \eqref{434} is uniform in $\ve$, and $h_n$ shares the same estimate uniformly-in-$(\ve,n)$ if one replace $h$ by $h_n$. 
	It follows from the Banach-Alaoglu Theorem that the sequence $\{h_n\}$ is weakly-$*$ compact in the sense that, up to a subsequence, 
	\begin{align*}
		&h_n\rightharpoonup h\ \text{ weakly-$*$ in $L^2_t([T_1,T_2])L^2_{x,v}(\Si_+\cup\Si_-)$, $L^2_t([T_1,T_2])H^2_{x,v}(\<v\>^4)$, 
			$L^2_t([T_1,T_2])L^2_{x}(\Omega)L^2_D$},\\
		&h_n\rightharpoonup h\ \text{ weakly-$*$ in $L^2_{x}(\Omega)L^2_v$ for any $t\in(T_1,T_2]$},
	\end{align*}
	as $n\to\infty$, with some $h$ satisfying \eqref{434}. 
	Therefore, taking limit $n\to\infty$ (up to a subsequence) in the weak form of \eqref{patfh1}, i.e. for any $\Phi\in \text{Span}\{w_k\}_{k=1}^n$, 
	\begin{multline*}
		(h_n(T_2),\Phi(T_2))_{L^2_{x}(\Omega)L^2_v}+(f_{T_1}-g_j(T_1),\Phi(T_1))_{L^2_{x}(\Omega)L^2_v}-\int^{T_2}_{T_1}(h_n,\pa_t\Phi)_{L^2_{x}(\Omega)L^2_v}\,dt\\+\int^{T_2}_{T_1}B_\ve[h_n,\Phi]\,dt=\int^{T_2}_{T_1}l(\Phi)\,dt, 
	\end{multline*}
	we obtain 
	\begin{multline}\label{435}
		(h(T_2),\Phi(T_2))_{L^2_{x}(\Omega)L^2_v}+(f_{T_1}-g_j(T_1),\Phi(T_1))_{L^2_{x}(\Omega)L^2_v}-\int^{T_2}_{T_1}(h,\pa_t\Phi)_{L^2_{x}(\Omega)L^2_v}\,dt\\+\int^{T_2}_{T_1}B_\ve[h,\Phi]\,dt=\int^{T_2}_{T_1}l(\Phi)\,dt, 
	\end{multline}
	which implies that $h$ is the weak solution to \eqref{patfh}. 
	Note that one can write $\big(\Gamma(\Psi,h_n),\Phi\big)_{L^2_{x}(\Omega)L^2_v}$ in the weak form as in \eqref{Gaest}, and use estimate \eqref{Gammavp} or \eqref{64eq1}. Moreover, the equation \eqref{435} is satisfied for all $\Phi\in C^\infty_c(\R^7_{t,x,v})$ since we have taken the limit $n\to\infty$. 
	
	\smallskip We further denote $h_\ve=h$ to illustrate its dependence on $\ve>0$. Then $h_\ve$ satisfies the estimate \eqref{434} uniformly in $\ve$ with $h$ replaced by $h_\ve$. By Banach-Alaoglu Theorem, the sequence $\{h_\ve\}$ is weakly-$*$ compact in the sense that, up to a subsequence, 
	\begin{align*}
		&h_\ve\rightharpoonup h\ \text{ weakly-$*$ in $L^2_t([T_1,T_2])L^2_{x,v}(\Si_+\cup\Si_-)$ and $L^2_t([T_1,T_2])L^2_{x}(\Omega)L^2_D$},\\
		&h_\ve\rightharpoonup h\ \text{ weakly-$*$ in $L^2_{x}(\Omega)L^2_v$ for any $t\in(T_1,T_2]$},
	\end{align*}
	as $\ve\to0$, with some $h$ satisfying 
	\begin{multline*}
		\frac{1}{2}\sup_{T_1\le t\le T_2}\|h\|^2_{L^2_{x}(\Omega)L^2_v}+\frac{1}{2}\int^{T_2}_{T_1}\|h\|^2_{L^2_{x,v}(\Si_+\cup\Si_-)}dt
		+2\vpi\int^{T_2}_{T_1}\wh{C}^2_0\|\<v\>^{{4}}h\|^2_{L^2_{x}(\Omega)L^2_v}\,dt\\+2\vpi\int^{T_2}_{T_1}\|\<v\>^{{2}}\na_vh\|^2_{L^2_{x}(\Omega)L^2_v}\,dt
		+\frac{c_0}{2}\int^{T_2}_{T_1}\|h\|_{L^2_{x}(\Omega)L^2_D}^2\,dt
		+\int^{T_2}_{T_1}\Big(N\|\<v\>^{l-2}h\|_{L^2_{x}(\Omega)L^2_v}^2\Big)\,dt\\
		\le e^{C(T_2-T_1)}\Big(\|f_{T_1}-g_j(T_1)\|_{L^2_{x}(\Omega)L^2_v}^2+\int^{T_2}_{T_1}\|[\vp,\phi]\|_{L^2_{x}(\Omega)L^2_v}^2\,dt+(T_2-T_1)(1+\vpi+N)^2C_{g_j}\Big).
	\end{multline*}
	Then we can further take the limit $\ve\to0$ in \eqref{435} with $h$ replaced by $h_\ve$ (up to a subsequence) to deduce that $h$ satisfies \eqref{hheq}. 
	
	\smallskip Consequently, if we let $f_j=h+g_j$ ($j\ge 1$) in $[T_1,T_2]\times\ol\Omega\times\R^3_v$, then $f_j$ is a weak solution to \eqref{L2existeq} with inflow-boundary value $g_j$ in the sense that for any $\Phi\in C^\infty_c(\R^7_{t,x,v})$, 
	\begin{multline}\label{weakfj}
		(f_j(T_2),\Phi(T_2))_{L^2_x(\Omega)L^2_v}-(f_{T_1},\Phi(T_1))_{L^2_x(\Omega)L^2_v}
		-\int^{T_2}_{T_1}(f_j,(\pa_t+v\cdot\na_x)\Phi)_{L^2_x(\Omega)L^2_v}\,dt\\
		+\int^{T_2}_{T_1}\int_{\Si_+}|v\cdot n|f_j\Phi\,dS(x)dvdt
		+\vpi\int^{T_2}_{T_1}\int_{\Omega\times\R^3_v}\Big(2\wh{C}^2_0\<v\>^{{8}}f_j\Phi+2\<v\>^{{4}}\na_vf_j\cdot\na_v\Phi\Big)\,dxdvdt\\
		=\int^{T_2}_{T_1}\int_{\Si_-}|v\cdot n|g_j\Phi\,dS(x)dvdt+\int^{T_2}_{T_1}(\Gamma(\Psi,f_j)+\Gamma(\vp,\mu^{\frac{1}{2}})+\phi-Nf_j,\Phi)_{L^2_x(\Omega)L^2_v}\,dt.
	\end{multline}
	Using \eqref{56}, the standard $L^2$ estimate of solution $f_j$ to equation \eqref{weakfj} yields 
	\begin{align}\label{weakfL2}
		&\notag\sup_{T_1\le t\le T_2}\|f_j\|^2_{L^2_x(\Omega)L^2_v}+\int^{T_2}_{T_1}\|f_j\|^2_{L^2_{x,v}(\Si_+)}dt
		+c_0\int^{T_2}_{T_1}\|f_j\|_{L^2_x(\Omega)L^2_D}^2\,dt\\
		&\notag\quad+\vpi\int^{T_2}_{T_1}\int_{\Omega\times\R^3_v}\Big(2\wh{C}^2_0\<v\>^{{8}}|f_j|^2+2\<v\>^{{4}}|\na_vf_j|^2\Big)\,dxdvdt
		+\int^{T_2}_{T_1}\Big(N\|\<v\>^{l-2}f_j\|_{L^2_{x}(\Omega)L^2_v}^2\Big)\,dt\\
		&\notag\qquad\le e^{C(T_2-T_1)}\Big(\|f_{T_1}\|_{L^2_{x}(\Omega)L^2_v}^2+\int^{T_2}_{T_1}\|[\vp,\phi]\|_{L^2_{x}(\Omega)L^2_v}^2\,dt+\int^{T_2}_{T_1}\|g_j\|^2_{L^2_{x,v}(\Si_-)}dt\Big)\\
		&\qquad\le e^{C(T_2-T_1)}\Big(\|f_{T_1}\|_{L^2_{x}(\Omega)L^2_v}^2+\int^{T_2}_{T_1}\|[\vp,\phi]\|_{L^2_{x}(\Omega)L^2_v}^2\,dt+2\int^{T_2}_{T_1}\|g\|^2_{L^2_{x,v}(\Si_-)}dt\Big),
	\end{align}
	with any sufficiently large $j\ge 1$, where $C>0$ is independent of $j$. 
	Consequently, the sequence $\{f_j\}$ is weakly-$*$ compact in the sense that, up to a subsequence, 
	\begin{align*}
		&f_j\rightharpoonup f\ \text{ weakly-$*$ in $L^2_t([T_1,T_2])L^2_{x,v}(\Si_+)$ and $L^2_t([T_1,T_2])L^2_{x}(\Omega)L^2_D$},\\
		&f_j\rightharpoonup f\ \text{ weakly-$*$ in $L^2_{x}(\Omega)L^2_v$ for any $t\in(T_1,T_2]$},
	\end{align*}
	as $j\to\infty$, where $f$ is some function satisfying estimate \eqref{weakfL2} with $f_j$ replaced by $f$. Together with the help of \eqref{eqg2l2}, we can take limit $j\to \infty$ in \eqref{weakfj} to deduce that $f$ satisfies \eqref{weakf}. 
	
	\smallskip Let $k\ge 0$ and $f$ be any weak solution to equation \eqref{L2existeq}. Similar to \eqref{434}, multiplying \eqref{L2existeq} by $\<v\>^{2k}f$, integrating over $\Omega\times\R^3_v$, and use \eqref{56} and Lemma \ref{LemRegu}, we can obtain the standard $L^2$ estimate:
	\begin{align*}
		&\pa_t\|\<v\>^kf(t)\|^2_{L^2_{x}(\Omega)L^2_v}+\|\<v\>^kf\|^2_{L^2_{x,v}(\Si_+)}
		+\vpi\wh{C}^2_0\|\<v\>^{{k+4}}f\|^2_{L^2_{x}(\Omega)L^2_v}\\
		&\quad
		+\vpi\|\<v\>^{{k+2}}\na_vf\|^2_{L^2_{x}(\Omega)L^2_v}+c_0\|\<v\>^kf\|_{L^2_{x}(\Omega)L^2_D}^2
		+N\|\<v\>^{l-2+k}f\|_{L^2_{x}(\Omega)L^2_v}^2\\
		&\qquad\le C\|\<v\>^kf(t)\|_{L^2_{x}(\Omega)L^2_v}^2+\|[\mu^{\frac{1}{10^4}}\vp,\<v\>^k\phi]\|_{L^2_{x}(\Omega)L^2_v}^2+\|\<v\>^kg\|^2_{L^2_{x,v}(\Si_-)}, 
	\end{align*}
	where we choose $\de_0>0$ in \eqref{41344} small enough. 
	This completes the proof of Lemma \ref{weakfThm}. 
\end{proof}

\subsection{\texorpdfstring{$L^2$}{L2} local Existence for linear equation with reflection}
Assume $\ve,\eta\in(0,1)$. 
 Given data $\Psi=\mu^{\frac{1}{2}}+\psi$ and $\vp,\phi$, we will derive the local-in-time $L^2$ existence to the linear regularized modified Boltzmann equation with modified reflection boundary and given source $\phi$, and dissipation $f$ and ${\eta\<v\>^l}^{}f$:
\begin{align}\label{linearre}
	\left\{
	\begin{aligned}
		 & \pa_tf+ v\cdot\na_xf = \vpi Vf+\Gamma(\Psi,f)+\Gamma(\vp,\mu^{\frac{1}{2}})\\
		 &\qquad\qquad\qquad+\phi-{\eta\<v\>^l}^{}f\quad \text{ in } [T_1,T_2]\times\Omega\times\R^3_v, \\
		 & f|_{\Si_-}=(1-\ve)Rf\quad \text{ on }[T_1,T_2]\times\Si_-, \\
		 & f(T_1,x,v)=f_{T_1}\quad \text{ in }\Omega\times\R^3_v,
	\end{aligned}\right.
\end{align}
for any small $\vpi\ge 0$, where $V$ is given by \eqref{Vf}.
To derive the solution to \eqref{linearre}, we use an iteration on the boundary condition: 
\begin{align}\label{lineariter1}
	\left\{
	\begin{aligned}
		 & \pa_tf^{n+1}+ v\cdot\na_xf^{n+1} = \vpi Vf^{n+1}+\Gamma(\Psi,f^{n+1})+\Gamma(\vp,\mu^{\frac{1}{2}})\\
		 &\qquad\qquad\qquad+\phi-{\eta\<v\>^l}^{}f^{n+1}
		 \quad \text{ in } [T_1,T_2]\times\Omega\times\R^3_v, \\
		 & f^{n+1}|_{\Si_-}=(1-\ve)Rf^{n}\quad \text{ on }[T_1,T_2]\times\Si_-, \\
		 & f^{n+1}(T_1,x,v)=f_{T_1}\quad \text{ in }\Omega\times\R^3_v.
	\end{aligned}\right.
\end{align}
with $f^0=0$. 
To take the limit $n\to\infty$, we need to derive its convergence as follows. 
\begin{Thm}[$L^2$ existence for linear equation]
	\label{LemLinRe}
	Assume that $\vpi,\eta\ge 0$ be small enough constants, $l\ge 0$, $\ve\in(0,1)$ and $0\le T_1<T_2$. 
	Suppose $\psi,\vp,\phi$ and $f_{T_1}$ satisfy
	\begin{align}\label{ve2}
		\begin{aligned}
		\|\<v\>^{4}\psi\|_{L^\infty_t([T_1,T_2])L^\infty_{x}(\Omega)L^\infty_v}& \le \de_0, \\
		\|[\psi,\vp,\<v\>^2\phi]\|^2_{L^2_t([T_1,T_2])L^2_{x}(\Omega)L^2_v}+
		\|f_{T_1}\|^2_{L^2_x(\Omega)L^2_v(\R^3_v)} & = \ti C.
		\end{aligned}
	\end{align}
	for some constant $\ti C>0$ and sufficiently small $\de_0>0$.
	Then there exists a unique solution $f$ to \eqref{linearre} in the sense that for any $\Phi\in C^\infty_c(\R^7_{t,x,v})$, 
	\begin{multline}\label{weakfre}
		(f(T_2),\Phi(T_2))_{L^2_x(\Omega)L^2_v}-(f,(\pa_t+v\cdot\na_x)\Phi)_{L^2_{t,x,v}([T_1,T_2]\times\Omega\times\R^3_v)}\\
		+(f,\Phi)_{L^2_t([T_1,T_2])L^2_{x,v}(\Si_+)}
		=(f_{T_1},\Phi(T_1))_{L^2_x(\Omega)L^2_v}
		+(1-\ve)(Rf,\Phi)_{L^2_t([T_1,T_2])L^2_{x,v}(\Si_-)}\\
		+\big(\vpi Vf+\Gamma(\Psi,f)+\Gamma(\vp,\mu^{\frac{1}{2}})+\phi-{\eta\<v\>^l}^{}f,\Phi\big)_{L^2_{t,x,v}([T_1,T_2]\times\Omega\times\R^3_v)},
	\end{multline}
	 that satisfies 
	\begin{multline}
		\label{913}
		\|\<v\>^kf\|_{L^\infty_t([T_1,T_2])L^2_x(\Omega)L^2_v}^2
		+c_\al\|\<v\>^kf\|_{L^2_t([T_1,T_2])L^2_{x,v}(\Si_+)}^2
		+c_0\|f\|_{L^2_t([T_1,T_2])L^2_x(\Omega)L^2_D}^2\\
		+\vpi\|[\wh{C}^{}_0\<v\>^{{k+4}}f,\<v\>^{{k+2}}\na_vf]\|_{L^2_t([T_1,T_2])L^2_x(\Omega)L^2_v}^2
		+\eta\|\<v\>^{k+\frac{l}{2}}f\|_{L^2_t([T_1,T_2])L^2_x(\Omega)L^2_v}^2\\
		\le
		C_{|T_2-T_1|}\big(\|\<v\>^kf(T_1)\|_{L^2_x(\Omega)L^2_v}^2+\|[\vp,\<v\>^k\phi]\|_{L^2_t([T_1,T_2])L^2_x(\Omega)L^2_v(\R^3_v)}^2\big),
	\end{multline}
	for some constant $C_{|T_2-T_1|}>0$ that is independent of $\vpi,\ve,\eta$. Note the underlying time interval is $[T_1,T_2]$. 
\end{Thm}
\begin{proof}
	Let $f^0=0$ and $f^n$ be the solution to equation \eqref{lineariter1}. 
	By the assumptions in \eqref{ve2}, we can apply Theorem \ref{weakfThm} to obtain the local solution $f$ to equation \eqref{lineariter1} with $n=0$:
	\begin{align*}
		\left\{
		\begin{aligned}
			 & \pa_tf^{1}+ v\cdot\na_xf^{1} = \vpi Vf^{1}+\Gamma(\Psi,f^{1})+\Gamma(\vp,\mu^{\frac{1}{2}})\\
			 &\qquad\qquad\qquad+\phi-{\eta\<v\>^l}^{}f^1\quad \text{ in } [T_1,T_2]\times\Omega\times\R^3_v, \\
			 & f^{1}|_{\Si_-}=0\quad\quad\quad \text{ on }[T_1,T_2]\times\Si_-, \\
			 & f^{1}(T_1,x,v)=f_{T_1}\quad \text{ in }\Omega\times\R^3_v, 
		\end{aligned}\right.
	\end{align*}
which satisfies 
	\begin{multline}\label{L2u}
	\|f^1\|_{L^\infty_tL^2_x(\Omega)L^2_v}^2
	+\|f^1\|_{L^2_tL^2_{x,v}(\Si_+)}^2
	+c_0\|f^1\|_{L^2_tL^2_x(\Omega)L^2_D}^2
	+\vpi\|[\wh{C}^{}_0\<v\>^{{4}}f^1,\<v\>^{{2}}\na_vf^1]\|_{L^2_tL^2_x(\Omega)L^2_v}^2
	\\+\eta\|\<v\>^{\frac{l}{2}}f^1\|_{L^2_tL^2_x(\Omega)L^2_v}^2
		\le
		e^{C(T_2-T_1)}\Big(\|f_{T_1}\|_{L^2_x(\Omega)L^2_v}^2+\|[\mu^{\frac{1}{10^4}}\vp,\<v\>^2\phi]\|_{L^2_tL^2_x(\Omega)L^2_v}^2\Big), 
	\end{multline}
	where we choose $\wh{C}_0>0$ sufficiently large.
	In order to obtain solution $f^{n+1}$ to equation \eqref{lineariter1}, we consider $h^n=f^{n+1}-f^n$ $(n\ge 0)$ that satisfies
	$h^0=f^1$, and
	for $n\ge 1$,
	\begin{align}\label{lineariter3}
		\left\{
		\begin{aligned}
			 & \pa_th^n+ v\cdot\na_xh^n =\vpi Vh^n+\Gamma(\Psi,h^n)\\&\qquad\qquad\qquad\qquad-{\eta\<v\>^l}^{}h^n
\quad \text{ in } [T_1,T_2]\times\Omega\times\R^3_v, \\
			 & h^n|_{\Si_-}=(1-\ve)Rh^{n-1}\quad \text{ on }[T_1,T_2]\times\Si_-, \\
			 & h^n(T_1,x,v)=0\quad \text{ in }\Omega\times\R^3_v.
		\end{aligned}\right.
	\end{align}
	Assume the iteration assumption
	\begin{align}
			\label{911}
	\int^{T_2}_{T_1}\int_{\Si_-}|v\cdot n||Rh^{n-1}|^2\,dS(x)dvdt & <\infty.
	\end{align}
	Then the proof of the existence of $h^n$ $(n\ge 1)$ to equation \eqref{lineariter3} is given by Theorem \ref{weakfThm}. 
	 By \eqref{L2u} and Lemma \ref{LemR}, we know that \eqref{911} is fulfilled when $n=1$. 
	Then for $n\ge 1$, by taking $L^2$ inner product of \eqref{lineariter3} with $2h^n$ over $[T_1,T_2]\times\Omega\times\R^3_v$ and using Lemma \ref{LemR} to control the boundary term, we obtain 
	\begin{align}\label{L2es5}\notag
		&\|h^n\|_{L^\infty_tL^2_x(\Omega)L^2_v}^2
		+\|h^n\|_{L^2_tL^2_{x,v}(\Si_+)}^2
		+c_0\|h^n\|_{L^2_tL^2_x(\Omega)L^2_D}^2
		+\vpi\|[\wh{C}^{}_0\<v\>^{{4}}h^n,\na_v(\<v\>^{{2}}h^n)]\|_{L^2_tL^2_x(\Omega)L^2_v}^2\\
		&\notag\quad+M\|h^n\|_{L^2_tL^2_x(\Omega)L^2_v}^2
		+\eta\|\<v\>^{\frac{l}{2}}h^n\|_{L^2_tL^2_x(\Omega)L^2_v}^2
		\\
		&\notag\qquad\le(1-\ve)^2\|Rh^{n-1}\|_{L^2_tL^2_{x,v}(\Si_+)}^2\le\cdots
		\le (1-\ve)^{2n}\|Rh^0\|_{L^2_tL^2_{x,v}(\Si_-)}^2\\
		&\qquad\le (1-\ve)^{2n}e^{C(T_2-T_1)}\Big(\|f_{T_1}\|_{L^2_x(\Omega)L^2_v}^2+\|\mu^{\frac{1}{80}}\vp\|_{L^2_tL^2_x(\Omega)L^2_v}^2\Big). 
	\end{align}
The estimate \eqref{L2es5} implies that the iteration assumption \eqref{911} is fulfilled for $n\ge 2$. 
	Since $f^{n+1}=\sum_{j=0}^nh^j$ and $h^n$ satisfies \eqref{lineariter3}, we know that $f^{n+1}$ solves equation \eqref{lineariter1}. From the estimate \eqref{L2es5}, we have 
	\begin{align}\label{5266}\notag
	&\|f^{n+1}\|_{L^\infty_tL^2_x(\Omega)L^2_v}^2
	+\|f^{n+1}\|_{L^2_tL^2_{x,v}(\Si_+)}^2
	+c_0\|f^{n+1}\|_{L^2_tL^2_x(\Omega)L^2_D}^2\\
		&\notag\quad+\vpi\|[\wh{C}^{}_0\<v\>^{{4}}f^{n+1},\<v\>^{{2}}\na_vf^{n+1}]\|_{L^2_tL^2_x(\Omega)L^2_v}^2+\eta\|\<v\>^{\frac{l}{2}}f^{n+1}\|_{L^2_tL^2_x(\Omega)L^2_v}^2\\
		&\qquad\le \ti Ce^{C(T_2-T_1)}\sum_{j=1}^{n}(1-\ve)^{2n}\le C_\ve\ti C e^{C(T_2-T_1)}.
	\end{align}
	On the other hand, recalling that $h^n=f^{n+1}-f^n$ and using \eqref{L2es5}, we know that $\{f^n\}$ is a Cauchy sequence in the corresponding spaces on the left-hand side of \eqref{5266}.
	By Lemma \ref{LemR} and boundary condition of \eqref{lineariter1}, $\{f^n\}$ is also a Cauchy sequence in $L^2_tL^2_{x,v}(\Si_-)$.
	Thus, there exists a function $f$ belonging to $L^\infty_tL^2_x(\Omega)L^2_v$, $L^2_tL^2_x(\Omega)L^2_D$, $L^2_tL^2_{x,v}(\Si_+)$ and $L^2_tL^2_{x,v}(\Si_-)$ such that
	\begin{align}\label{strongconver}\begin{aligned}
		f^n\to f & \text{ in $L^\infty_tL^2_{x,v}([T_1,T_2]\times\Omega\times\R^3_v)$, $L^2_{t,x}L^2_D([T_1,T_2]\times\Omega\times\R^3_v)$}, \\ &\quad\ \text{ $L^2_tL^2_{x,v}([T_1,T_2]\times\Si_+)$ and $L^2_tL^2_{x,v}([T_1,T_2]\times\Si_-)$, }
		\end{aligned}
	\end{align}
	as $n\to\infty$.
	Rewriting equation \eqref{lineariter1} in the weak form for $n\ge 2$: for any function $\Phi\in C^\infty_c(\R_t\times\R^3_x\times\R^3_v)$,
	\begin{multline}\label{weak3}
		(f^{n+1}(T_2),\Phi(T_2))_{L^2_{x}(\Omega)L^2_v}-(f^{n+1},(\pa_t+v\cdot\na_x)\Phi)_{L^2_tL^2_x(\Omega)L^2_v}\\
		+(f^{n+1},\Phi)_{L^2_tL^2_{x,v}(\Si_+)}
		=(f_{T_1},\Phi(T_1))_{L^2_x(\Omega)L^2_v}
		+(1-\ve)(Rf^{n},\Phi)_{L^2_tL^2_{x,v}(\Si_+)}\\
		+\big(\vpi Vf^{n+1}+\Gamma(\Psi,f^{n+1})+\Gamma(\vp,\mu^{\frac{1}{2}})+\phi-{\eta\<v\>^l}^{}f^{n+1},\Phi\big)_{L^2_tL^2_x(\Omega)L^2_v}.
	\end{multline}
	Using \eqref{Gammaes}, \eqref{strongconver} and passing the limit $n\to\infty$ in \eqref{weak3}, we obtain \eqref{weakfre}. 
	This shows that $f$ is the solution to \eqref{linearre}.
	Moreover, we give a short proof of the $L^2$ energy estimate; a detailed one will be given in Section \ref{Sec12}. By taking $L^2$ inner product of \eqref{linearre} with $f$ and $\<v\>^{2k}f$ over $\Omega\times\R^3_v$ for any $k\ge 0$, and using \eqref{56}, we have
	\begin{multline}\label{44enera}
		\pa_t\|f\|_{L^2_x(\Omega)L^2_v}^2+\|f\|^2_{L^2_{x,v}(\Si_+)}+2\vpi\|[\wh{C}^{}_0\<v\>^{{4}}f,\na_v(\<v\>^{{2}}f)]\|^2_{L^2_x(\Omega)L^2_v}+c_0\|f\|_{L^2_x(\Omega)L^2_D}^2\\
		\le \|[\vp,\phi]\|_{L^2_x(\Omega)L^2_v}^2
		+C\|f\|_{L^2_x(\Omega)L^2_v}^2
		+\|f\|^2_{L^2_{x,v}(\Si_-)}
		-2\eta\|\<v\>^{\frac{l}{2}}f\|_{L^2_x(\Omega)L^2_v}^2,
	\end{multline}
	and 
	\begin{multline}\label{44enerb}
		\pa_t\|\<v\>^{k}f\|_{L^2_x(\Omega)L^2_v}^2+\|\<v\>^{k}f\|^2_{L^2_{x,v}(\Si_+)}+\vpi\|[\wh{C}^{}_0\<v\>^{{k+4}}f,\na_v(\<v\>^{{k+2}}f)]\|^2_{L^2_x(\Omega)L^2_v}+c_0\|\<v\>^{k}f\|_{L^2_x(\Omega)L^2_D}^2\\
		\le \|[\vp,\<v\>^k\phi]\|_{L^2_x(\Omega)L^2_v}^2
		+C\|f\|_{L^2_x(\Omega)L^2_v}^2
		+\|\<v\>^{k}f\|^2_{L^2_{x,v}(\Si_-)},
	\end{multline}
	where we chose $\de_0>0$ in \eqref{ve2} small enough, with some constant $C>0$ that is independent of $\vpi,\ve,\eta$. 
	For the boundary terms, we use \eqref{101} and \eqref{101w} to obtain 
	\begin{align}\label{44bda}
		\begin{aligned}
			\|Rf\|^2_{L^2_{x,v}(\Si_-)}&=\|f\|^2_{L^2_{x,v}(\Si_+)}-\al\|f-R_Df\|^2_{L^2_{x,v}(\Si_+)},\\
			\|Rf\|^2_{L^2_{x,v}(\Si_-)}&\le\|f\|^2_{L^2_{x,v}(\Si_+)}-\frac{\al}{2}\|f\|_{L^2_{x,v}(\Si_+)}^2+\al\|R_Df\|^2,\\
			\|\<v\>^{k}Rf\|^2_{L^2_{x,v}(\Si_-)}&\le(1-\al)^2\|\<v\>^{k}f\|^2_{L^2_{x,v}(\Si_+)} + C_k\|f\|^2_{L^2_{x,v}(\Si_+)}. 
		\end{aligned}
	\end{align}
We plug these three boundary estimates into \eqref{44enera} and \eqref{44enerb} to obtain three energy estimates and take a proper combination.
For the term $\|R_D f\|_{L^2_{x,v}(\Si_+)}^2$, we let $\de>0$, denote $\chi^{+}_{\de}=\chi^{+}_{\de}(t,x,v;T_1+N\de^3)$ by \eqref{chide}. Then we rewrite  
\begin{align*}
	\int^{T_2}_{T_1}\|R_D f\|_{L^2_{x,v}(\Si_+)}^2\,dt
	&= \int^{T_2}_{T_1}\int_{\pa\Omega}c_\mu\Big|\int_{v'\cdot n(x)>0}\{v'\cdot n(x)\}f(v')\mu^{\frac{1}{2}}(v')\,dv'\Big|^2\,dS(x)dt\\
	&\le\Big(\int_{T_1+[(T_2-T_1)/\de^3]\de^3}^{T_2}+\sum_{N=0}^{[(T_2-T_1)/\de^3]-1}\int_{T_1+N\de^3}^{T_1+(N+1)\de^3}\Big)\big(\cdots\big)\,dt. 
\end{align*}
Splitting $f(v')=(1-\chi^{+}_{\de})f(v')+\chi^{+}_{\de} f(v')$ and applying trace Lemma \ref{diffboundLem}, i.e. \eqref{diffbound1}, \eqref{diffbound2j} and \eqref{273}, to each term, we have 
	\begin{align*}\notag
		&\int^{T_2}_{T_1}\|R_D f\|_{L^2_{x,v}(\Si_+)}^2\,dt
	\le C(\de^4+e^{-\de^{-1/2}})
		\|f\|^2_{L^2_tL^2_{x,v}(\Si_+)}\\
		&\quad\notag+2\sum_{N=0}^{[(T_2-T_1)/\de^3]}\bigg\{\int^{T_1+N\de^3}_{T_1}\big(\vpi Vf+\Gamma(\Psi,f)+\Gamma(\vp,\mu^{\frac{1}{2}})+\phi-\eta\<v\>^lf,f\big)_{L^2_x(\Omega)L^2_v}\,dt\\
		&\quad+\int^{T_2}_{T_1}
		\big(\vpi Vf+\Gamma(\Psi,f)+\Gamma(\vp,\mu^{\frac{1}{2}})+\phi-\eta\<v\>^lf,\chi^{+}_{\de} f\big)_{L^2_x(\Omega)L^2_v}\,dt\bigg\},
	\end{align*}
	where $\chi^{+}_{\de}$ depends on $N,\de$. 
	Then by collisional estimates \eqref{L1} and \eqref{L2}, with upper bound of $\chi^{+}_{\de}$ in \eqref{chideesti}, and Lemma \ref{GachideLem}), we continue it as 
	\begin{align}\label{44bdc}
		\int^{T_2}_{T_1}\|R_D f\|_{L^2_{x,v}(\Si_+)}^2\,dt&\le C(\de^4+e^{-\de^{-1/2}})\|f\|^2_{L^2_t([T_1,T_2])L^2_{x,v}(\Si_+)}+C_\de\|f\|^2_{L^2_t([T_1,T_2])L^2_x(\Omega)L^2_D}. 
	\end{align}
	Therefore, using Gr\"{o}nwall's inequality, integrating \eqref{44enera} and \eqref{44enerb} on $t\in[T_1,T_2]$ with estimates \eqref{44bda} and \eqref{44bdc}, and taking proper combination, we deduce that the solution $f$ to equation \eqref{linearre} satisfies the weighted $L^2$ energy estimate \eqref{913}. 
	
	\smallskip To prove the uniqueness, we assume that $f,g$ are two solutions to equation \eqref{linearre}. Then $h=f-g$ satisfies 
	\begin{align}\label{linearreh}
		\left\{
		\begin{aligned}
			& \pa_th+ v\cdot\na_xh = \vpi Vh+\Gamma(\Psi,h)-{\eta\<v\>^l}^{}h\quad \text{ in } [T_1,T_2]\times\Omega\times\R^3_v, \\
			& h|_{\Si_-}=(1-\ve)Rh\quad \text{ on }[T_1,T_2]\times\Si_-, \\
			& h(T_1,x,v)=0\quad \text{ in }\Omega\times\R^3_v.
		\end{aligned}\right.
	\end{align}
	Similar to \eqref{913}, taking $L^2$ inner product of \eqref{linearreh} with $2h$ over $[T_1,T_2]\times\Omega\times\R^3_v$, we have 
	\begin{align*}
		\|h\|_{L^\infty_t([T_1,T_2])L^2_x(\Omega)L^2_v}^2
		+c_0\|h\|_{L^2_tL^2_x(\Omega)L^2_D}^2\le 0.
	\end{align*}
	which implies $f=g$ and the uniqueness of equation \eqref{linearre}. 
	This completes the proof of Theorem \ref{LemLinRe}.
\end{proof}

\subsection{Forward-backward extension Lemma}
In this subsection, we give the forward-backward extension method as stated in Subsection \ref{Secexten} and equation \eqref{omegac}. 
Denote $D_{in},D_{out}$ as in \eqref{Dinoutwt}. 

\begin{Lem}[Forward-backward extension]
	\label{extendreverThm}
	let $0\le T_1<T_2$, $l>0$ be the largest polynomial-weight index, and $g$ be the given (inflow and outflow with respect to interior $\Omega$) boundary condition satisfying
	\begin{align*}
		\int^{T_2}_{T_1}\int_{\pa\Omega}|v\cdot n||g|^2\,dS(x)dvdt<\infty.
	\end{align*}
	Then there exists a weak solution $f$ to the equation
	\begin{align}\label{omegac1}
		\left\{
		\begin{aligned}
			&\pa_tf+v\cdot\na_xf+E\cdot\na_vf=P^2f& \text{ in } [T_1,T_2]\times D_{in},\\
			&\pa_tf+v\cdot\na_xf+E\cdot\na_vf=-P^2f& \text{ in } [T_1,T_2]\times D_{out},\\
			& f|_{\pa\Omega}=g \quad& \text{ on }[T_1,T_2]\times(\Si_+\cup\Si_-),\\
			&f(T_1,x,v)=0\quad& \text{ in }D_{out},\\
			&f(T_2,x,v)=0\quad& \text{ in }D_{in}, 
		\end{aligned}\right.
	\end{align}
	where $\{(x,v)\in\ol\Omega^c\times\R^3\,:\,v\cdot n(x)=0\}$ has normal vector $\ti n(x,v)\in\R^6$ that satisfies the vanishing boundary property: 
	\begin{align}\label{traceh}
		(v,E)\cdot \ti n(x,v)=0,\quad \text{ on }\ \{(x,v)\in\ol\Omega^c\times\R^3\,:\,v\cdot n(x)=0\}.
	\end{align}
	Moreover, the field $E=E(x,v)$ and positive function $P$ are given by 
	\begin{align}\label{EP}
		\begin{aligned}
			E(x,v)&=-v_i\pa_{x_j}n_i(x)v_j\frac{n(x)}{|n(x)|^2},\\
			P(x,v)&=\wh C_l\Big(\|[1,n,\na_xn]\|_{L^\infty_x}\frac{\<v\>^2}{|n(x)|}+1\Big). 
		\end{aligned}
	\end{align}
	Here, $\wh C_l$ is a sufficiently large constant depending on the largest weight index $l$ that will be chosen large later. 
	Implicit summation over repeated indices is taken hereafter. 
	The weak sense of solution means that for any function $\Phi\in C^\infty_c(\R_t\times\R^3_x\times\R^3_v)$,
	\begin{align}\label{hheq5}\notag
		&(f(T_2),\Phi(T_2))_{L^2_{x,v}(D_{out})}-(f(T_1),\Phi(T_1))_{L^2_{x,v}(D_{in})}
		-\int^{T_2}_{T_1}(f,(\pa_t+v\cdot\na_x)\Phi)_{L^2_{x,v}(\ol\Omega^c\times\R^3_v)}\,dt\\
		&\notag\qquad-\int^{T_2}_{T_1}(f,\na_v\cdot(E\Phi))_{L^2_{x,v}(\ol\Omega^c\times\R^3_v)}\,dt
		=
		\int^{T_2}_{T_1}(g,\Phi)_{L^2_{x,v}(\Si_+)}\,dt-\int^{T_2}_{T_1}(g,\Phi)_{L^2_{x,v}(\Si_-)}\,dt\\
		&\qquad\qquad\qquad+\int^{T_2}_{T_1}(P^2f,\Phi)_{L^2_{x,v}(D_{in})}\,dt-\int^{T_2}_{T_1}(P^2f,\Phi)_{L^2_{x,v}(D_{out})}\,dt. 
	\end{align}
	Note that we use the outward normal vector $n(x)$ of $\pa\Omega$ given in \eqref{naxi1}. 
	Moreover, for any weak solution $f$ to equation \eqref{omegac1}, $p\ge 2$ and $k\in[0,l]$, we have $L^p$ energy estimate:
	\begin{align}\label{hheqes5}
		\pa_t\|\<v\>^kf^{\frac{p}{2}}\|^2_{L^2_{x,v}(\ol\Omega^c\times\R^3_v)}
		+\frac{1}{2}\|\<v\>^{k}Pf^{\frac{p}{2}}\|^2_{L^2_{x,v}(\ol\Omega^c\times\R^3_v)}
		\le 2\|\<v\>^kg^{\frac{p}{2}}\|_{L^2(\Si_-\cup\Si_+)}^2.
	\end{align}
\end{Lem}

\begin{Rem}
	The vanishing property \eqref{traceh} means that the particles flowing starting at $\Si_-$ ($\Si_+$, resp.) within $D_{in}$ ($D_{out}$, resp.) along the trajectory will not cross the boundary between $D_{in}$ and $D_{out}$ and will be confined within $D_{in}$ ($D_{out}$, resp.). In fact, the trajectory is on the same level in the sense of \eqref{430} below. 
\end{Rem}

\begin{proof}
	We will consider the domains $D_{in}$ and $D_{out}$ separately, and use the method of characteristics. We focus on solving the equation within $[T_1, T_2]\times D_{in}$ while the part $D_{out}$ is similar and even simpler. 
	
	\smallskip\noindent{\bf Step 1. Characteristic setting.}
	By Lemma \ref{gextension}, there exists a smooth approximation $g_j\in C^\infty_c(\R^7_{t,x,v})$ $(j\ge 1)$ of $g$ such that \eqref{eqg2limit} is valid. 
	Write $h(t)=f(T_1+T_2-t)$ for $t\in[T_1,T_2]$ and use the approximation $g_{j}$ instead of $g$ as the inflow boundary data. Then $h(t)$ satisfies 
	\begin{align}\label{omegac2}
		\begin{cases}
				\pa_th-v\cdot\na_xh-E(x,v)\cdot\na_vh
				=-P^2(x,v)h,
			 &\text{ in }[T_1,T_2]\times D_{in},\\
			h(t,x,v)=g_{j}(T_1+T_2-t,x,v), & \text{ on }[T_1,T_2]\times\Si_-. 
		\end{cases}
	\end{align}
	Let $(t,x,v)\in[T_1, T_2]\times D_{in}$ be any point in the inflow region, and we construct the corresponding characteristic curve starting at $(t,x,v)$ by 
	\begin{align}\label{chara}
		(
			X(s),V(s)):=(
				X(s;t,x,v),V(s;t,x,v))
	\end{align}
	for $s\in[T_1,T_2]$, which is the solution of 
	\begin{align}\label{XVeq}\left\{\begin{aligned}
			&X'(s)=-V(s),
			\quad V'(s)=-E(X(s),V(s)),\\
			&
			X(t)=x,\quad V(t)=v. 
		\end{aligned}\right. 
	\end{align}
	Then $V(t)\cdot n(X(t))=v\cdot n(x)<0$, 
	\begin{align*}
			X(s)=x-\int^s_tV(r)
			\,dr,\quad V(s)=v-\int^s_tE(X(r),V(r))
			\,dr. 
	\end{align*}
	For any $s\ge t$, 
	\begin{align*}
		\frac{d}{ds}\big(V(s)\cdot n(X(s))\big)
		&=V'(s)\cdot n(X(s))+V(s)\cdot\pa_{x_j}n(X(s))X_j'(s)\\
		&=-E(X(s),V(s))\cdot n(X(s))-V_i(s)\pa_{x_j}n_i(X(s))V_j(s)\\
		&=V_i(s)\pa_{x_j}n_i(X(s))V_j(s)\frac{n(X(s))}{|n(X(s))|^2}\cdot n(X(s))
		-V_i(s)\pa_{x_j}n_i(X(s))V_j(s)
		\\&=0, 
	\end{align*}
	where we implicitly summed the repeated indices and made the \emph{a priori} assumption that $|n(X(r))|>0$ for any $r\in[T_1,T_2]$. 
	Thus, along the characteristic curve, $V(s)\cdot n(X(s))$ is constant and hence, 
	\begin{align}\label{430}
		V(s)\cdot n(X(s))=v\cdot n(x)<0, 
	\end{align}
	for any $s\ge t$. This further implies $|n(X(s))|>0$ and hence, closes the \emph{a priori} assumption $|n(X(r))|>0$ for any $r\in[T_1,T_2]$. 
	Therefore, the equation \eqref{XVeq} is always solvable for any $s$ until it hits the boundary $\pa\big([T_1,T_2]\times D_{in}\big)$, and any particles starting at $(t,x,v)\in [T_1, T_2]\times D_{in}$ will be confined in the region $D_{in}$ along the characteristic curve. 
	
	\medskip\noindent{\bf Step 2. Strong solution.}
	Next, we search for a strong solution and then pass the limit $\ve\to 0$. 
	We denote the backward stopping time $t_\b(t,x,v)$ as the time when the particle starting at $(t,x,v)\in (T_1, T_2]\times D_{in}$ first hits the boundary $\pa D_{in}$ along the characteristics \eqref{chara}:
	\begin{align*}
		t_\b(t,x,v):=\sup\big\{T_1\le\tau\le t\,:\,\big(X(\tau;t,x,v),V(\tau;t,x,v)\big)\in \pa D_{in}\big\},
		\quad \text{ if it exists};
	\end{align*}
	otherwise we set $t_\b(t,x,v)=T_1$ for the case that the particle never hits the boundary $\pa D_{in}$ within time $[T_1,T_2]$ (this means that it will hit $t=T_1$ backwardly).
	Then the solution $h$ to equation \eqref{omegac2} in $D_{in}$ is given by 
	\begin{align}\label{hhh}\notag
		h(t,x,v)\exp\Big(\int^t_{t_\b}P^2(X(r),V(r))\,dr\Big)&=h(t_\b,X(t_\b),V(t_\b))\\
		&=g_{j}(t_\b,X(t_\b),V(t_\b)),
	\end{align}
	Therefore, the solution $h$ is well defined in $[T_1,T_2]\times D_{in}$. Moreover, within open set $(T_1,T_2)\times D_{in}$, one can use \cite[Lemma 2]{Guo2009} to show that 
	$t_\b(t,x,v)$ is at least a $C^3_{t,x,v}$ function (since $\pa\Omega$ is $C^3$). 
	Then $h(t,x,v)$ is also $C^3_{t,x,v}$ in $(T_1,T_2)\times D_{in}$, and the standard characteristic method verifies that $h(t,x,v)$ given by \eqref{hhh} is indeed a strong solution to equation \eqref{omegac2} in $D_{in}$. 
	
	\smallskip\noindent{\bf Step 3. Confinement and energy estimate.}
	First, we have shown that the flow $h$ is confined in $D_{in}$ as in \eqref{430}. 
Moreover, we calculate the boundary measure on $\pa D_{in}\setminus\Si_-$, i.e. the boundary between $D_{in}$ and $D_{out}$. 
Note that $\pa D_{in}\setminus\Si_-$ is given by 
	\begin{align*}
		\pa D_{in}\setminus\Si_-=\{(x,v)\in\ol\Omega^c\times\R^3\,:\, v\cdot n(x)=0\},
	\end{align*}
	where $n(x)$ is given in \eqref{naxi1}. Therefore, we can choose the outward normal vector on the boundary $\pa D_{in}\setminus\Si_-$ by 
	\begin{align*}
		\ti n(x,v) = \frac{\na_{x,v}\big(v\cdot n(x)\big)}{\big|\na_{x,v}\big(v\cdot n(x)\big)\big|}
		=\frac{\big(v_i\na_xn_i(x),n(x)\big)}{\big|\na_{x,v}\big(v\cdot n(x)\big)\big|},
	\end{align*}
	where repeated indices are summed implicitly.
	This implies \eqref{traceh}, i.e.
	\begin{align*}
		(v,E)\cdot \ti n(x,v) = \frac{v_jv_i\pa_{x_j}n_i(x)+E\cdot n(x)}{\big|\na_{x,v}\big(v\cdot n(x)\big)\big|}=0. 
	\end{align*}
Then for any function $\Phi\in C^\infty_c(\R^7)$, $h$ satisfies the weak form 
\begin{align}\label{hheqa}\notag
	(h(T_2),\Phi(T_2))_{L^2_{x,v}(D_{in})}&+(h,v\cdot\na_x\Phi+\na_v\cdot(E\Phi))_{L^2_tL^2_{x,v}(D_{in})}\\&=(g_{j},\Phi)_{L^2_tL^2_{x,v}(\Si_-)}
	+(P^2h,\Phi)_{L^2_tL^2_{x,v}(D_{in})}, 
\end{align}
with $h=g_{j}$ on $\Si_-$. Then the standard $L^2$ estimate of equation \eqref{hheqa} gives 
\begin{align}\label{466}\notag
	&\sup_{T_1\le t\le T_2}\|h(t)\|^2_{L^2_{x,v}(D_{in})}
	+\frac{1}{2}\int^{T_2}_{T_1}\|Ph(t)\|^2_{L^2_{x,v}(D_{in})}\,dt\\
	&\quad\notag+\frac{1}{2}\int^{T_2}_{T_1}\int_{\pa D_{in}\setminus\Si_-}(v,E)\cdot \ti n(x,v)|h(t,x,v)|^2\,dxdvdt\\
	&\quad\le \int^{T_2}_{T_1}\int_{\Si_-}|v\cdot n||g_{j}|^2\,dS(x)dvdt,
\end{align}
where $(v,E)\cdot \ti n(x,v)=0$, and we used \begin{align*}
	|\na_v\cdot E|=
	\bigg|-\pa_{x_j}n_k(x)v_j\frac{n_k(x)}{|n(x)|^2}
	-v_i\pa_{x_k}n_i(x)\frac{n_k(x)}{|n(x)|^2}\bigg|
	\le \frac{\<v\>\|\na_xn\|_{L^\infty_x}}{|n(x)|^2}
	\le \frac{P^2}{4}. 
\end{align*} 
With the energy estimate \eqref{466} and noticing that \eqref{hheqa} is just a linear equation, by writing $h=h_{j}$ to emphasize its dependence on $g_{j}$ and using the strong convergence of $g_{j}$ in \eqref{eqg2limit}, it's direct to obtain the $L^2$ estimate of $h_{j_1}-h_{j_2}$: 
	\begin{align*}
		&\sup_{T_1\le t\le T_2}\|h_{j_1}-h_{j_2}\|^2_{L^2_{x,v}(D_{in})}
		+\frac{1}{2}\int^{T_2}_{T_1}\|Ph_{j_1}-Ph_{j_2}\|^2_{L^2_{x,v}(D_{in})}\,dt\\
		&\quad
		\le \int^{T_2}_{T_1}\int_{\Si_-}|v\cdot n||g_{\ve_1,j_1}-g_{\ve_2,j_2}|^2\,dS(x)dvdt \to 0, 
	\end{align*}
	as $j_1,j_2\to \infty$. 
	Therefore, the sequence $\{h_{j}\}$ possesses a strong limit, denoted by $h$, as $j\to\infty$.
	Such a strong limit satisfies 
	\begin{align*}
		\sup_{T_1\le t\le T_2}\|h(t)\|^2_{L^2_{x,v}(D_{in})}
		+\frac{1}{2}\int^{T_2}_{T_1}\|Ph(t)\|^2_{L^2_{x,v}(D_{in})}\,dt
		\le \int^{T_2}_{T_1}\int_{\Si_-}|v\cdot n||g|^2\,dS(x)dvdt. 
	\end{align*}
	By taking limit in \eqref{hheqa} and returning to original time coordinate $f(t):=h(T_1+T_2-t)$, $f$ solves equation \eqref{omegac1} in $D_{in}$. For the weighted energy estimate, the $L^p$ energy estimate of \eqref{omegac1} implies 
	\begin{align*}
		\pa_t\|\<v\>^kf^{\frac{p}{2}}\|^2_{L^2_{x,v}(D_{in})}
		+\frac{1}{2}\|\<v\>^{k}Pf^{\frac{p}{2}}\|^2_{L^2_{x,v}(D_{in})}
		\le 2\|\<v\>^kg^{\frac{p}{2}}\|_{L^2_{x,v}(\Si_-)}^2, 
	\end{align*}
	for any $k\in[0,l]$. 
	This completes the existence and estimates in $D_{in}$.

	\smallskip \noindent{\bf Step 4. Outflow region.}
	The existence and energy estimate in $D_{out}$ is similar and simpler, so we omit the details, but only construct the characteristic and verify the property of confinement. 
	
	\smallskip 
	For any point $(t,x,v)\in[T_1, T_2]\times D_{out}$ in the outflow region, we construct the characteristic curve starting at $(t,x,v)$ by 
	\begin{align*}
		(X(s),V(s)):=(X(s;t,x,v),V(s;t,x,v))
	\end{align*}
	for $s\in[T_1,T_2]$, which is the solution of 
	\begin{align*}\left\{\begin{aligned}
			&X'(s)=V(s),\quad V'(s)=E(X(s),V(s)),\\
			&X(t)=x,\quad V(t)=v. 
		\end{aligned}\right. 
	\end{align*}
	Then $V(t)\cdot n(X(t))=v\cdot n(x)>0$, 
	and for any $s\ge t$, 
	\begin{align*}
		\frac{d}{ds}\big(V(s)\cdot n(X(s))\big)
		&=\big(V'(s)\cdot n(X(s))+V(s)\cdot\pa_{x_j}n(X(s))X_j'(s)\big)\\
		&=E(X(s),V(s))\cdot n(X(s))+V_i(s)\pa_{x_j}n_i(X(s))V_j(s)\\
		&=0,
	\end{align*}
	where we have also made the \emph{a priori} assumption that $|n(X(r))|>0$ for any $r\in[T_1,T_2]$. 
	Thus, along the characteristic curve, we have 
	\begin{align*}
		V(s)\cdot n(X(s))=v\cdot n(x)>0, 
	\end{align*}
	which implies $|n(X(s))|>0$ and closes the \emph{a priori} assumption $|n(X(r))|>0$.
	Continuing the calculations in Steps 1--3, we can obtain the existence and $L^p$ estimates of $f$ to equation \eqref{omegac1} in $D_{out}$. 
	This completes the proof of Lemma \ref{extendreverThm}.
\end{proof}

With the above forward-backward extension Lemma \ref{extendreverThm}, we can extend the weak solution of the equation \eqref{L2existeq} to the whole space. 
\begin{Thm}\label{ThmExtend}
	Denote $E,P$ as in \eqref{EP}. Let $s\in(0,1)$, $\de>0$ and $\vpi,N\ge 0$. Assume that the inflow boundary value $g$, the initial data $f_{T_1}$, and the time-dependent functions $\Psi=\mu^{\frac{1}{2}}+\psi\ge0$, $\vp$, $\phi$ satisfy
	\begin{align*}\begin{aligned}
			\|\<v\>^4\psi\|_{L^\infty_t([T_1,T_2])L^\infty_{x}(\Omega)L^\infty_v(\R^3_v)}\,dt & \le\de_0, \\
			\|[\vp,\phi]\|^2_{L^2_t([T_1,T_2])L^2_x(\Omega)L^2_v(\R^3_v)} & <\infty, \\
			\|g\|^2_{L^2_t([T_1,T_2])L^2_{x,v}(\Si_-)}+
			\|f_{T_1}\|_{L^2_x(\Omega)L^2_v(\R^3_v)}& <\infty,
		\end{aligned}
	\end{align*}
	with sufficiently small $\de_0>0$.
	Then the weak solution $f$ to equation \eqref{L2existeq} within $\ol\Omega$ obtained in Theorem \eqref{weakfThm} can be extended to the whole space. That is, $f$ can be extended to $\R^3_x$ and solves 
	\begin{align*}
		\left\{
		\begin{aligned}
			& \pa_tf+ v\cdot\na_xf = \vpi Vf+\Gamma(\Psi,f)+\Gamma(\vp,\mu^{\frac{1}{2}})\\&\qquad\qquad\quad+\phi-N\<v\>^{{l-2}}f &\text{ in } [T_1,T_2]\times\Omega\times\R^3_v, \\
			& \pa_tf+ v\cdot\na_xf +E\cdot\na_vf=P^2f& \text{ in } [T_1,T_2]\times D_{in},\\
			&\pa_tf+v\cdot\na_xf+E\cdot\na_vf=-P^2f& \text{ in } [T_1,T_2]\times D_{out},\\
			& f(t,x,v)|_{\Si_-}=g\quad \text{ on }[T_1,T_2]\times\Si_-, \\
			& f(T_1,x,v)=f_{T_1}\quad \text{ in }\Omega\times\R^3_v, \\
			&f(T_1,x,v)=0\qquad \text{ in }D_{out},\\
			&f(T_2,x,v)=0\qquad \text{ in }D_{in},
		\end{aligned}\right.
	\end{align*}
	in the sense that for any function $\Phi\in C^\infty_c(\R_t\times\R^3_x\times\R^3_v)$, 
	\begin{multline}\label{weakfwhole}
		(f(T_2),\Phi(T_2))_{L^2_x(\Omega)L^2_v}-(f_{T_1},\Phi(T_1))_{L^2_x(\Omega)L^2_v}
		+(f(T_2),\Phi(T_2))_{L^2_{x,v}(D_{out})}\\
		-(f(T_1),\Phi(T_1))_{L^2_{x,v}(D_{in})}
		-\int^{T_2}_{T_1}(f,(\pa_t+v\cdot\na_x)\Phi)_{L^2_x(\R^3)L^2_v}\,dt
		-\int^{T_2}_{T_1}(f,\na_v\cdot(E\Phi))_{L^2_{x}(\ol\Omega^c)L^2_v}\,dt\\
		=
		\int^{T_2}_{T_1}(f,\vpi V\Phi)_{L^2_x(\Omega)L^2_v}\,dt+\int^{T_2}_{T_1}(\Gamma(\Psi,f)+\Gamma(\vp,\mu^{\frac{1}{2}})+\phi-N\<v\>^{{l-2}}f,\Phi)_{L^2_x(\Omega)L^2_v}\,dt\\
		+\int^{T_2}_{T_1}(P^2f,\Phi)_{L^2_{x,v}(D_{in})}\,dt-\int^{T_2}_{T_1}(P^2f,\Phi)_{L^2_{x,v}(D_{out})}\,dt.
	\end{multline}
	Moreover, for any weak solution $f$ satisfying \eqref{weakfwhole}, we have $L^2$ estimate: for any $k\ge 0$, 
	\begin{multline}\label{weakfes}
		\pa_t\|\<v\>^kf(t)\|^2_{L^2_{x}(\Omega)L^2_v}
		+\ka\pa_t\|\<v\>^kf(t)\|^2_{L^2_{x,v}(\ol\Omega^c\times\R^3_v)}
		+\frac{1}{2}\|\<v\>^kf(t)\|_{L^2_{x,v}(\Si_+)}^2
		+\vpi\wh{C}^2_0\|\<v\>^{{k+4}}f\|^2_{L^2_{x}(\Omega)L^2_v}\\
		+\vpi\|\<v\>^{{k+2}}\na_vf\|^2_{L^2_{x}(\Omega)L^2_v}
		+c_0\|\<v\>^kf\|_{L^2_{x}(\Omega)L^2_D}^2
		+N\|\<v\>^{k+l-2}f\|_{L^2_{x}(\Omega)L^2_v}^2\\
		\le 2\|\<v\>^kg(t)\|_{L^2_{x,v}(\Si_-)}^2+C\|\<v\>^kf(t)\|_{L^2_{x}(\Omega)L^2_v}^2+\|[\mu^{\frac{1}{10^4}}\vp,\<v\>^k\phi]\|_{L^2_{x}(\Omega)L^2_v}^2, 
	\end{multline} 
	where $\ka>0$ is a sufficiently small constant (depending on $k$). 	
\end{Thm}
\begin{proof}
We start with the weak solution $f$ to equation \eqref{L2existeq} within $\ol\Omega$ in the sense of \eqref{weakf}. With the boundary value $f|_{\Si_+\cup\Si_-}$, we apply Lemma \ref{extendreverThm} to obtain the solution $f$ to equation \eqref{omegac1} within $\ol\Omega^c$. 
Combining these two parts, we know that $f$ on $\R^3_x$ satisfies \eqref{weakfwhole}; that is, \eqref{weakfwhole} can be deduced from \eqref{weakf} and \eqref{hheq5}.
Moreover, combining \eqref{fLineares1} and \eqref{hheqes5}, we have the energy estimate \eqref{weakfes}. 
This completes the proof of Theorem \ref{ThmExtend}. 
\end{proof}

\section{\texorpdfstring{$L^p$}{Lp} collision estimate of level functions}
\label{Sec5}
In this Section, we will estimate the level functions of the extended equation in Theorem \ref{weakfThm}.
We fix $l\ge\ga+10$, $N\ge 1$, $\wh{C}_0>0$ (large as in Lemma \ref{LemRegu}), $\vpi\ge 0$ and $\de\in(0,1]$ with weight function $\<v\>^l_\de$ given by \eqref{weight}.

\subsection{\texorpdfstring{$L^1$}{L1} norm for level functions}
In this Subsection, we consider the estimate of level sets of the solution $f$ to the equation (the boundary conditions are not necessary in the Section)
\begin{align}\label{Bol1}
\left\{
\begin{aligned}
	& \pa_tf+v\cdot\na_xf= \left\{\begin{aligned} 
		&\vpi Vf+\Gamma(\Psi,f)+\Gamma(\vp,\mu^{\frac{1}{2}})\\&\qquad+N\<v\>^{{l-2}}\phi-\eta\<v\>^lf\quad \text{ in } [T_1,T_2]\times\Omega\times\R^3_v, \\
		& P^2f-E\cdot\na_vf \qquad \text{ in } [T_1,T_2]\times D_{in},\\
		& -P^2f-E\cdot\na_vf \qquad \text{ in } [T_1,T_2]\times D_{out},
	\end{aligned}\right. \\
	& f(T_1,x,v)=0\quad \text{ in }\Omega\times\R^3_v,\\
	&f(T_1,x,v)=0\qquad \text{ in }D_{out},\\
	&f(T_2,x,v)=0\qquad \text{ in }D_{in}.
\end{aligned}\right.
\end{align}
Write $x_+=\max\{x,0\}$. Then one has
\begin{align}\label{deri+}
\frac{d(x_+)^2}{dx} = 2x_+, \ \text{ and } \ \na_{t,x}|F_+|^2=2F_+\na_{t,x}F.
\end{align}
For convenience, we still denote level functions
\begin{align}\label{levelvl}
f^{(l)}_K := f-K\<v\>^{-l}_\de,\quad f^{(l)}_{K,+}=f^{(l)}_K\1_{f^{(l)}_K\ge 0},\quad f^{(l)}_{K,-}=f^{(l)}_K\1_{f^{(l)}_K<0}.
\end{align}
Multiplying \eqref{Bol1} by $\<v\>^lf^{(l)}_{K,+}$, and using \eqref{deri+}, we have
\begin{align}\label{Bollevel}
\frac{1}{2}\pa_t(f^{(l)}_{K,+})^2+ \frac{1}{2}v\cdot\na_x(f^{(l)}_{K,+})^2 = \G,
\end{align}
where $\G^{}$ is given by
\begin{align}\label{G}
\G=\left\{\begin{aligned}
	&\vpi f^{(l)}_{K,+}Vf+f^{(l)}_{K,+}\Big(\Gamma(\Psi,f)
	+\Gamma(\vp,\mu^{\frac{1}{2}})\Big)\\
	& \qquad+N\<v\>^{{l-2}}\phi f^{(l)}_{K,+}-\eta\<v\>^lf\,f^{(l)}_{K,+} \quad \text{ in } \Omega\times\R^3_v, \\
	& (P^2f-E\cdot\na_vf)f^{(l)}_{K,+}\qquad \text{ in } D_{in}, \\
	& (-P^2f-E\cdot\na_vf)f^{(l)}_{K,+} \quad \text{ in } D_{out}.
\end{aligned}\right.
\end{align}
Similar to the arguments in \cite[Lemma 3.5 and Proposition 3.7]{Alonso2022}, we have the $L^1$ estimate of $\G$.
\begin{Lem}\label{Qes1Lem}
Let $j\ge 0$, $T_2>T_1\ge0$, $K\ge 0$ and $N\ge 0$. Let $\wh{C}_0>0$ be large enough (it depends on $l,\de,\|n\|_{L^\infty}$). Denote weight function $\<v\>^{l}_\de$ as in \eqref{weight} with $\de\in(0,1]$. Assume
\begin{align*}
	-\frac{3}{2}<\ga\le 2,\quad \ka>2,\quad l\ge\ga+10.
\end{align*}
Suppose $\Psi=\mu^{\frac{1}{2}}+\psi\ge 0$, and $f$ is the solution of \eqref{Bol1}. Denote $\G^{}$ by \eqref{G}.
Then
\begin{align}\label{Qesz}\notag
	&\int^{T_2}_{T_1}\int_{\R^3}\big|\<v\>^j(1-\De_v)^{-\frac{\ka}{2}}\G^{}\big|\,dxdt
	\le
	C\|\<v\>^{\frac{j}{2}}f^{(l)}_{K,+}(T_1)\|_{L^2_{x,v}(\R^6)}^2\\
	&\quad\notag+C\|[\<v\>^{l}\Psi,\<v\>^{j+\ga+6}\Psi]\|_{L^\infty_tL^\infty_x(\Omega)L^\infty_v}\|\<v\>^{\frac{j+(\ga+2s)_+}{2}}f^{(l)}_{K,+}\|_{L^2_tL^2_x(\Omega)L^2_{v}}^2\\
	&\quad\notag+C\min\{\|\mu^{\frac{1}{80}}\vp\|_{L^\infty_tL^\infty_{x}(\Omega)L^\infty_v}\|\mu^{\frac{1}{80}}f^{(l)}_{K,+}\|_{L^1_tL^1_{x}(\Omega)L^1_v},
	\|\mu^{\frac{1}{80}}\vp\|_{L^2_tL^2_{x}(\Omega)L^2_v}\|\mu^{\frac{1}{80}}f^{(l)}_{K,+}\|_{L^2_tL^2_{x}(\Omega)L^2_v}\}\\
	&\quad\notag
	+KC\|\<v\>^l\Psi\|_{L^\infty_tL^\infty_x(\Omega)L^\infty_v}\|\<v\>^{j-2}f^{(l)}_{K,+}\|_{L^1_tL^1_{x}(\Omega)L^1_v}\\
	&\quad\notag+N\|\<v\>^l\phi\|_{L^\infty_tL^\infty_x(\Omega)L^\infty_v}\|\<v\>^{j-2}f^{(l)}_{K,+}\|_{L^1_tL^1_x(\Omega)L^1_v}
	+C\|\<v\>^{\frac{j}{2}}Pf^{(l)}_{K,+}\|_{L^2_tL^2_{x}(\ol\Omega^c)L^2_{v}}^2\\
	&\quad+KC\|\<v\>^{-l+j}P^2f^{(l)}_{K,+}\|_{L^1_tL^1_{x}(\ol\Omega^c)L^1_{v}}
	+{\vpi}C\|[\<v\>^{{\frac{j}{2}+3}}\na_vf^{(l)}_{K,+},\<v\>^{{\frac{j}{2}+1}}f^{(l)}_{K,+}]\|_{L^2_tL^2_x(\Omega)L^2_v}^2. 
\end{align}
where the underlying time interval is $[T_1,T_2]$ and the constant $C=C(l,s,\ga,\de,\|n\|_{L^\infty})$. Note:
\begin{itemize}[leftmargin=2em]
	\item Since the estimates in the proof only concern the upper bound of $b_\eta(\cos\th)\le b(\cos\th)$, the same estimate is valid when $\Ga$ in \eqref{G} is replaced by $\Ga_\eta$ (defined by \eqref{Gaeta}). 
	\item Since $-f$ satisfies a similar equation, $-f$ satisfies the same bound if $f^{(l)}_{K,+}$ is replaced by $(-f)^{(l)}_{K,+}$. 
\end{itemize}
\end{Lem}
\begin{proof}
Fix $v\in\R^3$, integrate \eqref{Bollevel} over $[T_1,T_2]\times\R^3_x$ to obtain
\begin{align}\label{319}
	\frac{1}{2}\int_{\R^3}(f^{(l)}_{K,+}(T_2))^2\,dx=\frac{1}{2}\int_{\R^3}(f^{(l)}_{K,+}(T_1))^2\,dx+\int^{T_2}_{T_1}\int_{\R^3}\G^{}\,dxdt.
\end{align}
Applying $\<v\>^j(1-\De_v)^{-\frac{\ka}{2}}$ to \eqref{319} and using the positivity of Bessel potential \eqref{BesselReal}, we have
\begin{multline*}
	0\le\frac{1}{2}\int_{\R^3}\<v\>^j(1-\De_v)^{-\frac{\ka}{2}}(f^{(l)}_{K,+}(T_2))^2\,dx=\frac{1}{2}\int_{\R^3}\<v\>^j(1-\De_v)^{-\frac{\ka}{2}}(f^{(l)}_{K,+}(T_1))^2\,dx\\
	+\int^{T_2}_{T_1}\int_{\R^3}\<v\>^j(1-\De_v)^{-\frac{\ka}{2}}\G^{}\,dxdt.
\end{multline*}
This implies
\begin{multline}\label{Qes1}
	\int^{T_2}_{T_1}\int_{\R^3}\big|\<v\>^j(1-\De_v)^{-\frac{\ka}{2}}\G^{}\big|\,dxdt
	\le\frac{1}{2}\int_{\R^3}\<v\>^j(1-\De_v)^{-\frac{\ka}{2}}(f^{(l)}_{K,+}(T_1))^2\,dx\\
	+2\int^{T_2}_{T_1}\int_{\R^3}\big(\<v\>^j(1-\De_v)^{-\frac{\ka}{2}}\G^{}\big)_+\,dxdt,
\end{multline}
where $(\cdot)_+$ is the positive part of the term.
For the first right-hand term of \eqref{Qes1}, since $\ka>2$, we have from \eqref{GsL1} and Young's convolution inequality that
\begin{align}\label{Qes1a}\notag
	&\int_{\R^3}\int_{\R^3}\<v\>^j(1-\De_v)^{-\frac{\ka}{2}}(f^{(l)}_{K,+}(T_1))^2\,dvdx \\
	&\quad \notag\le\int_{\R^3}\int_{\R^3}\int_{\R^3}\<u\>^j\<v-u\>^jG_\ka(u)(f^{(l)}_{K,+}(T_1))^2(v-u)\,dudvdx \\
	&\quad\notag\le C\|\<u\>^jG_\ka(u)\|_{L^1_u}\|\<v\>^{\frac{j}{2}}f^{(l)}_{K,+}(T_1)\|_{L^2_{x,v}(\R^6)}^2 \\
	&\quad\le C\|\<v\>^{\frac{j}{2}}f^{(l)}_{K,+}(T_1)\|_{L^2_{x,v}(\R^6)}^2.
\end{align}
For the second right-hand term of \eqref{Qes1}, denote
\begin{align*}
	A_K = \big\{(t,x,v)\in[T_1,T_2]\times\R^3_x\times\R^3_v\,:\,(1-\De_v)^{-\frac{\ka}{2}}\G^{}\ge 0\big\},
\end{align*}
and
\begin{align*}
	W_K=(1-\De_v)^{-\frac{\ka}{2}}(\<v\>^j\1_{A_K}).
\end{align*}
Then
\begin{align*}
	\int^{T_2}_{T_1}\int_{\R^3}\int_{\R^3}\big(\<v\>^j(1-\De_v)^{-\frac{\ka}{2}}\G^{}\big)_+\,dxdvdt
	& =\int^{T_2}_{T_1}\int_{\R^3}\int_{\R^3}\<v\>^j\1_{A_K}(1-\De_v)^{-\frac{\ka}{2}}\G^{}\,dxdvdt \\
	& =\int^{T_2}_{T_1}\int_{\R^3}\int_{\R^3}W_K\G^{}\,dxdvdt.
\end{align*}
For any $\ka>2$, we have from \cite[Eq. (3.38) and (3.39)]{Alonso2022}
that
\begin{align}\label{WKes}
	W_K(v)\ge 0, \quad |W_K(v)|+|\na_vW_K(v)|+|\na_v^2W_K(v)|\le C\<v\>^j,
\end{align}
with $C>0$ independent of $K$. This can be derived by estimating via the properties of Bessel potential. 
Next, we estimate
\begin{multline}\label{341}
	\int_{\R^3}\int_{\R^3}W_K\G^{}\,dxdv
	= 
	\vpi\int_{\R^3}\int_{\Omega}W_Kf^{(l)}_{K,+}Vf\,dxdv-\eta\int_{\R^3}\int_{\Omega}W_Kf\,f^{(l)}_{K,+}\,dxdv\\
	+\int_{\R^3}\int_{\Omega}W_Kf^{(l)}_{K,+}\Big(\Gamma(\Psi,f)
	+\Gamma(\vp,\mu^{\frac{1}{2}})\Big)\,dxdv
	+\int_{\R^3}\int_{\Omega}W_KN\<v\>^{{l-2}}\phi f^{(l)}_{K,+}\,dxdv \\
	+\int_{D_{in}}W_K(P^2f-E\cdot\na_vf)f^{(l)}_{K,+}\,dxdv
	+\int_{D_{out}}W_K(-P^2f-E\cdot\na_vf)f^{(l)}_{K,+}\,dxdv. 
\end{multline}

{{\smallskip}} \noindent{\bf Step 1. The third and fourth terms in \eqref{341}.} In this Step 1, the underlying domain is $\Omega$.
For the second right-hand term of \eqref{341}, we have
\begin{align}\notag\label{T1T2T3}
	& \int_{\R^3}\int_{\Omega}W_Kf^{(l)}_{K,+}\Big(\Gamma(\Psi,f)
	+\Gamma(\vp,\mu^{\frac{1}{2}})\Big)\,dxdv 
	= \int_{\R^3}\int_{\Omega}W_Kf^{(l)}_{K,+}\Gamma\big(\Psi,f-K\<v\>^{-l}_\de\big)\,dxdv \\
	& \quad\notag\qquad+ \int_{\R^3}\int_{\Omega}W_Kf^{(l)}_{K,+}\Gamma(\vp,\mu^{\frac{1}{2}})\,dxdv 
	+ \int_{\R^3}\int_{\Omega}W_Kf^{(l)}_{K,+}\Gamma\big(\Psi,K\<v\>^{-l}_\de\big)\,dxdv \\
	& \quad=: \int_{\Omega}(T_1+T_2+T_3)\,dx.
\end{align}
For the term $T_1$, we apply \eqref{Ga} and pre-post change of variable $(v,v_*)\mapsto (v',v_*')$ to deduce
\begin{align*}
	T_1 & =\int_{\R^3}\int_{\R^3}\int_{\S^2}B(v-v_*,\sigma)
	\Big(W_K(v')f^{(l)}_{K,+}(v')\mu^{\frac{1}{2}}(v'_*)-W_K(v)f^{(l)}_{K,+}(v)\mu^{\frac{1}{2}}(v_*)\Big) \\
	& \qquad\times\Psi(v_*)\big(f(v)-K\<v\>^{-l}_\de\big)\,d\sigma dv_*dv.
\end{align*}
Using Cauchy-Schwarz inequality, positivity of $\Psi$, and noticing that $f^{(l)}_Kf^{(l)}_{K,+}=(f^{(l)}_{K,+})^2$,
we deduce that
\begin{align}\label{T1+}
	T_1 & \le\int_{\R^6\times\S^2}B
	\Big(W_K(v')f^{(l)}_{K,+}(v')\mu^{\frac{1}{2}}(v'_*)-W_K(v)f^{(l)}_{K,+}(v)\mu^{\frac{1}{2}}(v_*)\Big)f^{(l)}_{K,+}(v)\Psi(v_*)\,d\sigma dv_*dv.
\end{align}
Using Cauchy-Schwarz inequality, we write the parts of integrand involving $f^{(l)}_{K,+}$ as
\begin{align*}
	\Big( & W_K(v')f^{(l)}_{K,+}(v')\mu^{\frac{1}{2}}(v'_*)-W_K(v)f^{(l)}_{K,+}(v)\mu^{\frac{1}{2}}(v_*)\Big)f^{(l)}_{K,+}(v)\\
	& \le
	\frac{1}{2}W_K(v')(f^{(l)}_{K,+}(v'))^2\mu^{\frac{1}{2}}(v'_*)
	+\frac{1}{2}W_K(v')(f^{(l)}_{K,+}(v))^2\mu^{\frac{1}{2}}(v'_*) \\
	& \quad-W_K(v)(f^{(l)}_{K,+}(v))^2\mu^{\frac{1}{2}}(v_*) \\
	& \le\frac{1}{2}W_K(v')(f^{(l)}_{K,+}(v'))^2(\mu^{\frac{1}{2}}(v'_*)-\mu^{\frac{1}{2}}(v_*)) \\
	& \quad+\frac{1}{2}\big(W_K(v')(f^{(l)}_{K,+}(v'))^2-W_K(v)(f^{(l)}_{K,+}(v))^2\big)\mu^{\frac{1}{2}}(v_*) \\
	& \quad+\frac{1}{2}(W_K(v')-W_K(v))(f^{(l)}_{K,+}(v))^2(\mu^{\frac{1}{2}}(v'_*)-\mu^{\frac{1}{2}}(v_*)) \\
	& \quad+\frac{1}{2}W_K(v)(f^{(l)}_{K,+}(v))^2(\mu^{\frac{1}{2}}(v'_*)-\mu^{\frac{1}{2}}(v_*)) \\
	& \quad+\frac{1}{2}(W_K(v')-W_K(v))(f^{(l)}_{K,+}(v))^2\mu^{\frac{1}{2}}(v_*).
\end{align*}
Correspondingly, we denote $T_1$ in \eqref{T1+} as
	$T_1=\sum_{j=1}^5T_{1,j}$.
For the term $T_{1,1}$, by \eqref{Hpristar} and \eqref{WKes}, we have
\begin{align}\label{T11}
	\notag
	|T_{1,1}| & =\frac{1}{2}\Big|\int_{\R^6\times\S^2}BW_K(v')(f^{(l)}_{K,+}(v'))^2(\mu^{\frac{1}{2}}(v'_*)-\mu^{\frac{1}{2}}(v_*))\Psi(v_*)\,d\sigma dv_*dv\Big| \\
	& \notag\le
	\|\<v\>^{2+(\ga+2s)_+}\Psi\|_{L^2_v}\|\<v\>^{(\ga+2s)_+}W_K(v)(f^{(l)}_{K,+}(v))^2\|_{L^1_{v}} \\
	& \le C\|\<v\>^{\ga+6}\Psi\|_{L^\infty_v}\|\<v\>^{\frac{j+(\ga+2s)_+}{2}}f^{(l)}_{K,+}\|^2_{L^2_v}.
\end{align}
For the term $T_{1,2}$, we apply regular change of variable \eqref{regular} and \eqref{Gammaes1} 
to deduce
\begin{multline}\label{T12}
	|T_{1,2}|=\frac{1}{2}\Big|\int_{\R^6\times\S^2}B
	(W_K(v')(f^{(l)}_{K,+}(v'))^2-W_K(v)(f^{(l)}_{K,+}(v))^2)\mu^{\frac{1}{2}}(v_*)\Psi(v_*)\,d\sigma dv_*dv\Big|\\
	=\frac{1}{2}\Big|\int_{\R^6\times\S^2}|v-v_*|^\ga b(\cos\th)
	\frac{1-\cos^{3+\ga}\frac{\th}{2}}{\cos^{3+\ga}\frac{\th}{2}}W_K(v)(f^{(l)}_{K,+}(v))^2\mu^{\frac{1}{2}}(v_*)\Psi(v_*)\,d\sigma dv_*dv\Big|\\
	\le C\|\mu^{\frac{1}{8}}\Psi\|_{L^\infty_v}\|\<v\>^{\frac{j+\ga}{2}}f^{(l)}_{K,+}\|_{L^2_{v}}^2,
\end{multline}
where we used \eqref{WKes} and \eqref{bcossink2}.
For the term $T_{1,3}$, by \eqref{WKes}, we have
\begin{align*}
	\big|W_K(v')-W_K(v)\big|
	& \le|v'-v|\int^1_0|\na_vW_K(v+t(v'-v))|\,dt \\
	& \le C|v-v_*|\sin\frac{\th}{2}(\<v\>^{j}+\<v_*\>^{j}),
\end{align*}
and from \eqref{477} that
	$|\mu^{\frac{1}{2}}(v'_*)-\mu^{\frac{1}{2}}(v_*)|
	\le C|v-v_*|\sin\frac{\th}{2}$.
Then we apply \eqref{bcossin} to deduce
\begin{align}
	\label{T13}\notag
	|T_{1,3}| & =\frac{1}{2}\Big|\int_{\R^6\times\S^2}B(W_K(v')-W_K(v))(f^{(l)}_{K,+}(v))^2(\mu^{\frac{1}{2}}(v'_*)-\mu^{\frac{1}{2}}(v_*))\Psi(v_*)\,d\sigma dv_*dv\Big| \\
	& \le\notag C_l\Big|\int_{\R^6\times\S^2}B\min\big\{\sin^2\frac{\th}{2}|v-v_*|^2,1\big\}(\<v\>^{j}+\<v_*\>^{j})(f^{(l)}_{K,+}(v))^2\Psi(v_*)\,d\sigma dv_*dv\Big| \\
	& \le\notag C_l\|\<v\>^{j+(\ga+2s)_+}\Psi\|_{L^1_v}\|\<v\>^{\frac{j+(\ga+2s)_+}{2}}f^{(l)}_{K,+}\|_{L^2_{v}}^2 \\
	& \le C_l\|\<v\>^{j+\ga+6}\Psi\|_{L^\infty_v}\|\<v\>^{\frac{j+(\ga+2s)_+}{2}}f^{(l)}_{K,+}\|_{L^2_{v}}^2,
\end{align}
where we used \eqref{gafg}.
For the term $T_{1,4}$, we apply \eqref{Hpristar1} and estimate \eqref{WKes} of $W_K$ to deduce
\begin{align}\label{T14}
	|T_{1,4}|&\notag=\frac{1}{2}\Big|\int_{\R^6\times\S^2}B(v-v_*,\sigma)
	(f^{(l)}_{K,+}(v))^2W_K(v)(\mu^{\frac{1}{2}}(v'_*)-\mu^{\frac{1}{2}}(v_*))\Psi(v_*)\,d\sigma dv_*dv\Big|\\
	&\notag\le C\|\<v\>^{2+(\ga+2s)_+}\Psi\|_{L^2_v}\|\<v\>^{(\ga+2s)_+}W_K(f^{(l)}_{K,+})^2\|_{L^1_v}\\
	&\le C\|\<v\>^{\ga+6}\Psi\|_{L^\infty_v}\|\<v\>^{\frac{j+(\ga+2s)_+}{2}}f^{(l)}_{K,+}\|_{L^2_v}^2.
\end{align}
{{\smallskip}}
For the term $T_{1,5}$, applying \eqref{HHbcos}, \eqref{WKes}, and estimate \eqref{gafg}, we have
\begin{align}\notag
	\label{T15}
	|T_{1,5}| & =\frac{1}{2}\Big|\int_{\R^6\times\S^2}B(W_K(v')-W_K(v))(f^{(l)}_{K,+}(v))^2\mu^{\frac{1}{2}}(v_*)\Psi(v_*)\,d\sigma dv_*dv\Big| \\
	& \le\notag C\Big|\int_{\R^6}\big(|v_*-v|^{\ga+2s-1}\1_{|v-v_*|\ge\frac{2}{\pi}}+|v-v_*|^{\ga+1}\1_{|v-v_*|<\frac{2}{\pi}}+|v-v_*|^{\ga+2s}\big) \\
	& \qquad\qquad\notag\times(\<v\>^j+\<v_*\>^j)(f^{(l)}_{K,+}(v))^2\mu^{\frac{1}{2}}(v_*)\Psi(v_*)\,d\sigma dv_*dv\Big| \\
	& \le C\|\mu^{\frac{1}{4}}\Psi\|_{L^\infty_v}\|\<v\>^{\frac{j+(\ga+2s)_+}{2}}f^{(l)}_{K,+}\|_{L^2_{v}}^2.
\end{align}
Therefore, combining estimates \eqref{T11}, \eqref{T12}, \eqref{T13}, \eqref{T14}, and \eqref{T15}, we obtain the estimate of $T_1$ given in \eqref{T1T2T3}:
\begin{align}\label{T1}
	|T_1|\le
	C\|\<v\>^{j+\ga+6}\Psi\|_{L^\infty_v}\|\<v\>^{\frac{j+(\ga+2s)_+}{2}}f^{(l)}_{K,+}\|_{L^2_{v}}^2. 
\end{align}
For the term $T_2$ in \eqref{T1T2T3}, it's direct from \eqref{vpmu12} that
\begin{align}
	\label{T3}
	|T_2| & \le C\min\{\|\mu^{\frac{1}{80}}\vp\|_{L^\infty_v}\|\mu^{\frac{1}{80}}f^{(l)}_{K,+}\|_{L^1_v},\|\mu^{\frac{1}{80}}\vp\|_{L^2_v}\|\mu^{\frac{1}{80}}f^{(l)}_{K,+}\|_{L^2_v}\}.
\end{align}
The term $T_3$ in \eqref{T1T2T3} can be estimated by \eqref{vpl12}:
\begin{align}\label{T2}
	T_3\le C\|\<v\>^l\Psi\|_{L^\infty_v}\|\<v\>^{-2}W_Kf^{(l)}_{K,+}\|_{L^1_v}
	\le\|\<v\>^l\Psi\|_{L^\infty_v}\|\<v\>^{j-2}f^{(l)}_{K,+}\|_{L^1_v}.
\end{align}

\smallskip
Plugging \eqref{T1}, \eqref{T3} and \eqref{T2} into \eqref{T1T2T3}, we obtain
\begin{align}
	\label{O1}
	&\notag\Big|\int_{\R^3}\int_{\Omega}W_Kf^{(l)}_{K,+}\Big(\Gamma(\Psi,f)
	+\Gamma(\vp,\mu^{\frac{1}{2}})\Big)\,dxdv\Big|\\
	&\notag\quad\le
	C\|[\<v\>^{l}\Psi,\<v\>^{j+\ga+6}\Psi]\|_{L^\infty_x(\Omega)L^\infty_v}\|\<v\>^{\frac{j+(\ga+2s)_+}{2}}f^{(l)}_{K,+}\|_{L^2_x(\Omega)L^2_{v}}^2\\
	&\quad+C\|\mu^{\frac{1}{80}}\vp\|_{L^\infty_{x}(\Omega)L^\infty_v}\|\mu^{\frac{1}{80}}f^{(l)}_{K,+}\|_{L^1_{x}(\Omega)L^1_v}
	+KC\|\<v\>^l\Psi\|_{L^\infty_x(\Omega)L^\infty_v}\|\<v\>^{j-2}f^{(l)}_{K,+}\|_{L^1_{x}(\Omega)L^1_v}.
\end{align}
For the third right-hand term of \eqref{341}, by \eqref{WKes}, it's direct to calculate:
\begin{align}\label{O1a}
	\int_{\R^3}\int_{\Omega}W_KN\<v\>^{{l-2}}\phi f^{(l)}_{K,+}\,dxdv
	\le N\|\<v\>^l\phi\|_{L^\infty_x(\Omega)L^\infty_v}\|\<v\>^{j-2}f^{(l)}_{K,+}\|_{L^1_x(\Omega)L^1_v}. 
\end{align}

{{\smallskip}} \noindent{\bf Step 2. The fifth and sixth terms in \eqref{341}.} 
For the last two terms in \eqref{341}, recalling \eqref{levelvl}, we write
\begin{multline*}
	\int_{D_{in}}W_K(P^2f-E\cdot\na_vf)f^{(l)}_{K,+}\,dxdv
	+\int_{D_{out}}W_K(-P^2f-E\cdot\na_vf)f^{(l)}_{K,+}\,dxdv\\
	=
	\int_{D_{in}}W_K\Big(P^2\big(f-K\<v\>^{-l}_\de\big)+KP^2\<v\>^{-l}_\de\Big)f^{(l)}_{K,+}\,dxdv\\
	+\int_{D_{out}}W_K\Big(-P^2\big(f-K\<v\>^{-l}_\de\big)-KP^2\<v\>^{-l}_\de\Big)f^{(l)}_{K,+}\,dxdv\\
	-\int_{\ol\Omega^c\times\R^3_v}W_K\Big(E\cdot\na_v\big(f-K\<v\>^{-l}_\de\big)+E\cdot\na_v\big(K\<v\>^{-l}_\de\big)\Big)f^{(l)}_{K,+}\,dxdv
	=:T_1'+T_2'+T_3'.
\end{multline*}
For the terms $T_1'$ and $T_2'$, using \eqref{WKes}, \eqref{EP} and Lemma \ref{vldeLem}, we have
\begin{align*}
	T'_1&=\int_{D_{in}}W_K\Big(P^2(f^{(l)}_{K,+})^2+KP^2\<v\>^{-l}_\de f^{(l)}_{K,+}\Big)\,dxdv\\
	&\le C\|\<v\>^{\frac{j}{2}}Pf^{(l)}_{K,+}\|_{L^2_{x,v}(D_{in})}^2
	+C_\de K\|\<v\>^{-l+j}P^2f^{(l)}_{K,+}\|_{L^1_{x,v}(D_{in})},
\end{align*}
and 
\begin{align*}
	T'_2&=\int_{D_{out}}W_K\Big(-P^2(f^{(l)}_{K,+})^2-KP^2\<v\>^{-l}_\de f^{(l)}_{K,+}\Big)\,dxdv
	\le 0.
\end{align*}
Moreover, by integration by parts and Lemma \ref{vldeLem}, the term $T_3'$ can be estimated as 
\begin{align*}
	T_3'&=\int_{\ol\Omega^c\times\R^3_v}W_K
	\Big(-\frac{1}{2}\na_v\cdot(EW_K)(f^{(l)}_{K,+})^2
	-KE\cdot\na_v\<v\>^{-l}_\de\Big)f^{(l)}_{K,+}\,dxdv\\
	&\le C\|\<v\>^{\frac{j}{2}}Pf^{(l)}_{K,+}\|_{L^2_{x}(\ol\Omega^c)L^2_{v}}^2
	+C_\de K\|\<v\>^{-l-2+j}P^2f^{(l)}_{K,+}\|_{L^1_{x}(\ol\Omega^c)L^1_{v}};
\end{align*}
recall that $E,P$ are given in \eqref{EP}. 
Combining the above three estimates, we obtain 
\begin{multline}
	\label{O2}
	\int_{D_{in}}W_K(P^2f-E\cdot\na_vf)f^{(l)}_{K,+}\,dxdv
	+\int_{D_{out}}W_K(-P^2f-E\cdot\na_vf)f^{(l)}_{K,+}\,dxdv\\
	\le C\|\<v\>^{\frac{j}{2}}Pf^{(l)}_{K,+}\|_{L^2_{x}(\ol\Omega^c)L^2_{v}}^2
	+C_\de K\|\<v\>^{-l+j}P^2f^{(l)}_{K,+}\|_{L^1_{x}(\ol\Omega^c)L^1_{v}}.
\end{multline}

{{\smallskip}}\noindent{\bf Step 3. The first and second terms in \eqref{341}.}
For the first right-hand term of \eqref{341}, by \eqref{Vf}, we write 
\begin{multline}\label{T12pripri}
	\vpi\int_{\R^3}\int_{\Omega}W_Kf^{(l)}_{K,+}Vf\,dxdv
	=
	\vpi\int_{\R^3\times\Omega}W_Kf^{(l)}_{K,+}\Big(-2\wh{C}^2_0\<v\>^{{8}}f^{(l)}_{K,+}+2\na_v\cdot(\<v\>^{{4}}\na_v)f^{(l)}_{K,+}\Big)\,dxdv\\
	+K\vpi\int_{\R^3\times\Omega}W_Kf^{(l)}_{K,+}\Big(-2\wh{C}^2_0\<v\>^{{8}}\<v\>^{-l}_\de+2\na_v\cdot(\<v\>^{{4}}\na_v)\<v\>^{-l}_\de\Big)\,dxdv
	=:\vpi(T_1''+T_2''). 
\end{multline}
For the term $T''_1$, by integration by parts and \eqref{WKes}, we have 
\begin{align*}
	T_1''
	&=-2\wh{C}^2_0\int_{\R^3\times\Omega}W_K\<v\>^{{8}}(f^{(l)}_{K,+})^2\,dxdv
	-2\int_{\R^3\times\Omega}\na_v(W_Kf^{(l)}_{K,+})\cdot(\<v\>^{{4}}\na_v)f^{(l)}_{K,+}\,dxdv\\
	&\le -2\wh{C}^2_0\|W_K^{\frac{1}{2}}\<v\>^{{4}}f^{(l)}_{K,+}\|_{L^2_x(\Omega)L^2_v}^2-2\|W_K^{\frac{1}{2}}\<v\>^{{2}}\na_vf^{(l)}_{K,+}\|_{L^2_x(\Omega)L^2_v}^2\\
	&\quad
	+C\|[\<v\>^{{\frac{j}{2}+3}}\na_vf^{(l)}_{K,+},\<v\>^{{\frac{j}{2}+1}}f^{(l)}_{K,+}]\|_{L^2_x(\Omega)L^2_v}^2.
\end{align*}
The negative terms are extra and can be dropped. 
For the term $T_{2}''$, by choosing $\wh{C}_0>0$ sufficiently large (it depends on $l,\de,\|n\|_{L^\infty}$), we have 
\begin{align*}
	T_{2}''\le -K\wh{C}_0^2\|W_K\<v\>^{{-l+8}}f^{(l)}_{K,+}\|_{L^1_x(\Omega)L^1_v}. 
\end{align*}
Substituting the above estimates into \eqref{T12pripri}, we obtain 
\begin{align}\label{T12pripries}
	\vpi\int_{\R^3}\int_{\Omega}W_Kf^{(l)}_{K,+}Vf\,dxdv
	\le
	C\vpi\|[\<v\>^{{\frac{j}{2}+3}}\na_vf^{(l)}_{K,+},\<v\>^{{\frac{j}{2}+1}}f^{(l)}_{K,+}]\|_{L^2_x(\Omega)L^2_v}^2.
\end{align}
For the second right-hand term in \eqref{341}, since $W_K\ge0$, we have 
\begin{align}\label{330aa}
	-\eta\int_{\R^3}\int_{\Omega}W_Kf\,f^{(l)}_{K,+}\,dxdv
	=-\eta\|W_K^{\frac{1}{2}}f^{(l)}_{K,+}\|_{L^2_x(\Omega)L^2_v}^2
	-\eta K\|\<v\>^{-l}W_Kf^{(l)}_{K,+}\|_{L^1_x(\Omega)L^1_v}\le 0. 
\end{align}

{{\smallskip}}\noindent{\bf Step 4. Combining estimates for \eqref{341}.}
Substituting estimates \eqref{O1}, \eqref{O1a}, \eqref{O2}, \eqref{T12pripries} and \eqref{330aa} into \eqref{341}, and then plugging the resultant estimate and \eqref{Qes1a} into \eqref{Qes1}, we deduce
\eqref{Qesz} and concludes Lemma \ref{Qes1Lem}.
\end{proof}

\subsection{\texorpdfstring{$L^2$}{L2} estimate for level functions}
Next, we derive the $L^2$ estimate of the collision term for $f^{(l)}_{K,+}$, whose proof is analogous to Lemma \ref{Qes1Lem}.
\begin{Lem}\label{CollLevelLem}
Assume the same conditions as in Lemma \ref{Qes1Lem}. 
Then we have
\begin{align}\label{GOmegaL2}\notag
	&\text{(a)}\ \int_{\Omega\times\R^3}f^{(l)}_{K,+}\Big(\Gamma\big(\Psi,f-K\<v\>^{-l}_\de\big)+\Gamma(\varphi,\mu^{\frac{1}{2}})\Big)\,dvdx\\
	&\qquad\qquad\quad\notag
	\le
	(-c_0+C\|\<v\>^4\psi\|_{L^\infty_x(\Omega)L^\infty_v})\|f^{(l)}_{K,+}\|_{L^2_x(\Omega)L^2_D}^2\\
	&\qquad\qquad\qquad\ +C\|\1_{|v|\le R_0}f^{(l)}_{K,+}\|_{L^2_x(\Omega)L^2_v}^2
	+C\|\mu^{\frac{1}{80}}\vp\|_{L^\infty_{x}(\Omega)L^\infty_v}\|\mu^{\frac{1}{80}}f^{(l)}_{K,+}\|_{L^1_{x}(\Omega)L^1_v},\\
	\label{GOmegaL2a}
	&\text{(b)}\ \int_{\Omega\times\R^3}\phi\Gamma\big(\Psi,K\<v\>^{-l}_\de\big)\,dvdx
	\le CK\|\<v\>^l\Psi\|_{L^\infty_x(\Omega)L^\infty_v}\|\<v\>^{-2}\phi\|_{L^1_{x}(\Omega)L^1_v},\\
	\label{GOmegaN}
	&\text{(c)}\ -\int_{\Omega\times\R^3}N\<v\>^{{l-2}}f^{(l)}_{K,+}f\,dvdx\le-N\|\<v\>^{\frac{l}{2}-1}f^{(l)}_{K,+}\|_{L^2_x(\Omega)L^2_v}^2-NK\|\<v\>^{{-2}}f^{(l)}_{K,+}\|_{L^1_x(\Omega)L^1_v},\\
	\label{O1r}
	&\text{(d)}\ \Big|\int_{\R^3}\int_{\Omega}N\<v\>^{{l-2}}\phi f^{(l)}_{K,+}\,dxdv\Big|
	\le N\|\<v\>^l\phi\|_{L^\infty_x(\Omega)L^\infty_v}\|\<v\>^{{-2}}f^{(l)}_{K,+}\|_{L^1_x(\Omega)L^1_v}, \\
	\label{651a}\notag
	&\text{(e)}\ \vpi\int_{\Omega\times\R^3}f^{(l)}_{K,+}Vf\,dxdv
	\le-2\vpi\|[\wh{C}^{}_0\<v\>^{{4}}f^{(l)}_{K,+},\<v\>^{{2}}\na_vf^{(l)}_{K,+}]\|^2_{L^2_{x}(\Omega)L^2_v}\\&\qquad\qquad\qquad\qquad\qquad\qquad-\vpi K\wh{C}^2_0\|\<v\>^{{-l+8}}f^{(l)}_{K,+}\|_{L^1_x(\Omega)L^1_v},\\
	\label{651in}
	&\text{(f)}\ \int_{D_{in}}f^{(l)}_{K,+}\big(P^2f-E\cdot\na_vf\big)\,dvdx
	\ge\frac{1}{2}\|Pf^{(l)}_{K,+}\|^2_{L^2_{x,v}(D_{in})}+\frac{K}{2}\|\<v\>^{-l}_\de P^2f^{(l)}_{K,+}\|_{L^1_{x,v}(D_{in})},\\ 
	\label{651out}
	&\text{(g)}\ \int_{D_{out}}f^{(l)}_{K,+}\big(P^2f+E\cdot\na_vf\big)\,dvdx
	\ge \frac{1}{2}\|Pf^{(l)}_{K,+}\|^2_{L^2_{x,v}(D_{out})}+\frac{K}{2}\|\<v\>^{-l}_\de P^2f^{(l)}_{K,+}\|_{L^1_{x,v}(D_{out})}, \\
	\label{651advec}
	&\text{(h)}\ |v\cdot\na_x\<v\>^{-l}_\de|\le C_{\de,\|n\|_{W^{1,\infty}}}\<v\>^{-l},\ \ \text{when }\de\in(0,1),  
\end{align}
with some constant $C=C(\ga,s,l,\de,\|n\|_{L^\infty})>0$. 
Note that, since $-f$ satisfies a similar equation, $-f$ satisfies the same bound if $f^{(l)}_{K,+}$ is replaced by $(-f)^{(l)}_{K,+}$. 

\end{Lem}
\begin{proof}
We will estimate \eqref{GOmegaL2}--\eqref{651out} one by one.

{{\smallskip}}\noindent{\bf Estimation of \eqref{GOmegaL2}.}
We write the left-hand side of \eqref{GOmegaL2} as
\begin{align}\label{T123c}
	\int_{\R^3}\int_{\Omega}f^{(l)}_{K,+}\Gamma\big(\Psi,f-K\<v\>^{-l}_\de\big)\,dxdv
	+ \int_{\R^3}\int_{\Omega}f^{(l)}_{K,+}\Gamma(\vp,\mu^{\frac{1}{2}})\,dxdv
	& =: T_1+T_2.
\end{align}
For the term $T_1$, notice that $f^{(l)}_K(v)f^{(l)}_{K,+}(v)=(f^{(l)}_{K,+}(v))^2$. Then we apply \eqref{Ga} and pre-post change of variable $(v,v_*)\mapsto (v',v_*')$ to deduce
\begin{align*}\notag
	T_1 & =\int_{\Omega}\int_{\R^3}\int_{\R^3}\int_{\S^2}B\Psi(v_*)
	\Big(f^{(l)}_{K,+}(v')\mu^{\frac{1}{2}}(v'_*)-f^{(l)}_{K,+}(v)\mu^{\frac{1}{2}}(v_*)\Big)
	\Big(f(v)-K\<v\>^{-l}_\de\Big)\,d\sigma dv_*dvdx\\
	& \le\int_{\Omega\times\R^6\times\S^2}B\Psi(v_*)
	\Big(f^{(l)}_{K,+}(v')\mu^{\frac{1}{2}}(v'_*)-f^{(l)}_{K,+}(v)\mu^{\frac{1}{2}}(v_*)\Big)f^{(l)}_{K,+}(v)\,d\sigma dv_*dvdx\\
	&\notag=\int_{\Omega\times\R^6\times\S^2}B\mu^{\frac{1}{2}}(v_*)\Big(\Psi(v'_*)f^{(l)}_{K,+}(v')-\Psi(v_*)f^{(l)}_{K,+}(v)\Big)f^{(l)}_{K,+}(v)\,d\sigma dv_*dvdx\\
	&=\Big(\Gamma\big(\Psi,f^{(l)}_{K,+}\big),f^{(l)}_{K,+}\Big)_{L^2_vL^2_x(\Omega)}.
\end{align*}
Applying \eqref{L1} and \eqref{Gaesweight}, we obtain
\begin{align}\label{T1es}\notag
	T_1 & \le -c_0\|f^{(l)}_{K,+}\|_{L^2_x(\Omega)L^2_D}^2+C\|\1_{|v|\le R_0}f^{(l)}_{K,+}\|_{L^2_x(\Omega)L^2_v}^2+C\|\<v\>^4\psi\|_{L^\infty_xL^\infty_v}\|f^{(l)}_{K,+}\|_{L^2_x(\Omega)L^2_D}^2 \\
	& \le (-c_0+C\|\<v\>^4\psi\|_{L^\infty_xL^\infty_v})\|f^{(l)}_{K,+}\|_{L^2_x(\Omega)L^2_D}^2+C\|\1_{|v|\le R_0}f^{(l)}_{K,+}\|_{L^2_x(\Omega)L^2_v}^2.
\end{align}
For the term $T_2$ in \eqref{T123c}, we apply \eqref{vpmu12} to deduce 
\begin{align}\label{T3es}
	|T_2| & \le C\|\mu^{\frac{1}{80}}\vp\|_{L^\infty_{x}(\Omega)L^\infty_v}\|\mu^{\frac{1}{80}}f^{(l)}_{K,+}\|_{L^1_{x}(\Omega)L^1_v}. 
\end{align}
Substituting \eqref{T1es} and \eqref{T3es} into \eqref{T123c}, we obtain \eqref{GOmegaL2}.

{{\smallskip}}\noindent{\bf Estimation of \eqref{GOmegaL2a}.} This is a direct consequence of the estimate \eqref{vpl12}.

{{\smallskip}}\noindent{\bf Estimation of \eqref{GOmegaN} and \eqref{O1r}.}
The estimate \eqref{GOmegaN} follows from direct calculation:
\begin{align*}
	& -\int_{\Omega\times\R^3}N\<v\>^{{l-2}}f^{(l)}_{K,+}f\,dvdx \\
	& \quad\le -N\int_{\Omega\times\R^3}\<v\>^{{l-2}}f^{(l)}_{K,+}(f-K\<v\>^{-l}_\de)\,dvdx-NK\int_{\Omega\times\R^3}\<v\>^{-2}_\de f^{(l)}_{K,+}\,dvdx \\
	& \quad\le-N\|\<v\>^{{\frac{l}{2}-1}}f^{(l)}_{K,+}\|_{L^2_x(\Omega)L^2_v}^2-NK\|\<v\>^{-2}_\de f^{(l)}_{K,+}\|_{L^1_x(\Omega)L^1_v},
\end{align*}
This implies \eqref{GOmegaN}. 
The estimate \eqref{O1r} follows directly from H\"older's inequality.

{{\smallskip}}\noindent{\bf Estimation of \eqref{651a}.}
From the definition of $Vf$ \eqref{Vf}, we have 
\begin{multline*}
	\vpi\int_{\Omega\times\R^3}f^{(l)}_{K,+}Vf\,dxdv
	=-\vpi\int_{\Omega\times\R^3}f^{(l)}_{K,+}\Big(2\wh{C}^2_0\<v\>^{{8}}f^{(l)}_{K,+}+2\na_v\cdot(\<v\>^{{4}}\na_v)f^{(l)}_{K,+}\Big)\,dxdv\\
	-\vpi K\int_{\Omega\times\R^3}f^{(l)}_{K,+}\Big(2\wh{C}^2_0\<v\>^{{8}}\<v\>^{-l}_\de +2\na_v\cdot(\<v\>^{{4}}\na_v)\<v\>^{-l}_\de \Big)\,dxdv
	=T_{1}''+T_2''. 
\end{multline*}
For the term $T_1''$, by integration by parts,
\begin{align*}
	T_1''\le-2\vpi\|[\wh{C}^{}_0\<v\>^{{4}}f^{(l)}_{K,+},\<v\>^{{2}}\na_vf^{(l)}_{K,+}]\|^2_{L^2_{x}(\Omega)L^2_v},
\end{align*}
For the term $T_2''$, by choosing $\wh{C}_0>0$ sufficiently large (which depends on $l,\de,\|n\|_{L^\infty}$), we obtain 
\begin{align*}
	T_2''\le -\vpi K\wh{C}^2_0\|\<v\>^{{-l+8}}f^{(l)}_{K,+}\|_{L^1_x(\Omega)L^1_v}. 
\end{align*}
Collecting the above two estimates, we obtain 
\eqref{651a}. 

{{\smallskip}}\noindent{\bf Estimation of \eqref{651in} and \eqref{651out}.}
For \eqref{651in}, we have 
\begin{multline*}
	\int_{D_{in}}f^{(l)}_{K,+}\big(P^2f-E\cdot\na_vf\big)\,dxdv
	=\int_{D_{in}}f^{(l)}_{K,+}
	\Big(P^2\big(f-K\<v\>^{-l}_\de\big)+P^2K\<v\>^{-l}_\de\Big)\,dxdv\\
	+\int_{D_{in}}f^{(l)}_{K,+}
	\Big(-E\cdot\na_v\big(f-K\<v\>^{-l}_\de\big)
	-KE\cdot\na_v\<v\>^{-l}_\de\Big)\,dxdv
	=:T_1'+T_2'.
\end{multline*}
For the term $T_1'$, noticing $f^{(l)}_{K,+}\big({f}-K\<v\>^{-l}\big)=(f^{(l)}_{K,+})^2$, we have
\begin{align*}
	T_1'=\|Pf^{(l)}_{K,+}\|^2_{L^2_{x,v}(D_{in})}+K\|\<v\>^{-l}_\de P^2f^{(l)}_{K,+}\|_{L^1_{x,v}(D_{in})}.
\end{align*}
For the term $T'_2$, recalling that $E,P$ are given by \eqref{EP}, using integration by parts, Lemma \ref{vldeLem} and choosing $\wh C_l=C(l,\de,\|n\|_{L^\infty})>0$ (given in \eqref{EP}) sufficiently large (this gives large $P$), 
\begin{align*}
	T_2'&=\int_{D_{in}}\Big(
	\frac{1}{2}\na_v\cdot E(f^{(l)}_{K,+})^2-Kf^{(l)}_{K,+}E\cdot\na_v\<v\>^{-l}_\de\Big)\,dxdv\\
	&\le \frac{1}{2}\|Pf^{(l)}_{K,+}\|^2_{L^2_{x,v}(D_{in})}+\frac{K}{2}\|\<v\>^{-l-1}_\de P^2f^{(l)}_{K,+}\|_{L^1_{x,v}(D_{in})}, 
\end{align*}
which can be absorbed by the dissipation in $T_1'$. Therefore, we obtain \eqref{651in}: 
\begin{align*}
	\int_{D_{in}}f^{(l)}_{K,+}\big(P^2f-E\cdot\na_vf\big)\,dxdv\ge
	\frac{1}{2}\|Pf^{(l)}_{K,+}\|^2_{L^2_{x,v}(D_{in})}+\frac{K}{2}\|\<v\>^{-l}_\de P^2f^{(l)}_{K,+}\|_{L^1_{x,v}(D_{in})}. 
\end{align*}
The estimate of \eqref{651out} in the domain $D_{out}$ can be deduced similarly to the above calculations by noting the minus sign in front of $P^2$, and we omit the details for brevity. 
Lastly, estimate \eqref{651advec} follows directly from Lemma \ref{vldeLem}. 
We then conclude Lemma \ref{CollLevelLem}. 
\end{proof}

\subsection{\texorpdfstring{$L^2$}{L2} estimate with cutoff}
If the $L^2$ inner product involves a weight or cutoff function, we have the following estimate on the Boltzmann collision term. The proof is similar to Lemmas \ref{Qes1Lem} and \ref{CollLevelLem}. 

\begin{Lem}\label{GachideLem}
Let $\chi_\de\in W^{2,\infty}(\R^7_{t,x,v})$ be a positive function satisfying $\|\chi_\de\|_{W^{2,\infty}}\le C_\de$ with any $\de>0$, $\vpi\ge 0$, $\eta>0$, $\ve\in(0,1)$, $-\frac{3}{2}<\ga\le 2$, $s\in(0,1)$, and $\vpi,N,M\ge 0$. 
Assume $\Psi=\mu^{\frac{1}{2}}+\psi\ge 0$. Then 
\begin{align}\label{colli2level}
	&\notag\big(\vpi Vf+\Gamma(\Psi,f)+\Gamma(\vp,\mu^{\frac{1}{2}})+N\phi-\eta\<v\>^{l}f,\chi_\de f^{(l)}_{K,+}\big)_{L^2_v}\\
	&\quad\notag\le C\|[\chi_\de,\na_v\chi_\de,\na_v^2\chi_\de]\|_{L^\infty_v}\|\<v\>^{\ga+6}\Psi\|_{L^\infty_v}\|\<v\>^{\frac{(\ga+2s)_+}{2}}f^{(l)}_{K,+}\|_{L^2_v}^2\\
	&\quad\notag+C\|\chi_\de\|_{L^\infty_v}\,\min\Big\{\|\mu^{\frac{1}{76}}\vp\|_{L^2_v}\|\mu^{\frac{1}{76}}f^{(l)}_{K,+}\|_{L^1_v},\,
	\|\mu^{\frac{1}{10^4}}\vp\|_{L^2_v}\|\mu^{\frac{1}{10^4}}f^{(l)}_{K,+}\|_{L^2_v}\Big\}\\
	&\quad\notag+CK\|\chi_\de\|_{L^\infty_v}\|\<v\>^l\psi\|_{L^\infty_v}\|\<v\>^{-2}f^{(l)}_{K,+}\|_{L^1_v}\\
	&\quad\notag+N\|\chi_\de\|_{L^\infty_v}\min\Big\{\|\phi\|_{L^\infty_v}\|f^{(l)}_{K,+}\|_{L^1_v},\,\|\phi\|_{L^2_v}\|f^{(l)}_{K,+}\|_{L^2_v}\Big\}\\
	&\quad+\vpi\|[\na_v\chi_\de,\na_v^2\chi_\de]\|_{L^\infty_v}\|\<v\>^{{2}}f^{(l)}_{K,+}\|_{L^2_v}\|\<v\>^{{2}}[f^{(l)}_{K,+},\na_vf^{(l)}_{K,+}]\|_{L^2_v},
\end{align}
and 
	\begin{align}\label{colli2nolevel}
		&\notag\big(\vpi Vf+\Gamma(\Psi,f)+\Gamma(\vp,\mu^{\frac{1}{2}})+N\phi-{\eta\<v\>^l}^{}f,\chi_\de f\big)_{L^2_v}\\
		&\quad\notag\le C\|[\chi_\de,\na_v\chi_\de,\na_v^2\chi_\de]\|_{L^\infty_v}\|\<v\>^{\ga+6}\Psi\|_{L^\infty_v}\|\<v\>^{\frac{(\ga+2s)_+}{2}}f\|_{L^2_v}^2\\
		&\quad\notag+C\|\chi_\de\|_{L^\infty_v}\min\Big\{\|\mu^{\frac{1}{76}}\vp\|_{L^2_v}\|\mu^{\frac{1}{76}}f\|_{L^1_v},\,\|\mu^{\frac{1}{80}}\vp\|_{L^2_v}\|\mu^{\frac{1}{80}}f\|_{L^2_v}\Big\}\\
		&\quad+N\|\chi_\de\|_{L^\infty_v}\min\Big\{\|\phi\|_{L^\infty_v}\|f\|_{L^1_v},\,\|\phi\|_{L^2_v}\|f\|_{L^2_v}\Big\},
	\end{align}
	where we let $\wh{C}_0=\wh{C}_0(\de,l,\|n\|_{L^\infty})>0$, given in \eqref{Vf}, be sufficiently large. 
	Here, the constant $C=C(\ga,s)>0$ is independent of $\vpi,\ve,N,\eta$. 
	Moreover, if $s\in[\frac{1}{2},1)$, the same estimates holds for $\Ga_\eta$ defined by \eqref{Gaeta}, which replaces $\Ga$, uniformly in $\eta$.
\end{Lem} 
(Note that we used the notation $\eta$ twice, but the vanishing dissipation will be used in Section \ref{Sec10} and the ``cut-off" $\Ga_\eta$ will be only used in Section \ref{Sec11}.)
\begin{proof}
	For the estimate \eqref{colli2level}, we write
	\begin{multline*}
		\big(\vpi Vf,\chi_\de f^{(l)}_{K,+}\big)_{L^2_v}+\big(\Gamma(\Psi,f-K\<v\>^{-l}_\de)f^{(l)}_{K,+},\chi_\de f^{(l)}_{K,+}\big)_{L^2_v}+\big(\Gamma(\vp,\mu^{\frac{1}{2}}),\chi_\de f^{(l)}_{K,+}\big)_{L^2_v}\\
		+\big(\Gamma(\Psi,K\<v\>^{-l}_\de),\chi_\de f^{(l)}_{K,+}\big)_{L^2_v}=:S_1+S_2+S_3+S_4.
	\end{multline*}
	By definition of $Vf$ \eqref{Vf} and integration by parts, we have 
	\begin{align*}
	S_1
	&\notag=\vpi\big(-2\wh{C}^2_0\<v\>^{{8}}f+2\na_v\cdot(\<v\>^{{4}}\na_v)f\big),\chi_\de f^{(l)}_{K,+}\big)_{L^2_v}\\
	&\notag\le \vpi\int_{\R^3_v}\Big(-2\wh{C}^2_0\<v\>^{{8}}\chi_\de |f^{(l)}_{K,+}|^2
	+2\chi_\de f^{(l)}_{K,+}\na_v\cdot(\<v\>^{{4}}\na_v)f^{(l)}_{K,+}\\
	&\notag\qquad\quad-K\wh{C}^2_0\<v\>^{{8}}\<v\>^{-l}_\de\chi_\de f^{(l)}_{K,+}
	+2K\chi_\de f^{(l)}_{K,+}\na_v\cdot(\<v\>^{{4}}\na_v)\<v\>^{-l}_\de
	\Big)\,dv\\
	&\le\vpi\|[\na_v\chi_\de,\na_v^2\chi_\de]\|_{L^\infty_v}\|\<v\>^{{2}}f^{(l)}_{K,+}\|_{L^2_v}\|\<v\>^{{2}}[f^{(l)}_{K,+},\na_vf^{(l)}_{K,+}]\|_{L^2_v},
	\end{align*}
	where we used $|\na_v\cdot(\<v\>^{{4}}\na_v)\<v\>^{-l}_\de|	\le C_{\de,\|n\|_{L^\infty}}\<v\>^{{-l+8}}$ and choose $\wh{C}_0=\wh{C}_0(\de,\|n\|_{L^\infty})>0$ sufficiently large. 
	For $S_2$, we have from \eqref{Ga} that
	\begin{align}\label{S1aaa}\notag
		S_2 
		& =\int_{\R^6}\int_{\S^2}B\mu^{\frac{1}{2}}(v_*)\big(\Psi'_*(f-K\<v\>^{-l}_\de)'-\Psi_*(f-K\<v\>^{-l}_\de)\big)\chi_\de(v)f^{(l)}_{K,+}(v)\,d\sigma dv_*dv \\
		& \le\int_{\R^6}\int_{\S^2}B\Psi_*f^{(l)}_{K,+}(v)\big(\mu^{\frac{1}{2}}(v'_*)\chi_\de(v')f^{(l)}_{K,+}(v')-\mu^{\frac{1}{2}}(v_*)\chi_\de(v)f^{(l)}_{K,+}(v)\big)\,d\sigma dv_*dv,
	\end{align}
	where we used $(f-K\<v\>^{-l}_\de)'\le f^{(l)}_{K,+}(v')$ and $(f-K\<v\>^{-l}_\de)f^{(l)}_{K,+}=|f^{(l)}_{K,+}|^2$, and pre-post change of variable. 
	The expression \eqref{S1aaa} is similar to \eqref{T1+} while $\chi_\de$ satisfies the same control as $W_K$ in \eqref{WKes} with $j=0$. Then one can apply similar calculations \eqref{T1+}--\eqref{T1} with $W_K$ replaced by $f^{(l)}_{K,+}$ to deduce 
	\begin{align*}
		S_2\le
		C\|[\chi_\de,\na_v\chi_\de,\na^2_v\chi_\de]\|_{L^\infty_v}\|\<v\>^{\ga+6}\Psi\|_{L^\infty_v}\|\<v\>^{\frac{(\ga+2s)_+}{2}}f^{(l)}_{K,+}\|_{L^2_{v}}^2. 
	\end{align*}
The estimation of $S_3$ and $S_4$ are given driectly by \eqref{vpmu12}, \eqref{L2} and \eqref{vpl12}
\begin{align*}
	S_3&\le C\|\chi_\de\|_{L^\infty_v}\,\min\Big\{\|\mu^{\frac{1}{76}}\vp\|_{L^2_v}\|\mu^{\frac{1}{76}}f^{(l)}_{K,+}\|_{L^1_v},\,
		\|\mu^{\frac{1}{10^4}}\vp\|_{L^2_v}\|\mu^{\frac{1}{10^4}}f^{(l)}_{K,+}\|_{L^2_v}\Big\},\\
	S_4&\le \big(\Gamma(\Psi,K\<v\>^{-l}_\de),\chi_\de f^{(l)}_{K,+}\big)_{L^2_v}
	\le CK\|\chi_\de\|_{L^\infty_v}\|\<v\>^l\psi\|_{L^\infty_v}\|\<v\>^{-2}f^{(l)}_{K,+}\|_{L^1_v}. 
\end{align*}
The estimate of the term $\phi$ is straightforward by using H\"{o}lder's inequality.

\smallskip The non-level estimate \eqref{colli2nolevel} is similar, and we only consider the terms $V$ and $\Ga(\Psi,f)$. 
	By definition of $Vf$ \eqref{Vf} and integration by parts, we have 
	\begin{align*}
	\big(\vpi Vf,\chi_\de f\big)_{L^2_v}&\notag=\vpi\big(-2\wh{C}^2_0\<v\>^{{8}}f+2\na_v\cdot(\<v\>^{{4}}\na_v)f\big),\chi_\de f\big)_{L^2_v}\\
	&\notag\le \vpi\int_{\R^3_v}\Big(-2\wh{C}^2_0\<v\>^{{8}}\chi_\de |f|^2
	-2\<v\>^{{4}}\chi_\de|\na_vf|^2+\<v\>^{4}\De_v\chi_\de|f|^2\Big)\,dv
	\le0,
	\end{align*}
provided $\vpi=\vpi(\de)\ge0$ is sufficiently small. 
	The term $\Ga(\Psi,f)$ is 
	\begin{align*}
		\big(\Gamma(\Psi,f)f,\chi_\de f\big)_{L^2_v} 
		& =\int_{\R^6}\int_{\S^2}B\Big(\mu^{\frac{1}{2}}(v'_*)\chi_\de(v')f(v')-\mu^{\frac{1}{2}}(v_*)\chi_\de(v)f(v)\Big)\Psi(v_*)f(v)\,d\sigma dv_*dv, 
	\end{align*}
	which is still similar to \eqref{T1+} if we replace $W_K$ by $\chi_\de$ and $f^{(l)}_{K,+}$ by $f$.
	Then one can apply similar calculations \eqref{T1+}--\eqref{T1} to deduce
	\begin{align*}
		\big(\Gamma(\Psi,f)f,\chi_\de f\big)_{L^2_v}\le
		C\|[\chi_\de,\na_v\chi_\de,\na^2_v\chi_\de]\|_{L^\infty_v}\|\<v\>^{\ga+6}\Psi\|_{L^\infty_v}\|\<v\>^{\frac{(\ga+2s)_+}{2}}f\|_{L^2_{v}}^2. 
	\end{align*}
	 Combining these estimates, we obtain \eqref{colli2level} and \eqref{colli2nolevel}. Since the above calculations only concern the upper bound of the collision kernel, the same technique can be applied to $b_\eta(\cos\th)$, which replaces $b(\cos\th)$ and has the upper bound $b(\cos\th)$. Thus, the same estimates hold for $\Ga_\eta$. This completes the proof of Lemma \ref{GachideLem}.
\end{proof}

\subsection{Besov regularity for level functions}
In this Subsection, we derive the time-space Besov regularity for level functions by using the velocity averaging lemma \ref{averThma}. 
\begin{Lem}\label{LemBesovreguLevel}
	Assume the same conditions as in Lemma \ref{Qes1Lem}. Let $f$ be a solution to \eqref{Bol1}.
	Moreover, 
\begin{align}\label{562x}
	\begin{aligned}
		\|[\<v\>^l\psi,\vp]\|_{L^\infty_{t,x,v}([T_1,T_2]\times\Omega\times\R^3_v)}+\|\vp\|_{L^2_{t,x,v}([T_1,T_2]\times\Omega\times\R^3_v)}
		&\le \de_0,\\ 
		\|\<v\>^l\phi\|_{L^\infty_{t,x,v}([T_1,T_2]\times\Omega\times\R^3_v)}&\le K_1,\\
		\|f\|_{L^\infty_{t,x,v}([T_1,T_2]\times\R^3_x\times\R^3_v)}&=C_\infty,
	\end{aligned}
\end{align}
for some $\de_0\in(0,1)$ and $K_1,C_\infty>0$. 
	Then for any $K\ge 0$, $f^{(l)}_{K,+}$ solves equation \eqref{Bollevel} and there exists small $s'\in(0,1)$ such that 
	\begin{align}\label{736}
		&\notag\Big\|\int_{\R^3}\1_{[T_1,T_2]}\<v\>^{-10}(f^{(l)}_{K,+})^2\,dv\Big\|_{B^{s',2}_p(\R^{4}_{t,x})}^p
		\le C\Big(\|\<v\>^{-2}[f^{(l)}_{K,+}(T_1),f^{(l)}_{K,+}(T_2)]\|_{L^2_{x,v}(\R^6)}^{2p}\\
		&\notag\quad+C_\infty^{2p-2}\|\1_{[T_1,T_2]}\<v\>^{-2p}f^{(l)}_{K,+}\|_{L^2_{t,x,v}(\R^{7})}^{2}
		+\|\<v\>^{\frac{(\ga+2s)_+}{2}}f^{(l)}_{K,+}\|^{2p}_{L^2_tL^2_x(\Omega)L^2_v}\\
		&\notag\quad
		+\min\{\|\mu^{\frac{1}{80}}f^{(l)}_{K,+}\|^{p}_{L^1_tL^1_x(\Omega)L^1_v},\|\mu^{\frac{1}{80}}f^{(l)}_{K,+}\|^{p}_{L^2_tL^2_x(\Omega)L^2_v}\}\\
		&\notag\quad
		+({K+NK_1})^p\|\<v\>^{-2}f^{(l)}_{K,+}\|^{p}_{L^1_tL^1_x(\Omega)L^1_v}
		+\vpi^p\|[\<v\>^{{3}}f^{(l)}_{K,+},\<v\>\na_vf^{(l)}_{K,+}]\|_{L^2_tL^2_x(\Omega)L^2_v}^{2p}\\
		&\quad+\|Pf^{(l)}_{K,+}\|_{L^2_tL^2_{x}(\ol\Omega^c)L^2_{v}}^{2p}
		+K^p\|\<v\>^{-l}P^2f^{(l)}_{K,+}\|^p_{L^1_tL^1_{x}(\ol\Omega^c)L^1_{v}}\Big), 
	\end{align}
	for some $C=C(l,\ga,s,p,\de)>0$.
	Note that we add the exponent $p$ in \eqref{736} and the energy function $\E_p$ \eqref{EpIntro}.
\end{Lem}
\begin{proof}
To obtain Besov regularity, we will apply velocity averaging Lemma \ref{averThma} to equation \eqref{Bollevel} with the following parameters.
Fix any $\si>2$ and choose small $\ka\in(0,1/p)$ such that $\ka+\si\le 3$. Then we set 
\begin{align*}
	d=3,\quad m=3\ge \ka+\si,\quad n=4
\end{align*}
and $1<p<2$ is chosen to be close to $1$ such that
\begin{align}\label{49}
	1<p\le p^{\#}, \quad \ka p<1,\quad 1<p<\frac{p}{2-p},
	\quad \ka p^*\equiv\frac{\ka p}{p-1}>5,
\end{align}
where $p^\#$ is given by \eqref{ppsharp}.
Then \eqref{lam1} gives regularity index 
\begin{align}\label{spri}
	s':=\frac{n(1-\ka)}{(1+2n)(1+m)}\big(1-\frac{1}{p}\big)\in(0,1). 
\end{align}
By Sobolev embedding (e.g. \cite[Theorem 4.12]{Adams2003} with necessary embedding to an integer Sobolev space first), the last condition in \eqref{49} implies
\begin{align*}
	W^{\kappa,\,p^*}_{t,x}([T_1,T_2]\times\Omega)W^{\kappa,\,p^*}_{v}(\R^3) \text{ is embedded in } L^\infty_{t,x,v}([T_1,T_2]\times\Omega\times\R^3_v),
\end{align*}
where $W^{\kappa,\,p^*}$ is the fractional Sobolev space. 
Hence, by duality,
\begin{align}\label{embeddsigma}
	L^1_{t,x,v}([T_1,T_2]\times\Omega\times\R^3_v) \text{ is embedded in } H^{-\kappa,p}_{t,x}H^{-\kappa,p}_{v}([T_1,T_2]\times\Omega\times\R^3_v).
\end{align}
On the other hand, it follows from equation \eqref{Bollevel} that
\begin{align}\label{linear1bb}
	\frac{1}{2}\pa_t\big(\<v\>^{-2}f^{(l)}_{K,+}\big)^2+ \frac{1}{2}v\cdot\na_x\big(\<v\>^{-2}f^{(l)}_{K,+}\big)^2 = \<v\>^{-4}\G.
\end{align}
Then we apply Lemma \ref{averThma} (i.e. estimate \eqref{avereq1} with averaging function $\psi=\<v\>^{-6}$ satisfying $\|\<v\>^n\<D_v\>^{m+1}\psi\|_{L^2_v}<\infty$ therein) to the equation \eqref{linear1bb} to derive
\begin{align}\label{100}\notag
	&\Big\|\int_{\R^3}\1_{[T_1,T_2]}\<v\>^{-10}(f^{(l)}_{K,+})^2\,dv\Big\|_{B^{s',2}_p(\R^{4}_{t,x})}\\
	&\quad\notag\le C
	\Big(\|(I-\De_{x})^{-\kappa/2}(I-\De_v)^{-(\ka+\si)/2}(\<v\>^{-2}f^{(l)}_{K,+}(T_1))^2\|_{L^p(\R^{6}_{x,v})}\\
	&\qquad\notag+\|(I-\De_{x})^{-\kappa/2}(I-\De_v)^{-(\ka+\si)/2}(\<v\>^{-2}f^{(l)}_{K,+}(T_2))^2\|_{L^p(\R^{6}_{x,v})}\\
	&\qquad\notag+\|\1_{[T_1,T_2]}(\<v\>^{-2}f^{(l)}_{K,+})^2\|_{L^p(\R^{7}_{t,x,v})}\\
	&\qquad+\|(I-\De_{t,x})^{-\kappa/2}(I-\De_v)^{-(\ka+\si)/2}(\1_{[T_1,T_2]}\<v\>^{-4}\G)\|_{L^p(\R^{7}_{t,x,v})}\Big), 
\end{align}
In the following, we bound each term on the right side of \eqref{100}.
For the first two right-hand terms of \eqref{100}, by embedding \eqref{embeddsigma}, 
we have 
\begin{align}\label{651t}\notag
	&\|(1-\De_x)^{-\frac{\kappa}{2}}(1-\Delta_v)^{-\frac{\ka+\sigma}{2}}[(\<v\>^{-2}f^{(l)}_{K,+}(T_1))^2,(\<v\>^{-2}f^{(l)}_{K,+}(T_2))^2]\|_{L^p_{x,v}(\R^6)}\\
	&\quad\le C
	\|\<v\>^{-2}[f^{(l)}_{K,+}(T_1),f^{(l)}_{K,+}(T_2)]\|_{L^2_{x,v}(\R^6)}^{2}. 
\end{align}
For the third right-hand term of \eqref{100}, applying H\"{o}lder's inequality and $L^\infty$ bound of $f$ in \eqref{562x},
\begin{align}\label{651s}
	\|\1_{[T_1,T_2]}\<v\>^{-4}(f^{(l)}_{K,+})^2\|_{L^p(\R^{7}_{t,x,v})}
			&\le\notag\Big(\int_{\R^{7}_{t,x,v}}\1_{[T_1,T_2]}\<v\>^{-4p}(f^{(l)}_{K,+})^{2p}\,dtdxdv\Big)^{\frac{1}{p}}\\
	&\le C_\infty^{2-2/p}\|\1_{[T_1,T_2]}\<v\>^{-2p}f^{(l)}_{K,+}\|_{L^2_{t,x,v}(\R^{7})}^{2/p}.
\end{align}
For the fourth right-hand term of \eqref{100}, using embedding \eqref{embeddsigma} and Lemma \ref{Qes1Lem} with $j=0$, 
\begin{align}\label{719}\notag
	&\quad \|(1-\De_{t,x})^{-\frac{\kappa}{2}}(1-\De_v)^{-\frac{\ka+\sigma}{2}}(\1_{[T_1,T_2]}\<v\>^{-4}\G)\|_{L^p_{t,x,v}(\R^7)}
	\le\|\1_{[T_1,T_2]}(1-\De_v)^{-\frac{\ka}{2}}\G\|_{L^1_{t,x,v}(\R^7)} \\
	& \notag\le C\bigg\{
	\|f^{(l)}_{K,+}(T_1)\|_{L^2_{x,v}(\R^6)}^2
	+\|[\<v\>^{l}\Psi,\<v\>^{\ga+6}\Psi]\|_{L^\infty_tL^\infty_x(\Omega)L^\infty_v}\|\<v\>^{\frac{(\ga+2s)_+}{2}}f^{(l)}_{K,+}\|_{L^2_tL^2_x(\Omega)L^2_{v}}^2\\
	&\quad\notag+\min\{\|\mu^{\frac{1}{80}}\vp\|_{L^\infty_tL^\infty_{x}(\Omega)L^\infty_v}\|\mu^{\frac{1}{80}}f^{(l)}_{K,+}\|_{L^1_tL^1_{x}(\Omega)L^1_v},
	\|\mu^{\frac{1}{80}}\vp\|_{L^2_tL^2_{x}(\Omega)L^2_v}\|\mu^{\frac{1}{80}}f^{(l)}_{K,+}\|_{L^2_tL^2_{x}(\Omega)L^2_v}\}\\
	&\quad\notag
	+K\|\<v\>^l\Psi\|_{L^\infty_tL^\infty_x(\Omega)L^\infty_v}\|\<v\>^{-2}f^{(l)}_{K,+}\|_{L^1_tL^1_{x}(\Omega)L^1_v}\\
	&\quad\notag+N\|\<v\>^l\phi\|_{L^\infty_tL^\infty_x(\Omega)L^\infty_v}\|\<v\>^{-2}f^{(l)}_{K,+}\|_{L^1_tL^1_x(\Omega)L^1_v}
	+\|Pf^{(l)}_{K,+}\|_{L^2_tL^2_{x}(\ol\Omega^c)L^2_{v}}^2\\
	&\quad\notag+K\|\<v\>^{-l}P^2f^{(l)}_{K,+}\|_{L^1_tL^1_{x}(\ol\Omega^c)L^1_{v}}
	+{\vpi}\|[\<v\>^{{3}}\na_vf^{(l)}_{K,+},\<v\>f^{(l)}_{K,+}]\|_{L^2_tL^2_x(\Omega)L^2_v}^2\bigg\}\\
	&\notag\le C\bigg\{\|f^{(l)}_{K,+}(T_1)\|_{L^2_{x,v}(\R^6)}^2
	+\|\<v\>^{\frac{(\ga+2s)_+}{2}}f^{(l)}_{K,+}\|^2_{L^2_tL^2_x(\Omega)L^2_v}\\	
	&\notag\quad+(1+{K+NK_1})\|\<v\>^{-2}f^{(l)}_{K,+}\|_{L^1_tL^1_x(\Omega)L^1_v}+\vpi \|[\<v\>^{{3}}f^{(l)}_{K,+},\<v\>\na_vf^{(l)}_{K,+}]\|_{L^2_tL^2_x(\Omega)L^2_v}^2\\	
	&\quad+\|Pf^{(l)}_{K,+}\|_{L^2_tL^2_{x}(\ol\Omega^c)L^2_{v}}^2
	+K\|\<v\>^{-l}P^2f^{(l)}_{K,+}\|_{L^1_tL^1_{x}(\ol\Omega^c)L^1_{v}}\bigg\},
\end{align}
where the time norm is lying in $[T_1,T_2]$ if not specified, and we used $\ga\le 2$, $l\ge\ga+10$, and \eqref{562x}.
Here the constant $C=C(l,s,\ga)>0$ depends on $l$. 
Substituting estimates \eqref{651t}, \eqref{651s} and \eqref{719} into \eqref{100} yields \eqref{736}. 
This completes the proof of Lemma \ref{LemBesovreguLevel}. 
\end{proof}

\section{\texorpdfstring{$L^\infty$}{L infty} estimate for inflow boundary}\label{SecLinfty}
In this Section, we will deduce the $L^\infty_{x,v}$ estimate of the Boltzmann equation, which is the crucial estimate for the non-cutoff Boltzmann equation in a domain $\Omega$ with boundary. Unless otherwise stated, the underlying time norm in this Section is always within $[T_1,T_2]$. 

\smallskip 
Let $0\le T_1<T_2$, $\vpi\ge 0$, and fix $l\ge \ga+10$.
In this Section, we denote the level functions $f^{(l)}_{K,+}$ as in \eqref{flK} with $\de=1$: 
\begin{align*}
	f^{(l)}_{K,+}=\big(f-K\<v\>^{-l}\big)_+. 
\end{align*}
We begin with splitting the linearized mollified equation by adding an extra dissipation term $\eta\<v\>^lf$ with any small $\eta>0$:
\begin{align}\label{linear1}
	\left\{
	\begin{aligned}
		& \pa_tf+ v\cdot\na_xf = \vpi Vf+ \Gamma(\Psi,f)+\Gamma(\vp,\mu^{\frac{1}{2}})-\eta\<v\>^lf\quad \text{ in } [T_1, T_2]\times\Omega\times\R^3_v, \\
		& f|_{\Si_-}=g\qquad \text{ on }[T_1, T_2]\times\Si_-, \\
		& f(T_1,x,v)=f_{T_1}\quad \text{ in }\Omega\times\R^3_v,
	\end{aligned}\right.
\end{align}
with given $\vp$ and $\Psi=\mu^{\frac{1}{2}}+\psi\ge 0$.
By adding an extra damping term, we can split the solution $f$ to \eqref{linear1} into $f=f_1+f_2$, where $f_1$ and $f_2$ solve
\begin{align}\label{linear1a}
	\left\{
	\begin{aligned}
		& \pa_tf_1+ v\cdot\na_xf_1 = \vpi Vf_1+\Gamma(\Psi,f_1)+\Gamma(\vp_1,\mu^{\frac{1}{2}})\\&\qquad\qquad\qquad\qquad-N\<v\>^{l-2}f_1-\eta\<v\>^lf_1\quad \text{ in } [T_1, T_2]\times\Omega\times\R^3_v, \\
		& f_1|_{\Si_-}=g\qquad \text{ on }[T_1, T_2]\times\Si_-, \\
		& f_1(T_1,x,v)=f_{T_1}\quad \text{ in }\Omega\times\R^3_v,
	\end{aligned}\right.
\end{align}
and
\begin{align}\label{linear1b}
	\left\{
	\begin{aligned}
		& \pa_tf_2+ v\cdot\na_xf_2 =
		\vpi Vf_2+\Gamma(\Psi,f_2)+\Gamma(\vp_2,\mu^{\frac{1}{2}}) \\&\qquad\qquad\qquad\qquad+N\<v\>^{l-2}f_1-\eta\<v\>^lf_2\quad \text{ in } [T_1, T_2]\times\Omega\times\R^3_v,\\
		& f_2|_{\Si_-}=0\qquad \text{ on }[T_1, T_2]\times\Si_-, \\
		& f_2(T_1,x,v)=0\quad \text{ in }\Omega\times\R^3_v,
	\end{aligned}\right.
\end{align}
respectively. Here $N>0$, $\Psi=\mu^{\frac{1}{2}}+\psi$ and $\vp=\vp_1+\vp_2$ are given. 
The $L^2$ existences of solutions $f,f_1,f_2$ to the above three equations are given in Theorem \ref{weakfThm}. 
In fact, one can obtain the solutions $f,f_1$ to equations \eqref{linear1} and \eqref{linear1a} by using Theorem \ref{weakfThm}, respectively, and let $f_2=f-f_1$ to obtain the solution to equation \eqref{linear1b}. 
We will do the $L^\infty$ estimation of $f_1$ and $f_2$ separately in the next two subsections.
Note that:
\begin{itemize}[leftmargin=2em]
	\item the extra dissipation term $\<v\>^lf$ is only for the initial $L^\infty$ bound of $f_2$, and it doesn't contribute to the improved $L^\infty$ estimate of $f_1$ (the one used for the final $L^2$--$L^\infty$ estimate);
	\item $f_2$ has vanishing initial and inflow-boundary conditions;
	\item at the end of this Section, we will let $\eta\to0$ and obtain the $L^\infty$ estimate of the ``original" equation with any $\vpi\ge 0$. 
\end{itemize}

\subsection{\texorpdfstring{$L^\infty$}{L infty} estimate with non-vanishing data}
In this subsection, we will do the estimation about the equation \eqref{linear1a}. By dropping $(\cdot)_1$, we rewrite it as 
\begin{align}\label{linear1a1}
	\left\{
	\begin{aligned}
		& \pa_tf+ v\cdot\na_xf = \vpi Vf+\Gamma(\Psi,f)+\Gamma(\vp,\mu^{\frac{1}{2}})\\&\qquad\qquad\qquad\qquad-N\<v\>^{l-2}f-\eta\<v\>^lf\quad \text{ in } [T_1, T_2]\times\Omega\times\R^3_v, \\
		& f|_{\Si_-}=g\quad \text{ on }[T_1, T_2]\times\Si_-, \\
		& f(T_1,x,v)=f_{T_1}\quad \text{ in }\Omega\times\R^3_v,
	\end{aligned}\right.
\end{align}
with given $K,N>0$, $\vpi\ge 0$, $\Psi=\mu^{\frac{1}{2}}+\psi$, $\vp$, and
fixed $l\ge\ga+10$. Assume the \emph{a priori} control on $\psi,\vp$:
\begin{align}
	\label{psies}
	\sup_{T_1\le t\le T_2}\|\<v\>^l[\psi,\vp]\|_{L^\infty_{x}(\Omega)L^\infty_v}\le \de_0,
\end{align}
with $\de_0>0$ sufficiently small which will be chosen in Lemmas \ref{LinftyLemNonVan} and \ref{LinftyLemVanish}. 
Then the $L^2$ existence of equation \eqref{linear1a1} is given in Theorem \ref{weakfThm}.
We next give the $L^\infty$ estimate of \eqref{linear1a1}.

\begin{Lem}[$L^\infty$ Estimate for linear equation with non-vanishing data]
	\label{LinftyLemNonVan}
	Assume the same conditions as in (the local existence) Theorem \ref{weakfThm}. 
	Let $N>0$ be a large constant depending on $\ga,s$. 
	Suppose further that $\psi,\vp$ satisfies \eqref{psies} with sufficiently small $\de_0>0$.
	Suppose $f$ solves \eqref{linear1a1}. 
	Then $f$ satisfies
	\begin{multline}\label{Linfty1} 
		\|\<v\>^lf\|_{L^\infty_{t,x,v}([T_1, T_2]\times\ol\Omega\times\R^3_v)}
		\le K_1\\
		\equiv
		\max\big\{\frac{1}{2}\|\vp\|_{L^\infty_{t,x,v}([T_1, T_2]\times\Omega\times\R^3_v)},\,\|\<v\>^lg\|_{L^\infty_{t,x,v}([T_1, T_2]\times\Si_-)},\,\|\<v\>^lf_{T_1}\|_{L^\infty_{x,v}(\Omega\times\R^3_v)}\big\}.
	\end{multline}
	whenever the right-hand side is bounded. 
	Note that the upper bound $K_1$ is independent of $\vpi$.
\end{Lem}
\begin{proof}
	Let $K>0$. 
	We multiply \eqref{linear1a1} by $f^{(l)}_{K,+}$ to obtain
	\begin{align}\label{linear1aa}\notag
			& \frac{1}{2}\pa_t(f^{(l)}_{K,+})^2+ \frac{1}{2}v\cdot\na_x(f^{(l)}_{K,+})^2 = f^{(l)}_{K,+}\big(\vpi Vf+\Gamma(\Psi,f)+\Gamma(\vp,\mu^{\frac{1}{2}})\big) \\
			& \qquad\qquad\qquad\qquad-N\<v\>^{{l-2}}f^{(l)}_{K,+}f-\eta\<v\>^lf^{(l)}_{K,+}f\quad \text{ in } [T_1, T_2]\times\Omega\times\R^3_v,
	\end{align}
	where we used \eqref{deri+}. By integrating \eqref{linear1aa} over $\Omega\times\R^3_v$ and using Lemma \ref{CollLevelLem}, i.e. estimates \eqref{GOmegaL2}, \eqref{GOmegaL2a}, \eqref{GOmegaN} and \eqref{651a}, we have
	\begin{align*}
		&\frac{1}{2}\pa_t\|f^{(l)}_{K,+}\|_{L^2_x(\Omega)L^2_v}^2+\frac{1}{2}\int_{\pa\Omega\times\R^3_v}v\cdot n(f^{(l)}_{K,+})^2\,dS(x)dv=(-c_0+C\|\<v\>^4\psi\|_{L^\infty_x(\Omega)L^\infty_v})\|f^{(l)}_{K,+}\|_{L^2_x(\Omega)L^2_D}^2\\
		&\qquad+C\|\1_{|v|\le R_0}f^{(l)}_{K,+}\|_{L^2_x(\Omega)L^2_v}^2
		+C\|\mu^{\frac{1}{80}}\vp\|_{L^\infty_{x}(\Omega)L^\infty_v}\|\mu^{\frac{1}{80}}f^{(l)}_{K,+}\|_{L^1_{x}(\Omega)L^1_v}\\
		&\qquad+CK\|\<v\>^{-2}f^{(l)}_{K,+}\|_{L^1_{x}(\Omega)L^1_v}-N\|\<v\>^{\frac{l}{2}-1}f^{(l)}_{K,+}\|_{L^2_x(\Omega)L^2_v}^2-NK\|\<v\>^{{-2}}f^{(l)}_{K,+}\|_{L^1_x(\Omega)L^1_v}\\
		&\qquad-\vpi\|[\wh{C}^{}_0\<v\>^{{4}}f^{(l)}_{K,+},\<v\>^{{2}}\na_vf^{(l)}_{K,+}]\|^2_{L^2_{x}(\Omega)L^2_v}
		-\vpi K\wh{C}^2_0\|\<v\>^{{-l+8}}f^{(l)}_{K,+}\|_{L^1_x(\Omega)L^1_v}.
	\end{align*}
	Choose $\de_0>0$ small enough and choose $N$ large enough such that $N>4C$ and set 
	\begin{align}\label{Kchoice}
		K_1:=\max\big\{\frac{1}{2}\|\vp\|_{L^\infty_{t,x,v}([T_1, T_2]\times\Omega\times\R^3_v)},\,\|\<v\>^lg\|_{L^\infty_{t,x,v}([T_1, T_2]\times\Si_-)},\,\|\<v\>^lf_{T_1}\|_{L^\infty_{x,v}(\Omega\times\R^3_v)}\big\}.
	\end{align}
	Then we have
	\begin{align*}
		\frac{1}{2}\pa_t\|f^{(l)}_{K,+}\|_{L^2_x(\Omega)L^2_v}^2+\frac{1}{2}\int_{\pa\Omega}\int_{\R^3_v}v\cdot n(f^{(l)}_{K,+})^2\,dS(x)dv=-\frac{c_0}{2}\|f^{(l)}_{K,+}\|_{L^2_x(\Omega)L^2_D}^2.
	\end{align*}
	Integrating over $t\in[T_1, T_2]$, we obtain
	\begin{multline}\label{57}
		\|f^{(l)}_{K,+}\|_{L^\infty_t([T_1,T_2])L^2_x(\Omega)L^2_v}^2+\|f^{(l)}_{K,+}(t)\|_{L^2_t([T_1,T_2])L^2_{x,v}(\Si_+)}\le2\|f^{(l)}_{K,+}(T_1)\|_{L^2_x(\Omega)L^2_v}^2\\
		+2\|g^{(l)}_{K,+}(t)\|_{L^2_t([T_1,T_2])L^2_{x,v}(\Si_-)}.
	\end{multline}
	By the choice of $K_1$ in \eqref{Kchoice}, the initial and inflow-boundary terms in \eqref{57} vanish, and hence,
	\begin{align*}
		\|f^{(l)}_{K,+}\|_{L^\infty_t([T_1,T_2])L^2_x(\Omega)L^2_v}^2+\|f^{(l)}_{K,+}(t)\|_{L^2_t([T_1,T_2])L^2_{x,v}(\Si_+)}=0.
	\end{align*}
	Recalling the definition of $f^{(l)}_{K,+}$, i.e. \eqref{levelvl}, we deduce 
	\begin{align*}
		\sup_{(t,x,v)\in[T_1, T_2]\times\ol\Omega\times\R^3}\<v\>^lf\le K_1. 
	\end{align*}
	This gives the upper $L^\infty$ estimate in \eqref{Linfty1}.
	For the lower $L^\infty$ estimate, we let $h=-f$ and take the multiplication of \eqref{linear1a1} by $h^{(l)}_{K,+}$. Similar arguments and estimates can be made for the term $h^{(l)}_{K,+}$ instead of $f^{(l)}_{K,+}$ since we have the same estimate of the collision terms for $h$ as for $f$ in Lemma \ref{CollLevelLem} with $\vp$ replaced by $-\vp$ therein. Thus, similar arguments imply the lower $L^\infty$ estimate in \eqref{Linfty1}, and conclude Lemma \ref{LinftyLemNonVan}.
\end{proof}

\subsection{Initial \texorpdfstring{$L^\infty$}{L infty} estimate for vanishing data}\label{Secinitial1}
First, we should derive an initial large $L^\infty$ bound of $f_2$, which depends only on the time interval and $\|\<v\>^lf_1\|_{L^\infty_{t,x,v}([T_1,T_2]\times\Omega\times\R^3_v)}$ (which further depends on initial data). 
Then, based on this bound, we derive the improved small $L^\infty$ bound of $f_2$. Here, by dropping the $(\cdot)_2$ in the equation \eqref{linear1b}, we consider
\begin{align}\label{linear1b1}
	\left\{
	\begin{aligned}
		& \pa_tf+ v\cdot\na_xf =
		\vpi Vf+\Gamma(\Psi,f)+\Gamma(\vp,\mu^{\frac{1}{2}}) \\&\qquad\qquad\qquad\quad+N\<v\>^{l-2}f_1-\eta\<v\>^lf\quad \text{ in } [T_1, T_2]\times\Omega\times\R^3_v,\\
		& f|_{\Si_-}=0\quad\quad \text{ on }[T_1, T_2]\times\Si_-, \\
		& f(T_1,x,v)=0\quad \text{ in }\Omega\times\R^3_v. 
	\end{aligned}\right.
\end{align}
\begin{Lem}\label{initialLinftyLem}
	Let $\vpi\ge 0$, $\eta>0$, $0\le T_1<T_2<\infty$ with $T_2-T_1\le 1$, and let
	$N=N(\ga,s)>0$ be a large constant chosen in Lemma \ref{LinftyLemNonVan}.
	Suppose $\Psi=\mu^{\frac{1}{2}}+\psi\ge 0$, $\vp$ satisfy
	\begin{align}
		\label{Asspsib}
		\sup_{T_1\le t\le T_2}\|[\<v\>^{l}\psi,\<v\>^{l}\vp]\|_{L^\infty_{x}(\Omega)L^\infty_v}\le \de_0,
	\end{align}
	with sufficiently small $0<\de_0<1$. 
	Assume that (the result in Lemma \ref{LinftyLemNonVan})
	\begin{align}\label{f1K1}
		\|\<v\>^lf_1\|_{L^\infty_{t,x,v}([T_1,T_2]\times\Omega\times\R^3_v)}=K_1<\infty. 
	\end{align}
	Let $f$ be the solution to \eqref{linear1b1}
	in the sense of \eqref{weakf}. 
	Then $f$ has an upper bound
	\begin{align}\label{initialLinfty}
		\|\<v\>^{l}f\|_{L^\infty_{x,v}(\ol\Omega\times\R^3_v)}\le e^{C_\eta(t-T_1)}(NK_1+1)<\infty. 
	\end{align}
	where $C_\eta>0$ is a constant that depends on $\eta,l,\ga,s$ but independent of $T_1,T_2$.
\end{Lem}
The $L^\infty$ bound in \eqref{initialLinfty} is not only large but also local in time. So it's not an appropriate bound for our analysis but only serves as an \emph{a priori} bound. 
\begin{proof}
	The proof is a simple application of De Giorgi's arguments. Here we give a proof of the upper bound of $f$, while the lower bound of $f$ shares a similar calculation, and the $L^\infty$ bound (of $|f|$) follows.

	\smallskip 
	To capture the necessary dissipation, we use a time-dependent function
	$K(t)\ge1$ with the level functions denoted by
	\begin{align*}
		f^{(l)}_{K(t)} := f-\frac{K(t)}{\<v\>^l},\quad 
		f^{(l)}_{K,+}=f^{(l)}_{K(t)}\1_{f^{(l)}_{K(t)}\ge 0}. 
	\end{align*}
	Unlike the rest of the analysis in this work, $K(t)$ is a time-dependent function for the derivation of the initial $L^\infty$ bound. 
	The function $K(t)$ will be chosen later. 
	By taking $L^2$ inner product of \eqref{linear1b1} with $f^{(l)}_{K,+}$ over $L^2_{x}(\Omega)L^2_v$, and using \eqref{deri+}, we have 
	\begin{align*}
		&\frac{1}{2}\pa_t\|f^{(l)}_{K,+}\|^2_{L^2_{x}(\Omega)L^2_v}
		+\pa_tK\|\<v\>^{-l}f^{(l)}_{K,+}\|_{L^1_{x}(\Omega)L^1_v}
		+\frac{1}{2}\|f^{(l)}_{K,+}\|^2_{L^2_{x,v}(\Si_+)}\\
		&\quad\le\frac{1}{2}\|f^{(l)}_{K,+}\|^2_{L^2_{x,v}(\Si_-)}
		-\eta(\<v\>^{l}f,f^{(l)}_{K,+})_{L^2_{x}(\Omega)L^2_v}\\
		&\qquad+\Big(\vpi Vf+ \Gamma(\Psi,f-K\<v\>^{-l})+\Gamma(\vp,\mu^{\frac{1}{2}})+ \Gamma(\Psi,K\<v\>^{-l})+N\<v\>^{l-2}f_1,f^{(l)}_{K,+}\Big)_{L^2_{x}(\Omega)L^2_v}.
	\end{align*}
	Applying the energy estimates of collision terms from Lemma \ref{CollLevelLem}, i.e. \eqref{GOmegaL2}, \eqref{GOmegaL2a}, \eqref{O1r} and \eqref{651a} (although the estimate in Lemma \ref{CollLevelLem} does not involve $K(t)$, the estimates remain the same since the collision terms do not depend on $t$), we deduce 
	\begin{multline}\label{256}
		\frac{1}{2}\pa_t\|f^{(l)}_{K,+}\|^2_{L^2_{x}(\Omega)L^2_v}
		+\pa_tK\|\<v\>^{-l}f^{(l)}_{K,+}\|_{L^1_{x}(\Omega)L^1_v}
		+\eta K\|f^{(l)}_{K,+}\|_{L^1_{x}(\Omega)L^1_v}
		+\frac{1}{2}\|f^{(l)}_{K,+}\|^2_{L^2_{x,v}(\Si_+)}\\
		\le \frac{1}{2}\|f^{(l)}_{K,+}\|^2_{L^2_{x,v}(\Si_-)}
		+C\|f^{(l)}_{K,+}\|_{L^2_x(\Omega)L^2_v}^2
		+(NK_1+1+K)C\|\<v\>^{-2}f^{(l)}_{K,+}\|_{L^1_{x}(\Omega)L^1_v}, 
	\end{multline}
	with sufficiently small $\de_0\in(0,1)$, where we used \eqref{f1K1} to control $f_1$ and simply dropped the good terms. 
	For the $L^1$ norm, we choose $K(t)\ge NK_1+1$ to deduce 
	\begin{align*}
		(NK_1+1+K)C\|\<v\>^{-2}f^{(l)}_{K,+}\|_{L^1_{x}(\Omega)L^1_v}
		\le KC\|\<v\>^{-2}f^{(l)}_{K,+}\|_{L^1_{x}(\Omega)L^1_v}. 
	\end{align*}
	with $C=C(\ga,s,l)>0$. 
	Moreover, using interpolation, we have 
	\begin{align*}
		\notag
		KC\|\<v\>^{-2}f^{(l)}_{K,+}\|_{L^1_{x}(\Omega)L^1_v}
		\le
		C_\eta K\|\<v\>^{-l}f^{(l)}_{K,+}\|_{L^1_{x}(\Omega)L^1_v}+\eta K\|f^{(l)}_{K,+}\|_{L^1_{x}(\Omega)L^1_v}, 
	\end{align*}
	for some constant $C_\eta=C(\eta,\ga,s,l)>0$. 
	Then, noticing the inflow boundary data vanishes, the right-hand side of \eqref{256} is 
	\begin{align}\label{276}
		\le
		C\|f^{(l)}_{K,+}\|_{L^2_x(\Omega)L^2_v}^2
		+C_\eta K\|\<v\>^{-l}f^{(l)}_{K,+}\|_{L^1_{x}(\Omega)L^1_v}
		+\eta K\|f^{(l)}_{K,+}\|_{L^1_{x}(\Omega)L^1_v}. 
	\end{align}
	Now, to control all these right-hand terms, we will suitably choose $K(t)$. 
	Also, we will fix the constant $C_\eta,C>0$ here until the end of this proof. 
	To eliminate the $L^1$ norms, we choose $K(t)\ge NK_1+1$ such that 
	\begin{align*}
		\pa_tK\ge C_\eta K,
	\end{align*}
	for which we simply let 
	\begin{align*}
		K(t)=e^{C_\eta(t-T_1)}(NK_1+1).
	\end{align*}
	Substituting such $K(t)$ into \eqref{256} and \eqref{276}, we have
	\begin{align*}
		\frac{1}{2}\pa_t\|f^{(l)}_{K,+}\|^2_{L^2_{x}(\Omega)L^2_v}
		+\frac{1}{2}\|f^{(l)}_{K,+}\|^2_{L^2_{x,v}(\Si_+)}
		\le \frac{1}{2}\|f^{(l)}_{K,+}\|^2_{L^2_{x,v}(\Si_-)}. 
	\end{align*} 
	Then by the Gr\"{o}nwall's inequality and noticing the initial data vanishes, we have 
	\begin{align*}
		\|f^{(l)}_{K,+}\|^2_{L^\infty_t([T_1,T_2])L^2_{x}(\Omega)L^2_v}+\|f^{(l)}_{K,+}\|^2_{L^2_t([T_1,T_2])L^2_{x,v}(\Si_+)}=0. 
	\end{align*}
		which implies $f\le K(t)\<v\>^{-l}$ in $[T_1,T_2]\times\ol\Omega\times\R^3_v$. 
	The lower bound can be deduced similarly and we conclude Lemma \ref{initialLinftyLem}. 
\end{proof}

\subsection{Energy inequality for level functions}
In this subsection, we will prepare some prior results for the $L^\infty$ estimation of the equation \eqref{linear1b1}. 
By Theorem \ref{ThmExtend}, we consider the solution $f$ that solves the extended equation to \eqref{linear1b1}:
\begin{align}\label{linear1b2}
	\left\{
	\begin{aligned}
		& \pa_tf+ v\cdot\na_xf = \left\{\begin{aligned}
			& \vpi Vf+\Gamma(\Psi,f)
			+\Gamma(\vp,\mu^{\frac{1}{2}})&\\&\quad\quad+N\<v\>^{{l-2}}f_1-\eta\<v\>^lf & \text{ in } [T_1,T_2]\times\Omega\times\R^3_v,\\
			& -E\cdot\na_vf+P^2f& \text{ in } [T_1,T_2]\times D_{in},\\
			& -E\cdot\na_vf-P^2f& \text{ in } [T_1,T_2]\times D_{out},\\
		\end{aligned}\right. \\
		& f|_{\Si_-}=0\quad \text{ on }[T_1,T_2]\times\Si_-, \\
		& f(T_1,x,v)=0\quad \text{ in }\Omega\times\R^3_v,\\
		&f(T_1,x,v)=0\qquad \text{ in }D_{out},\\
		&f(T_2,x,v)=0\qquad \text{ in }D_{in},
	\end{aligned}\right.
\end{align}
in the sense of \eqref{weakfwhole}, where $\vpi\ge 0$, and $N>0$ is a large constant chosen in Lemma \ref{LinftyLemNonVan}. 

To apply the iteration, we give the energy inequality including regular spatial and velocity variables on level functions.
\begin{Lem}[Energy inequality for level functions]
	\label{energyinterLem}
	Assume the same conditions as in Lemmas \ref{initialLinftyLem} and \ref{interLem}. 
	Let $s'\in(0,1)$ be a small constant depending on $p$ (chosen in \eqref{spri} below), and let 
	$C_0>0$ (given in \eqref{Ep}) be sufficiently large depending on $l,\ga,s,p$. 
	Assume $f$ solves \eqref{linear1b2} and satisfies 
	\begin{align}\notag
		&\|\<v\>^{l_0+l-2}f\|^2_{L^2_{t,x,v}([T_1,T_2]\times\Omega\times\R^3_v)}\le C_1<\infty,\\
		&\|\<v\>^{-2}f\|^2_{L^2_{t,x,v}([T_1,T_2]\times\Omega\times\R^3_v)}\le \de_0,\quad
		\|\<v\>^lf\|_{L^\infty_{t,x,v}([T_1,T_2]\times\Omega\times\R^3_v)}=C_\infty<\infty.\label{inftyboundf2}
	\end{align}
	with a large constant $l_0=l_0(l,s,s',p)>0$. 
	Suppose that 
	\begin{align*}
		\|\<v\>^lf_1\|_{L^\infty_t([T_1,T_2])L^\infty_{x}(\Omega)L^\infty_v}\le K_1<\infty,
	\end{align*}
	for some $K_1>0$.
	Then for any $0\le M< K$, we have
	\begin{align}\label{724}
		&\notag\|f^{(l)}_{K,+}\|_{L^\infty_t([T_1,T_2])L^2_{x,v}(\R^6)}^2+\|f^{(l)}_{K,+}\|^2_{L^2_tL^2_{x}(\Omega)L^2_D}
		+\vpi\|[\wh{C}^{}_0\<v\>^{{4}}f^{(l)}_{K,+},\<v\>^{{2}}\na_vf^{(l)}_{K,+}]\|^2_{L^2_tL^2_{x}(\Omega)L^2_v}\\
		&\notag\quad+\frac{1}{C_0\max\{C_\infty^{2p-2},1\}}
		\Big\|\int_{\R^3}\1_{[T_1,T_2]}\<v\>^{-10}(f^{(l)}_{K,+})^2\,dv\Big\|_{B^{s',2}_p(\R^{4}_{t,x})}^p\\
		&\quad\le C(1+C_1)^{C}(1+K_1)^p\sum_{i=1}^4\frac{\ga_i\E_p(M)^{\beta_i}}{(K-M)^{\al_i}}.
	\end{align}
	For any $K\ge 0$, the same left-hand side of \eqref{724} is also
	\begin{align}\notag\label{724x}
		&\le C\|f^{(l)}_{K,+}\|^2_{L^2_tL^2_x(\Omega)L^2_v}+C(1+K+K_1)\|\<v\>^{-2}f^{(l)}_{K,+}\|_{L^1_tL^1_x(\Omega)L^1_v}\\
	&\quad+\frac{1}{\max\{C_\infty^{2p-2},1\}}\Big(\|f^{(l)}_{K,+}\|_{L^2_tL^2_x(\Omega)L^2_v}^{2p}
	+(1+K+K_1)^p\|\<v\>^{-2}f^{(l)}_{K,+}\|^p_{L^1_tL^1_{x}(\Omega)L^1_v}\Big).
	\end{align}
	Here $C=C(s,s',p,\ga,l)>0$ is some large constant. The parameters $\beta_i>1$ and $\ga_i,\al_i>0$, depending on $s,s',p$, are given by \eqref{albe}. 
	Furthermore, the estimate \eqref{724} holds for $h:=-f$, with $f^{(l)}_{K,+}$ replaced by $(-f)^{(l)}_{K,+}$. The functional $\E_p$ is given by \eqref{Ep}. 
\end{Lem}
\begin{proof}

Note that if not specified, the underlying time norm in this proof is within $[T_1,T_2]$.
	
\smallskip\noindent{\bf Step 1. Regular velocity estimate.}
We estimate the first to third left-hand terms of \eqref{724}. 

\smallskip 
For the part in $\Omega$, similar to Lemma \ref{LinftyLemNonVan}, by taking $L^2$ inner product of \eqref{linear1b2} with $f^{(l)}_{K,+}$ 
over $[T_1,T_2]\times\Omega\times\R^3_v$, applying Lemma \ref{CollLevelLem} and noticing the initial-inflow boundary data vanish, we have 
\begin{align}\label{esffff2}\notag
	&\|f^{(l)}_{K,+}\|_{L^\infty_t([T_1,T_2])L^2_x(\Omega)L^2_v}^2
	+\frac{c_0}{2}\|f^{(l)}_{K,+}\|_{L^2_tL^2_x(\Omega)L^2_D}^2
	+\|f^{(l)}_{K,+}\|_{L^2_tL^2_{x,v}(\Si_+)}^2\\
	&\quad\notag+2\vpi\|[\wh{C}^{}_0\<v\>^{{4}}f^{(l)}_{K,+},\<v\>^{{2}}\na_vf^{(l)}_{K,+}]\|^2_{L^2_tL^2_{x}(\Omega)L^2_v}
	+\vpi K\wh{C}^2_0\|\<v\>^{{-l+8}}f^{(l)}_{K,+}\|_{L^1_tL^1_x(\Omega)L^1_v}\\
	&\quad\le
	C\|\1_{|v|\le R_0}f^{(l)}_{K,+}\|_{L^2_tL^2_x(\Omega)L^2_v}^2
	+(\de_0+K+NK_1)C\|\<v\>^{-2}f^{(l)}_{K,+}\|_{L^1_tL^1_{x}(\Omega)L^1_v},
\end{align}
where 
we choose $\de_0\in(0,1)$ in \eqref{Asspsib} sufficiently small. 

\smallskip For the part in $\ol\Omega^c$, as in the proof of Lemma \eqref{extendreverThm}, we denote 
\begin{align}\label{hftt}
	h(t)=\left\{\begin{aligned}
		&f(T_1+T_2-t)\quad &\text{ in }D_{in},\\
		&f(t)\quad&\text{ in } D_{out}. 
	\end{aligned}\right.
\end{align} 
Then it follows from \eqref{linear1b2} that 
\begin{align}\label{hlin}
	\left\{
	\begin{aligned}
		&\pa_th+v\cdot\na_x\big(h\1_{D_{out}}-h\1_{D_{in}}\big)&\\
		&\qquad\qquad+E\cdot\na_v\big(h\1_{D_{out}}-h\1_{D_{in}}\big)+P^2h=0
			\quad\text{ in } [T_1,T_2]\times\ol\Omega^c\times\R^3_v,\\
		&h|_{\Si_-}=0\qquad\qquad \text{ on }[T_1, T_2]\times\Si_-,\\
		&h|_{\Si_+}=f\qquad\qquad \text{ on }[T_1, T_2]\times\Si_+,\\
		&h(T_1,x,v)=0\qquad \text{ in }\ol\Omega^c\times\R^3_v.
	\end{aligned}\right.
\end{align}
Taking the inner product of \eqref{hlin} with $h^{(l)}_{K,+}$ over $[T_1,T_2]\times\ol\Omega^c\times\R^3_v$, using the last two estimates in Lemma \eqref{CollLevelLem}, and applying the vanishing boundary property in \eqref{traceh}, we have 
\begin{multline}\label{esffff1}
	\|h^{(l)}_{K,+}\|_{L^\infty_t([T_1,T_2])L^2_{x,v}(\ol\Omega^c\times\R^3_v)}^2
	+\frac{1}{2}\|Ph^{(l)}_{K,+}\|^2_{L^2_{t,x,v}([T_1,T_2]\times\ol\Omega^c\times\R^3_v)}\\
	+\frac{K}{2}\|\<v\>^{-l}P^2h^{(l)}_{K,+}\|_{L^1_{t,x,v}([T_1,T_2]\times\ol\Omega^c\times\R^3_v)}
	\le C_l\|f^{(l)}_{K,+}\|_{L^2_t([T_1,T_2])L^2_{x,v}(\Si_+)}^2. 
\end{multline}
It follows from \eqref{esffff1} that if we choose $K=C_\infty$ given by \eqref{inftyboundf2} in this step, then $$\|h^{(l)}_{K,+}\|_{L^\infty_t([T_1,T_2])L^2_{x,v}(\ol\Omega^c\times\R^3_v)}^2=0,$$ and hence, $f\le C_\infty\<v\>^{-l}$. The lower bound can be deduced similarly and thus 
\begin{align*}
	\|\<v\>^{l}f\|_{L^\infty_{t,x,v}([T_1,T_2]\times\R^3_x\times\R^3_v)}\le C_\infty,
\end{align*} 
while noting that we only assume the initial $L^\infty$ bound within $\Omega$. 
Taking combination $\eqref{esffff2} +\ka\times\eqref{esffff1}$ with sufficiently $\ka>0$ and changing $h$ back to $f$, we have 
\begin{align}\label{260}\notag
	&\|f^{(l)}_{K,+}\|_{L^\infty_t([T_1,T_2])L^2_{x,v}(\R^3_x\times\R^3_v)}^2
	+\frac{c_0}{2}\|f^{(l)}_{K,+}\|_{L^2_tL^2_x(\Omega)L^2_D}^2
	+\|f^{(l)}_{K,+}\|_{L^2_t([T_1,T_2])L^2_{x,v}(\Si_+)}^2\\
	&\notag\quad+2\vpi\|[\wh{C}^{}_0\<v\>^{{4}}f^{(l)}_{K,+},\<v\>^{{2}}\na_vf^{(l)}_{K,+}]\|^2_{L^2_tL^2_{x}(\Omega)L^2_v}
	+\vpi K\wh{C}^2_0\|\<v\>^{{-l+8}}f^{(l)}_{K,+}\|_{L^1_tL^1_x(\Omega)L^1_v}\\
	&\notag\quad+\frac{1}{2}\|Pf^{(l)}_{K,+}\|^2_{L^2_tL^2_{x}(\ol\Omega^c)L^2_v}
	+\frac{K}{2}\|\<v\>^{-l}P^2f^{(l)}_{K,+}\|_{L^1_tL^1_{x}(\ol\Omega^c)L^1_v}\\
	&\quad\le
	C\|\1_{|v|\le R_0}f^{(l)}_{K,+}\|_{L^2_tL^2_x(\Omega)L^2_v}^2
	+(\de_0+K+K_1)C\|\<v\>^{-2}f^{(l)}_{K,+}\|_{L^1_tL^1_{x}(\Omega)L^1_v},
\end{align}
where $C>0$ depends on $l$. Note also that $N=N(\ga,s)>0$ is already chosen in Lemma \ref{LinftyLemNonVan}. This is the main energy estimate for velocity regularity.

\smallskip\noindent{\bf Step 2. Regular time-space estimate.} 
The Besov regularity of the level functions is already given in Lemma \ref{LemBesovreguLevel}, i.e. estimate \eqref{736}. Moreover, note from \eqref{esD} that if $T_2-T_1\le1$, 
\begin{align*}
	&\|\<v\>^{\frac{(\ga+2s)_+}{2}}f^{(l)}_{K,+}\|^2_{L^2_tL^2_x(\Omega)L^2_v}\le \|f^{(l)}_{K,+}\|^2_{L^2_tL^2_x(\Omega)L^2_D}\text{ when $\ga+2s\ge 0$},\\
	&\|\<v\>^{\frac{(\ga+2s)_+}{2}}f^{(l)}_{K,+}\|^2_{L^2_tL^2_x(\Omega)L^2_v}\le \|f^{(l)}_{K,+}\|^2_{L^\infty_tL^2_x(\Omega)L^2_v}\text{ when $\ga+2s< 0$}.
\end{align*}
Then we can use $L^2$ energy estimate \eqref{260} to control the right-hand $p$-power terms of \eqref{736}. That is, 
\begin{align}\label{736x}
	\notag\Big\|\int_{\R^3}\1_{[T_1,T_2]}\<v\>^{-10}(f^{(l)}_{K,+})^2\,dv\Big\|_{B^{s',2}_p(\R^{4}_{t,x})}^p
	&\notag\le C\max\{C_\infty^{2p-2},1\}\|\<v\>^{-2p}f^{(l)}_{K,+}\|_{L^2_tL^2_x(\R^3_x)L^2_v}^{2}\\&\quad\notag
	+C\|\1_{|v|\le R_0}f^{(l)}_{K,+}\|_{L^2_tL^2_x(\Omega)L^2_v}^{2p}\\&\quad
	+C(1+K+K_1)^p\|\<v\>^{-2}f^{(l)}_{K,+}\|^p_{L^1_tL^1_{x}(\Omega)L^1_v}. 
\end{align}
Note again that $N=N(\ga,s)>0$ is chosen in Lemma \ref{LinftyLemNonVan}. The $L^2_x(\R^3_x)$ norm will be absorbed by \eqref{260}. 

\smallskip\noindent{\bf Step 3. Putting estimates together and controlling $L^1,L^2$ norms.}
Choose a large constant $C_0>0$ depending only on the constant $C>0$ in \eqref{736x}, which depends on $l,\ga,s,p$. 
Then the linear combination $\eqref{260}+\max\{C_\infty^{2p-2},1\}^{-1}C_0^{-1}\times\eqref{736x}$ gives 
\begin{align}\label{271}
	&\notag\|f^{(l)}_{K,+}\|_{L^\infty_t L^2_{x,v}(\R^6_{x,v})}^2
	+\frac{c_0}{2}\|f^{(l)}_{K,+}\|_{L^2_tL^2_x(\Omega)L^2_D}^2
	+\|f^{(l)}_{K,+}\|_{L^2_tL^2_{x,v}(\Si_+)}^2\\
	&\notag\quad+2\vpi\|[\wh{C}^{}_0\<v\>^{{4}}f^{(l)}_{K,+},\<v\>^{{2}}\na_vf^{(l)}_{K,+}]\|^2_{L^2_tL^2_{x}(\Omega)L^2_v}
	+\vpi K\wh{C}^2_0\|\<v\>^{{-l+8}}f^{(l)}_{K,+}\|_{L^1_tL^1_x(\Omega)L^1_v}\\
	&\notag\quad+\frac{1}{2}\|Pf^{(l)}_{K,+}\|^2_{L^2_tL^2_{x}(\ol\Omega^c)L^2_v}
	+\frac{K}{2}\|\<v\>^{-l}P^2f^{(l)}_{K,+}\|_{L^1_tL^1_{x}(\ol\Omega^c)L^1_v}\\
	&\notag\quad+\frac{1}{C_0\max\{C_\infty^{2p-2},1\}}\Big\|\int_{\R^3}\1_{[T_1,T_2]}\<v\>^{-10}(f^{(l)}_{K,+})^2\,dv\Big\|_{B^{s',2}_p(\R^{4}_{t,x})}^p\\
	&\notag\le 
	C\|f^{(l)}_{K,+}\|^2_{L^2_tL^2_x(\Omega)L^2_v}+C(1+K+K_1)\|\<v\>^{-2}f^{(l)}_{K,+}\|_{L^1_tL^1_x(\Omega)L^1_v}\\
	&\quad+\frac{1}{\max\{C_\infty^{2p-2},1\}}\Big(\|f^{(l)}_{K,+}\|_{L^2_tL^2_x(\Omega)L^2_v}^{2p}
	+(1+K+K_1)^p\|\<v\>^{-2}f^{(l)}_{K,+}\|^p_{L^1_tL^1_{x}(\Omega)L^1_v}\Big),
\end{align}
for any $K\ge0$, which implies \eqref{724x}. Now, for the $L^1$ and $L^2$ norms within $\Omega$, we let $0\le M<K$ and apply Lemma \ref{interLem} with $m=0$ to deduce 
\begin{align}\label{720a}
	\|f^{(l)}_{K,+}\|^2_{L^2_{t,x,v}([T_1,T_2]\times\Omega\times\R^3_v)}
	&\le\frac{C\big(\max\{C_\infty^{2p-2},1\}\big)^{\frac{(1-\si)\beta_*\xi_*}{2p}}C_1^{\frac{(1-\beta_*)\xi_*}{4}}(\E_p(M))^{r_*}}{(K-M)^{\xi_*-2}},
	%
\end{align}
and
\begin{align}\label{720b}
	\|\<v\>^{-2}f^{(l)}_{K,+}\|_{L^1_{t,x,v}([T_1,T_2]\times\Omega\times\R^3_v)}
	&\le \frac{C\big(\max\{C_\infty^{2p-2},1\}\big)^{\frac{(1-\si)\beta_*\xi_*}{2p}}C_1^{\frac{(1-\beta_*)\xi_*}{4}}(\E_p(M))^{r_*}}{(K-M)^{\xi_*-1}},
\end{align}
where $l_0>0$ is a sufficiently large constant depending on $l,s,s',p$ (given in Lemma \ref{interLem}) and we put the constant $C_0$ (which is determined in \eqref{271} and used in \eqref{Ep} and \eqref{77}) inside constant $C$. 
Then $C=C(s,s',p,\ga,l)>0$ here is independent of $C_1,C_\infty$. 
Moreover, 
\begin{align*}
	1\le \frac{K}{K-M}.
\end{align*}
	Thus, substituting \eqref{720a} and \eqref{720b} into \eqref{271}, 
	and choosing $\de_0\in(0,1)$ sufficiently small, we have 
	\begin{align*}
		&\|f^{(l)}_{K,+}\|_{L^\infty_tL^2_{x,v}(\R^6_{x,v})}^2
		+\frac{c_0}{2}\|f^{(l)}_{K,+}\|_{L^2_tL^2_x(\Omega)L^2_D}^2
		+\|f^{(l)}_{K,+}\|_{L^2_tL^2_{x,v}(\Si_+)}^2\\
		&\quad +\vpi\|[\wh{C}^{}_0\<v\>^{{4}}f^{(l)}_{K,+},\<v\>^{{2}}\na_vf^{(l)}_{K,+}]\|^2_{L^2_tL^2_{x}(\Omega)L^2_v}
		\\
		&\quad+\frac{1}{C_0\max\{C_\infty^{2p-2},1\}}\Big\|\int_{\R^3}\1_{[T_1,T_2]}\<v\>^{-10}(f^{(l)}_{K,+})^2\,dv\Big\|_{B^{s',2}_p(\R^{4}_{t,x})}^p\\
		&\quad\le C(1+C_1)^{C}(1+K_1)^p\sum_{i=1}^4\frac{\ga_i\E_p(M)^{\beta_i}}{(K-M)^{\al_i}},
	\end{align*}
	where we used \eqref{sibexi1}, i.e. $\frac{(1-\si)\beta_*\xi_*}{2p}<1$. Here, the parameters are given by 
	\begin{align}\label{albe}
		\begin{aligned}
			&\ga_1=\max\{C_\infty^{2p-2},1\}^{\frac{(1-\si)\beta_*\xi_*}{2p}},\quad \ga_2=\frac{\ga_1K}{K-M},\quad\ga_3=1,\quad\ga_4=\big(\frac{K}{K-M}\big)^p,\\
			& \beta_1=\beta_2=r_*,\quad\beta_3=\beta_4=pr_*,\\
			&\al_1=\al_2=\xi_*-2,
			\quad\al_3=\al_4=p(\xi_*-2).
		\end{aligned}
	\end{align}
	Here, $\beta_i>1$ and $\al_i>0$ ($1\le i\le 4$), which can be seen from Lemma \ref{interLem} (i.e. \eqref{rxistar}).
	This implies \eqref{724}. Moreover, $C_0$ used in \eqref{271} depends on $s,p,\ga,l$ and $C>0$ used in \eqref{271}--\eqref{720b} depends on $s,s',p,\ga,l$.

	\smallskip
	Since $(-f)^{(l)}_{K,+}$ satisfies the same bound as $f^{(l)}_{K,+}$ in Lemma \ref{Qes1Lem}
	and Lemma \ref{CollLevelLem}, we can obtain the same estimate \eqref{724} for $(-f)^{(l)}_{K,+}$.
	This completes the proof of Lemma \ref{energyinterLem}.
\end{proof}

\subsection{Improved \texorpdfstring{$L^\infty$}{L infty} estimate for vanishing data (De Giorgi iteration)}
Now we are ready for the proof of improved $L^\infty$ estimate for linear equation with vanishing initial-inflow data, which is given by the Lemma \ref{LinftyLemVanish} below.
 But before that, we need to give a control on the energy $\E_0:=\E_p(0)$ first, which uses $L^2$ energy estimates as follows. 
\begin{Lem}
	\label{E0Lem}
	Let $\vpi\ge 0$, $0\le T_1<T_2<\infty$ with $T_2-T_1\le 1$, 
	and fix $l\ge\ga+10$, $-\frac{3}{2}<\ga\le 2$ and $0<s<1$. Let $p^\#$ be given in \eqref{ppsharp} and suppose $1<p<p^\#$. 
	Let $s'\in(0,1)$ be a sufficiently small constant depending on $p$, 
	and $l_0=l_0(l,s,s',p)>0$ be a sufficiently large constant (which can be chosen in Lemma \ref{interLem} with $m=l-2$).
	Let $\psi$, $\vp$ and $f_1$ be given and satisfy
	\begin{align*}\begin{aligned}
			\|[\<v\>^l\psi,\<v\>^l\vp]\|_{L^\infty_t([T_1,T_2])L^\infty_{x}(\Omega)L^\infty_v}&\le \de_0,\\
			\|\<v\>^lf_1\|_{L^\infty_t([T_1,T_2])L^\infty_{x}(\Omega)L^\infty_v}\le K_1&<\infty,
		\end{aligned}
	\end{align*}
	with a sufficiently small $\de_0\in(0,1)$,
	and some $K_1>0$.
	Assume that $f$ solves \eqref{linear1b2} in the sense of \eqref{weakfwhole} and satisfies 
	\begin{align*}\begin{aligned}
			&\|\<v\>^{l_0+l-2}f\|^2_{L^2_{t,x,v}([T_1,T_2]\times\Omega\times\R^3_v)}\le C_1<\infty,\\
			&\|\<v\>^{-2}f\|^2_{L^2_{t,x,v}([T_1,T_2]\times\Omega\times\R^3_v)}\le \de_0,\quad
			\|\<v\>^lf\|_{L^\infty_{t,x,v}([T_1,T_2]\times\Omega\times\R^3_v)}\le C_\infty<\infty.
		\end{aligned}
	\end{align*}
	Let $\E_0=\E_p(0)$ be given in \eqref{Ep}. Then 
	\begin{align}\label{773}
		\E_0 \le C(1+K_1)^p(\DD^{\frac{1}{2}}+\DD^{p}),
	\end{align}
	where $C=C(T_2-T_1,|\Omega|,l,\ga,s,s',p)>0$, and $\DD$ denotes
	\begin{align}\label{DD}
		\DD:=\int^{T_2}_{T_1}\|[\mu^{\frac{1}{10^4}}\vp,\<v\>^{l-2}(f+f_1)]\|^2_{L^2_x(\Omega)L^2_v}\,dt. 
	\end{align}
\end{Lem}

\begin{Lem}[$L^\infty$ estimate for linear equation with vanishing data]
	\label{LinftyLemVanish}
	Suppose the same conditions as in Lemma \ref{LinftyLemVanish}. 
	Let $\E_0=\E_p(0)$ be given in \eqref{Ep}. 
		Then $f$ satisfies
	\begin{align*}
		\|\<v\>^lf\|_{L^\infty_t([T_1,T_2])L^\infty_{x,v}(\R^3_x\times\R^3_v)}
		\le 
		C(1+C_1+K_1)^{C}\max_{1\le i\le 4}(\lam_i)^{\frac{1}{\al_i}}(\DD^{\frac{1}{2}}+\DD^{p})^{\frac{\beta_i-1}{\al_i}},
	\end{align*}
	where parameters $\al_i,\beta_i,\lam_i$ are given in \eqref{albex}, depending only on $s,s',p$.
	Also, the constant $C=C(l,\ga,s,s',p)>0$ is independent of $T_1,T_2$. 
\end{Lem}
Then we give the proof of the above two Lemmas. 

\begin{proof}[Proof of Lemma \ref{E0Lem}]
The proof is the application of $L^2$ energy estimates.
First, it follows from the energy inequalities in Lemma \ref{energyinterLem} (estimate \eqref{724x}) that (with $K=0$)
\begin{align}\label{724a}\notag
	\E_p(0)&\le C\|f_{+}\|^2_{L^2_tL^2_x(\Omega)L^2_v}+C(1+K_1)\|\<v\>^{-2}f_{+}\|_{L^1_tL^1_x(\Omega)L^1_v}\\
	&\quad+\frac{1}{\max\{C_\infty^{2p-2},1\}}\Big(\|f_{+}\|_{L^2_tL^2_x(\Omega)L^2_v}^{2p}
	+(1+K_1)^p\|\<v\>^{-2}f_{+}\|^p_{L^1_tL^1_{x}(\Omega)L^1_v}\Big).
\end{align}
where $f_+=\max\{f,0\}$ and the underlying time interval is $[T_1,T_2]$. Note that these norms are taken within $\Omega$. 
For the $L^1$ and $L^2$ norms, we have 
\begin{align*}
	\|\<v\>^{-2}f^{}_+\|_{L^1_tL^1_x(\Omega)L^1_v}+\|f^{}_+\|_{L^2_tL^2_x(\Omega)L^2_v}
	\le C_{|\Omega|}\|f\|_{L^2_t([T_1,T_2])L^2_x(\Omega)L^2_v}, 
\end{align*}
with $T_2-T_1\le 1$ and some constant $C_{|\Omega|}>0$ depending on the measure of (bounded domain) $\Omega$. 
Moreover, by \eqref{weakfes}, we can obtain the $L^2$ energy estimate for $f$ (not level function $f_+$). That is, applying Theorem \eqref{ThmExtend} (with $\phi=N\<v\>^{l-2}(f+f_1)$ therein) to equation \eqref{linear1b2}, $f$ satisfies 
\begin{multline}\label{fes1}
\|\<v\>^kf\|^2_{L^\infty_t([T_1, T_2])L^2_{x}(\Omega)L^2_{v}}
+c_0\|\<v\>^kf\|_{L^2_t([T_1, T_2])L^2_{x}(\Omega)L^2_D}^2
+N\|\<v\>^{k+l-2}f\|_{L^2_t([T_1, T_2])L^2_{x}(\Omega)L^2_v}^2\\
\le \|[\mu^{\frac{1}{10^4}}\vp,N\<v\>^{k+l-2}(f+f_1)]\|_{L^2_t([T_1, T_2])L^2_{x}(\Omega)L^2_v}^2,
\end{multline}
for any $k\ge 0$, 
where we chose $N>0$ sufficiently large to absorb the $L^2$ energy of $f$. 
Therefore, substituting the above estimates into \eqref{724a} and recalling the functional $\DD$ given in \eqref{DD}, we obtain 
\begin{align*}
	\E_p(0)=C(1+K_1)^p(\DD^{\frac{1}{2}}+\DD^{p}), 
\end{align*}
with some $C=C(l,\ga,s,s',p,|\Omega|)>0$. 
This completes the proof of Lemma \ref{E0Lem}.
\end{proof}

To prove Lemma \ref{LinftyLemVanish}, we will apply the De Giorgi iteration scheme. Note that the assumptions allow us to use the preparation in Lemmas \ref{initialLinftyLem}, \ref{interLem}, and \ref{energyinterLem}.
\begin{proof}[Proof of Lemma \ref{LinftyLemVanish}]
	Fix $K_0>0$, which will be determined later in \eqref{K0111}. Denote the increasing levels $M_k$ by
	\begin{align*}
		M_k:=K_0\big(1-\frac{1}{2^k}\big), \quad k=0,1,2,\cdots.
	\end{align*}
	Notice that $M_0=0$, $\lim_{k\to\infty}M_k=K_0$, $M_k-M_{k-1}=K_02^{-k}>0$ and $\frac{M_k}{M_k-M_{k-1}}=2^k-1\le 2^k$. 
	Applying Lemma \ref{energyinterLem} with $(M,K)=(M_{k-1},M_k)$ and evaluating constants $\ga_i$ given in \eqref{albe}, 
	\begin{align*}
		&\notag\|f^{(l)}_{M_k,+}\|_{L^\infty_tL^2_{x,v}(\R^6)}^2+\|f^{(l)}_{M_k,+}\|^2_{L^2_tL^2_{x}(\Omega)L^2_D}
		+\vpi\|[\wh{C}^{}_0\<v\>^{{4}}f^{(l)}_{M_k,+},\<v\>^{{2}}\na_vf^{(l)}_{M_k,+}]\|^2_{L^2_tL^2_{x}(\Omega)L^2_v}\\
		&\notag\quad+\frac{1}{C_0\max\{C_\infty^{2p-2},1\}}
		\Big\|\int_{\R^3}\1_{[T_1,T_2]}\<v\>^{-10}(f^{(l)}_{M_k,+})^2\,dv\Big\|_{B^{s',2}_p(\R^{4}_{t,x})}^p\\
		&\quad\le C(1+C_1)^{C}(1+K_1)^p\sum_{i=1}^4\frac{\lam_i2^{k(\al_i+p)}\E_p(M)^{\beta_i}}{(K_0)^{\al_i}}, 
	\end{align*}
	where $C=C(s,s',p,\ga,l)>0$ and the parameters $\lam_i,\al_i>0$ and $\beta_i>1$ are given by 
	\begin{align}\label{albex}
		\begin{aligned}
			&\lam_1=\lam_2=\max\{C_\infty^{2p-2},1\}^{\frac{(1-\si)\beta_*\xi_*}{2p}},\quad\lam_3=\lam_4=1,\\
			& \beta_1=\beta_2=r_*,\quad\beta_3=\beta_4=pr_*,\\
			&\al_1=\al_2=\xi_*-2,
			\quad\al_3=\al_4=p(\xi_*-2).
		\end{aligned}
	\end{align}
	Thus, using functional $\E_p(M_k)$ from \eqref{Ep}, one has
	\begin{align}\label{524a}
		\E_p(M_k)\le C(1+C_1)^{C}(1+K_1)^p\sum_{i=1}^4\frac{\lam_i2^{k(\al_i+p)}\E_p(M_{k-1})^{\beta_i}}{(K_0)^{\al_i}},
	\end{align}
	for any $k\ge 1$. Then we can perform the De Giorgi iteration on the sequence $\{\E_p(M_k)\}$. 
	Noticing $\beta_i>1$, we write 
	\begin{equation}\label{Ekstar}
		Q_0=\max_{1\le i\le 4}2^{\frac{\al_i+p}{\beta_i-1}}>1,\quad \E^*_k=\frac{\E_0}{(Q_0)^{k}},\quad \text{ for }k=0,1,2,\dots,
	\end{equation}
	as an artificial sequence, and denote the upper bound by 
	\begin{equation}\label{K0111}
		K_0:= \max_{1\le i\le 4}\Big((4\lam_iC_2)^{\frac{1}{\al_i}}(\E_0)^{\frac{\beta_i-1}{\al_i}}(Q_0)^{\frac{\beta_i}{\al_i}}\Big),
	\end{equation}
	where $\E_0=\E_p(0)$ is given by \eqref{Ep}, 
	and $$C_2={C(1+C_1)^{C}(1+K_1)^p}>0$$ is the constant in \eqref{524a}.
	By \eqref{Ekstar} and \eqref{K0111}, we have $\E^*_0=\E_0$ and
	\begin{align}\label{Ekstar1}
		\E^*_k &\notag= \frac{\E_0}{(Q_0)^{k}}
		=\frac{1}{4}\sum_{i=1}^4\frac{(\E^*_{k-1})^{\beta_i}(K_0)^{\al_i}\E_0}{(\E^*_{k-1})^{\beta_i}(K_0)^{\al_i}(Q_0)^{k}}\\
		&\notag=\frac{1}{4}\sum_{i=1}^4\frac{(\E^*_{k-1})^{\beta_i}\max_{1\le j\le 4}\Big((4\lam_jC_2)^{\frac{1}{\al_j}}(\E_0)^{\frac{\beta_j-1}{\al_j}}(Q_0)^{\frac{\beta_j}{\al_j}}\Big)^{\al_i}\E_0}{\big(\frac{\E_0}{(Q_0)^{k-1}}\big)^{\beta_i}(K_0)^{\al_i}(Q_0)^{k}}\\
		&\ge C_2\sum_{i=1}^4\frac{(\E^*_{k-1})^{\beta_i}\lam_i(Q_0)^{k(\beta_i-1)}}{(K_0)^{\al_i}}
		\ge C_2\sum_{i=1}^4\frac{\lam_i2^{k(\al_i+p)}(\E^*_{k-1})^{\beta_i}}{(K_0)^{\al_i}}.
	\end{align}
	Comparing \eqref{Ekstar1} and \eqref{524a}, and using comparison principle (since $\E_0=\E_0^*=\E_p(M_0)$), 
	\begin{align*}
		\E_p(M_k)\le \E_k^*\to 0\text{ as }k\to\infty,
	\end{align*}
	since $Q_0>1$. Consequently, taking the limit $k\to\infty$, recall the functional $\E_p(M_k)$ in \eqref{Ep}, we deduce 
	\begin{align*}
		\|f^{(l)}_{K_0,+}\|^2_{L^\infty_t([T_1,T_2])L^2_{x,v}(\R^3_x\times\R^3_v)}=0,
	\end{align*}
	where $K_0$ is given by \eqref{K0111}. Thus,
	\begin{align*}
		\|(\<v\>^lf)_+\|_{L^\infty_{x,v}(\R^3_x\times\R^3_v)} & \le K_0
		\le
		C(1+C_1)^{C}(1+K_1)^p\max_{1\le i\le 4}(\lam_i)^{\frac{1}{\al_i}}(\E_0)^{\frac{\beta_i-1}{\al_i}},
	\end{align*}
	where the constant $C=C(s,s',p,\ga,l)>0$ and the parameters $\al_i,\beta_i,\lam_i$ are given in \eqref{albex}. 
Substituting estimate \eqref{773} for $\E_0$ to this, we have 
\begin{align*}
	\|(\<v\>^lf)_+\|_{L^\infty_{x,v}(\R^3_x\times\R^3_v)}
	&\le C(1+C_1+K_1)^{C}\max_{1\le i\le 4}(\lam_i)^{\frac{1}{\al_i}}(\DD^{\frac{1}{2}}+\DD^{p})^{\frac{\beta_i-1}{\al_i}}.
\end{align*}
Since the Lemmas \ref{interLem} and \ref{energyinterLem} have their corresponding counterparts for $-f$, a similar bound holds for $-f$ also in Lemma \ref{E0Lem}, i.e \eqref{773}. Thus, similarly, if we use $(-f)^{(l)}_{K,+}$ to replace $f^{(l)}_{K,+}$ in $\E_0$, then we have the same lower bound: 
	\begin{align*}
		\|(-\<v\>^lf)_+\|_{L^\infty_t([T_1,T_2])L^\infty_{x,v}(\R^3_x\times\R^3_v)}
		\le
		C(1+C_1+K_1)^{C}\max_{1\le i\le 4}(\lam_i)^{\frac{1}{\al_i}}(\DD^{\frac{1}{2}}+\DD^{p})^{\frac{\beta_i-1}{\al_i}}.
	\end{align*}
	This completes the proof of Lemma \ref{LinftyLemVanish}.
\end{proof}

\subsection{\texorpdfstring{$L^\infty$}{L infty} estimate of full linear equation}\label{Sec65}
In this Subsection, we will combine the $L^\infty$ estimate for non-vanishing data in Lemma \ref{LinftyLemNonVan} and improved $L^\infty$ estimate for vanishing data in Lemma \ref{LinftyLemVanish}. 
Recall that at the beginning of Section \ref{SecLinfty}, we split the modified linear equation \eqref{linear1} into \eqref{linear1a} and \eqref{linear1b}. Such splitting allows us to obtain the properties as in Remark \ref{coeff1}. Moreover, we will let $\eta\to0$ to recover the ``original" linear equation:
\begin{align}\label{linear1f}
	\left\{
	\begin{aligned}
		& \pa_tf+ v\cdot\na_xf = \vpi Vf+ \Gamma(\Psi,f)+\Gamma(\vp,\mu^{\frac{1}{2}})\quad \text{ in } [T_1, T_2]\times\Omega\times\R^3_v, \\
		& f|_{\Si_-}=g\qquad \text{ on }[T_1, T_2]\times\Si_-, \\
		& f(T_1,x,v)=f_{T_1}\quad \text{ in }\Omega\times\R^3_v,
	\end{aligned}\right.
\end{align}
\begin{Thm}[$L^\infty$ estimate for linear equation] \label{LinftyLinear}
	Fix $\vpi\ge 0$, $l\ge\ga+10$, $-\frac{3}{2}<\ga\le 2$ and $0<s<1$. Let $0\le T_1<T_2<\infty$ with $T_2-T_1\le 1$, and let $p^\#$ be given in \eqref{ppsharp} and fix any $1<p<p^\#$. Let $s'\in(0,1)$ be a sufficiently small constant depending on $p$, 
	and $l_0=l_0(l,s,s',p)>0$ be a sufficiently large constant (which can be chosen in Lemma \ref{interLem}), and $N=N(\ga,s)>0$ be a large constant chosen in Lemma \ref{LinftyLemNonVan}.
	
	Suppose $\psi$, $\vp=\vp_1+\vp_2$, $f_{T_1}$ and $g$ satisfy
	\begin{align}\label{ve1}
		\begin{aligned}
			\|\<v\>^l\max\{|\psi|,|\vp_1|,|\vp_2|\}\|_{L^\infty_t([T_1,T_2])L^\infty_{x}(\Omega)L^\infty_v}&= \de_0,\\
			\|\<v\>^{l}g\|_{L^\infty_{t,x,v}([T_1, T_2]\times\Si_-)}= \de_\infty,\quad\|\<v\>^{l}f_{T_1}\|_{L^\infty_{x,v}(\Omega\times\R^3_v)} & = \de_\infty', \\
			\|\<v\>^{l-2}g\|^2_{L^2_t([T_1,T_2])L^2_{x,v}(\Si_-)}+
			\|\<v\>^{l-2}f_{T_1}\|^2_{L^2_x(\Omega)L^2_v(\R^3_v)}&\\ +\|[\vp,\vp_1,\vp_2]\|^2_{L^\infty_t([T_1,T_2])L^2_{x}(\Omega)L^2_v}
			& = \de_1,\\
			\|\<v\>^{l_0+2l-2}g\|^2_{L^2_t([T_1,T_2])L^2_{x,v}(\Si_-)}+
			\|\<v\>^{l_0+2l-2}f_{T_1}\|^2_{L^2_x(\Omega)L^2_v}
			& = \ti C_1.
		\end{aligned}
	\end{align}
	with constants $\ti C_1>0$ and sufficiently small $\de_0,\de_1,\de_\infty,\de_\infty'\in(0,1)$.
	Then the solution $f$ to \eqref{linear1f} satisfies
	\begin{multline}\label{Linftyx}
		\|\<v\>^lf\|_{L^\infty_t([T_1,T_2])L^\infty_{x,v}(\ol\Omega\times\R^3_v)}
		\le\max\big\{\frac{1}{2}\|\vp_1\|_{L^\infty_{t,x,v}([T_1, T_2]\times\Omega\times\R^3_v)},\,\de_\infty,\,\de_\infty'\big\}\\
		+ C(1+\ti C_1+K_1)^{C}\Big((T_2-T_1)e^{C}\de_1\Big)^{\zeta},
	\end{multline}
	where $C=C(l,\ga,s,s',p)>0$ and $\zeta=\zeta(s)>0$ are independent of $T_1,T_2$. Here, $K_1$ is given by 
	\begin{align*}
		K_1=
		\max\big\{\frac{1}{2}\|\vp_1\|_{L^\infty_{t,x,v}([T_1, T_2]\times\Omega\times\R^3_v)},\,\de_\infty,\,\de_\infty'
		\big\}, 
	\end{align*}
	Furthermore, $f$ can be split as $f=f_1+f_2$, where $f_1$ and $f_2$ are the solutions to \eqref{linear1a} and \eqref{linear1b} respectively, and satisfy $L^\infty$ estimate \eqref{Linftyx} with $f$ replaced by $f_1$ and $f_2$ on the left-hand side.
\end{Thm}
\begin{proof}
	To find the estimate of $f$ to equation \eqref{linear1f}, we first consider the linear mollified Boltzmann equation \eqref{linear1} with dissipation $\eta\<v\>^lf$ and split $f=f_1+f_2$, where $f_1$ and $f_2$ solve the non-vanishing data equation \eqref{linear1a} and vanishing data equation \eqref{linear1b} respectively. 
	
	\smallskip 
	Until the end of this proof, if not specified, the underlying time interval is $[T_1,T_2]$. 
	Applying Lemma \ref{LinftyLemNonVan} to $f_1$, we have 
	\begin{align}\label{Linfty1z}
		\|\<v\>^lf_1\|_{L^\infty_tL^\infty_{x}(\ol\Omega)L^\infty_v}
		\le K_1\equiv\max\big\{\frac{1}{2}\|\vp_1\|_{L^\infty_tL^\infty_{x}(\Omega)L^\infty_v},\,\|\<v\>^lg\|_{L^\infty_tL^\infty_{x,v}(\Si_-)},\,
		\|\<v\>^lf_{T_1}\|_{L^\infty_{x}(\Omega)L^\infty_v}\big\}. 
	\end{align}
	On the other hand, 
	the $L^2$ estimate for $f_2$, for instance \eqref{fes1}, implies 
	\begin{align*}
		\|\<v\>^{l_0+l-2}f_2\|^2_{L^2_tL^2_x(\Omega)L^2_v}&\le C\|\<v\>^{l_0+l}f_2\|^2_{L^2_tL^2_x(\Omega)L^2_D}
		\le C\|[\mu^{\frac{1}{10^4}}\vp,N\<v\>^{l_0+2l-2}f]\|_{L^2_tL^2_{x}(\Omega)L^2_v}^2,\\
		\|\<v\>^{-2}f_2\|^2_{L^2_tL^2_x(\Omega)L^2_v}&\le C\|f_2\|^2_{L^2_tL^2_x(\Omega)L^2_D}\le C\|[\mu^{\frac{1}{10^4}}\vp,N\<v\>^{l-2}f]\|_{L^2_tL^2_{x}(\Omega)L^2_v}^2.
	\end{align*}
	For the $L^2$ norm of $f$, we have from \eqref{fLineares1} that, for any $k\ge 0$, 
	\begin{multline}\label{L2fff}
		\|\<v\>^kf\|_{L^\infty_tL^2_x(\Omega)L^2_v}^2		+\|\<v\>^kf\|^2_{L^2_tL^2_{x,v}(\Si_+)}+c_0\|\<v\>^kf\|_{L^2_tL^2_x(\Omega)L^2_D}^2\\
		\le
		e^{C(T_2-T_1)}\Big(\|\<v\>^kf(T_1)\|_{L^2_{x}(\Omega)L^2_v}^2+\|\vp\|_{L^2_tL^2_x(\Omega)L^2_v}^2+\|\<v\>^kg\|^2_{L^2_tL^2_{x,v}(\Si_-)}\Big). 
	\end{multline}
	Combining the above two estimates and assumption \eqref{ve1}, we have 
	\begin{align*}
		\begin{aligned}
			\|\<v\>^{l_0+l-2}f_2\|^2_{L^2_tL^2_x(\Omega)L^2_v}
			&\le \ti C_1,\\
			\|\<v\>^{-2}f_2\|^2_{L^2_tL^2_x(\Omega)L^2_v}
			&\le \de_1.
		\end{aligned}
	\end{align*}
	Applying Lemma \ref{initialLinftyLem} to $f_2$, we obtain the initial $L^\infty$ bound: 
	\begin{align}\label{inif2}
		\|\<v\>^{l}f_2\|_{L^\infty_t([T_1,T_2])L^\infty_{x}(\ol\Omega)L^\infty_v}\le e^{C_\eta(t-T_1)}(NK_1+1).
	\end{align}
	The problem is that the initial $L^\infty$ bound of $f_2$ in Theorem \ref{initialLinftyLem} depends on $\eta>0$; so it serves as the \emph{a priori} bound such that the following energy on the right-hand side is finite.
	For the improved $L^\infty$ bound of $f_2$, we denote it by 
	\begin{align}\label{488a}
		C_\infty=\|\<v\>^{l}f_2\|_{L^\infty_t([T_1,T_2])L^\infty_{x}(\ol\Omega)L^\infty_v}, 
	\end{align}
	which is finite due to \eqref{inif2} (such finiteness is essential). 
%
%
%
	Therefore, by Lemmas \ref{LinftyLemVanish}, and recalling parameters $\al_i,\beta_i,\lam_i$ given by \eqref{albex}, we have 
	\begin{align}\label{488x}
\|\<v\>^{l}f_2\|_{L^\infty_t([T_1,T_2])L^\infty_{x}(\ol\Omega)L^\infty_v}
		&\le {C(1+\ti C_1+K_1)^{C}}\max_{1\le i\le 4}(\lam_i)^{\frac{1}{\al_i}}(\DD^{\frac{1}{2}}+\DD^{p})^{\frac{\beta_i-1}{\al_i}},
	\end{align}
	where $\DD$ is given by \eqref{DD} (note that the $f$ in \eqref{DD} is now $f_2$ here), i.e.
	\begin{align*}
		\DD:=\int^{T_2}_{T_1}\|[\mu^{\frac{1}{10^4}}\vp,\<v\>^{l-2}f]\|^2_{L^2_x(\Omega)L^2_v}\,dt,
	\end{align*}
	 which, by
	using $L^2$ estimate \eqref{L2fff} and assumption \eqref{ve1}, satisfies
	\begin{align*}
		\DD\le (T_2-T_1)e^C\de_1<1.
	\end{align*}
	if we choose $\de_1\in(0,1)$ small (depending only on $\ga,s$). 
	Note that, we have fixed $p$, and the exponent $\frac{(1-\si)\beta_*\xi_*}{2p}$ in \eqref{albex} is the same the one in \eqref{sibexi1}, and thus 
	\begin{align}\label{627x}
		\frac{(1-\si)\beta_*\xi_*}{2p}<1, \text{ and }\xi_*>2+\frac{r(1)-2}{r(p^\#)}.
	\end{align}
Therefore, by \eqref{albex} and Lemma \ref{interLem}, we know that $\beta_i=\beta_i(s,p)>1$ are constants, and 
\begin{align*}
	&(\lam_1)^{\frac{1}{\al_1}}=(\lam_2)^{\frac{1}{\al_2}}
	=\max\{C_\infty^{2p-2},1\}^{\frac{(1-\si)\beta_*\xi_*}{2p(\xi_*-2)}},\\
	&(\lam_3)^{\frac{1}{\al_3}}=(\lam_4)^{\frac{1}{\al_4}}
	=1.
\end{align*}
Then we continue \eqref{488x} to deduce 
	\begin{align}\label{inftyf2aa}\notag
		\|\<v\>^{l}f_2\|_{L^\infty_t([T_1,T_2])L^\infty_{x}(\ol\Omega)L^\infty_v}&\le {C(1+\ti C_1+K_1)^{C}}\max\{C_\infty^{2p-2},1\}^{\frac{(1-\si)\beta_*\xi_*}{2p(\xi_*-2)}}\DD^{\zeta}\\
		&\le {C(1+\ti C_1+K_1)^{C}}\max\{C_\infty^{2p-2},1\}^{\frac{(1-\si)\beta_*\xi_*}{2p(\xi_*-2)}}((T_2-T_1)e^C\de_1)^{\zeta},
	\end{align}
	where $C=C(l,\ga,s,s',p)>0$ and $\zeta=\zeta(s,s',p)>0$ are some constants. 
	Therefore, there are two cases as below:
	\begin{enumerate}
		\item if $C_\infty<1$, then we obtain the upper bound 
		\begin{align*}
			\|\<v\>^lf_2\|_{L^\infty_t([T_1,T_2])L^\infty_x(\ol\Omega)L^\infty_{v}}
			\le {C(1+\ti C_1+K_1)^{C}}((T_2-T_1)e^C\de_1)^{\zeta};
		\end{align*}
		
		\smallskip
		\item if $C_\infty\ge 1$, then \eqref{inftyf2aa} implies 
		\begin{align}\label{750a}
			\|\<v\>^lf_2\|_{L^\infty_t([T_1,T_2])L^\infty_x(\ol\Omega)L^\infty_{v}}
			&\le {C(1+\ti C_1+K_1)^{C}}C_\infty^{\frac{(2p-2)(1-\si)\beta_*\xi_*}{2p(\xi_*-2)}}((T_2-T_1)e^C\de_1)^{\zeta}. 
		\end{align}
		From estimate \eqref{627x} (or \eqref{sibexi1}) and the choice of $p^\#$ given in \eqref{ppsharp}, we deduce that for any $p\in(1,p^\#)$, the exponent satisfies 
		\begin{align*}
			\frac{(1-\si)\beta_*\xi_*}{2p}\frac{2p-2}{\xi_*-2}<\frac{2p-2}{\xi_*-2}<1, 
		\end{align*}
		which is a fixed universal constant. (These parameters depend only on $s,p$ while $p$ is fixed).
		Therefore, we can absorb $C_\infty$ on the right-hand side of \eqref{750a} by the left hand due to its definition \eqref{488a}. Then we obtain \eqref{inftyf2aa1} with different constants $C,\zeta>1$. Further, if we choose $\de_1>0$ sufficiently small (depending on $\ga,s,l,|\Omega|$), then $C_\infty<1$, which reduces to the first case. 
	\end{enumerate}
	In summary, we obtain 
	\begin{align}\label{inftyf2aa1}
		\|\<v\>^lf_2\|_{L^\infty_tL^\infty_x(\ol\Omega)L^\infty_{v}}
		&\le {C(1+\ti C_1+K_1)^{C}}((T_2-T_1)e^C\de_1)^{\zeta}. 
	\end{align}
	Combining the $L^\infty$ estimates \eqref{Linfty1z} and \eqref{inftyf2aa1}, we see that the solution $f^\eta$ to the modified equation \eqref{linear1} satisfies \eqref{Linftyx}. Together with \eqref{L2fff} we know that $f^\eta$ has $L^2$ and $L^\infty$ energy estimates on $[T_1,T_2]$ uniformly in $\eta>0$, and thus has a subsequence which has a weak-$*$ limit $f$. Since the modified equation \eqref{linear1} is linear, it's standard to write it in the weak form and take the weak-$*$ limit to deduce that the limit $f$ satisfies the ``original'' linear equation \eqref{linear1f} (we will also consider the weak-$*$ limit for the \emph{nonlinear} case later in Section \ref{Sec8}, and one can refer to the details there). Moreover, the limit satisfies the same $L^\infty$ estimate \eqref{Linftyx}. 
	It's also direct from \eqref{Linfty1z} and \eqref{inftyf2aa1} that $f_1,f_2$ satisfy \eqref{Linftyx} with $f$ on the left-hand side of \eqref{Linftyx} replaced by $f_1,f_2$.
	This completes the proof of Theorem \ref{LinftyLinear}.
\end{proof}

\section{\texorpdfstring{$L^2$}{L2}--\texorpdfstring{$L^\infty$}{L infty} estimate for inflow boundary}\label{Sec8}
In this section, we will derive the existence of the nonlinear Boltzmann equation with the inflow-boundary condition in the bounded domain $\Omega$, by using the $L^2$--$L^\infty$ energy method.
As explained in Section \ref{Sec1non}, we added a regularizing term to solve the difficulties arising from nonlinearity. 
However, since $f^{(l)}_{K,+}$ is merely Lipschitz continuous, we can only insert the first-order derivative $\na_v$ into the regularizing term $Vf$ defined in \eqref{Vf}. Such a regularizing term can control the collision norm $\|f\|_{L^2_D}$ only if $s\in(0,\frac{1}{2})$. For the case $s\in[\frac{1}{2},1)$ we will truncate the cross section $b(\cos\th)$ into a weaker singular form, as in \cite{Alonso2022}. Unlike the torus case in \cite{Alonso2022}, one cannot have strong convergence for the boundary case because there is not enough spatial regularity and one cannot use the Sobolev compact embedding. 

\smallskip Let $-\frac{3}{2}<\ga\le 2$, $s\in(0,1)$, $0\le T_1<T_2<\infty$ with $T_2-T_1\le 1$ and let $l\ge\ga+10$ be a fixed constant. We prove the local-in-time and global-in-time nonlinear problems in the following two Subsections respectively. 

\subsection{Local nonlinear theory}
We begin with the construction of a local-in-time solution to the nonlinear Boltzmann equation \eqref{B1}.
For this purpose, we first consider the regularizing equation \eqref{sec1noneq}:
\begin{align}\label{liniter1}
	\left\{
	\begin{aligned}
		& \pa_tf^{\vpi}+ v\cdot\na_xf^{\vpi} =\vpi Vf^{\vpi}+\Gamma(\mu^{\frac{1}{2}}+f^{\vpi}\chi_{\de_0}(\<v\>^lf^\vpi),f^{\vpi})\\&\qquad\qquad\qquad\qquad\qquad+\Gamma(f^{\vpi}\chi_{\de_0}(\<v\>^lf^\vpi),\mu^{\frac{1}{2}})\quad \text{ in } [T_1,T_2]\times\Omega\times\R^3_v, \\
		& f^{\vpi}|_{\Si_-}=g\quad \text{ on }[T_1,T_2]\times\Si_-, \\
		& f^{\vpi}(T_1,x,v)=f_{T_1}\quad \text{ in }\Omega\times\R^3_v.
	\end{aligned}\right.
\end{align}
where $\de_0>0$ is a small constant. Here $\chi_{\de_0}(f)$ is a cutoff function given by 
\begin{align*}
	\chi_{\de_0}(f)=\left\{\begin{aligned}
		&0,\quad \text{ if }|f|>\de_0,\\
		&1,\quad \text{ if }|f|\le\de_0.
	\end{aligned}\right. 
\end{align*}
We add such a cutoff function to automatically obtain the $L^\infty$ bound when doing the approximation. 
To solve this equation \eqref{liniter1}, we let $\vpi>0$ be any small constant and $S:X\to X$ be the (weak-)solution operator of the equation:
\begin{align}\label{liniter2}
	\left\{
	\begin{aligned}
		& \pa_tf+ v\cdot\na_xf =\vpi Vf+\Gamma(\mu^{\frac{1}{2}}+\psi\chi_{\de_0}(\<v\>^l\psi),f)\\&\qquad\qquad\qquad\qquad+\Gamma(\psi\chi_{\de_0}(\<v\>^l\psi),\mu^{\frac{1}{2}})\quad \text{ in } [T_1,T_2]\times\Omega\times\R^3_v, \\
		& f|_{\Si_-}=g\quad \text{ on }[T_1,T_2]\times\Si_-, \\
		& f(T_1,x,v)=f_{T_1}\quad \text{ in }\Omega\times\R^3_v.
	\end{aligned}\right.
\end{align}
That is, for any $\psi\in X$, we set $S\psi = f$ to be the weak solution of \eqref{liniter2}, whose existence can be obtained from Theorem \ref{weakfThm}. Here $X$ is the normed space defined by 
\begin{multline}\label{Xdef}
	X:=\big\{f\in L^\infty_tL^2_{x,v}([T_1,T_2]\times\Omega\times\R^3_v)\,:\,
	\mu^{\frac{1}{2}}+f\ge 0,\ 
	\|f\|_{L^\infty_t([T_1,T_2])L^2_x(\Omega)L^2_v}^2 \le \de_0, \\
	\|\<v\>^lf\|_{L^\infty_t([T_1,T_2])L^\infty_{x,v}(\Omega\times\R^3_v)} \le \de_0\big\}, 
\end{multline}
equipped with norm $L^\infty_tL^2_{x,v}([T_1,T_2]\times\Omega\times\R^3_v)$, 
with some small $\de_0>0$ (which can be chosen in Theorems \ref{weakfThm} and \ref{LinftyLinear}). 
Next, we derive the local-in-time existence of the nonlinear equation by using the contraction mapping theorem and letting $\vpi\to 0$ with uniform estimates. 
\begin{Thm}[Local-in-time existence of nonlinear equation]\label{nonLocal}
	Fix $l\ge\ga+10$. 
	Let $\de_0\in(0,1)$ be a sufficiently small constant depending on $\ga,s$; and $l_0$ be a large constant depending on $l,\ga,s$ (both can be chosen in Theorem \ref{LinftyLinear}).
	Suppose $f_{T_1}$ and $g$ satisfy $F_0=\mu+\mu^{\frac{1}{2}}f_{T_1}\ge 0$ and 
	\begin{align}\begin{aligned}
			\label{ve0}
			\|\<v\>^lg\|_{L^\infty_{t,x,v}([T_1,T_2]\times\Si_-)}+\|\<v\>^lf_{T_1}\|_{L^\infty_{x,v}(\Omega\times\R^3_v)}& \le\ve_\infty, \\
			\|\<v\>^{l-2}g\|^2_{L^2_t([T_1,T_2])L^2_{x,v}(\Si_-)}+\|\<v\>^{l-2}f_{T_1}\|^2_{L^2_x(\Omega)L^2_v(\R^3_v)}& \le\ve_1,\\
			\|\<v\>^{l_0}g\|^2_{L^2_t([T_1,T_2])L^2_{x,v}(\Si_-)}+\|\<v\>^{l_0}f_{T_1}\|^2_{L^2_x(\Omega)L^2_v(\R^3_v)}&=\ti C_1, 
		\end{aligned}
	\end{align}
	with some constant $\ti C_1>0$, and a sufficiently small $0<\ve_\infty,\ve_1<1$.
	Then there exists a small time $T_2>T_1$ and a solution $f$ to the equation
	\begin{align}\label{non1}
		\left\{
		\begin{aligned}
			& \pa_tf+ v\cdot\na_xf = \Gamma(\mu^{\frac{1}{2}}+f,f)+\Gamma(f,\mu^{\frac{1}{2}})\quad \text{ in } [T_1,T_2]\times\Omega\times\R^3_v, \\
			& f|_{\Si_-}=g\qquad\quad \text{ on }[T_1,T_2]\times\Si_-, \\
			& f(T_1,x,v)=f_{T_1}\quad \text{ in }\Omega\times\R^3_v,
		\end{aligned}\right.
	\end{align}
	satisfying $F=\mu+\mu^{\frac{1}{2}}f\ge 0$, and 
	\begin{multline}\label{84cz}
		\|\<v\>^{k}f\|^2_{L^\infty_t([T_1,T_2])L^2_x(\Omega)L^2_v}
		+\|\<v\>^kf\|^2_{L^2_t([T_1,T_2])L^2_{x,v}(\Si_+)}
		+c_0\|\<v\>^kf\|_{L^2_t([T_1,T_2])L^2_x(\Omega)L^2_D}^2\\
		\le 2e^{C(T_2-T_1)}\|\<v\>^{k}f_{T_1}\|^2_{L^2_x(\Omega)L^2_v}+2e^{C(T_2-T_1)}\|\<v\>^kg\|^2_{L^2_t([T_1,T_2])L^2_{x,v}(\Si_-)},
	\end{multline}
	for any $k\ge 0$, 
	and
	\begin{align}\label{84bz}
		\|\<v\>^lf\|_{L^\infty_t([T_1,T_2])L^\infty_{x,v}(\ol\Omega\times\R^3_v)}\le \de_0, 
	\end{align}
	for some constant $C=C(k,\ga,s)>0$ that is independent of the time $T_1,T_2$.
\end{Thm}
Note that $\de_0>0$ is a given small constant, and $\ve_\infty,\ve_1>0$ are smaller constants, which implies that the solution obtained in this Theorem \ref{nonLocal} is not automatically global-in-time.
\begin{proof}
	We consider the fixed-point theorem for equations \eqref{liniter2} with the cases $s\in(0,\frac{1}{2})$ and $s\in[\frac{1}{2},1)$ in the first and second steps, respectively. Once we obtain the solution to the nonlinear equation, we pass the limit $\vpi\to0$ in the third step. 
	
	\smallskip\noindent{\bf Step 1. Contraction mapping for weak singularity.}
	Let $s\in(0,\frac{1}{2})$. Choose any sufficiently small $\vpi>0$ (which is less than a constant depending on $l$ that is chosen in Lemma \ref{LinftyLinear}). Then we let $S:X\to X$ be the weak solution operator of the equation \eqref{liniter2} by setting $S\psi = f$, whose existence is guaranteed by Theorem \ref{weakfThm}, with $\psi\equiv\vp$ replaced by $\psi\chi_{\de_0}(\<v\>^l\psi)$ therein. Here $X$ is the normed space given by \eqref{Xdef}. 
	Note that the space $X$ depends on $T_1,T_2$. 
	We next prove that $S:X\to X$ is a contraction mapping. 
	
	\smallskip For any $\psi\in X$, we begin by proving that $S\psi\in X$. Since 
	\begin{align*}
		\|\psi\|_{L^\infty_t([T_1,T_2])L^2_x(\Omega)L^2_v}^2 \le \de_0, 
		\quad 
		\|\<v\>^l\psi\|_{L^\infty_t([T_1,T_2])L^\infty_{x,v}(\Omega\times\R^3_v)} \le \de_0, 
	\end{align*}
	with some small constant $\de_0>0$ chosen in Theorems \ref{weakfThm} and \ref{LinftyLinear}. Using the assumption \eqref{ve0}, we can apply Theorems \ref{weakfThm} and \ref{LinftyLinear}, i.e. \eqref{fLineares1} and \eqref{Linftyx}, with $\psi=\vp=\vp_1$ replaced by $\psi\chi_{\de_0}(\<v\>^l\psi)$ and $\vp_2$ replaced by $0$ therein to obtain that the weak solution $f=S\psi$ to equation \eqref{liniter2} satisfies estimates
	\begin{multline*}
		\|f\|_{L^\infty_t([T_1,T_2])L^2_x(\Omega)L^2_v}^2+\|f\|^2_{L^2_t([T_1,T_2])L^2_{x,v}(\Si_+)}
		+c_0\|f\|_{L^2_t([T_1,T_2])L^2_x(\Omega)L^2_D}^2\\
		+\vpi\|[\wh{C}^{}_0\<v\>^{{4}}f,\<v\>^{{2}}\na_vf]\|^2_{L^2_t([T_1,T_2])L^2_x(\Omega)L^2_v}
		\le e^{C(T_2-T_1)}\big((T_2-T_1)\de_0+\ve_1\big),
	\end{multline*}
	and
	\begin{multline}\label{799a}
		\|\<v\>^lf\|_{L^\infty_t([T_1,T_2])L^\infty_{x,v}(\ol\Omega\times\R^3_v)}
		\le\max\big\{\frac{1}{2}\de_0,\,\ve_\infty\big\}\\
		+C\big(1+\ti C_1+\max\big\{\frac{1}{2}\de_0,\ve_\infty\big\}\big)^{C}\Big((T_2-T_1)e^{C}(\ve_1+\de_0)\Big)^{\zeta},
	\end{multline}
	where $C=C(l,\ga,s)>0$ is independent of $T_1,T_2$.
	Then we choose $T_2-T_1,\ve_\infty,\ve_1>0$ so small that 
	$(T_2-T_1)e^{C(T_2-T_1)}<\frac{1}{4}$, $e^{C(T_2-T_1)}\ve_1<\frac{\de_0}{4}$, and $\ve_\infty<\frac{\de_0}{2}$, $(T_2-T_1)e^{C}\big(\ve_1+1\big)\le \big(\frac{1}{2C(2+\ti C_1)^C}\de_0\big)^{\frac{1}{\zeta}}$ 
	and deduce
	\begin{align*}
		\begin{aligned}
			\|f\|_{L^\infty_t([T_1,T_2])L^2_x(\Omega)L^2_v}^2+\|f\|^2_{L^2_t([T_1,T_2])L^2_{x,v}(\Si_+)}+c_0\|f\|_{L^2_t([T_1,T_2])L^2_x(\Omega)L^2_D}^2 & \le \de_0, \\
			\|\<v\>^lf\|_{L^\infty_t([T_1,T_2])L^\infty_{x,v}(\ol\Omega\times\R^3_v)}
			\le \frac{1}{2}\de_0+C(2+\ti C_1)^C\Big((T_2-T_1)e^{C}\big(\ve_1+1\big)\Big)^{\zeta}& \le \de_0.
		\end{aligned}
	\end{align*}
	The non-negativity of $\mu^{\frac{1}{2}}+f$ can be derived from Theorem \ref{positivity}.
	These facts imply $f=S\psi\in X$. 
	
	\smallskip Next we prove that $S$ is a contraction map with small time $T_2-T_1>0$. Let $\psi,\vp\in X$ and $f=S\psi$, $h=S\vp$. Then, by \eqref{liniter2}, $f-h$ satisfies 
	\begin{align}\label{liniter3}
		\left\{
		\begin{aligned}
			& \pa_t(f-h)+ v\cdot\na_x(f-h) =\vpi V(f-h)+\Gamma(\mu^{\frac{1}{2}}+\psi\chi_{\de_0}(\<v\>^l\psi),f-h)
			\\&\qquad\qquad\qquad+\Gamma(\psi\chi_{\de_0}(\<v\>^l\psi)-\vp\chi_{\de_0}(\<v\>^l\vp),\mu^{\frac{1}{2}}+h)\quad \text{ in } [T_1,T_2]\times\Omega\times\R^3_v, \\
			& (f-h)|_{\Si_-}=0\quad \text{ on }[T_1,T_2]\times\Si_-, \\
			& (f-h)(T_1,x,v)=0\quad \text{ in }\Omega\times\R^3_v.
		\end{aligned}\right.
	\end{align}
	Taking $L^2$ inner product of \eqref{liniter3} with $f-h$ over $\Omega\times\R^3_v$ and noticing $\psi,\vp\in X$ has $L^\infty$ bound $\de_0$, we have 
	\begin{multline}\label{666}
		\frac{1}{2}\pa_t\|f-h\|_{L^2_x(\Omega)L^2_v}^2+\frac{1}{2}\|f-h\|^2_{L^2_{x,v}(\Si_+)}\\
		=\Big(\vpi V(f-h)+\Gamma(\mu^{\frac{1}{2}}+\psi,f-h)+\Gamma(\psi-\vp,h)+\Gamma(\psi-\vp,\mu^{\frac{1}{2}}),f-h\Big)_{L^2_x(\Omega)L^2_v}. 
	\end{multline}
	Applying Lemmas \ref{vpmu12Lem} and \ref{LemRegu} and estimate \eqref{56} for the right-hand side of \eqref{666}, we obtain 
	\begin{align*}
		&\frac{1}{2}\pa_t\|f-h\|_{L^2_x(\Omega)L^2_v}^2+\frac{1}{2}\|f-h\|^2_{L^2_{x,v}(\Si_+)}
		+\frac{\vpi}{C}\|\<v\>^{{2}}\<D_v\>(f-h)\|_{L^2_x(\Omega)L^2_v}^2\\
		&\quad\le \big(-c_0+C\|\<v\>^{4}\psi\|_{L^\infty_x(\Omega)L^\infty_v}\big)\|f-h\|_{L^2_x(\Omega)L^2_D}^2
		+C\|\1_{|v|\le R_0}(f-h)\|_{L^2_x(\Omega)L^2_v}^2\\
		&\qquad+C\|\psi-\vp\|_{L^2_x(\Omega)L^2_v}\|\<v\>^{2}h\|_{L^\infty_x(\Omega)L^2_v}\|\<v\>^{2}(f-h)\|_{L^2_x(\Omega)H^{2s}_v}\\
		&\qquad+C\|\mu^{\frac{1}{10^4}}(\psi-\vp)\|_{L^2_x(\Omega)L^2_v}\|\mu^{\frac{1}{10^4}}(f-h)\|_{L^2_x(\Omega)L^2_v}\\
		&\quad\le
		-\frac{c_0}{2}\|f-h\|_{L^2_x(\Omega)L^2_D}^2
		+C_\vpi\big(\|\<v\>^{2}h\|_{L^\infty_x(\Omega)L^\infty_v}^2+1\big)\|\psi-\vp\|_{L^2_x(\Omega)L^2_v}^2\\
		&\qquad+\frac{\vpi}{2C}\|\<v\>^{2}\<D_v\>(f-h)\|^2_{L^2_x(\Omega)L^{2}_v}+C\|f-h\|^2_{L^2_x(\Omega)L^2_v}, 
	\end{align*}
	since $s\in(0,\frac{1}{2})$. 
	Here we choose $\de_0>0$ in \eqref{Xdef} sufficiently small. Thus, we obtain 
	\begin{align*}
		\frac{1}{2}\pa_t\|f-h\|_{L^2_x(\Omega)L^2_v}^2\le C_\vpi\|\psi-\vp\|_{L^2_x(\Omega)L^2_v}^2+C\|f-h\|^2_{L^2_x(\Omega)L^2_v}.
	\end{align*}
	Using Gr\"{o}nwall's inequality and choosing $T_2=T_2(\vpi)>T_1$ sufficiently small, we have 
	\begin{align*}
		\|f-h\|_{L^\infty_t([T_1,T_2])L^2_x(\Omega)L^2_v}^2&\le (T_2-T_1)C_\vpi\|\psi-\vp\|_{L^\infty_t([T_1,T_2])L^2_x(\Omega)L^2_v}^2\\
		&\le \frac{1}{2}\|\psi-\vp\|_{L^\infty_t([T_1,T_2])L^2_x(\Omega)L^2_v}^2.
	\end{align*}
	This implies that $S:X\to X$ is a contraction map. Therefore, by Banach fixed point theorem, there exists $f=f^\vpi\in X$ such that 
	\begin{align*}
		\|f^\vpi\|_{L^\infty_t([T_1,T_2])L^2_x(\Omega)L^2_v}^2 \le \de_0, 
		\quad
		\|\<v\>^lf^\vpi\|_{L^\infty_t([T_1,T_2])L^\infty_{x,v}(\Omega\times\R^3_v)} \le \de_0,
	\end{align*}
	and it solves equation \eqref{liniter1} in the weak sense of \eqref{weakf}.

	\smallskip \noindent{\bf Step 2. Strong Singularity.} Let $s\in[\frac{1}{2},1)$ in this step. As in Subsection \ref{Sec1non}, we truncate the collision kernel $b(\cos\th)$ as
	\begin{align*}
		b_\eta(\cos\th):=\frac{b(\cos\th)\th^{2+2s}}{\th^{2+2s_*}(\th+\eta)^{2s-2s_*}}\approx \th^{-2-2s_*}, 
	\end{align*}
	with some fixed $s_*\in(0,\frac{1}{2})$. We also denote $\Ga_\eta$ by \eqref{Gaeta}. Since $b_\eta$ has weak singularity, we can apply the fixed-point arguments in Step 1 to obtain a small time $T_2=T_2(\vpi,\eta)>T_1$ and a weak solution $f_\eta$ to equation \eqref{liniter1} with $\Ga$ replaced by $\Ga_\eta$, i.e. $f_\eta$ satisfies $f_\eta|_{\Si_-}=g$ on $[T_1,T_2]\times\Si_-$ and $f_\eta(T_1,x,v)=f_{T_1}$ in $\Omega\times\R^3_v$, and solves 
	\begin{align}\label{liniter1aa}
		\begin{aligned}
			\pa_tf_\eta+ v\cdot\na_xf_\eta &=\vpi Vf_\eta+\Gamma_\eta(\mu^{\frac{1}{2}}+f_\eta\chi_{\de_0}(\<v\>^lf_\eta),f_\eta)\\&\qquad+\Gamma_\eta(f_\eta\chi_{\de_0}(\<v\>^lf_\eta),\mu^{\frac{1}{2}})\quad \text{ in } [T_1,T_2]\times\Omega\times\R^3_v.
		\end{aligned}
	\end{align}
	Taking $L^2$ inner product of \eqref{liniter1aa} with $\<v\>^{2k}f_\eta$ over $[T_1,T_2]\times\Omega\times\R^3_v$ with any $k\ge 0$, and using Lemma \ref{LemRegu} for regularizing term $Vf$ and Lemma \ref{GaetaesLem} for the collision terms, we obtain 
	\begin{align*}
		&\pa_t\|\<v\>^{k}f_\eta(t)\|_{L^2_x(\Omega)L^2_v}^2+\|\<v\>^{k}f_\eta\|^2_{L^2_{x,v}(\Si_+)}
		+\vpi\|[\wh{C}^{}_0\<v\>^{{k+4}}f_\eta,\<v\>^{{k+2}}\na_vf_\eta]\|_{L^2_x(\Omega)L^2_v}^2\\
		&\quad\le \|\<v\>^kg\|^2_{L^2_{x,v}(\Si_-)}
		+C(1+\de_0)\|\<v\>^{k}f_\eta\|_{L^2_{x}(\Omega)L^2_D}^2+C\|f_\eta\|_{L^2_{x}(\Omega)L^2_v}^2\\
		&\quad\le \|\<v\>^kg\|^2_{L^2_{x,v}(\Si_-)}
		+\frac{\vpi}{2}\|\<v\>^{k+2}f_\eta\|^2_{L^2_{x}(\Omega)H^1_v}+C_\vpi\|f_\eta\|_{L^2_{x}(\Omega)L^2_v}^2,
	\end{align*}
	where we used \eqref{esD}, $s\in[\frac{1}{2},1)$ and interpolation to deduce
	\begin{align}\label{kesD}
		\|\<v\>^{k}f\|^2_{L^2_D}\le C\|\<v\>^{k+\frac{\ga+2s}{2}}\<D_v\>^sf\|^2_{L^2_v}\le\frac{\vpi}{2C}\|\<v\>^{k+2}f\|^2_{H^1_v}+C_\vpi\|f\|^2_{L^2_v}.
	\end{align}
	The term $H^1_v$ can now be absorbed by the regularizing term. Therefore, integrating over $[T_1,T_2]$ and choosing $T_2=T_2(\vpi)>T_1$ sufficiently small, we have 
	\begin{multline}\label{713}
		\|\<v\>^{k}f_\eta\|_{L^\infty_tL^2_x(\Omega)L^2_v}^2+\|\<v\>^{k}f_\eta\|^2_{L^2_tL^2_{x,v}(\Si_+)}
		+\vpi\|[\wh{C}^{}_0\<v\>^{{k+4}}f_\eta,\<v\>^{{k+2}}\na_vf_\eta]\|_{L^2_tL^2_x(\Omega)L^2_v}^2\\
		\le 2\|\<v\>^{k}f_{T_1}\|_{L^2_x(\Omega)L^2_v}^2+2\|\<v\>^{k}f_\eta\|^2_{L^2_t([T_1,T_2])L^2_{x,v}(\Si_-)},
	\end{multline}
	which is uniform in $\eta$. This implies that the solution $f_\eta$ can be extended to a time $T_2=T_2(\vpi)>T_1$ which is independent of $\eta$. 
	
	\smallskip 
	For the $L^\infty_{t,x,v}$ estimate of $f_\eta$, we give a short proof for brevity; see also \cite[Section 7]{Alonso2022} or \cite[Section 8]{Cao2022b}. The main step to obtain the $L^\infty$ estimate is the estimate of level functions. In Lemma \ref{Qes1Lem}, the estimate is also valid for $\Ga_\eta$ replacing $\Ga$, where the constant is independent of $\eta$. In Lemma \ref{CollLevelLem}, we need to estimate the term $\int_{\Omega\times\R^3}f^{(l)}_{K,+}\Gamma_\eta\big(\Psi,f-K\<v\>^{-l}_\de\big)\,dvdx$, while the estimates for other terms remain the same. In fact, using \eqref{Gaetaes}, \eqref{kesD} and $\Psi\ge 0$, we have 
	\begin{align*}
		&\int_{\Omega\times\R^3}f^{(l)}_{K,+}\Gamma_\eta\big(\Psi,f-K\<v\>^{-l}_\de\big)\,dvdx\\
		&\quad\le \int_{\Omega\times\R^3}f^{(l)}_{K,+}\Gamma_\eta\big(\Psi,f^{(l)}_{K,+}\big)\,dvdx\\
		&\quad \le C\|\<v\>^4\Psi\|_{L^\infty_x(\Omega)L^\infty_v}\|f^{(l)}_{K,+}\|_{L^2_x(\Omega)L^2_D}^2\\
		&\quad\le \frac{\vpi}{2C}\|\<v\>^{k+2}f^{(l)}_{K,+}\|^2_{L^2_x(\Omega)H^1_v}+C_\vpi\|\<v\>^4\Psi\|_{L^\infty_x(\Omega)L^\infty_v}^2\|f^{(l)}_{K,+}\|^2_{L^2_x(\Omega)L^2_v}. 
	\end{align*}
	The $H^1_v$ term can be absorbed by the regularizing term in \eqref{651a}. Therefore, following the same calculations in Theorem \ref{LinftyLinear} (i.e. all the calculations in Section \ref{SecLinfty}), and using the energy functional 
	\begin{multline}\label{Eppri}
		\E'_p(K):=\|f^{(l)}_{K,+}\|^2_{L^\infty_tL^2_{x,v}([T_1,T_2]\times\R^6_{x,v})}+\vpi\|\<v\>^2f^{(l)}_{K,+}\|_{L^2_tL^2_xH^1_v([T_1,T_2]\times\Omega\times\R^3_v)}^2\\
		+\frac{1}{C_0\max\{C_\infty^{2p-2},1\}}\Big\|\int_{\R^3_v}\1_{[T_1,T_2]}\<v\>^{-10}(f^{(l)}_{K,+})^2\,dv\Big\|_{B^{s',2}_p(\R^{4}_{t,x})}^p
	\end{multline}
	instead of $\E_p(K)$ defined in \eqref{Ep} (we change the dissipation norm $\|f\|_{L^2_D}$ to a more regularized term but with a small constant $\vpi$), we can obtain the $L^\infty$ estimate of $f_\eta$ as in \eqref{799a}:
	\begin{multline*}
		\|\<v\>^lf_\eta\|_{L^\infty_t([T_1,T_2])L^\infty_{x,v}(\ol\Omega\times\R^3_v)}
		\le
		\max\big\{\frac{1}{2}\de_0,\,\ve_\infty\big\}\\
		+C\big(1+\ti C_1+\max\big\{\frac{1}{2}\de_0,\ve_\infty\big\}\big)^{C}\Big((T_2-T_1)e^{C}(\ve_1+\de_0)\Big)^{\zeta}, 
	\end{multline*}
	where $C=C(l,\ga,s,\vpi)>0$ is independent of $T_1,T_2$, and $\zeta=\zeta(s)>0$. 
	Note the constant $C>0$ depends on $\vpi>0$. Then we choose $T_2=T_2(\vpi)>T_1$ and $\ve_\infty,\ve_1>0$ so small that $\ve_\infty<\frac{\de_0}{2}$ and $(T_2-T_1)e^{C}\big(\ve_1+1\big)\le \big(\frac{1}{2C(2+\ti C_1)^C}\de_0\big)^{\frac{1}{\zeta}}$ to deduce
	\begin{align}\label{713a}
		\|\<v\>^lf_\eta\|_{L^\infty_t([T_1,T_2])L^\infty_{x,v}(\ol\Omega\times\R^3_v)}\le \de_0. 
	\end{align}

	\smallskip Therefore, applying Banach-Alaoglu Theorem, $f_\eta$ is weakly-$*$ compact in the corresponding spaces in \eqref{713} and \eqref{713a}, and there exists a subsequence (still denote it by $f_\eta$) such that 
	\begin{align}\label{weaklimiteta}
		\begin{aligned}
			& f_\eta\rightharpoonup f\quad
			\text{ weakly-$*$ in $L^2_{t,x,v}([T_1,T_2]\times\Si_+)$ and $L^2_{t,x}H^1_v([T_1,T_2]\times\Omega\times\R^3_v)$}, \\
			& f_\eta\rightharpoonup f\quad \text{ weakly-$*$ in $L^2_x(\Omega)L^2_v$ for any $t\in[T_1,T_2]$},\\
			& f_\eta\rightharpoonup f\quad
			\text{ weakly-$*$ in $L^\infty_{t,x,v}([T_1,T_2]\times\Omega\times\R^3_v)$},
		\end{aligned}
	\end{align}
	as $\eta\to 0$, with some function $f$ satisfying 
	\begin{multline}\label{713ab}
		\|\<v\>^{k}f\|_{L^\infty_tL^2_x(\Omega)L^2_v}^2+\|\<v\>^{k}f\|^2_{L^2_tL^2_{x,v}(\Si_+)}
		+\vpi\|[\wh{C}^{}_0\<v\>^{{k+4}}f,\<v\>^{{k+2}}\na_vf]\|_{L^2_tL^2_x(\Omega)L^2_v}^2\\
		\le 2\|\<v\>^{k}f_{T_1}\|_{L^2_x(\Omega)L^2_v}^2+2\|\<v\>^{k}g\|^2_{L^2_tL^2_{x,v}(\Si_-)},
	\end{multline}
	and 
	\begin{align}\label{713b}
		\|\<v\>^lf\|_{L^\infty_t([T_1,T_2])L^\infty_{x,v}(\ol\Omega\times\R^3_v)}\le \de_0. 
	\end{align}
	Rewriting equation \eqref{liniter1aa} in the weak form: for any function $\Phi\in C^\infty_c(\R_t\times\R^3_x\times\R^3_v)$,
	\begin{multline}\label{weaketa}
		(f_\eta(T_2),\Phi(T_2))_{L^2_{x}(\Omega)L^2_v}-(f_\eta,(\pa_t+v\cdot\na_x)\Phi)_{L^2_{t}([T_1,T_2])L^2_{x}(\Omega)L^2_v}+(f_\eta,\Phi)_{L^2_{t}([T_1,T_2])L^2_{x,v}(\Si_+)}\\
		=(f_{T_1},\Phi(T_1))_{L^2_{x}(\Omega)L^2_v}+(g,\Phi)_{L^2_{t}([T_1,T_2])L^2_{x,v}(\Si_-)}\\
		+\big(\vpi Vf_\eta+\Gamma_\eta(\mu^{\frac{1}{2}}+f_\eta\chi_{\de_0}(\<v\>^lf_\eta),f_\eta)+\Gamma_\eta(f_\eta\chi_{\de_0}(\<v\>^lf_\eta),\mu^{\frac{1}{2}}),\Phi\big)_{L^2_{t}([T_1,T_2])L^2_{x}(\Omega)L^2_v}.
	\end{multline}
	It suffices to obtain the limit for the collision terms. Using the upper bounds \eqref{713}, \eqref{713a}, \eqref{713ab}, \eqref{713b}, and estimate \eqref{Gaetaes1} for collision terms, we have 
	\begin{align}\label{collGaetaGa}\notag
		&\big(\Gamma_\eta(\mu^{\frac{1}{2}}+f_\eta\chi_{\de_0}(\<v\>^lf_\eta),\mu^{\frac{1}{2}}+f_\eta)-\Gamma(\mu^{\frac{1}{2}}+f\chi_{\de_0}(\<v\>^lf),\mu^{\frac{1}{2}}+f),\Phi\big)_{L^2_{t,x,v}([T_1,T_2]\times\Omega\times\R^3_v)}\\
		&\notag\quad=
		\big(\Gamma_\eta(f_\eta-f,\mu^{\frac{1}{2}}+f_\eta)
		+\Gamma_\eta(\mu^{\frac{1}{2}}+f,f_\eta-f)\\&\notag\quad\qquad
		+\Gamma_\eta(\mu^{\frac{1}{2}}+f,\mu^{\frac{1}{2}}+f)
		-\Gamma(\mu^{\frac{1}{2}}+f,\mu^{\frac{1}{2}}+f),\Phi\big)_{L^2_{t,x,v}([T_1,T_2]\times\Omega\times\R^3_v)}\\
		&\notag\quad\le C\int_{[T_1,T_2]\times\Omega}\|\Phi\|_{W^{2,\infty}_v}\|\<v\>^{\ga+4}(f_\eta-f)\|_{L^2_v}(1+\|\<v\>^{\ga+4}f_\eta\|_{L^\infty_v}+\|\<v\>^{\ga+4}f\|_{L^\infty_v})\,dxdt\\
		&\qquad\quad
		+\big(\Gamma_\eta(\mu^{\frac{1}{2}}+f,\mu^{\frac{1}{2}}+f)
		-\Gamma(\mu^{\frac{1}{2}}+f,\mu^{\frac{1}{2}}+f),\Phi\big)_{L^2_{t,x,v}([T_1,T_2]\times\Omega\times\R^3_v)}. 
	\end{align}
	Applying Lemma \ref{claimLem} to the first right-hand term of \eqref{collGaetaGa} and \eqref{Gaetalimit} to the second right-hand term, we obtain the limit: 
	\begin{align*}
		\lim_{\eta\to0}\big(\Gamma_\eta(\mu^{\frac{1}{2}}+f_\eta\chi_{\de_0}(\<v\>^lf_\eta),\mu^{\frac{1}{2}}+f_\eta)-\Gamma(\mu^{\frac{1}{2}}+f\chi_{\de_0}(\<v\>^lf),\mu^{\frac{1}{2}}+f),\Phi\big)_{L^2_{t,x,v}([T_1,T_2]\times\Omega\times\R^3_v)}=0. 
	\end{align*}
	Combining this with the weak-$*$ limit in \eqref{weaklimiteta}, we can take $\eta\to0$ in \eqref{weaketa} to deduce that $f$ is the weak solution to \eqref{liniter1} for the case $s\in[\frac{1}{2},1)$ in the sense of \eqref{weakf}.

	\smallskip\noindent{\bf Step 3. Convergence of $f^{\vpi}$.}
	Let $s\in(0,1)$. Notice that the solution $f^\vpi$ to \eqref{liniter1} obtained in Steps 1 and 2 only exists for a small time $T=T(\vpi)>0$ since we utilize the regularizing term $\vpi Vf$. In this Step, without utilizing the regularity from $\vpi Vf$, we will prove that the existence time $T>0$ can be independent of $\vpi>0$ and then one can pass the limit $\vpi\to 0$. 
	
	\smallskip Since the term $f^{\vpi}\chi_{\de_0}(\<v\>^lf^\vpi)$ satisfies 
	\begin{align*}
		\|\<v\>^lf^{\vpi}\chi_{\de_0}(\<v\>^lf^\vpi)\|_{L^\infty_t([T_1,T_2])L^\infty_{x,v}(\Omega\times\R^3_v)} \le \de_0, 
	\end{align*}
	we can apply Theorems \ref{weakfThm} and \ref{LinftyLinear}, i.e. \eqref{fLineares1} and \eqref{Linftyx}, with $\psi=\vp=\vp_1=f^{\vpi}\chi_{\de_0}(\<v\>^lf^\vpi)$ and $\vp_2=\phi=0$ to obtain that the solution $f^{\vpi}$ to equation \eqref{liniter1} satisfies 
	\begin{multline*}
		\|f^{\vpi}\|_{L^\infty_t([T_1,T_2])L^2_x(\Omega)L^2_v}^2+\|f^{\vpi}\|^2_{L^2_t([T_1,T_2])L^2_{x,v}(\Si_+)}
		+c_0\|f^{\vpi}\|_{L^2_t([T_1,T_2])L^2_x(\Omega)L^2_D}^2\\
		+\vpi\|[\wh{C}^{}_0\<v\>^{{4}}f^{\vpi},\<v\>^{{2}}\na_vf^{\vpi}]\|_{L^2_t([T_1,T_2])L^2_x(\Omega)L^2_v}^2
		\le e^{C(T_2-T_1)}\big((T_2-T_1)\de_0+\ve_1\big),
	\end{multline*}
	and, as in \eqref{799a}, 
	\begin{multline*}
		\|\<v\>^lf^{\vpi}\|_{L^\infty_t([T_1,T_2])L^\infty_{x,v}(\ol\Omega\times\R^3_v)}
		\le\max\big\{\frac{1}{2}\de_0,\,\ve_\infty\big\}\\
		+C\big(1+\ti C_1+\max\big\{\frac{\de_0}{2},\ve_\infty\big\}\big)^{C}\Big((T_2-T_1)e^{C}\big(\ve_1+\de_0\big)\Big)^{\zeta},
	\end{multline*}
	where $C=C(l,\ga,s)>0$ is independent of $T_1,T_2$, and $\zeta=\zeta(s)>0$. 
	Then we choose $T_2,\ve_\infty,\ve_1>0$ so small that $(T_2-T_1)e^{C(T_2-T_1)}<\frac{1}{4}$, $e^{C(T_2-T_1)}\ve_1<\frac{\de_0}{4}$, $\ve_\infty<\frac{\de_0}{2}$ and $(T_2-T_1)e^{C}\big(\ve_1+1\big)\le \big(\frac{1}{2C(2+\ti C_1)^C}\de_0\big)^{\frac{1}{\zeta}}$ to deduce 
	\begin{multline}\label{fn2}
		\|f^{\vpi}\|_{L^\infty_t([T_1,T_2])L^2_x(\Omega)L^2_v}^2+\|f^{\vpi}\|^2_{L^2_t([T_1,T_2])L^2_{x,v}(\Si_+)}
		+c_0\|f^{\vpi}\|_{L^2_t([T_1,T_2])L^2_x(\Omega)L^2_D}^2\\
		+\vpi\|[\wh{C}^{}_0\<v\>^{{4}}f^{\vpi},\<v\>^{{2}}\na_vf^{\vpi}]\|_{L^2_t([T_1,T_2])L^2_x(\Omega)L^2_v}^2\le\de_0, 
	\end{multline}
	and
	\begin{align}\label{fn3}
		\|\<v\>^lf^{\vpi}\|_{L^\infty_t([T_1,T_2])L^\infty_{x,v}(\ol\Omega\times\R^3_v)}
		\le \frac{1}{2}\de_0+C(2+\ti C_1)^C\Big((T_2-T_1)e^{C}\big(\ve_1+1\big)\Big)^{\zeta}& \le \de_0.
	\end{align}
	Note that the choice of $T_2-T_1,\ve_\infty,\ve_1>0$ here is independent of $\vpi>0$, and thus the existence time $T>0$ is independent of $\vpi>0$ (one can prove this by standard continuity argument).
	Therefore, the sequence $\{f^{\vpi}\}$ is bounded in the sense in \eqref{fn2} and \eqref{fn3}. By Banach-Alaoglu Theorem, there exists a subsequence $\{f^n\}\subset\{f^{\vpi}\}$ (for simplicity we can take $\vpi=\frac{1}{n}$) such that $\{f^n\}$ has a weak-$*$ limit $f$ as $n\to\infty$ satisfying
	\begin{align}\label{84c}
		\begin{aligned}
			&\|f\|_{L^\infty_t([T_1,T_2])L^2_x(\Omega)L^2_v}^2+\|f\|^2_{L^2_t([T_1,T_2])L^2_{x,v}(\Si_+)}
			+c_0\|f\|_{L^2_t([T_1,T_2])L^2_x(\Omega)L^2_D}^2
			\le \de_0,\\
			& \|\<v\>^lf\|_{L^\infty_t([T_1,T_2])L^\infty_{x,v}(\Omega\times\R^3_v)}\le \de_0,
		\end{aligned}
	\end{align}
	in the sense that
	\begin{align}\label{weaklimit}
		\begin{aligned}
			& f^n\rightharpoonup f\quad
			\text{ weakly-$*$ in $L^2_{t,x,v}([T_1,T_2]\times\Si_+)$ and $L^\infty_{t,x,v}([T_1,T_2]\times\Omega\times\R^3_v)$}, \\
			& f^n\rightharpoonup f\quad \text{ weakly-$*$ in $L^2_x(\Omega)L^2_v$ for any $t\in[T_1,T_2]$}.
		\end{aligned}
	\end{align}
	Notice from \eqref{fn3} that $f^n\chi_{\de_0}(\<v\>^lf^n)=f^n$. 
	We then write the solution $f^{n}$ to equation \eqref{liniter1} in the weak sense. That is, for any function $\Phi\in C^\infty_c(\R_t\times\R^3_x\times\R^3_v)$,
	\begin{multline}\label{weak1}
		(f^{n}(T_2),\Phi(T_2))_{L^2_x(\Omega)L^2_v}-(f^{n},(\pa_t+v\cdot\na_x)\Phi)_{L^2_{t,x,v}([T_1,T_2]\times\Omega\times\R^3_v)}
		+(f^{n},\Phi)_{L^2_{t}([T_1,T_2])L^2_{x,v}(\Si_+)}\\
		=(f_{T_1},\Phi(T_1))_{L^2_x(\Omega)L^2_v}+(g,\Phi)_{L^2_{t}([T_1,T_2])L^2_{x,v}(\Si_-)}\\
		+\Big(\frac{1}{n}Vf^{n}+\Gamma(\mu^{\frac{1}{2}}+f^n,f^{n})+\Gamma(f^n,\mu^{\frac{1}{2}}),\Phi\Big)_{L^2_{t,x,v}([T_1,T_2]\times\Omega\times\R^3_v)}.
	\end{multline}
	To take the weak-$*$ limit in \eqref{weak1}, it remains to find the limit of the regularizing term $V$ and collision terms $\Ga$. In fact, by definition of $V$ \eqref{Vf} and estimate \eqref{fn2}, we have 
	\begin{align}\label{716}
		\frac{1}{n}\big(Vf^{n},\Phi\big)_{L^2_{t,x,v}([T_1,T_2]\times\Omega\times\R^3_v)}
		\le-\frac{1}{n}\int
		\big(\wh{C}^2_0\<v\>^{{4}}|f^{n}||\Phi|+\<v\>^{{2}}|\na_vf^{n}||\na_v\Phi|\big)\,dvdxdt\to 0, 
	\end{align}
	as $n\to\infty$. 
	On the other hand, by \eqref{Gammavp}, we have 
	\begin{align}\label{86}
		&\notag\big|\big(\Gamma(\mu^{\frac{1}{2}}+f^n,f^{n})+\Gamma(f^n,\mu^{\frac{1}{2}})-\Gamma(\mu^{\frac{1}{2}}+f,f)-\Gamma(f,\mu^{\frac{1}{2}}),\Phi\big)_{L^2_{t,x,v}([T_1,T_2]\times\Omega\times\R^3_v)}\big|\\
		&\quad\notag=\big|\big(\Gamma(\mu^{\frac{1}{2}}+f,f^{n}-f)+\Gamma(f^n-f,f^{n}+\mu^{\frac{1}{2}}),\Phi\big)_{L^2_{t,x,v}([T_1,T_2]\times\Omega\times\R^3_v)}\big|\\
		&\quad\notag\le C\int^{T_2}_{T_1}\int_{\Omega}\|\Phi\|_{W^{2,\infty}_v}\Big((1+\|\<v\>^{\ga+6}f\|_{L^\infty_v})\|\<v\>^{\ga+4}(f^{n}-f)\|_{L^2_v}\\
		&\qquad\qquad\qquad\notag+\|\<v\>^{\ga+4}(f^n-f)\|_{L^2_v}(\|\<v\>^{\ga+6}f^{n}\|_{L^\infty_v}+1)\Big)\,dxdt\\
		&\quad\le C\int^{T_2}_{T_1}\int_{\Omega}\|\Phi\|_{W^{2,\infty}_v}
		\|\<v\>^{\ga+4}(f^{n}-f)\|_{L^2_v}
		\,dxdt,
	\end{align}
	where we used \eqref{fn3} and \eqref{84c} in the last inequality.
	Applying Lemma \ref{claimLem}, 
	there exists subsequence $\{f^{n_j}\}\subset\{f^{n}\}$ such that for any $\Phi\in C^\infty_c(\R^7)$,
	\begin{align}\label{86a}
		\lim_{n_j\to\infty}\int^{T_2}_{T_1}\int_{\Omega}\|\Phi\|_{W^{2,\infty}_v}\|\<v\>^{\ga+4}(f^{n_j}-f)\|_{L^2_v}\,dxdt=0.
	\end{align}
	Combining \eqref{716}, \eqref{86} and \eqref{86a}, and utilizing the weak-$*$ limit in \eqref{weaklimit}, we can take limit $n=n_j\to\infty$ in \eqref{weak1} to deduce that 
	for any $\Phi\in C^\infty_c(\R_t\times\R^3_x\times\R^3_v)$,
	\begin{multline*}
		(f(T_2),\Phi(T_2))_{L^2_x(\Omega)L^2_v}-(f,(\pa_t+v\cdot\na_x)\Phi)_{L^2_{t,x,v}([T_1,T_2]\times\Omega\times\R^3_v)}+(f,\Phi)_{L^2_{t}([T_1,T_2])L^2_{x,v}(\Si_+)}\\
		=(f_{T_1},\Phi(T_1))_{L^2_x(\Omega)L^2_v}+(g,\Phi)_{L^2_{t}([T_1,T_2])L^2_{x,v}(\Si_-)}\\
		+\big(\Gamma(\mu^{\frac{1}{2}}+f,f)+\Gamma(f,\mu^{\frac{1}{2}}),\Phi\big)_{L^2_{t,x,v}([T_1,T_2]\times\Omega\times\R^3_v)}.
	\end{multline*}
	That is, $f$ is a weak solution to \eqref{non1}. The estimates \eqref{84cz} and \eqref{84bz} follow from \eqref{fLineares1} and \eqref{84c}.
	The non-negativity of $F=\mu+\mu^{\frac{1}{2}}f$ can be derived from Theorem \ref{positivity}.
	This completes the proof of Theorem \ref{nonLocal}.
\end{proof}

	\subsection{Global nonlinear theory}
	In this subsection, we give proof of the global-in-time existence of the full nonlinear Boltzmann equation. 
	We will derive the global \emph{a priori} estimate from the Boltzmann equation so that one can repeat the existence result in the interval $[0,1]$, $[1,2]$, $\dots$, $[n,(n+1)]$ to derive the global existence. 
	\begin{Thm}[Global-in-time existence of nonlinear equation]\label{ThmInflow}
		Assume that $\Omega$ is bounded. 
		Let $-\frac{3}{2}<\ga\le 2$, $s\in(0,1)$ and fix $l\ge\ga+10$. Fix any sufficiently small $\de_0\in(0,1)$ (which can be given in Theorem \ref{LinftyLinear}), and let $l_0$ be a large constant depending on $l,s$ (which can be given in Theorem \ref{LinftyLemVanish}).
		There exists sufficiently small $\ve_\infty,\ve_1>0$ such that if $f_0$ and $g$ satisfy $F_0=\mu+\mu^{\frac{1}{2}}f_0\ge 0$ and 
		\begin{align}\begin{aligned}\label{ve0z} 
				\|\<v\>^lg\|_{L^\infty_{t,x,v}([0,\infty)\times\Si_-)}+\|\<v\>^lf_{0}\|_{L^\infty_{x,v}(\Omega\times\R^3_v)}& =\ve_\infty, \\
				\|e^{c_0t}\<v\>^{l-2}g\|^2_{L^2_t([0,\infty))L^2_{x,v}(\Si_-)}+\|\<v\>^{l-2}f_{0}\|^2_{L^2_x(\Omega)L^2_v(\R^3_v)}& =\ve_1,\\
				\|\<v\>^{l_0}g\|^2_{L^2_t([0,\infty))L^2_{x,v}(\Si_-)}+\|\<v\>^{l_0}f_{0}\|^2_{L^2_x(\Omega)L^2_v(\R^3_v)}&=\ti C, 
			\end{aligned}
		\end{align}
		for some constants $\ti C>0$ and $c_0>0$. 
		Then there exists a solution $f$ to the equation
		\begin{align}
			\label{non2}
			\left\{
			\begin{aligned}
				& \pa_tf+ v\cdot\na_xf = \Gamma(\mu^{\frac{1}{2}}+f,f)+\Gamma(f,\mu^{\frac{1}{2}})\quad \text{ in } (0,\infty)\times\Omega\times\R^3_v, \\
				& f|_{\Si_-}=g\quad \text{ on }[0,\infty)\times\Si_-, \\
				& f(0,x,v)=f_0\quad \text{ in }\Omega\times\R^3_v,
			\end{aligned}\right.
		\end{align}
		satisfying $F=\mu+\mu^{\frac{1}{2}}f\ge 0$, and for any $T\in(0,\infty)$ and $k\in[0,l_0]$, 
		\begin{multline}\label{84czz}
			\|\<v\>^kf\|^2_{L^\infty_t([0,T])L^2_x(\Omega)L^2_v}
			+\|\<v\>^kf\|^2_{L^2_t([0,T])L^2_{x,v}(\Si_+)}
			+c_0\|\<v\>^kf\|_{L^2_t([0,T])L^2_x(\Omega)L^2_D}^2\\
			+c_0\|\<v\>^kf\|_{L^2_t([0,T])L^2_x(\Omega)L^2_v}^2
			\le C\|\<v\>^kf_0\|^2_{L^2_x(\Omega)L^2_v}
			+C\|\<v\>^kg\|^2_{L^2_t([0,T])L^2_{x,v}(\Si_-)}, 
		\end{multline}
		and
		\begin{align}\label{84czy}
			\|\<v\>^lf(t)\|_{L^\infty_{t,x,v}([0,T]\times\ol\Omega\times\R^3_v)}\le\ve_\infty+C\ve_1^{\zeta}\le \de_0, 
		\end{align}
		with some constant $\zeta=\zeta(s)>0$. 
			%
	\end{Thm}
	\begin{proof}
		Since the local-in-time existence is already given in Theorem \ref{nonLocal}. we will use the \emph{a priori} arguments (for simplicity) and $L^2$--$L^\infty$ estimates to derive the global-in-time energy estimate. 
		Since $\Omega$ is a bounded domain, we can apply the global \emph{a priori} $L^2$ estimate in Section \ref{Sec12}. Using such a global $L^2$ decay, the local-in-time solution will be global-in-time. The non-negativity of $F=\mu+\mu^{\frac{1}{2}}f\ge 0$ can be derived from Theorem \ref{positivity}. 
		
		\medskip
		\noindent
		{\bf The \emph{a priori} estimate.}
		Let $n>0$ be any integer.
		Then we assume the \emph{a priori} assumption 
		\begin{align}\label{ve111}
			\|\<v\>^lf\|_{L^\infty_{t,x,v}([0,n]\times\ol\Omega\times\R^3_v)}\le \de_0.
		\end{align}
		%
		%
		Moreover, from the $L^2$ estimate in Theorem \ref{L2globalThm} (i.e. \eqref{L2es1a}), we have, for any $0\le j\le n-1$, 
		\begin{align}\label{733a}
			e^{c_0j}\|\<v\>^{l-2}f|_{t=j}\|^2_{L^2_x(\Omega)L^2_v}
			\le C\|\<v\>^{l-2}f_0\|^2_{L^2_x(\Omega)L^2_v}+C\|e^{c_0t}\<v\>^{l-2}g\|^2_{L^2_t([0,j])L^2_{x,v}(\Si_-)}, 
		\end{align}
		and 
		\begin{align*}
			\|\<v\>^{l_0}f|_{t=j}\|^2_{L^2_x(\Omega)L^2_v}
			\le C\|\<v\>^{l_0}f_0\|^2_{L^2_x(\Omega)L^2_v}+C\|\<v\>^{l_0}g\|^2_{L^2_t([0,j])L^2_{x,v}(\Si_-)}, 
		\end{align*}
		with some constant $c_0>0$. Moreover, by a simple $L^2$ energy estimate on $[j,j+1]$ (for instance \eqref{84cz}), we have 
		\begin{align}\label{733c}
			\|f\|^2_{L^\infty_t([j,j+1])L^2_x(\Omega)L^2_v}
			\le C\|f|_{t=j}\|^2_{L^2_x(\Omega)L^2_v}+C\|g\|^2_{L^2_t([j,j+1])L^2_{x,v}(\Si_-)}. 
		\end{align}
		Thus, combining the above three estimates \eqref{733a}--\eqref{733c}, by assumption \eqref{ve0z} and the \emph{a priori} assumption \eqref{ve111} for the current Theorem, the assumption \eqref{ve1} in Theorem \ref{LinftyLinear} (for linear equation) is satisfied with $[T_1,T_2]=[j,j+1]$, $\psi\equiv\vp\equiv\vp_2:=f$ and $\vp_1=0$:
		\begin{align*}
			\begin{aligned}
				\|\<v\>^lf\|_{L^\infty([j,j+1])L^\infty_{x}(\Omega)L^\infty_v}&\le\de_0,\\
				\|\<v\>^{l}g\|_{L^\infty_{t,x,v}([j,j+1]\times\Si_-)}\le \ve_\infty,\quad\|\<v\>^{l}f|_{t=j}\|_{L^\infty_{x,v}(\Omega\times\R^3_v)} &\le \de_0,\\
				\|\<v\>^{l-2}g\|^2_{L^2_t([j,j+1])L^2_{x,v}(\Si_-)}+
				\|\<v\>^{l-2}f|_{t=j}\|^2_{L^2_x(\Omega)L^2_v(\R^3_v)}+\|f\|^2_{L^\infty_t([j,j+1])L^2_{x}(\Omega)L^2_v}
				&\le \ve_1Ce^{-c_0j},\\
				\|\<v\>^{l_0}g\|^2_{L^2_t([j,j+1])L^2_{x,v}(\Si_-)}+
				\|\<v\>^{l_0}f|_{t=j}\|^2_{L^2_t([j,j+1])L^2_x(\Omega)L^2_v}
				&\le \ti CC.
			\end{aligned}
		\end{align*}
		Note that the constant $l_0$ in the current Theorem is larger than the one in Theorem \ref{LinftyLinear}. 
		Therefore, applying Theorem \ref{LinftyLinear}
		with $\de_\infty\le \ve_\infty$, $\de_\infty'=\|\<v\>^{l}f|_{t=j}\|_{L^\infty_{x,v}(\Omega\times\R^3_v)}$, $\de_1\le \ve_1Ce^{-c_0j}$ and $\ti C_1=\ti CC$ therein, 
		we deduce 
		the $L^\infty$ estimates in time interval $[j,j+1]$: 
		\begin{align}\label{ve222}
			\|\<v\>^lf\|_{L^\infty([j,j+1])L^\infty_{x,v}(\ol\Omega\times\R^3_v)}
			\le\max\big\{\ve_\infty,\,\|\<v\>^{l}f|_{t=j}\|_{L^\infty_{x,v}(\Omega\times\R^3_v)}\big\}
			+ C(2+\ti C)^{C}\ve_1^{\zeta}e^{-c_0j\zeta}, 
		\end{align}
		with some constant $C=C(l,\ga,s)>0$ which is independent of $j,n$, and $\zeta=\zeta(s)>0$. 
		Repeating such estimate on $[0,1]$, $[1,2]$, $\dots$, $[j,j+1]$ $(0\le j\le n-1)$, we have 
		\begin{align*}
			&\|\<v\>^lf\|_{L^\infty([j,j+1])L^\infty_{x,v}(\ol\Omega\times\R^3_v)}\\
			&\quad\le\max\big\{\ve_\infty,\,\|\<v\>^{l}f|_{t=j-1}\|_{L^\infty_{x,v}(\Omega\times\R^3_v)}\big\}
			+ C(2+\ti C)^{C}\ve_1^{\zeta}\big(e^{-c_0(j-1)\zeta}+e^{-c_0j\zeta}\big)\\
			&\quad\le \cdots\le \max\big\{\ve_\infty,\,\|\<v\>^{l}f|_{t=0}\|_{L^\infty_{x,v}(\Omega\times\R^3_v)}\big\}
			+ C(2+\ti C)^{C}\ve_1^{\zeta}\sum_{k=0}^je^{-c_0k\zeta}\\
			&\quad\le \ve_\infty+C(2+\ti C)^{C}\ve_1^{\zeta}. 
		\end{align*}
		Therefore, if we choose $\ve_\infty,\ve_1>0$ sufficiently small, which depends only on $l,\ga,s$ and is independent of $j,n$, we deduce 
		\begin{align*}
			\|\<v\>^lf\|_{L^\infty([0,n])L^\infty_{x,v}(\ol\Omega\times\R^3_v)}\le \ve_\infty+C\ve_1^{\zeta}\le\de_0, 
		\end{align*}
		which closes the \emph{a priori} assumption \eqref{ve111}. Moreover, the constants above are independent of $n$, and hence, one can let $n\to\infty$ to obtain \eqref{84czy}. The $L^2$ estimate \eqref{84czz} is directly from Theorem \ref{L2globalThm}. 
		
		\smallskip\noindent {\bf Remark.}
		One can also notice that the coefficient $``1"$ in the first right-hand term of \eqref{ve222} is essential, and this is the main purpose we split the equation into non-vanishing and vanishing initial-inflow data as stated in Section \ref{Sec65}. 
		Note that the initial local-in-time smallness of the \emph{a priori} assumption \eqref{ve111} can be given by \eqref{84bz}. Then it's standard to apply the continuity argument to obtain the global-in-time existence of the equation \eqref{non2}.
		In fact, the local-in-time existence of the equation \eqref{non2} is given in the Theorem \ref{nonLocal}. If there exists a solution $f$ for any time $t\in[0,T]$, then from \eqref{733a}--\eqref{733c}, we know that $f(T)$ and $g(t)$ ($t\in[T,\infty)$) satisfy the initial condition \eqref{ve0} for the local-in-time existence Theorem \ref{nonLocal} at $t=T$. Moreover, since the constants $L^2$--$L^\infty$ estimates \eqref{84czz} and \eqref{84czy} are independent of time, we can repeat the same procedure and finally deduce the global-in-time existence. 
		This completes the proof of Theorem \ref{ThmInflow}.
	\end{proof}
	
\subsection{Proof of Theorem \ref{ThmInflowMain}}
The main Theorem \ref{ThmInflowMain} follows from the 
global-in-time existence in Theorem \ref{ThmInflow}. 
Moreover, the large-time behavior \eqref{L2es1b} follows from the global $L^2$ decay \eqref{L2es1a} in Section \ref{Sec12}. (Note that Section \ref{Sec12} is self-consistent.)

\section{\texorpdfstring{$L^\infty$}{L infty} estimate for reflection boundary}\label{Sec10}
In this Section, we let $l\ge \ga+10$, $\vpi\ge 0$, $\eta,\ve\in(0,1)$, $0\le T_1<T_2\le T_1+1$, and fix $-\frac{3}{2}<\ga\le 2$ and $s\in(0,1)$. Moreover, we assume the same conditions as in Theorem \ref{LemLinRe}. 
If not specified, the underlying time interval in this Section is $[T_1,T_2]$; i.e.
	$L^q_t=L^q_t([T_1,T_2])$.
Then we consider the $L^\infty$ estimate of the linearized Boltzmann equation with \emph{Maxwell} reflection boundary condition \eqref{reflect}:
\begin{align*}
	Rf(x,v)=(1-\al)f(x,R_L(x)v)+\al R_Df(x,v),
\end{align*}
with some $\al\in(0,1)$, where $R_L(x)$ and $R_D$ are given in \eqref{RL1} and \eqref{RD1} respectively. Moreover,
we will consider level functions as in \eqref{flK} with constant $K>0$: 
\begin{align*}
	f^{(l)}_K :=f-K\<v\>^{-l}_\de,\quad f^{(l)}_{K,+}=f^{(l)}_K\1_{f^{(l)}_K\ge 0}.
\end{align*}
To find the solution to the nonlinear Boltzmann equation, we need to derive the $L^\infty$ estimate of the solution $f$ to \eqref{linearre} and use the modified Boltzmann equation as follows. 
As in Section \ref{SecLinfty}, we will add the extra dissipation term ${\eta\<v\>^l}^{}f$ to obtain the initial $L^\infty$ bound for $f$. 
Applying Theorem \ref{LemLinRe}, we can obtain the solution $f$ to the linear modified Boltzmann equation with reflection:
\begin{align}\label{diff}
	\left\{
	\begin{aligned}
 & \pa_tf+ v\cdot\na_xf =\vpi Vf+
		\Gamma(\Psi,f)+\Gamma(\vp,\mu^{\frac{1}{2}})\\&\qquad\qquad\qquad\qquad-{\eta\<v\>^l}^{}f \quad \text{ in } [T_1,T_2]\times\Omega\times\R^3_v, \\
 & f|_{\Si_-}=(1-\ve)Rf \quad \text{ on }[T_1,T_2]\times\Si_-, \\
 & f(T_1,x,v)=f_{T_1} \qquad \text{ in }\Omega\times\R^3_v.
	\end{aligned}\right.
\end{align}
To split equation \eqref{diff}, by splitting $\vp=\vp_1+\vp_2$ with $\vp_1,\vp_2\in L^2_{t,x,v}$, we apply Theorem \ref{LemLinRe} again to obtain the solution $f_1$ to equation (with reflection):
\begin{align}\label{diffa}
	\left\{
	\begin{aligned}
    & \pa_tf_1+ v\cdot\na_xf_1 =\vpi Vf_1+
		\Gamma(\Psi,f_1)+\Gamma(\vp_1,\mu^{\frac{1}{2}})
\\&\qquad\qquad\qquad\qquad
		-N\<v\>^{l-2}f_1
		-\eta\<v\>^lf_1
		\quad \text{ in } [T_1,T_2]\times\Omega\times\R^3_v,  \\
    & f_1|_{\Si_-}=(1-\ve)Rf_1\quad\quad \text{ on }[T_1,T_2]\times\Si_-, \\
    & f_1(T_1,x,v)=f_{T_1} \qquad\qquad \text{ in }\Omega\times\R^3_v,
	\end{aligned}\right.
\end{align}
which has non-vanishing initial data.
Let $f_2=f-f_1$ in $\ol\Omega$. We then obtain the equation of $f_2$:
\begin{align}\label{diffb}
	\left\{
	\begin{aligned}
 & \pa_tf_2+ v\cdot\na_xf_2 =\vpi Vf_2+
		\Gamma(\Psi,f_2)+\Gamma(\vp_2,\mu^{\frac{1}{2}})
		\\&\qquad\qquad\qquad\qquad
+N\<v\>^{l-2}f_1
		-\eta\<v\>^lf_2
		\quad \text{ in } [T_1,T_2]\times\Omega\times\R^3_v,  \\
 & f_2|_{\Si_-}=(1-\ve)Rf_2\quad\quad \text{ on }[T_1,T_2]\times\Si_-, \\
 & f_2(T_1,x,v)=0\qquad\qquad \text{ in }\Omega\times\R^3_v,
	\end{aligned}\right.
\end{align}
which has vanishing initial data.
Then we can find its extension to the whole space by using Theorem \ref{ThmExtend}: 
\begin{align}\label{diff1}
	\begin{cases}
		\pa_tf+v\cdot\na_xf = g,&\text{ in }[T_1,T_2]\times\R^3_x\times\R^3_v,\\
		f|_{\Si_-}=(1-\ve)Rf &\text{ on }[T_1,T_2]\times\Si_-,\\
		f(T_1,x,v)=0 &\text{ in }\Omega\times\R^3_v, \\
		f(T_1,x,v)=0 &\text{ in }D_{out}, \\
		f(T_2,x,v)=0 & \text{ in }D_{in},
	\end{cases}
\end{align}
for some initial data $f_{T_1}$, where 
\begin{align}
	\label{gDef}
	g=\begin{cases}
		\vpi Vf+\Gamma(\Psi,f)+\Gamma(\vp,\mu^{\frac{1}{2}})+N\<v\>^{l-2}f_1-{\eta\<v\>^l}^{}f&\text{ in }\Omega\times\R^3_v,\\
		-E\cdot\na_vf+P^2f&\text{ in }D_{in},\\
		-E\cdot\na_vf-P^2f&\text{ in }D_{out}.
	\end{cases}
\end{align}
Here $D_{in}$ and $D_{out}$ are given in \eqref{Dinoutwt}.
In the following Subsections, we will derive the $L^\infty$ estimates for equations \eqref{diffa} and \eqref{diff1}.

\subsection{Basic \texorpdfstring{$L^2$}{L2} level-function estimates}\label{Sec81}
Fix a $\de>0$ (used in the weight $\<v\>^{l}_{\de}$ and cutoff $\chi^-_\de$) to be determined in \eqref{87d}. 
Let $T_2=T_1+\de^3$ and $[T_1,T_2]$ be the underlying time interval.  
	Assume $\Psi=\mu^{\frac{1}{2}}+\psi\ge 0$ and $\vp$ satisfy
	\begin{align*}
		\|[\<v\>^l\psi,\<v\>^{l}\vp]\|_{L^\infty_t([T_1,T_2])L^\infty_x(\Omega)L^\infty_v}\le\de_0,
	\end{align*}
	with small $\de_0>0$. 
We consider a model equation for both \eqref{diffa} and \eqref{diff1}:
\begin{align}\label{diff1model}
\left\{
\begin{aligned}
	& \pa_tf+v\cdot\na_xf =
		\vpi Vf+\Gamma(\Psi,f)+\Gamma(\vp,\mu^{\frac{1}{2}})&\\&\qquad\qquad\qquad\quad-M\<v\>^{l-2}f+N\<v\>^{l-2}\phi-\eta\<v\>^lf 
		\quad\text{ in }[T_1,T_2]\times\Omega\times\R^3_v,\\
	&\pa_tf+ v\cdot\na_xf +E\cdot\na_vf=P^2f\qquad\qquad \text{ in } [T_1,T_2]\times D_{in},\\
	&\pa_tf+v\cdot\na_xf+E\cdot\na_vf=-P^2f\qquad\qquad \text{ in } [T_1,T_2]\times D_{out},\\
	& f|_{\Si_-}=(1-\ve)Rf \qquad\qquad \text{ on }[T_1,T_2]\times\Si_-,\\
	& f(T_1,x,v)=f_{T_1} \qquad\qquad \text{ in }\Omega\times\R^3_v,\\
	& f(T_1,x,v)=0 \qquad\qquad \text{ in }D_{out},\\
	& f(T_2,x,v)=0 \qquad\qquad \text{ in }D_{in},
\end{aligned}\right.
\end{align}
Here $D_{in}$ and $D_{out}$ are given in \eqref{Dinoutwt}.
Note that 
$$
v\cdot\na_xff^{(l)}_{K,+}=\frac{1}{2}v\cdot\na_x(f^{(l)}_{K,+})^2+Kv\cdot\na_x\<v\>^{-l}_\de f^{(l)}_{K,+}.
$$
Multiplying \eqref{diff1model} by $f^{(l)}_{K,+}$ in $\Omega$, 
\begin{align}\label{87c}\notag
	\pa_t|f^{(l)}_{K,+}|^2+v\cdot\na_x|f^{(l)}_{K,+}|^2 &=
		2\Big(\vpi Vf+\Gamma(\Psi,f-K\<v\>^l_\de)+\Gamma(\Psi,K\<v\>^l_\de)+\Gamma(\vp,\mu^{\frac{1}{2}})\\&\qquad-M\<v\>^{l-2}f+N\<v\>^{l-2}\phi-\eta\<v\>^lf-Kv\cdot\na_x\<v\>^{-l}_\de\Big)f^{(l)}_{K,+}.
\end{align}

\noindent{\bf Main energy in $\Omega$.}
For any $T\in[T_1,T_2]$, integrating \eqref{87c} over $(T_1,T)\times\Omega\times\R^3_v$, 
\begin{multline*}
	\|f^{(l)}_{K,+}(T)\|^2_{L^2_x(\Omega)L^2_v}-\|f^{(l)}_{K,+}(T_1)\|^2_{L^2_x(\Omega)L^2_v}
		+\|f^{(l)}_{K,+}\|^2_{L^2_t(T_1,T)L^2_{x,v}(\Si_+)}
		-\|f^{(l)}_{K,+}\|^2_{L^2_t(T_1,T)L^2_{x,v}(\Si_-)}\\
	=\int_{(T_1,T)\times\Omega\times\R^3_v}\Big(\pa_t|f^{(l)}_{K,+}|^2+v\cdot\na_x|f^{(l)}_{K,+}|^2\Big)\,dvdxdt. 
\end{multline*}
For the boundary term, we have from \eqref{101a} and \eqref{533} that 
\begin{align}\notag\label{87b}
	\|(Rf)^{(l)}_{K,+}\|^2_{L^2_t(T_1,T)L^2_{x,v}(\Si_-)}
	&\le 
	(1-\al+C\al\de^2)\|f^{(l)}_{K,+}\|^2_{L^2_t(T_1,T)L^2_{x,v}(\Si_+)}+\al\|f^{(l)}_{K,+}(T_1)\|^2_{L^2_x(\Omega)L^2_v}\\
	&\quad-\al\int_{T_1}^{T}\int_{\Omega\times\R^3_v}\chi^{-}_{\de}\big(\pa_t|f^{(l)}_{K,+}|^2+v\cdot\na_x|f^{(l)}_{K,+}|^2\big)\,dvdxdt. 
\end{align}
Combining the above estimates 
and choosing $\de=\de(\al)>0$ sufficiently small such that $1-\big(1-\al+C\al\de^2\big)=\frac{\al}{2}$, we have 
\begin{align}\label{87d}\notag
	&(1-\al)\|f^{(l)}_{K,+}(T)\|^2_{L^2_x(\Omega)L^2_v}-\|f^{(l)}_{K,+}(T_1)\|^2_{L^2_x(\Omega)L^2_v}
		+c_\al\|f^{(l)}_{K,+}\|^2_{L^2_t(T_1,T)L^2_{x,v}(\Si_+)}\\
	&\quad\le\int_{(T_1,T)\times\Omega\times\R^3_v}\Big(\al\big(1-\chi^{-}_{\de}\big)+(1-\al)\Big)\Big(\pa_t|f^{(l)}_{K,+}|^2+v\cdot\na_x|f^{(l)}_{K,+}|^2\Big)\,dvdxdt. 
\end{align}
for some constant $c_\al>0$ that depends only on $\al\in(0,1)$. 
Utilizing the right-hand side of \eqref{87c} and applying Lemmas \ref{GachideLem} and \ref{CollLevelLem} for level-function collisional estimate with and without cutoff $1-\chi^{-}_{\de}$, respectively, we deduce 
\begin{align*}
	&\notag\int_{(T_1,T)\times\Omega\times\R^3_v}\Big(\pa_t|f^{(l)}_{K,+}|^2+v\cdot\na_x|f^{(l)}_{K,+}|^2\Big)\,dvdxdt\\
	&\notag\quad+2M\|\<v\>^{\frac{l}{2}-1}f^{(l)}_{K,+}\|_{L^2_t(T_1,T)L^2_x(\Omega)L^2_v}^2
	+\frac{2MK}{C_\de}\|\<v\>^{-2}f^{(l)}_{K,+}\|_{L^1_t(T_1,T)L^1_x(\Omega)L^1_v}\\
	&\notag\quad+4\vpi\|[\wh{C}^{}_0\<v\>^{{4}}f^{(l)}_{K,+},\<v\>^{{2}}\na_vf^{(l)}_{K,+}]\|^2_{L^2_t(T_1,T)L^2_{x}(\Omega)L^2_v}
	+2\vpi K\wh{C}^2_0\|\<v\>^{{-l+8}}f^{(l)}_{K,+}\|_{L^1_t(T_1,T)L^1_x(\Omega)L^1_v}\\
	&\notag\quad+2(c_0-C\|\<v\>^{4}\psi\|_{L^\infty_t(T_1,T)L^\infty_x(\Omega)L^\infty_v})\|f^{(l)}_{K,+}\|_{L^2_t(T_1,T)L^2_x(\Omega)L^2_D}^2\\
	&\notag\quad+2\eta\|\<v\>^lf^{(l)}_{K,+}\|_{L^2_t(T_1,T)L^2_x(\Omega)L^2_v}^2+\frac{2\eta K}{C_\al}\|f^{(l)}_{K,+}\|_{L^1_t(T_1,T)L^1_x(\Omega)L^1_v}\\
	&\notag\quad\le C_{\de}K\|\<v\>^{-l}f^{(l)}_{K,+}\|_{L^1_t(T_1,T)L^1_x(\Omega)L^1_v}
	+C\|f^{(l)}_{K,+}\|_{L^2_t(T_1,T)L^2_x(\Omega)L^2_v}^2\\
	&\qquad+2\|[K\<v\>^l\psi,\vp,N\<v\>^{l}\phi]\|_{L^\infty_t(T_1,T)L^\infty_x(\Omega)L^\infty_v}\|\<v\>^{-2}f^{(l)}_{K,+}\|_{L^1_t(T_1,T)L^1_x(\Omega)L^1_v}, 
\end{align*}
and 
\begin{align}\label{8levelcollib}
	&\notag\int_{(T_1,T)\times\Omega\times\R^3_v}(1-\chi^{-}_{\de})\big(\pa_t|f^{(l)}_{K,+}|^2+v\cdot\na_x|f^{(l)}_{K,+}|^2\big)\,dvdxdt\\
	&\notag\quad\notag\le C_\de\|\<v\>^{\frac{(\ga+2s)_+}{2}}f^{(l)}_{K,+}\|_{L^2_t(T_1,T)L^2_x(\Omega)L^2_v}^2\\
	&\quad\notag+C_\de\|[K\<v\>^l\psi,\vp,N\<v\>^{l}\phi]\|_{L^\infty_t(T_1,T)L^\infty_x(\Omega)L^\infty_v}\|\<v\>^{-2}f^{(l)}_{K,+}\|_{{L^1_t(T_1,T)L^1_x(\Omega)L^1_v}}\\
	&\quad+\vpi C_\de\|\<v\>^{{2}}f^{(l)}_{K,+}\|_{L^2_t(T_1,T)L^2_x(\Omega)L^2_v}\|\<v\>^{{2}}[f^{(l)}_{K,+},\na_vf^{(l)}_{K,+}]\|_{L^2_t(T_1,T)L^2_x(\Omega)L^2_v},
\end{align}
Here, $1-\chi^-_\de\in[0,1]$, $\|\mu^{\frac{1}{10^4}}\vp\|_{L^2_t(T_1,T)L^2_x(\Omega)L^2_v}\le C_{|\Omega|}\de_0$, we choose $\de_0>0$ sufficiently small and use \eqref{chideesti} to control the norm of $\chi^-_\de$. 
Substituting these two collisional estimates into \eqref{87d} and choosing sufficiently large $\wh{C}_0=\wh{C}_0(\ga,s,\al)>0$ (note that $\de=\de(\al)>0$ is chosen), we obtain 
\begin{align}\label{8maininOmega}
	&\notag\|f^{(l)}_{K,+}(T)\|^2_{L^2_x(\Omega)L^2_v}
		+\frac{\al}{2(1-\al)}\|f^{(l)}_{K,+}\|^2_{L^2_t(T_1,T)L^2_{x,v}(\Si_+)}+c_0\|f^{(l)}_{K,+}\|_{L^2_t(T_1,T)L^2_x(\Omega)L^2_D}^2\\
	&\quad\notag+2M\|\<v\>^{\frac{l}{2}-1}f^{(l)}_{K,+}\|_{L^2_t(T_1,T)L^2_x(\Omega)L^2_v}^2
	+\frac{2MK}{C_\de}\|\<v\>^{-2}f^{(l)}_{K,+}\|_{L^1_t(T_1,T)L^1_x(\Omega)L^1_v}\\
	&\quad\notag+2\vpi\|[\wh{C}^{}_0\<v\>^{{4}}f^{(l)}_{K,+},\<v\>^{{2}}\na_vf^{(l)}_{K,+}]\|^2_{L^2_t(T_1,T)L^2_{x}(\Omega)L^2_v}
	+2\vpi K\wh{C}^2_0\|\<v\>^{{-l+8}}f^{(l)}_{K,+}\|_{L^1_t(T_1,T)L^1_x(\Omega)L^1_v}\\
	&\notag\quad+2\eta\|\<v\>^lf^{(l)}_{K,+}\|_{L^2_t(T_1,T)L^2_x(\Omega)L^2_v}^2+\frac{2\eta K}{C_\al}\|f^{(l)}_{K,+}\|_{L^1_t(T_1,T)L^1_x(\Omega)L^1_v}\\
	&\quad\notag\le C_\al\|f^{(l)}_{K,+}(T_1)\|^2_{L^2_x(\Omega)L^2_v}
	+C_{\al}\|\<v\>^{\frac{(\ga+2s)_+}{2}}f^{(l)}_{K,+}\|_{L^2_t(T_1,T)L^2_x(\Omega)L^2_v}^2\\
	&\qquad+C_{\al}\|[K\<v\>^l\psi,\vp,N\<v\>^{l}\phi]\|_{L^\infty_t(T_1,T)L^\infty_x(\Omega)L^\infty_v}\|\<v\>^{-2}f^{(l)}_{K,+}\|_{L^1_t(T_1,T)L^1_x(\Omega)L^1_v},  
\end{align}
for any $T\in[T_1,T_2]$ and any $T_2-T_1\le\de^3$. This is the main energy estimate in $\Omega$. 

\medskip
\noindent{\bf Main energy in $\ol\Omega^c$.} For the estimate in $\ol\Omega^c$, we follow the calculations for deriving \eqref{esffff1}, and denote $h$ as in \eqref{hftt}, i.e. 
	$h(t)=f(T_1+T_2-t)$ in $D_{in}$ and $h(t)=f(t)$ in $D_{out}$. 
Then it follows from \eqref{diff1} that 
\begin{align}\label{hrever}
	\left\{
	\begin{aligned}
		&\pa_th+v\cdot\na_x\big(h\1_{D_{out}}-h\1_{D_{in}}\big)+E\cdot\na_v\big(h\1_{D_{out}}-h\1_{D_{in}}\big)\\
		&\qquad\qquad+P^2h=0,
		\qquad\qquad\qquad\text{ in } [T_1,T_2]\times\ol\Omega^c\times\R^3_v,\\
		&h|_{\Si_-}=f(T_1+T_2-t)|_{\Si_-}\qquad\qquad \text{ on }[T_1,T_2]\times\Si_-,\\
		&h|_{\Si_+}=f(t)|_{\Si_+}\qquad\qquad \text{ on }[T_1,T_2]\times\Si_+,\\
		&h(s,x,v)=0\qquad\qquad \text{ in }\ol\Omega^c\times\R^3_v.
	\end{aligned}\right.
\end{align}
For any $T\in[T_1,T_2]$ and $T_2=T_1+\de^3$, taking the inner product of \eqref{hrever} with $h^{(l)}_{K,+}$ over $[T_1,T]\times\ol\Omega^c\times\R^3_v$, using Lemma \eqref{CollLevelLem} (i.e. \eqref{651in}, \eqref{651out} and \eqref{651advec}), and the vanishing boundary measure in \eqref{traceh}, we have
\begin{multline}\label{esffff12}
	\|h^{(l)}_{K,+}(T)\|_{L^2_{x,v}(\ol\Omega^c\times\R^3_v)}^2
	+\|Ph^{(l)}_{K,+}\|^2_{L^2_t(T_1,T)L^2_x(\ol\Omega^c)L^2_v}
	+2K\|\<v\>^{-l}_\de P^2h^{(l)}_{K,+}\|_{L^1_t(T_1,T)L^1_x(\ol\Omega^c)L^1_v}\\
	\le 
	\|f^{(l)}_{K,+}\|_{L^2_t(T_1,T)L^2_{x,v}(\Si_+)}^2
	+\|f^{(l)}_{K,+}\|_{L^2_t(T_1,T)L^2_{x,v}(\Si_-)}^2
	+C_{\de}K\|\<v\>^{-l}f^{(l)}_{K,+}\|_{L^1_t(T_1,T)L^1_x(\ol\Omega^c)L^1_v}. 
\end{multline}
Choosing the constant $\wh{C}_l=\wh{C}_l(\ga,s,\de,\|n\|_{W^{1,\infty}})>0$ in \eqref{EP} sufficiently large, the last term in \eqref{esffff12} can be absorbed. The boundary energy on $\Si_-$ can be estimated by \eqref{87b} and Lemma \ref{GachideLem} (similar to \eqref{8levelcollib}). Thus, for any $T\in[T_1,T_2]$ and any $T_2-T_1\le\de^3$, 
\begin{align}\label{8maininOmegac}\notag
	&\|h^{(l)}_{K,+}(T)\|_{L^2_{x,v}(\ol\Omega^c\times\R^3_v)}^2
	+\|Ph^{(l)}_{K,+}\|^2_{L^2_t(T_1,T)L^2_x(\ol\Omega^c)L^2_v}
	+K\|\<v\>^{-l}_\de P^2h^{(l)}_{K,+}\|_{L^1_t(T_1,T)L^1_x(\ol\Omega^c)L^1_v}\\
	&\notag\le 
	(2-\al+C\al\de^2)\|f^{(l)}_{K,+}\|^2_{L^2_t(T_1,T)L^2_{x,v}(\Si_+)}+\al\|f^{(l)}_{K,+}(T)\|^2_{L^2_x(\Omega)L^2_v}\\
	&\quad\notag
	+C_\al\|\<v\>^{\frac{(\ga+2s)_+}{2}}f^{(l)}_{K,+}\|_{L^2_t(T_1,T)L^2_x(\Omega)L^2_v}^2\\
	&\quad\notag+C\|[K\<v\>^l\psi,\vp,N\<v\>^{l}\phi]\|_{L^\infty_t(T_1,T)L^\infty_x(\Omega)L^\infty_v}\|f^{(l)}_{K,+}\|_{L^1_t(T_1,T)L^1_x(\Omega)L^1_v}\\
	&\quad+\vpi C_\de\|\<v\>^{{2}}f^{(l)}_{K,+}\|_{L^2_t(T_1,T)L^2_x(\Omega)L^2_v}\|\<v\>^{{2}}[f^{(l)}_{K,+},\na_vf^{(l)}_{K,+}]\|_{L^2_t(T_1,T)L^2_x(\Omega)L^2_v}. 
\end{align}

\subsection{\texorpdfstring{$L^\infty$}{L infty} estimate with non-vanishing data}
In this Subsection, we analyze the $L^\infty$ control on the solution $f$ to equation \eqref{diffa}, which can be rewritten as
\begin{align}\label{diff0}
	\left\{
	\begin{aligned}
    & \pa_tf+ v\cdot\na_xf =\vpi Vf+
	\Gamma(\Psi,f)+\Gamma(\vp,\mu^{\frac{1}{2}})
	\\&\qquad\qquad\qquad\qquad-N\<v\>^{l-2}f-\eta\<v\>^{l}f
	\quad\text{ in } [T_1,T_2]\times\Omega\times\R^3_v,  \\
    & f|_{\Si_-}=(1-\ve)Rf \qquad \text{ on }[T_1,T_2]\times\Si_-,\\
    & f(T_1,x,v)=f_{T_1}\qquad\ \text{ in }\Omega\times\R^3_v.
	\end{aligned}\right. 
\end{align}
\begin{Lem}\label{ReLinfty1Lem}
	Fix $\de=\de(\al)>0$ (determined in Subsection \ref{Sec81}). Let $l\ge\ga+10$, $0\le T_1<T_2=T_1+\de^3$ and let $[T_1,T_2]$ be the underlying time interval.  
	Assume $\Psi=\mu^{\frac{1}{2}}+\psi\ge 0$ and $\vp$ satisfy
	\begin{align*}
		\|[\<v\>^l\psi,\<v\>^{l}\vp]\|_{L^\infty_t([T_1,T_2])L^\infty_x(\Omega)L^\infty_v}=\de_0,
	\end{align*}
	with sufficiently small $\de_0\in(0,1)$. Let $N>0$ be a sufficiently large constant depending on $\ga,s,\al$. 
	If $f$ is the solution to \eqref{diff0}, then
	\begin{align}\label{ReLinfty1}
		\|\<v\>^{l}_{\de}f\|_{L^\infty_t([T_1,T_2])L^\infty_x(\ol\Omega)L^\infty_v}\le K_1\equiv
		\max\big\{\frac{1}{2}\|\<v\>^{l}_\de \vp\|_{L^\infty_t([T_1,T_2])L^\infty_x(\Omega)L^\infty_v},\,\|\<v\>^{l}_{\de}f_{T_1}\|_{L^\infty_x(\Omega)L^\infty_v}\big\}.
	\end{align}
\end{Lem}
\begin{proof}
The underlying time interval is $[T_1,T_2]$ is not specified. 
Using the energy estimate in Section \ref{Sec81} and taking supremum $T\in[T_1,T_2]$ in \eqref{8maininOmega}, we have 
\begin{align*}\notag
	&\|f^{(l)}_{K,+}\|^2_{L^\infty 
	_tL^2_x(\Omega)L^2_v}
	+\frac{\al}{2(1-\al)}\|f^{(l)}_{K,+}\|^2_{L^2_tL^2_{x,v}(\Si_+)}+c_0\|f^{(l)}_{K,+}\|_{L^2_tL^2_x(\Omega)L^2_D}^2\\
	&\quad\notag+2N\|\<v\>^{\frac{l}{2}-1}f^{(l)}_{K,+}\|_{L^2_tL^2_x(\Omega)L^2_v}^2
	+\frac{2NK}{C_\al}\|\<v\>^{-2}f^{(l)}_{K,+}\|_{L^1_tL^1_x(\Omega)L^1_v}\\
	&\quad\notag+2\vpi\|[\wh{C}^{}_0\<v\>^{{4}}f^{(l)}_{K,+},\<v\>^{{2}}\na_vf^{(l)}_{K,+}]\|^2_{L^2_tL^2_{x}(\Omega)L^2_v}
	+2\vpi K\wh{C}^2_0\|\<v\>^{{-l+8}}f^{(l)}_{K,+}\|_{L^1_tL^1_x(\Omega)L^1_v}\\
	&\quad\le C_\al\|f^{(l)}_{K,+}(T_1)\|^2_{L^2_x(\Omega)L^2_v}
	+C_{\al}\|\<v\>^{2}f^{(l)}_{K,+}\|_{L^2_tL^2_x(\Omega)L^2_v}^2\\
	&\qquad+C_{\al}\|[K\<v\>^l\psi,\vp]\|_{L^\infty_tL^\infty_x(\Omega)L^\infty_v}\|\<v\>^{-2}f^{(l)}_{K,+}\|_{L^1_tL^1_x(\Omega)L^1_v}. 
\end{align*}
Thus, choosing a large $N=N(\ga,s,\al,\|n\|_{W^{1,\infty}})>4C_\al>0$ and $K>\frac{\|\<v\>^{l}\vp\|_{L^\infty_tL^\infty_x(\Omega)L^\infty_v}}{C_\al}$, we have  
\begin{align*}
	&\|f^{(l)}_{K,+}\|^2_{L^\infty_tL^2_x(\Omega)L^2_v}
	+\frac{\al}{2(1-\al)}\|f^{(l)}_{K,+}\|^2_{L^2_tL^2_{x,v}(\Si_+)}+c_0\|f^{(l)}_{K,+}\|_{L^2_tL^2_x(\Omega)L^2_D}^2\le C_\al\|f^{(l)}_{K,+}(T_1)\|^2_{L^2_x(\Omega)L^2_v}. 
\end{align*}
Therefore, choosing 
\begin{align*}
	K=\max\big\{\frac{1}{2}\|\<v\>^{l}_\de \vp\|_{L^\infty_t([T_1,T_2])L^\infty_x(\Omega)L^\infty_v},\,\|\<v\>^{l}_{\de}f_{T_1}\|_{L^\infty_x(\Omega)L^\infty_v}\big\}, 
\end{align*}
we have $\|f^{(l)}_{K,+}(T_1)\|^2_{L^2_x(\Omega)L^2_v}=0$ and hence, $$\|f^{(l)}_{K,+}(t)\|^2_{L^\infty_t([T_1,T_2])L^2_x(\Omega)L^2_v}
=\|f^{(l)}_{K,+}\|^2_{L^2_t([T_1,T_2])L^2_{x,v}(\Si_+)}=0,$$
which implies $f\le K\<v\>^{-l}_\de$ in $[T_1,T_2]\times\ol\Omega\times\R^3_v$. 
Similarly, we can multiply \eqref{diff0} by $(-f)^{(l)}_{K,+}$ and follow the above arguments to deduce the lower bound: $f\ge -K\<v\>^{-l}_\de$ in $[T_1,T_2]\times\ol\Omega\times\R^3_v$. This implies \eqref{ReLinfty1} and concludes Lemma \ref{ReLinfty1Lem}. 
\end{proof}

\subsection{Initial \texorpdfstring{$L^\infty$}{L infty} bound for vanishing data}
In this and the following Subsections, we consider the vanishing-initial data equation \eqref{diff1}. 
As in Section \ref{Secinitial1}, we first derive a large initial $L^\infty$ bound of $f$ and then, based on this large bound, we derive the improved $L^\infty$ bound later in Section \ref{Sec97}. 
This initial $L^\infty$ bound will serve as an \emph{a priori} bound to imply the finiteness of $\|\<v\>^l_\de f\|_{L^\infty_{t,x,v}}$. 
\begin{Lem}\label{initialLinftyLem2}
	Let $0\le T_1<T_2=T_1+\de^3$, $\al\in(0,1)$ and $l\ge \ga+10$. Let $N=N(\al,\ga,s)>0$ be a large constant determined in Lemma \ref{ReLinfty1Lem}. 
	Suppose $\Psi=\mu^{\frac{1}{2}}+\psi\ge 0$, $\vp$ satisfy
	\begin{align*}
		\begin{aligned}
			\|[\<v\>^l\psi,\<v\>^{l}\vp]\|_{L^\infty_t([T_1,T_2])L^\infty_{x}(\Omega)L^\infty_v}&\le \de_0,\\
			\|\<v\>^lf_1\|_{L^\infty(T_1,T)L^\infty_x(\Omega)L^\infty_v}&=K_1,
		\end{aligned}
	\end{align*}
	with some $K_1>0$ and sufficiently small $0<\de_0<1$. 
	Let $f$ be the solution to \eqref{diff1} in the sense of \eqref{weakfwhole} with vanishing initial data. Fix $\de=\de(\al)>0$ as in Subsection \ref{Sec81}. Then $f$ has an $L^\infty$ bound 
	\begin{align}\label{inidiffLinfty1}
		\|\<v\>^l_\de f\|_{L^\infty_{t,x,v}([T_1,T_2]\times\R^3_x\times\R^3_v)}
		\le e^{C_{\al,\eta}\de^3}(1+\de_0+NK_1)<\infty, 
	\end{align}
	where $C_{\al,\eta}=C(\al,\eta,\ga,s,l)>0$ is a constant independent of $\vpi,\vp,T_1$.
\end{Lem}
\begin{proof}
	The proof is a simple application of De Giorgi's arguments and is similar to Lemma \ref{initialLinftyLem}. Also, we only give the proof of the upper bound of $f$, while the lower bound of $f$ shares a similar calculation which is omitted.

	\smallskip\noindent{\bf Initial $L^\infty$ bound in $\Omega$.}
	To capture the necessary dissipation, we use the time-dependent function
	$K(t)\ge 0$ to be chosen later with the level functions given by
	\begin{align*}
		f_{K} := f-{K(t)}\<v\>^{-l},\quad 
		f^{(l)}_{K,+}=f_{K(t)}\1_{f_{K(t)}\ge 0}. 
	\end{align*}
	Note that only in this step $K(t)$ is a time-dependent function, and 
\begin{align*}
	(\pa_t+v\cdot\na_x)ff^{(l)}_{K,+}=\frac{1}{2}(\pa_t+v\cdot\na_x)(f^{(l)}_{K,+})^2+(\pa_tK+v\cdot\na_x\<v\>^{-l}_\de) f^{(l)}_{K,+}.
\end{align*}
Multiplying \eqref{diff1} by $f^{(l)}_{K,+}$ and using \eqref{deri+}, we have 
\begin{align*}
	\pa_t|f^{(l)}_{K,+}|^2+v\cdot\na_x|f^{(l)}_{K,+}|^2+\pa_tK(t)f^{(l)}_{K,+}&=
		2\Big(\vpi Vf+\Gamma(\Psi,f)+\Gamma(\vp,\mu^{\frac{1}{2}})\\&\qquad+N\<v\>^{l-2}f_1-\eta\<v\>^lf-Kv\cdot\na_x\<v\>^{-l}_\de\Big)f^{(l)}_{K,+}, 
\end{align*}
which is a similar identity to \eqref{87c} except that the dissipation $\pa_tK(t)$ is present. Thus, using the same technique for deriving \eqref{8maininOmega}, we have 
\begin{align}\label{8maininOmega1}\notag
	&\|f^{(l)}_{K,+}(T)\|^2_{L^2_x(\Omega)L^2_v}
	+\frac{\al}{2(1-\al)}\|f^{(l)}_{K,+}\|^2_{L^2_t(T_1,T)L^2_{x,v}(\Si_+)}
	+c_0\|f^{(l)}_{K,+}\|_{L^2_t(T_1,T)L^2_x(\Omega)L^2_D}^2\\
	&\quad\notag
	+\|\pa_tK\<v\>^{-l}_\de f^{(l)}_{K,+}\|_{L^1_t(T_1,T)L^1_{x}(\Omega)L^1_v}+\frac{2\eta K}{C_\al}\|f^{(l)}_{K,+}\|_{L^1_t(T_1,T)L^1_x(\Omega)L^1_v}\\
	&\quad
	\le C_{\al}\|\<v\>^{\frac{(\ga+2s)_+}{2}}f^{(l)}_{K,+}\|_{L^2_t(T_1,T)L^2_x(\Omega)L^2_v}^2
	+C_{\al}\big(\de_0+K+NK_1\big)\|\<v\>^{-2}f^{(l)}_{K,+}\|_{L^1_t(T_1,T)L^1_x(\Omega)L^1_v},
\end{align}
for any $T\in[T_1,T_2]$, 
where we have vanishing initial data (we drop the unnecessary dissipation term). 
Take supremum over $T\in[T_1,T_2]$ and note that the term $\|\<v\>^{\frac{(\ga+2s)_+}{2}}f^{(l)}_{K,+}\|_{L^2_t(T_1,T)L^2_x(\Omega)L^2_v}^2$ can be absorbed by dissipation when $\ga+2s\ge 0$ or by instant energy when $\ga+2s<0$ with small $T_2-T_1\le\de^3>0$. 
For the $L^1$ norm, we choose $K(t)\ge \de_0+NK_1$, and apply interpolation to deduce 
	\begin{align*}
		C_\al(\de_0 C+K+NK_1)\|\<v\>^{-2}f^{(l)}_{K,+}\|_{L^1_{x}(\Omega)L^1_v}&\le C_\al K(t)\|\<v\>^{-2}f^{(l)}_{K,+}\|_{L^1_{x}(\Omega)L^1_v}\\
		&\le C_{\al,\eta}K\|\<v\>^{-l}f^{(l)}_{K,+}\|_{L^1_{x}(\Omega)L^1_v}+\frac{\eta K}{C_\al}\|f^{(l)}_{K,+}\|_{L^1_{x}(\Omega)L^1_v}, 
	\end{align*}
	for some constant $C_\eta=C(\eta,\ga,s,l)>0$. 
	To control the term $C_{\al,\eta}K\|\<v\>^{-l}f^{(l)}_{K,+}\|_{L^1_{x}(\Omega)L^1_v}$, we simply choose 
	\begin{align*}
		K(t)=e^{C_{\al,\eta}(t-T_1)}(1+\de_0+NK_1), \ \text{which satisfies }\pa_tK\ge C_{\al,\eta} K \text{ and } K(t)\ge \de_0+NK_1, 
	\end{align*}
	where we fix the constant $C_{\al,\eta}>0$ here until the end of this proof. 
	Substituting these into \eqref{8maininOmega1}, we have
	\begin{align*}
		\|f^{(l)}_{K,+}\|^2_{L^\infty_t(T_1,T_2)L^2_x(\Omega)L^2_v}
	+\frac{\al}{2(1-\al)}\|f^{(l)}_{K,+}\|^2_{L^2_t(T_1,T_2)L^2_{x,v}(\Si_+)}
	\le 0,  
	\end{align*} 
		which implies $f\le K(t)\<v\>^{-l}_\de$ in $[T_1,T_2]\times\ol\Omega\times\R^3_v$. 
	The lower bound can be deduced similarly. 

\medskip\noindent{\bf Initial $L^\infty$ bound in $\ol\Omega^c$.} 
For the upper bound in $\ol\Omega^c$, it follows from \eqref{8maininOmegac} that if we choose constant $$K=\|f\|_{L^\infty_{t,x,v}([T_1,T_2]\times\ol\Omega\times\R^3_v)},$$ which is finite due to the above step, then $$\|h^{(l)}_{K,+}\|_{L^\infty_t([T_1,T_2])L^2_{x,v}(\ol\Omega^c\times\R^3_v)}^2=0,$$ and hence, $f\le C_\infty\<v\>^{-l}_\de$ in $\ol\Omega^c$. The lower bound can be deduced similarly and we conclude Lemma \ref{initialLinftyLem2}. 
Here, the small constant $\de>0$ depends only on $\al$, and hence, all the constants $C_\al=C(\al,\ga,s)>0$ (depending on the fixed $\de=\de(\al)>0$ and $N=N(\al,\ga,s)>0$) are independent of $\vpi,\ve,T_1$. 
\end{proof}

\subsection{Energy inequality for level functions}
In this Subsection, we will derive the main energy estimates for level functions, including regular velocity and regular spatial-time estimates. The velocity regular energy can be given by \eqref{8maininOmega} while the time-space regularity is given in Lemma \ref{LemBesovreguLevel}. 
Then we mimic Lemma \ref{energyinterLem} to derive the energy interpolation inequality and denote the same energy functional $\E^{}_{p}(K)$ as in \eqref{Ep} by 
\begin{align}\label{Ep2}\notag
	\E^{}_{p}(K):&=
	\|f^{(l)}_{K,+}\|^2_{L^\infty_tL^2_{x}(\R^3_x)L^2_v}
	+\|f^{(l)}_{K,+}\|_{L^2_{t}L^2_x(\Omega)L^2_D}^2\\
	&\notag\quad
	+\vpi\|[\wh{C}^{}_0\<v\>^{{4}}f^{(l)}_{K,+},\<v\>^{{2}}\na_vf^{(l)}_{K,+}]\|^2_{L^2_{t,x,v}([T_1, T_2]\times\Omega\times\R^3_v)}\\
	&\quad+\frac{1
	}{C_0\max\{C_\infty^{2p-2},1\}}
	\Big\|\int_{\R^3_v}\1_{[T_1,T_2]}\<v\>^{-10}(f^{(l)}_{K,+})^\frac{2}{p}\,dv\Big\|^q_{B^{s',2}_p(\R^{1+3}_{t,x})}. 
\end{align}

\begin{Lem}[Energy inequality]
	\label{energyinterLem1}
	Assume the same conditions as in Lemmas \ref{initialLinftyLem2} and \ref{interLem}. 
Let $s'\in(0,1)$ be a small constant chosen in \eqref{LemBesovreguLevel}. Let $[T_1,T_2]$ be the underlying time interval. 
Suppose $\Psi=\mu^{\frac{1}{2}}+\psi\ge0$ and $\vp$ satisfy 
\begin{align*}
	\begin{aligned}
		\|[\<v\>^{l}\psi,\<v\>^l\vp]\|_{L^\infty_tL^\infty_x(\Omega)L^\infty_v}+\|\vp\|_{L^2_tL^2_{x}(\R^3_x)L^2_v}&\le \de_0,\\
		\|\<v\>^{l_0+l-2}f\|^2_{L^2_tL^2_{x}(\R^3_x)L^2_v}\le C_1,\quad
		\|\<v\>^l_\de f\|_{L^\infty_tL^\infty_x(\Omega)L^\infty_v}&=C_\infty,\\
		\|\<v\>^{l}_\de f_1\|_{L^\infty_tL^\infty_x(\Omega)L^\infty_v}&=K_1,
	\end{aligned}
\end{align*}
with some $C_1,C_\infty,K_1>0$ and sufficiently small $\de_0\in(0,1)$. Assume that $f$ solves equation \eqref{diff1}.
		Then 
	\begin{align}\label{724p}
		&\notag\|f^{(l)}_{K,+}\|_{L^\infty_tL^2_{x,v}(\R^6)}^2+\|f^{(l)}_{K,+}\|^2_{L^2_tL^2_{x}(\Omega)L^2_D}
		+\vpi\|[\wh{C}^{}_0\<v\>^{{4}}f^{(l)}_{K,+},\<v\>^{{2}}\na_vf^{(l)}_{K,+}]\|^2_{L^2_tL^2_{x}(\Omega)L^2_v}\\
		&\notag\quad+\frac{1}{C_0\max\{C_\infty^{2p-2},1\}}
		\Big\|\int_{\R^3}\1_{[T_1,T_2]}\<v\>^{-10}(f^{(l)}_{K,+})^2\,dv\Big\|_{B^{s',2}_p(\R^{4}_{t,x})}^p\\
		&\quad\le C(1+C_1+K_1)^{C}\sum_{i=1}^4\frac{\ga_i\E_p(M)^{\beta_i}}{(K-M)^{\al_i}}.
	\end{align}
	Here $C=C(\al,s,s',p,\ga,l)>0$ is some large constant independent of $T_1,T_2$. The parameters $\beta_i>1$ and $\ga_i,\al_i>0$, depending on $s,s',p$, are given by \eqref{albexp}. 

	Furthermore, the estimate \eqref{724p} holds for $h:=-f$, with $f^{(l)}_{K,+}$ replaced by $(-f)^{(l)}_{K,+}$. The functional $\E_p$ is given by \eqref{Ep2}. 

\end{Lem}
\begin{proof}
For the first three terms of \eqref{724p}, taking combination $ \eqref{8maininOmega}+\ka\times\eqref{8maininOmegac}$ with sufficiently small $\ka=\ka(\al)>0$ ($\ka$ is used to eliminate the right-hand terms of \eqref{8maininOmegac}), and taking supremum $T\in[T_1,T_2]$, we can obtain (the time norm is taken on $[T_1,T_2]$)
\begin{align}\label{8maininOmega2}
	&\notag\frac{1}{2}\|f^{(l)}_{K,+}\|^2_{L^\infty_tL^2_x(\Omega)L^2_v}
		+\frac{\al}{4(1-\al)}\|f^{(l)}_{K,+}\|^2_{L^2_tL^2_{x,v}(\Si_+)}+c_0\|f^{(l)}_{K,+}\|_{L^2_tL^2_x(\Omega)L^2_D}^2\\
		&\quad\notag+\ka\|f^{(l)}_{K,+}\|_{L^\infty_tL^2_x(\ol\Omega^c)L^2_v}^2
		+\ka\|Pf^{(l)}_{K,+}\|^2_{L^2_tL^2_x(\ol\Omega^c)L^2_v}
		+\ka K\|\<v\>^{-l}_\de P^2f^{(l)}_{K,+}\|_{L^1_tL^1_x(\ol\Omega^c)L^1_v}\\
	&\quad\notag+2\vpi\|[\wh{C}^{}_0\<v\>^{{4}}f^{(l)}_{K,+},\<v\>^{{2}}\na_vf^{(l)}_{K,+}]\|^2_{L^2_tL^2_{x}(\Omega)L^2_v}
	+2\vpi K\wh{C}^2_0\|\<v\>^{{-l+8}}f^{(l)}_{K,+}\|_{L^1_tL^1_x(\Omega)L^1_v}\\
	&\quad\le C_{\al}\|\<v\>^{2}f^{(l)}_{K,+}\|_{L^2_tL^2_x(\Omega)L^2_v}^2
	+C_{\al}\big(\de_0+K+NK_1\big)\|\<v\>^{-2}f^{(l)}_{K,+}\|_{L^1_tL^1_x(\Omega)L^1_v},  
\end{align}
Since \eqref{inidiffLinfty1} implies $C_\infty=\|\<v\>^l_\de f\|_{L^\infty_{t,x,v}([T_1,T_2]\times\R^3_x\times\R^3_v)}<\infty$, the time-space Besov regular estimate is given in \eqref{736}: 
\begin{align}\label{736xxp}
	&\notag\Big\|\int_{\R^3}\1_{[T_1,T_2]}\<v\>^{-10}(f^{(l)}_{K,+})^2\,dv\Big\|_{B^{s',2}_p(\R^{4}_{t,x})}^p
	\le C_\al\Big(\|\<v\>^{-2}[f^{(l)}_{K,+}(T_1),f^{(l)}_{K,+}(T_2)]\|_{L^2_{x,v}(\R^6)}^{2p}\\
	&\notag\quad+C_\infty^{2p-2}\|\1_{[T_1,T_2]}\<v\>^{-2p}f^{(l)}_{K,+}\|_{L^2_{t,x,v}(\R^{7})}^{2}
	+\|\<v\>^{\frac{(\ga+2s)_+}{2}}f^{(l)}_{K,+}\|^{2p}_{L^2_tL^2_x(\Omega)L^2_v}\\
	&\notag\quad+(1+K+K_1)^p\|\<v\>^{-2}f^{(l)}_{K,+}\|^{p}_{L^1_tL^1_x(\Omega)L^1_v}
	+\vpi^p\|[\<v\>^{{3}}f^{(l)}_{K,+},\<v\>\na_vf^{(l)}_{K,+}]\|_{L^2_tL^2_x(\Omega)L^2_v}^{2p}\\
	&\quad+\|Pf^{(l)}_{K,+}\|_{L^2_tL^2_{x}(\ol\Omega^c)L^2_{v}}^{2p}
	+K^p\|\<v\>^{-l}P^2f^{(l)}_{K,+}\|^p_{L^1_tL^1_{x}(\ol\Omega^c)L^1_{v}}\Big), 
\end{align}
for some $C=C(\al,\ga,s,l,p)>0$ ($\de=\de(\al)>0$ is already chosen depending on $\al$). 
Moreover, the extra terms with exponent $p$ in \eqref{736xxp} can be controlled by \eqref{8maininOmega2}:
\begin{multline}\label{736xxp2}
	\Big\|\int_{\R^3}\1_{[T_1,T_2]}\<v\>^{-10}(f^{(l)}_{K,+})^2\,dv\Big\|_{B^{s',2}_p(\R^{4}_{t,x})}^p
	\le C_\al\Big(\max\{C_\infty^{2p-2},1\}\|\1_{[T_1,T_2]}\<v\>^{-2p}f^{(l)}_{K,+}\|_{L^2_{t,x,v}(\R^{7})}^{2}\\
	+\|\<v\>^{2}f^{(l)}_{K,+}\|^{2p}_{L^2_tL^2_x(\Omega)L^2_v}+(1+K+K_1)^p\|\<v\>^{-2}f^{(l)}_{K,+}\|^{p}_{L^1_tL^1_x(\Omega)L^1_v}\Big). 
\end{multline}

\smallskip\noindent{\bf Putting estimates together and controlling $L^1,L^2$ norms.}
Choose a large constant $C_0>0$ depending only on the constant $C_\al=C(\al,\ga,s,l,p)>0$ in \eqref{736xxp2}. 
Then the linear combination $\eqref{8maininOmega2}+\max\{C_\infty^{2p-2},1\}^{-1}C_0^{-1}\times\eqref{736xxp2}$ gives 
\begin{align}\label{271xx}
	&\notag\|f^{(l)}_{K,+}\|^2_{L^\infty_tL^2_{x}(\Omega)L^2_v}
	+c_\al\|f^{(l)}_{K,+}\|_{L^\infty_tL^2_{x}(\ol\Omega^c)L^2_v}^2
	+c_0\|f^{(l)}_{K,+}\|_{L^2_tL^2_x(\Omega)L^2_D}^2
	+c_\al\|f^{(l)}_{K,+}\|_{L^2_tL^2_{x,v}(\Si_+)}^2\\
	&\notag\quad
	+2\vpi\|[\wh{C}^{}_0\<v\>^{{4}}f^{(l)}_{K,+},\<v\>^{{2}}\na_vf^{(l)}_{K,+}]\|_{L^2_tL^2_x(\Omega)L^2_v}^2
	+2\vpi K\wh{C}^2_0\|\<v\>^{{-l+8}}f^{(l)}_{K,+}\|_{L^1_tL^1_x(\Omega)L^1_v}\\
	&\notag\quad+c_\al\|Pf^{(l)}_{K,+}\|^2_{L^2_tL^2_x(\ol\Omega^c)L^2_v}
	+Kc_\al\|\<v\>^{-l}P^2f^{(l)}_{K,+}\|_{L^1_tL^1_x(\ol\Omega^c)L^1_v}\\
	&\notag\quad+\frac{1}{C_0\max\{C_\infty^{2p-2},1\}}\Big\|\int_{\R^3}\1_{[T_1,T_2]}\<v\>^{-10}(f^{(l)}_{K,+})^2\,dv\Big\|_{B^{s',2}_p(\R^{4}_{t,x})}^p\\
	&\notag\quad\le C\|\<v\>^2f^{(l)}_{K,+}\|_{L^2_tL^2_x(\Omega)L^2_v}^2+C(1+K+K_1)\|\<v\>^{-2}f^{(l)}_{K,+}\|_{L^1_x(\Omega)L^1_v}, \\
	&\quad+\frac{1}{\max\{C_\infty^{2p-2},1\}}\Big(\|\<v\>^2f^{(l)}_{K,+}\|_{L^2_tL^2_x(\Omega)L^2_v}^{2p}
	+(1+K+K_1)^p\|\<v\>^{-2}f^{(l)}_{K,+}\|^p_{L^1_tL^1_{x}(\Omega)L^1_v}\Big).
\end{align}
For the $L^1$ and $L^2$ norms within $\Omega$, we can apply Lemma \ref{interLem} with $m=4$ to deduce 
\begin{align}\label{720axx}
	\|\<v\>^2f^{(l)}_{K,+}\|^2_{L^2_{t,x,v}([T_1,T_2]\times\Omega\times\R^3_v)}
	&\le\frac{C\big(\max\{C_\infty^{2p-2},1\}\big)^{\frac{(1-\si)\beta_*\xi_*}{2p}}C_1^{\frac{(1-\beta_*)\xi_*}{4}}(\E_p(M))^{r_*}}{(K-M)^{\xi_*-2}},
	%
\end{align}
and
\begin{align}\label{720bxx}
	\|\<v\>^{-2}f^{(l)}_{K,+}\|_{L^1_{t,x,v}([T_1,T_2]\times\Omega\times\R^3_v)}
	&\le \frac{C\big(\max\{C_\infty^{2p-2},1\}\big)^{\frac{(1-\si)\beta_*\xi_*}{2p}}C_1^{\frac{(1-\beta_*)\xi_*}{4}}(\E_p(M))^{r_*}}{(K-M)^{\xi_*-1}},
\end{align}
where $l_0>0$ is a sufficiently large constant depending on $l,s,s',p$ (given in Lemma \ref{interLem}) and we put the constant $C_0$ (which is large but determined in \eqref{271xx}, and appeared in \eqref{Ep2} and \eqref{77}) inside constant $C$. 
Then $C=C(\al,s,s',p,\ga,l)>0$ here is independent of $C_1,C_\infty$. 
Moreover, since $0\le M<K$, we have 
\begin{align*}
	1\le \frac{K}{K-M}.
\end{align*}
	Thus, substituting \eqref{720axx} and \eqref{720bxx} into \eqref{271xx}, and choosing $\de_0\in(0,1)$ sufficiently small, 
	\begin{align*}
		&\notag\|f^{(l)}_{K,+}\|^2_{L^\infty_tL^2_{x}(\Omega)L^2_v}
	+c_\al\|f^{(l)}_{K,+}\|_{L^\infty_tL^2_{x}(\ol\Omega^c)L^2_v}^2
	+c_0\|f^{(l)}_{K,+}\|_{L^2_tL^2_x(\Omega)L^2_D}^2
	+c_\al\|f^{(l)}_{K,+}\|_{L^2_tL^2_{x,v}(\Si_+)}^2\\
	&\notag\quad+2\vpi K\wh{C}^2_0\|\<v\>^{{-l+8}}f^{(l)}_{K,+}\|_{L^1_tL^1_x(\Omega)L^1_v}
	+2\vpi\|[\wh{C}^{}_0\<v\>^{{4}}f^{(l)}_{K,+},\<v\>^{{2}}\na_vf^{(l)}_{K,+}]\|_{L^2_tL^2_x(\Omega)L^2_v}^2\\
	&\notag\quad+c_\al\|Pf^{(l)}_{K,+}\|^2_{L^2_tL^2_x(\ol\Omega^c)L^2_v}
	+Kc_\al\|\<v\>^{-l}P^2f^{(l)}_{K,+}\|_{L^1_tL^1_x(\ol\Omega^c)L^1_v}\\
	&\notag\quad+\frac{1}{C_0\max\{C_\infty^{2p-2},1\}}\Big\|\int_{\R^3}\1_{[T_1,T_2]}\<v\>^{-10}(f^{(l)}_{K,+})^2\,dv\Big\|_{B^{s',2}_p(\R^{4}_{t,x})}^p\\
		&\quad\le C(1+C_1+K_1)^{C}\sum_{i=1}^4\frac{\ga_i\E_p(M)^{\beta_i}}{(K-M)^{\al_i}},
	\end{align*}
	where we used \eqref{sibexi1}, i.e. $\frac{(1-\si)\beta_*\xi_*}{2p}<1$. Here, the parameters are given by 
	\begin{align}\label{albexp}
		\begin{aligned}
			&\ga_1=\max\{C_\infty^{2p-2},1\}^{\frac{(1-\si)\beta_*\xi_*}{2p}},\quad \ga_2=\frac{\ga_1K}{K-M},\quad\ga_3=1,\quad\ga_4=\big(\frac{K}{K-M}\big)^p,\\
			& \beta_1=\beta_2=r_*,\quad\beta_3=\beta_4=pr_*,\\
			&\al_1=\al_2=\xi_*-2,
			\quad\al_3=\al_4=p(\xi_*-2).
		\end{aligned}
	\end{align}
	Here, $\beta_i>1$ and $\al_i>0$ ($1\le i\le 4$), which can be seen from Lemma \ref{interLem} (i.e. \eqref{rxistar}).
	This implies \eqref{724p}. Moreover, the constants $C_0,C>0$ used above depends only on $s,s',p,\ga,l$.

Since $(-f)^{(l)}_{K,+}$ satisfies the same estimate as $f^{(l)}_{K,+}$ in Lemma \ref{Qes1Lem} and in Lemma \ref{interLem}, we can obtain the same estimates for $(-f)^{(l)}_{K,+}$, which concludes the proof of Lemma \ref{energyinterLem1}.
\end{proof}

\subsection{Improved \texorpdfstring{$L^\infty$}{L infty} estimate for vanishing data (De Giorgi iteration)}\label{Sec97}
With the above preparations on the regular time-space-velocity estimation of level functions, we are ready to use the De Giorgi method to deduce the improved $L^\infty$ estimate.
The Theorem \ref{ThmLinRe} below will give us the $L^\infty$ control of the solution $f$ to equation \eqref{diff1}. But before that, we need to give a control on the energy functional $\E_0:=\E_p(0)$ first. Until the end of this proof, if not specified, the underlying time interval is still $[T_1,T_2]$. 

\smallskip 
Note that $C_\infty:=\|\<v\>^l_\de f\|_{L^\infty_{t,x,v}([T_1,T_2]\times\Omega\times\R^3_v)}$ below is merely a notation, which is finite due to Lemma \ref{initialLinftyLem2}. 
Moreover, the following lemma and theorem are ``global" results that are independent on $T_1,T_2$ with assumption \eqref{Assfzp}.

\begin{Lem}
	\label{E0Lemp}
	Let $\al\in(0,1)$ and $\de=\de(\al)>0$ determined in Subsection \ref{Sec81}. 
	Let $\vpi\ge 0$, $0\le T_1<T_2=T_1+\de^3<\infty$, 
	and fix $l\ge\ga+10$, $-\frac{3}{2}<\ga\le 2$ and $0<s<1$. Let $p^\#$ be given in \eqref{ppsharp} and suppose $1<p<p^\#$. 
	Let $s'\in(0,1)$ be a sufficiently small constant depending on $p$, 
	and $l_0=l_0(l,s,s',p)>0$ be a sufficiently large constant (which can be chosen in Lemma \ref{interLem} with $m=4$).
	Suppose $\Psi=\mu^{\frac{1}{2}}+\psi\ge0$ and $\vp$ are given and satisfy 
	\begin{align*}
		\begin{aligned}
			&\|[\<v\>^l\psi,\<v\>^{l}\vp]\|_{L^\infty_{t,x,v}([T_1,T_2]\times\Omega\times\R^3_v)}\le \de_0,\\
			&\|\<v\>^{l}_\de f_1\|_{L^\infty_tL^\infty_x(\Omega)L^\infty_v}=K_1,\quad
			\|\vp\|_{L^2_tL^2_x(\Omega)L^2_v}=\ti C,\\
		\end{aligned}
	\end{align*}
	with some constant $\ti C>0$ and sufficiently small $\de_0\in(0,1)$.
	Assume that $f$ solves \eqref{diff1} with any $\eta\ge0$ in the sense of \eqref{weakfwhole} and satisfies 
	\begin{align}\label{Assfzp}\begin{aligned}
			&\|\<v\>^{l_0+l-2}f\|^2_{L^2_{t,x,v}([T_1,T_2]\times\Omega\times\R^3_v)}= C_1<\infty,\\
			&\|\<v\>^{2}f\|^2_{L^2_{t,x,v}([T_1,T_2]\times\Omega\times\R^3_v)}\le \de_0<1,\quad
			\|\<v\>^l_\de f\|_{L^\infty_{t,x,v}([T_1,T_2]\times\Omega\times\R^3_v)}=:C_\infty.
		\end{aligned}
	\end{align}
	Let $\E_0=\E_p(0)$ be given in \eqref{Ep2}. Then 
	\begin{align}\label{773pp}
		\E_0 \le C\DD+\DD^p,
	\end{align}
	where $C>0$ is a generic constant independent of $T_1,\de,\al,\vpi,\ve$, and $\DD$ is given by 
	\begin{align}\label{DDpp}\notag
		\DD:&=\|f\|_{L^\infty_tL^2_x(\R^3_x)L^2_v}^2
		+c_0\|f\|_{L^2_tL^2_x(\Omega)L^2_D}^2\\
		&\quad+\vpi\|[\wh{C}^{}_0\<v\>^{{4}}f,\<v\>^{{2}}\na_vf]\|_{L^2_tL^2_x(\Omega)L^2_v}^2+\|Pf\|^2_{L^2_tL^2_{x}(\ol\Omega^c)L^2_v}. 
	\end{align}
	The same estimate holds for $(-f)_{+}$ instead of $f_+$ within $\E_0$. 
\end{Lem}

\begin{Lem}[$L^\infty$ estimate for vanishing data]
	\label{Leminftypp}
	Suppose the same conditions as in Lemma \ref{E0Lemp}. 
	Let $\E_0=\E_p(0)$ be given in \eqref{Ep}. 
	For any $\eta>0$, there exists solution $f$ to equation \eqref{diff1}, which satisfies
	\begin{align}\label{Linfty2p}
		\|\<v\>^l_\de f\|_{L^\infty_tL^\infty_{x,v}(\R^6_{x,v})}
		&\le C(1+C_1+K_1)^{C}\max_{1\le i\le 4}(\lam_i)^{\frac{1}{\al_i}}\big(\DD+\DD^p\big)^{\frac{\beta_i-1}{\al_i}},
	\end{align}
	where $\DD$ denotes \eqref{DDpp}, $\al_i,\beta_i,\lam_i$ are given in \eqref{albexz}, and $\zeta=\zeta(\ga,s,p)$, $C=C(\al,l,\ga,s,s',p)>0$ are large constants independent of $\eta,T_1,\ve,\vpi$. 
\end{Lem}

Then we give the proof of the above three Lemmas and Theorem. 

\begin{proof}[Proof of Lemma \ref{E0Lemp}]
The proof is the application of $L^2$ energy estimates on $[T_1,T_2]$.

\smallskip \noindent{\bf Note that $K=0$ in $\E_p(0)$.}
We have 
	\begin{align*}
		&\notag\|f_{+}\|_{L^\infty_tL^2_x(\R^3_x)L^2_v}^2+\|f_{+}\|^2_{L^2_tL^2_{x}(\Omega)L^2_D}
			+\vpi\|[\wh{C}^{}_0\<v\>^{{4}}f_{+},\<v\>^{{2}}\na_vf_{+}]\|^2_{L^2_tL^2_{x}(\Omega)L^2_v}\\
		&\notag\quad\le C\|f\|_{L^\infty_tL^2_x(\R^3_x)L^2_v}^2
		+C\|f\|_{L^2_tL^2_x(\Omega)L^2_D}^2
		+\vpi\|[\wh{C}^{}_0\<v\>^{{4}}f,\<v\>^{{2}}\na_vf]\|_{L^2_tL^2_x(\Omega)L^2_v}^2\\
		&\quad\le C\DD, 
	\end{align*}
	with a generic constant $C>0$. 
	The Besov regular term is also given by \eqref{736} with $K=0$ while the extra $L^2$ energy can be controlled $\DD$ in \eqref{DDpp}:
	\begin{align*}
		&\notag\Big\|\int_{\R^3}\1_{[T_1,T_2]}\<v\>^{-10}(f_{+})^2\,dv\Big\|_{B^{s',2}_p(\R^{4}_{t,x})}^p
		\le C\Big(\|\<v\>^{-2}[f_{+}(T_1),f_{+}(T_2)]\|_{L^2_{x,v}(\R^6)}^{2p}\\
		&\notag\qquad+C_\infty^{2p-2}\|\1_{[T_1,T_2]}\<v\>^{-2p}f^{(l)}_{+}\|_{L^2_{t,x,v}(\R^{7})}^{2}
		+\|\<v\>^{\frac{(\ga+2s)_+}{2}}f_{+}\|^{2p}_{L^2_tL^2_x(\Omega)L^2_v}\\
		&\notag\qquad+\|\mu^{\frac{1}{80}}f_{+}\|^{p}_{L^2_tL^2_x(\Omega)L^2_v}
		+\vpi^p\|[\<v\>^{{3}}f_{+},\<v\>\na_vf_{+}]\|_{L^2_tL^2_x(\Omega)L^2_v}^{2p}+\|Pf_{+}\|_{L^2_tL^2_{x}(\ol\Omega^c)L^2_{v}}^{2p}\Big)\\
		&\quad\le C\max\{C_\infty^{2p-2},1\}(\DD+\DD^p).
	\end{align*}
	The term $\<v\>^{\frac{(\ga+2s)_+}{2}}f_{+}$ can be controlled by either instant energy or dissipation rate. 
	Combining the above two estimates and recalling the coefficient $\frac{1}{C_0\max\{C_\infty^{2p-2},1\}}$ in $\E_0$ \eqref{Ep} with a large constant $C_0=C_0(\al,\ga,s,l,p)$, we obtain 
\begin{align*}
	\E_p(0)&\le C\DD+\DD^p.
\end{align*}
This completes the proof of Lemma \ref{E0Lemp}.
\end{proof}

\begin{proof}[Proof of Lemma \ref{Leminftypp}]
	To prove Theorem \ref{Leminftypp}, similar to Lemma \ref{LinftyLemVanish}, we will use the De Giorgi iteration scheme. First, by Theorems \ref{LemLinRe} and \ref{ThmExtend}, equation \eqref{diff} has a solution in $\Omega$ and can be extended to equation \eqref{diff1} in the whole space. 
	Fix $K_0>0$ which is determined later in \eqref{K0111z}. Denote the increasing levels $M_k$ by
	\begin{align*}
		M_k:=K_0\big(1-\frac{1}{2^k}\big), \quad k=0,1,2,\cdots.
	\end{align*}
	Note that $M_0=0$, $\lim_{k\to\infty}M_k=K_0$, $M_k-M_{k-1}=K_02^{-k}>0$, and $\frac{M_k}{M_k-M_{k-1}}=2^k-1\le 2^k$. 
	Thus, applying Lemma \ref{energyinterLem1} with $(M,K)=(M_{k-1},M_k)$ and evaluating constants $\ga_i$ given in \eqref{albexp}, 
	\begin{align}\label{939xx}
		&\notag\|f^{(l)}_{M_k,+}\|_{L^\infty_tL^2_{x,v}(\R^6)}^2+\|f^{(l)}_{M_k,+}\|^2_{L^2_tL^2_{x}(\Omega)L^2_D}
		+\vpi\|[\wh{C}^{}_0\<v\>^{{4}}f^{(l)}_{M_k,+},\<v\>^{{2}}\na_vf^{(l)}_{M_k,+}]\|^2_{L^2_tL^2_{x}(\Omega)L^2_v}\\
		&\notag\quad+\frac{1}{C_0\max\{C_\infty^{2p-2},1\}}
		\Big\|\int_{\R^3}\1_{[T_1,T_2]}\<v\>^{-10}(f^{(l)}_{M_k,+})^2\,dv\Big\|_{B^{s',2}_p(\R^{4}_{t,x})}^p\\
		&\quad\le C(1+C_1+K_1)^{C}\sum_{i=1}^4\frac{\lam_i2^{k(\al_i+p)}\E_p(M_{k-1})^{\beta_i}}{(K_0)^{\al_i}}, 
	\end{align}
	where $C=C(\al,s,s',p,\ga,l)>0$ and the parameters $\lam_i,\al_i>0$ and $\beta_i>1$ are given by 
	\begin{align}\label{albexz}
		\begin{aligned}
			&\lam_1=\lam_2=\max\{C_\infty^{2p-2},1\}^{\frac{(1-\si)\beta_*\xi_*}{2p}},\quad\lam_3=\lam_4=1,\\
			& \beta_1=\beta_2=r_*,\quad\beta_3=\beta_4=pr_*,\\
			&\al_1=\al_2=\xi_*-2,
			\quad\al_3=\al_4=p(\xi_*-2).
		\end{aligned}
	\end{align}
	Then we can perform the De Giorgi iteration on sequence $\{\E_p(M_k)\}$. 
	Noting $\beta_i>1$, we write 
	\begin{equation}\label{Ekstarz}
		Q_0=\max_{1\le i\le 4}2^{\frac{\al_i+p}{\beta_i-1}}>1,\quad \E^*_k=\frac{\E_0}{(Q_0)^{k}},\quad \text{ for }k=0,1,2,\dots,
	\end{equation}
	as an artificial sequence, and denote the upper bound by 
	\begin{align}\label{K0111z}
		K_0:&= 
		\max_{1\le i\le 4}\Big((4\lam_iC_2)^{\frac{1}{\al_i}}(\E_0)^{\frac{\beta_i-1}{\al_i}}(Q_0)^{\frac{\beta_i}{\al_i}}\Big),
	\end{align}
	where 
	$\E_0=\E_p(0)$ is given by \eqref{Ep2}, 
	$g$ is given by \eqref{gDef}
	and $C_2={C(1+C_1)^{C}}>0$ is the constant in \eqref{939xx}. 
Then, by noting the left-hand of \eqref{939xx} is functional $\E_p(M_k)$ from \eqref{Ep2}, for any $k\ge 1$, one has
		\begin{align}\label{524az}
			\E_p(M_k)\le C_2\sum_{i=1}^4\frac{\lam_i2^{k(\al_i+p)}\E_p(M_{k-1})^{\beta_i}}{(K_0)^{\al_i}}. 
		\end{align}
	By \eqref{Ekstarz} and \eqref{K0111z}, we have $\E^*_0=\E_0$ and
	\begin{align}\label{Ekstar1z}
		\E^*_k &\notag= \frac{\E_0}{(Q_0)^{k}}
		=\frac{1}{4}\sum_{i=1}^4\frac{(\E^*_{k-1})^{\beta_i}(K_0)^{\al_i}\E_0}{(\E^*_{k-1})^{\beta_i}(K_0)^{\al_i}(Q_0)^{k}}\\
		&\notag\ge\frac{1}{4}\sum_{i=1}^4\frac{(\E^*_{k-1})^{\beta_i}\max_{1\le j\le 4}\Big((4\lam_jC_2)^{\frac{1}{\al_j}}(\E_0)^{\frac{\beta_j-1}{\al_j}}(Q_0)^{\frac{\beta_j}{\al_j}}\Big)^{\al_i}\E_0}{\big(\frac{\E_0}{(Q_0)^{k-1}}\big)^{\beta_i}(K_0)^{\al_i}(Q_0)^{k}}\\
		&\ge C_2\sum_{i=1}^4\frac{(\E^*_{k-1})^{\beta_i}\lam_i(Q_0)^{k(\beta_i-1)}}{(K_0)^{\al_i}}
		\ge C_2\sum_{i=1}^4\frac{\lam_i2^{k(\al_i+p)}(\E^*_{k-1})^{\beta_i}}{(K_0)^{\al_i}}.
	\end{align}
	By comparing \eqref{Ekstar1z} and \eqref{524az}, and comparison principle (note $\E_0=\E_0^*=\E_p(M_0)$ and $Q_0>1$), 
	\begin{align*}
		\E_p(M_k)\le \E_k^*\to 0\text{ as }k\to\infty. 
	\end{align*}
	Consequently, taking the limit $k\to\infty$, recall the functional $\E_p(M_k)$ in \eqref{Ep2}, we deduce 
	\begin{align*}
		\|f^{(l)}_{K_0,+}\|^2_{L^\infty_t([T_1,T_2])L^2_{x,v}(\R^3_x\times\R^3_v)}=0,
	\end{align*}
	where $K_0$ is given by \eqref{K0111z}. Thus, by \eqref{773pp}, on $[T_1,T_2]$, 
	\begin{align*}
		\|(\<v\>^l_\de f)_+\|_{L^\infty_{x,v}(\R^3_x\times\R^3_v)} \le K_0. 
	\end{align*}
	Since the Lemmas \ref{interLem}, \ref{energyinterLem1}, and \ref{E0Lemp} have their corresponding counterparts for $-f$, if we use $(-f)^{(l)}_{K,+}$ to replace $f^{(l)}_{K,+}$ in $\E_0$, then we have the same lower bound. In summary, using estimate \eqref{773pp} to control $\E_0$, we have 
\begin{align*}
	\|\<v\>^l_\de f\|_{L^\infty_tL^\infty_{x,v}(\R^6_{x,v})}
	&\le 
	C(1+C_1+K_1)^{C}\max_{1\le i\le 4}(\lam_i)^{\frac{1}{\al_i}}\big(\DD+\DD^p\big)^{\frac{\beta_i-1}{\al_i}}, 
\end{align*}
where the constants $C=C(\al,s,s',p,\ga,l)>0$ is independent of $T_1,\vpi,\eta,\ve$, and the parameters $\al_i,\beta_i,\lam_i$ are given in \eqref{albexz}. This implies \eqref{Linfty2p} and completes the proof of Lemma \ref{Leminftypp}.
\end{proof}

\subsection{\texorpdfstring{$L^\infty$}{L infty} estimate of full linear equation}
In this Subsection, we will combine the $L^\infty$ estimate for non-vanishing data and improved $L^\infty$ estimate for vanishing data to obtain the $L^\infty$ estimate of the modified linear equation \eqref{diff}. Moreover, by taking the limit $\eta\to0$, we obtain the existence of the ``original'' linear equation 
\begin{align}\label{linear1fp}
	\left\{
	\begin{aligned}
		& \pa_tf+ v\cdot\na_xf = \vpi Vf+ \Gamma(\Psi,f)+\Gamma(\vp,\mu^{\frac{1}{2}})\quad \text{ in } [T_1, T_2]\times\Omega\times\R^3_v, \\
		& f|_{\Si_-}=(1-\ve)Rf\qquad \text{ on }[T_1, T_2]\times\Si_-, \\
		& f(T_1,x,v)=0\qquad\qquad \text{ in }\Omega\times\R^3_v,
	\end{aligned}\right.
\end{align}

\begin{Thm}[$L^\infty$ estimate for linear equation] \label{ThmLinRe}
	Assume the same conditions as in Lemma \ref{E0Lemp}. 
Let $0\le T_1<T_2=T_1+\de^3$ and $N=N(\ga,s)>0$ be a large constant chosen in Lemma \ref{ReLinfty1Lem}. 
	Suppose $\Psi=\mu^{\frac{1}{2}}+\psi\ge 0$, $\vp=\vp_1+\vp_2$ and $f_{T_1}$ satisfy
\begin{align*}
\begin{aligned}
	\|[\<v\>^l\psi,\<v\>^l\vp_1,\<v\>^l\vp_2]\|_{L^\infty_t([T_1,T_2])L^\infty_{x}(\Omega)L^\infty_v}&= \de_0,\\
	\|\<v\>^{l}_\de f_{T_1}\|_{L^\infty_{x,v}(\Omega\times\R^3_v)} = \de_\infty,\quad
	\|\<v\>^{l_0+2l-2}f_{T_1}\|^2_{L^2_x(\Omega)L^2_v}
	& = \ti C_1,\\
	\|\<v\>^{l-2}f_{T_1}\|^2_{L^2_x(\Omega)L^2_v(\R^3_v)}+\|[\vp,\vp_1,\vp_2]\|^2_{L^\infty_t([T_1,T_2])L^2_{x}(\Omega)L^2_v}
	& = \de_1.
\end{aligned}
\end{align*}
	with constants $\ti C_1>0$ and sufficiently small $\de_0,\de_1,\de_\infty\in(0,1)$.
	Then the solution $f$ to \eqref{linear1fp} satisfies
	\begin{align}\label{Linftyxrefl}
		\|\<v\>^l_\de f\|_{L^\infty_t([T_1,T_2])L^\infty_{x,v}(\ol\Omega\times\R^3_v)}
		\le K_1
		+ C(1+\ti C_1+K_1)^{C}\Big((T_2-T_1)\de_1\Big)^{\zeta},
	\end{align}
	where $C=C(\al,l,\ga,s)>0$ and $\zeta=\zeta(s,\ga)>0$ are independent of $T_1$. Here, $K_1$ is given by 
	\begin{align*}
		K_1=
		\max\big\{\frac{1}{2}\|\<v\>^{l}_\de \vp_1\|_{L^\infty_t([T_1,T_2])L^\infty_x(\Omega)L^\infty_v},\,\|\<v\>^l_{\de}f_{T_1}\|_{L^\infty_{x}(\Omega)L^\infty_v}\big\}. 
	\end{align*}
\end{Thm}

\begin{proof}
	Until the end of this proof, if not specified, the underlying time interval is $[T_1,T_2]$. 	
	Note that 
	\begin{align}
		\label{T2T1minusde}
		\text{
			$T_2-T_1\le\de^3$\ and\ $\de=\de(\al)>0$ is a fixed small constant. }
	\end{align}
We begin with the modified equation \eqref{diff} with any $\eta>0$ and split $f=f_1+f_2$ as in \eqref{diffa} and \eqref{diffb}. 
	Applying Lemma \ref{ReLinfty1Lem} to $f_1$, we have 
	\begin{align}\label{Linftyf1a}
		\|\<v\>^l_\de f_1\|_{L^\infty_tL^\infty_{x}(\ol\Omega)L^\infty_v}
		\le K_1\equiv\max\big\{\frac{1}{2}\|\<v\>^{l}_\de \vp_1\|_{L^\infty_t([T_1,T_2])L^\infty_x(\Omega)L^\infty_v},\,
		\|\<v\>^l_{\de}f_{T_1}\|_{L^\infty_{x}(\Omega)L^\infty_v}\big\}. 
	\end{align}
	On the other hand, utilizing Theorem \ref{LemLinRe} (with $\phi=N\<v\>^{l-2}(f_1+f_2)$ therein) and Lemma \ref{extendreverThm} (for the part in $\Omega$) yields the $L^2$ estimate for $f_2$: for any $k\ge 0$, 
	\begin{multline}
		\label{L2reflesf2}
		\|\<v\>^kf_2\|_{L^\infty_tL^2_x(\R^3_x)L^2_v}^2
		+c_\al\|\<v\>^kf_2\|_{L^2_tL^2_{x,v}(\Si_+)}^2
		+c_0\|f_2\|_{L^2_tL^2_x(\Omega)L^2_D}^2
		+\eta\|\<v\>^{k+\frac{l}{2}}f_2\|_{L^2_tL^2_x(\Omega)L^2_v}^2\\
		+\vpi\|[\wh{C}^{}_0\<v\>^{{k+4}}f_2,\<v\>^{{k+2}}\na_vf_2]\|_{L^2_tL^2_x(\Omega)L^2_v}^2+\|\<v\>^kPf\|_{L^2_tL^2_x(\ol\Omega^c)L^2_v}^2\\
		\le
		C_{|T_2-T_1|}\|[\vp,N\<v\>^{k+l-2}f]\|_{L^2_tL^2_x(\Omega)L^2_v(\R^3_v)}^2,
	\end{multline}
Thus, 
\begin{align*}
	\|\<v\>^{l_0+l-2}f_2\|^2_{L^2_tL^2_x(\Omega)L^2_v}&\le C\|\<v\>^{l_0+l}f_2\|^2_{L^2_tL^2_x(\Omega)L^2_D}
	\le C\|[\vp,N\<v\>^{l_0+2l-2}f]\|_{L^2_tL^2_{x}(\Omega)L^2_v}^2,\\
	\|\<v\>^{-2}f_2\|^2_{L^2_tL^2_x(\Omega)L^2_v}&\le C\|f_2\|^2_{L^2_tL^2_x(\Omega)L^2_D}\le C\|[\vp,N\<v\>^{l-2}f]\|_{L^2_tL^2_{x}(\Omega)L^2_v}^2.
\end{align*}
For the $L^2$ norm of $f=f_1+f_2$, we have also from Theorem \ref{LemLinRe} that, 
\begin{multline}\label{L2reflesf}
	\|\<v\>^kf\|_{L^\infty_tL^2_x(\Omega)L^2_v}^2		+\|\<v\>^kf\|^2_{L^2_tL^2_{x,v}(\Si_+)}+\|\<v\>^kf\|_{L^2_tL^2_x(\Omega)L^2_D}^2\\
	\le
	C_{\al}\Big(\|\<v\>^kf(T_1)\|_{L^2_{x}(\Omega)L^2_v}^2+\|\vp\|_{L^2_tL^2_x(\Omega)L^2_v}^2\Big). 
\end{multline}
Combining the above two estimates and the assumption, we have 
\begin{align*}
		\|\<v\>^{l_0+l-2}f_2\|^2_{L^2_tL^2_x(\Omega)L^2_v}\le \ti C_1,\quad
		\|\<v\>^{-2}f_2\|^2_{L^2_tL^2_x(\Omega)L^2_v}\le \de_1.
\end{align*}
With these bounds, we can applying Lemma \ref{initialLinftyLem2} to $f_2$ and obtain the initial $L^\infty$ bound: 
	\begin{align*}
		\|\<v\>^{l}_\de f_2\|_{L^\infty_t([T_1,T_2])L^\infty_{x}(\ol\Omega)L^\infty_v}\le e^{C_{\al,\eta}\de^3}(NK_1+1).
	\end{align*}
	The problem is that the initial $L^\infty$ bound of $f_2$ in Theorem \ref{Leminftypp} depends on $\eta>0$; so it serves as the \emph{a priori} bound such that the following energy on the right-hand side is finite.
	For the improved $L^\infty$ bound of $f_2$, we denote it by 
	\begin{align*}
		C_\infty=\|\<v\>^{l}_\de f_2\|_{L^\infty_t([T_1,T_2])L^\infty_{x}(\ol\Omega)L^\infty_v}, 
	\end{align*}
	which is finite. 
	Then applying Lemma \ref{Leminftypp} to $f_2$, and recalling parameters $\al_i,\beta_i,\lam_i$ given by \eqref{albexz}, we have 
	\begin{align}\label{488xz}
\|\<v\>^{l}_\de f_2\|_{L^\infty_t([T_1,T_2])L^\infty_{x}(\ol\Omega)L^\infty_v}
		&\le {C(1+\ti C_1+K_1)^{C}}\max_{1\le i\le 4}(\lam_i)^{\frac{1}{\al_i}}(\DD^{\frac{1}{2}}+\DD^{p})^{\frac{\beta_i-1}{\al_i}},
	\end{align}
	where $\DD$ is given by \eqref{DDpp} (note that the $f$ in \eqref{DDpp} is now $f_2$ here), i.e.
	\begin{align*}
		\DD:&=\|f_2\|_{L^\infty_tL^2_x(\R^3_x)L^2_v}^2
		+c_0\|f_2\|_{L^2_tL^2_x(\Omega)L^2_D}^2\\
		&\quad+\vpi\|[\wh{C}^{}_0\<v\>^{{4}}f_2,\<v\>^{{2}}\na_vf_2]\|_{L^2_tL^2_x(\Omega)L^2_v}^2+\|Pf_2\|^2_{L^2_tL^2_{x}(\ol\Omega^c)L^2_v}, 
	\end{align*}
	 which, by using $L^2$ estimate \eqref{L2reflesf2} and \eqref{L2reflesf}, as well as \eqref{T2T1minusde}, satisfies
	\begin{align*}
		\DD&\le C_{\al}(T_2-T_1)\|[\vp,N\<v\>^{l-2}f]\|_{L^2_tL^2_x(\Omega)L^2_v(\R^3_v)}^2\\
		&\le C_{\al}(T_2-T_1)\big(\|\<v\>^{l-2}f(T_1)\|_{L^2_{x}(\Omega)L^2_v}^2+\|\vp\|_{L^\infty_tL^2_x(\Omega)L^2_v}^2\big)\\
		&\le C_\al(T_2-T_1)\de_1.
	\end{align*}
	Note that, we have fixed $p$, and the exponent $\frac{(1-\si)\beta_*\xi_*}{2p}$ in \eqref{albexz} is the same the one in \eqref{sibexi1}, and thus 
	\begin{align}\label{627xzw}
		\frac{(1-\si)\beta_*\xi_*}{2p}<1, \text{ and }\xi_*>2+\frac{r(1)-2}{r(p^\#)}.
	\end{align}
Therefore, by \eqref{albexz} and Lemma \ref{interLem}, we know that $\beta_i=\beta_i(s,p)>1$ are constants, and 
\begin{align*}
	&(\lam_1)^{\frac{1}{\al_1}}=(\lam_2)^{\frac{1}{\al_2}}
	=\max\{C_\infty^{2p-2},1\}^{\frac{(1-\si)\beta_*\xi_*}{2p(\xi_*-2)}},\\
	&(\lam_3)^{\frac{1}{\al_3}}=(\lam_4)^{\frac{1}{\al_4}}
	=1.
\end{align*}
Then we continue \eqref{488xz} to deduce 
	\begin{align}\label{inftyf2aaz}\notag
		\|\<v\>^{l}_\de f_2\|_{L^\infty_t([T_1,T_2])L^\infty_{x}(\ol\Omega)L^\infty_v}&\le {C(1+\ti C_1+K_1)^{C}}\max\{C_\infty^{2p-2},1\}^{\frac{(1-\si)\beta_*\xi_*}{2p(\xi_*-2)}}\DD^{\zeta}\\
		&\le {C(1+\ti C_1+K_1)^{C}}\max\{C_\infty^{2p-2},1\}^{\frac{(1-\si)\beta_*\xi_*}{2p(\xi_*-2)}}((T_2-T_1)\de_1)^{\zeta},
	\end{align}
	where $C=C(\al,l,\ga,s,s',p)>0$ and $\zeta=\zeta(s,s',p)>0$ are some constants. 
	Therefore, there are two cases as below:
	\begin{enumerate}[leftmargin=2em]
		\item if $C_\infty<1$, then we obtain the upper bound 
		\begin{align*}
			\|\<v\>^l_\de f_2\|_{L^\infty_t([T_1,T_2])L^\infty_x(\ol\Omega)L^\infty_{v}}
			\le {C(1+\ti C_1+K_1)^{C}}((T_2-T_1)\de_1)^{\zeta};
		\end{align*}
		
		\smallskip
		\item if $C_\infty\ge 1$, then \eqref{inftyf2aaz} implies 
		\begin{align}\label{750az}
			\|\<v\>^l_\de f_2\|_{L^\infty_t([T_1,T_2])L^\infty_x(\ol\Omega)L^\infty_{v}}
			&\le {C(1+\ti C_1+K_1)^{C}}C_\infty^{\frac{(2p-2)(1-\si)\beta_*\xi_*}{2p(\xi_*-2)}}((T_2-T_1)\de_1)^{\zeta}. 
		\end{align}
		From \eqref{627xzw} (or \eqref{sibexi1}) and the choice of $p^\#$ given in \eqref{ppsharp}, we deduce that for any $p\in(1,p^\#)$, the exponent satisfies 
		\begin{align*}
			\frac{(1-\si)\beta_*\xi_*}{2p}\frac{2p-2}{\xi_*-2}<\frac{2p-2}{\xi_*-2}<1, 
		\end{align*}
		which is a fixed universal constant. (These parameters depend only on fixed parameters $s,p$).
		Therefore, we can absorb $C_\infty$ on the right-hand side of \eqref{750az} by the left hand due to its definition. Then we obtain \eqref{inftyf2aa1uu} below with some different constants $C,\zeta>1$. Further, if we choose $\de_1>0$ sufficiently small (depending on $\al,\ga,s,l,|\Omega|$), then $C_\infty<1$, which reduces to the first case. 
	\end{enumerate}
	In summary, we obtain 
	\begin{align}\label{inftyf2aa1uu}
		\|\<v\>^l_\de f_2\|_{L^\infty_tL^\infty_x(\ol\Omega)L^\infty_{v}}
		&\le {C(1+\ti C_1+K_1)^{C}}((T_2-T_1)\de_1)^{\zeta}. 
	\end{align}
	Combining the $L^\infty$ estimates \eqref{Linftyf1a} and \eqref{inftyf2aa1uu}, we see that the solution $f^\eta$ to the modified equation \eqref{diff} satisfies \eqref{Linftyxrefl}. Together with \eqref{L2reflesf} we know that $f^\eta$ has $L^2$ and $L^\infty$ energy estimates on $[T_1,T_2]$ uniformly in $\eta>0$, and thus has a subsequence which has a weak-$*$ limit $f$. Since the modified equation \eqref{linear1fp} is linear, it's standard to write it in the weak form and take the weak-$*$ limit to deduce that the limit $f$ satisfies the ``original'' linear equation \eqref{linear1fp} (we will also consider the weak-$*$ limit for the \emph{nonlinear} case later in Section \ref{Sec11}, and one can refer to the details there). Moreover, the limit satisfies the same $L^\infty$ estimate \eqref{Linftyxrefl}. 
	This completes the proof of Theorem \ref{ThmLinRe}.
\end{proof}

\section{\texorpdfstring{$L^2$}{L2}--\texorpdfstring{$L^\infty$}{L infty} estimate for reflection boundary}\label{Sec11}
In Section \ref{Sec10} above, we obtained the $L^\infty$ estimate of the solution $f$ to linear Boltzmann equation with \emph{Maxwell} reflection boundary condition. Combining it with the $L^2$ estimate, we will derive the local and global existence of the nonlinear Boltzmann equation.

\subsection{Local nonlinear theory}
In this subsection, we will derive the local-in-time existence for the nonlinear Boltzmann equation in $\Omega$ with \emph{Maxwell} reflection boundary conditions.
For this purpose, we first consider the regularizing ($\vpi Vf$) equation 
\begin{align}\label{sec1Maxell2}
	\left\{
	\begin{aligned}
		& \pa_tf^\vpi+ v\cdot\na_xf^\vpi = \vpi Vf^\vpi+\Gamma(\mu^{\frac{1}{2}}+f^\vpi\chi_{\de_0}(\<v\>^lf^\vpi),f^\vpi)\\
		&\qquad\qquad\qquad+\Gamma(f^\vpi\chi_{\de_0}(\<v\>^lf^\vpi),\mu^{\frac{1}{2}})
\quad \text{ in } [T_1,T_2]\times\Omega\times\R^3_v, \\
		& f^\vpi(t,x,v)|_{\Si_-}=(1-\ve)Rf^\vpi\quad \text{ on }[T_1,T_2]\times\Si_-, \\
		& f^\vpi(T_1,x,v)=f_{T_1}\quad \text{ in }\Omega\times\R^3_v. 
	\end{aligned}\right.
\end{align}
where $\de_0>0$ is a given small constant and $l\ge \ga+10$ is fixed. Here $\chi_{\de_0}(f)$ is a cutoff function given by 
\begin{align*}
	\chi_{\de_0}(f)=\left\{\begin{aligned}
		&0\quad \text{ if }|f|>\de_0,\\
		&1\quad \text{ if }|f|\le\de_0.
	\end{aligned}\right. 
\end{align*}
We add such a cutoff function in order to automatically obtain the $L^\infty$ upper bound. 
To solve this equation \eqref{sec1Maxell2}, we let $\vpi>0$ be any small constant and $S:X\to X$ be the solution operator of equation:
\begin{align}\label{sec1Maxell3}
	\left\{
	\begin{aligned}
		& \pa_tf+ v\cdot\na_xf =\vpi Vf+\Gamma(\mu^{\frac{1}{2}}+\psi\chi_{\de_0}(\<v\>^l\psi),f)\\&\qquad\qquad\qquad+\Gamma(\psi\chi_{\de_0}(\<v\>^l\psi),\mu^{\frac{1}{2}})
		\quad \text{ in } [T_1,T_2]\times\Omega\times\R^3_v, \\
		& f|_{\Si_-}=(1-\ve)Rf\quad \text{ on }[T_1,T_2]\times\Si_-, \\
		& f(T_1,x,v)=f_{T_1}\quad \text{ in }\Omega\times\R^3_v.
	\end{aligned}\right.
\end{align}
That is, for any $\psi\in X$, we set $S\psi = f$. Here $X$ is the normed space defined by 
\begin{multline}\label{Xdef1}
	X:=\big\{f\in L^\infty_tL^2_{x,v}([T_1,T_2]\times\Omega\times\R^3_v)\,:\, \mu^{\frac{1}{2}}+f\ge 0,\ 
	\|f\|_{L^\infty_tL^2_{x,v}([T_1,T_2]\times\Omega\times\R^3_v)}\le \de_0, \\
	\|\<v\>^l_\de f\|_{L^\infty_t([T_1,T_2])L^\infty_{x,v}(\Omega\times\R^3_v)}\le \de_0
	\big\}, 
\end{multline}
equipped with norm $L^\infty_tL^2_{x,v}([T_1,T_2]\times\Omega\times\R^3_v)$, 
with some small $\de_0>0$.

\begin{Lem}\label{LemNonLocRe}
	Let $\al\in(0,1)$, $-\frac{3}{2}<\ga\le 2$, $s\in(0,1)$. Let $\de=\de(\al)>0$ be a small constant determined in Subsection \ref{Sec81} and $0\le T_1< T_2=T_1+\de^3$. 
	Let $\de_0>0$ be a sufficiently small constant (which can be chosen in Theorems \ref{LemLinRe} and \ref{ThmLinRe}). Let $l_0=l_0(l,\ga,s)>0$ be a large constant given in \eqref{ThmLinRe} (being $l_0+2l-2$ therein).
	Suppose $f_{T_1}$ satisfy $F_{T_1}=\mu+\mu^{\frac{1}{2}}f_{T_1}\ge 0$ and 
	\begin{align*}
		\begin{aligned}
			\|\<v\>^{l_0}f_{T_1}\|^2_{L^2_x(\Omega)L^2_v}=\ti C_1,\quad 
			\|\<v\>^l_\de f_{T_1}\|_{L^\infty_x(\Omega)L^\infty_v}=\ve_\infty,\quad\|\<v\>^{l-2}f_{T_1}\|^2_{L^2_x(\Omega)L^2_v}=\ve_1,
		\end{aligned}
	\end{align*}
	with sufficiently small $\ve_1,\ve_\infty\in(0,1)$ which depends only on $s,\de_0$ and is independent of $\ve$ (appeared in boundary condition), and a fixed $\ti C>0$.
	Then there exists a small $T_2>T_1$ and a solution $f$ to nonlinear equation
	\begin{align}\label{non4}
		\left\{
		\begin{aligned}
			 & \pa_tf+ v\cdot\na_xf = \Gamma(\mu^{\frac{1}{2}}+f,f)+\Gamma(f,\mu^{\frac{1}{2}})\quad \text{ in } [T_1,T_2]\times\Omega\times\R^3_v, \\
			 & f|_{\Si_-}=(1-\ve)Rf\quad \text{ on }[T_1,T_2]\times\Si_-, \\
			 & f(T_1,x,v)=f_{T_1}\quad \text{ in }\Omega\times\R^3_v,
		\end{aligned}\right.
	\end{align}
	in the sense of that, for any function $\Phi\in C^\infty_c(\R_t\times\R^3_x\times\R^3_v)$ satisfying $\Phi|_{\Si_+}=(1-\ve)R^*\Phi$ where $R^*$ is the dual reflection operator given by \eqref{reflectdual},
	\begin{multline*}
		(f(T_2),\Phi(T_2))_{L^2_x(\Omega)L^2_v}-(f,(\pa_t+v\cdot\na_x)\Phi)_{L^2_{t,x,v}([T_1,T_2]\times\Omega\times\R^3_v)}\\
		=(f_{T_1},\Phi(T_1))_{L^2_x(\Omega)L^2_v}
		+\big(\Gamma(\mu^{\frac{1}{2}}+f,f)+\Gamma(f,\mu^{\frac{1}{2}}),\Phi\big)_{L^2_{t,x,v}([T_1,T_2]\times\Omega\times\R^3_v)},
	\end{multline*}
	 which satisfies non-negativity $F=\mu+\mu^{\frac{1}{2}}f\ge 0$ and energy estimates
	\begin{align}\label{LinftyRelinear1}
		\begin{aligned}
			&\|f\|^2_{L^\infty_t([T_1,T_2])L^2_{x}(\Omega)L^2_v}+c_0\|f\|_{L^2_t([T_1,T_2])L^2_x(\Omega)L^2_D}^2\le \de_0,\\
			&\|\<v\>^l_\de f\|_{L^\infty_t([T_1,T_2])L^\infty_{x}(\Omega)L^\infty_v}\le \de_0.
		\end{aligned}
	\end{align}
\end{Lem}
\begin{proof}
	We consider $[T_1,T_2]$ as the underlying interval and use the fixed point theorem for equations \eqref{sec1Maxell3} with the cases $s\in(0,\frac{1}{2})$ and $s\in[\frac{1}{2},1)$ in the first and second steps, respectively. Once we obtain the solution to the nonlinear equation, we pass the limit $\vpi\to0$ in the third step. 
	The proof is similar to Theorem \ref{nonLocal}. 
	
	\smallskip\noindent{\bf Step 1. Contraction mapping.}
	Let $s\in(0,\frac{1}{2})$ in this step.
	For any $\vpi>0$, we let $S:X\to X$ be the solution operator of the equation \eqref{sec1Maxell3} by setting $S\psi = f$, whose existence is guaranteed by Theorem \ref{LemLinRe}.
	 Here $X$ is the normed space given by \eqref{Xdef1}. 
	We next prove that $S:X\to X$ is a contraction mapping. 
	
	\smallskip For any $\psi\in X$, we begin by proving that $S\psi \in X$. Since 
	\begin{align*}
		\|\psi\|_{L^\infty_tL^2_{x,v}([T_1,T_2]\times\Omega\times\R^3_v)} \le \de_0, \quad 
		\|\<v\>^l_\de \psi\|_{L^\infty_t([T_1,T_2])L^\infty_{x,v}(\Omega\times\R^3_v)} \le \de_0, 
	\end{align*}
	with some small constant $\de_0\in(0,\frac{1}{2})$, the existence of solution $f=S\psi$ to equation \eqref{Linfty2p} is given by Theorem \ref{LemLinRe}. Moreover, by $L^2$ estimate \eqref{913}, \eqref{hheqes5} and $L^\infty$ estimate \eqref{Linftyxrefl}, we obtain that, for any $k\ge 0$,
\begin{align}\notag\label{L2Lpenergy106}
	&\|\<v\>^kf\|_{L^\infty_tL^2_x(\R^3_x)L^2_v}^2
	+c_\al\|\<v\>^kf\|_{L^2_tL^2_{x,v}(\Si_+)}^2
	+\|\<v\>^kPf\|^2_{L^2_tL^2_{x}(\ol\Omega^c)L^2_v}\\
	&\quad\notag+c_0\|\<v\>^kf\|_{L^2_tL^2_x(\Omega)L^2_D}^2
	+\vpi\|[\wh{C}^{}_0\<v\>^{{k+4}}f,\<v\>^{{k+2}}\na_vf]\|_{L^2_tL^2_x(\Omega)L^2_v}^2\\
	&\qquad\le
	C_{k,T_2-T_1}\big(\|\<v\>^kf_{T_1}\|^2_{L^2_x(\Omega)L^2_v}+\|\vp\|_{L^2_tL^2_x(\Omega)L^2_v}^2\big),
\end{align} and 
\begin{align}\label{106zz}\notag
	\|\<v\>^l_\de f\|_{L^\infty_t([T_1,T_2])L^\infty_{x,v}(\ol\Omega\times\R^3_v)}
	&\le K_1
	+ C(1+\ti C_1+K_1)^{C}(T_2-T_1)^{\zeta}\\
	&\quad\quad\times\big(\|\<v\>^{l-2}f_{T_1}\|^2_{L^2_x(\Omega)L^2_v(\R^3_v)}+\|\psi\|^2_{L^\infty_t([T_1,T_2])L^2_{x}(\Omega)L^2_v}\big)^{\zeta},
\end{align}
with $C=C(\al,\ga,s,l)>0$ and $\zeta=\zeta(\ga,s)>0$ that are independent of $T_1,\vpi,\ve$. 
Here, $K_1$ is given by (by choosing $\vp_1=0$, $\vp=\vp_2$ in \eqref{Linftyxrefl})
\begin{align*}
	K_1=\|\<v\>^l_{\de}f_{T_1}\|_{L^\infty_{x}(\Omega)L^\infty_v}. 
\end{align*}
Therefore, choosing $T_2-T_1\le\de^3,\ve_1,\ve_\infty>0$ sufficiently small, depending on the constants in \eqref{L2Lpenergy106} and \eqref{106zz} (and hence, depending on $\al,\ga,s,l$), we deduce 
\begin{align*}
	&\|f\|_{L^\infty_tL^2_x(\R^3_x)L^2_v}
	+\|\<v\>^l_\de f\|_{L^\infty_tL^\infty_{x,v}(\R^6_{x,v})}\le \de_0,
\end{align*}
while the non-negativity of $\mu^{\frac{1}{2}}+f$ can be given by Theorem \ref{positivity}. These facts imply $f=S\psi\in X$. 

\smallskip Next we prove that $S$ is a contraction map with small time $T_2-T_1\le\de^3>0$. Let $\psi,\vp\in X$ and $f=S\psi$, $h=S\vp$. Then, by \eqref{sec1Maxell3}, $f-h$ satisfies 
\begin{align}\label{sec1Maxell4}
	\left\{
	\begin{aligned}
		& \pa_t(f-h)+ v\cdot\na_x(f-h) =\vpi V(f-h)+\Gamma(\mu^{\frac{1}{2}}+\psi\chi_{\de_0}(\<v\>^l\psi),f-h)
		\\&\qquad\quad+\Gamma(\psi\chi_{\de_0}(\<v\>^l\psi)-\vp\chi_{\de_0}(\<v\>^l\vp),\mu^{\frac{1}{2}}+h)
		\quad \text{ in } [T_1,T_2]\times\Omega\times\R^3_v, \\
		& (f-h)|_{\Si_-}=(1-\ve)R(f-h)\quad \text{ on }[T_1,T_2]\times\Si_-, \\
		& (f-h)(0,x,v)=0\quad \text{ in }\Omega\times\R^3_v.
	\end{aligned}\right.
\end{align}
Taking $L^2$ inner product of \eqref{sec1Maxell4} with $f-h$ over $\Omega\times\R^3_v$, we have 
\begin{multline}\label{666a}
	\frac{1}{2}\pa_t\|f-h\|_{L^2_x(\Omega)L^2_v}^2
	+\frac{1}{2}\|f-h\|_{L^2_{x,v}(\Si_+)}-\frac{1}{2}\|f-h\|_{L^2_{x,v}(\Si_-)}\\
	=\Big(\vpi V(f-h)+\Gamma(\mu^{\frac{1}{2}}+\psi,f-h)+\Gamma(\psi-\vp,h+\mu^{\frac{1}{2}})
	,f-h\Big)_{L^2_x(\Omega)L^2_v}. 
\end{multline}
Applying Lemma \ref{LemR}, \eqref{101} for the boundary term, Lemma \ref{LemRegu} for $\vpi Vf$, and estimates \eqref{64eq1} and \eqref{56} for the right-hand side of \eqref{666a}, we obtain 
\begin{align*}
	&\frac{1}{2}\pa_t\|f-h\|_{L^2_x(\Omega)L^2_v}^2+\frac{\ve}{2}\|f-h\|_{L^2_{x,v}(\Si_+)}
	+\frac{\vpi}{C}\|\<v\>^{{2}}\<D_v\>(f-h)\|_{L^2_x(\Omega)L^2_v}^2\\
	&\quad\le \big(-c_0+C\|\<v\>^{4}\psi\|_{L^\infty_x(\Omega)L^\infty_v}\big)\|f-h\|_{L^2_x(\Omega)L^2_D}^2
	+C\|\1_{|v|\le R_0}(f-h)\|_{L^2_x(\Omega)L^2_v}^2\\
	&\qquad+C\|\psi-\vp\|_{L^2_x(\Omega)L^2_v}\|\<v\>^{2}h\|_{L^\infty_x(\Omega)L^2_v}\|\<v\>^{2}(f-h)\|_{L^2_x(\Omega)H^{2s}_v}\\
	&\qquad+C\|\mu^{\frac{1}{10^4}}(\psi-\vp)\|_{L^2_x(\Omega)L^2_v}\|\mu^{\frac{1}{10^4}}(f-h)\|_{L^2_x(\Omega)L^2_v}\\
	&\quad\le
	-\frac{c_0}{2}\|f-h\|_{L^2_x(\Omega)L^2_D}^2
	+C_\vpi\|\<v\>^{4}h\|_{L^\infty_x(\Omega)L^\infty_v}^2\|\psi-\vp\|_{L^2_x(\Omega)L^2_v}^2\\
	&\qquad+\frac{\vpi}{2C}\|\<v\>^{2}\<D_v\>(f-h)\|^2_{L^2_x(\Omega)L^{2}_v}+C\|f-h\|^2_{L^2_x(\Omega)L^2_v},
\end{align*}
since $s<\frac{1}{2}$. 
Choosing $\de_0>0$ in \eqref{Xdef1} sufficiently small, we obtain 
\begin{align*}
	\frac{1}{2}\pa_t\|f-h\|_{L^2_x(\Omega)L^2_v}^2+\frac{\ve}{2}\|f-h\|^2_{L^2_{x,v}(\Si_+)}
	\le C_\vpi\|\psi-\vp\|_{L^2_x(\Omega)L^2_v}^2+C\|f-h\|^2_{L^2_x(\Omega)L^2_v}.
\end{align*}
Using Gr\"{o}nwall's inequality and choosing $T_2=T_2(\vpi)>T_1$ sufficiently small, we have 
\begin{align*}
	\|f-h\|_{L^\infty_t([T_1,T_2])L^2_x(\Omega)L^2_v}^2&\le (T_2-T_1)C_\vpi\|\psi-\vp\|_{L^\infty_t([T_1,T_2])L^2_x(\Omega)L^2_v}^2\\
	&\le \frac{1}{2}\|\psi-\vp\|_{L^\infty_t([T_1,T_2])L^2_x(\Omega)L^2_v}^2.
\end{align*}
This implies that $S:X\to X$ is a contraction map for a short time. Therefore, by Banach fixed point theorem, there exists $f=f^\vpi\in X$ such that 
\begin{align*}
	\|f^\vpi\|_{L^\infty_t([T_1,T_2])L^2_x(\Omega)L^2_v}^2 \le \de_0, \quad
	\|\<v\>^l_\de f^\vpi\|_{L^\infty_t([T_1,T_2])L^\infty_{x,v}(\Omega\times\R^3_v)} \le \de_0,
\end{align*}
and it satisfies equation \eqref{sec1Maxell2} in the sense of \eqref{weakfre}. 
The non-negativity of $\mu^{\frac{1}{2}}+f^\vpi$ can be derived from Theorem \ref{positivity}.

\smallskip \noindent{\bf Step 2. Strong Singularity.}
The proof is similar to the ``Step 2" in Theorem \ref{nonLocal}.
Let $s\in[\frac{1}{2},1)$ in this step. Truncate the collision kernel $b(\cos\th)$ as in \eqref{beta} and denote $\Ga_\eta$ by \eqref{Gaeta}.
Since $b_\eta$ has a weak singularity, we can apply the fixed-point arguments in Step 1 to obtain a small time $T=T(\vpi,\eta)>0$ and a weak solution $f_\eta$ to equation \eqref{sec1Maxell2} with $\Ga$ replaced by $\Ga_\eta$, i.e. $f_\eta$ satisfies $f_\eta(T_1,x,v)=f_{T_1}$ in $\Omega\times\R^3_v$, and solves 
\begin{align}\label{sec1Maxell2a}
	\left\{
	\begin{aligned}
		& \pa_tf_\eta+ v\cdot\na_xf_\eta = \vpi Vf_\eta+\Gamma(\mu^{\frac{1}{2}}+f_\eta\chi_{\de_0}(\<v\>^lf_\eta),f_\eta)\\
		&\qquad\qquad\qquad+\Gamma(f_\eta\chi_{\de_0}(\<v\>^lf_\eta),\mu^{\frac{1}{2}})
		\quad \text{ in } [T_1,T_2]\times\Omega\times\R^3_v, \\
		& f_\eta(t,x,v)|_{\Si_-}=(1-\ve)Rf_\eta\quad \text{ on }[T_1,T_2]\times\Si_-. 
	\end{aligned}\right.
\end{align}
Taking $L^2$ inner product of \eqref{sec1Maxell2a} with $f_\eta$ over $[T_1,T_2]\times\Omega\times\R^3_v$, and using Lemma \ref{LemRegu} for regularizing term $Vf$, \eqref{LemR} for boundary term, and Lemma \ref{GaetaesLem} for the collision terms, we obtain 
\begin{multline*}
	\pa_t\|f_\eta(t)\|_{L^2_x(\Omega)L^2_v}^2+\ve\|f_\eta\|_{L^2_{x,v}(\Si_+)}
	+\vpi\|[\wh{C}^{}_0\<v\>^{{4}}f_\eta,\na_v(\<v\>^{{2}}f_\eta)]\|_{L^2_x(\Omega)L^2_v}^2\\
	\le C(1+\de_0)\|f_\eta\|_{L^2_{x}(\Omega)L^2_D}^2+C\|f_\eta\|_{L^2_{x}(\Omega)L^2_v}^2
	\le \frac{\vpi}{2}\|\<v\>^{2}f_\eta\|^2_{L^2_{x}(\Omega)H^1_v}+C_\vpi\|f_\eta\|_{L^2_{x}(\Omega)L^2_v}^2,
\end{multline*}
where we used \eqref{kesD} to deduce 
\begin{align*}
	\|f\|^2_{L^2_D}\le \frac{\vpi}{2C}\|\<v\>^{2}f\|^2_{H^1_v}+C_\vpi\|f\|^2_{L^2_v}.
\end{align*} 
The term $H^1_v$ can now be absorbed by the regularizing term. Therefore, integrating over $[T_1,T_2]$ and choosing $T_2=T_2(\vpi)>T_1$ sufficiently small, we have 
\begin{align}\label{7132}
	\|f_\eta\|_{L^\infty_tL^2_x(\Omega)L^2_v}^2+\ve\|f_\eta\|^2_{L^2_tL^2_{x,v}(\Si_+)}
	+\vpi\|[\wh{C}^{}_0\<v\>^{{4}}f_\eta,\na_v(\<v\>^{{2}}f_\eta)]\|_{L^2_tL^2_x(\Omega)L^2_v}^2
	\le 2\|f_{T_1}\|_{L^2_x(\Omega)L^2_v}^2,
\end{align}
which is uniform in $\eta$. This implies that the solution $f_\eta$ can be extended to a time $T_2=T_2(\vpi)>T_1$ which is independent of $\eta$.

\smallskip 
For the $L^\infty_{t,x,v}$ estimate of $f_\eta$, we give a short proof for brevity as in the proof of Theorem \ref{nonLocal}; see also \cite[Section 7]{Alonso2022} or \cite[Section 8]{Cao2022b}. The main goal is to obtain an $L^\infty$ estimate of the level functions that is uniform in $\eta$ but depends on $\vpi$. (Note that, in Section \ref{Sec10}, the estimates are uniform in $\vpi$.)
In Lemmas \ref{GachideLem} and \ref{Qes1Lem}, the same estimates hold for $\Ga_\eta$, with constants independent of $\eta$. The modification of Lemma \ref{CollLevelLem} for $\Ga_\eta$ is already given in ``Step 2'' of Theorem \ref{nonLocal}. Therefore, using the same functional as in \eqref{Eppri} in Theorem \ref{nonLocal}
\begin{multline}\label{Epprizz}
	\E'_p(K):=\|f^{(l)}_{K,+}\|^2_{L^\infty_tL^2_{x,v}([T_1,T_2]\times\R^6_{x,v})}+\vpi\|\<v\>^2f^{(l)}_{K,+}\|_{L^2_tL^2_xH^1_v([T_1,T_2]\times\Omega\times\R^3_v)}^2\\
	+\frac{1}{C_0\max\{C_\infty^{2p-2},1\}}\Big\|\int_{\R^3_v}\1_{[T_1,T_2]}\<v\>^{-10}(f^{(l)}_{K,+})^2\,dv\Big\|_{B^{s',2}_p(\R^{4}_{t,x})}^p,
\end{multline}
and following the 
same calculations in Theorem \ref{ThmLinRe} (i.e. all the calculations in Section \ref{Sec10}),
we can obtain the $L^\infty$ estimate of $f_\eta$ (the same method for deriving \eqref{106zz}):
\begin{multline*}
	\|\<v\>^l_\de f\|_{L^\infty_tL^\infty_{x,v}(\R^6_{x,v})}
	\le K_1+ C(1+\ti C_1+K_1)^{C}(T_2-T_1)^{\zeta}\\
	\times\big(\|\<v\>^{l-2}f_{T_1}\|^2_{L^2_x(\Omega)L^2_v(\R^3_v)}+\|\psi\|^2_{L^\infty_t([T_1,T_2])L^2_{x}(\Omega)L^2_v}\big)^{\zeta},
\end{multline*}
for some $\zeta=\zeta(\ga,s)>0$. 
Note the constant depends on $\vpi>0$ because the energy functional \eqref{Epprizz} depends on $\vpi>0$. 
Then we choose $T_2=T_2(\vpi)>T_1$ and $\ve_1>0$ so small that 
\begin{align}\label{713a2}
	\|\<v\>^l_\de f_\eta\|_{L^\infty_t([T_1,T_2])L^\infty_{x,v}(\Omega\times\R^3_v)}\le \de_0. 
\end{align}

\smallskip Therefore, applying Banach-Alaoglu Theorem, $f_\eta$ is weakly-$*$ compact in the corresponding spaces in \eqref{7132} and \eqref{713a2}, and there exists a subsequence (still denote it by $f_\eta$) such that 
\begin{align}\label{weaklimiteta2}
	\begin{aligned}
		& f_\eta\rightharpoonup f\quad
		\text{ weakly-$*$ in $L^2_{t,x,v}([T_1,T_2]\times\Si_+)$ and $L^2_{t,x}H^1_v([T_1,T_2]\times\Omega\times\R^3_v)$}, \\
		& f_\eta\rightharpoonup f\quad \text{ weakly-$*$ in $L^2_x(\Omega)L^2_v$ for any $t\in[T_1,T_2]$},\\
		& f_\eta\rightharpoonup f\quad
		\text{ weakly-$*$ in $L^\infty_{t,x,v}([T_1,T_2]\times\Omega\times\R^3_v)$},
	\end{aligned}
\end{align}
as $\eta\to 0$, with some function $f$ satisfying \eqref{7132} and \eqref{713a2}. 
Rewriting equation \eqref{sec1Maxell2a} in the weak form: for any function $\Phi\in C^\infty_c(\R_t\times\R^3_x\times\R^3_v)$,
\begin{multline}\label{weaketa2}
	(f_\eta(T_2),\Phi(T_2))_{L^2_x(\Omega)L^2_v}-(f_\eta,(\pa_t+v\cdot\na_x)\Phi)_{L^2_{t,x,v}([T_1,T_2]\times\Omega\times\R^3_v)}+(f_\eta,\Phi)_{L^2_tL^2_{x,v}(\Si_+)}\\
	=(f_{T_1},\Phi(T_1))_{L^2_x(\Omega)L^2_v}
	+(1-\ve)(Rf_\eta,\Phi)_{L^2_tL^2_{x,v}(\Si_-)}\\
	+\big(\vpi Vf_\eta+\Gamma(\mu^{\frac{1}{2}}+f_\eta\chi_{\de_0}(\<v\>^lf_\eta),f_\eta)+\Gamma(f_\eta\chi_{\de_0}(\<v\>^lf_\eta),\mu^{\frac{1}{2}})
	,\Phi\big)_{L^2_{t,x,v}([T_1,T_2]\times\Omega\times\R^3_v)}.
\end{multline}
It suffices to obtain the limit for the collision terms, which follows from the estimate \eqref{collGaetaGa} in Theorem \ref{nonLocal}. 
That is 
\begin{align*}
	\lim_{\eta\to0}\big(\Gamma_\eta(\mu^{\frac{1}{2}}+f_\eta\chi_{\de_0}(\<v\>^lf_\eta),\mu^{\frac{1}{2}}+f_\eta)-\Gamma(\mu^{\frac{1}{2}}+f\chi_{\de_0}(\<v\>^lf),\mu^{\frac{1}{2}}+f),\Phi\big)_{L^2_{t,x,v}([T_1,T_2]\times\Omega\times\R^3_v)}=0. 
\end{align*}
Combining this with the weak-$*$ limit in \eqref{weaklimiteta2}, we can take $\eta\to0$ in \eqref{weaketa2} to deduce that $f$ is the weak solution to \eqref{sec1Maxell2} for the case $s\in[\frac{1}{2},1)$ in the sense of \eqref{weakfre}.

\smallskip\noindent{\bf Step 3. Convergence of $f^{\vpi}$.}
Let $s\in(0,1)$. Notice that the solution $f^\vpi$ to \eqref{sec1Maxell2} obtained in Steps 1 and 2 only exists for a small time $T_2=T_2(\vpi)>T_1$.
We will prove that the existence time $T_2>0$ can be independent of $\vpi>0$ and then one can pass the limit $\vpi\to 0$. 

\smallskip The term $f^{\vpi}\chi_{\de_0}(\<v\>^lf^\vpi)$ automatically satisfies 
\begin{align*}
	\|\<v\>^lf^{\vpi}\chi_{\de_0}(\<v\>^lf^\vpi)\|_{L^\infty_t([T_1,T_2])L^\infty_{x,v}(\Omega\times\R^3_v)} \le \de_0. 
\end{align*}
Thus, applying the $L^2$--$L^\infty$ estimates from \eqref{913}, \eqref{hheqes5}, and \eqref{Linftyxrefl}, we have the same estimates \eqref{L2Lpenergy106} and \eqref{106zz}, while the constants on the right-hand sides are independent of $\vpi>0$. 
Thus, we can choose $T_2-T_1\le\de^3,\ve_1,\ve_\infty>0$ so small (independent of $\vpi$) that
\begin{align}\label{fna2}\notag
		&\|f^{\vpi}\|_{L^\infty_t([T_1,T_2])L^2_x(\Omega)L^2_v}^2+\ve\|f^{\vpi}\|^2_{L^2_tL^2_{x,v}(\Si_+)}+c_0\|f^{\vpi}\|_{L^2_tL^2_x(\Omega)L^2_D}^2 \\
		&\quad+\vpi\|[\wh{C}^{}_0\<v\>^{{4}}f^{\vpi},\<v\>^{{2}}\na_vf^{\vpi}]\|_{L^2_tL^2_x(\Omega)L^2_v}^2
\le \de_0, 
\end{align}
and 
\begin{align}\label{fna3}
	\|\<v\>^lf^{\vpi}\|_{L^\infty_t([T_1,T_2])L^\infty_{x,v}(\Omega\times\R^3_v)}
	& \le \de_0.
\end{align}
Then it's the standard continuity argument to show that the existence time $T_2-T_1>0$ is independent of $\vpi>0$. 
The sequence $\{f^{\vpi}\}$ is bounded in the sense in \eqref{fna2} and \eqref{fna3}. By Banach-Alaoglu Theorem, there exists a subsequence $\{f^n\}\subset\{f^{\vpi}\}$ (for simplicity we can take $\vpi=\frac{1}{n}$) such that $\{f^n\}$ has a weak-$*$ limit $f$ as $n\to\infty$ satisfying
\begin{align}
	\label{iterass2}
	\begin{aligned}
			&\|f\|_{L^\infty_t([T_1,T_2])L^2_x(\Omega)L^2_v}^2+\ve\|f\|^2_{L^2_tL^2_{x,v}(\Si_+)}+c_0\|f\|_{L^2_tL^2_x(\Omega)L^2_D}^2\le \de_0, \\
			& \|\<v\>^l_\de f\|_{L^\infty_t([T_1,T_2])L^\infty_{x,v}(\Omega\times\R^3_v)}
		\le\de_0,
	\end{aligned}
\end{align}
where the weak-$*$ limit is taken in the sense that
\begin{align}\label{weaklimit1}
	\begin{aligned}
			& f^n\rightharpoonup f\quad \text{ weakly-$*$ in $L^2_{t,x}L^2_D([T_1,T_2]\times\Omega\times\R^3_v)$ and
		$L^\infty_{t,x,v}([T_1,T_2]\times\Omega\times\R^3_v)$}, \\
			& f^n\rightharpoonup f\quad \text{ weakly-$*$ in $L^2_x(\Omega)L^2_v$ for any $t\in[T_1,T_2]$}.
	\end{aligned}
\end{align}
Notice from \eqref{fna3} that $f^n\chi_{\de_0}(\<v\>^lf^n)=f^n$. 
Rewrite equation \eqref{sec1Maxell2} in the weak form: for any function $\Phi\in C^\infty_c(\R_t\times\R^3_x\times\R^3_v)$ satisfying $\Phi|_{\Si_+}=(1-\ve)R^*\Phi$,
where $R^*$ is the dual reflection operator given by \eqref{reflectdual},
the weak solution $f^{n}$ to equation \eqref{sec1Maxell2} satisfies
\begin{multline}\label{weak4}
	(f^{n}(T_2),\Phi(T_2))_{L^2_x(\Omega)L^2_v}-(f^{n},(\pa_t+v\cdot\na_x)\Phi)_{L^2_{t,x,v}([T_1,T_2]\times\Omega\times\R^3_v)}
	=(f_{T_1},\Phi(T_1))_{L^2_x(\Omega)L^2_v}\\
	+\big(\frac{1}{n}Vf^{n}+\Gamma(\mu^{\frac{1}{2}}+f^n,f^{n})+\Gamma(f^n,\mu^{\frac{1}{2}}),\Phi\big)_{L^2_{t,x,v}([T_1,T_2]\times\Omega\times\R^3_v)}. 
\end{multline}
To pass the weak-$*$ limit in \eqref{weak4}, we can apply the same estimates \eqref{716} and \eqref{86} in Theorem \ref{nonLocal} for $Vf$ and collision terms (using Lemma \ref{claimLem}).
Therefore, taking limit $n=n_j\to\infty$ in \eqref{weak4} and applying \eqref{weaklimit1}, we obtain
\begin{multline*}
	(f(T_2),\Phi(T_2))_{L^2_x(\Omega)L^2_v}-(f,(\pa_t+v\cdot\na_x)\Phi)_{L^2_{t,x,v}([T_1,T_2]\times\Omega\times\R^3_v)}\\
	=(f_{T_1},\Phi(T_1))_{L^2_x(\Omega)L^2_v}
	+\big(\Gamma(\mu^{\frac{1}{2}}+f,f)+\Gamma(f,\mu^{\frac{1}{2}}),\Phi\big)_{L^2_{t,x,v}([T_1,T_2]\times\Omega\times\R^3_v)}.
\end{multline*}
This implies that $f$ is the solution to \eqref{non4}. Estimate \eqref{iterass2} implies \eqref{LinftyRelinear1}.
The non-negativity of $F=\mu+\mu^{\frac{1}{2}}f$ can be derived from Theorem \ref{positivity}. This completes the proof of Lemma \ref{LemNonLocRe}.
\end{proof}

\subsection{Global nonlinear theory}
In this subsection, we are going to deduce the global-in-time existence of the full nonlinear Boltzmann equation with reflection boundary without $\ve$: 
\begin{align}\label{non5}
	\left\{
	\begin{aligned}
		 & \pa_tf+ v\cdot\na_xf = \Gamma(\mu^{\frac{1}{2}}+f,f)+\Gamma(f,\mu^{\frac{1}{2}})\quad \text{ in } (0,\infty)\times\Omega\times\R^3_v, \\
		 & f|_{\Si_-}=Rf\quad \text{ on }(0,\infty)\times\Si_-, \\
		 & f(0,x,v)=f_0\quad \text{ in }\Omega\times\R^3_v.
	\end{aligned}\right.
\end{align}
For this purpose, we begin with the nonlinear Boltzmann equation with a modified boundary:
\begin{align}\label{non6}
	\left\{
	\begin{aligned}
		 & \pa_tf+ v\cdot\na_xf = \Gamma(\mu^{\frac{1}{2}}+f,f)+\Gamma(f,\mu^{\frac{1}{2}})\quad \text{ in } (0,\infty)\times\Omega\times\R^3_v, \\
		 & f|_{\Si_-}=(1-\ve)Rf\quad \text{ on }(0,\infty)\times\Si_-, \\
		 & f(0,x,v)=f_{0}\quad \text{ in }\Omega\times\R^3_v.
	\end{aligned}\right.
\end{align}
The local-in-time existence of equation \eqref{non6} is given by Lemma \ref{LemNonLocRe}. Then we can apply Theorem \ref{ThmLinRe} with $\vpi=0$. 

\begin{Thm}\label{LemNonRe}
	Assume that $\Omega$ is bounded and $f_0$ satisfies the conservation law \eqref{conservationlaw}. Fix $\al\in(0,1)$, $-\frac{3}{2}<\ga\le 2$, $s\in(0,1)$, $l\ge \ga+10$, $\ti C>0$, and let $l_0=l_0(s,l)>0$ be a large constant. Fix a small $\de=\de(\al)>0$ determined in Subsection \ref{Sec81}. 
	Suppose the initial data $f_0$ satisfy $F_0=\mu+\mu^{\frac{1}{2}}f_0\ge 0$ and 
	\begin{align*}
			\|\<v\>^{l_0}f_0\|_{L^2_x(\Omega)L^2_v}=\ti C,\quad 
			\|\<v\>^l_\de f_0\|_{L^\infty_x(\Omega)L^\infty_v}=\ve_\infty,\quad\|\<v\>^{l-2}f_0\|_{L^2_x(\Omega)L^2_v}= \ve_1,
	\end{align*}
	with sufficiently small $\ve_1,\ve_\infty\in(0,1)$ depending on $\al,\ga,s,l,\ti C$.
	Then there exists a global-in-time solution $f$ to equation \eqref{non5} such that $F=\mu+\mu^{\frac{1}{2}}f\ge 0$ and
	\begin{align}
		\label{980}
		\|\<v\>^l_\de f\|_{L^\infty_tL^\infty_{x,v}(\R^6_{x,v})}
	&\le \ve_\infty+C(1+\ti C)^{C}\ve_1^{\zeta}, 
	\end{align}
	for some constants $C=C(\al,\ga,s,l,q)>0$ and $\zeta=\zeta(\ga,s)>0$.
\end{Thm}
\begin{proof}
The local-in-time existence is given by Lemma \ref{LemNonLocRe}. In this proof, we only give the \emph{a priori} estimates. Moreover, the non-negativity of $F=\mu+\mu^{\frac{1}{2}}f$ is given in Theorem \ref{positivity}. We use $[0,T]$ as the underlying time interval below. 

\medskip
\noindent{\bf Step 1. Global existence for $\ve> 0$.}
Let $f$ be the local-in-time solution to \eqref{non6}. 
Assume the \emph{a priori} assumption
\begin{align}\label{iterass16}
	\begin{aligned}
		\|\<v\>^l_\de f\|_{L^\infty_t(\R_t)L^\infty_{x}(\Omega)L^\infty_v}+\|\<v\>^{l-2}f\|^2_{L^\infty_t(\R_t)L^2_x(\Omega)L^2_v} & \le \de_0,
	\end{aligned}
\end{align}
with some fixed small $\de_0\in(0,1)$. 
Then, by the $L^2$ estimate in Theorem \ref{L2globalThm} (i.e. \eqref{L2esLarge} and \eqref{L2esLargea}), for any $T>s>0$ and $k\ge 0$, we have 
\begin{align*}
	\begin{aligned}
		e^{c_0t}\|\<v\>^{l-2}f(T)\|^2_{L^2_x(\Omega)L^2_v}
		&\le C\|\<v\>^{l-2}f_0\|^2_{L^2_x(\Omega)L^2_v},\\
		\|\<v\>^{l_0}f(T)\|^2_{L^2_x(\Omega)L^2_v}
		&\le C\|\<v\>^{l_0}f_0\|^2_{L^2_x(\Omega)L^2_v}, 
	\end{aligned}
\end{align*}
and 
\begin{align}\label{L2esglo}
	\|\<v\>^kf\|^2_{L^\infty_t([s,T])L^2_x(\Omega)L^2_v}
		+\|\<v\>^kf\|_{L^2_t([s,T])L^2_x(\Omega)L^2_v}^2
	\le C\|\<v\>^kf|_{t=s}\|^2_{L^2_x(\Omega)L^2_v}, 
\end{align}
with some constant $c_0>0$. 
Thus, the assumptions in Theorem \ref{ThmLinRe} (for linear equation) are satisfied with any $[T_1,T_2]\subset\R_t$ satisfying $T_2-T_1\le\de^3$ and $\psi\equiv\vp\equiv\vp_2:=f$ and $\vp_1=0$, i.e. 
\begin{align*}
\begin{aligned}
	\|\<v\>^l_\de f\|_{L^\infty_t([T_1,T_2])L^\infty_{x}(\Omega)L^\infty_v}&\le \de_0,\\
	\|\<v\>^{l}_\de f_{T_1}\|_{L^\infty_{x,v}(\Omega\times\R^3_v)} \le\de_0,\quad
	\|\<v\>^{l_0}f_{T_1}\|^2_{L^2_x(\Omega)L^2_v}
	& \le C\ti C,\\
	\|\<v\>^{l-2}f_{T_1}\|^2_{L^2_x(\Omega)L^2_v(\R^3_v)}+\|f\|^2_{L^\infty_t([T_1,T_2])L^2_{x}(\Omega)L^2_v}
	& \le \ve_1 Ce^{-c_0T_1}.
\end{aligned}
\end{align*}
Note that the current constant $l_0$ is greater than the one in Theorem \ref{ThmLinRe}. 
Therefore, applying Theorem \ref{ThmLinRe} with $\de_\infty\le \de_0$, $\de_1\le \ve_1Ce^{-c_0j}$ and $\ti C_1=C\ti C$ therein, 
we deduce the $L^\infty$ estimates in time interval $[T_1,T_2]\subset\R_t$ satisfying $T_2-T_1\le\de^3$ with a fixed constant $\de=\de(\al)>0$: 
\begin{align*}
	\|\<v\>^l_\de f\|_{L^\infty_t([T_1,T_2])L^\infty_{x,v}(\ol\Omega\times\R^3_v)}
		\le K_1
		+ C(1+\ti C_1+K_1)^{C}\Big(\de^3\ve_1 Ce^{-c_0T_1}\Big)^{\zeta},
	\end{align*}
	where $K_1$ is given by 
	\begin{align*}
		K_1=\|\<v\>^l_{\de}f_{T_1}\|_{L^\infty_{x}(\Omega)L^\infty_v}\le \de_0<1, 
	\end{align*}
with some constant $C=C(\al,l,\ga,s)>0$ and $\zeta=\zeta(\ga,s)>0$ that are independent $T_1$. 
Repeating such estimate on $[0,\de^3]$, $[\de^3,2\de^3]$, $\dots$, $[j\de^3,(j+1)\de^3]$ $(j\ge 0)$, we have 
\begin{align*}
	&\|\<v\>^l_\de f\|_{L^\infty([j\de^3,(j+1)\de^3])L^\infty_{x,v}(\ol\Omega\times\R^3_v)}\\
	&\quad\le\|\<v\>^{l}_\de f|_{t=j\de^3}\|_{L^\infty_{x}(\Omega)L^\infty_v}
	+ C(1+\ti C)^{C}\de^{3\zeta}\ve_1^{\zeta}e^{-c_0j\zeta}\\
	&\quad\le \cdots\le \|\<v\>^{l}_\de f|_{t=0}\|_{L^\infty_{x,v}(\Omega\times\R^3_v)}
	+ C(1+\ti C)^{C}\de^{3\zeta}\ve_1^{\zeta}\sum_{k=0}^je^{-c_0k\zeta}\\
	&\quad\le \ve_\infty+C(1+\ti C)^{C}\de^{3\zeta}\ve_1^{\zeta}. 
\end{align*}
Note that $\de=\de(\al)>0$ is a fixed constant depending only on the accommodation coefficient $\al\in(0,1)$. 
Therefore, if we choose $\ve_\infty,\ve_1>0$ sufficiently small, which depends only on $\al,l,\ga,s$ and is independent of time, we deduce 
\begin{align}\label{Linfty2zz}
	\|\<v\>^l_\de f\|_{L^\infty_t(\R)L^\infty_{x,v}(\ol\Omega\times\R^3_v)}\le \ve_\infty+C_{\al}(1+\ti C)^{C}\ve_1^{\zeta}\le\de_0, 
\end{align}
which, together with \eqref{L2esglo} and a small $\ve_1>0$, closes the \emph{a priori} assumption \eqref{iterass16}

\medskip\noindent{\bf Step 2. Passing the limit $\ve\to 0$.}
We write $f=f^{\ve}$ to be the solution to \eqref{non6}, showing its dependence on $\ve$. Then by the $L^2$--$L^\infty$ estimates in Step 1, i.e. \eqref{L2esglo} and \eqref{Linfty2zz} for $f^\ve$, and by applying the Banach-Alaoglu Theorem, there exists weak-$*$ limit $f$ satisfying \eqref{L2esglo} and \eqref{Linfty2zz} in the sense that
\begin{align}\label{weaklimit7}
	\begin{aligned}
			& f^{\ve}\rightharpoonup f\quad \text{ weakly-$*$ in 
			$L^2_{t,x,v}([0,T]\times\Omega\times\R^3_v)$, 
$L^2_{t,x}L^2_D([0,T]\times\Omega\times\R^3_v)$
} \\
			& \qquad\qquad\qquad\qquad\qquad\qquad\text{ and
		$L^\infty_{t,x,v}([0,T]\times\Omega\times\R^3_v)$}, \\
			& f^{\ve}\rightharpoonup f\quad \text{ weakly-$*$ in $L^2_x(\Omega)L^2_v$ for any $t\in[0,T]$}.
	\end{aligned}
\end{align}
We rewrite the equation \eqref{non6} in the weak form:
for any function $\Phi\in C^\infty_c(\R_t\times\R^3_x\times\R^3_v)$ satisfying $\Phi|_{\Si_+}=R^*\Phi$ with dual reflection operator $R^*$ given by \eqref{reflectdual}, the weak solution $f^{\ve}$ to equation \eqref{non6} satisfies
\begin{multline}\label{weak6}
	(f^{\ve}(T),\Phi(T))_{L^2_x(\Omega)L^2_v}-(f^{\ve},(\pa_t+v\cdot\na_x)\Phi)_{L^2_{t,x,v}([0,T]\times\Omega\times\R^3_v)}
	+\ve(f^{\ve},\Phi)_{L^2_tL^2_{x,v}(\Si_+)}\\
	=(f_{0},\Phi(0))_{L^2_x(\Omega)L^2_v}
	+\big(\Gamma(\mu^{\frac{1}{2}}+f^{\ve},f^{\ve})+\Gamma(f^{\ve},\mu^{\frac{1}{2}}),\Phi\big)_{L^2_{t,x,v}([0,T]\times\Omega\times\R^3_v)}.
\end{multline}
Similar to estimate \eqref{86}, we have 
\begin{align*}
	&\big|\big(\Gamma(\mu^{\frac{1}{2}}+f^\ve,f^\ve)+\Gamma(f^\ve,\mu^{\frac{1}{2}})-\Gamma(\mu^{\frac{1}{2}}+f,f)-\Gamma(f,\mu^{\frac{1}{2}}),\Phi\big)_{L^2_{t,x,v}([0,T]\times\Omega\times\R^3_v)}\big|\\
	&\quad=\big|\big(\Gamma(\mu^{\frac{1}{2}}+f,f^\ve-f)+\Gamma(f^\ve-f,f^\ve+\mu^{\frac{1}{2}}),\Phi\big)_{L^2_{t,x,v}([0,T]\times\Omega\times\R^3_v)}\big|\\
	&\quad\le C\int^{T}_{0}\int_{\Omega}\|\Phi\|_{W^{2,\infty}_v}(1+\|\<v\>^{\ga+6}f\|_{L^\infty_v}+\|\<v\>^{\ga+6}f^\ve\|_{L^\infty_v})\|\<v\>^{\ga+4}(f^\ve-f)\|_{L^2_v}\,dxdt\\
	&\quad\le C\int^{T}_{0}\int_{\Omega}\|\Phi\|_{W^{2,\infty}_v}\|\<v\>^{\ga+4}(f^\ve-f)\|_{L^2_v}\,dxdt.
\end{align*}
Then by Lemma \ref{claimLem}, there exists a subsequence $\{f^{\ve_j}\}$ of $\{f^{\ve}\}$ such that the collision terms in \eqref{weak6} satisfy
\begin{multline*}
	\lim_{\ve_j\to\infty}\big(\Gamma(\mu^{\frac{1}{2}}+f^{\ve_j},f^{\ve_j})+\Gamma(f^{\ve_j},\mu^{\frac{1}{2}}),\Phi\big)_{L^2_{t,x,v}([0,T]\times\Omega\times\R^3_v)}\\=\big(\Gamma(\mu^{\frac{1}{2}}+f,f)+\Gamma(f,\mu^{\frac{1}{2}}),\Phi\big)_{L^2_{t,x,v}([0,T]\times\Omega\times\R^3_v)}.
\end{multline*}
Also, the $L^2$ boundary norm is bounded as in \eqref{L2esglo}, i.e. $\|\<v\>^kf^\ve\|_{L^2_{t}L^2_{x,v}(\Si_+)}\le C\|\<v\>^kf_0\|^2_{L^2_x(\Omega)L^2_v}.$
Therefore, taking limit $\ve=\ve_j\to\infty$ in \eqref{weak6} and using \eqref{weaklimit7}, we obtain
\begin{multline*}
	(f(T),\Phi(T))_{L^2_x(\Omega)L^2_v}-(f,(\pa_t+v\cdot\na_x)\Phi)_{L^2_{t,x,v}([0,T]\times\Omega\times\R^3_v)}\\
	=(f_{0},\Phi(0))_{L^2_x(\Omega)L^2_v}
	+\big(\Gamma(\mu^{\frac{1}{2}}+f,f)+\Gamma(f,\mu^{\frac{1}{2}}),\Phi\big)_{L^2_{t,x,v}([0,T]\times\Omega\times\R^3_v)}.
\end{multline*}
This implies that $f$ is the solution to \eqref{non5}, which satisfies \eqref{980}. This completes the proof of Theorem \ref{LemNonRe}. 
\end{proof}
\subsection{Proof of Theorem \ref{ThmReflectionMain}}\label{SecMainMaxwell}
The main Theorem \ref{ThmReflectionMain} follows from the combination of local-in-time existence in Lemma \ref{LemNonLocRe} and global-in-time existence as well as $L^\infty$ estimate in Theorem \ref{LemNonRe}. 
The $L^2$ energy estimates can be found in Theorem \ref{L2globalThm}.

\section{The a priori \texorpdfstring{$L^2$}{L2} decay theory}\label{Sec12}
In this section, we will prove the global \emph{a priori} $L^2$ estimate of the nonlinear Boltzmann equation with large-time decay by assuming the $L^\infty$ bound of the solution is small.
Note that this Section \ref{Sec12} is {\bf self-consistent} without using the $L^2$--$L^\infty$ estimate.

\subsection{Global \texorpdfstring{$L^2$}{L2} estimate}
The next Theorem \ref{L2globalThm} gives the global \emph{a priori} $L^2$ estimate of the nonlinear Boltzmann equation.
\begin{Thm}\label{L2globalThm}
	Assume that $\Omega\subset\R^3_x$ is a bounded open subset, $-\frac{3}{2}<\ga\le 2$ and $s\in(0,1)$. 
	Let $T>0$ and $f$ be the solution to Boltzmann equation \eqref{B1} in $[0,T]$.
	Suppose
	\begin{align}
		\label{L2global}
		\sup_{0\le t\le T}\|\<v\>^{\ga+10}f\|_{L^\infty_{x,v}(\Omega\times\R^3_v)}\le\de_0,
	\end{align}
	with some sufficiently small $\de_0>0$,
	which is a constant depending only on $\ga,s$.
	
	{{\smallskip}}\noindent (1) Let $k\ge 0$. Suppose $f$ satisfies the inflow boundary conditions \eqref{inflow}. Then there exists small $c_0>0$ such that if
	\begin{align*}
		\int^T_0\int_{\Si_-}|v\cdot n|e^{2c_0t}|\<v\>^kg|^2\,dS(x)dvdt<\infty,
	\end{align*}
	then we have
	\begin{multline}\label{L2es1}
		\|\<v\>^{k}f\|^2_{L^\infty_t([0,T])L^2_x(\Omega)L^2_v}
		+\|\<v\>^kf\|_{L^2_t([0,T])L^2_{x,v}(\Si_+)}
		+c_0\|\<v\>^kf\|_{L^2_t([0,T])L^2_x(\Omega)L^2_D}^2\\
		+c_0\|\<v\>^kf\|_{L^2_t([0,T])L^2_x(\Omega)L^2_v}^2
		\le C\|\<v\>^{k}f_0\|^2_{L^2_x(\Omega)L^2_v}+C\|\<v\>^kg\|_{L^2_t([0,T])L^2_{x,v}(\Si_-)}, 
	\end{multline}
	for some $C>0$. Moreover, we have the large-time behavior: for any $t\in[0,T]$,
	\begin{multline}\label{L2es1a}
		e^{c_0t}\|\<v\>^{k}f(t)\|^2_{L^2_x(\Omega)L^2_v}+
		\|e^{c_0s}\<v\>^kf\|_{L^2_{s}([0,t])L^2_{x,v}(\Si_+)}\\
		\le C\Big(\|\<v\>^{k}f_0\|^2_{L^2_x(\Omega)L^2_v}+\|e^{c_0s}\<v\>^kg\|_{L^2_{s}([0,t])L^2_{x,v}(\Si_-)}\Big), 
	\end{multline}
	
	{{\smallskip}}\noindent (2) 
	Suppose $f$ satisfies the conservation of mass for the initial data as in \eqref{conservationlaw} and the {Maxwell} reflection boundary condition $f|_{\Si_-}=(1-\ve)Rf$ with any $\ve\in[0,1)$ and $R$ given in \eqref{reflect}. Then there exists small $c_0>0$ such that for any $k\ge 0$, 
	\begin{multline}\label{L2esLargea}
		\sup_{0\le t\le T}\|\<v\>^kf(t)\|^2_{L^2_x(\Omega)L^2_v}+c_{\al}\|\<v\>^kf\|_{L^2_{t}([0,T])L^2_{x,v}(\Si_+)}
		+c_0\|\<v\>^kf(t)\|_{L^2_{t}([0,T])L^2_x(\Omega)L^2_D}^2\\
		+c_0\|\<v\>^kf(t)\|_{L^2_{t}([0,T])L^2_x(\Omega)L^2_v}^2
		\le C\|\<v\>^kf_0\|^2_{L^2_x(\Omega)L^2_v}.
	\end{multline}
	for some $C>0$ and small $c_0,c_{\al}>0$.
	Moreover, we have the large-time behavior:
	\begin{align}
		\label{L2esLarge}
		e^{c_0 t}\|\<v\>^kf(t)\|^2_{L^2_x(\Omega)L^2_v}
			+\|e^{c_0s}\<v\>^kf\|^2_{L^2_s([0,t])L^2_{x,v}(\Si_+)}\le \|\<v\>^kf_0\|^2_{L^2_x(\Omega)L^2_v},
	\end{align}
	for any $t\ge 0$.
\end{Thm}
Note that in this Theorem \ref{L2globalThm}, we only give the \emph{a priori} estimate while the proof of its existence is given in Sections \ref{Sec8} and \ref{Sec11}. The microscopic estimate of $\{\I-\P\}f$ is given by \eqref{micro}. So we will focus on the macroscopic estimate of $\P f$ below, which is mainly the following:

\begin{Prop}\label{macroLem}
	Assume that $\Omega\subset\R^3_x$ is bounded open and let $-\frac{3}{2}<\ga\le 2$ and $s\in(0,1)$. 	
	Let $f$ be the solution to the Boltzmann equation \eqref{B1} for $t\in[0,1]$.
	Suppose
	\begin{align}
		\label{L2global1}
		\sup_{0\le t\le 1}\|\<v\>^{\ga+10}f\|_{L^\infty_{x,v}(\Omega\times\R^3_v)}\le\de_0,
	\end{align}
	with some sufficiently small $\de_0>0$, which is a constant depending only on $\ga,s$. Then we have 
	
	\smallskip
	\begin{enumerate}
		\item 	
		There exists $M>0$ such that for any solution $f(t,x,v)$ to the nonlinear Boltzmann equation \eqref{B1},
		\begin{align}\label{127}\notag
			\int^{1}_{0}\|\P f(t)\|_{L^2_x(\Omega)L^2_D}^2\,dt&\le M\int^{1}_{0}\|\{\I-\P\}f(t)\|_{L^2_x(\Omega)L^2_D}^2\,dt\\
			&\quad+M\int^1_0\int_{\pa\Omega\times\R^3_v}|v\cdot n(x)||f(t)|^2\,dS(x)dvdt.
		\end{align}
		
		\smallskip 
		\item Assume the conservation of mass for the initial data as in \eqref{conservationlaw}.
		There exists $M>0$ such that for any solution $f(t,x,v)$ to the nonlinear Boltzmann equation \eqref{B1} satisfying the Maxwell reflection boundary condition $f|_{\Si_-}=(1-\ve)Rf$ with $\ve\in[0,1)$ and $R$ given in \eqref{reflect},
		\begin{align}\label{127a}\notag
			&\int^{1}_{0}\|\P f(t)\|_{L^2_x(\Omega)L^2_D}^2\,dt\le M\int^{1}_{0}\|\{\I-\P\}f(t)\|_{L^2_x(\Omega)L^2_D}^2\,dt\\
			&\qquad\quad+M\int_{\Si_+}|v\cdot n|\big(\ve|f|^2+(1-\ve)\al|f-R_Df|^2\big)\,dS(x)dv,
		\end{align}
		where $R_D f(v)=c_\mu\mu^{\frac{1}{2}}(v)\int_{v'\cdot n(x)>0}\{v'\cdot n(x)\}f(v')\mu^{\frac{1}{2}}(v')\,dv'$ is given by \eqref{RD1}.
	\end{enumerate}
\end{Prop}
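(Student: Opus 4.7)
The plan is to apply the standard macro-micro decomposition $f = \P f + \{\I-\P\}f$ with $\P f = (a(t,x)+b(t,x)\cdot v + c(t,x)\tfrac{|v|^2-3}{6})\mu^{1/2}$, and then extract elliptic-type $L^2_x$ control on $(a,b,c)$ by testing against carefully chosen moment functions. The first step is to derive local conservation laws by pairing the Boltzmann equation \eqref{B1} with the collision invariants $\{\mu^{1/2}, v\mu^{1/2}, \tfrac{|v|^2-3}{6}\mu^{1/2}\}$, producing evolution equations of the form $\partial_t a + \nabla_x\cdot b = 0$, $\partial_t b + \nabla_x(a+2c) + \nabla_x\cdot\Theta(\{\I-\P\}f) = \text{(micro + nonlinear)}$, and $\partial_t c + \tfrac{1}{3}\nabla_x\cdot b = \text{(similar)}$. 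The nonlinear contributions $\Gamma(f,f)$ are controlled by the smallness assumption \eqref{L2global1} via the collision estimates of Section~\ref{Sec3}, so they can be absorbed into the microscopic dissipation.

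The key mechanism for extracting $\|\P f\|^2_{L^2_xL^2_D}$ is to introduce time-localized test functions $\Phi_a,\Phi_b,\Phi_c$ of the form $\psi(t,x)\cdot p(v)\mu^{1/2}$, where the $v$-weights $p(v)$ are polynomials in $v$ designed so that pairing with the equation isolates $c$, $b$, $a$ successively, and the $x$-coefficients $\psi$ solve auxiliary elliptic problems on $\Omega$ (for instance $-\Delta_x\psi_c = c$ with compatible boundary conditions). Following the framework in Subsection~\ref{subsub11}, I would multiply by a smooth temporal cutoff $\zeta(t)$ supported in $[T,T+\delta^3]\subset [0,1]$ with fixed small $\delta>0$, so that the time derivative $\zeta'$ produces controllable $L^2_{x,v}$ commutators, and integrate by parts in $x$ to generate boundary traces. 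Integrating in $T$ then recovers the full $\int_0^1\|\P f\|_{L^2_xL^2_D}^2\,dt$.

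For part (1) with inflow data, the boundary integrals appearing from integration by parts are directly bounded by $\int_0^1\int_{\pa\Omega\times\R^3_v}|v\cdot n||f|^2\,dS(x)dvdt$ after Cauchy-Schwarz, giving \eqref{127}. For part (2), the conservation of mass \eqref{conservationlaw} ensures $\int_\Omega a(t,x)\,dx = 0$ (from the $\mu^{1/2}$ moment equation integrated in $x$), which is essential for a Poincar\'e-type inequality closing the estimate on $a$. The Maxwell reflection boundary trace is then processed by identity \eqref{101} of Lemma~\ref{LemR}, which gives the decomposition $\int_{\Si_-}|v\cdot n||Rf|^2 = \int_{\Si_+}|v\cdot n||f|^2 - (2\al-\al^2)\int_{\Si_+}|v\cdot n||f-\P_\Si f|^2$, producing the precise boundary term on the right of \eqref{127a} after combining with the outgoing boundary trace weighted by $\ve$.

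The main obstacle will be choosing the test functions $\Phi_{a,b,c}$ to be compatible with the non-cutoff dissipation norm $\|\cdot\|_{L^2_D}$ rather than merely $\|\cdot\|_{L^2_v}$: their $v$-polynomial factors must be chosen so that the collision terms $(L\Phi,\{\I-\P\}f)_{L^2_v}$ and $(\Gamma(f,f),\Phi)_{L^2_v}$ are controlled using the sharp estimates \eqref{micro}, \eqref{L1}, \eqref{Gaesweight}, without loss of velocity weights. A secondary subtlety is that, for the Maxwell boundary case, the test function for $a$ must be constructed so that its boundary trace pairs cleanly against $f-\P_\Sigma f$ rather than the full outgoing trace — this is the structural reason the $(2\al-\al^2)$ diffuse boundary gain appears on the right of \eqref{127a} and is exactly what allows absorption after combining with \eqref{101}.
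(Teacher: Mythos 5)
Your proposal takes a genuinely different route from the paper. The paper proves Proposition \ref{macroLem} by a compactness--contradiction argument: assuming the estimate fails, it normalizes a sequence $Z_k$ with $\int_0^1\|\P Z_k\|^2_{L^2_xL^2_D}\,dt=1$ and vanishing microscopic/boundary energy, extracts a weak limit $Z$ satisfying $(\pa_t+v\cdot\na_x)Z=0$ with $Z=\P Z$, invokes the rigidity lemma of Guo (Lemma \ref{Lem43}) to get the explicit polynomial form of $Z$, upgrades to strong convergence via the velocity-averaging lemma together with a five-region decomposition of $[0,1]\times\Omega\times\R^3_v$ (time layers, interior, large velocities, grazing and non-grazing boundary sets, with the cutoffs $\chi^\de_\pm$ and a trace argument), and finally derives $Z\equiv0$ from the boundary/conservation conditions, contradicting $\|Z\|\ne0$. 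Your route is the direct, constructive macroscopic estimate via dual test functions $\psi(t,x)p(v)\mu^{1/2}$ with $\psi$ solving auxiliary elliptic problems. If it could be completed it would have the advantage of producing an explicit constant $M$, whereas the paper's $M$ is non-constructive; the paper's method, in exchange, never has to construct boundary-compatible elliptic test functions and only needs to characterize the limit $Z$ on $\pa\Omega$.

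However, as written there is a genuine gap, and it sits exactly where you defer the work. For the Maxwell case with $\ve=0$, the only boundary quantity available on the right of \eqref{127a} is $(2\al-\al^2)\int_{\Si_+}|v\cdot n||f-\P_\Si f|^2$; the full outgoing trace $\int_{\Si_+}|v\cdot n||f|^2$ is \emph{not} at your disposal. The integration by parts in the $b$-estimate (momentum moments) produces boundary integrals of the form $\int_{\pa\Omega}\na\psi_b:\int_{\R^3}(v\otimes v)(v\cdot n)f\mu^{1/2}\,dv\,dS(x)$, and you must show these are controlled by the diffuse defect alone, uniformly in $\al\in(0,1]$ and in the choice of local reflection $R_L$ (specular or bounce-back), together with compatible boundary conditions for the elliptic problems defining $\psi_a,\psi_b,\psi_c$. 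You flag this as a ``secondary subtlety,'' but it is the entire difficulty of the bounded-domain macroscopic estimate; asserting that the test function ``must be constructed so that its boundary trace pairs cleanly against $f-\P_\Si f$'' is a statement of what needs to be proved, not a proof. A similar (milder) issue arises in part (1): the boundary terms there involve traces of $\psi$ and $\na\psi$ on $\pa\Omega$ multiplied against velocity moments of $f$, and closing them requires elliptic regularity up to the boundary and an absorption argument that you have not supplied. Until the test functions are actually constructed and these boundary terms are closed, the argument does not establish \eqref{127} or \eqref{127a}.
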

The idea of the proof of Proposition \ref{macroLem} is to use contradiction arguments and splitting the domain $\Omega\times\R^3_v$, which is similar to the methodology in \cite{Guo2002,Guo2009,Guo2021b}.
We first show that Proposition \ref{macroLem} implies Theorem \ref{L2globalThm}.

\begin{proof}[Proof of Theorem \ref{L2globalThm}]
	The proof is the standard energy argument by combining several weighted $L^2$ energy estimates. 
	For the solution $f$ to nonlinear Boltzmann equation \eqref{B1}, $e^{\lam t}f(t)$ satisfies
	\begin{align}\label{B13}
		(\pa_tf+ v\cdot\na_x)(e^{\lam t}f) = L(e^{\lam t}f)+\Gamma(f,e^{\lam t}f)+\lam e^{\lam t}f.
	\end{align}
	Let $0\le N\le t\le N+1$ with $N$ being an integer.
	With estimate \eqref{L2global}, for any $k\ge 0$, we take $L^2$ inner product of \eqref{B1} with $2\<v\>^{2k}f$ and $2f$ respectively over $[N,t]\times\Omega\times\R^3_v$ to deduce
	\begin{multline}\label{125}
		\|\<v\>^kf(t)\|^2_{L^2_x(\Omega)L^2_v} + \int^t_N\int_{\pa\Omega\times\R^3_v}v\cdot n|\<v\>^kf|^2\,dvdS(x)ds + (2c_0-C\de_0)\int^t_N\|\<v\>^kf\|^2_{L^2_x(\Omega)L^2_D}\,ds\\
		\le 2\|\<v\>^kf(N)\|^2_{L^2_x(\Omega)L^2_v}
		+C\int^t_N\|f\|^2_{L^2_x(\Omega)L^2_v}\,ds,
	\end{multline}
	(where we put the boundary integrations $\Si_+$ and $\Si_-$ together here and below) and
	\begin{multline}\label{125a}
		\|f(t)\|^2_{L^2_x(\Omega)L^2_v} + \int^t_N\int_{\pa\Omega\times\R^3_v}v\cdot n|f|^2\,dvdS(x)ds
		+(2c_0-C\de_0)\int^t_N\|f\|^2_{L^2_x(\Omega)L^2_D}\,ds\\
		\le 2\|f(N)\|^2_{L^2_x(\Omega)L^2_v}+C\int^t_N\|f\|^2_{L^2_x(\Omega)L^2_v}\,ds,
	\end{multline}
	for some $c_0>0$, where we used \eqref{micro}, \eqref{L1}, \eqref{L2} and \eqref{Gaesweight} for the collision terms.
	Choosing $\de_0>0$ small enough and using Gr\"{o}nwall's inequality to \eqref{125} and \eqref{125a}, we have
	\begin{multline}\label{125y}
		\|\<v\>^kf(t)\|^2_{L^2_x(\Omega)L^2_v} + \int^t_N\int_{\pa\Omega\times\R^3_v}v\cdot n|\<v\>^kf|^2\,dvdS(x)ds + c_0\int^t_N\|\<v\>^kf\|^2_{L^2_x(\Omega)L^2_D}\,ds\\
		\le e^{C(t-N)}\|\<v\>^kf(N)\|^2_{L^2_x(\Omega)L^2_v},
	\end{multline}
	and
	\begin{multline}\label{125ay}
		\|f(t)\|^2_{L^2_x(\Omega)L^2_v} + \int^t_N\int_{\pa\Omega\times\R^3_v}v\cdot n|f|^2\,dvdS(x)ds
		+c_0\int^t_N\|f\|^2_{L^2_x(\Omega)L^2_D}\,ds\\
		\le e^{C(t-N)}\|f(N)\|^2_{L^2_x(\Omega)L^2_v}.
	\end{multline}
	For the $L^2$ estimate on time interval $[0,N]$, we take $L^2$ inner product of \eqref{B13} with $e^{\lam t}\<v\>^{2k}f$ and $e^{\lam t}f$ respectively over $[0,N]\times\Omega\times\R^3_v$ to deduce
	\begin{multline}\label{125b}
		e^{2\lam N}\|\<v\>^kf(N)\|^2_{L^2_x(\Omega)L^2_v} + \int^N_0\int_{\pa\Omega\times\R^3_v}v\cdot ne^{2\lam s}|\<v\>^kf|^2\,dvdS(x)ds\\
		+ (2c_0-C\de_0)\int^N_0e^{2\lam s}\|\<v\>^kf\|^2_{L^2_x(\Omega)L^2_D}\,ds
		\le \|\<v\>^kf(0)\|^2_{L^2_x(\Omega)L^2_v} \\
		+2\lam\int^N_0e^{2\lam s}\|\<v\>^kf\|^2_{L^2_x(\Omega)L^2_v}\,ds
		+C\int^N_0e^{2\lam s}\|\<v\>^{\frac{\ga+2s}{2}}f\|^2_{L^2_x(\Omega)L^2_v}\,ds,
	\end{multline}
	and
	\begin{multline}\label{125c}
		e^{2\lam N}\|f(N)\|^2_{L^2_x(\Omega)L^2_v}
		+ \int^N_0\int_{\pa\Omega\times\R^3_v}v\cdot ne^{2\lam s}|f|^2\,dvdS(x)ds
		+ 2c_0\int^N_0e^{2\lam s}\|(\I-\P)f\|^2_{L^2_x(\Omega)L^2_D}\,ds\\
		\le 2\|f(0)\|^2_{L^2_x(\Omega)L^2_v}
		+2\lam \int^N_0e^{2\lam s}\|f\|^2_{L^2_x(\Omega)L^2_v}\,ds
		+C\de_0\int^N_0e^{2\lam s}\|f\|^2_{L^2_x(\Omega)L^2_D}\,ds.
	\end{multline}
	Dividing the time interval as $[0,N)=\cup^{N-1}_{n=0}[n,n+1)$ and writing $f_n(s,x,v)=f(n+s,x,v)$ for $n=0,1,2,\dots,N-1$, we have from \eqref{125c} that
	\begin{multline}\label{125d}
		e^{2\lam N}\|f(N)\|^2_{L^2_x(\Omega)L^2_v} + \sum_{n=0}^{N-1}\int^1_0\int_{\pa\Omega\times\R^3_v}v\cdot ne^{2\lam(n+s)}|f_n|^2\,dvdS(x)ds\\
		+ 2c_0\sum_{n=0}^{N-1}\int^1_0e^{2\lam(n+s)}\|(\I-\P)f_n\|^2_{L^2_x(\Omega)L^2_D}\,ds\
		\le 2\|f(0)\|^2_{L^2_x(\Omega)L^2_v} \\
		+2\lam \int^N_0e^{2\lam s}\|f\|^2_{L^2_x(\Omega)L^2_v}\,ds +C\de_0\int^N_0e^{2\lam s}\|f\|^2_{L^2_x(\Omega)L^2_D}\,ds.
	\end{multline}
	Notice that $f_n(s,x,v)$ satisfies the same Boltzmann equation \eqref{B1} for $s\in(0,1)$, which allows use to apply Proposition \ref{macroLem} to $f_n$. 

	\smallskip 
	In order to obtain the exponential decay for soft potential, we apply the space-velocity mixed weight introduced in \cite{Deng2022}. Let $W$ be given by \eqref{W}:
	\begin{align*}
		W = W(x,v) = \exp\Big(-q\frac{x\cdot v}{\<v\>}\Big),
	\end{align*}
	with some fixed $q\in(0,1)$.
	Note that we require the boundedness of $\Omega$ to obtain the boundedness of $W$ as in \eqref{Wbound}.
	Multiplying \eqref{B13} by $W$ and noticing \eqref{W1}, we have
	\begin{multline}\label{B14}
		(\pa_tf+ v\cdot\na_x)(e^{\lam t}Wf)+ q|v|^2\<v\>^{-1}e^{\lam t}Wf\\
		= 
		\Gamma(e^{\lam t}f,\mu^{\frac{1}{2}})+W\Gamma(\mu^{\frac{1}{2}}+f,e^{\lam t}f)
		+\lam e^{\lam t}Wf.
	\end{multline}
	Taking the $L^2$ inner product of \eqref{B14} with $\<v\>^{2k}Wf$ over $[0,t]\times\Omega\times\R^3_v$, and using \eqref{L2global}, \eqref{Gaesweight} and \eqref{L2} for the collision term yields
	\begin{multline}\label{125f}
		e^{2\lam t}\|\<v\>^{k}Wf(t)\|^2_{L^2_x(\Omega)L^2_v}
		+ \int^t_0\int_{\pa\Omega\times\R^3_v}v\cdot ne^{2\lam s}|\<v\>^{k}Wf|^2\,dS(x)dvds \\
		+ q\int^t_0e^{2\lam s}\||v|\<v\>^{k-\frac{1}{2}}Wf\|^2_{L^2_x(\Omega)L^2_v}\,ds
		\le 2\|\<v\>^{k}Wf(0)\|^2_{L^2_x(\Omega)L^2_v}\\
		+C\int^t_0e^{2\lam s}\|\<v\>^{k}f\|^2_{L^2_x(\Omega)L^2_D}\,ds
		+\lam \int^t_0e^{2\lam s}\|\<v\>^{k}Wf\|^2_{L^2_x(\Omega)L^2_v}\,ds.
	\end{multline}
	where we used the boundedness of $W$ from \eqref{Wbound}. We also used the control $\|W^2\<v\>^kf\|_{L^2_D}\le C\|(\ti a^{1/2})^w(W^2\<v\>^kf)\|_{L^2_v}\lesssim \|\<v\>^kf\|_{L^2_D}$; see for instance \cite[Lemma 2.3]{Deng2020a}. Here $\ti a$ is given in \eqref{tiaa} and $(\ti a^{1/2})^wW^2\<v\>^k$ can be regarded as a pseudo-differential operator in $v\in\R^3_v$ with symbol in $S(\ti a^{1/2}\<v\>^k)$ (symbol is defined in \eqref{symbol}).

	\smallskip
	Then we can apply Proposition \ref{macroLem} accordingly.

	{{\smallskip}}\noindent{\bf Inflow boundary condition.} For the case of inflow boundary condition \eqref{inflow}, we have from \eqref{127} that
	\begin{multline}\label{130}
		\frac{1}{M}\sum_{n=0}^{N-1}\int^1_0e^{2\lam(n+s)}\|\P f_n\|^2_{L^2_x(\Omega)L^2_D}\,ds
		\le \sum_{n=0}^{N-1}\int^1_0e^{2\lam(n+s)}\|(\I-\P)f_n\|^2_{L^2_x(\Omega)L^2_D}\,ds\\
		+\sum_{n=0}^{N-1}\int^1_0\int_{\pa\Omega\times\R^3_v}|v\cdot n|e^{2\lam(n+s)}|f_n|^2\,dvdS(x)ds.
	\end{multline}
	For the energy estimate in $[N,t]$, notice that $e^{2\lam t}\le e^{2\lam(t-N)}e^{2\lam s}$ for any $s\ge N$.
	Multiplying \eqref{125y} by $e^{2\lam t}$, we have
	\begin{align}\label{1221}
		&\notag e^{2\lam t}\|\<v\>^kf(t)\|^2_{L^2_x(\Omega)L^2_v} + \int^t_N\int_{\Si_+}|v\cdot n|e^{2\lam s}|\<v\>^kf|^2\,dvdS(x)ds + c_0\int^t_Ne^{2\lam s}\|\<v\>^kf\|^2_{L^2_x(\Omega)L^2_D}\,ds\\
		&\notag\quad\le e^{2\lam(t-N)+C(t-N)}e^{2\lam N}\|\<v\>^kf(N)\|^2_{L^2_x(\Omega)L^2_v}+e^{2\lam(t-N)}\int^t_N\int_{\Si_-}|v\cdot n|e^{2\lam s}|\<v\>^kf|^2\,dvdS(x)ds\\
		&\quad\le C_1e^{2\lam N}\|\<v\>^kf(N)\|^2_{L^2_x(\Omega)L^2_v}+C_1\int^t_N\int_{\Si_-}|v\cdot n|e^{2\lam s}|\<v\>^kg|^2\,dvdS(x)ds.
	\end{align}
	for some $C_1>1$,
	where we choose $\lam<1$ and used $t\le N+1$.
Thus, taking linear combination $\ka\times\eqref{130}+\eqref{125d}+\ka^2\times\eqref{125b}+\ka^3\times\eqref{125f}+\ka^2 C_1^{-1}\times\eqref{1221}$ with sufficiently small $\ka>0$, we have
	\begin{align}\label{125u}
		&\notag\ka^2C_1^{-1}e^{2\lam t}\|\<v\>^kf(t)\|^2_{L^2_x(\Omega)L^2_v}
		+\ka^2C_1^{-1}\int^t_N\int_{\Si_+}|v\cdot n|e^{2\lam s}|\<v\>^kf|^2\,dvdS(x)ds\\
		&\notag\quad+\int^N_0\int_{\Si_+}|v\cdot n|e^{2\lam s}\big(\frac{|f|^2}{2}+\ka^2|\<v\>^kf|^2\big)\,dvdS(x)ds\\
		&\notag\quad+\frac{\ka c_0}{2}\int^N_0e^{2\lam s}\|f\|^2_{L^2_x(\Omega)L^2_D}\,ds
		+\ka^2(2c_0-C\de_0)\int^N_0e^{2\lam s}\|\<v\>^kf\|^2_{L^2_x(\Omega)L^2_D}\,ds\\
		&\notag\quad+\ka^2C_1^{-1}c_0\int^t_Ne^{2\lam s}\|\<v\>^kf\|^2_{L^2_x(\Omega)L^2_D}\,ds
		+q\ka^2\int^t_0e^{2\lam s}\||v|\<v\>^{k-\frac{1}{2}}Wf\|^2_{L^2_x(\Omega)L^2_v}\,ds\\
		&\notag\quad\le
		C\|\<v\>^kf_0\|^2_{L^2_x(\Omega)L^2_v}
		+(C+\ka^2)\int^t_0\int_{\Si_-}|v\cdot n|e^{2\lam s}|\<v\>^{k}g|^2\,dvdS(x)ds\\
		&\notag\qquad+C\de_0\int^N_0e^{2\lam s}\|f\|^2_{L^2_x(\Omega)L^2_D}\,ds
		+\lam\int^N_0e^{2\lam s}\|f\|^2_{L^2_x(\Omega)L^2_v}\,ds\\
		&\qquad+\ka^2C\lam\int^t_0e^{2\lam s}\|\<v\>^kf\|^2_{L^2_x(\Omega)L^2_v}\,ds
		+\ka^3C\int^t_0e^{2\lam s}\|\<v\>^kf\|^2_{L^2_x(\Omega)L^2_D}\,ds, 
	\end{align}
	where we used $\|f\|_{L^2_D}\le\|\P f\|_{L^2_D}+\|\{\I-\P\}f\|_{L^2_D}$, $\|\<v\>^{\frac{\ga+2s}{2}}f\|^2_{L^2_v}\le C\|f\|_{L^2_D}$, the bound of $W$ from \eqref{Wbound} and interpolation 
	\begin{align}\label{interdamp}
		\|\<v\>^{k}f\|_{L^2_v}\le C\|\<v\>^{k}f\|_{L^2_D}+C\||v|\<v\>^{k-\frac{1}{2}}Wf\|^2_{L^2_v}
	\end{align}
	for both hard and soft potentials, which follows from \eqref{esD} and \eqref{Wbound}.
	With fixed constants $c_0,C,C_1>0$, we choose sufficiently small $0<\ka<\min\big\{1,\frac{c_0}{2C}\big\}$, $0<\de_0<\frac{c_0\ka}{2C}$, and $0\le\lam<\frac{c_0}{2CC_1}$ in \eqref{125u} to deduce
	\begin{multline*}
		\ka^2C_1^{-1}e^{2\lam t}\|\<v\>^kf(t)\|^2_{L^2_x(\Omega)L^2_v}
		+\ka^2C_1^{-1}\int^t_0\int_{\Si_+}|v\cdot n|e^{2\lam s}|\<v\>^kf|^2\,dvdS(x)ds \\
		+\ka^2C_1^{-1}c_0\int^t_0e^{2\lam s}\|\<v\>^kf\|^2_{L^2_x(\Omega)L^2_D}\,ds
		+q\ka^2\int^t_0e^{2\lam s}\||v|\<v\>^{k-\frac{1}{2}}Wf\|^2_{L^2_x(\Omega)L^2_v}\,ds\\
		\le
		C\|\<v\>^kf_0\|^2_{L^2_x(\Omega)L^2_v}
		+C\int^t_0\int_{\Si_-}|v\cdot n|e^{2\lam s}|\<v\>^{k}g|^2\,dvdS(x)ds, 
	\end{multline*}
	for some $C>0$.
	This implies \eqref{L2es1} and \eqref{L2es1a} for the case of inflow boundary condition by choosing $\lam=0$ and small $\lam>0$ respectively.

	{{\smallskip}}\noindent{\bf Maxwell reflection boundary condition.} The approach for this case is similar. The only difference is to use the weighted boundary estimate in Lemma \ref{LemR}, i.e. estimate \eqref{101} and \eqref{101w}, and trace lemma \ref{diffboundLem} to calculate the boundary terms. By \eqref{127a}, we have
	\begin{multline}\label{130a}
		\frac{1}{M}\sum_{n=0}^{N-1}\int^1_0e^{2\lam(n+s)}\|\P f_n\|^2_{L^2_x(\Omega)L^2_D}\,ds
		\le \sum_{n=0}^{N-1}\int^1_0e^{2\lam(n+s)}\|(\I-\P)f_n\|^2_{L^2_x(\Omega)L^2_D}\,ds\\
		+\sum_{n=0}^{N-1}e^{2\lam(n+s)}\big(\ve\|f_n\|^2_{L^2_{x,v}(\Si_+)}
		+(1-\ve)\al\|f_n-R_Df_n\|^2_{L^2_{x,v}(\Si_+)}\big). 
	\end{multline}
	Using \eqref{101} and \eqref{101w} for the case of Maxwell reflection boundary, we have 
\begin{align}\label{Maxbounda}
		\int_{\pa\Omega\times\R^3_v}v\cdot n|f|^2\,dvdS(x)
		= \ve\|f\|^2_{L^2_{x,v}(\Si_+)}
		+(1-\ve)\al\|f-R_Df\|^2_{L^2_{x,v}(\Si_+)},
\end{align}
	and 
\begin{align}\label{Maxboundb}
		\int_{\pa\Omega\times\R^3_v}v\cdot n|\<v\>^kf|^2\,dvdS(x)
		\ge \big(1-(1-\ve)(1-\al)\big)\|\<v\>^kf\|^2_{L^2_{x,v}(\Si_+)}
		-C_k\|f\|^2_{L^2_{x,v}(\Si_+)}.
\end{align}
The extra damping energy in \eqref{125f} can now be rewritten as 
\begin{align}\label{125fpp}\notag
	&e^{2\lam t}\|\<v\>^{k}Wf(t)\|^2_{L^2_x(\Omega)L^2_v}
	+ q\int^t_0e^{2\lam s}\||v|\<v\>^{k-\frac{1}{2}}Wf\|^2_{L^2_x(\Omega)L^2_v}\,ds
	\le C\|\<v\>^{k}f(0)\|^2_{L^2_x(\Omega)L^2_v}\\
	&\quad+C_k\int^t_0e^{2\lam s}\|f\|^2_{L^2_{x,v}(\Si_+)}\,ds+C\int^t_0e^{2\lam s}\|\<v\>^{k}f\|^2_{L^2_x(\Omega)L^2_D}\,ds
	+\lam \int^t_0e^{2\lam s}\|\<v\>^{k}f\|^2_{L^2_x(\Omega)L^2_v}\,ds.
\end{align}
	For the energy estimate in $[N,t]$, multiplying \eqref{125y} and \eqref{125ay} by $e^{2\lam t}$, and applying \eqref{Maxbounda} and \eqref{Maxboundb} to boundary terms (notice $\big(1-(1-\ve)(1-\al^2)\big)\ge 0$), we obtain
	\begin{align}\label{1225u}\notag
		&e^{2\lam t}\|\<v\>^kf(t)\|^2_{L^2_x(\Omega)L^2_v}
		+c_0e^{2\lam t}\int^t_N\|\<v\>^kf\|^2_{L^2_x(\Omega)L^2_D}\,ds
		+c_{\ve,\al}e^{2\lam t}\int^t_N\|\<v\>^kf\|^2_{L^2_{x,v}(\Si_+)}\,ds\\
		&\notag\quad\le e^{2\lam (t-N)+C(t-N)}e^{2\lam N}\|\<v\>^kf(N)\|^2_{L^2_x(\Omega)L^2_v}+C_ke^{2\lam t}\int^t_N\|f\|^2_{L^2_{x,v}(\Si_+)}\,ds\\
		&\quad\le C_1e^{2\lam N}\|\<v\>^kf(N)\|^2_{L^2_x(\Omega)L^2_v}+C_ke^{2\lam t}\int^t_N\|f\|^2_{L^2_{x,v}(\Si_+)}\,ds,
	\end{align}
and 
	\begin{align}\label{1225i}\notag
		e^{2\lam t}\|f(t)\|^2_{L^2_x(\Omega)L^2_v}
		+e^{2\lam t}\int^t_N\Big(\ve\|f\|^2_{L^2_{x,v}(\Si_+)}
		&+(1-\ve)\al\|f-R_Df\|^2_{L^2_{x,v}(\Si_+)}\Big)\,ds\\
		+c_0e^{2\lam t}\int^t_N\|f\|^2_{L^2_x(\Omega)L^2_D}\,ds
		&\notag\le e^{2\lam (t-N)+C(t-N)}e^{2\lam N}\|f(N)\|^2_{L^2_x(\Omega)L^2_v}\\
		&\le C_1e^{2\lam N}\|f(N)\|^2_{L^2_x(\Omega)L^2_v},
	\end{align}
	respectively, 
	for some $C_1>2$, where we used $t\le N+1$.
Combining $\eqref{1225u}+C_1\times\eqref{125b}$ and $\eqref{1225i}+C_1\times\eqref{125d}$, and using the similar boundary estimates for the interval $[0,N]$ again, we have 
\begin{align}\label{1225o}\notag
	&e^{2\lam t}\|\<v\>^kf(t)\|^2_{L^2_x(\Omega)L^2_v}
	+c_0\int^t_0e^{2\lam s}\|\<v\>^kf\|^2_{L^2_x(\Omega)L^2_D}\,ds
	+c_{\ve,\al}\int^t_0e^{2\lam s}\|\<v\>^kf\|^2_{L^2_{x,v}(\Si_+)}\,ds\\
	&\notag\quad\le C\|\<v\>^kf(0)\|^2_{L^2_x(\Omega)L^2_v}
	+C_k\int^t_0e^{2\lam s}\|f\|^2_{L^2_{x,v}(\Si_+)}\,ds\\
	&\quad
	+2\lam\int^N_0e^{2\lam s}\|\<v\>^kf\|^2_{L^2_x(\Omega)L^2_v}\,ds
	+C\int^N_0e^{2\lam s}\|f\|^2_{L^2_x(\Omega)L^2_D}\,ds, 
\end{align}
and 
\begin{align}\label{1225pz}\notag
	&e^{2\lam t}\|f(t)\|^2_{L^2_x(\Omega)L^2_v}
	+\int^t_Ne^{2\lam s}\Big(\ve\|f\|^2_{L^2_{x,v}(\Si_+)}
	+(1-\ve)\al\|f-R_Df\|^2_{L^2_{x,v}(\Si_+)}\Big)\,ds \\
	&\notag\quad
	+ C_1\sum_{n=0}^{N-1}\int^1_0e^{2\lam s}\Big(\ve\|f_n\|^2_{L^2_{x,v}(\Si_+)}
	+(1-\ve)\al\|f_n-R_Df_n\|^2_{L^2_{x,v}(\Si_+)}\Big)\,ds\\
	&\notag\quad+c_0\int^t_Ne^{2\lam s}\|f\|^2_{L^2_x(\Omega)L^2_D}\,ds+2C_1c_0\sum_{n=0}^{N-1}\int^1_0e^{2\lam(n+s)}\|(\I-\P)f_n\|^2_{L^2_x(\Omega)L^2_D}\,ds\\
	&\le C_1\|f(0)\|^2_{L^2_x(\Omega)L^2_v}
	+2C_1\lam\int^N_0e^{2\lam s}\|f\|^2_{L^2_x(\Omega)L^2_v}\,ds +C\de_0\int^N_0e^{2\lam s}\|f\|^2_{L^2_x(\Omega)L^2_D}\,ds.
\end{align}
respectively. The estimate \eqref{1225pz} can imply two types of energy estimates. First, combining \eqref{1225pz} and macroscopic estimate \eqref{130a} implies the dissipation rate without any boundary energy, and by choosing $\de_0>0$ small i.e. 
\begin{align}\label{1225p1}
	&e^{2\lam t}\|f(t)\|^2_{L^2_x(\Omega)L^2_v}+c_0\int^t_0e^{2\lam s}\|f\|^2_{L^2_x(\Omega)L^2_D}\,ds
	\le C\|f(0)\|^2_{L^2_x(\Omega)L^2_v}
	+2C_1\lam\int^N_0e^{2\lam s}\|f\|^2_{L^2_x(\Omega)L^2_v}\,ds.	
\end{align}
for some small generic constant $c_0>0$. 
Second, neglecting the dissipation rate and using $\|f-R_D f\|_{L^2_{x,v}(\Si_+)}^2\ge \frac{1}{2}\|f\|_{L^2_{x,v}(\Si_+)}^2-\|R_D f\|_{L^2_{x,v}(\Si_+)}^2$, we can obtain the boundary energy, 
\begin{align}\label{1225p}\notag
	&e^{2\lam t}\|f(t)\|^2_{L^2_x(\Omega)L^2_v}
	+\frac{1}{2}(1-\ve)\al\int^t_0e^{2\lam s}\|f\|_{L^2_{x,v}(\Si_+)}^2\,ds\le 
	C\int^t_0\|R_D f\|_{L^2_{x,v}(\Si_+)}^2\,ds\\
	&\quad
	+C\|f(0)\|^2_{L^2_x(\Omega)L^2_v}
	+2C_1\lam \int^N_0e^{2\lam s}\|f\|^2_{L^2_x(\Omega)L^2_v}\,ds +C\de_0\int^N_0e^{2\lam s}\|f\|^2_{L^2_x(\Omega)L^2_D}\,ds.
\end{align}
The term $\|R_D f\|_{L^2_{x,v}(\Si_+)}^2$ on the right-hand side will be controlled by the boundary energy $\|f\|_{L^2_{x,v}(\Si_+)}^2$ and the interior energy. 
For this, we shall apply the trace Lemma \ref{diffboundLem} to the boundary term $R_D f$ given in \eqref{RD1}, i.e. 
\begin{align*}
	R_D f(v)=c_\mu\mu^{\frac{1}{2}}(v)\int_{v'\cdot n(x)>0}\{v'\cdot n(x)\}f(v')\mu^{\frac{1}{2}}(v')\,dv'.
\end{align*}
For any $0\le T_1<T_2\le T_1+1$ and any $\de>0$, let $\chi^{+}_{\de}=\chi^{+}_{\de}(t,x,v;T_1+N\de^3)$ be defined in \eqref{chide}. Then we have 
\begin{align*}
	\int^{T_2}_{T_1}\|R_D f\|_{L^2_{x,v}(\Si_+)}^2\,dt
	&= \int^{T_2}_{T_1}\int_{\pa\Omega}c_\mu\Big|\int_{v'\cdot n(x)>0}\{v'\cdot n(x)\}f(v')\mu^{\frac{1}{2}}(v')\,dv'\Big|^2\,dS(x)dt\\
	&\le\Big(\int_{T_1+[(T_2-T_1)/\de^3]\de^3}^{T_2}+\sum_{N=0}^{[(T_2-T_1)/\de^3]-1}\int_{T_1+N\de^3}^{T_1+(N+1)\de^3}\Big)\big(\cdots\big)\,dt. 
\end{align*}
Splitting $f(v')=(1-\chi^{+}_{\de})f(v')+\chi^{+}_{\de} f(v')$ and applying trace Lemma \ref{diffboundLem} to each term, we have 
	\begin{align*}\notag
		\int^{T_2}_{T_1}\|R_D f\|_{L^2_{x,v}(\Si_+)}^2\,dt
	&\le C(\de^4+e^{-\de^{-1/2}})
		\|f\|^2_{L^2_tL^2_{x,v}(\Si_+)}\\
		&\quad\notag+2\sum_{N=0}^{[(T_2-T_1)/\de^3]}\bigg\{\int^{T_1+N\de^3}_{T_1}\big(\Gamma(\mu^{\frac{1}{2}}+f,f)+\Gamma(f,\mu^{\frac{1}{2}}),f\big)_{L^2_x(\Omega)L^2_v}\,dt\\
		&\quad+\int^{T_2}_{T_1}
		\big(\Gamma(\mu^{\frac{1}{2}}+f,f)+\Gamma(f,\mu^{\frac{1}{2}}),\chi^{+}_{\de} f\big)_{L^2_x(\Omega)L^2_v}\,dt\bigg\},
	\end{align*}
	where $\chi^{+}_{\de}$ depends on $N,\de$. 
	Then by the assumption \eqref{L2global}, collisional estimates \eqref{L1} and \eqref{L2}, with upper bound of $\chi^{+}_{\de}$ in \eqref{chideesti} (note that the commutator $[(\ti a^{1/2})^w,\chi^{+}_{\de}]$, between the pseudo-differential operator $\ti a^{1/2}$ given by \eqref{tiaa} and the good function $\chi^{+}_{\de}$, belong to symbol class $S(\ti a^{1/2})$; or one can simply apply Lemma \ref{GachideLem}),
	we continue it as 
	\begin{align}\label{boundaryPsi}
		\int^{T_2}_{T_1}\|R_D f\|_{L^2_{x,v}(\Si_+)}^2\,dt&\le C(\de^4+e^{-\de^{-1/2}})\|f\|^2_{L^2_t([T_1,T_2])L^2_{x,v}(\Si_+)}+C_\de\|f\|^2_{L^2_t([T_1,T_2])L^2_x(\Omega)L^2_D}. 
	\end{align}
	Therefore, substituting \eqref{boundaryPsi} into \eqref{1225p} with sufficiently small $\de=\de(\ve)>0$ (note that $\ve\in[0,1)$ is fixed within the assumptions), we have 
	\begin{align}\label{1225pp}\notag
		&e^{2\lam t}\|f(t)\|^2_{L^2_x(\Omega)L^2_v}
		+c_{\al,\ve}\int^t_0e^{2\lam s}\|f\|_{L^2_{x,v}(\Si_+)}^2\,ds
		\\
		&\quad\le 
		C\|f(0)\|^2_{L^2_x(\Omega)L^2_v}
		+2C_1\lam \int^N_0e^{2\lam s}\|f\|^2_{L^2_x(\Omega)L^2_v}\,ds +C\int^t_0e^{2\lam s}\|f\|^2_{L^2_x(\Omega)L^2_D}\,ds, 
	\end{align}
	for some small constant $c_{\al,\ve}>0$. To control the extra dissipation rate in \eqref{1225pp}, we may use \eqref{1225p1}. Therefore, taking combination $\eqref{1225p1}+\ka\times\eqref{1225pp}+\ka^2\times\eqref{1225o}+\ka^3\times\eqref{125fpp}$ with sufficiently small $\ka>0$, we have 
	\begin{align*}\notag
		&e^{2\lam t}\|f(t)\|^2_{L^2_x(\Omega)L^2_v}
		+\ka^2e^{2\lam t}\|\<v\>^kf(t)\|^2_{L^2_x(\Omega)L^2_v}
		+\frac{\ka c_{\al,\ve}}{2}\int^t_0e^{2\lam s}\|f\|_{L^2_{x,v}(\Si_+)}^2\,ds\\
		&\notag\quad
		+\ka^2c_{\ve,\al}\int^t_0e^{2\lam s}\|\<v\>^kf\|^2_{L^2_{x,v}(\Si_+)}\,ds
		+\frac{c_0}{2}\int^t_0e^{2\lam s}\|f\|^2_{L^2_x(\Omega)L^2_D}\,ds\\
		&\notag\quad
		+\frac{\ka^2c_0}{2}\int^t_Ne^{2\lam s}\|\<v\>^kf\|^2_{L^2_x(\Omega)L^2_D}\,ds
+ \ka^3q\int^t_0e^{2\lam s}\||v|\<v\>^{k-\frac{1}{2}}Wf\|^2_{L^2_x(\Omega)L^2_v}\,ds\\
&\quad
\le 
C\lam \int^t_0e^{2\lam s}\|\<v\>^{k}f\|^2_{L^2_x(\Omega)L^2_v}\,ds
+C\|\<v\>^{k}f(0)\|^2_{L^2_x(\Omega)L^2_v}, 
	\end{align*}
	for some constant $\ka>0$.
Therefore, usign interpolation \eqref{interdamp} to obtain the extra damping and choosing $0\le\lam<\frac{c_0\ka^3q}{C}$ small, we have 
\begin{align*}
	&
	\ka^2e^{2\lam t}\|\<v\>^kf(t)\|^2_{L^2_x(\Omega)L^2_v}
	+\ka^2c_{\ve,\al}\int^t_0e^{2\lam s}\|\<v\>^kf\|^2_{L^2_{x,v}(\Si_+)}\,ds
	+\frac{\ka^2c_0}{2}\int^t_Ne^{2\lam s}\|\<v\>^kf\|^2_{L^2_x(\Omega)L^2_D}\,ds\\
	&\notag\quad
	+ \ka^3q\int^t_0e^{2\lam s}\||v|\<v\>^{k-\frac{1}{2}}Wf\|^2_{L^2_x(\Omega)L^2_v}\,ds+\frac{\ka^3q}{C}\int^t_0e^{2\lam s}\|\<v\>^kf\|^2_{L^2_x(\Omega)L^2_v}\,ds\\
	&\quad
	\le C\|\<v\>^{k}f(0)\|^2_{L^2_x(\Omega)L^2_v}.
\end{align*}
This implies \eqref{L2esLargea} and \eqref{L2esLarge} with $\lam=0$ and small $\lam>0$ respectively. We then conclude the global \emph{a priori} $L^2$ estimate in Theorem \ref{L2globalThm}.
\end{proof}

The remaining of this Section \ref{Sec12} is devoted to the proof of Proposition \ref{macroLem}.

\subsection{Macroscopic estimate}\label{Sec121}
Assume that $\Omega\subset\R^3_x$ is a bounded open subset.
We will prove Proposition \ref{macroLem} by contradiction. If Proposition \ref{macroLem} is false, then no $M$ exists as in Proposition \ref{macroLem} for every solution to the nonlinear Boltzmann equation. Hence, for any $k\ge 1$, there exists a sequence of non-zero solutions $f_k(t,x,v)$ to the nonlinear Boltzmann equation \eqref{B1} that satisfy \eqref{L2global1}:
\begin{align}
	\label{L2global2}
	\sup_{0\le t\le 1}\|\<v\>^{\ga+10}f_k\|_{L^\infty_{x,v}(\Omega\times\R^3_v)}\le \frac{1}{k},
\end{align}
and one of the following:

{{\smallskip}}
\begin{enumerate}
	\item \noindent{\bf In the inflow boundary case:} $f_k$ satisfies inflow boundary condition \eqref{inflow} and
	\begin{align*}\notag
		\int^{1}_{0}\|\P f_k(t)\|_{L^2_x(\Omega)L^2_D}^2\,dt & \ge k\int^{1}_{0}\|\{\I-\P\}f_k(t)\|_{L^2_x(\Omega)L^2_D}^2\,dt \\
		& \quad+k\int^1_0\int_{\pa\Omega\times\R^3_v}|v\cdot n(x)||f_k(t)|^2\,dS(x)dvdt.
	\end{align*}
	Equivalently, by normalization
	\begin{align}
		\label{norm}
		Z_k(t,x,v)=\frac{f_k(t,x,v)}{\Big(\int^{1}_{0}\|\P f_k(t)\|_{L^2_x(\Omega)L^2_D}^2\,dt\Big)^{\frac{1}{2}}},
	\end{align}
	we have
	\begin{align}\label{normalPf}
		\int^{1}_{0}\|\P Z_k(t)\|_{L^2_x(\Omega)L^2_D}^2\,dt=1,
	\end{align}
	and
	\begin{align}\label{140}
		\hspace{3em}\int^{1}_{0}\|\{\I-\P\}Z_k(t)\|_{L^2_x(\Omega)L^2_D}^2\,dt
		+\int^1_0\int_{\pa\Omega\times\R^3_v}|v\cdot n(x)||Z_k(t)|^2\,dS(x)dvdt\le\frac{1}{k}.
	\end{align}
	
	{{\smallskip}}
	\item \noindent{\bf In the Maxwell reflection boundary case:} $f_k$ satisfies Maxwell reflection boundary condition \eqref{reflect} and
	\begin{align*}
		\int^{1}_{0}\|\P f_k(t)\|_{L^2_x(\Omega)L^2_D}^2\,dt & \ge k\int^{1}_{0}\|\{\I-\P\}f_k(t)\|_{L^2_x(\Omega)L^2_D}^2\,dt \\
		+k\int_{\Si_+} & |v\cdot n|\big(\ve|f_k|^2+(1-\ve)\al|f_k-R_Df_k|^2\big)\,dS(x)dv.
	\end{align*}
	The normalized $Z_k$ given in \eqref{norm} satisfies $Z_k|_{\Si_-}=(1-\ve)RZ_k$, \eqref{normalPf} and
	\begin{align}\label{140a}\notag
		& \int^{1}_{0}\|\{\I-\P\}Z_k(t)\|_{L^2_x(\Omega)L^2_D}^2\,dt \\
		& \quad+\int_{\Si_+}|v\cdot n|\big(\ve|Z_k|^2+(1-\ve)\al|Z_k-R_DZ_k|^2\big)\,dS(x)dv\le \frac{1}{k}.
	\end{align}
\end{enumerate}

{{\smallskip}}
In both cases above, we know that
\begin{align}\label{ZkD}
	\int^{1}_{0}\|Z_k(t)\|_{L^2_x(\Omega)L^2_D}^2\,dt\le 2
\end{align}
is bounded. By Banach-Alaoglu theorem, there exists $Z(t,x,v)$ satisfying
\begin{align}\label{ZD}
	\int^{1}_{0}\|Z(t)\|_{L^2_x(\Omega)L^2_D}^2\,dt\le 2
\end{align}
such that
\begin{align*}
	Z_k\rightharpoonup Z\ \text{ weakly in }\ \int^{1}_{0}\|\cdot\|_{L^2_x(\Omega)L^2_D}^2\,dt.
\end{align*}
By \eqref{140} and \eqref{140a}, we have
\begin{align}\label{IPZk0}
	\int^{1}_{0}\|\{\I-\P\}Z_k(t)\|_{L^2_x(\Omega)L^2_D}^2\,dt\to 0,
\end{align}
and hence,
\begin{align*}
	\P Z_k\rightharpoonup\P Z\ \text{ weakly in }\ \int^{1}_{0}\|\cdot\|_{L^2_x(\Omega)L^2_D}^2\,dt.
\end{align*}
Moreover, we have from the nonlinear Boltzmann equation $(\pa_t+v\cdot\na_x)f_k=Lf_k+\Gamma(f_k,f_k)$ that
\begin{align}
	\label{1235}
	(\pa_t+v\cdot\na_x)Z_k=LZ_k+\Gamma(f_k,Z_k).
\end{align}
Rewriting this in the weak form: for any smooth compactly-support function $\Phi\in C^\infty_c((0,1)\times\Omega\times\R^3_v)$,
\begin{align}\label{1233}
	\big(Z_k,(\pa_t+v\cdot\na_x)\Phi\big)_{L^2_{t,x,v}((0,1)\times\Omega\times\R^3_v)}=\big(LZ_k+\Gamma(f_k,Z_k),\Phi\big)_{L^2_{t,x,v}((0,1)\times\Omega\times\R^3_v)}.
\end{align}
Notice from \eqref{L2global2}, \eqref{normalPf}, \eqref{140} and \eqref{140a} that
\begin{align*}
	(LZ_k,\Phi)_{L^2_{t,x,v}}=(L(\{\I-\P\}Z_k),\Phi)_{L^2_{t,x,v}}\le \|\{\I-\P\}Z_k\|_{L^2_{t,x}L^2_D}\|\Phi\|_{L^2_{t,x}L^2_D}\to 0,
\end{align*}
and
\begin{align*}
	\big(\Gamma(f_k,Z_k),\Phi\big)_{L^2_{t,x,v}}\le\|f_k\|_{L^\infty_{t,x,v}}\|Z_k\|_{L^2_{t,x}L^2_D}\|\Phi\|_{L^2_{t,x}L^2_D}\to 0,
\end{align*}
as $k\to\infty$. Then we take limit $k\to\infty$ in \eqref{1233} to obtain
\begin{align}\label{panaxZ}
	(\pa_t+v\cdot\na_x)Z=0,
\end{align}
in the sense of distribution.
From \eqref{IPZk0}, we have $\P Z=0$, and hence, by \cite[Lemma 6, pp. 736]{Guo2009}, we have
\begin{Lem}[\cite{Guo2009}, Lemma 6]
	There exist constants $a_0,c_0,c_1,c_2$, and constant vectors $b_0,b_1$ and $\varpi$ such that $Z(t,x,v)$ takes the form:
	\begin{multline}\label{499}
		Z(t,x,v)=\bigg(\Big\{\frac{c_0}{2}|x|^2-b_0\cdot x+a_0\Big\}+\big\{-c_0tx-c_1x+\varpi\times x+b_0t+b_1\big\}\times v\\
		+\Big\{\frac{c_0t^2}{2}+c_1t+c_2\Big\}|v|^2\bigg)\sqrt\mu.
	\end{multline}
	Moreover, these constants are finite:
	\begin{align*}
		|a_0|+|c_0|+|c_1|+|c_2|+|b_0|+|b_1|+|\varpi|<\infty.
	\end{align*}
\end{Lem}
%
%
The following subsections are devoted to proving the following Lemma, which leads to a contradiction.
\begin{Lem}\label{Lem123}
	Assume Proposition \ref{macroLem} is false, and let $Z_k,Z$ be defined as the above in Subsection \ref{Sec121}. Then $Z_k$ converge strongly to $Z$ in the sense that
	\begin{align}\label{1238}
		\int^1_0\|Z_k-Z\|_{L^2_x(\Omega)L^2_D}^2\,dt\to 0,
	\end{align}
	as $k\to\infty$. Moreover,
	\begin{align}\label{ZC}
		\int^1_0\|Z\|_{L^2_x(\Omega)L^2_D}^2\ge C>0,
	\end{align}
	for some $C>0$. Furthermore,
	\begin{enumerate}
		\item for the inflow boundary case, $Z(t,x,v)=0$ for $(t,x,v)\in[0,1]\times\pa\Omega\times\R^3_v$;
		{{\smallskip}} \item for the Maxwell reflection boundary case, $Z(t,x,v)=R_D Z(t,x,v)$ for $(t,x,v)\in[0,1]\times\Si_+$. Moreover, if $\ve>0$, then $Z=0$ on $\Si_+$. If $\ve=0$, then for $t\ge 0$,
		\begin{align}\label{conver}
			\int_{\Omega\times\R^3_v}Z(t,x,v)\mu^{\frac{1}{2}}(v)\,dxdv=0.
		\end{align}
	\end{enumerate}
	
\end{Lem}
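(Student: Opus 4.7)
The plan is to establish the four claims of the lemma in sequence, with strong convergence of $Z_k$ to $Z$ being the foundational step. First I will decompose $Z_k = \P Z_k + \{\I-\P\}Z_k$. By \eqref{IPZk0}, the microscopic part $\{\I-\P\}Z_k$ already converges to $0$ strongly in $L^2_{t,x}L^2_D$, which in particular forces $\{\I-\P\}Z = 0$ in the weak limit, so $Z = \P Z$. Thus strong convergence reduces to that of the macroscopic part $\P Z_k = (a_k + b_k\cdot v + \frac{|v|^2-3}{6}c_k)\sqrt\mu$, and since the velocity profile lies in a fixed five-dimensional subspace with Gaussian weight, it suffices to show strong convergence of the coefficients $(a_k,b_k,c_k)$ in $L^2_{t,x}((0,1)\times\Omega)$. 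These coefficients are moments $\int Z_k\,\phi_i(v)\,dv$ with $\phi_i\in\{\sqrt\mu,v_j\sqrt\mu,|v|^2\sqrt\mu\}$.

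Next I would apply the velocity averaging lemma (Lemma \ref{averagingLem}) to the transport equation \eqref{1235}, namely $(\pa_t+v\cdot\na_x)Z_k = LZ_k+\Gamma(f_k,Z_k)$. The right-hand side is bounded in the dual dissipation norm by virtue of \eqref{L2global2}, \eqref{ZkD}, and the collision estimates \eqref{Gammavp} and \eqref{GaesweightDvs}; together with the trace bounds \eqref{140}/\eqref{140a} (which prevent boundary mass loss after extending $Z_k$ by zero outside $\Omega$), this yields uniform fractional Sobolev regularity in $(t,x)$ for each velocity moment $\int Z_k\,\phi_i(v)\,dv$. Rellich--Kondrachov compactness then gives strong convergence of $(a_k,b_k,c_k)$ in $L^2_{t,x}$, which upgrades to strong convergence of $\P Z_k$ in $L^2_{t,x}L^2_D$, establishing \eqref{1238}. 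Nondegeneracy \eqref{ZC} is then immediate: from the normalization \eqref{normalPf} and strong convergence, $\int_0^1\|\P Z\|_{L^2_x L^2_D}^2\,dt = 1$, and since $Z=\P Z$, we obtain $\int_0^1\|Z\|_{L^2_xL^2_D}^2\,dt\ge 1$.

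For the boundary conditions, I would use a trace argument. The bounds \eqref{140} and \eqref{140a} show $Z_k$ has uniformly vanishing boundary energy in the relevant pieces, while the transport equation gives $Z$ a well-defined trace on $\pa\Omega\times\R^3$ (from the explicit polynomial-in-$(t,x,v)$ times $\sqrt\mu$ form in Lemma \ref{Lem43}). Passing to the limit in test integrals against $\Phi\in C^\infty_c$ with support away from grazing set $\Si_0$: for the inflow case, \eqref{140} gives $\int_0^1\int_{\pa\Omega\times\R^3}|v\cdot n||Z_k|^2\to 0$, hence $Z=0$ on $\pa\Omega\times\R^3$. For the Maxwell case, \eqref{140a} gives $\ve\int_{\Si_+}|v\cdot n||Z_k|^2\to 0$ and $(1-\ve)(2\al-\al^2)\int_{\Si_+}|v\cdot n||Z_k-\P_\Sigma Z_k|^2\to 0$; since $\al\in(0,1]$ implies $(2\al-\al^2)>0$, we get $Z=\P_\Sigma Z$ on $\Si_+$, and when $\ve>0$ additionally $Z=0$ on $\Si_+$.

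Finally, for the conservation law \eqref{conver} in the Maxwell case with $\ve=0$, I would test \eqref{B1} against $\sqrt\mu$. Since $\Ga(F,F)$ has five collision invariants including $\sqrt\mu$, and the Maxwell reflection operator \eqref{reflect} with $\al>0$ preserves mass flux ($\int_{v\cdot n<0}|v\cdot n|Rf\,dv=\int_{v\cdot n>0}\{v\cdot n\}f\,dv$ via \eqref{cmu}), we obtain $\frac{d}{dt}\int_{\Omega\times\R^3}f_k\sqrt\mu\,dxdv=0$. Combined with $\int f_0\sqrt\mu=0$ from \eqref{conservationlaw}, this gives $\int_{\Omega\times\R^3}Z_k(t)\sqrt\mu\,dxdv=0$ for all $t\in[0,1]$, which passes to the limit under strong convergence to yield \eqref{conver}. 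The main obstacle will be the velocity averaging step: in the non-cutoff setting the source term $\Ga(f_k,Z_k)$ only sits in a weighted Sobolev space involving $\<D_v\>^{-s}$, so the averaging lemma must be applied with the matching weights and regularity indices from \eqref{GaesweightDvs}, and care is needed to absorb boundary contributions when extending $Z_k$ to the whole space for the averaging argument.
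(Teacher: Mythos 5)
Your overall architecture (reduce to the macroscopic moments, get interior compactness from velocity averaging, then read off the boundary conditions from \eqref{140}/\eqref{140a} and the conservation law from testing against $\sqrt\mu$) matches the paper's, and the parts of your argument concerning \eqref{ZC}, the boundary identities in (1)--(2), and \eqref{conver} are essentially sound. The gap is in the strong convergence step \eqref{1238}.

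The velocity averaging lemma cannot be applied ``up to the boundary'' the way you propose. If you extend $Z_k$ by zero outside $\Omega$, the extended function does not satisfy $(\pa_t+v\cdot\na_x)\widetilde Z_k = \widetilde{\mathcal F}_k$ with a right-hand side in the spaces Lemma \ref{averagingLem} requires: the jump across $\pa\Omega$ contributes a surface-measure source $\{v\cdot n\}Z_k\,\de_{\pa\Omega}$, and the trace bounds \eqref{140}/\eqref{140a} only control this in the degenerate weighted norm $\int|v\cdot n||Z_k|^2$, which says nothing about the grazing directions $|v\cdot n|\approx 0$ and does not embed into $H^{-\si,p}_{t,x}H^{-m}_v$. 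So averaging only gives compactness of $\int \chi_0 Z_k\,e_j\,dv$ for a cutoff $\chi_0$ supported strictly inside $(0,1)\times\Omega\times\{|v|\lesssim \de^{-1/4}\}$ --- this is exactly what the paper does on the set $D_2$. What remains, and what your proposal omits, is the possibility that $L^2$ mass of the moments concentrates in a shrinking neighborhood of $\pa\Omega$ (or near $t=0,1$, or at large $|v|$). The paper must therefore add: (i) a uniform-in-$k$ bound $\sup_{0\le t\le1}\|Z_k(t)\|_{L^2_{x,v}}\le C$ to dispose of the time-boundary layer $D_1$; (ii) exponential smallness in $|v|>\de^{-1/4}$; (iii) smallness of the grazing contribution $|v\cdot n|<3\de^2$ via $\int_{|v\cdot n|<3\de^2}\<v\>^2\mu^{1/2}\,dv\lesssim\de^2$; and, most substantially, (iv) a trace/transport argument on the non-grazing set near $\pa\Omega$, built on the backward/forward cutoffs $\chi^\de_\pm$ of \eqref{chipm} and the energy identities \eqref{chi+}--\eqref{chi-}, which converts the near-boundary $L^2$ norm of $Z_k-Z$ at time $T$ into fluxes across the \emph{inner} boundary $\Si^\de_\pm$ plus collision terms, the former being controlled by the interior compactness already obtained. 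Without steps (iii)--(iv) the convergence $\int_0^1\|\P(Z_k-Z)\|^2_{L^2_xL^2_D}\,dt\to0$ does not follow, and consequently neither does your trace argument for the boundary conditions, which implicitly uses that the traces of $Z_k$ converge to the trace of $Z$ on the non-grazing part of $\Si_\pm$ (the paper's \eqref{Zklimit}, itself a byproduct of step (iv)).
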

We first show that Lemma \ref{Lem123} implies Proposition \ref{macroLem}.

\begin{proof}[Proof of Proposition \ref{macroLem}]
	Assume that Proposition \ref{macroLem} is false. Then by the above construction of $Z_k,Z$ in Subsection \ref{Sec121} and Lemma \ref{Lem123}, we have the following:
	
	{{\smallskip}}\noindent{\bf The case of inflow boundary.} In this case, we have from Lemma \ref{Lem123} that $Z=0$ on $[0,1]\times\pa\Omega\times\R^3_v$. Then by \eqref{499}, and comparing the coefficients in front of the polynomials of $v$, we have
	\begin{align*}
		\frac{c_0}{2}|x|^2-b_0\cdot x+a_0
		& \equiv -c_0tx-c_1x+\varpi\times x+b_0t+b_1 \\
		& \equiv\frac{c_0t^2}{2}+c_1t+c_2\equiv 0,
	\end{align*}
	for any $(t,x,v)\in[0,1]\times\pa\Omega\times\R^3_v$. Therefore, $c_0=c_1=c_2=0$ and $a_0=b_0=0$. Then $\varpi\times x+b_1\equiv 0$, and equivalently,
	\begin{align}\label{1256}
		\varpi^2x_3-\varpi^3x_2+b^1_1=\varpi^3x_1-\varpi^1x_3+b^2_1=\varpi^1x_2-\varpi^2x_1+b^3_1=0,
	\end{align}
	for any $x\in\pa\Omega$. Since $\pa\Omega=\{x\,:\,\xi(x)=0\}$ is two dimensional surface, we know that $(x_1,x_2)$ are locally independent in some subsets of $\pa\Omega$. Hence, $\varpi^1=\varpi^2=b^3_1=0$ and then $\varpi^3=b^2_1=0$. Therefore, we obtain $Z=0$, which contradicts to \eqref{ZC}.

	{{\smallskip}}\noindent{\bf The case of Maxwell reflection boundary.}
	In this case, for any $\ve\in[0,1)$ and $\al\in(0,1)$, we have from Lemma \ref{Lem123} that
	\begin{align*}
		Z(t,x,v)=c_\mu\mu^{\frac{1}{2}}(v)\int_{v'\cdot n(x)>0}\{v'\cdot n(x)\}Z(t,x,v')\mu^{\frac{1}{2}}(v')\,dv',
	\end{align*}
	on $\Si_+$. Therefore, comparing the coefficient with \eqref{499} which is in the form $(1,v,|v|^2)\mu^{\frac{1}{2}}$, for any $(t,x)\in[0,1]\times\pa\Omega$, we have
	\begin{align*}
			& -c_0tx-c_1x+\varpi\times x+b_0t+b_1 \equiv \frac{c_0t^2}{2}+c_1t+c_2\equiv 0.
	\end{align*}
	which implies $c_0=c_1=c_2=b_0=0$, and hence, $\varpi=b_1=0$ as in \eqref{1256}.
	Thus, $Z(t,x,v)=a_0\mu^{\frac{1}{2}}(v)$.
	
	{{\smallskip}} If $\ve>0$, then we have from Lemma \ref{Lem123} that $Z=0$ on $\Si_+$, which implies $a_0=0$ and hence, $Z=0$ in $\Omega$.
	
	{{\smallskip}} If $\ve=0$, then by \eqref{conver} and $Z(t,x,v)=a_0\mu^{\frac{1}{2}}(v)$, we have $a_0=0$ and $Z=0$. In both cases, we have $Z\equiv0$, which contradicts to \eqref{ZC}.
	This completes the proof of Proposition \ref{macroLem}.
\end{proof}

The following subsections are devoted to the proof of Lemma \ref{Lem123}. 

\subsection{Decomposing the integrating domain}
We split the integrating domain in \eqref{1238} into several subsets. That is, we write
\begin{align*}
	[0,1]\times\Omega\times\R^3_v=\cup_{j=1}^5D_j,
\end{align*}
where
\begin{align}\label{D1234}
	\begin{aligned}
		D_1 & =\big([0,\de]\cup[1-\de,1]\big)\times\Omega\times\R^3_v, \\
		D_2 & =(\de,1-\de)\times\{x\in\Omega\,:\,\xi(x)<-2\de^6\}\times\{v\,:\,|v|\le\de^{-\frac{1}{4}}\}, \\
		D_3 & =(\de,1-\de)\times\{x\in\Omega\,:\,\xi(x)<-2\de^6\}\times\{v\,:\,|v|>\de^{-\frac{1}{4}}\}, \\
		D_4 & =(\de,1-\de)\times\Big\{(x,v)\,:\,0>\xi(x)\ge-2\de^6,\,\Big[|v|>2\de^{-\frac{1}{4}} \text{ or }|v\cdot n(x)|<3\de^2\Big]\Big\}, \\
		D_5 & =(\de,1-\de)\times\Big\{(x,v)\,:\,0>\xi(x)\ge-2\de^6,\,\Big[|v|\le2\de^{-\frac{1}{4}} \text{ and }|v\cdot n(x)|\ge3\de^2\Big]\Big\}, 
	\end{aligned}
\end{align}
where $\xi(x)$ is given in \eqref{Omega}. 
The subsets $D_4$ and $D_5$ correspond to grazing and non-grazing sets, respectively.
To prove the strong convergence \eqref{1238}, noticing the microscopic estimate \eqref{IPZk0}, it suffices to show that
\begin{align*}
	\int^1_0\|\P(Z_k-Z)\|_{L^2_x(\Omega)L^2_D}^2\,dt\to 0, \
	\text{ as $k\to\infty$},
\end{align*} which is equivalent to
\begin{align}\label{ZkZ}
	\sum_{j=1}^5\int^1_0\int_{\Omega}\Big|\int_{\R^3_v}(Z_k-Z)e_j\,dv\Big|^2\,dxdt\to 0, \
	\text{ as $k\to\infty$},
\end{align}
where $\{e_j\}$ is the orthonormal basis in $L^2_v$:
\begin{align*}
	\{e_j\}_{j=1}^5 = \big\{\mu^{\frac{1}{2}},v\mu^{\frac{1}{2}},\frac{|v|^2-3}{6}\mu^{\frac{1}{2}}\big\}.
\end{align*}

\subsubsection{Near the time boundary}

We \emph{claim} that there exists $K>0$ such that for $k\ge K$,
\begin{align}\label{timeboundary}
	\sup_{0\le t\le 1}\|Z_k(t)\|_{L^2_x(\Omega)L^2_v}\le C<\infty,
\end{align}
for some $C>0$ which is independent of $k$.
Then by the Banach-Alaoglu Theorem and the uniqueness of weak limit, we have
\begin{align*}
	\sup_{0\le t\le 1}\|Z(t)\|_{L^2_x(\Omega)L^2_v}\le C<\infty.
\end{align*}
Thus, the left-hand side of \eqref{ZkZ} within the domain $D_1$ given in \eqref{D1234} can be estimated as
\begin{align}\label{D1}
	\Big(\int^\de_0+\int_{1-\de}^1\Big)\int_{\Omega}|(Z_k-Z,e_j)_{L^2_v}|^2\,dxdt
	\le \de C\sup_{0\le t\le 1,\, k\ge 1}\|[Z_k(t),Z(t)]\|^2_{L^2_x(\Omega)L^2_v}
	\le \de C.
\end{align}

{{\smallskip}} We next prove the \emph{claim} \eqref{timeboundary}. For $T\in[0,1]$, by taking $L^2$ inner product of \eqref{1235} with $Z_k$ over $[0,T]\times\Omega\times\R^3_v$, we have
\begin{multline}\label{ZkT}
	\|Z_k(T)\|_{L^2_x(\Omega)L^2_v}^2
	+\int^T_0\int_{\Si_+}|v\cdot n||Z_k|^2\,dS(x)dvdt
	=\|Z_k(0)\|_{L^2_x(\Omega)L^2_v}^2\\
	+\int^T_0\int_{\Si_-}|v\cdot n||Z_k|^2\,dS(x)dvdt+2\int^T_0\big(LZ_k+\Gamma(f_k,Z_k),Z_k\big)_{L^2_x(\Omega)L^2_v}\,dt.
\end{multline}
By estimates \eqref{L1}, \eqref{L2} and \eqref{Gaesweight} for the collision terms, we have
\begin{multline*}
	\|Z_k(T)\|_{L^2_x(\Omega)L^2_v}^2
	+\int^T_0\int_{\Si_+}|v\cdot n||Z_k|^2\,dS(x)dvdt
	\le\|Z_k(0)\|_{L^2_x(\Omega)L^2_v}^2\\
	+\int^T_0\int_{\Si_-}|v\cdot n||Z_k|^2\,dS(x)dvdt+C\int^T_0\|Z_k\|_{L^2_x(\Omega)L^2_v}^2\,dt\\
	+2\big(-c_0+C\sup_{0\le t\le 1}\|\<v\>^{4}f_k\|_{L^\infty_x(\Omega)L^\infty_v}\big)\int^T_0\|Z_k\|_{L^2_x(\Omega)L^2_D}^2\,dt.
\end{multline*}
Using assumption \eqref{L2global2} to choose $K>0$ large enough that $\sup_{0\le t\le 1}\|\<v\>^4f_k\|_{L^\infty_x(\Omega)\L^\infty_v}\le\frac{c_0}{2C}$ and using Gr\"{o}nwall's inequality, we obtain
\begin{multline}\label{ZkT0}
	\|Z_k(T)\|_{L^2_x(\Omega)L^2_v}^2
	+\int^T_0\int_{\Si_+}|v\cdot n|e^{C(T-t)}|Z_k|^2\,dS(x)dvdt
	\le C\|Z_k(0)\|_{L^2_x(\Omega)L^2_v}^2\\
	+\int^T_0\int_{\Si_-}|v\cdot n|e^{C(T-t)}|Z_k|^2\,dS(x)dvdt,
\end{multline}
where we used $T\le 1$. On the other hand, for the term $\|Z_k(0)\|_{L^2_x(\Omega)L^2_v}^2$ in \eqref{ZkT0}, we have from \eqref{ZkT}, \eqref{L2global2} and collisional estimates \eqref{L2}, \eqref{Gaesweight} that
\begin{multline}\label{ZkT1}
	\|Z_k(0)\|_{L^2_x(\Omega)L^2_v}^2
	\le
	\|Z_k(T)\|_{L^2_x(\Omega)L^2_v}^2
	+\int^T_0\int_{\Si_+}|v\cdot n||Z_k|^2\,dS(x)dvdt\\-\int^T_0\int_{\Si_-}|v\cdot n||Z_k|^2\,dS(x)dvdt
	+C\int^T_0\|Z_k\|_{L^2_x(\Omega)L^2_D}^2\,dt.
\end{multline}
Integrating \eqref{ZkT1} over $T\in[0,1]$, and using \eqref{ZkD} and \eqref{esD} with $\ga+2s\ge 0$, one has
\begin{multline}\label{ZkT2}
	\|Z_k(0)\|_{L^2_x(\Omega)L^2_v}^2
	\le
	2
	+\int^1_0\int^T_0\int_{\Si_+}|v\cdot n||Z_k|^2\,dS(x)dvdtdT\\
	-\int^1_0\int^T_0\int_{\Si_-}|v\cdot n||Z_k|^2\,dS(x)dvdtdT.
\end{multline}

\smallskip\noindent{\bf The inflow boundary case.} In this case, by \eqref{140}, we have from \eqref{ZkT0} and \eqref{ZkT2} that
\begin{align*}
	\|Z_k(T)\|_{L^2_x(\Omega)L^2_v}^2
	\le C\|Z_k(0)\|_{L^2_x(\Omega)L^2_v}^2+\frac{1}{k}\le C,
\end{align*}
for any $T\in[0,1]$, which implies \eqref{timeboundary} in the inflow case.

{{\smallskip}}\noindent{\bf The reflection boundary case.} In this case, since $Z_k|_{\Si_-}=(1-\ve)RZ_k$, using boundary estimate \eqref{101} and \eqref{140a}, we have from \eqref{ZkT0} and \eqref{ZkT2} that
\begin{align*}
	\|Z_k(T)\|_{L^2_x(\Omega)L^2_v}^2
	& \le C\|Z_k(0)\|_{L^2_x(\Omega)L^2_v}^2 \\
	& \le C
	+C\ve\int^1_0\int^T_0\int_{\Si_+}|v\cdot n||Z_k|^2\,dS(x)dvdtdT
	\\&\qquad+C(1-\ve)\al\int^1_0\int^T_0\int_{\Si_+}|v\cdot n||f-R_Df|^2\,dS(x)dvdtdT\\
	& \le C+\frac{C}{k}.
\end{align*}
This implies \eqref{timeboundary} in the \emph{Maxwell} reflection case, and we conclude the \emph{claim} \eqref{timeboundary}.

\subsubsection{The interior set}
For the domains $D_2$ and $D_3$, we will use smooth cutoff functions to represent them. 
Let $\chi_0$ be the smooth cutoff function such that
\begin{align}\label{chi000}
	\chi_0(t,x,v)=\left\{\begin{aligned}
		& 1,\quad \text{ if }t\in[\de,1-\de]\text{ and }\xi(x)\le -2\de^6\text{ and }|v|\le2\de^{-\frac{1}{4}}, \\
		& 0,\quad \text{ if }t\in\big[0,\frac{\de}{2}\big]\cup\big[1-\frac{\de}{2},1\big]\text{ or }\xi(x)\ge-\de^6\text{ or }|v|\ge3\de^{-\frac{1}{4}},
	\end{aligned}\right.
\end{align}
which satisfies
\begin{align*}
	|\na_{t,x,v}\chi_0|\le C\de^{-6}.
\end{align*}
Then the left-hand side \eqref{ZkZ} within domain $D_2\cup D_3$ can be written as
\begin{multline}\label{D222}
	\int^{1-\de}_\de\int_{\xi(x)<-2\de^6}\Big|\int_{\R^3}(Z_k-Z)e_j\,dv\Big|^2\,dxdt\\
	\le 2\int^{1}_0\int_{\xi(x)<-2\de^6}\Big\{\Big|\int_{\R^3_v}\chi_0(Z_k-Z)e_j\,dv\Big|^2+\Big|\int_{\R^3_v}(1-\chi_0)(Z_k-Z)e_j\,dv\Big|^2\Big\}\,dxdt.
\end{multline}
Note that $\chi_0Z_k$ is supported in
\begin{align*}
	\{(t,x,v)\,:\,t\in\big[\frac{\de}{2},1-\frac{\de}{2}\big],\ \xi(x)\le-\de^6,\ |v|\le 3\de^{-\frac{1}{4}}\},
\end{align*}
which is a compact subset in the open bounded set $\Omega$. Moreover, by \eqref{1235}, $\chi_0Z_k$ satisfies the equation
\begin{align*}
	(\pa_t+v\cdot\na_x)(\chi_0Z_k) = Z_k(\pa_t+v\cdot\na_x)\chi_0 +\chi_0LZ_k+\chi_0\Gamma(f_k,Z_k).
\end{align*}
From \eqref{ZkD}, we have $\int^1_0\|Z_k\|^2_{L^2_x(\Omega)L^2_D}\,ds\le 2<\infty$, and hence,
\begin{align*}
	\chi_0Z_k & \in L^2_{t,x,v}(\R_t\times\R^3_x\times\R^3), \\
	\{[\pa_t+v\cdot \na_x]\chi_0\}Z_k & \in L^2_{t,x,v}(\R_t\times\R^3_x\times\R^3)), \\
	\chi_0LZ_k+\chi_0\Gamma(f_k,Z_k) & \in L^2_{t,x}H^{-1}_v(\R_t\times\R^3_x\times\R^3)).
\end{align*}
where we used \eqref{GaesweightDvs} with $s\in(0,1)$; note from \eqref{Gaesweight}, \eqref{L2} and \eqref{esD} that 
\begin{align*}
	\|\chi_0LZ_k+\chi_0\Gamma(f_k,Z_k)\|_{H^{-1}_v}&=\sup_{\|\phi\|_{H^1_v}\le 1}\big(\chi_0LZ_k+\chi_0\Gamma(f_k,Z_k),\phi\big)_{L^2_v}\\
	&\le C\|\<v\>^4f_k\|_{L^\infty_v}\|Z_k\|_{L^2_D}\sup_{\|\phi\|_{H^1_v}\le 1}\|\chi_0\phi\|_{L^2_D}\le C. 
\end{align*}
Then we deduce from the averaging lemma \cite[Theorem 5]{Diperna1989a} (see also \cite{Golse1988} for earlier compactness result) that
\begin{align*}
	\int_{\R^3}\chi_0Z_k(v)\phi(v)\,dv\in H^{\frac{1}{4}}(\R_t\times\R^3_x)
\end{align*}
uniformly in $k$ for any smooth function $\phi(v)$ with compact support. Since $\chi_0$ has compact support in $v$, it then follows that the sequence
\begin{align*}
	\int_{\R^3}\chi_0Z_k(v)e_j\,dv
\end{align*}
is compact in $L^2(\R_t\times\R^3_x)$.
Therefore, by uniqueness of the weak limit, up to a subsequence, the first right-hand term of \eqref{D222} can be estimated as 
\begin{align}
	\label{D2}
	\int^{1-\de}_\de\int_{\xi(x)<-2\de^6}\Big|\int_{\R^3}\chi_0(Z_k-Z)e_j\,dv\Big|^2\,dxdt
	\to 0,
\end{align}
as $k\to\infty$ for any $j$ and fixed $\de>0$.

\subsubsection{The large-velocity set}
For second right-hand term of \eqref{D222} with large-velocity, we have from \eqref{chi000} that 
\begin{align}\label{D3}\notag
	& \int^{1-\de}_\de\int_{\xi(x)<-2\de^6}\Big|\int_{\R^3}(1-\chi_0)(Z_k-Z)e_j\,dv\Big|^2\,dxdt \\
	& \notag\quad\le C\int^1_0\int_{\Omega}\Big|\int_{|v|>2\de^{-\frac{1}{4}}}\<v\>^2|Z_k-Z|\mu^{\frac{1}{4}}e^{-\frac{\de^{-1/2}}{2}}\,dv\Big|^2 \\
	& \notag\quad\le Ce^{-\frac{\de^{-1/2}}{2}}\int^{1}_{0}\|[Z_k(t),Z(t)]\|_{L^2_x(\Omega)L^2_D}^2\,dt \\
	& \quad \le Ce^{-\frac{\de^{-1/2}}{2}},
\end{align}
where we used \eqref{ZkD} and \eqref{ZD}.

\subsubsection{The grazing set}
For the estimate \eqref{ZkZ} within the grazing set $D_4$ defined in \eqref{D1234}, by H\"{o}lder's inequality, we have
\begin{align*}
	& \int^{1-\de}_{\de}\int_{0>\xi(x)\ge-2\de^6}\Big|\int_{|v|>2\de^{-\frac{1}{4}} \text{ or }|v\cdot n(x)|<3\de^2}(Z_k-Z)e_j\,dv\Big|^2\,dxdt \\
	& \quad\le\int^1_0\int_{\Omega}\|Z_k-Z\|_{L^2_D}^2\,dxdt\,\sup_{x\in\Omega}\int_{|v|>2\de^{-\frac{1}{4}} \text{ or }|v\cdot n(x)|<3\de^2}\<v\>^2\mu^{\frac{1}{2}}\,dv \\
	& \quad\le C\sup_{x\in\Omega}\int_{|v|>2\de^{-\frac{1}{4}} \text{ or }|v\cdot n(x)|<3\de^2}\<v\>^2\mu^{\frac{1}{2}}\,dv,
\end{align*}
where we used \eqref{esD}, \eqref{ZkD} and \eqref{ZD}.
For the large-velocity region, we have
\begin{align}\label{smallmu}
	\int_{|v|>2\de^{-\frac{1}{4}}}\<v\>^2\mu^{\frac{1}{2}}\,dv
	\le Ce^{-\frac{\de^{-1/2}}{2}}\int_{\R^3_v}\<v\>^2\mu^{\frac{1}{4}}\,dv\le Ce^{-\frac{\de^{-1/2}}{2}}.
\end{align}
For the grazing region, we use a rotation $v\mapsto \ti Rv$ with $\ti R^Tn=(1,0,0)$ and $|v|=|\ti Rv|$ ($\ti R$ is a orthogonal matrix and $\ti R^T$ is the transpose of $\ti R$) to deduce
\begin{align}\label{smallmu1}
	\int_{|v\cdot n(x)|<3\de^2}\<v\>^2\mu^{\frac{1}{2}}\,dv
	& \le \int_{|v_1|<3\de^2}\<v\>^2\mu^{\frac{1}{2}}\,dv
	\le C\de^2.
\end{align}
Collecting the above three estimates, we obtain
\begin{align}\label{D4}
	\int^{1-\de}_{\de}\int_{0>\xi(x)\ge-2\de^6}\Big|\int_{|v|>2\de^{-\frac{1}{4}} \text{ or }|v\cdot n(x)|<3\de^2}(Z_k-Z)e_j\,dv\Big|^2\,dxdt\le C\big(\de^2+e^{-\frac{\de^{-1/2}}{2}}\big).
\end{align}

\subsubsection{The non-grazing set}
In order to prove the convergence of \eqref{ZkZ} in domain $D_5$ given by \eqref{D1234}, we use the smooth cutoff functions as before. 

Recall that we assume $\Omega$ is a bounded open set defined by
\begin{align*}
	\Omega=\{x\in\R^3_x\,:\,\xi(x)<0\}.
\end{align*}
For any $\de>0$, we denote $\chi_1:\R\to[0,1]$, $\chi_2:\R^3\to[0,1]$ and $\chi_3:\R^3\to[0,1]$ as smooth cutoff functions satisfying
\begin{align}\label{chi123}
	\begin{aligned}
		& \chi_1(r)=\left\{\begin{aligned}
			& 1\quad\text{ if } r\ge3\de^2, \\
			& 0\quad\text{ if } r<2\de^2,
		\end{aligned}\right.
		\quad \chi_2(v) = \left\{\begin{aligned}
			& 1\quad\text{ if } |v|\le 2\de^{-\frac{1}{4}}, \\
			& 0\quad\text{ if } |v|>4\de^{-\frac{1}{4}},
		\end{aligned}\right. \\
		& \chi_3(x) = \left\{\begin{aligned}
			& 1\quad\text{ if } 2\de^6\ge\xi(x)\ge -2\de^6, \\
			& 0\quad\text{ if } \xi(x)< -3\de^6\text{ or }\xi(x)>3\de^6,
		\end{aligned}\right.
	\end{aligned}
\end{align}x
and
\begin{align*}
	\begin{aligned}
		& |\chi_1'(r)|\le C\de^{-2},\quad |\chi_1''(r)|\le C\de^{-4}, \quad |\na_v\chi_2(v)|\le C\de^{\frac{1}{4}}, \\
		& |\na^2_v\chi_2(v)|\le C\de^{\frac{1}{2}} \quad |\na_x\chi_3(x)|\le C\de^{-6}, \quad |\na^2_v\chi_3(x)|\le C\de^{-12}.
	\end{aligned}
\end{align*}
Recall that we assume that the outward unit normal vector $n=n(x)$ on $\pa\Omega$ has an extension to $\R^3_x$ as in \eqref{naxi1} such that
\begin{align}\label{nW24}
	n(x)\in W^{2,\infty}(\R^3_x).
\end{align} 
Then we construct the backward/forward smooth cutoff function as
\begin{align}\label{chipm}
	\begin{aligned}
		\chi^\de_+(t,x,v;{T}) & =\chi_1\big(v\cdot n(x-v\{t-{T}\})\big)\chi_2(v)\chi_3(x-v\{t-{T}\}),\ \text{ for }0\le T\le t, \\
		\chi^\de_-(t,x,v;{T}) & =\chi_1\big(-v\cdot n(x-v\{t-{T}\})\big)\chi_2(v)\chi_3(x-v\{t-{T}\}),\ \text{ for }0\le t\le T,
	\end{aligned}
\end{align}
which satisfies
\begin{align}\label{pachi1}
	\pa_t\chi_\pm^\de+v\cdot\na_x\chi_\pm^\de=0.
\end{align}
We conclude the properties of cutoff functions $\chi_\pm^\de$ in the following Lemma.
\begin{Lem}\label{chipmLem}
	Let $t,T\in[0,1]$ and $\chi_\pm^\de$ be defined in \eqref{chipm}. Fix a sufficiently small $\de>0$. Then
	
	\begin{enumerate}
		\item Let $|t-T|\le \de^3$ and $x\in\Omega$. If $\chi^\de_+(t,x,v;T)\neq 0$ then $v\cdot n(x)>\de^2>0$. If $\chi^\de_-(t,x,v;T)\neq 0$ then $v\cdot n(x)<-\de^2<0$.
		
		{{\smallskip}}
		\item Let $x\in\Omega$ such that $0>\xi(x)\ge-3\de^6$. Then
		\begin{align*}
			\chi^\de_+(T-\de^3,x,v;T)=0\ \text{ and }\ \chi^\de_-(T+\de^3,x,v;T)=0.
		\end{align*}
		
		\smallskip
		\item Let $(x,v)\in\ol\Omega\times\R^3_v$ such that $|v|\le 2\de^{-\frac{1}{4}}$ and $0\ge\xi(x)\ge-2\de^6$. If $v\cdot n(x)\ge3\de^2$, then $\chi^\de_+(T,x,v;T)=1$. If $v\cdot n(x)\le-3\de^2$, then $\chi^\de_-(T,x,v;T)=1$.
		
		{{\smallskip}} \item We have
		\begin{align}\label{colli3x}
			\|[\na_v\chi_\pm^\de,\na_v^2\chi_\pm^\de](t,x,v;T)\|_{L^\infty_t([0,1])L^\infty_x(\Omega)L^\infty_v(\R^3_v)}
			\le C\de^{-12},
		\end{align}
		for some $C>0$ depends only on $\|n\|_{W^{2,\infty}(\Omega)}$ and is independent of $T$.
	\end{enumerate}
\end{Lem}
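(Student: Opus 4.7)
The plan is to verify each assertion by unpacking the definitions of $\chi_1,\chi_2,\chi_3$ in \eqref{chi123} and exploiting the Lipschitz regularity $n\in W^{2,\infty}(\R^3_x)$ from \eqref{nW24}, together with the fact (recorded around \eqref{naxi1}) that $n$ coincides with $\na\xi/|\na\xi|$ in a neighborhood of $\pa\Omega$.

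For item (1), assume $\chi^\de_+(t,x,v;T)\neq 0$ with $|t-T|\le\de^3$. Then $v\cdot n(x-v(t-T))\ge 2\de^2$ and $|v|\le 4\de^{-1/4}$. Writing $v\cdot n(x)=v\cdot n(x-v(t-T))+v\cdot[n(x)-n(x-v(t-T))]$ and using $|n(x)-n(x-v(t-T))|\le |v||t-T|\,\|\na n\|_{L^\infty}$, I obtain
\begin{align*}
v\cdot n(x)\ge 2\de^2-|v|^2|t-T|\,\|\na n\|_{L^\infty}\ge 2\de^2-C\de^{5/2}>\de^2
\end{align*}
for $\de$ small, exactly paralleling the computation \eqref{chide01}. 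The statement for $\chi^\de_-$ follows from the same argument with $v\mapsto -v$.

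Item (2) is the step I expect to be the main obstacle, because it requires trading the a priori smallness $|\xi(x)|\le 3\de^6$ against the $\de^5$-sized change of $\xi$ along the characteristic step of length $\de^3$. I would argue by contradiction: suppose $\chi^\de_+(T-\de^3,x,v;T)\neq 0$ and set $y=x+v\de^3$. Then $v\cdot n(y)\ge 2\de^2$, $|v|\le 4\de^{-1/4}$, and $\xi(y)\le 3\de^6$. As in (1), $v\cdot n(x)\ge\de^2/2$, and by continuity $v\cdot n(x+\theta v\de^3)\ge \de^2/4$ for all $\theta\in[0,1]$ once $\de$ is small. Since $x$ lies within distance $O(\de^6)$ of $\pa\Omega$, the identity $\na\xi=|\na\xi|\,n$ is valid along the whole segment $[x,y]$, so with $c_1:=\inf_{\Omega_\de}|\na\xi|>0$ the Taylor expansion gives
\begin{align*}
\xi(y)=\xi(x)+\de^3\int_0^1 v\cdot\na\xi(x+\theta v\de^3)\,d\theta\ge -3\de^6+\tfrac{c_1}{4}\de^5\ge\tfrac{c_1}{8}\de^5,
\end{align*}
which contradicts $\xi(y)\le 3\de^6$. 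The $\chi^\de_-$ assertion is handled by the same argument with $v$ replaced by $-v$.

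Item (3) is an immediate evaluation: $\chi^\de_\pm(T,x,v;T)=\chi_1(\pm v\cdot n(x))\chi_2(v)\chi_3(x)$, and each factor equals $1$ under the stated hypotheses by the defining properties \eqref{chi123}. For item (4), I would apply the chain rule to $\chi^\de_\pm=\chi_1(\pm v\cdot n(y))\chi_2(v)\chi_3(y)$ with $y=x-v(t-T)$. Using $\pa_{v_j}y_i=-(t-T)\de_{ij}$ with $|t-T|\le 1$, the bounds on $\chi_1,\chi_1',\chi_1''$, $\na_v\chi_2, \na_v^2\chi_2$, $\na_x\chi_3, \na_x^2\chi_3$ from \eqref{chi123}, $\|n\|_{W^{2,\infty}}<\infty$, and $|v|\le 4\de^{-1/4}$ on the support, a term-by-term inspection shows the dominant second-order contribution comes from $\pa_{v_jv_k}\chi_3(y)=(t-T)^2(\pa_{x_jx_k}\chi_3)(y)$, which is bounded by $C\de^{-12}$. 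All other second-derivative terms are of order $\de^{-9/2}$ or smaller, so the estimate \eqref{colli3x} follows.
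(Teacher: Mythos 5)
Your proposal is correct and follows essentially the same route as the paper: item (1) via the Lipschitz bound on $n$, item (2) by contradiction through the growth of $\xi$ along the characteristic step (the paper uses a second-order Taylor expansion at $x$ while you use the first-order integral form and propagate the lower bound $v\cdot n\gtrsim\de^2$ along the segment, which is an equivalent bookkeeping), and items (3)–(4) by direct evaluation and the chain rule with the dominant $\de^{-12}$ term coming from $\na_x^2\chi_3$. No gaps.
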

Note that we use smooth cutoff functions $\chi_\pm^\de$ whereas \cite[Lemma 10]{Guo2009} used indicator functions. 
\begin{proof}
	To prove (1), we have from \eqref{chi123} and \eqref{chipm} that for $(t,x,v)\in[0,1]\times\Omega\times\R^3_v$, if $\chi^\de_+(t,x,v)\neq 0$, then
	\begin{align*}
		v\cdot n(x-v\{t-{T}\})\ge2\de^2,\quad |v|\le 4\de^{-\frac{1}{4}},\quad \xi(x-v\{t-{T}\})\ge -3\de^6.
	\end{align*}
	Thus, by \eqref{nW24}, for $|t-T|\le\de^3$,
	\begin{align*}
		v\cdot n(x) & =v\cdot n(x-v\{t-{T}\})+\big[v\cdot n(x)-v\cdot n(x-v\{t-{T}\})\big] \\
		& \ge2\de^2-\sup_{\th\in[0,1]}|\na n(x-\th v\{t-T\})|\times|t-T||v|^2 \\
		& \ge2\de^2-C_n\de^{3-\frac{1}{2}}
		\ge \de^2,
	\end{align*}
	if we choose $\de>0$ sufficiently small, which depends only on $n$, i.e. depends on $\Omega$.
	Similarly, for the forward cutoff function $\chi^\de_-$, if $\chi^\de_-(t,x,v)\neq 0$, then
	\begin{align*}
		-v\cdot n(x-v\{t-{T}\})\ge2\de^2,\quad |v|\le 4\de^{-\frac{1}{4}},\quad \xi(x-v\{t-{T}\})\ge -3\de^6.
	\end{align*}
	Thus, for $|t-T|\le\de^3$,
	\begin{align*}
		v\cdot n(x) & =v\cdot n(x-v\{t-{T}\})+\big[v\cdot n(x)-v\cdot n(x-v\{t-{T}\})\big] \\
		& \le -2\de^2+C_n\de^{3-\frac{1}{2}}\le -\de^2,
	\end{align*}
	with small $\de>0$.
	
	{{\smallskip}} To prove (2), let $x\in\Omega$ such that $0>\xi(x)\ge-3\de^6$. If $\chi^\de_+(T-\de^3,x,v;T)\neq 0$, then we have
	\begin{align}\label{1246}
		v\cdot n(x+v\de^3)\ge2\de^2,\quad |v|\le 4\de^{-\frac{1}{4}},\quad 3\de^6\ge\xi(x+v\de^3)\ge -3\de^6.
	\end{align}
	However,
	\begin{align*}
		\xi(x+v\de^3) = \xi(x)+\de^3v\cdot\na\xi(x)+\de^3v\cdot\na^2\xi(\bar x)\cdot\de^3v,
	\end{align*}
	for some $\bar x$ is between $x$ and $x+v\de^3$. Since $n=\frac{\na\xi(x)}{|\na\xi(x)|}$ as in \eqref{nW24}, we have from the first assertion (1), i.e. $v\cdot n(x)\ge \de^2$, that 
	\begin{align*}
		\de^3v\cdot\na\xi(x)=\de^3|\na\xi(x)|v\cdot n(x)\ge \de^5c_\xi,
	\end{align*}
	where $c_\xi$ is the lower bound of $|\na\xi(x)|$ in $\{0>\xi(x)\ge-3\de^6\}$. Therefore,
	\begin{align*}
		\xi(x+v\de^3) & \ge -3\de^6+\de^5c_\xi-C_\xi\de^6|v|^2 \\
		& \ge -3\de^6+\de^5c_\xi-C_\xi\de^{6-\frac{1}{2}}\ge\frac{\de^5c_\xi}{2}>3\de^6,
	\end{align*}
	with the upper bound $C_\xi>0$ of $|\na^2\xi(x)|$ in $\{0>\xi(x)\ge-3\de^6\}$, where $\de>0$ is chosen to be small enough. This contradicts \eqref{1246} and hence, $\chi^\de_+(T-\de^3,x,v;T)=0$.
	
	\smallskip 
	Similarly, let $0>\xi(x)\ge-3\de^6$. If $\chi^\de_-(T+\de^3,x,v;T)\neq 0$, then
	\begin{align}\label{1246a}
		v\cdot n(x-v\de^3)\le-2\de^2,\quad |v|\le 4\de^{-\frac{1}{4}},\quad 3\de^6\ge\xi(x-v\de^3)\ge -3\de^6.
	\end{align}
	However, we have from (1) that $v\cdot n(x)<-\de^2$, and hence, by choosing $\de>0$ small enough,
	\begin{align*}
		\xi(x-v\de^3) & = \xi(x)-\de^3v\cdot\na\xi(x)+\de^3v\cdot\na^2\xi(\bar x)\cdot\de^3v \\
		& \ge -3\de^6+\de^5c_\xi-C_\xi\de^{6-\frac{1}{2}}> 3\de^6,
	\end{align*}
	which contradicts \eqref{1246a}. Thus, $\chi^\de_-(T+\de^3,x,v;T)=0$ for any $x$ satisfying $0>\xi(x)\ge-3\de^6$.

	{{\smallskip}} To prove (3), letting $(t,x,v)$ such that
	\begin{align*}
		t=T,\ v\cdot n(x)\ge3\de^2,\ |v|\le 2\de^{-\frac{1}{4}},\ 0>\xi(x)\ge-2\de^6,
	\end{align*}
	we have from \eqref{chi123} that
	then $\chi^\de_+(T,x,v;T)=1$. Similarly, for $(x,v)$ satisfying $v\cdot n(x)\le-3\de^2$, $|v|\le 2\de^{-\frac{1}{4}}$ and $0>\xi(x)\ge-2\de^6$, we have $\chi^\de_-(T,x,v;T)=1$.
	
	{{\smallskip}} To prove (4), similar to the proof of \eqref{chideesti} at the end of Subsection \ref{Sec103}, it's direct to calculate the derivatives of $\chi_\pm^\de$ and deduce
	\begin{align*}
		|\na_v\chi_\pm^\de| & \le C\big(1+\|n\|_{L^\infty_x}+\|\na_xn\|_{L^\infty_x}\big)\big(\|[\chi_1,\chi_2,\chi_3,\na\chi_1,\chi'_2,\na\chi_3]\|_{L^\infty}\big) \\
		& \le C\de^{-6}\big(1+\|n\|_{L^\infty_x}+\|\na_xn\|_{L^\infty_x}\big).
	\end{align*}
	and
	\begin{align*}
		|\na^2_v\chi_\pm^\de| & \le C\big(1+\|n\|_{W^{2,\infty}_x}\big)\big(\|[\chi_1,\chi_2,\chi_3,\na\chi_1,\chi'_2,\na\chi_3,\na^2\chi_1,\chi''_2,\na^2\chi_3]\|_{L^\infty}\big) \\
		& \le C\de^{-12}\big(1+\|n\|_{W^{2,\infty}_x}\big).
	\end{align*}
	for some generic constant $C>0$.
	This implies \eqref{colli3x} and completes the proof of Lemma \ref{chipmLem}.
	
\end{proof}

\smallskip 
Let $0\le T-\de^3\le T\le T+\de^3\le 1$.
Denote the smooth cutoff functions $\chi_\pm^\de(t,x,v;T)$ as in \eqref{chipm}. 
By \eqref{1235}, \eqref{panaxZ} and \eqref{pachi1}, $\chi_\pm^\de(Z_k-Z)$ satisfies the equation
\begin{align}\label{1371}
	\big(\pa_t+v\cdot\na_x\big)\big(\chi_\pm^\de(Z_k-Z)\big) = \chi_\pm^\de LZ_k+\chi_\pm^\de\Gamma(f_k,Z_k).
\end{align}
We denote the inner domain
\begin{align*}
	\Omega_\de=\{x\in\R^3_x\:\,\xi(x)<-2\de^6\}.
\end{align*}
By our construct of $n(x)$ on $\pa\Omega_\de$ in \eqref{nW24}, we know that $n(x)$ is also the outward normal unit vector on boundary
\begin{align}\label{paOmegade}
	\pa\Omega_\de=\{x\in\R^3_x\:\,\xi(x)=-2\de^6\}.
\end{align}
Then we denote the corresponding incoming and outgoing sets as
\begin{align*}
	\begin{aligned}
		\Si^\de_+ & =\{(x,v)\in\pa\Omega_\de\times\R^3_v\,:\, v\cdot n(x)>0\}, \\
		\Si^\de_- & =\{(x,v)\in\pa\Omega_\de\times\R^3_v\,:\, v\cdot n(x)<0\}, \\
		\Si^\de_0 & =\{(x,v)\in\pa\Omega_\de\times\R^3_v\,:\, v\cdot n(x)=0\}.
	\end{aligned}
\end{align*}
For $\chi_\pm^\de=\chi^\de_+$, taking $L^2$ inner product of \eqref{1371} with $\chi^\de_+(Z_k-Z)$ over $[T-\de^3,T]\times\big(\Omega\setminus\Omega_\de\big)\times\R^3_v$, we have
\begin{multline*}
	\|\chi^\de_+(Z_k-Z)(T)\|_{L^2_x(\Omega\setminus\Omega_\de)L^2_v}^2+\int_{T-\de^3}^T\int_{\Si_+}|v\cdot n(x)|\big(\chi^\de_+(Z_k-Z)\big)^2\,dS(x)dvdt\\-\int_{T-\de^3}^T\int_{\Si_-}|v\cdot n(x)|\big(\chi^\de_+(Z_k-Z)\big)^2\,dS(x)dvdt\\
	= \|\chi^\de_+(Z_k-Z)(T-\de^3)\|_{L^2_x(\Omega\setminus\Omega_\de)L^2_v}^2+\int_{T-\de^3}^T\int_{\Si^\de_+}|v\cdot n(x)|\big(\chi^\de_+(Z_k-Z)\big)^2\,dS(x)dvdt\\-\int_{T-\de^3}^T\int_{\Si^\de_-}|v\cdot n(x)|\big(\chi^\de_+(Z_k-Z)\big)^2\,dS(x)dvdt\\
	+\big(\chi^\de_+ LZ_k+\chi^\de_+\Gamma(f_k,Z_k),\chi^\de_+(Z_k-Z)\big)_{L^2_{t,x,v}([T-\de^3,T]\times(\Omega\setminus\Omega_\de)\times\R^3_v)}.
\end{multline*}
Using Lemma \ref{chipmLem} (1) and (2), we have
\begin{multline}\label{chi+}
	\|\chi^\de_+(Z_k-Z)(T)\|_{L^2_x(\Omega\setminus\Omega_\de)L^2_v}^2+\int_{T-\de^3}^T\int_{\Si_+}|v\cdot n(x)|\big(\chi^\de_+(Z_k-Z)\big)^2\,dS(x)dvdt\\
	=\int_{T-\de^3}^T\int_{\Si^\de_+}|v\cdot n(x)|\big(\chi^\de_+(Z_k-Z)\big)^2\,dS(x)dvdt\\
	+\big(\chi^\de_+LZ_k+\chi^\de_+\Gamma(f_k,Z_k),\chi^\de_+(Z_k-Z)\big)_{L^2_{t,x,v}([T-\de^3,T]\times(\Omega\setminus\Omega_\de)\times\R^3_v)}.
\end{multline}
Similar calculations can be carried out for $\chi^\de_-$ by taking $L^2$ inner product of \eqref{1371} with $\chi^\de_-(Z_k-Z)$ over $[T-\de^3,T]\times\big(\Omega\setminus\Omega_\de\big)\times\R^3_v$:
\begin{multline*}
	\|\chi^\de_-(Z_k-Z)(T+\de^3)\|_{L^2_x(\Omega\setminus\Omega_\de)L^2_v}^2+\int^{T+\de^3}_T\int_{\Si_+}|v\cdot n(x)|\big(\chi^\de_-(Z_k-Z)\big)^2\,dS(x)dvdt\\-\int^{T+\de^3}_T\int_{\Si_-}|v\cdot n(x)|\big(\chi^\de_-(Z_k-Z)\big)^2\,dS(x)dvdt\\
	= \|\chi^\de_-(Z_k-Z)(T)\|_{L^2_x(\Omega\setminus\Omega_\de)L^2_v}^2+\int^{T+\de^3}_T\int_{\Si^\de_+}|v\cdot n(x)|\big(\chi^\de_-(Z_k-Z)\big)^2\,dS(x)dvdt\\-\int^{T+\de^3}_T\int_{\Si^\de_-}|v\cdot n(x)|\big(\chi^\de_-(Z_k-Z)\big)^2\,dS(x)dvdt\\
	+\big(\chi^\de_- LZ_k+\chi^\de_-\Gamma(f_k,Z_k),\chi^\de_-(Z_k-Z)\big)_{L^2_{t,x,v}([T,T+\de^3]\times(\Omega\setminus\Omega_\de)\times\R^3_v)}.
\end{multline*}
Then one has from Lemma \ref{chipmLem} (1) and (2) that
\begin{multline}\label{chi-}
	\|\chi^\de_-(Z_k-Z)(T)\|_{L^2_x(\Omega\setminus\Omega_\de)L^2_v}^2+\int^{T+\de^3}_T\int_{\Si_-}|v\cdot n(x)|\big(\chi^\de_-(Z_k-Z)\big)^2\,dS(x)dvdt\\
	= \int^{T+\de^3}_T\int_{\Si^\de_-}|v\cdot n(x)|\big(\chi^\de_-(Z_k-Z)\big)^2\,dS(x)dvdt\\
	+\big(\chi^\de_- LZ_k+\chi^\de_-\Gamma(f_k,Z_k),\chi^\de_-(Z_k-Z)\big)_{L^2_{t,x,v}([T-\de^3,T]\times(\Omega\setminus\Omega_\de)\times\R^3_v)}.
\end{multline}
By \eqref{colli3x} and \eqref{Gaesweight}, we know that
\begin{align}\label{chi+1}
	& \notag\big(\chi^\de_+ L\{\I-\P\}Z_k+\chi^\de_+\Gamma(f_k,Z_k),\chi^\de_+(Z_k-Z)\big)_{L^2_{t,x,v}([T-\de^3,T]\times(\Omega\setminus\Omega_\de)\times\R^3_v)} \\
	& \notag\quad\le C\int_{T-\de^3}^T\Big(\|\{\I-\P\}Z_k\|_{L^2_x(\Omega\setminus\Omega_\de)L^2_D}+\Big\{\sup_{0\le t\le 1}\|\<v\>^{4}f_k\|_{L^\infty_{x,v}(\Omega\times\R^3_v)}\Big\}\|Z_k\|_{L^2_x(\Omega\setminus\Omega_\de)L^2_D}\Big) \\
	& \notag\qquad\qquad\times\|(\chi^\de_+)^2(Z_k-Z)\|_{L^2_x(\Omega\setminus\Omega_\de)L^2_D}\,dt \\
	& \quad\le \frac{C_\de}{k},
\end{align}
for some constant $C_\de>0$ depending on $\de>0$,
where we used \eqref{L2global2}, \eqref{140}, \eqref{140a} to control the corresponding quantities. 
Note that we also used $\|Z_k\|_{L^2_D}=\|(\ti a^{\frac{1}{2}})^wZ_k\|_{L^2_v}$ from \eqref{tia}, and \cite[Lemma 2.3]{Deng2020a} and \eqref{colli3x} to deduce
\begin{align}\label{chi+D}
	\|(\chi^\de_+)^2(Z_k-Z)\|_{L^2_D}\le C_\de\|Z_k-Z\|_{L^2_D}, 
\end{align}
since $\chi^\de_+$ is a smooth cutoff function in $v\in\R^3$ with compact support. 
Similarly,
\begin{align}\label{chi-1}
	& \big(\chi^\de_- L\{\I-\P\}Z_k+\chi^\de_-\Gamma(f_k,Z_k),\chi^\de_-(Z_k-Z)\big)_{L^2_{t,x,v}([T,T+\de^3]\times(\Omega\setminus\Omega_\de)\times\R^3_v)}\le \frac{C_\de}{k}.
\end{align}
Noticing $\chi^\de_+(T)+\chi^\de_-(T)=1$ for $|v|\le 2\de^{-\frac{1}{4}}$ and $|v\cdot n(x)|\ge3\de^2$, and substituting \eqref{chi+1} and \eqref{chi-1} into \eqref{chi+} and \eqref{chi-} respectively, we have
\begin{multline}\label{ZkZT}
	\|(Z_k-Z)(T)\|_{L^2_{x,v}(\{x\in\Omega\setminus\Omega_\de,\,|v|\le 2\de^{-\frac{1}{4}},\,|v\cdot n(x)|\ge3\de^2\})}^2+\int_{T-\de^3}^T\int_{\Si_+}|v\cdot n(x)|\big(\chi^\de_+(Z_k-Z)\big)^2\,dS(x)dvdt\\
	+\int^{T+\de^3}_T\int_{\Si_-}|v\cdot n(x)|\big(\chi^\de_-(Z_k-Z)\big)^2\,dS(x)dvdt
	\le\int_{T-\de^3}^T\int_{\Si^\de_+}|v\cdot n(x)|\big(\chi^\de_+(Z_k-Z)\big)^2\,dS(x)dvdt\\
	+\int^{T+\de^3}_T\int_{\Si^\de_-}|v\cdot n(x)|\big(\chi^\de_-(Z_k-Z)\big)^2\,dS(x)dvdt
	+\frac{C_\de}{k}.
\end{multline}

We next deal with the boundary terms in \eqref{ZkZT} by showing that they are further bounded via the interior compactness inside $\Omega_\de$. For this, we need a trace theorem on the non-grazing set similar to Lemma \ref{diffboundLem}. By \eqref{chi123}, we denote a smooth cutoff function
\begin{align*}
	\begin{aligned}
		\ol\chi^\de_+(t,x,v;{T}) & =\chi^\de_+(t,x,v;T)\chi_3(x),\ \text{ for }0\le T\le t, \\
		\ol\chi^\de_-(t,x,v;{T}) & =\chi^\de_-(t,x,v;T)\chi_3(x),\ \text{ for }0\le t\le T,
	\end{aligned}
\end{align*}
Then 
\begin{align}\label{supported}
	\ol\chi_\pm^\de\text{ are supported on }\{x\,:\,3\de^6\ge\xi(x)\ge -3\de^6\}.
\end{align}
Using \eqref{supported}, and Lemma \ref{chipmLem} (1) and (2), we have that for $(x,v)\in\Omega\times\R^3_v$ and $|t-T|\le \de^3$,
\begin{align}\label{support1}\begin{aligned}
		& \text{ if\ $\chi^\de_+(t,x,v;T)\neq 0$\ then\ $v\cdot n(x)>\de^2>0$;} \\
		& \text{ if\ $\chi^\de_-(t,x,v;T)\neq 0$\ then\ $v\cdot n(x)<-\de^2<0$;} \\
		& \ol\chi^\de_+(T-\de^3,x,v)=0\ \text{ and }\ \ol\chi^\de_-(T+\de^3,x,v;T)=0.
	\end{aligned}
\end{align}
Since by \eqref{chi123} and \eqref{paOmegade}, $\chi_3(x)=1$ on $\pa\Omega_\de$, we know that
\begin{align*}
	\begin{aligned}
		\big(\chi^\de_+(Z_k-Z)\big)^2|_{\Si^\de_+}= \big(\ol\chi^\de_+(Z_k-Z)\big)^2|_{\Si^\de_+}, \\
		\big(\chi^\de_-(Z_k-Z)\big)^2|_{\Si^\de_-}= \big(\ol\chi^\de_-(Z_k-Z)\big)^2|_{\Si^\de_-}.
	\end{aligned}
\end{align*}
We then have from \eqref{1371} that
\begin{align}\label{1371a}
	\big(\pa_t+v\cdot\na_x\big)\big(\ol\chi_\pm^\de(Z_k-Z)\big) = \chi_\pm^\de(Z_k-Z)v\cdot\na_x\chi_3(x)+ \ol\chi_\pm^\de LZ_k+\ol\chi_\pm^\de\Gamma(f_k,Z_k).
\end{align}
For $\ol\chi^\de_+$, taking $L^2$ inner product of \eqref{1371a} with $\ol\chi_\pm^\de(Z_k-Z)$ over $[T-\de^3,T]\times\Omega_\de\times\R^3_v$ and using \eqref{support1} to deduce
\begin{multline}\label{1380}
	\|\ol\chi^\de_+(Z_k-Z)(T)\|_{L^2_x(\Omega_\de)L^2_v}^2+\int_{T-\de^3}^T\int_{\Si^\de_+}|v\cdot n(x)|\big(\ol\chi^\de_+(Z_k-Z)\big)^2\,dS(x)dvdt\\
	=\big(\chi^\de_+(Z_k-Z)v\cdot\na_x\chi_3(x)+\ol\chi^\de_+ LZ_k+\ol\chi^\de_+\Gamma(f_k,Z_k),\ol\chi^\de_+(Z_k-Z)\big)_{L^2_{t,x,v}([T-\de^3,T]\times\Omega_\de\times\R^3_v)}.
\end{multline}
Similarly, for $\ol\chi^\de_-$ in \eqref{1371a}, we have
\begin{multline}\label{1380a}
	\|\ol\chi^\de_-(Z_k-Z)(T)\|_{L^2_x(\Omega\setminus\Omega_\de)L^2_v}^2+\int^{T+\de^3}_T\int_{\Si^\de_-}|v\cdot n(x)|\big(\chi^\de_-(Z_k-Z)\big)^2\,dS(x)dvdt\\
	=
	-\big(\chi^\de_-(Z_k-Z)v\cdot\na_x\chi_3(x)+\ol\chi^\de_- LZ_k+\ol\chi^\de_-\Gamma(f_k,Z_k),\ol\chi^\de_-(Z_k-Z)\big)_{L^2_{t,x,v}([T,T+\de^3]\times\Omega_\de\times\R^3_v)}.
\end{multline}
For the last terms in \eqref{1380} and \eqref{1380a}, as in \eqref{chi+1} and \eqref{chi-1}, we have from \eqref{colli3x}, \eqref{Gaesweight} and \eqref{chi+D} that
\begin{align*}
	& \big(\chi^\de_+(Z_k-Z)v\cdot\na_x\chi_3(x)+\ol\chi^\de_+ LZ_k+\ol\chi^\de_+\Gamma(f_k,Z_k),\ol\chi^\de_+(Z_k-Z)\big)_{L^2_{t,x,v}([T-\de^3,T]\times\Omega_\de\times\R^3_v)} \\
	& \quad\le C_\de\int^{1-\de^3}_{\de^3}\|\chi^\de_+(Z_k-Z)\|_{L^2_x(\Omega_\de)L^2_v(|v|\le4\de^{-\frac{1}{4}})} \\
	& \qquad+C\int_{T-\de^3}^T\Big(\|\{\I-\P\}Z_k\|_{L^2_x(\Omega\setminus\Omega_\de)L^2_D}+\Big\{\sup_{0\le t\le 1}\|\<v\>^{4}f_k\|_{L^\infty_{x,v}(\Omega\times\R^3_v)}\Big\}\|Z_k\|_{L^2_x(\Omega\setminus\Omega_\de)L^2_D}\Big) \\
	& \notag\qquad\qquad\times\|(\chi^\de_+)^2(Z_k-Z)\|_{L^2_x(\Omega\setminus\Omega_\de)L^2_D}\,dt \\
	& \quad\le C_\de\int^{1-\de^3}_{\de^3}\|\chi^\de_+(Z_k-Z)\|_{L^2_x(\Omega_\de)L^2_v(|v|\le4\de^{-\frac{1}{4}})}\,dt+\frac{C_\de}{k}.
\end{align*}
Similar calculations can be derived for the $\chi^\de_-$ parts in \eqref{1380a}.
Using \eqref{140} and \eqref{140a} for the $\{\I-\P\}f$ parts, and interior compactness \eqref{D2} for the $\P f$ parts, we deduce the limit:
\begin{align*}
	& \big(\chi_\pm^\de(Z_k-Z)v\cdot\na_x\chi_3(x)+\ol\chi_\pm^\de LZ_k+\ol\chi_\pm^\de\Gamma(f_k,Z_k),\ol\chi_\pm^\de(Z_k-Z)\big)_{L^2_{t,x,v}([T-\de^3,T]\times\Omega_\de\times\R^3_v)} \\
	& \qquad\to 0\quad \text{ uniformly in $T$ as }k\to\infty.
\end{align*}
Substituting this limit into \eqref{1380} and \eqref{1380a}, and then \eqref{ZkZT}, we have, as $k\to \infty$, 
\begin{multline}\label{D5}
	\sup_{T\in[\de^3,1-\de^3]}\Big(\|(Z_k-Z)(T)\|_{L^2_{x,v}(\{x\in\Omega\setminus\Omega_\de,\,|v|\le 2\de^{-\frac{1}{4}},\,|v\cdot n(x)|\ge3\de^2\})}^2
	\\
	+\int_{T-\de^3}^T\int_{\Si_+}|v\cdot n(x)|\big(\chi^\de_+(Z_k-Z)\big)^2\,dS(x)dvdt\\
	+\int^{T+\de^3}_T\int_{\Si_-}|v\cdot n(x)|\big(\chi^\de_-(Z_k-Z)\big)^2\,dS(x)dvdt\Big)\to 0. 
\end{multline}

\subsection{Strong convergence and non-zero \texorpdfstring{$\P Z$}{PZ}}
In this subsection, we will conclude Lemma \ref{Lem123}.
Recall that we decompose $[0,1]\times\Omega\times\R^3_v$ in \eqref{D1234}. Substituting estimates \eqref{D1}, \eqref{D2}, \eqref{D3}, \eqref{D4} and \eqref{D5} (notice \eqref{D222}) into the left-hand side \eqref{ZkZ}, and letting $k\to\infty$, we have
\begin{align*}
	\lim_{k\to\infty}\sum_{j=1}^5\int^1_0\int_{\Omega}\Big|\int_{\R^3_v}(Z_k-Z)e_j\,dv\Big|^2\,dxdt
	\le C\big(\de^2+\de+e^{-\frac{\de^{-1/2}}{2}}\big). 
\end{align*}
Letting $\de\to 0$, we obtain \eqref{ZkZ}:
\begin{align*}
	\lim_{k\to\infty}\sum_{j=1}^5\int^1_0\int_{\Omega}\Big|\int_{\R^3_v}(Z_k-Z)e_j\,dv\Big|^2\,dxdt= 0.
\end{align*}
Together with \eqref{140} and \eqref{140a}, we obtain the strong convergence \eqref{1238}.

{{\smallskip}} To prove \eqref{ZC}, noticing \eqref{IPZk0} and \eqref{normalPf}, we deduce from the strong convergence \eqref{1238} that
\begin{align*}
	\int^1_0\|Z\|_{L^2_x(\Omega)L^2_D}^2=\lim_{k\to0}\int^1_0\|\P Z_k\|_{L^2_x(\Omega)L^2_D}^2=1.
\end{align*}

{{\smallskip}} We next derive the boundary condition for $Z$. By \eqref{D5} and Lemma \ref{chipmLem} (3), we obtain that 
\begin{align}\label{Zklimit}
	\int_{\de^3}^{1-\de^3}\int_{\pa\Omega}\int_{|v|\le 2\de^{-\frac{1}{4}}\text{ and }|v\cdot n(x)|\ge 3\de^2}|v\cdot n(x)|(Z_k-Z)^2\,dS(x)dvdt
	\to 0.
\end{align}
For the inflow boundary case, we have from \eqref{140} that
\begin{multline*}
	\int_{\de^3}^{1-\de^3}\int_{\pa\Omega}\int_{|v|\le 2\de^{-\frac{1}{4}}\text{ and }|v\cdot n(x)|\ge 3\de^2}|v\cdot n(x)||Z(t)|^2\,dS(x)dvdt\\
	\le \frac{1}{k}+\int_{\de^3}^{1-\de^3}\int_{\pa\Omega}\int_{|v|\le 2\de^{-\frac{1}{4}}\text{ and }|v\cdot n(x)|\ge 3\de^2}|v\cdot n(x)||Z_k-Z|^2\,dS(x)dvdt\to 0,
\end{multline*}
as $k\to\infty$. Since $\de>0$ is arbitrary, we obtain $Z(t,x,v)=0$ for $(t,x,v)\in[0,1]\times\pa\Omega\times\R^3_v$.

\smallskip
For the Maxwell reflection boundary case, since $\ve\in[0,1)$, we denote
\begin{align}\label{bara}
	\bar a_k(t,x)=c_\mu\int_{v'\cdot n(x)>0}\{v'\cdot n(x)\}Z_k(t,x,v')\mu^{\frac{1}{2}}(v')\,dv'.
\end{align}
Then we have from \eqref{140a} and $\al\in(0,1)$ that
\begin{align}\label{140b}
	\int_{\Si_+}|v\cdot n|\big(\ve|Z_k|^2+(1-\ve)|Z_k-R_DZ_k|^2\big)\,dS(x)dv\le \frac{C}{k},
\end{align}
for some $C=C(\al)>0$.
Thus, for any sufficiently large $k>0$, 
\begin{align*}
	& \int_{\de^3}^{1-\de^3}\int_{\pa\Omega}\int_{|v|\le 2\de^{-\frac{1}{4}}\text{ and }v\cdot n(x)\ge 3\de^2}|v\cdot n||R_DZ_k|^2\,dvdS(x)dt \\
	& \quad\le 2\int_{\de^3}^{1-\de^3}\int_{\pa\Omega}\int_{|v|\le 2\de^{-\frac{1}{4}}\text{ and }v\cdot n(x)\ge 3\de^2}|v\cdot n|\big(|Z_k-R_DZ_k|^2+|Z-Z_k|^2+|Z|^2\big)\,dvdS(x)dt \\
	& \quad\le \frac{C}{k}+C\le C,
\end{align*}
where we used \eqref{Zklimit} to control the $Z-Z_k$ term and \eqref{499} to control the $Z$ term.
Together with \eqref{bara}, we use rotation $v\mapsto \ti R$ satisfying $\ti R^Tn=(1,0,0)$ to deduce
\begin{align*}\notag
	C & \ge \int_{\de^3}^{1-\de^3}\int_{\pa\Omega}\int_{|v|\le 2\de^{-\frac{1}{4}}\text{ and }v\cdot n(x)\ge 3\de^2}|v\cdot n|\mu(v)|\bar a_k(t,x)|^2\,dvdS(x)dt \\
	& \notag\ge \int_{\de^3}^{1-\de^3}\int_{\pa\Omega}\int_{|v|\le 2\text{ and }v_1\ge 3}|v_1|\mu(v)|\bar a_k(t,x)|^2\,dvdS(x)dt \\
	& \ge \frac{1}{C}\int_{\de^3}^{1-\de^3}\int_{\pa\Omega}|\bar a_k(t,x)|^2dS(x)dt,
\end{align*}
for some $C>0$, where we choose $\de\in(0,1)$ small.
Therefore,
\begin{align*}
	\int_{\de^3}^{1-\de^3}\int_{\pa\Omega\times\R^3_v}|v\cdot n||R_D Z_k|^2\,dvdS(x)dt\le C
\end{align*}
is uniformly-in-$k$ bounded, and hence, by \eqref{140b},
\begin{align}\label{PSiZk}
	\int_{\de^3}^{1-\de^3}\int_{\pa\Omega}\int_{\Si_+}|v\cdot n||Z_k|^2\,dS(x)dvdt\le C,
\end{align}
is also uniformly-in-$k$ bounded.
Notice from \eqref{140b} that
\begin{multline}\label{1390}
	\int_{\de^3}^{1-\de^3}\int_{\pa\Omega}\int_{|v|\le 2\de^{-\frac{1}{4}}\text{ and }v\cdot n(x)\ge 3\de^2}|v\cdot n(x)||Z-R_D Z|^2\,dS(x)dvdt\\
	=\frac{C_{\al,\ve}}{k}+\int_{\de^3}^{1-\de^3}\int_{\pa\Omega}\int_{|v|\le 2\de^{-\frac{1}{4}}\text{ and }v\cdot n(x)\ge 3\de^2}|v\cdot n(x)||Z-Z_k|^2\,dS(x)dvdt\\
	+\int_{\de^3}^{1-\de^3}\int_{\pa\Omega}\int_{|v|\le 2\de^{-\frac{1}{4}}\text{ and }v\cdot n(x)\ge 3\de^2}|v\cdot n(x)||R_D(Z-Z_k)|^2\,dS(x)dvdt.
\end{multline}
By \eqref{Zklimit}, the second right-hand term of \eqref{1390} converges to $0$. For the last term in \eqref{1390}, we use \eqref{499} to control $Z$ term and \eqref{PSiZk} to control $Z_k$ term:
\begin{multline}\label{1390a}
	\int_{\de^3}^{1-\de^3}\int_{\pa\Omega}\int_{|v|\le 2\de^{-\frac{1}{4}}\text{ and }v\cdot n(x)\ge 3\de^2}|v\cdot n(x)||R_D(Z-Z_k)|^2\,dS(x)dvdt\\
	\le C\int_{\de^3}^{1-\de^3}\int_{\pa\Omega}\Big|\int_{|v|\le 2\de^{-\frac{1}{4}}\text{ and }v\cdot n(x)\ge 3\de^2}\{v\cdot n(x)\}\big(Z(v)-Z_k(v)\big)\mu^{\frac{1}{2}}(v)\,dv\Big|^2\,dS(x)dt\\
	+C\int_{\de^3}^{1-\de^3}\int_{\pa\Omega}\Big|\int_{|v|> 2\de^{-\frac{1}{4}}\text{ or }0<v\cdot n(x)< 3\de^2}\{v\cdot n(x)\}\big(Z(v)-Z_k(v)\big)\mu^{\frac{1}{2}}(v)\,dv\Big|^2\,dS(x)dt\\
	\le
	C\int_{\de^3}^{1-\de^3}\int_{\pa\Omega}\Big|\int_{|v|\le 2\de^{-\frac{1}{4}}\text{ and }v\cdot n(x)\ge 3\de^2}\{v\cdot n(x)\}|Z(v)-Z_k(v)|^2\,dv\\
	+
	C\int_{|v|> 2\de^{-\frac{1}{4}}\text{ or }0<v_1< 3\de^2}v_1\mu(v)\,dv,
\end{multline}
where we used Cauchy-Schwarz inequality and rotation $v\mapsto \ti Rv$ with $\ti R^Tn=(1,0,0)$ in the last inequality.
Using \eqref{smallmu} and \eqref{smallmu1} for the last term in \eqref{1390a}, and taking limit $k\to\infty$ in \eqref{1390} with the help of \eqref{Zklimit}, we deduce
\begin{align*}
	\int_{\de^3}^{1-\de^3}\int_{\pa\Omega}\int_{|v|\le 2\de^{-\frac{1}{4}}\text{ and }v\cdot n(x)\ge 3\de^2}|v\cdot n(x)||Z-R_D Z|^2\,dS(x)dvdt
	\le C\big(\de^2+e^{-\frac{\de^{-1/2}}{2}}\big).
\end{align*}
Since $\de>0$ is arbitrary small, we obtain $Z|_{\Si_+}=R_D Z$ on $[0,1]\times\Si_+$.

\smallskip For the case $\ve>0$, we have from \eqref{140b} that
\begin{align*}
	\int_{\Si_+}|v\cdot n||Z_k|^2\,dS(x)dv\le \frac{C_\ve}{k}\to 0,
\end{align*}
as $k\to\infty$. Consequently, $Z=0$ on $\Si_+$.

\smallskip For the case $\ve=0$, we have $Z_k|_{\Si_-}=RZ_k$. We take $L^2$ inner product of \eqref{1235} with $\mu^{\frac{1}{2}}$ over $\Omega\times\R^3_v$ to deduce
\begin{align}\label{converva}
	\pa_t\int_{\Omega\times\R^3_v}Z_k(t,x,v)\mu^{\frac{1}{2}}(v)\,dxdv+\int_{\pa\Omega\times\R^3_v}\{v\cdot n(x)\}Z_k(t,x,v)\mu^{\frac{1}{2}}(v)\,dS(x)dv=0.
\end{align}
For the boundary term, as in \eqref{101}, using Maxwell reflection condition $Z_k|_{\Si_-}=RZ_k$ with $R$ given by \eqref{reflect}, we take the change of variable $v\mapsto R_L(x)v\,:\,\Si_-\to \Si_+$ to deduce
\begin{multline*}
	\int_{\pa\Omega\times\R^3_v}\{v\cdot n(x)\}Z_k(t,x,v)\mu^{\frac{1}{2}}(v)\,dS(x)dv
	=\int_{\Si_+}\{v\cdot n(x)\}Z_k(t,x,v)\mu^{\frac{1}{2}}(v)\,dS(x)dv\\
	+\int_{\Si_-}\{v\cdot n(x)\}\big((1-\al)Z_k(x,R_L(x)v)+\al R_DZ_k(x,v)\big)\mu^{\frac{1}{2}}(v)\,dS(x)dv
	=0.
\end{multline*}
Thus, using \eqref{conservationlaw}, \eqref{converva} implies the conservation law of mass:
\begin{align*}
	\int_{\Omega\times\R^3_v}Z_k(t,x,v)\mu^{\frac{1}{2}}(v)\,dxdv=\int_{\Omega\times\R^3_v}f_0(x,v)\mu^{\frac{1}{2}}(v)\,dxdv=0.
\end{align*}
Taking the limit $k\to\infty$ and using strong convergence \eqref{1238}, we have
\begin{align*}
	\int_{\Omega\times\R^3_v}Z(t,x,v)\mu^{\frac{1}{2}}(v)\,dxdv=0.
\end{align*}
This completes the proof of Lemma \ref{Lem123}.

\section{Appendix: Proof of Velocity Averaging Lemma}
\label{AppAver}
Here, we provide the proof of Velocity Averaging Lemma \ref{averThma} and the precise bound. 
The proof in \cite{Diperna1991a} considered the differential operator $\De_x$ but not $\De_{t,x}$, and used the Littlewood-Paley decomposition in dyadic cubes. However, we don't impose any smooth cutoff-from-infinity function $\phi$ in the velocity averages. Instead, we only use a sufficiently smooth weight function $\phi$. 
Because of this, we need to use some new cutoff functions instead of the classic method and moreover, we utilize the crucial change of variable on $\tau+v\cdot\xi$ from \cite{Bezard1994}. 
\begin{proof}[Proof of Theorem \ref{averThma}]
	
	\smallskip {\bf Part (1).}
	We begin with the proof of \eqref{avereq} and use the same notations for the Besov space in \eqref{Besov}. 
	Then for any $f\in L^p$, we have 
	\begin{align*}
		f=\sum^\infty_{j=0}f_j,\quad \text{ where } f_j=\De_j^2f. 
	\end{align*}
	By writing velocity averaging as 
	\begin{align}\label{olfj}
		\ol{f_j}=\int_{\R^d_v}f_j(v)\psi(v)\,dv,\quad j\ge 0, 
	\end{align}
 we have an estimate: 	
	\begin{Lem}\label{AB12}
		For any $n>0$ and $p\in[1,\infty]$, let $\beta=\frac{1-\ka}{1+m}\in(0,1)$ and $\frac{1}{p'}=1-\frac{1}{p}$. Then 
\begin{align}\label{AjBjes}
	\begin{aligned}
		\|\ol{f_0}\|_{L^p_{t,x}}&\le C\|\<D_v\>\psi\|_{L^{2}_v}\|f\|_{L^p(\R^{1+2d}_{t,x,v})},\\
		\|\ol{f_j}\|_{L^{p}_{t,x}}&\le C_m 2^{-\frac{n\beta j}{1+2n}\frac{1}{\max\{p,p'\}}}\|\<v\>^n\<D_v\>^{m+1}\psi\|_{L^2_v}\|(\De_jf,\De_jG)\|_{L^p_{t,x,v}}.
	\end{aligned}
\end{align}
	\end{Lem}
	Assuming Lemma \ref{AB12} is valid, we conclude the proof of Theorem \ref{averThma} as follows. 
	
	\smallskip\noindent$\bullet$ For the case $p\in[1,2]$, by Besov norm definition \eqref{Besov}, we need to estimate 
	\begin{align}\label{olf1x}
		\|\ol f\|_{B^{\al,2}_p}
		&=\|\ol{f_0}\|_{L^p_{t,x}}
		+\Big(\sum^\infty_{j=1}(2^{j\al}\|\ol{f_j}\|_{L^p_{t,x}})^2\Big)^{1/2}. 
	\end{align}
	The first right-hand term is estimated by \eqref{AjBjes}. 
	Applying \eqref{AjBjes} to the last term in \eqref{olf1x}, we have  
	\begin{align*}
		\|\ol{f_j}\|_{L^p_{t,x}}\le C_m2^{-\frac{n\beta j}{1+2n}\frac{1}{\max\{p,p'\}}}\|\<v\>^{n}\<D_v\>^{m+1}\psi\|_{L^{2}_{v}}\big(\|\De_jf\|_{L^p_{t,x,v}}+\|\De_jG\|_{L^p_{t,x,v}}\big), 
	\end{align*}
	where $C$ is a constant independent of $j,f,G$. 
	Then we choose $\al=\frac{n\beta}{1+2n}\frac{1}{\max\{p,p'\}}$ to eliminate the frequency coefficient in \eqref{olf1x}. 
Further, by Minkowski's inequality with $1< p\le 2$, we have 
	\begin{align*}
		\Big(\sum_{j\ge 1}\|\De_ju\|^2_{L^p(\R^{1+2d}_{t,x,v})}\Big)^{1/2}
		\le\Big\|\Big(\sum_{j\ge 1}|\De_ju|^2\Big)^{1/2}\Big\|_{L^p(\R^{1+2d}_{t,x,v})}.
	\end{align*}
	Using the Littlewood-Paley Theorem, e.g. \cite[Theorem 6.1.2]{Grafakos2014a}, one has
	\begin{align*}
		\big\|\big(\sum_{j\ge 1}|\De_ju|^2\big)^{1/2}\big\|_{L^p(\R^{1+d}_{t,x})}\le C_{d,p}\|u\|_{L^p(\R^{1+d}_{t,x})}. 
	\end{align*}
Finally, substituting the above estimates into \eqref{olf1x} yields 
	\begin{align*}\notag
		\|\ol f\|_{B^{\al,2}_p}
		&\le C_{m,d,p}\|\<v\>^{n}\<D_v\>^{m+1}\psi\|_{L^{2}_{v}}\big(\|f\|_{L^p(\R^{1+2d}_{t,x,v})}+\|G\|_{L^p(\R^{1+2d}_{t,x,v})}\big). 
	\end{align*}

	\smallskip \noindent$\bullet$
	For the case $p\in [2,\infty)$, we consider $B^{\al,p}_p$ norm instead: 
	\begin{align}\label{127x}
		\|\ol f\|_{B^{\al,p}_p}
		&=\|\ol{f_0}\|_{L^p_{t,x}}
		+\Big(\sum^\infty_{j=1}(2^{j\al}\|\ol{f_j}\|_{L^p_{t,x}})^{p}\Big)^{1/p}. 
	\end{align}
Similarly, by Lemma \ref{AB12}, we have 
	\begin{align*}
		\|\ol{f_0}\|_{L^p_{t,x}}
		&\le C\|\<D_v\>\psi\|_{L^2_v}\|f\|_{L^p_{t,x,v}},\\
		\|\ol{f_j}\|_{L^p_{t,x}}&\le C_m2^{-\frac{n\beta j}{1+2n}\frac{1}{\max\{p,p'\}}}\|\<v\>^{n}\<D_v\>^{m+1}\psi\|_{L^{2}_{v}}\big(\|\De_jf\|_{L^p_{t,x,v}}+\|\De_jG\|_{L^p_{t,x,v}}\big).  
	\end{align*}
	Then we choose $\al=\frac{n\beta}{1+2n}\frac{1}{\max\{p,p'\}}$. 
	Also, by Minkowski's inequality, Littlewood-Paley Theorem,
	\begin{align*}
		\Big(\sum_{j\ge 1}\|\De_jf\|^{p}_{L^p_{t,x,v}}\Big)^{1/p}
		\le\Big\|\Big(\sum_{j\ge 1}|\De_jf(v)|^{p}\Big)^{1/p}\Big\|_{L^p_{t,x,v}}
		\le C_{d,p}\|f\|_{L^p_{t,x,v}}.
	\end{align*}
Substituting into \eqref{127x} yields 
	\begin{align*}\notag
		\|\ol f\|_{B^{\al,p}_p}
		&\le C_{m,d,p,q}\|\<v\>^{n}\<D_v\>^{m+1}\psi\|_{L^{2}_{v}}\big(\|f\|_{L^p_{t,x,v}}+\|G\|_{L^p_{t,x,v}}\big). 
	\end{align*}
 This completes the proof of (1) in Theorem \ref{averThma}.

	\medskip\noindent {\bf Part (2).} For the proof of \eqref{avereq1} in bounded time interval $[T_1,T_2]$ with $p\in(1,2]$, we follow \cite[Proposition 2.14]{Alonso2022} and sketch the proof for brevity. Multiplying \eqref{transG1} by $\1_{[T_1,T_2]}(t)$ we arrive at 
	\begin{align}\label{1230}
		\pa_t\wt f+v\cdot\na_x\wt f = f(T_1)\de(t-T_1)-f(T_2)\de(t-T_2)+\wt G,
	\end{align}
	which is valid for all $t\in(-\infty,\infty)$,
	where 
	\begin{align*}
		\wt f=\1_{[T_1,T_2]}f,\quad 
		\wt G=\1_{[T_1,T_2]}G. 
	\end{align*}
	Then the velocity averaging estimate \eqref{avereq} can be applied to \eqref{1230}. To overcome the delta function about time, we choose $0\le\ka<\wt\ka\le 1$ such that $\wt\ka>1-\frac{1}{p}+\ka$ which implies that we should require $\ka<\frac{1}{p}$. Thus, as in \cite[Proposition 2.14]{Alonso2022}, using the asymptotic behavior of Bessel kernel (see \eqref{GsL1} for the fast decay for large $t$ and slow growth near the singular point), we have 
	\begin{align*}
		&\|(I-\De_{t,x})^{-\wt\ka/2}(I-\De_v)^{-\frac{m}{2}}(f(T_1)\de(t-T_1))\|_{L^p(\R^{1+2d}_{t,x,v})}\\
		&\quad\le\|G_{\wt\ka-\ka}(t-T_1)(I-\De_{x})^{-\frac{\ka}{2}}(I-\De_v)^{-\frac{m}{2}}f(T_1)\|_{L^p(\R^{1+2d}_{t,x,v})}\\
		&\quad\le C_{d,\wt\ka,\ka,p}\|(I-\De_{x})^{-\frac{\ka}{2}}(I-\De_v)^{-\frac{m}{2}}f(T_1)\|_{L^p(\R^{1+d}_{x,v})}. 
	\end{align*}
	The term $f(T_2)\de(t-T_2)$ can be deduced similarly and one can obtain \eqref{avereq1}.  	
This completes the proof of Theorem \ref{averThma}, provided Lemma \ref{AB12} is valid. 
\end{proof}

{{\smallskip}} Now, it remains to prove Lemma \ref{AB12}. 

\begin{proof}[Proof of Lemma \ref{AB12}]
The estimate of $\ol{f_0}$ in \eqref{olfj} can be calculated directly by using Young's convolution inequality: for any $p\in[1,2]$, 
\begin{align*}
	\|\ol{f_0}\|_{L^p(\R^{1+d}_{t,x})}
	\le \|\psi\|_{L^{p'}_v}\|\De_0^2f\|_{L^p(\R^{1+2d}_{t,x,v})}
	\le C\|\psi\|_{L^{p'}_v}\|f\|_{L^p(\R^{1+2d}_{t,x,v})}. 
\end{align*}
To find the estimate of $f_j$ ($j\ge 1$), we split the region in frequency space. Denote the cut-off $\vp_0\in C^\infty_c(\R)$ such that $\vp_0=1$ in $[-1,1]$ and $\vp_0=0$ in $(-\infty,-2)\cup(2,\infty)$, and we set $\vp_1=1-\vp_0$. 
Then we decompose $\wh f$ as 
\begin{align}\label{whf1}
	\wh{f_j}(\tau,\xi,v)=
	\vp_0\Big(2^{\beta j+4}\frac{\tau+\xi\cdot v}{|(\tau,\xi)|}\Big)\wh{f_j}(\tau,\xi,v)
	+\vp_1\Big(2^{\beta j+4}\frac{\tau+\xi\cdot v}{|(\tau,\xi)|}\Big)\wh{f_j}(\tau,\xi,v), 
\end{align}
for some constant $\beta>0$ to be chosen. 
We have from \eqref{transG} that $\wh{f_j}=\frac{(1+|\tau|^2+|\xi|^2)^{\ka/2}}{i(\tau+\xi\cdot v)}(I-\De_v)^{m/2}\wh{G_j}$. 
Thus, we can rewrite \eqref{whf1} as 
\begin{align*}
	\wh{f_j}=
	\vp_0\Big(2^{\beta j+4}\frac{\tau+\xi\cdot v}{|(\tau,\xi)|}\Big)\wh{f_j}
	+\vp_1\Big(2^{\beta j+4}\frac{\tau+\xi\cdot v}{|(\tau,\xi)|}\Big)\frac{(1+|\tau|^2+|\xi|^2)^{\ka/2}(I-\De_v)^{m/2}\wh{G_j}}{i(\tau+\xi\cdot v)}. 
\end{align*}
Integrating with $\psi(v)dv$ and taking inverse Fourier transform $\F^{-1}_{t,x}$ about $(t,x)$, we have 
\begin{align}\label{olfj111}\notag
	\ol{f_j}(t,x)&=\F^{-1}_{t,x}\Big\{\int_{\R^d}\vp_0\Big(2^{\beta j+4}\frac{\tau+\xi\cdot v}{|(\tau,\xi)|}\Big)\psi(v)\wh\Psi\Big(\frac{\tau}{2^j},\frac{\xi}{2^j}\Big)\wh{\De_jf}\,dv\\
	&\qquad-i\int_{\R^d}(I-\De_v)^{m/2}\Big[\frac{\vp_1(2^{\beta j+4}\frac{\tau+\xi\cdot v}{|(\tau,\xi)|})\psi(v)}{\tau+\xi\cdot v}\Big](1+|\tau|^2+|\xi|^2)^{\ka/2}\wh\Psi\Big(\frac{\tau}{2^j},\frac{\xi}{2^j}\Big)\wh{\De_jG}\,dv\Big\}.  
\end{align}
Correspondingly, by writing $\vp_2(r)=\frac{\vp_1(r)}{r}$, we denote 
\begin{align*}
	\begin{aligned}
		A_jf&=\F^{-1}_{t,x}\Big\{\int_{\R^d}\vp_0\Big(2^{\beta j+4}\frac{\tau+\xi\cdot v}{|(\tau,\xi)|}\Big)\wh\Psi\Big(\frac{\tau}{2^j},\frac{\xi}{2^j}\Big)\psi(v)\wh f\,dv\Big\},\\
		B_jG
		&=\F^{-1}_{t,x}\Big\{\int_{\R^d}(I-\De_v)^{m/2}\Big[\frac{2^{\beta j+4}\vp_2\big(2^{\beta j+4}\frac{\tau+\xi\cdot v}{|(\tau,\xi)|}\big)\psi(v)}{|(\tau,\xi)|}\Big](1+|\tau|^2+|\xi|^2)^{\ka/2}\wh\Psi\Big(\frac{\tau}{2^j},\frac{\xi}{2^j}\Big)\wh G\,dv\Big\}. 
	\end{aligned}
\end{align*}
Here, we only consider even number $m\ge 0$ while the other cases can be obtained by interpolation later. Then the velocity derivative term is 
\begin{align*}
	(I-\De_v)^{m/2}\Big[\vp_2\big(2^{\beta j+4}\frac{\tau+\xi\cdot v}{|(\tau,\xi)|}\big)\psi(v)\Big]
	&=\sum_{|\al_1|+|\al_2|\le m}\frac{C_{\al_1,\al_2}2^{|\al_1|\beta j}\xi^{\al_1}}{|(\tau,\xi)|^{|\al_1|}}
	\\&\qquad\qquad\qquad\times\vp^{(|\al_1|)}_2\big(2^{\beta j+4}\frac{\tau+\xi\cdot v}{|(\tau,\xi)|}\big)\pa^{\al_2}_v\psi(v), 
\end{align*} 
where $\vp^{(n)}_2(r):=\frac{d^{n}\vp_2}{dr^n}$ and $\al_1,\al_2\in\N^d$ are multi-indices.

\medskip\noindent
{\bf The $L^2$ estimate.}
Fix any $\al\in(0,1)$. 
To find the $L^2$ estimates of $\ol{f_j}$ with regularity, we need to design the appropriate smooth cutoff in frequency $|\xi|$ and $|\tau|$ as follows. Denote the cut-off $\vp_0,\vp_1$ as above. Then we consider decomposition
\begin{align*}
	\begin{aligned}
		\vp_a&=\vp_1(2^{-\al j+2}|\xi|),\\
		\vp_b&=\vp_0(2^{-\al j+2}|\xi|)\vp_0(2^{-j+2}|\tau|),\\
		\vp_c&=\vp_0(2^{-\al j+2}|\xi|)\vp_1(2^{-j+2}|\tau|)\vp_0(2^{-\al j}|v|^{-1}|\tau|),\\
		\vp_d&=\vp_0(2^{-\al j+2}|\xi|)\vp_1(2^{-j+2}|\tau|)\vp_1(2^{-\al j}|v|^{-1}|\tau|), 
	\end{aligned}
\end{align*}
which satisfies $\vp_a+\vp_b+\vp_c+\vp_d=1$. 
On the corresponding support of these functions, we have 
\begin{align}\label{support118}
\begin{cases}
	|\xi|>2^{\al j-2}, &\text{ on the support of }\vp_a,\\
	|\xi|\le 2^{\al j-1},\ |\tau|\le 2^{j-1}, &\text{ on the support of }\vp_b,\\
	|\xi|\le 2^{\al j-1},\ |\tau|>2^{j-2},\ |\tau|\le 2^{\al j+1}|v|, &\text{ on the support of }\vp_c,\\
	|\xi|\le 2^{\al j-1},\ |\tau|>2^{j-2},\ |\tau|> 2^{\al j}|v|, &\text{ on the support of }\vp_d.
\end{cases}
\end{align}
Corresponding to these cutoff functions, we the $A_jf,B_jG$ as 
\begin{align*}
	A_jf=A_{j,a}f+A_{j,b}f+A_{j,c}f+A_{j,d}f,\quad B_jG=B_{j,a}G+B_{j,b}G+B_{j,c}G+B_{j,d}G. 
\end{align*}
First, for the terms $A_{j,b}f,B_{j,b}G$, since $|(\tau,\xi)|<2^{j}$ on the support of $\vp_b$ while $|(\tau,\xi)|\ge 2^j$ on the support of $\wh\Psi(\frac{\tau}{2^j},\frac{\xi}{2^j})$, these terms vanish, i.e. $\vp_b\wh\Psi(\frac{\tau}{2^j},\frac{\xi}{2^j})=0$ and hence, 
\begin{align}\label{esfjb}
	A_{j,b}f=B_{j,b}G=0. 
\end{align}
Moreover, on the support of $\vp_d$, since $|\xi|\le 2^{\al j-2}$, $|\tau|>2^{j-2}$, $|\tau|> 2^{\al j}|v|$, we have 
\begin{align}
	\label{suppvpd}
|\tau+\xi\cdot v|\ge |\tau|-|\xi||v|\ge \frac{|\tau|}{2}>2^{j-3}.
\end{align}
For the term $A_{j,a}f$, by Plancherel's identity, we have  
\begin{align*}
	\|A_{j,a}f\|_{L^2_{t,x}}&=\Big\|\int_{\R^d}\vp_a\vp_0\Big(2^{\beta j+4}\frac{\tau+\xi\cdot v}{|(\tau,\xi)|}\Big)\wh\Psi\Big(\frac{\tau}{2^j},\frac{\xi}{2^j}\Big)\psi(v)\wh f\,dv\Big\|_{L^2_{\tau,\xi}}\\
	&\le \Big\|\vp_a\big\|\vp_0\Big(2^{\beta j+4}\frac{\tau+\xi\cdot v}{|(\tau,\xi)|}\Big)\psi(v)\big\|_{L^2_v}\wh\Psi\big(\frac{\tau}{2^j},\frac{\xi}{2^j}\big)\|\wh f\|_{L^2_v}\Big\|_{L^2_{\tau,\xi}}. 
\end{align*}
To evaluate this, we can use a rotating change of variable: $v\mapsto u=A^{-1}v$ with an orthogonal matrix $A$ satisfying $A^{-1}=A^T$ and $A^T\frac{\xi}{|\xi|}=e_1\equiv (1,0,\dots,0)$. For example, we choose $A^T=\bigl(\frac{\xi}{|\xi|},A_2,\dots,A_d\bigr)^T$ with some unit vectors $A_i\in\R^d$ orthogonal to $\frac{\xi}{|\xi|}$. Then $\pa_{u_1}(Au)=\frac{\xi}{|\xi|}$. By such a rotation and using embedding $\|\cdot\|_{L^\infty_{v_1}(\R)}\le C\|\cdot\|_{H^1_{v_1}(\R)}$, we have 
\begin{align*}\notag
	\Big\|\vp_0\Big(2^{\beta j+4}\frac{\tau+\xi\cdot v}{|(\tau,\xi)|}\Big)\psi(v)\Big\|_{L^2_v}
	&\le \Big\|\vp_0\Big(2^{\beta j+4}\frac{\tau+v_1|\xi|}{|(\tau,\xi)|}\Big)\psi(A(v-\tau|\xi|^{-1}e_1))\Big\|_{L^2_v}\\
	&\notag\le \Big\|\vp_0\Big(2^{\beta j+4}\frac{\tau+v_1|\xi|}{|(\tau,\xi)|}\Big)\Big\|_{L^2_{v_1}}\|\psi(A(v-\tau|\xi|^{-1}e_1))\|_{L^\infty_{v_1}L^2_{v_2,\dots,v_d}}\\
	&\le C2^{-\frac{\beta j}{2}}|(\tau,\xi)|^{\frac{1}{2}}|\xi|^{-\frac{1}{2}}\|\<D_v\>\psi\|_{L^2_v}. 
\end{align*}
Thus, by the support in \eqref{support118}, 
\begin{align}\label{esAja}
	\|A_{j,a}f\|_{L^2_{t,x}}&\notag\le C\|\psi\|_{H^1_v}\|2^{-\frac{\beta j}{2}}|(\tau,\xi)|^{\frac{1}{2}}|\xi|^{-\frac{1}{2}}\1_{|\xi|>2^{\al j-2}}\1_{2^j\le|[\tau,\xi]|\le3\cdot 2^j}\wh{f}\|_{L^2_{\tau,\xi,v}}\\
	&\le C2^{-\frac{\beta j}{2}+\frac{(1-\al)j}{2}}\|\psi\|_{H^1_v}\|f\|_{L^2_{t,x,v}}. 
\end{align}
The term $B_{j,a}G$ is similar. Using the support of $\wh\Psi(\frac{\tau}{2^j},\frac{\xi}{2^j})$, one has $2^j\le|(\tau,\xi)|\le C2^j$ and hence, 
\begin{align}\label{esBja}\notag
	\|B_{j,a}G\|_{L^2_{t,x}}
	&\le \sum_{|\al_1|+|\al_2|\le m}\frac{C2^{(m+1)\beta j+\ka j}}{2^j}\Big\|\vp_a\big\|\vp^{(|\al_1|)}_2\big(2^{\beta j+4}\frac{\tau+\xi\cdot v}{|(\tau,\xi)|}\big)\pa^{\al_2}_v\psi(v)\big\|_{L^2_{v}}\|\wh G\|_{L^2_v}\Big\|_{L^2_{\tau,\xi}}\\
	&\notag\le \frac{C2^{(m+1)\beta j+\ka j}}{2^j}\|\<D_v\>^{m+1}\psi\|_{L^2_v}\||(\tau,\xi)|^{\frac{1}{2}}|\xi|^{-\frac{1}{2}}\1_{|\xi|>2^{\al j-2}}\1_{2^j\le|[\tau,\xi]|\le3\cdot 2^j}\wh{G}\|_{L^2_{\tau,\xi,v}}\\
	&\le C2^{(m+1)\beta j+\ka j-j-\frac{\beta j}{2}+\frac{(1-\al)j}{2}}\|\<D_v\>^{m+1}\psi\|_{L^2_v}\|G\|_{L^2_{t,x,v}}. 
\end{align}
For the term $A_{j,c}f,B_{j,c}G$, since $|v|\ge 2^{-\al j-1}|\tau|\ge 2^{(1-\al)j-3}$ on the support of $\vp_c$, for any $n\ge 0$, 
\begin{align}\label{esAjc}\notag
	\|A_{j,c}f\|_{L^2_{t,x}}
	&\le\Big\|\int_{\R^d_v}\vp_c\wh\Psi\Big(\frac{\tau}{2^j},\frac{\xi}{2^j}\Big)|\wh{f}(v)\psi(v)|\,dv\Big\|_{L^2_{\tau,\xi}}\\
	&\notag\le \Big\|\|\vp_c\<v\>^{-n}\wh{f}(v)\|_{L^2_v}\|\<v\>^n\psi\|_{L^2_v}\Big\|_{L^2_{\tau,\xi}}\\
	&\le C_n2^{-n(1-\al)j}\|\<v\>^n\psi\|_{L^2_v}\|f\|_{L^2_{t,x,v}},  
\end{align}
and 
\begin{align}\label{esBjc}
	\|B_{j,c}f\|_{L^2_{t,x}}
	&\notag\le \frac{C2^{\beta j+m\beta j+\ka j}}{2^j}\Big\|\|\vp_c\<v\>^{-n}\wh{G}(v)\|_{L^2_v}\|\<v\>^n\<D_v\>^m\psi\|_{L^2_v}\Big\|_{L^2_{\tau,\xi}}\\
	&\le C_n2^{(1+m)\beta j+\ka j-j-n(1-\al)j}\|\<v\>^n\psi\|_{L^2_v}\|G\|_{L^2_{t,x,v}}. 
\end{align}
For the terms $A_{j,d}f,B_{j,d}G$, by \eqref{suppvpd} and the support of $\wh\Psi(\frac{\tau}{2^j},\frac{\xi}{2^j})$, i.e. $2^j\le|(\tau,\xi)|\le 3\times2^j$, one has 
\begin{align*}
	2^{\beta j+4}\frac{|\tau+\xi\cdot v|}{|(\tau,\xi)|}
	&\ge 2^{\beta j+4}\frac{2^{j-3}}{2^j}\ge 2,
\end{align*}
which, together with the support of $\vp_0$, implies that on the support of $\vp_d$,
\begin{align*}
	\vp_0(2^{\beta j+4}\frac{\tau+\xi\cdot v}{|(\tau,\xi)|})=0,\qquad
	\vp_1(2^{\beta j+4}\frac{\tau+\xi\cdot v}{|(\tau,\xi)|})=1,
\end{align*}
and hence, $A_{j,d}f=0$ and 
\begin{align}\label{esBjd}\notag
	\|B_{j,d}G\|_{L^2_{t,x}}
	&\le C_m\Big\|\int_{\R^d}\sum_{|\al_1|+|\al_2|\le m}\frac{2^{|\al_1|\beta j}\xi^{\al_1}}{|(\tau,\xi)|^{|\al_1|}}\frac{\big|2^{\beta j+4}\frac{\tau+\xi\cdot v}{|(\tau,\xi)|}\big|^{-|\al_1|}}{|\tau+\xi\cdot v|}|\pa^{\al_2}_v\psi(v)|
	2^{\ka j}\wh\Psi\Big(\frac{\tau}{2^j},\frac{\xi}{2^j}\Big)\wh G\,dv\Big\|_{L^2_{\tau,\xi}}\\
	&\le C_m2^{\ka j-j}\|\<D_v\>^{m}\psi\|_{L^2_v}\|G\|_{L^2_{t,x,v}},
\end{align}
where $\vp_2(r)=\vp_1(r)/r$. 
Combining the above estimates \eqref{esfjb}, \eqref{esAja}, \eqref{esBja}, \eqref{esAjc}, \eqref{esBjc} and \eqref{esBjd}, we obtain 
\begin{align}\label{AjBjesL2}
	\begin{aligned}
		\|A_jf\|_{L^2_{t,x}}&\le C\big(2^{-\frac{\beta j}{2}+\frac{(1-\al)j}{2}}+2^{-n(1-\al)j}\big)\|\<v\>^n\<D_v\>\psi\|_{L^2_v}\|f\|_{L^2_{t,x,v}},\\
		\|B_jG\|_{L^2_{t,x}}&\le C\Big(2^{(1+m)\beta j+\ka j-j}\big(2^{-\frac{\beta j}{2}+\frac{(1-\al)j}{2}}+2^{-n(1-\al)j}\big)+2^{\ka j-j}\Big)\|\<v\>^n\<D_v\>^{m+1}\psi\|_{L^2_v}\|G\|_{L^2_{t,x,v}},	
	\end{aligned}
\end{align}
for any $n\ge 0$ and even number $m\ge 0$. Then, by interpolation, estimate \eqref{AjBjesL2} holds for any real number $m\ge 0$. Now, we choose
\begin{align*}
	n>0,\quad
	\beta=\frac{1-\ka}{1+m}\in(0,1),\quad
	\al=1-\frac{\beta}{1+2n}\in(0,1).
\end{align*}
Thus, it follows from \eqref{AjBjesL2} that 
\begin{align}\label{AjBjesL2z}
	\begin{aligned}
		\|A_jf\|_{L^2_{t,x}}&\le C2^{-\frac{n\beta j}{1+2n}}\|\<v\>^n\<D_v\>\psi\|_{L^2_v}\|f\|_{L^2_{t,x,v}},\\
		\|B_jG\|_{L^2_{t,x}}&\le C2^{-\frac{n\beta j}{1+2n}}\|\<v\>^n\<D_v\>^{m+1}\psi\|_{L^2_v}\|G\|_{L^2_{t,x,v}}.	
	\end{aligned}
\end{align}

\medskip\noindent
{\bf The $L^p$ estimate.}
To derive the $L^p$ estimate, we consider the $L^1$ and $L^\infty$ Theorem by showing the multiplier is the Fourier transform of a $L^1$ function, i.e. if a bounded $C^{d+1}$ function $m$ on $\R^{d+1}$ satisfies that it inverse Fourier transform $\F^{-1}m\in L^1$, 
then $m$ is an $L^1$ and $L^\infty$ multiplier since, by Young's convolution inequality, 
\begin{align}\label{1120}
	\big\|\big(m\wh f\big)^{\vee}\big\|_{L^p(\R^{d+1})}
	=\|\F^{-1}m*f\|_{L^p(\R^{d+1})}\le \|\F^{-1}m\|_{L^1(\R^{d+1})}\|f\|_{L^p(\R^{d+1})},  
\end{align} 
for any $p\in[1,\infty]$. 
To calculate the $L^p$ estimate of $A_jf,B_jG$, by taking integration by parts about $(I-\De_v)^{m/2}$, we consider multipliers 
\begin{align*}
	\begin{aligned}
		&m_{A,j}:=\vp_0\Big(2^{\beta j+4}\frac{\tau+\xi\cdot v}{|(\tau,\xi)|}\Big)\wh\Psi\Big(\frac{\tau}{2^j},\frac{\xi}{2^j}\Big)\psi(v),\\
		&m_{B,j}:=\frac{2^{(1+|\al_1|)\beta j}\xi^{\al_1}}{|(\tau,\xi)|^{1+|\al_1|}}\vp^{(|\al_1|)}_2\Big(2^{\beta j+4}\frac{\tau+\xi\cdot v}{|(\tau,\xi)|}\Big)\pa^{\al_2}_v\psi(v)(1+|\tau|^2+|\xi|^2)^{\ka/2}\wh\Psi\Big(\frac{\tau}{2^j},\frac{\xi}{2^j}\Big),
	\end{aligned}
\end{align*}
for any multi-indices $|\al_1|+|\al_2|\le m$.
For time-dependent averaging lemma, we also need to utilize the crucial change of variable from \cite{Bezard1994}. In the frequency variables, we consider matrix $T$ given by  
\begin{align*}
	T(\tau,\xi_1,\xi'):=\big(\frac{\ti\tau+\ti\xi_1}{\sqrt{2}},\frac{-\ti\tau+\ti\xi_1}{\sqrt{2}},\ti\xi'\big),
\end{align*}
where $\xi'=(\xi'_2,\dots,\xi'_d)$. The corresponding Jacobian determinant is $\big|\frac{\pa(\tau,\xi_1)}{\pa(\ti\tau,\ti\xi_1)}\big|=1$. Applying this change of variable $T$, we have 
\begin{align*}
	\tau+\xi\cdot v=(\ti\tau,\ti\xi)\cdot\Big(\frac{1-v_1}{\sqrt{2}},\frac{v_1+1}{\sqrt{2}},v'\Big), 
	\quad|\tau|^2+|\xi|^2=|\ti\tau|^2+|\ti\xi|^2.
\end{align*}
Moreover, noting $\big|\big(\frac{1-v_1}{\sqrt{2}},\frac{v_1+1}{\sqrt{2}},v'\big)\big|=\<v\>$, we consider a rotating change of variable: $\xi\mapsto \ti\xi=R^{-1}\xi$ with an orthogonal matrix $R$ satisfying $R^{-1}=R^T$ and $R^T\big(\frac{1-v_1}{\sqrt{2}},\frac{v_1+1}{\sqrt{2}},v'\big)=\<v\>e_0\equiv (\<v\>,0,\dots,0)$. 
Moreover, for orthogonal matrices $T$, $R$ and dilation $(\tau,\xi)\to 2^{-j}(\tau,\xi)$, these change of variables won't change the multiplier norm; see for instance \cite[Proposition 2.5.14]{Grafakos2014}. Then it suffices to consider multipliers 
\begin{align*}
	\begin{aligned}
		&\ti m_{A,j}=\vp_0\Big(2^{\beta j+4}\frac{\tau\<v\>}{|(\tau,\xi)|}\Big)\wh\Psi(\tau,\xi)\psi(v),\\
		&\ti m_{B,j}=\frac{2^{(1+|\al_1|)\beta j}\xi^{\al_1}}{|(\tau,\xi)|^{1+|\al_1|}}\vp^{(|\al_1|)}_2\Big(2^{\beta j+4}\frac{\tau\<v\>}{|(\tau,\xi)|}\Big)\pa^{\al_2}_v\psi(v)(1+|\tau|^2+|\xi|^2)^{\ka/2}\wh\Psi(\tau,\xi). 
	\end{aligned}
\end{align*}
Then, by direct calculations and noting $\wh\Psi(\tau,\xi)$ is supported in $\{2^j\le|(\tau,\xi)|\le 3\times 2^j\}$ and $\beta=\frac{1-\ka}{1+m}$, we have 
\begin{align*}
	\|\ti m_{A,j}\|_{L^\infty_{\tau,\xi}}\le \wh\Psi(\tau,\xi)|\psi(v)|,\qquad
	\|\ti m_{B,j}\|_{L^\infty_{\tau,\xi}}
	\le \wh\Psi(\tau,\xi)|\pa^{\al_2}_v\psi(v)|,
\end{align*}
and, for any multi-index $\si\in\N^{d+1}$, utilizing the support of $\vp^{(|\si'|)}_0,\vp^{(|\si'|)}_1$, we have (consider $\tau$ and $\xi$ derivatives separately)
\begin{align*}
	\big|\pa^\si_{\tau,\xi}\ti m_{A,j}\big|&\le C_m\frac{|\psi(v)|}{|(\tau,\xi)^{\si}|}\sum_{\si'\le\si}|\pa^{\si'}\wh\Psi(\tau,\xi)|,\\
	\big|\pa^\si_{\tau,\xi}\ti m_{B,j}\big|&
	\le C_m\frac{|\pa^{\al_2}_v\psi(v)|}{|(\tau,\xi)^{\si}|}\sum_{\si'\le\si}|\pa^{\si'}\wh\Psi(\tau,\xi)|. 
\end{align*}
Therefore, $\ti m_{A,j},\ti m_{B,j}$ are the Fourier transform of some $L^1_{t,x}$ functions, i.e. by integration by parts, for any $N>\frac{d+1}{2}$, 
\begin{align*}
	\|\F_{t,x}^{-1}\ti m_{A,j}\|_{L^1_{t,x}}
	&=\Big\|\int_{\R^{d+1}_{t,x}}\ti m_{A,j}(\tau,\xi)\frac{(I-\De_{\tau,\xi})^{N}e^{2\pi i(\tau,\xi)\cdot(t,x)}}{\<(t,x)\>^{2N}}\,d\tau d\xi\Big\|_{L^1_{t,x}}\\
	&\le C_N\Big\|\int_{\R^{d+1}_{t,x}}\sum_{|\si|\le 2N}\frac{|\psi(v)|}{|(\tau,\xi)^{\si}|}\sum_{|\si|\le 2N}|\pa^{\si}\wh\Psi(\tau,\xi)|\<(t,x)\>^{-2N}\,d\tau d\xi\Big\|_{L^1_{t,x}}\\
	&\le C_N|\psi(v)|, 
\end{align*}
and similarly, 
\begin{align*}
	\|\F_{t,x}^{-1}\ti m_{B,j}\|_{L^1_{t,x}}
	&\le C_{m,N}|\pa^{\al_2}_v\psi(v)|. 
\end{align*}
It follows from \eqref{1120} that $\ti m_{A,j},\ti m_{B,j}$ are $L^1$ and $L^\infty$ multipliers and hence, 
\begin{align*}
	\|A_jf\|_{L^p_{t,x}}&\le C\int_{\R^d_v}|\psi(v)|\|f\|_{L^p_{t,x}}\,dv\le C_p\|\<D_v\>\psi\|_{L^{2}_v}\|f\|_{L^p_{t,x,v}},\\
	\|B_jG\|_{L^p_{t,x}}&\le C_m\int_{\R^d_v}\sum_{|\al_2|\le m}|\pa^{\al_2}_v\psi(v)|\|G\|_{L^p_{t,x}}\,dv\le C_m\|\<D_v\>^{m+1}\psi\|_{L^{2}_v}\|G\|_{L^p_{t,x,v}}, 
\end{align*}
for any $p\in[1,\infty]$. 
Therefore, together with $L^2$ estimate \eqref{AjBjesL2z} and using real interpolation, one has 
\begin{align*}
	\begin{aligned}
		\|A_jf\|_{L^2_{t,x}}&\le C2^{-\frac{n\beta j}{1+2n}\frac{1}{\max\{p,p'\}}}\|\<v\>^n\<D_v\>\psi\|_{L^2_v}\|f\|_{L^p_{t,x,v}},\\
		\|B_jG\|_{L^2_{t,x}}&\le C2^{-\frac{n\beta j}{1+2n}\frac{1}{\max\{p,p'\}}}\|\<v\>^n\<D_v\>^{m+1}\psi\|_{L^2_v}\|G\|_{L^p_{t,x,v}}.	
	\end{aligned}
\end{align*}
Substituting these estimates into \eqref{olfj111}, we complete the proof of Lemma \ref{AB12}. 
\end{proof}

{{\smallskip}}
\noindent {\bf Acknowledgements.}
The author would like to thank Prof. Yan Guo for his valuable comments and suggestions on the manuscript. The author would also like to thank Prof. Renjun Duan and Prof. Weiran Sun for the valuable discussion. 
The author also appreciates the support of Prof. Donghyun Lee and Prof. Jin Woo Jang. 
This work is supported by the National Research Foundation of Korea (NRF) grant funded by the Korea government (MSIT) No. RS-2023-00210484 and No. RS-2023-00212304.
This work is also supported by the Basic Science Research Institute Fund, whose NRF grant number is 2021R1A6A1A10042944.

\bibliography{../1}
\bibliographystyle{amsplain-url}

\end{document}